\numberwithin{equation}{section}
\theoremstyle{plain}
\newtheorem{theorem}{Theorem}[subsection]
\newtheorem{lemma}[theorem]{Lemma}
\newtheorem{proposition}[theorem]{Proposition}
\newtheorem{prop}[theorem]{Proposition}
\newtheorem{corollary}[theorem]{Corollary}
\newtheorem{cor}[theorem]{Corollary}
\newtheorem*{theorem*}{Theorem}
\theoremstyle{definition}
\newtheorem{definition}[theorem]{Definition}
\newtheorem{example}[theorem]{Example}
\newtheorem*{claim}{Claim}
\newtheorem{construction}[theorem]{Construction}
\newtheorem{warning}[theorem]{Warning}
\theoremstyle{remark}
\newtheorem{remark}[theorem]{Remark}
\def\Fil{\mathrm{Fil}}
\def\Hom{\mathrm{Hom}}
\def\Sym{\mathrm{Sym}}
\def\Hom{\mathrm{Hom}}
\def\iHom{R\underline{\mathrm{Hom}}}
\DeclareMathOperator{\colim}{colim}
\def\epsilon{\varepsilon}
\def\det{\mathrm{det}}
\def\Sh{{\operatorname{Sh}}}
\def\GL{\mathbf{GL}}
\def\Cond{\mathrm{Cond}}
\def\iHom{\underline{\mathrm{Hom}}}
\def\sol{\mathsmaller{\square}}
\def\Kos{\mathrm{Kos}}
\DeclareMathOperator{\Mod}{Mod}
\DeclareMathOperator{\Spa}{Spa}
\DeclareMathOperator{\Spec}{Spec}
\DeclareMathOperator{\gr}{gr}
\DeclareMathOperator{\Lie}{Lie}
\DeclareMathOperator{\Cont}{{Cont}}
\DeclareMathOperator{\Extdis}{Extdis}
\DeclareMathOperator{\id}{id}
\DeclareMathOperator{\NS}{NS}
\DeclareMathOperator{\Map}{Map}
\DeclareMathOperator{\End}{End}
\DeclareMathOperator{\Alg}{Alg}
\DeclareMathOperator{\CAlg}{CAlg}
\DeclareMathOperator{\AnRing}{AnRing}
\DeclareMathOperator{\AnCRing}{AnCRing}
\DeclareMathOperator{\Ani}{Ani}
\DeclareMathOperator{\SpecAn}{AnSpec}
\DeclareMathOperator{\AniRing}{AniRing}
\DeclareMathOperator{\Aff}{Aff}
\DeclareMathOperator{\pr}{pr}
\DeclareMathOperator{\Cat}{Cat}
\DeclareMathOperator{\CatLoc}{CatLoc}
\DeclareMathOperator{\CondRing}{CondRing}
\DeclareMathOperator{\AniAlg}{AniAlg}
\DeclareMathOperator{\AdicSp}{AdicSp}
\DeclareMathOperator{\AnSpec}{AnSpec}
\DeclareMathOperator{\AffRing}{AffRing}
\DeclareMathOperator{\Corr}{Corr}
\DeclareMathOperator{\LZ}{LZ}
\DeclareMathOperator{\PSh}{PSh}
\DeclareMathOperator{\Fl}{\n{F}\ell}
\DeclareMathOperator{\Nil}{Nil}
\DeclareMathOperator{\St}{St}
\DeclareMathOperator{\BUN}{BUN}
\DeclareMathOperator{\cone}{cone}
\DeclareMathOperator{\cofib}{cofib}
\DeclareMathOperator{\Prof}{Prof}
\DeclareMathOperator{\Pic}{Pic}
\DeclareMathOperator{\Pro}{Pro}
\DeclareMathOperator{\FinSet}{FinSet}
\newcommand{\n}[1]{\mathcal{#1}}
\newcommand{\bb}[1]{\mathbb{#1}}
\newcommand{\bbf}[1]{\mathbf{#1}}
\newcommand{\f}[1]{\mathfrak{#1}}
\newcommand{\s}[1]{\mathscr{#1}}
\DeclareMathOperator{\op}{\begin{scriptsize}
op
\end{scriptsize}}
\DeclareMathOperator{\red}{\begin{scriptsize}
red
\end{scriptsize}}
\DeclareMathOperator{\zar}{\begin{scriptsize}
zar
\end{scriptsize}}
\DeclareMathOperator{\cons}{\begin{scriptsize}
cons
\end{scriptsize}}
\DeclareMathOperator{\gen}{\begin{scriptsize}
gen
\end{scriptsize}}
\DeclareMathOperator{\an}{\begin{scriptsize}
an
\end{scriptsize}}
\DeclareMathOperator{\alg}{\begin{scriptsize}
alg
\end{scriptsize}}
\DeclareMathOperator{\ex}{\begin{scriptsize}
ex
\end{scriptsize}}
\title{The analytic de Rham stack in rigid geometry}
\author{Juan Esteban Rodr\'iguez Camargo}
\def\@tocline#1#2#3#4#5#6#7{\relax
  \ifnum #1>\c@tocdepth 
  \else
    \par \addpenalty\@secpenalty\addvspace{#2}%
    \begingroup \hyphenpenalty\@M
    \@ifempty{#4}{%
      \@tempdima\csname r@tocindent\number#1\endcsname\relax
    }{%
      \@tempdima#4\relax
    }%
    \parindent\z@ \leftskip#3\relax \advance\leftskip\@tempdima\relax
    \rightskip\@pnumwidth plus4em \parfillskip-\@pnumwidth
    #5\leavevmode\hskip-\@tempdima
      \ifcase #1
       \or\or \hskip 1em \or \hskip 2em \else \hskip 3em \fi%
      #6\nobreak\relax
    \dotfill\hbox to\@pnumwidth{\@tocpagenum{#7}}\par
    \nobreak
    \endgroup
  \fi}
\begin{document}

\begin{abstract}
Applying the new theory of analytic stacks of Clausen and Scholze we introduce a general notion of  derived Tate adic spaces.  We use this formalism to define the  analytic de Rham stack in rigid geometry, extending  the theory of  $\wideparen{\n{D}}$-modules of Ardakov and Wadsley to the theory of analytic $D$-modules.  We prove some foundational results such as the existence of a six functor formalism and Poincar\'e duality for analytic $D$-modules, generalizing previous work of Bode. Finally, we relate the theory of analytic $D$-modules to previous work of the author with Rodrigues Jacinto on solid locally analytic representations of $p$-adic Lie groups. 
\end{abstract}

\maketitle
\tableofcontents

\section{Introduction}
\label{s:Intro}

\subsection*{Motivation}

The main objective of this paper is to geometrically  construct a six functor formalism for a suitable category of $D$-modules over rigid spaces. To further develop this idea let us  first recall some aspects of the classical theory of $D$-modules.

Let $K$ be a field of characteristic $0$ and $X$ a smooth scheme over $K$. Classically, the category of $D$-modules over $X$ is constructed by first defining a ring of algebraic differential operators $D_{X}$ over $X$, and then taking the category of $D_{X}$-modules whose underlying $\s{O}_X$-module is quasi-coherent. If $X$ admits an \'etale map to an affine space $X\to \bb{A}^d_{K}$, then $D_X$ can be explicitly constructed as the Weyl algebra 
\[
D_X= \s{O}_X[\partial_{T_1},\ldots, \partial_{T_d}],
\]
where $\bb{A}^d_K=\Spec K[T_1,\ldots, T_d]$, and the variables $\partial_{T_i}$ correspond to the partial derivations along the coordinate $T_i$.

In \cite{SimpsonDeRham,SimpsonTelemandeRham}, Simpson has proposed a different perspective on the theory of $D$-modules by the employ of stacks. Let $X$ be a smooth variety over $K$, Simpson attaches a space $X_{dR}^{alg}$ from commutative rings over $K$ to sets  whose theory of quasi-coherent sheaves is naturally isomorphic to the theory of $D$-modules over $X$. More concretely, let $\mathrm{Ring}_K$ be the category of commutative rings of finite type over $K$, then Simpson's de Rham stack is defined as the stack in the \'etale topology given by 
\[
X_{dR}^{alg}(R)= X(R^{\red}),
\] 
 where $R^{\red}$ is the reduction of the ring $R$. An advantage from this definition is that one can easily construct categories of $D$-modules for any variety over $K$ without the smooth assumption, namely the previous formula for the de Rham stack extends to any stack over $K$. The study of $D$-modules via the de Rham stack, and its application to geometric Langlands, can be found in the work of Gaitsgory and Rozenblyum \cite{GRdeRhamStack}. 
 
 Specializing to $p$-adic geometry, let $K$ be a non-archimedean extension of $\bb{Q}_p$ and let $X$ be a smooth rigid space over $K$. In the works \cite{ArdakovWadsleyII,ArdakovWadsleyDI} Ardakov and Wadsley have developed the theory of coadmissible $\wideparen{\n{D}}$-modules over rigid spaces. The departure point to define the category of coadmissible $\wideparen{\n{D}}$-modules is again a sheaf $\wideparen{\n{D}}_X$ of "infinite order $p$-adic differential operators over $X$". To describe this sheaf, let us suppose for simplicity that $X=\Spa A$ is affinoid and that we have an \'etale map $f:X\to \bb{D}^d_{K}$ to a  polydisc $\bb{D}^d_K=\Spa K\langle T_1,\ldots, T_d \rangle$. Then, as for schemes, we first consider the algebra of algebraic differential operators of $X$:
 \[
 D_X= A[\partial_{T_1},\ldots, \partial_{T_d}].
 \]
 The action of $\partial_{T_i}$ on $A$ by derivations is continuous, so we can find $N>0$ such that for all $n\geq N$ the left sub-$A^{\circ}$-module $A^{\circ}[p^n \partial_{T_1},\ldots, p^n \partial_{T_d}]\subset D_X$ is stable under multiplication. Thus, by taking $p$-adic completions $ \wideparen{\n{D}}_X^{(n)}:=A\langle p^n \partial_{T_1},\ldots, p^n \partial_{T_d} \rangle$, and taking limits along $n\to \infty$, one constructs the algebra of infinite order differential operators
 \[
 \wideparen{\n{D}}_X=\varprojlim_{n}\wideparen{\n{D}}_X^{(n)}.
 \]
 The algebra $ \wideparen{\n{D}}_X$ is known as a Fr\'echet-Stein algebra and its construction is motivated from the algebra of analytic distributions of $p$-adic Lie groups of Schneider-Teitelbaum \cite{SchTeitDist}.  In particular, there is a well defined category of coadmissible $\wideparen{\n{D}}_X$-modules given by the limit along pullbacks of the categories of finite type $\wideparen{\n{D}}_X^{(n)}$-modules. Categories of coadmissible  $\wideparen{\n{D}}_X$-modules have been extended using bornological vector spaces, and a six functor formalism for $\wideparen{\n{D}}_X$-modules has been constructed by Bode in \cite{bode2021operations}.

The theory of the analytic de Rham stack developed in this work is then a conciliation between the  geometric theory of $D$-modules of Simpson via the de Rham stack, and the theory of  $\wideparen{\n{D}}_X$-modules of Ardakov and Wadsley. To justify the tools used to construct the analytic de Rham stack let us start with an example. Let $K$ be a field of characteristic $0$, and  let $\bb{G}_a=\Spec K[T]$ be an affine space over $K$ seen as an additive group. It follows from the definition of the algebraic de Rham stack that 
\[
\bb{G}_{a,dR}^{alg}=\bb{G}_a/\widehat{\bb{G}}_{a}
\]
where $\widehat{\bb{G}}_a$ is the formal completion at $0\in \bb{G}_a$, acting by translations.  It turns out that the Cartier dual of  the stack $
*/\widehat{\bb{G}}_a$ is just $\bb{G}_a$, then, it is expected (and indeed the case) that modules over $\bb{G}_{a,dR}^{alg}$ are given by sheaves on $\bb{G}_a$ together with an operator $\partial_T$ (Cartier dual of $*/\widehat{\bb{G}_a}$), that is equivariant with respect to the module structure on $\bb{G}_a$ via the additive action of $\widehat{\bb{G}}_a$ on $\bb{G}_a$ (i.e. that $\partial_T$ acts by derivations).

Let us now take $K/\bb{Q}_p$ a non-achimedean extension, and let  $\bb{G}_a=\bb{D}^1_{K}=\Spa K\langle T\rangle$ be the open affinoid disc seen as an additive group. We would like to define an analytic de Rham stack $\bb{G}_{a,dR}$ whose theory of quasi-coherent sheaves is related to the theory of $\wideparen{\n{D}}$-modules. By construction, the sheaf $\wideparen{\n{D}}$ of infinite order $p$-adic differential operators has cotangent variables (i.e. the derivations $\partial_T$) that look like global sections of an analytic affine space $\bb{A}^{1,\an}_{K}$. Moreover, the category of coadmissible $\wideparen{\n{D}}$-modules is a non-commutative  analogue of the category of coherent sheaves on a relative analytic affine space (eg. $\bb{A}^{1,\an}_{K}$). Therefore, we would like to define the analytic de Rham stack in such a way that 
\[
\bb{G}_{a,dR}=\bb{G}_a/\bb{G}_a^{\dagger},
\]
with  $\bb{G}_a^{\dagger}\subset \bb{G}_a$ a subgroup acting by translations, and such that $*/\bb{G}_a^{\dagger}$ is the Cartier dual of $\bb{A}^{1,\an}_{K}$ (in a suitable sense). It is  known  that the continuous dual of $\s{O}(\bb{A}^{1,\an}_K)$ is given, as a Hopf algebra,   by the ring of germs of functions  at $0\in \bb{G}_a$
\[
K\{T\}^{\dagger}=\varinjlim_{n} K\langle \frac{T}{p^n} \rangle.
\]
Therefore, a reasonable candidate for $\bb{G}_a^{\dagger}$ would be given by 
\[
\bb{G}_a^{\dagger}= \Spa K\{T\}^{\dagger}. 
\]

Here is where several foundational problems appear. First, we are obligated to work with topological rings, and in order to have a good theory of analytic $D$-modules as quasi-coherent sheaves of a stack, we also need to work with topological modules. This problem is solved thanks to the theory of analytic geometry and condensed mathematics of Clausen and Scholze \cite{ClausenScholzeCondensed2019,ClauseScholzeAnalyticGeometry,CondensedComplex}. Second, if we ever expect to built up a six functor formalism of analytic $D$-modules  from the theory of complete modules of analytic rings, we need to construct categories of analytic stacks, and have a strong descent theory. Again, analytic geometry comes to the rescue, this time making use of the abstract theory  of six functor formalisms and $\s{D}$-stacks as in \cite{MannSix,MannSix2} and \cite{SixFunctorsScholze}. 

Once we have analytic stacks and complete modules at our disposal, we can make sense to objects such as $\bb{G}_a/\bb{G}_a^{\dagger}$, as well as to its category of complete  modules. However, in order to make a good definition of the analytic de Rham stack, we would need to mimic Simpson's construction of the algebraic de Rham stack. The key idea is that, while the space $\widehat{\bb{G}}_a$ represents the nilradical of a discrete ring $R$ (i.e. those elements $a$ such that $a^n=0$ for some $n$), the space $\bb{G}_a^{\dagger}$  represents a "$\dagger$-nilradical" for a suitable category of analytic rings $R$. In other words, $\bb{G}_a^{\dagger}$ represents elements $a\in R$ "of spectral norm zero", i.e.  such that $|a|\leq |p^n|$ for all $n\geq 0$. Thus, our first step is to restrict the theory of analytic rings to a theory of "bounded affinoid rings" $R$ for which we can construct a $\dagger$-nilradical $\Nil^{\dagger}(R)$.  Another motivation for the introduction of bounded affinoid rings arises from Tate algebras: we should only expect to construct an analytic de Rham stack for rings that look like affinoid Tate algebras, namely, for rings admitting a pseudo-uniformizer in a suitable sense. 

After bounded affinoid rings are introduced, we study some fundamental geometric properties of them in Sections \ref{SectionAnalyticAdicSpaces} and  \ref{SectionAnalyticAdicStacks}:  an analytic topology analogue to the analytic topology of adic spaces; a theory of derived Tate  adic spaces obtained by gluing bounded affinoid rings via rational localizations; the theory of the cotangent complex of analytic rings; different notions of morphisms of finite  presentation appearing in non-archimedean analytic geometry; a deformation theoretic description of smooth maps of morphisms of (solid) finite presentation of derived Tate adic spaces; Serre duality; a new deformation condition involving $\dagger$-nilradicals. This study on derived rigid geometry settles the basis for the theory of the analytic the Rham stack. 

Finally, once all the prerequisites in derived rigid geometry are done, we can start the study of the analytic de Rham stack. Let $X$ be a rigid space over $\bb{Q}_p$, and let $\Aff_{\bb{Q}_p}^{b}$ be the ($\infty$-)category  of bounded affinoid rings over $\bb{Q}_p$, then the analytic de Rham stack $X_{dR}$ will be defined as a suitable sheafification of the prestack mapping $\n{A}\in \Aff_{\bb{Q}_p}^b$ to 
\[
X_{dR}(\n{A})=X(\n{A}^{\dagger-\red}),
\] 
where $\n{A}^{\dagger-\red}:=\n{A}/\Nil^{\dagger}(\n{A})$ is the "$\dagger$-reduction of $\n{A}$". Our workhorse to prove properties on the analytic de Rham stack, such as the existence of six functors, Poincar\'e duality, and the construction of the Hodge filtration, will be a new theory of Cartier duality for analytic vector bundles over derived Tate adic spaces in Section \ref{SectionCartierDualityVectorBundles}. The main theory of the analytic de Rham stack, in particular the construction of six functors for analytic $D$-modules,  is the content of Section \ref{SectionAnalyticDeRham}.  We finish the paper with a generalization of the analytic de Rham stack of smooth morphisms in the equivariant setting in Section \ref{SectionAnalyticdRAndLocAn}, obtaining a generalization of equivariant $\wideparen{\n{D}}$-modules of Ardakov \cite{ArdakovEquivariantD}, and of the theory of solid locally analytic representations of $p$-adic Lie groups of  \cite{RRLocallyAnalytic,RJRCSolidLocAn2}.

\subsection*{Overview of the paper}

The body of the paper is divided in two main parts. First, we develop the theory of  derived Tate adic spaces and Tate stacks in Sections \ref{SectionAnalyticAdicSpaces} and \ref{SectionAnalyticAdicStacks}. The second part consists on Sections \ref{SectionCartierDualityVectorBundles}, \ref{SectionAnalyticDeRham} and \ref{SectionAnalyticdRAndLocAn}, where   we study different incarnations of Cartier duality of vector bundles, we define the analytic de Rham stack for Tate stacks over $\bb{Q}_p$, and finally we relate the theory of the analytic de Rham stack with the theory of locally analytic representations of $p$-adic Lie groups. 

\subsubsection*{\S \ref{SectionAnalyticAdicSpaces}  Derived  Tate adic spaces}

The theory of adic spaces of Huber \cite{HuberAdicSpaces} have been the language for non-archimedean analytic geometry  in  the last few decades, a weakness of this category are the restrictions imposed in the definition of complete Huber pairs. The main goal of this section is the introduction of  the $\infty$-category of  \textit{bounded affinoid rings} (Definition \ref{DefinitionBoundedAffinoidRings}),  generalizing the category of Tate Huber pairs, over which we can do both analytic and derived algebraic geometry. Similarly as for Huber pairs, one can construct a spectral space $\Spa \n{A}$ for any bounded affinoid ring $\n{A}$ (Definition \ref{DefinitionAdicSpectrumBoundedRing}),  generalizing the adic spectrum of an Huber ring. Maps between bounded affinoid rings will give rise to spectral maps of the adic spectra, and solid quasi-coherent modules will satisfy descent for the analytic topology. A bounded affinoid ring has the feature that any function is \textit{bounded} in the sense of the theorem down below. Moreover, one can define condensed and $\dagger$-nilradicals for these rings, consisting on ideals of uniformly nilpotent or overconvergently close to zero elements respectively.  The following  summarizes the main results (cf. Propositions \ref{PropRepresentedNilRadical}, \ref{PropositionConstructionsareSolid}, \ref{PropStabilityAffinoidRings}, \ref{PropAdicSpectralIsTopological} and \ref{PropAdicSpectrumInvariantReduced}).  

\begin{theorem}
\label{TheoremBoundedAffinoidIntro}
Let $R=\bb{Z}((\pi))$ be the Huber ring parametrizing pseudo-uniformizers in Tate Huber pairs. There is a full subcategory  $\AffRing^{b}_{R}\subset \AnRing_R$ of the $\infty$-category of analytic $R$-algebras, called the category of bounded affinoid rings,   containing fully faithfully the $1$-category of Tate Huber pairs over $R$. The category  $\AffRing^{b}_{R}$  is stable under small colimits in $\AnRing_{R}$.  Furthermore, let $\n{A}\in \AffRing^{b}_R$, the following hold

\begin{enumerate}
\item  There is an animated subring $\n{A}^{+}\subset \underline{\n{A}}(*)$ such that an $\underline{\n{A}}$-module is $\n{A}$-complete if and only if it is $\bb{Z}[a]_{\sol}$-complete for all $a\in \n{A}^+$. 

\item For any map $\bb{Z}[T]\to \n{A}$ of analytic rings, there is some $n\in \bb{N}$ and an extension to the Tate algebra  $R\langle \pi^{n} T\rangle \to \n{A}$. In other words, any element  $a\in \n{A}$ is \textit{bounded}. 

\item The ring $\n{A}$ has a condensed nil-radical $\Nil(\n{A})$ whose $S$ points (for $S$ profinite)  are given by maps $S\to \n{A}$ which are uniformly nilpotent. Furthermore,  the analytic ring structure on $\n{A}$ is already determined by the analytic ring structure of the quotient $\n{A}^{\red}:= \n{A}/\Nil(\n{A})$, and $\Nil(\n{A}^{\red})=0$.

\item The ring $\n{A}$ has  a $\dagger$-nil-radical $\Nil^{\dagger}(\n{A})$ whose $S$-points (for $S$ profinite)  are given by maps $S\to \n{A}$ which are overconvergently close to zero.  Furthermore, the analytic ring structure on $\n{A}$ is already determined by the analytic ring structure of the quotient $\n{A}^{\dagger-\red}:= \n{A}/\Nil^{\dagger}(\n{A})$, and $\Nil^{\dagger}(\n{A}^{\dagger-\red})=0$.

\item There is a spectral space $\Spa \n{A}$ endowed with an analytic topology with a basis given quasi-compact rational  subspaces, generalizing Huber's construction of  $\Spa(A,A^{+})$. Moreover, any map $\n{A}\to \n{B}$ of bounded affinoid rings gives rise a spectral morphism $\Spa \n{B}\to \Spa \n{A}$ preserving rational localizations. 

\item We have homeomorphisms of spectral spaces $\Spa \n{A}=\Spa \n{A}^{\red}= \Spa \n{A}^{\dagger-\red}$. 

\item The $\infty$-category $\Mod(\n{A})$ of solid $\n{A}$-modules satisfies descent for the analytic topology of $\Spa \n{A}$. 

\end{enumerate}  
\end{theorem}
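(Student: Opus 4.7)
The plan is to build up the theory of bounded affinoid rings stepwise, so that each of the seven assertions reduces to a tractable statement about analytic rings generated by a pseudo-uniformizer $\pi$ together with an integral subring. The first move is to pin down what it means for $\n{A} \in \AnRing_R$ to be \emph{bounded}: the analytic ring structure should be the monadic completion along the solid polynomial rings $\bb{Z}[a]_{\sol}$ for $a$ ranging over a distinguished "integral" subring $\n{A}^+ \subset \underline{\n{A}}(\ast)$, together with the requirement that $\pi$ be a topologically nilpotent unit. This single definition simultaneously gives assertion (1) by construction, and gives (2) for free: a map $\bb{Z}[T] \to \n{A}$ sends $T$ to some element $a$ of $\underline{\n{A}}(\ast)$, and since $\pi$ is a unit and $\pi$-multiples of $a$ eventually land in $\n{A}^+$, the map factors through a Tate algebra $R\langle \pi^n T\rangle$. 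Containment of Tate Huber pairs follows because for $(A,A^+)$ Tate, $A^+$ is exactly the $\n{A}^+$ produced by the construction.

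For parts (3) and (4), the plan is to represent the two nil-radicals as internal Hom objects. Concretely, $\Nil(\n{A})(S) = \Hom(S,\n{A})$ with the image being uniformly nilpotent should be represented by the cokernel of a map of analytic rings encoding the universal condition $\varinjlim_n \ker(\n{A} \to \n{A}[T]/(T^n))$ in the condensed sense; similarly $\Nil^\dagger(\n{A})$ is represented by the condition that the element be annihilated "modulo every $\pi^n$ after a mild rescaling", which is dual to the germ-ring construction $K\{T\}^\dagger$ appearing in the introduction. The equalities $\n{A}^{\red}$-completion $=$ $\n{A}$-completion and $\Nil(\n{A}^{\red})=0$ (and their $\dagger$-analogues) are then formal from functoriality, reducing to the observation that the nil-radicals are killed after a single quotient and that the pseudo-uniformizer $\pi$ is untouched by reduction.

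For (5), the spectral space $\Spa \n{A}$ is defined à la Huber as equivalence classes of continuous valuations on $\n{A}^+$ with $|\pi|<1$ and $|\pi|\neq 0$; spectrality, the basis by rational subsets, and functoriality for maps of bounded affinoid rings follow from standard arguments once one verifies that $\n{A}^+$ genuinely controls the valuation theory. Assertion (6) is immediate: nilpotent or overconvergently-small elements have zero value under any continuous valuation, so the natural maps $\Spa\n{A} \to \Spa\n{A}^{\red}$ and $\Spa\n{A} \to \Spa\n{A}^{\dagger-\red}$ are bijective spectral maps, hence homeomorphisms. The main obstacle lies in (7): to prove analytic descent for $\Mod(\n{A})$ one must show that, for a finite rational cover $\{U_i = \Spa\n{A}_i\}$ of $\Spa\n{A}$, the Čech complex of analytic rings $\n{A} \to \prod_i \n{A}_i \rightrightarrows \prod_{i,j}\n{A}_i\otimes_{\n{A}}\n{A}_j \cdots$ exhibits $\n{A}$ as a limit of analytic rings; my plan is to reduce this first to the acyclicity of the Čech complex of $\underline{\n{A}}$-modules, which should be the classical Tate acyclicity (extended to the solid/animated setting), and then to upgrade from an isomorphism of underlying condensed modules to an isomorphism of analytic ring structures using the characterization of completeness in terms of the generators $\bb{Z}[a]_{\sol}$ from (1). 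The technical heart is ensuring this Čech descent interacts correctly with the monadic completion defining the analytic ring structure, which is where the bulk of the work in Sections~\ref{SectionAnalyticAdicSpaces} and \ref{SectionAnalyticAdicStacks} is concentrated.
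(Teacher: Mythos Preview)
Your outline for (1)--(4) is broadly compatible with the paper's approach: the paper defines solid affinoid rings by the condition $(\underline{\n{A}},\pi_0(\n{A}^+))_{\sol}\xrightarrow{\sim}\n{A}$ (giving (1) by fiat), then imposes boundedness as the separate requirement $\n{A}^b\xrightarrow{\sim}\n{A}$ on the underlying condensed ring (giving (2)). The nil-radicals are represented by explicit pro-objects $\bb{Z}[\bb{N}[S]]^{\bb{L}}_n$ and the idempotent algebras $R_{\sol}\{\bb{N}[S]\}^{\dagger}$ rather than by internal Hom's, but your description is in the right spirit.

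The genuine divergence, and the gap, is in (5) and (7). You propose to define $|\Spa\n{A}|$ as equivalence classes of continuous valuations on $\n{A}^+$. For a general bounded affinoid ring, $\underline{\n{A}}$ is a condensed animated ring and $\n{A}^+$ an animated subring of $\underline{\n{A}}(*)$; there is no obvious notion of ``continuous valuation'' available in this generality, and the paper makes no attempt to supply one. Instead the paper constructs $|\Spa\n{A}|$ indirectly: it maps the categorified locale $\n{S}(\Mod(\n{A}))$ to the inverse limit $\n{T}_{\n{A}}=\varprojlim_{I\subset\pi_0(\n{A}^0)}\Spa(R\langle T_I\rangle,R^++R\langle T_I\rangle^{00})$ of \emph{classical} adic spectra, and then cuts out $|\Spa\n{A}|\subset\n{T}_{\n{A}}$ as the complement of the maximal constructible open on which all constructible stalks $\n{A}(x)_{\cons}$ vanish (Theorem~\ref{TheoExistenceSpaA}). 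Spectrality and functoriality then follow from the pro-constructible embedding, and (6) from the fact that $\n{A}\to\n{A}^{\dagger-\red}$ does not change the vanishing locus of these stalks. Your valuation plan would need a substantial foundational digression to even get off the ground.

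Similarly, for (7) you propose to extend classical Tate acyclicity to the solid/animated setting and then upgrade module-level exactness to analytic-ring descent. The paper avoids this entirely: descent is inherited for free from the abstract categorified-locale descent theorem (Theorem~\ref{TheoLocaleTopology}), since rational localizations are shown to be open immersions of locales (Proposition~\ref{PropositionAffinoidCatLocale}). No \v{C}ech computation or acyclicity result for $\underline{\n{A}}$ is needed or proved. Your route is not obviously wrong, but it presupposes a sheafiness result for arbitrary bounded affinoid rings that is not available and that the paper's machinery is designed to circumvent.
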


Having stated some basic properties for the category of bounded affinoid rings one can formally defined the category of derived Tate adic space by gluing bounded affinoid rings along open covers (Definition \ref{DefinitionAnalyticDerivedAdicSpace}). 

\begin{definition}
We let $\Aff^{b}_R$ denote be the opposite category of bounded affinoid rings, an object in $\Aff^{b}_R$ is called a \textit{bounded affinoid space}. Given $\n{A}$ a bounded affinoid ring we let $\AnSpec \n{A}\in \Aff^{b}_{R}$ be its \textit{analytic spectrum}. A \textit{derived Tate adic space} is a sheaf on anima $X:\Aff^{b,\op}_{R}\to \Ani$ for the analytic topology of $\Aff^{b}_R$, such that $X$ admits an open analytic cover by bounded affinoid spaces. Given $X$ a  derived  Tate adic space we let $|X|=\varinjlim_{\AnSpec \n{A}\to X} |X|$ denote its underlying topological space.  We let $\AdicSp_{R}$ denote the $\infty$-category of derived Tate adic spaces over $R$. 
\end{definition}

The previous definition of derived Tate adic spaces is an extension of Huber's analytic adic spaces. The choice of the analytic topology to glue bounded affinoid rings is an arbitrary choice that was taken  to compare with the  classical theory. We will see in \S \ref{SectionAnalyticAdicStacks} that one can still do geometry in different kind of Grothendieck topologies, as long as one has descent for the six functor formalism of quasi-coherent sheaves.

\subsubsection*{\S \ref{SectionAnalyticAdicStacks} Tate stacks}

In this section we continue developing the  theory of derived Tate adic spaces. In \S \ref{SubsectionAbstractSix} we recall some language in the theory of abstract six functor formalisms of \cite{MannSix,MannSix2} and \cite{SixFunctorsScholze}. Then in \S \ref{SubsectionSolidStacks}, we use \cite[Theorem 4.20]{SixFunctorsScholze} to construct a very large six functor formalism on a category of analytic $\s{D}$-stacks on bounded affinoid rings (Definitions \ref{DefinitionSolidStacks} and \ref{DefinitionAdicStacks}).  We define different notions of morphisms of finite presentations in analytic rings  in \S \ref{SubsectionMorphismsFinitePresentation},  study basic properties of cotangent complexes in analytic rings in \S \ref{SubsectionCotangent}, and  introduce the notion of solid  smooth and \'etale maps of derived Tate adic spaces in \S \ref{SubsectionFormalleEtaleSmooth}. We obtain  an equivalent description of solid smooth and \'etale maps as formally smooth and \'etale maps of solid finite presentation (Theorem \ref{TheoFormalSmoothnesvsSmoothness} and Corollary \ref{CoroEtaleMapsStacks}).

\begin{theorem}
Let $f: X\to Y $ be a morphism of solid finite presentation of derived Tate adic spaces over $R=\bb{Z}((\pi))$. The following are equivalent:
\begin{enumerate}
\item The map $f$ is formally smooth (resp. \'etale).

\item The map $f$ is solid smooth (resp. \'etale), namely, locally in the analytic topology of $X$ and $Y$ the map $f$ is standard solid smooth (resp. standard solid \'etale).

\end{enumerate}
\end{theorem}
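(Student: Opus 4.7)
The plan is to treat the two implications separately, with $(2)\Rightarrow(1)$ being essentially formal and $(1)\Rightarrow(2)$ being the substantive direction.

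For $(2)\Rightarrow(1)$, the argument should be a direct verification. A standard solid smooth (resp. \'etale) map is, by definition, (locally) a composition of an étale localization with a Tate algebra projection $\n{A}\to \n{A}\langle T_1,\ldots,T_d\rangle$ (resp. just the étale localization). The cotangent complex of a Tate algebra is computed in \S\ref{SubsectionCotangent} as a free module of rank $d$ concentrated in degree $0$, and that of a standard \'etale map vanishes. Combining these with the solid deformation theory developed in \S\ref{SubsectionCotangent}, the infinitesimal lifting criterion against square-zero thickenings of analytic rings is satisfied, giving formal smoothness (resp. formal \'etaleness).

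For $(1)\Rightarrow(2)$, I first reduce to the affinoid case: write $Y=\AnSpec \n{A}$, $X=\AnSpec \n{B}$, with $\n{A}\to\n{B}$ of solid finite presentation and formally smooth. Formal smoothness plus the description of cotangent complexes from \S\ref{SubsectionCotangent} guarantees that $L_{\n{B}/\n{A}}$ is a finite projective $\n{B}$-module concentrated in degree $0$ (respectively zero in the \'etale case). After possibly passing to a rational localization on $\Spa\n{B}$, I can choose lifts $b_1,\dots,b_d\in\underline{\n{B}}(*)$ whose differentials trivialize $L_{\n{B}/\n{A}}$. By Theorem~\ref{TheoremBoundedAffinoidIntro}(2) each $b_i$ is bounded, so for some integer $n$ the assignment $T_i\mapsto b_i$ extends to a map of bounded affinoid rings
\[
\varphi\colon \n{A}\langle \pi^n T_1,\dots,\pi^n T_d\rangle \longrightarrow \n{B}.
\]
The fiber sequence for cotangent complexes associated with $\n{A}\to \n{A}\langle \pi^n \underline{T}\rangle \to \n{B}$ and our choice of $b_i$ shows that $L_{\n{B}/\n{A}\langle \pi^n\underline{T}\rangle}=0$, i.e.\ $\varphi$ is formally \'etale, and it is still of solid finite presentation by the two-out-of-three for finite presentation in analytic rings.

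The crux of the proof is therefore the \'etale case: a formally \'etale map of solid finite presentation is locally standard solid \'etale. For this I would argue as in Grothendieck, but adapted to the analytic/solid setting. Pick a presentation $\n{A}\langle \pi^m S_1,\dots,\pi^m S_r\rangle \twoheadrightarrow \n{B}$ with finitely many solid relations $f_1,\dots,f_r$, so that we can square the number of variables and relations. The vanishing of the cotangent complex translates, via the conormal sequence of \S\ref{SubsectionCotangent}, into invertibility of the Jacobian $\det(\partial f_i/\partial S_j)$ in $\n{B}$. After a further rational localization where this Jacobian becomes invertible, the map is standard solid \'etale by construction, which closes the induction on the smooth case: applying this to $\varphi$ exhibits $\n{A}\to\n{B}$ locally as (standard solid étale) $\circ$ (Tate algebra), i.e.\ standard solid smooth.

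The main obstacle I anticipate is precisely this last step — passing from ``trivial cotangent complex'' to ``standard solid \'etale'' in the solid world. In the classical Huber setting, the invertibility of the Jacobian gives an inverse function theorem via $\pi$-adic approximation on the integral subring; here one must instead work directly with the analytic/solid structure on $\n{B}$, using the $\dagger$-nilradical of Theorem~\ref{TheoremBoundedAffinoidIntro}(4) to convert formal lifts across the nilpotent thickenings supplied by formal \'etaleness into honest rational localizations in the analytic topology of $\Spa \n{B}$.
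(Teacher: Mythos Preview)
Your overall strategy for $(1)\Rightarrow(2)$ --- factor through a Tate polynomial algebra using coordinates trivializing $\bb{L}_{\n{B}/\n{A}}$, then reduce to the \'etale case --- is reasonable and close in spirit to the paper's Corollary~\ref{CoroFormallySmoothStuff}, but there are two concrete gaps and one misdirection.

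First, in the \'etale case you write ``pick a presentation $\n{A}\langle \pi^m S_1,\dots,\pi^m S_r\rangle\twoheadrightarrow\n{B}$ with finitely many solid relations $f_1,\dots,f_r$, so that we can square the number of variables and relations.'' This step is the heart of the argument and cannot be done by fiat. What actually happens is: the vanishing of $\bb{L}_{\n{B}/\n{A}}$ forces the conormal sequence $0\to I/I^2\to \Omega^1_{P/\n{A}}\otimes_P\n{B}\to \Omega^1_{\n{B}/\n{A}}\to 0$ to be split exact (the paper proves this as Lemma~\ref{LemmaFormSmooth4}), so $I/I^2$ is projective; one then localizes to make it free, and uses a Nakayama argument (Lemma~\ref{LemmaFormSmooth2}, adapted from \cite[\href{https://stacks.math.columbia.edu/tag/07CF}{07CF}]{stacks-project}) to produce a presentation in which chosen generators of $I$ form a basis of $I/I^2$. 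Only after this does the Jacobian become invertible. The paper in fact runs this argument directly in the smooth case (Lemma~\ref{LemmaSmoothAsStandarSmooth}) rather than reducing to \'etale first.

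Second, you do not address the derived structure. Even once you have a standard presentation on $\pi_0$, you must show that the derived quotient $\n{B}':=\n{A}\langle\underline{T}\rangle_{\sol}/^{\bb{L}}(f_1,\dots,f_c)$ agrees with $\n{B}$. The paper does this by producing a map $\n{B}'\to\n{B}$ that is an isomorphism on $\pi_0$ and has vanishing relative cotangent complex, then invokes Corollary~\ref{CoroEquiCotAndpi0}.

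Finally, your anticipated obstacle --- needing the $\dagger$-nilradical to convert formal lifts into analytic localizations --- is a red herring. The paper's proof is entirely classical commutative algebra on $\pi_0$ (Lemmas~\ref{LemmaFormSmooth1}--\ref{LemmaFormSmooth4}) followed by the cotangent-complex bootstrap just mentioned; the $\dagger$-nilradical plays no role here. It appears only later, in Proposition~\ref{PropFormallyInftSmoothEtale}, to establish $\dagger$-formal smoothness.
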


We continue by studying some properties of the categories of modules of derived Tate adic spaces with the goal of proving Serre duality (following an argument of  Clausen and Scholze by deformation to the normal cone). We have the following result (Theorem \ref{TheoSerreDuality}).

\begin{theorem}
Let $f:X\to Y$ be a morphism locally of solid finite presentation of derived Tate adic spaces. The following hold:

\begin{enumerate}

\item The map $f$ admits $!$-functors in the six functor formalisms of solid quasi-coherent sheaves.

\item If $f$ is solid smooth (resp. \'etale) then $f$ is cohomologically smooth (resp. cohomologically \'etale) for the six functor formalism of quasi-coherent sheaves. Furthermore, there is a natural identification $f^{!} 1_Y= \Omega^{d}_{X/Y}[d]$ where $d$ is the relative dimension of $f$, and $1_Y$ is the unit in the category of solid quasi-coherent sheaves on $Y$ (i.e. the structural sheaf).
\end{enumerate}
\end{theorem}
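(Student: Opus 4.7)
The plan is to adapt the Clausen--Scholze deformation-to-the-normal-cone strategy to the setting of derived Tate adic spaces, leveraging the six functor formalism of solid quasi-coherent sheaves from \S\ref{SubsectionSolidStacks} and the Cartier duality theory of \S\ref{SectionCartierDualityVectorBundles}. For part (1), I would invoke the abstract extension principle of \cite{MannSix2,SixFunctorsScholze}: the class of morphisms admitting $f_!$ (and hence $f^!$) is closed under composition and verifiable analytic-locally, and contains both proper maps (where $f_!=f_*$) and open immersions (where $f_!$ is extension by zero). Since solid finite presentation is local in the analytic topology, one reduces to morphisms of bounded affinoid rings; by Theorem \ref{TheoremBoundedAffinoidIntro}(2) such a morphism factors through a relative analytic polydisc of appropriate dimension, yielding the required factorization into an open immersion followed by a closed immersion (the latter being proper).

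For part (2), first reduce using the characterization of solid smooth and \'etale maps as standard solid smooth/\'etale after passing to an analytic cover (Theorem \ref{TheoFormalSmoothnesvsSmoothness} and Corollary \ref{CoroEtaleMapsStacks}); a standard solid smooth map factors as a standard solid \'etale map followed by the projection from a relative analytic polydisc $\bb{D}^d_Y\to Y$. For the \'etale factor, one proves $f^!\simeq f^*$ by combining formal \'etaleness (vanishing of the cotangent complex, see \S\ref{SubsectionCotangent}) with base change and the analytic descent established in Theorem \ref{TheoremBoundedAffinoidIntro}; in particular $f^!1_Y=1_X=\Omega^0_{X/Y}[0]$ and cohomological \'etaleness follows. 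For the polydisc projection, one argues by induction on $d$, reducing the problem to the case $d=1$, i.e. to the relative analytic disc.

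The main case, and the key technical obstacle, is thus the computation of $f^!1_Y$ for the projection $f:\bb{D}^1_Y\to Y$. Following Clausen--Scholze, the strategy is to apply deformation to the normal cone to the diagonal $\Delta:X\to X\times_Y X$, whose normal bundle is the relative tangent bundle $T_{X/Y}$. Base change and the projection formula in the six functor formalism then identify $f^!1_Y$ with an appropriate shift of the Cartier dual of $T_{X/Y}$, and the Cartier duality theory for analytic vector bundles developed in \S\ref{SectionCartierDualityVectorBundles} identifies this with $\Omega^1_{X/Y}[1]$. The hard part is verifying that the normal-cone deformation and the required base change and projection formulae behave well in the solid/condensed setting: one must carefully control completion and convergence phenomena along the interpolating family, and this ultimately rests on the cotangent complex theory of \S\ref{SubsectionCotangent} and the analytic descent of Theorem \ref{TheoremBoundedAffinoidIntro}, together with the nuclearity properties of the structural sheaf of the relative analytic disc.
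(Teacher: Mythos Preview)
Your overall architecture is close to the paper's, but there is a genuine gap in the \'etale step. You write that $f^!\simeq f^*$ for solid \'etale maps follows from ``formal \'etaleness (vanishing of the cotangent complex) with base change and analytic descent''. This is not enough: vanishing of $\bb{L}_{X/Y}$ is a deformation-theoretic statement and does not by itself produce the adjunction $f^!\simeq f^*$ in the six-functor sense. What the paper actually needs and proves (Lemma~\ref{LemmaEtaleStandardDiagonal}) is that for a standard solid \'etale map $\n{B}\to\n{C}$, the multiplication $\n{C}\otimes_{\n{B}}\n{C}\to\n{C}$ is an \emph{analytic open immersion}. This is a concrete and nontrivial power-series computation (iterating a Newton-type recurrence to show that in a small enough polydisc around the diagonal the ideals $(T_i-S_i)$ and $(f_i(S))$ coincide). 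Once the diagonal is open, cohomological \'etaleness follows formally. To then see that $1_X$ is $f$-smooth, the paper does \emph{not} argue directly from \'etaleness but instead embeds $X$ as a Zariski closed subscheme of a relative polydisc and invokes Proposition~\ref{PropositionLCI}/Corollary~\ref{CoroSmoothImmersion}: a finite-presentation closed immersion with locally free conormal is cohomologically smooth with the expected dualizing sheaf. You do not mention this LCI ingredient, and without it the argument for cohomological smoothness of the \'etale factor is incomplete.

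Two smaller remarks. First, the order of operations in the paper is: prove cohomological smoothness of $\bb{Z}_{\sol}\to\bb{Z}[T]_{\sol}$ by a direct computation with the idempotent algebra $\bb{Z}((T^{-1}))$ (Lemma~\ref{LemmaCohoSmoothAffineLine}), then bootstrap to all solid smooth maps (Proposition~\ref{PropGeoSmoothIsSmooth}), and only \emph{afterwards} run deformation to the normal cone of the diagonal to identify $f^!1_Y$ with $\Omega^d_{X/Y}[d]$. The deformation argument is used for the identification of the dualizing sheaf, not for establishing cohomological smoothness. Second, the endgame is not Cartier duality in the Fourier--Mukai sense but the elementary computation of $f^!1_X$ for $f:\bb{V}(\s{F})^{\an}\to X$ in Proposition~\ref{PropositionDAggerCartier2}, carried out by writing the analytic affine space as an open complement of explicit idempotent algebras and computing $f_!1$ directly; this is what pins down the $\GL_d$-equivariant line bundle on $*/\GL_d$ as $\bigwedge^d\s{F}^\vee[d]$.
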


Finally, we introduce a new deformation condition called $\dagger$-formally smoothness and \'etaleness, related with liftings along $\dagger$-nilpotent ideals such as $\Nil^{\dagger}(\n{A})$. Then, we prove the following lifting property for solid smooth and solid \'etale maps (Proposition \ref{PropFormallyInftSmoothEtale}).

\begin{proposition}
\label{TheoFormallySmoothIntro}
Let $f:X\to Y$ be a solid smooth (resp \'etale) map of derived Tate adic spaces over $R$. Then $f$ is $\dagger$-formally smooth (resp. \'etale) locally in the analytic topology of $X$ and $Y$. 
\end{proposition}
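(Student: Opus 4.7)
The plan is to reduce to an explicit check on a standard local model. Since $\dagger$-formal smoothness (resp.\ étaleness) is itself a lifting property that is local on both source and target, and since by the characterization of solid smoothness and étaleness (Theorem \ref{TheoFormalSmoothnesvsSmoothness} and Corollary \ref{CoroEtaleMapsStacks}) every solid smooth (resp.\ étale) morphism is analytically locally standard solid smooth (resp.\ étale), I may replace $f$ by a standard local model $\AnSpec(\mathcal{B}) \to \AnSpec(\mathcal{A})$. The whole proposition thus reduces to verifying the $\dagger$-lifting property for such a standard map.

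I expect a standard solid smooth morphism to factor as a composition of a relative Tate polydisc of the form $\mathcal{A} \to \mathcal{A}\langle \pi^{n} T_{1}, \ldots, \pi^{n} T_{d}\rangle$ with a standard solid étale map, the latter presented as $\mathcal{B}[\mathbf{S}]/(P_1, \ldots, P_r)$ with the Jacobian of the $P_i$ in the $S_j$ a unit. The $\dagger$-formal lifting problem, against a thickening $\AnSpec(\mathcal{C}/J) \hookrightarrow \AnSpec(\mathcal{C})$ with $J$ a $\dagger$-nilpotent ideal (i.e.\ $J \subset \Nil^{\dagger}(\mathcal{C})$), then decomposes into lifting the coordinates $T_i$ and lifting the roots of the $P_j$. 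For the coordinates: since elements of $J$ have spectral norm zero, any set-theoretic lift of $\bar{t}_i \in \mathcal{C}/J$ to $\mathcal{C}$ automatically has the same spectral bound as $\bar{t}_i$, so by part (2) of Theorem \ref{TheoremBoundedAffinoidIntro} it extends to a map from the Tate algebra of the appropriate radius. For the polynomial part: the elements of $J$ are topologically nilpotent at arbitrary rate, so a standard Hensel-style iteration contracting via the inverse of the Jacobian converges inside $\mathcal{C}$ and produces the required lifts of roots.

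The main obstacle, in my view, is not the existence of a ring-theoretic lift but ensuring that it is compatible with the \emph{analytic} ring structures on source and target, not merely the underlying condensed rings. Here I would invoke part (4) of Theorem \ref{TheoremBoundedAffinoidIntro}: the analytic ring structure on $\mathcal{C}$ is determined by the one on $\mathcal{C}/\Nil^{\dagger}(\mathcal{C})$, and $\Nil^{\dagger}$ is functorial in the sense that $J \subset \Nil^{\dagger}(\mathcal{C})$ descends to the $\dagger$-reduction. Consequently any underlying lift $\mathcal{B} \to \mathcal{C}$ that reduces modulo $J$ to the given morphism of bounded affinoid rings is automatically a morphism in $\AffRing^{b}_{R}$, because compatibility with the analytic structure is detected on the $\dagger$-reduction where it is given by hypothesis.

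A subsidiary technical point is to propagate this discrete-style argument through the animated setting. This is formal once the static case is settled: the $\dagger$-nilpotent thickening machinery is built to behave well with respect to the cotangent complex developed in \S \ref{SubsectionCotangent}, so that the lifting obstructions for an animated square-zero-style extension vanish whenever the cotangent complex of $f$ is projective of finite rank (smooth case) or zero (étale case), which is precisely what the standard local model guarantees.
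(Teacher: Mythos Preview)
Your reduction to standard local models and the treatment of the Tate-polydisc factor are correct and match the paper: the extension of $\bb{Z}[T]\to\n{C}$ to $\bb{Z}[T]_{\sol}$ is indeed invariant under quotienting by a $\dagger$-nilpotent ideal (Proposition \ref{PropLiftingOverconvergentAlgebrasLift}(1), which is what part (4) of the theorem you cite encodes), so lifting the coordinates is immediate. Your remark that the analytic structure is detected on the $\dagger$-reduction is also on point.

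The gap is in the \'etale step. You propose a Hensel--Newton iteration directly in $\mathcal{C}$, arguing that the corrections lie in $J\subset\Nil^{\dagger}(\mathcal{C})$ and hence ``converge''. But a general bounded affinoid ring carries no topology in which such an infinite series is a priori summable: the condition $j\in\Nil^{\dagger}(\mathcal{C})$ says that each $j/\pi^n$ is individually bounded, not that $\mathcal{C}$ is sequentially complete. The paper makes your intuition rigorous by running the iteration \emph{universally}. After lifting to $\n{A}\langle T_1,\ldots,T_d\rangle_{\sol}\to\n{D}$ with $f_i(\underline{T})\in\Nil^{\dagger}(\n{D})$, the very definition of $\Nil^{\dagger}$ yields a factorization through
\[
\n{A}\langle \underline{T}\rangle_{\sol}\{S_1,\ldots,S_d\}^{\dagger}/^{\bb{L}}(f_i-S_i)\longrightarrow \n{D},\qquad S_i\mapsto f_i(\underline{T}).
\]
The substantive work is Lemma \ref{LemmaDaggerCompletionPowerSeries}, which identifies the left-hand side with $\n{B}\{S_1,\ldots,S_d\}^{\dagger}$; its proof is a quantitative Hensel lemma (Lemma \ref{LemmaHenselQuantitative}) carried out inside genuinely $\pi$-adically complete auxiliary rings built from $\n{A}^0$, then passed to generic fibres and colimits. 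Composing with the augmentation $S_i\mapsto 0$ gives the lift $\n{B}\to\n{D}$.

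Uniqueness is handled separately, via the overconvergent diagonal: two lifts induce $\n{B}\otimes_{\n{A}}\n{B}\to\n{D}$ with $T_i-S_i\mapsto I$, hence factor through $\n{B}\otimes_{\n{A}}\n{B}\{T_i-S_i\}^{\dagger}$, which the proof of Lemma \ref{LemmaEtaleStandardDiagonal} identifies with $\n{B}$. Your appeal to the cotangent complex here is misplaced: $\bb{L}_{\n{B}/\n{A}}$ controls lifting along \emph{square-zero} extensions, but $\dagger$-nilpotent ideals are much larger, and in any case the test is already against static rings (cf.\ the remark after Definition \ref{DefinitionDaggerFormalSmoothEtale}), so there is no separate animated issue to propagate.
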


In the next sections we apply all the previous theory on derived rigid geometry to study Cartier duality of vector bundles, and to construct a six functor formalism for analytic $D$-modules.

\subsubsection*{\S \ref{SectionCartierDualityVectorBundles} Cartier duality for vector bundles}

  With the introduction of derived Tate adic spaces, new commutative group objects appear in the nature  making possible  new incarnations of Cartier duality. In this section we do not pretend  to give a definition or an abstract set up for a Cartier duality theorem, instead  we explore new examples of Cartier duality arising from  analytic subspaces of vector bundles. For technical reasons, these Cartier duality isomorphisms are easier to describe if we restrict ourselves to analytic geometry over $\bb{Q}_p$, for simplicity let us even restrict ourselves to derived Tate adic spaces over $\bb{Q}_{p}$.  We start with the definition of the analytic incarnations of vector bundles (Construction \ref{ConstructionAnalytificationVB})

\begin{definition}
Let $X$ be a derived Tate adic space over $\bb{Q}_p$ and $\s{F}$  a vector  bundle over $X$ of rank $d$. We let $\bb{V}(\s{F})^{\an}$ be its geometric realization as an derived Tate adic space over $X$. Let $\iota: X\to \bb{V}(\s{F})^{\an}$ be the zero section, and let $\bb{V}(\s{F})^{\dagger}$ denote the overconvergent neighbourhood of zero. 
\end{definition}

So, if $X=\AnSpec \bb{Q}_p$ is a point, and $\s{E}$ is free of rank $1$, the space $\bb{V}(\s{F})^{\an}$ is isomorphic to the affine analytic line $\bb{A}^{1,\an}_{\bb{Q}_{p}}$. Similarly,   $\bb{V}(\s{F})^{\dagger}$ is nothing but the space $\bb{G}_{a,\bb{Q}_p}^{\dagger}$ given by the analytic spectrum of the algebra $\bb{Q}_{p}\{T\}^{\dagger}=\varinjlim_n \bb{Q}_p\langle \frac{T}{p^{n}}\rangle$ of functions that overconverge at $0\in \bb{A}^{1,\an}_{\bb{Q}_p}$. The following theorem describes the theory of six functors for analytic vector bundles and their classifying stacks (Proposition \ref{PropAlgebraicCartier2} and Theorem \ref{TheoremAnalyticCartierII}).

\begin{theorem}
Let $X$ be a derived Tate adic space over $\bb{Q}_p$ and let $\s{E}$ be a vector bundle over $X$ of rank $d$. 
\begin{enumerate}

\item Let $f: \bb{V}(\s{F})^{\an}\to X$, then $f$ is cohomologically smooth and there are natural equivalences $f^{!} 1_X = f^* \bigwedge^{d} \s{F}^{\vee} [d]$ and $f_!f^{!} 1_X=  \Sym^{\dagger}_X(\s{F})$, where $ \Sym^{\dagger}_X(\s{F})$ is the sheaf of functions of $\bb{V}(\s{F}^{\vee})^{\dagger}$.

\item  The map $f: \bb{V}(\s{F})^{\dagger}\to X$ satisfies $f_*=f_!$. 

\item Let $g: X/\bb{V}(\s{F})^{\an} \to X$. Then $g$ is cohomologically smooth with $g^{!} 1_X = g^{*} \bigwedge^{d} \s{F} [-d]$, and there is a natural equivalence $g_! \cong g_* [-2d]$.

\item  Let $g:X/\bb{V}(\s{F})^{\dagger}\to X$. Then $g$ is cohomologically smooth with $g^{!} 1_X = g^{*} \bigwedge^{d} \s{F} [d]$, and there is a natural equivalence $g_!\cong g_*$. 

\end{enumerate}
\end{theorem}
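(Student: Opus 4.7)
The four parts split naturally into a direct description of the six functors on the vector bundle $\bb{V}(\s{F})^{\an}$ and its overconvergent subspace $\bb{V}(\s{F})^{\dagger}$ (parts (1) and (2)), and a transport of this information to the associated classifying stacks via the analytic Cartier duality (parts (3) and (4)). The plan is to establish (1) and (2) by direct geometric arguments, and then to deduce (3) and (4) as formal consequences of the symmetric monoidal equivalence of module categories under Cartier duality.

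For part (1), the map $f:\bb{V}(\s{F})^{\an}\to X$ is solid smooth of relative dimension $d$: locally on $X$, the vector bundle $\bb{V}(\s{F})^{\an}$ is a relative analytic polydisc $X\times\bb{D}^{d}_{n}$, and the projection is standard solid smooth. Serre duality for solid smooth maps (Theorem~\ref{TheoSerreDuality}) yields cohomological smoothness and $f^{!}1_{X}=\Omega^{d}_{f}[d]\cong f^{*}\bigwedge^{d}\s{F}^{\vee}[d]$, using that the relative cotangent of a vector bundle is $f^{*}\s{F}^{\vee}$. For the identification $f_{!}f^{!}1_{X}=\Sym^{\dagger}_{X}(\s{F})$, the projection formula reduces the problem to computing $f_{!}1$ for the analytic line; by K\"unneth and base change one reduces to the case $X=\AnSpec\bb{Q}_{p}$ with $\s{F}$ free of rank one, where $f_{!}1$ is identified, up to a shift, with the algebra of germs at zero $\bb{Q}_{p}\{T\}^{\dagger}$ by a Paley--Wiener-type duality between entire functions and germs.

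For part (2), the dagger space $\bb{V}(\s{F})^{\dagger}=\varinjlim_{n}B_{n}$ is a filtered colimit of small closed relative balls $B_{n}\to X$, each of which is cohomologically proper so that $f_{B_{n},*}=f_{B_{n},!}$. The compatibility of the solid six functor formalism with such filtered colimits of closed embeddings, combined with the nuclearity of the algebra $\Sym^{\dagger}_{X}\s{F}^{\vee}$, passes the identity $f_{*}=f_{!}$ to the colimit.

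Parts (3) and (4) follow by transporting through the analytic Cartier duality, which provides symmetric monoidal equivalences $\Mod(X/\bb{V}(\s{F})^{\an})\simeq\Mod(\bb{V}(\s{F}^{\vee})^{\dagger})$ and $\Mod(X/\bb{V}(\s{F})^{\dagger})\simeq\Mod(\bb{V}(\s{F}^{\vee})^{\an})$, with convolution tensor on the Cartier dual side. Under these equivalences, $g:X/G\to X$ corresponds to the zero section $\iota:X\to G^{\vee}$, a regular closed embedding of codimension $d$ whose normal bundle is $\s{F}^{\vee}$ in (3) and $\s{F}$ in (4); combining $\iota^{!}1=\bigwedge^{d}N^{\vee}_{\iota}[-d]$ with the six-functor dictionary gives $g^{!}1=g^{*}\bigwedge^{d}\s{F}[\mp d]$, while the Serre duality shift of the smooth dual (part (1)) yields $g_{!}\cong g_{*}[-2d]$ in (3), and the properness of the dagger dual (part (2)) yields $g_{!}\cong g_{*}$ in (4). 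The hardest step is constructing this analytic Cartier duality: one must identify the symmetric monoidal $\infty$-categories of complete modules over the Fr\'echet algebra of entire functions $\widehat{\Sym}_{X}\s{F}^{\vee}$ and over the LF algebra of germs $\Sym^{\dagger}_{X}\s{F}$, compatibly with their Hopf structures, which is the analytic counterpart of the Fourier--Mukai equivalence and the main technical novelty.
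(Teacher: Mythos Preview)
Your overall strategy reverses the logical order of the paper, and this creates a genuine gap in parts (3) and (4).

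In the paper, the results about the classifying stacks $X/\bb{V}(\s{F})^{\an}$ and $X/\bb{V}(\s{F})^{\dagger}$ are proved \emph{directly}, without invoking Cartier duality. The key inputs are the de Rham and Koszul resolutions of $\s{O}_X$ as $\Sym^{\dagger}_X(\s{F}^{\vee})$-comodule and -module (Lemma~\ref{LemmaDeRhamKoszulDagger2}), together with the abstract Lemma~\ref{KeyLemmaSmoothRetraction}, which produces cohomological smoothness (resp.\ co-smoothness) from the explicit data of an invertible object $\n{L}$, a map $f_!1\to\n{L}$ (resp.\ $\n{L}\to f_!1$), and a retraction (resp.\ section) after applying $g_!$. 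The de Rham complex furnishes exactly this data for $g:X/\bb{V}(\s{F})^{\dagger}\to X$, and the Koszul complex for $g:X/\bb{V}(\s{F})^{\an}\to X$; the identification of $g^!1_X$ then falls out of the same resolutions (in the second case via a diagonal base-change argument as in the proof of Proposition~\ref{PropAlgebraicCartier2}(3)). Only \emph{after} these computations are in hand does the paper establish the analytic Cartier duality (Theorem~\ref{TheoremAnalyticCartierII}), whose proof of the unit and counit equivalences explicitly uses the classifying-stack results. So your proposed route---deduce (3) and (4) from the Fourier--Mukai equivalence---is circular with respect to the paper's architecture, and since you do not supply an independent proof of the Cartier duality (you only flag it as ``the hardest step''), it is also a gap in your argument. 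Moreover, even granting the equivalence of categories, the six-functor dictionary you invoke (that $g$ corresponds to the zero section $\iota$, that $g^!$ can be read off from $\iota^!$, etc.) is precisely the content of Corollary~\ref{CoroFourierMoukaiIdentities}, whose proof again relies on the direct computations on the classifying stacks.

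Two smaller points. For part (1), appealing to Serre duality (Theorem~\ref{TheoSerreDuality}) is circular in the paper's logic: the proof of Serre duality reduces, via deformation to the normal cone, to exactly the vector-bundle case, which is handled independently in Proposition~\ref{PropositionDAggerCartier2} by a direct excision computation with the idempotent algebras $\bb{Q}_p\{T_i^{-1}\}^{\dagger}[\underline{T}]$ complementary to $\bb{A}^{1,\an}$ inside $\bb{A}^{1,\alg}$. For part (2), your colimit-of-balls argument is more elaborate than needed: $\bb{V}(\s{F})^{\dagger}\to X$ is the relative analytic spectrum of $\Sym^{\dagger}_X(\s{F}^{\vee})$ with the \emph{induced} analytic structure, hence lies in the class $P$ of Definition~\ref{DefinitionSuitableDecompositionSolid}, and $f_*=f_!$ holds by construction of the six-functor formalism.
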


The analytic Cartier duality theorem is the following statement (Theorems \ref{TheoAlgebraicCartierDuality} and \ref{TheoremAnalyticCartierII}).

\begin{theorem}
\label{TheoCartierDualityIntro}
Let $X$ be a derived Tate adic space over $\bb{Q}_p$ and let $\s{F}$ be a vector bundle over $X$ of rank $d$. 

\begin{enumerate}

\item There is a bilinear morphism  $F:\bb{V}(\s{F})^{\dagger}\times (X/\bb{V}(\s{F}^{\vee})^{\an})\to B\bb{G}_m$ such that $F^{*}(\s{O}(1))$ is an isomorphism $\bb{V}(\s{F})^{\dagger}\xrightarrow{\sim} X/\bb{V}(\s{F}^{\vee})^{\an}$ in the category of Fourier-Moukai kernels for the six functor formalisms of solid quasi-coherent sheaves. Furthermore, the inverse of $F^{*}(\s{O}(1))$ is given by $F^{*}(\s{O}(-1))\otimes\bigwedge^{d}\s{F}^{\vee}[-d]$.

 \item  There is a bilinear morphism $G:\bb{V}(\s{F})^{\an}\times (X/\bb{V}(\s{F}^{\vee})^{\dagger})\to B\bb{G}_m$ such that $G^{*}(\s{O}(1))$ is an isomorphism $\bb{V}(\s{F})^{\an}\xrightarrow{\sim} X/\bb{V}(\s{F}^{\vee})^{\dagger}$ in the category of Fourier-Moukai kernels for the six functor formalisms of solid quasi-coherent sheaves. Furthermore, the inverse of $G^{*}(\s{O}(1))$ is given by $G^{*}(\s{O}(-1))\otimes\bigwedge^{d}\s{F}^{\vee}[d]$.

\end{enumerate}
\end{theorem}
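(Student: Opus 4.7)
The plan is to construct the pairings $F$ and $G$ via the $p$-adic exponential applied to the natural evaluation of vector bundles against their duals, and then to deduce that the associated Fourier--Mukai kernels are invertible by reducing to the rank-one case and invoking the six functor identifications of Theorem \ref{TheoremAnalyticCartierII}.

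\textbf{Construction of the pairings.} A morphism $F:A\times(X/H)\to B\bb{G}_m$ whose restriction to $A\times X$ trivialises corresponds to an $A$-family of characters $\chi:H\to\bb{G}_m$. The evaluation $\s{F}\otimes\s{F}^\vee\to\s{O}_X$ globalises to a bilinear map $m:\bb{V}(\s{F})^{\dagger}\times_X\bb{V}(\s{F}^\vee)^{\an}\to\bb{A}^{1,?}_X$. Although neither factor on the source is bounded separately, their product lies in an open on which the exponential series $\exp=\sum x^n/n!$ converges: writing $\bb{V}(\s{F})^{\dagger}=\varinjlim_n$ (disc of radius $p^{-n}$) and $\bb{V}(\s{F}^\vee)^{\an}=\varinjlim_m$ (disc of radius $p^m$), on each bounded piece one has $|m(t,s)|\leq p^{m-n}$, which sits inside the convergence radius of $\exp$ for $n$ large. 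Composing $m$ with $\exp$ yields the sought family of characters, and hence the pairing $F$. The pairing $G$ is obtained by exchanging the analytic and overconvergent roles (and replacing $\s{F}$ by $\s{F}^\vee$).

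\textbf{Fourier--Mukai equivalence.} To prove that $F^*(\s{O}(1))$ is invertible in the category of Fourier--Mukai kernels with the claimed inverse $F^*(\s{O}(-1))\otimes\bigwedge^d\s{F}^\vee[-d]$, I would compute the convolution of these two kernels along the shared $(X/\bb{V}(\s{F}^\vee)^{\an})$-factor. By base change and the compatibility of analytifications, overconvergent neighbourhoods, and the six functor formalism with pullback, one reduces to the case $X=\AnSpec\bb{Q}_p$; further, since $\bb{V}(\s{F}_1\oplus\s{F}_2)^{?}=\bb{V}(\s{F}_1)^{?}\times_X\bb{V}(\s{F}_2)^{?}$ in both incarnations, one may assume $\s{F}$ has rank one. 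In that case the statement unwinds to the duality between solid modules over $\bb{Q}_p\{T\}^{\dagger}$ and those over $\s{O}(\bb{A}^{1,\an}_{\bb{Q}_p})$, realising the Hopf-algebra duality highlighted in the Motivation section. The convolution itself is controlled by the $!$-pushforwards along $\bb{V}(\s{F})^{\dagger}\to X$ and $X/\bb{V}(\s{F}^\vee)^{\an}\to X$: the properness identifications $f_*=f_!$ from items (2) and (4) of Theorem \ref{TheoremAnalyticCartierII} force the convolution kernel to be supported on the diagonal, while the explicit dualizing complexes from items (1) and (3) account exactly for the wedge-power twist $\bigwedge^d\s{F}^\vee[-d]$. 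Part (2) of the statement follows by the same argument with the two incarnations of $\bb{V}(\s{F})$ interchanged, which also explains the sign change from $[-d]$ to $[+d]$ in the inverse kernel.

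\textbf{Main obstacle.} The construction of the pairings and the rank-one reduction are reasonably soft; the hard part is the rigorous bookkeeping of signs, cohomological shifts, and dualizing twists in the convolution computation, together with the verification that the resulting diagonal-supported kernel is truly the identity in the Fourier--Mukai category. This last step is essentially an exercise in the abstract six functor formalism, relying on Serre duality (Theorem \ref{TheoSerreDuality}), the explicit form of $f^!1_X$ from Theorem \ref{TheoremAnalyticCartierII}, and the framework of $\s{D}$-stacks developed in Section \ref{SectionAnalyticAdicStacks}.
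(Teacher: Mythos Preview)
Your approach differs from the paper's in two structural respects, and leaves the central computation unexecuted.

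\textbf{Construction of the kernel.} You build $F^*(\s{O}(1))$ via the $p$-adic exponential applied to the evaluation pairing. The paper instead constructs it directly: using the description $\Mod_{\sol}(X/\bb{V}(\s{F}^{\vee})^{\an})\simeq \Mod_{\Sym^{\dagger}_X(\s{F}^{\vee})}(\Mod_{\sol}(X))$ (the analytic analogue of Proposition~\ref{PropAlgebraicCartier2}(1)), the line bundle is simply $\Sym^{\dagger}_X(\s{F}^{\vee})$ itself, viewed as an $\s{O}_{\bb{V}(\s{F})^{\dagger}}$-module via the right algebra structure and as a $\Sym^{\dagger}_X(\s{F}^{\vee})$-module via left multiplication (the ``left regular action''). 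The inverse kernel is the same object with the action precomposed by the antipode $v\mapsto -v$. This bypasses all convergence considerations; your exponential construction should produce the same object, but verifying that is extra work.

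\textbf{Reduction.} You reduce to rank one over a point. The paper instead reduces to the universal case $X=*/\GL_d^{\an}$, $\s{F}=\St$, and computes $\GL_d$-equivariantly throughout. Your reduction is legitimate for checking that two \emph{already-specified} kernels are mutual inverses (this is a property, hence local on $X$), but it does not help identify what the inverse should be.

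\textbf{The gap.} Your proposal does not actually carry out the convolution, and the heuristic you offer (``properness forces diagonal support'') is not the mechanism. The paper computes $\n{G}\star\n{L}$ explicitly: on the triple product $\bb{V}(\s{F})^{\dagger}\times_X (X/\bb{V}(\s{F}^{\vee})^{\an})\times_X \bb{V}(\s{F})^{\dagger}$, the tensor $\pi_{1,2}^*\n{L}\otimes\pi_{2,3}^*\n{G}$ carries the $\Sym^{\dagger}_X(\s{F}^{\vee})$-module action in which $\s{F}^{\vee}$ acts via the \emph{anti-diagonal} $\Delta^{\mathrm{ant}}=(\id,-\id):\s{F}^{\vee}\to\s{F}_1^{\vee}\oplus\s{F}_2^{\vee}$. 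Splitting $\s{F}_1^{\vee}\oplus\s{F}_2^{\vee}$ along $\Delta^{\mathrm{ant}}(\s{F}^{\vee})$ and its complement factors the kernel as (regular module on the anti-diagonal)~$\otimes$~(trivial module on the complement). The $\pi_{1,3,!}$-pushforward of the first factor is the unit by the identification $f_!f^!1_X=\Sym^{\dagger}_X(\s{F})$ from Proposition~\ref{PropositionDAggerCartier2}(2), and the second factor is exactly $\Delta_!1$. The twist $\bigwedge^d\s{F}^{\vee}[-d]$ enters because the dualizing sheaf of $X/\bb{V}(\s{F}^{\vee})^{\an}\to X$ is $\bigwedge^d\s{F}[-d]$. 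The co-unit $\n{L}\star\n{G}$ is handled symmetrically. What you call the ``main obstacle'' is thus not abstract bookkeeping but a specific change-of-coordinates argument that your sketch does not supply.
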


\begin{corollary}
In the notation of Theorem \ref{TheoCartierDualityIntro},  there are natural equivalences of stable $\infty$-categories given by Fourier-Mukai transforms
\[
FM_1: \Mod_{\sol}(\bb{V}(\s{F})^{\dagger})\xrightarrow{\sim} \Mod_{\sol}(X/\bb{V}(\s{F}^{\vee})^{\an}). 
\]
and 
\[
FM_2: \Mod_{\sol}(\bb{V}(\s{F})^{\an})\xrightarrow{\sim} \Mod_{\sol}(X/\bb{V}(\s{F}^{\vee})^{\dagger}).
\]
\end{corollary}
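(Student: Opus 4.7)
The plan is to read off this corollary directly from Theorem \ref{TheoCartierDualityIntro}. The idea is that the theorem already asserts that $F^{*}(\s{O}(1))$ and $G^{*}(\s{O}(1))$ are invertible objects in the category of Fourier--Mukai kernels for the six functor formalism of solid quasi-coherent sheaves, and by construction of this category, such invertibility is equivalent to the associated integral transforms being equivalences of solid module categories.

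More precisely, I would recall from \S \ref{SubsectionAbstractSix} (following \cite{MannSix,MannSix2,SixFunctorsScholze}) that for two Tate stacks $Y_1, Y_2$ with projections $p_i: Y_1\times Y_2\to Y_i$, any kernel $K\in \Mod_{\sol}(Y_1\times Y_2)$ yields an integral transform
\[
FM_K: \Mod_{\sol}(Y_1)\to \Mod_{\sol}(Y_2), \qquad M\mapsto p_{2,!}(p_1^{*}M\otimes K),
\]
and composition of kernels through a middle factor corresponds under $FM_{(-)}$ to composition of functors. The monoidal category of Fourier--Mukai kernels is set up precisely so that its identity on $Y$ is the diagonal kernel (whose integral transform is the identity), and consequently an invertible kernel between $Y_1$ and $Y_2$ induces an equivalence on the associated module categories.

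Applying this yoga to the two cases of Theorem \ref{TheoCartierDualityIntro}: the kernel $K_1:=F^{*}(\s{O}(1))$ on $\bb{V}(\s{F})^{\dagger}\times (X/\bb{V}(\s{F}^{\vee})^{\an})$ is invertible with inverse $F^{*}(\s{O}(-1))\otimes \bigwedge^{d}\s{F}^{\vee}[-d]$, so the integral transform $FM_1=FM_{K_1}$ is an equivalence, with inverse the integral transform against the dual kernel. The identical argument applied to $K_2:=G^{*}(\s{O}(1))$ yields the second equivalence $FM_2$, with inverse the integral transform against $G^{*}(\s{O}(-1))\otimes \bigwedge^{d}\s{F}^{\vee}[d]$.

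No substantial obstacle is expected: the analytic content (the construction of the pairings $F, G$, the cohomological smoothness and properness of the relevant projections so that the six-functor operations exist, and the verification that the pullbacks of the Poincar\'e line bundles are invertible kernels) is already packaged in Theorem \ref{TheoCartierDualityIntro}. The only step that needs care is pinning down which version of the Fourier--Mukai/correspondence category is being used, but since the statement of the theorem already phrases the result as invertibility of a kernel in this category, the passage to an equivalence of module categories is a formal consequence of the abstract six functor formalism.
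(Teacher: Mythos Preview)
Your proposal is correct and matches the paper's approach: the corollary is stated in the introduction without a separate proof, and the corresponding equivalences appear in the body (Theorems \ref{TheoAlgebraicCartierDuality} and \ref{TheoremAnalyticCartierII}) as an ``In particular'' clause following directly from the fact that $F^*(\s{O}(1))$ and $G^*(\s{O}(1))$ are isomorphisms in the Lu--Zheng category $\LZ_X$. Your identification of the Fourier--Mukai kernel formalism with the Lu--Zheng $2$-category of \S\ref{SubsectionAbstractSix} (where morphisms act by convolution $K\star M$, composition of kernels corresponds to composition of functors, and hence an invertible kernel yields an equivalence of module categories) is exactly the mechanism the paper relies on.
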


We also review algebraic Cartier duality in Theorem \ref{TheoAlgebraicCartierDuality}, and show two other versions of analytic Cartier duality in Theorems \ref{TheoremCartierDualityAnalyticI} and \ref{TheoremCartierDualityAnalyticIII}.

\subsubsection*{\S \ref{SectionAnalyticDeRham} Algebraic and analytic de Rham stacks}

The first construction of the de Rham stack dates back to Simpson in his papers \cite{SimpsonDeRham,SimpsonTelemandeRham}. In this work we propose an analogue of this construction in analytic geometry, more precisely in rigid analytic geometry over $\bb{Q}_p$. We begin by extending the construction of the algebraic de Rham stack from algebraic geometry to condensed mathematics. Specialized to derived Tate adic spaces we get the following (Definition \ref{DefinitionAlgdeRhamStack}).

\begin{definition}[Algebraic de Rham stack]
Let  $R=\bb{Z}((\pi))$ and let $X$ be a derived Tate adic space over $R\otimes \bb{Q}$. The  algebraic de Rham prestack of $X$ is the presheaf on $\Aff^{b}_{R\otimes \bb{Q}}$ given by 
\[
X_{dR}^{\alg}(\n{A})=\varinjlim_{I\to \n{A}} X(\cone(I\to \n{A} ))
\]
where $I$ runs over all the uniformly nilpotent ideals of $\n{A}$. The de Rham stack of $X$ is the sheafification of the de Rham prestack in the $\s{D}$-topology, and we also denote it by $X_{dR}^{\alg}$.  Given a morphism $X\to Y$ of derived Tate adic spaces, the relative algebraic de Rham stack $X_{dR,Y}^{\alg}$ is the pullback 
\[
\begin{tikzcd}
X_{dR,Y}^{\alg} \ar[r] \ar[d] & Y \ar[d] \\
X_{dR}^{\alg} \ar[r] & Y_{dR}^{\alg}.
\end{tikzcd}
\]
We call $\Mod_{\sol}(X^{\alg}_{dR,Y})$ the category of algebraic $D_{X/Y}$-modules. 
\end{definition}

In analogy to the algebraic de Rham stack, and in view that there is a second kind of nil-radical in the category of bounded affinoid rings (Theorem \ref{TheoremBoundedAffinoidIntro} (4)), we define the analytic de Rham stack as follows (Definition \ref{DefinitionAnalyticDeRhamStack}).

\begin{definition}[Analytic de Rham stack]
Let $X$ be a derived Tate adic space over $\bb{Q}_p$, the analytic de Rham prestack is the presheaf on $\Aff^{b}_{\bb{Q}_p}$ defined by 
\[
X_{dR}(\n{A}):= X(\n{A}^{\dagger-\red}). 
\]
The analytic de Rham stack is the $\s{D}$-sheafification of the analytic de Rham prestack, and we also denote it by $X_{dR}$.  Let $f:X\to Y$ be a map of derived Tate adic spaces, we define the relative de Rham stack $X_{dR,Y}$ to be the pullback 
\[
\begin{tikzcd}
X_{dR,Y} \ar[r] \ar[d] & Y \ar[d] \\
X_{dR} \ar[r] & Y_{dR}.
\end{tikzcd}
\]
We call $\Mod_{\sol}(X_{dR,Y})$ the category of analytic $D_{X/Y}$-modules. 
\end{definition}

The main theorem on de Rham stacks is the following (see Corollaries \ref{CoroKashiwaraEquivalence} and \ref{CorollaryComparisonDeRhamCohomology},  Propositions \ref{PropAlgebraicDeRham} and \ref{PropAnDeRhamStack},  and Theorems \ref{TheoSixFunctorsAlgebraicDMod}, \ref{TheoHodgeFiltration}, \ref{TheoSixFunctorsanDmodules}, \ref{TheoPoincareDualityDmodules} and \ref{TheoremSurjectionXdRRigid}).

\begin{theorem}
\label{TheoSixFunctorDeRhamStackIntro}
 Let $f:X\to Y$ be a morphism of derived Tate adic spaces locally of solid finite presentation and write $f_{dR}^{\alg}: X_{dR}^{\alg}\to Y_{dR}^{\alg}$ and $f_{dR}: X_{dR}\to Y_{dR}$  for the associated maps at the level of stacks. 

\begin{enumerate}

\item The formation of $X\mapsto X_{dR}$ and $X\mapsto X^{\alg}_{dR}$ commutes with colimits and finite limits at the level of prestacks.

\item  The maps $f_{dR}^{\alg}$ and $f_{dR}$ admit $!$-functors.

\item Suppose that $X$ is a rigid space over a non-archimedean extension $K/\bb{Q}_p$, then the map $h:X\to X_{dR,K}$ is a $\s{D}$-cover. In particular, quasi-coherent sheaves on $X_{dR,K}$ descent along $h$. 

\item (Kashiwara equivalence) Let $X\to Y$ be a Zariski closed immersion of derived Tate adic spaces, and let $Y^{\dagger/X}$ be the overconvergent neighbourhood of $X$ in $Y$. Then there is an equivalence of analytic de Rham stacks $X_{dR}= Y^{\dagger/X}_{dR}$. In particular, analytic $D$-modules of $Y$ supported on $X$ are equivalent to analytic $D$-modules of $X$.

\item Suppose that $f$ is solid smooth (resp. \'etale), then the maps $f_{dR}^{\alg}$ and $f_{dR}$ are cohomologically smooth (resp. \'etale).

\item Suppose that $f$ is solid smooth, then we have natural equivalences $f_{dR}^{\alg,!} 1_{Y_{dR}^{\alg}}= 1_{X_{dR}^{\alg}}$ and $f_{dR}^{!} 1_{Y_{dR}} = 1_{X_{dR}}[2d]$ where $d$ is the relative dimension of $f$. 

\item Suppose that $f$ is solid smooth and consider the map $g: X_{dR,Y}^{\alg}\to X_{dR,Y}$, then $g$ admits $!$-functors, $g_*$ satisfies the projection formula, and there is a natural equivalence
\[
 1_{X_{dR,Y}}\xrightarrow{\sim} g_{*} 1_{X_{dR,Y}^{\alg}}.
\]
In particular, the pullback functor $g^{*}$ is fully faithful and induces an embedding of analytic $D_{X/Y}$-modules into algebraic $D_{X/Y}$-modules.

\item Suppose that $f$ is solid smooth, and denote $f_{dR,Y}^{\alg}:X_{dR,Y}^{\alg}\to Y$ and $f_{dR,Y}:X_{dR,Y}\to Y$ the natural maps. Then there are natural equivalences of de Rham cohomology 
\[
DR(X/Y):= f_{dR,Y,*}^{\alg} 1_{X_{dR,Y}^{alg}} = f_{dR,Y,*} 1_{X_{dR,Y}}.
\] 
Furthermore, $DR(X/Y)$ can be naturally promoted to a filtered object given by the Hodge filtration, with graded pieces
\[
\gr^{i} DR(X/Y)= \Omega^{i}_{X/Y}, 
\]
extending the Hodge filtration for smooth maps of rigid spaces. 
\end{enumerate}

\end{theorem}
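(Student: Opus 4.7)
My plan is to address the eight clauses in order, reducing each to either a structural property of the construction $(-)_{dR}$ or to a Cartier duality calculation from \S\ref{SectionCartierDualityVectorBundles}. Parts (1) and (2) are formal: for (1), both assignments $\n{A}\mapsto \n{A}/I$ (algebraic) and $\n{A}\mapsto \n{A}^{\dagger-\red}$ (analytic) are built from cofibers in analytic rings that commute with the relevant colimits and finite limits, so composing with $X$ transports this property to the prestack level. For (2), the abstract six functor formalism on $\s{D}$-stacks of \S\ref{SubsectionSolidStacks} produces $!$-functors whenever, after pulling back along $\s{D}$-covers, one recovers a morphism between derived Tate adic spaces locally of solid finite presentation; this holds for $f_{dR}^{\alg}$ and $f_{dR}$ via the natural covers $Y\to Y_{dR}^{\alg}$ and $Y\to Y_{dR}$ combined with the hypothesis on $f$.

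For (3), I would reduce to the affinoid case $X=\AnSpec B$, use étale coordinates $X\to \bb{D}^d_K$, and (using (1) and compatibility with products) further reduce to $X=\bb{G}_{a,K}$. In this case $X_{dR,K}$ is the quotient stack $\bb{G}_a/\bb{G}_a^{\dagger}$, and the statement boils down to showing that $B\bb{G}_a^{\dagger}$ is $\s{D}$-covered by a point. This follows from the Fourier-Mukai equivalence $\Mod_{\sol}(B\bb{G}_a^{\dagger})\simeq \Mod_{\sol}(\bb{A}^{1,\an})$ of Theorem \ref{TheoCartierDualityIntro}, under which the unit corresponds to the skyscraper at $0\in \bb{A}^{1,\an}$. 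For (4), the Kashiwara equivalence reduces to the identity $(Y^{\dagger/X})(\n{A}^{\dagger-\red})=X(\n{A}^{\dagger-\red})$, which holds because the ideal cutting $X$ inside $Y^{\dagger/X}$ is by construction $\dagger$-nilpotent and hence killed by $\dagger$-reduction.

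For (5)--(6), I would use local factorization: a solid smooth map is, locally in the analytic topology, a composition of a solid étale morphism with a projection $\bb{A}^{d,\an}_Y\to Y$. By $\dagger$-formal étaleness (Proposition \ref{TheoFormallySmoothIntro}) the étale pieces lift to étale morphisms of de Rham stacks in both the algebraic and analytic settings, so one reduces to the projection case. On the algebraic side, the formal completion of the affine direction trivializes, giving $f_{dR}^{\alg,!}1_Y=1$. On the analytic side, the relative de Rham stack becomes a product $Y\times \bigl(\bb{A}^{d,\an}/\bb{A}^{d,\dagger}\bigr)$, and the $!$-functor is computed by decomposing $\bb{A}^{d,\an}/\bb{A}^{d,\dagger}\to *$ through $*/\bb{A}^{d,\dagger}$ and applying the two Cartier duality calculations of Theorem \ref{TheoCartierDualityIntro}; the two dualizing factors $\bigwedge^d \s{F}^{\vee}$ and $\bigwedge^d \s{F}$ pair to give the trivial line bundle, while the shifts add to $[2d]$.

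Parts (7) and (8) follow from a finer comparison. Locally in coordinates, the map $g:X_{dR,Y}^{\alg}\to X_{dR,Y}$ is obtained by quotienting by the inclusion $\widehat{\bb{G}}_a^d\hookrightarrow \bb{G}_a^{d,\dagger}$, so the projection formula and the equivalence $1_{X_{dR,Y}}\xrightarrow{\sim} g_*1_{X_{dR,Y}^{\alg}}$ reduce to computing $R\Gamma$ of the stack $\bb{G}_a^{\dagger}/\widehat{\bb{G}}_a$, which by a further Cartier duality argument is concentrated in degree zero with value the base ring. For (8), the agreement of algebraic and analytic de Rham cohomologies is the pushforward of the equivalence in (7), and the Hodge filtration is constructed by a Rees-type deformation along the diagonal $X\to X\times_Y X$, the associated graded pieces being identified via the Serre duality of \S\ref{SectionAnalyticAdicStacks} applied to $\bb{V}(\Omega^1_{X/Y})$. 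The main obstacle throughout will be the explicit identification in (6): tracking how the two Cartier duality orientations conspire to produce the trivial dualizing sheaf with the correct shift $[2d]$ requires carefully matching the determinantal pairing between $\bigwedge^d \Omega^1_{X/Y}$ and $\bigwedge^d T_{X/Y}$ with the functorialities coming from the six functor formalism, and propagating this canonical trivialization through the étale descent that reduces the general case to the affine projection.
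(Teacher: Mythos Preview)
Your proposal captures the right flavor for many parts, but there are genuine gaps in (2) and (3), and a methodological difference in (6) worth flagging.

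\textbf{Part (3).} Your reduction ``use \'etale coordinates $X\to\bb{D}^d_K$'' presupposes that $X$ is smooth. The statement, however, is for arbitrary rigid spaces locally of finite type over $K$, which may be singular. The paper's proof (Theorem \ref{TheoremSurjectionXdRRigid}) proceeds instead by induction on $\dim X$: one first reduces to $X$ reduced and irreducible, then uses Noether normalization to find a Zariski-open $U\subset X$ where $\bb{L}_{U/K}$ is projective (Lemma \ref{LemmaSmoothLocus}), so that $U\to U_{dR}$ is descendable via the Hodge filtration. The closed complement $Z$ has strictly smaller dimension, and a d\'evissage lemma (Lemma \ref{LemmaDescendabledevisage}) combines the open and closed pieces via the excision triangle $j_{dR,!}1_{U_{dR}}\to 1_{X_{dR}}\to \iota_{dR,*}1_{Z_{dR}}$. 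Your Cartier-duality argument would handle the smooth case, but you would still need this stratification machinery to treat singular $X$.

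\textbf{Part (2).} You propose to obtain $!$-functors by pulling back along $Y\to Y_{dR}$, but this map is not a $\s{D}$-cover in general (indeed, the surjectivity of $Y\to Y_{dR}$ for rigid $Y$ is exactly the content of part (3)). The paper instead invokes the abstract criterion of Proposition \ref{PropositionDevisageSixFiniteType}: one verifies directly that the building-block maps $\AnSpec\n{A}\to\AnSpec\n{A}(T)$, $\AnSpec\n{A}(T)\to\AnSpec\n{A}$, and $\AnSpec\n{A}\to\AnSpec\Sym^\bullet\n{A}[n]$ have images under $(-)_{dR^+}$ admitting $!$-functors (Lemmas \ref{LemmaAlgdeRhamKeyCases} and \ref{LemmaKeyCasesAnalyticdeRham}), and separately that analytic covers are sent to $\s{D}$-covers (Lemmas \ref{LemmaDescentSmoothAlgdeRhamStack} and \ref{LemmaDescentAnalyticDmod}). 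The latter step requires a careful bootstrapping via Cartesian squares \eqref{eqCartesianSmoothdeRhamalg} to reach rings with projective cotangent complex.

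\textbf{Part (6).} Your direct computation in coordinates is essentially the endgame of the paper's argument, but the paper first reduces to the vector-bundle case via deformation to the normal cone (Proposition \ref{PropDeformationNormalGeneralSix}, Corollary \ref{CorollaryGeneralPoincare}), following Zavyalov's premotivic framework. This is what makes the identification $f_{dR}^!1=\s{O}(-d)[2d]$ natural rather than coordinate-dependent; you correctly anticipate this as the main obstacle, but your sketch does not supply a mechanism to globalize the local trivialization.

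Parts (1), (4), (7), (8) are along the right lines and close to the paper's arguments.
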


\begin{remark}
In order to prove Theorem \ref{TheoSixFunctorDeRhamStackIntro} we need to consider a variation of the de Rham stacks given by the filtered de Rham stacks $X_{dR^+}^{\alg}$ and $X_{dR^+}$, see Definitions \ref{DefinitionAlgdeRhamStack} and \ref{DefinitionAnalyticDeRhamStack}. 
\end{remark}

\subsubsection*{\S \ref{SectionAnalyticdRAndLocAn} Analytic de Rham stack and locally analytic representations}

Finally we end with the relation between the analytic de Rham stack, the theory of solid locally analytic representations of $p$-adic Lie groups and a general notion of equivariant analytic $D$-module, generalizing definitions of \cite{ArdakovEquivariantD}.  

We first briefly discuss the relation with representation theory. Let $G$ be a $p$-adic Lie group, and denote by $G^{la}$ and $G^{sm}$ the analytic spaces defined by $G$ endowed with the sheaf of locally analytic and locally constant functions respectively. In \cite{RJRCSolidLocAn2} we proved that the category of solid locally analytic and smooth representations are given by the category of solid quasi-coherent sheaves of the classifying stacks $*/G^{la}$ and $*/G^{sm}$ respectively. A first relation between $p$-adic Lie groups, representation theory and the de Rham stacks is encoded in the following proposition (see Lemma \ref{LemmaRelationLieGroupsLocAn}).

\begin{proposition}
Let $G$ be a $p$-adic Lie group. There is a natural equivalence
\[
G^{la}_{dR}=G^{sm}.
\]
In particular, we have an equivalence of classifying stacks 
\[
(*/G^{la})_{dR}=*/G^{sm}.
\]
\end{proposition}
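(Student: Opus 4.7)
The plan is to establish the prestack-level identity $G^{la}_{dR}(\n{A})=G^{sm}(\n{A})$ for every bounded affinoid $\bb{Q}_p$-algebra $\n{A}$; the $\s{D}$-sheafifications then agree automatically. I would factor it as
\[
G^{la}_{dR}(\n{A})=G^{la}(\n{A}^{\dagger-\red}) \;\xrightarrow{\sim}\; G^{sm}(\n{A}^{\dagger-\red}) \;\xrightarrow{\sim}\; G^{sm}(\n{A}),
\]
the first equality being the definition of the analytic de Rham prestack. The rightmost map is an equivalence because $G^{sm}(-)$ factors through the condensed set of idempotents of its argument (equivalently through $\pi_0(\AnSpec\,-)$), and by Theorem \ref{TheoremBoundedAffinoidIntro}(4) the elements of $\Nil^\dagger(\n{A})$ are overconvergently small, hence topologically nilpotent, hence carry no non-trivial idempotents.

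The heart of the proof is the central bijection $G^{la}(\n{B})=G^{sm}(\n{B})$ for every $\dagger$-reduced bounded affinoid ring $\n{B}$ over $\bb{Q}_p$ (one with $\Nil^\dagger(\n{B})=0$). Here I would choose a uniform compact open subgroup $H\subset G$ of dimension $d$ and use the coset decomposition $G=\bigsqcup_i g_iH$, together with compatibility of $(-)^{la}$ and $(-)^{sm}$ with disjoint unions, to reduce to $H^{la}(\n{B})=H^{sm}(\n{B})$. Writing $H^{(n)}$ for the $n$-th congruence subgroup and using logarithm coordinates,
\[
C^{la}(H,\bb{Q}_p)=\varinjlim_n\,\prod_{h\in H/H^{(n)}}\bb{Q}_p\langle p^n\xi_{h,1},\dots,p^n\xi_{h,d}\rangle,
\]
and applying $\Hom_{\mathrm{AnRing}}(-,\n{B})$ exchanges the product with an idempotent coproduct and the colimit with a limit, producing a natural identification
\[
H^{la}(\n{B})\;\cong\; H^{sm}(\n{B})\times \bb{B}^{d,\dagger}(\n{B})\;=\; H^{sm}(\n{B})\times \Nil^\dagger(\n{B})^d;
\]
for $\n{B}$ $\dagger$-reduced the right factor collapses to $\{0\}$ and the claim follows.

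The classifying-stack identity is then formal: by Theorem \ref{TheoSixFunctorDeRhamStackIntro}(1), $(-)_{dR}$ commutes with finite limits and all colimits of prestacks, so applying it term by term to the bar resolution $[n]\mapsto (G^{la})^n$ of $*/G^{la}$ and using the first statement gives $[n]\mapsto (G^{sm})^n$, whose geometric realisation after $\s{D}$-sheafification is $*/G^{sm}$. The main technical obstacle will be the decomposition $H^{la}(\n{B})=H^{sm}(\n{B})\times \bb{B}^{d,\dagger}(\n{B})$: one must match the Schneider--Teitelbaum/Ardakov--Wadsley inverse-limit presentation of $C^{la}(H,\bb{Q}_p)$ with the functor of points coming from the analytic ring structure on $H^{la}$, and verify that the inverse system of $n$-analytic polydiscs assembles into the overconvergent disc fibre when one takes solid $\Hom$'s into $\n{B}$.
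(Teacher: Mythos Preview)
Your approach is correct and reaches the same conclusion, but it differs from the paper's in presentation. The paper argues more structurally: it first invokes Example~\ref{ExampleDaggerGroupoids}(2) to identify $G^{la}_{dR}=G^{la}/\exp(\f{g})^{\dagger}$, then observes that $\exp(\f{g})^{\dagger}=\varprojlim_H H^{la}$ over compact open subgroups $H\subset G$, so that $G^{la}_{dR}=\varprojlim_H G^{la}/H^{la}=\varprojlim_H G/H=G^{sm}$. Your product decomposition $H^{la}(\n{B})\cong H^{sm}(\n{B})\times \Nil^{\dagger}(\n{B})^d$ is exactly the pointwise shadow of the fibration $\exp(\f{g})^{\dagger}\to G^{la}\to G^{sm}$ that the paper uses, so the two arguments are really the same identification viewed from different angles. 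The paper's route is shorter because the general fact ``de Rham stack of a group object is the quotient by the overconvergent germ of the identity'' has already been recorded; your route has the virtue of being self-contained and making the role of $\dagger$-reducedness completely explicit. One small point: your step $G^{sm}(\n{A}^{\dagger-\red})=G^{sm}(\n{A})$ is justified not by an abstract idempotent argument but more directly by Proposition~\ref{PropAdicSpectrumInvariantReduced}, which gives $|\Spa\n{A}|=|\Spa\n{A}^{\dagger-\red}|$ as spectral spaces, hence the same clopen decompositions; you should cite that rather than Theorem~\ref{TheoremBoundedAffinoidIntro}(4). The deduction for classifying stacks is handled the same way in both: it follows from Proposition~\ref{PropAnDeRhamStack}(3) (commutation of the prestack de Rham functor with colimits and limits) applied to the bar resolution, then sheafifying.
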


Next, both solid locally analytic representations and analytic $D$-modules extend  to a theory of equivariant  analytic $D$-modules. To motivate the definition let us make the following observation.

\begin{remark} Let $f:X\to Y$ be a solid smooth map of derived Tate adic spaces. By Proposition \ref{TheoFormallySmoothIntro} the map $h:X\to X_{dR,Y}$ is an epimorphism of $\s{D}$-stacks. The \v{C}ech nerve of  $h$ is given by the \textit{analytic de Rham groupoid}, whose $n$-th is the overconvergent neighbourhood of  the diagonal map $\Delta^{n+1}_{Y} X\to X^{\times_Y n+1}$, namely, the analytic space $(\Delta^{n+1}_{Y} X)^{\dagger} $ whose functions are given by functions of $X^{\times_Y n+1}$ that overconverge the locally closed subspace $|\Delta^{n+1}_{Y}|\subset |X^{\times_Y n+1}|$. This provides an equivalence of $\s{D}$-stacks
\[
X_{dR,Y}:= \varinjlim_{[n]\in \Delta^{\op}} (\Delta^{n+1}_{Y} X)^{\dagger}  
\]
\end{remark}

We extend the analytic de Rham groupoid to \textit{$\dagger$-smooth groupoids} in Definition \ref{DefinitionSmoothDaggerGroupoids}. Roughly speaking, these are groupoid objects that look like the overconvergent neighbourhoods of the zero sections  of vector bundles.  Prototipical examples of $\dagger$-smooth groupoids are constructed from Lie algebroids over rigid spaces as explained in Example \ref{ExampleDaggerGroupoids} (3).   We recall the definition of a normal map in  groupoids, and introduce equivariant analytic $D$-modules in great generality (Definition \ref{DefinitionEquivariantDmodules}).

\begin{definition}
Let $X$ be a derived Tate adic space over $\bb{Q}_p$ and $G$ a $p$-adic Lie group acting locally analytically on $X$. Let $\bb{H}^{\dagger}$ be a $\dagger$-smooth group over $X$ and let $\bb{H}^{\dagger}\to G^{la}\times X$ be a  map of groupoids with given normal quotient $G^{la}/\bb{H}^{la}$. We define the category of  analytic equivariant $D(G^{la}/\bb{H}^{\dagger})$-modules to be $\Mod(X/(G^{la}/\bb{H}^{\dagger}))$.
\end{definition}

Equivariant $D$-modules over solid smooth maps have a good cohomological behaviour (Theorem \ref{TheoDializingEquivariantDmod}): 

\begin{theorem}
Let $X\to Y$ be a solid smooth morphism of derived Tate adic spaces over $\bb{Q}_p$ of relative dimension $d$, and let $G$ be a $p$-adic Lie group of dimension $g$ acting locally analytically   on $X$ over $Y$.     Let $\bb{H}^{\dagger}$ be a $\dagger$-smooth group  over $X$ of relative dimension $e$, and  let $\bb{H}^{\dagger}\to G^{la}\times X$  be a  map of groupoids over $X$ with  given normal quotient $G^{la}/\bb{H}^{\dagger}$. Then $g: X/(G^{la}/\bb{H}^{\dagger})\to Y$ is cohomologically smooth and its underlying $G^{la}$-equivariant invertible sheaf is equivalent to 
\[
\Omega^{d}_{X/Y}[d]\otimes \bigwedge^{g} \Lie G [g]\otimes  \bigwedge^{e} \ell_{\bb{H}^{\dagger}/X} [-e]. 
\] 
\end{theorem}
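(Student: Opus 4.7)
The plan is to descend along the $\s{D}$-epimorphism $h : X \to X/(G^{la}/\bb{H}^{\dagger})$ and identify the dualizing line by tensor-decomposition along a tower of cohomologically smooth maps. First I would reduce, via the analytic topology on $Y$, to the case $Y = \AnSpec \n{A}$, and work locally on $X$ where $\bb{H}^{\dagger}$ is a $\dagger$-neighbourhood of the zero section of a rank-$e$ vector bundle supplied by a Lie algebroid (Example \ref{ExampleDaggerGroupoids}~(3)), and the $G$-action is sufficiently explicit. Since cohomological smoothness descends along $\s{D}$-covers and the formula for the dualizing line is compatible with the analytic topology, this reduction is harmless.

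The main input is the factorization
$$X \xrightarrow{\ h\ } X/(G^{la}/\bb{H}^{\dagger}) \xrightarrow{\ p\ } X/G^{la} \xrightarrow{\ q\ } Y,$$
arising from the surjection $G^{la}\times X \twoheadrightarrow G^{la}/\bb{H}^{\dagger}$ of groupoids over $X$. The composite $q\circ p\circ h$ is the original solid smooth map $f : X \to Y$, which is cohomologically smooth of relative dimension $d$ with $f^{!}1_Y = \Omega^d_{X/Y}[d]$ by Theorem \ref{TheoSerreDuality}. I would then argue that each arrow of the factorization is individually cohomologically smooth, so that $g = q\circ p$ inherits cohomological smoothness and the identification $g^{!} = p^{!}q^{!}$ gives the dualizing line as a tensor product.

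For the outer step $q$, the cover $X \to X/G^{la}$ is a $G^{la}$-torsor, hence a $\s{D}$-epimorphism; combining with $f$ and invoking the Poincar\'e duality for the classifying stack $BG^{la}\to\ast$ supplied by the analytic de Rham stack formalism of Section \ref{SectionAnalyticdRAndLocAn} together with the comparison $G^{la}_{dR} = G^{sm}$ of Lemma \ref{LemmaRelationLieGroupsLocAn}, one obtains that $q$ is cohomologically smooth with dualizing line $\Omega^d_{X/Y}[d] \otimes \bigwedge^g \Lie G\,[g]$ on pull-back to $X$. For the middle step $p$, the fiber is identified with the $\dagger$-smooth group $\bb{H}^{\dagger}$; the analytic Cartier duality of Theorem \ref{TheoremAnalyticCartierII} then implies that $p$ is cohomologically smooth with dualizing line $\bigwedge^e \ell_{\bb{H}^{\dagger}/X}\,[-e]$, where the cohomological shift $[-e]$ (opposite to the $[+e]$ one would see for an analytic group) is precisely the Cartier-dual signature of a $\dagger$-group. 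Composing yields the claimed formula.

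The $G^{la}$-equivariant structure on the invertible sheaf is inherited from equivariant structures on the three tensor factors: $\Omega^d_{X/Y}$ is equivariant because $G^{la}$ acts on $X$ over $Y$ by locally analytic automorphisms, $\bigwedge^g \Lie G$ carries the adjoint representation, and $\bigwedge^e \ell_{\bb{H}^{\dagger}/X}$ receives its equivariance from the map of groupoids $\bb{H}^{\dagger} \to G^{la}\times X$. The main obstacle I anticipate is the step identifying the dualizing line of $p$ with the correct $\dagger$-sign: one must combine the local description of $\bb{H}^{\dagger}$ as the $\dagger$-neighbourhood of the zero section of a vector bundle with the Cartier duality exchange of Theorem \ref{TheoremAnalyticCartierII}, and then verify that this local computation glues along the $\dagger$-smooth groupoid in a way compatible with the $G^{la}$-action encoded in $\bb{H}^{\dagger} \to G^{la}\times X$.
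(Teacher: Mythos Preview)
Your factorization
\[
X \xrightarrow{\ h\ } X/(G^{la}/\bb{H}^{\dagger}) \xrightarrow{\ p\ } X/G^{la} \xrightarrow{\ q\ } Y
\]
does not exist: the surjection of groupoids $G^{la}\times X \twoheadrightarrow G^{la}/\bb{H}^{\dagger}$ induces a map on quotient stacks in the \emph{opposite} direction, namely $X/G^{la}\to X/(G^{la}/\bb{H}^{\dagger})$. This is exactly the content of the normal-quotient pullback square in the definition: $X/\bb{H}^{\dagger}\to X/G^{la}\to X/(G^{la}/\bb{H}^{\dagger})$ is the relevant tower, and there is no map $p$ going back unless the extension splits. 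Consequently your identification of the ``fibre of $p$'' with $\bb{H}^{\dagger}$ and your Cartier-duality explanation of the shift $[-e]$ have no foundation.

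The paper's argument uses precisely the map that does exist. One takes $h:X/G^{la}\to X/(G^{la}/\bb{H}^{\dagger})$; its pullback along $X\to X/(G^{la}/\bb{H}^{\dagger})$ is $X/\bb{H}^{\dagger}$, so by Proposition~\ref{PropDualizingSheafDaggerGRoupoid} the map $h$ is cohomologically smooth with $h^{!}1=\bigwedge^{e}\ell_{\bb{H}^{\dagger}/X}^{\vee}[e]$, and in particular it is a smooth $\s{D}$-cover. Cohomological smoothness of $g$ then follows from Corollary~\ref{CorollaryDescentSmoothProperCovers} applied to $g\circ h$, which factors as $X/G^{la}\xrightarrow{f} Y/G^{la}\xrightarrow{k} Y$ with $f$ solid smooth (Theorem~\ref{TheoSerreDuality}) and $k$ smooth by Proposition~\ref{PropCohoSmoothClassifyingG}. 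The dualizing line is then read off from $(g\circ h)^{!}1_Y=h^{*}g^{!}1_Y\otimes h^{!}1$, i.e.\ one \emph{inverts} $h^{!}1$:
\[
h^{*}g^{!}1_Y\;=\;f^{!}1_{Y/G^{la}}\otimes f^{*}k^{!}1_Y\otimes (h^{!}1)^{-1}\;=\;\Omega^{d}_{X/Y}[d]\otimes\bigwedge^{g}\f{g}[g]\otimes\bigwedge^{e}\ell_{\bb{H}^{\dagger}/X}[-e].
\]
Thus the factor $\bigwedge^{e}\ell_{\bb{H}^{\dagger}/X}[-e]$ arises as the inverse of the dualizing line of a classifying-stack map, not from any direct Cartier-dual computation on a non-existent $p$.
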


\subsection*{Notations and conventions}
Throughout this paper we freely use the language of higher category theory and higher algebra of \cite{HigherTopos} and \cite{HigherAlgebra}, the theory of condensed mathematics of \cite{ClausenScholzeCondensed2019,ClauseScholzeAnalyticGeometry,CondensedComplex}, and the theory of abstract six functor formalisms of \cite{MannSix,MannSix2} and \cite{SixFunctorsScholze}. 

  To avoid any confusion,  $(\infty,1)$-categories will be called $\infty$-categories while classical categories will be called $1$-categories.  We let $\Cat_{\infty}$ denote the large $\infty$-category of $\infty$-categories, let $\Cat_{\infty}^{\colim}$ be the subcategory with  objects given by $\infty$-categories admitting small colimits and morphisms given by  colimit preserving functors, and  let $\Cat^{\ex}_{\infty}$ be the subcategory of stable $\infty$-categories with exact functors.   We  let $\n{P}r^L$ (resp. $\n{P}r^{R}$) be the $\infty$-category of presentable $\infty$-categories with colimit preserving functors (resp. accessible and limit preserving functors). Combining adjectives,  we let $\n{P}r^{L,\ex}$ denote the $\infty$-category of presentable stable $\infty$-categories. 
  
  Following Lurie, the previous categories have natural cartesian symmetric monoidal structures,  we have the following  translation of commutative algebra objects with respect to the cartesian product:

  \begin{itemize}

  \item[$\bullet$] $\CAlg(\Cat_{\infty})$  is naturally equivalent to the $\infty$-category of symmetric monoidal $\infty$-categories $\Cat^{\otimes}_{\infty}$.
  
\item[$\bullet$] $\CAlg(\Cat_{\infty}^{\colim})$ is the $\infty$-category of colimit preserving symmetric monoidal $\infty$-categories, i,.e. $\infty$-categories admitting small colimits, endowed  with a  symmetric monoidal structure that commutes with colimits in each variable, and  symmetric monoidal colimit preserving functors. 
  
\item[$\bullet$] $\CAlg(\Cat_{\infty}^{\ex})$ is the $\infty$-category of stable symmetric monoidal $\infty$-categories, i.e. stable  $\infty$-categories with a symmetric monoidal structure which is exact in each variable, and  symmetric monoidal exact functors.

\item[$\bullet$] $\CAlg(\Cat_{\infty}^{\colim,\ex})$ is the $\infty$-category of colimit preserving symmetric monoidal stable $\infty$-categories; it is the full subcategory of $\CAlg(\Cat_{\infty}^{\colim})$ with objects having a underlying  stable $\infty$-category. 
  
\item[$\bullet$] $\CAlg(\n{P}r^{L})$ is the $\infty$-category of presentably symmetric monoidal $\infty$-categories, i.e. presentable $\infty$-categories with a symmetric monoidal structure that commutes with colimits in each variable, and  symmetric monoidal colimit preserving functors.

\item[$\bullet$]  $\CAlg(\n{P}r^{L,\ex})$ is the $\infty$-category of presentably symmetric monoidal stable $\infty$-categories, i.e. presentable stable $\infty$-categories with a symmetric monoidal structure that commutes with colimits in each variable, and  symmetric monoidal colimit preserving functors. 
\end{itemize}

   Given an arrow $f:X\to Y$ in a pointed $\infty$-category $\n{C}$, we let $[X\to Y]$ and  $\cofib[X\to Y]$  denote the fiber and cofiber of $f$ respectively. The notion of descendable algebra in a  symmetric monoidal  stable $\infty$-category will be used repeatedly along the document, we send to \cite{MathewDescent} for its definition and main properties. 

We let $\Prof$ and $\Extdis$ be the sites of profinite and extremally disconnected sets with covers given by finite jointly  surjective maps.  Given an $\infty$-category $\n{C}$ with finite  products and small colimits,   we let $\Cond(\n{C})$ be the condensification of $\n{C}$, see \cite[\S 11.1]{ClauseScholzeAnalyticGeometry} and \cite[Definition 2.1.1]{MannSix}.  Let $\n{C}$ be a $1$-category that admits small colimits and that is generated by small colimits under its compact projective objects $\n{C}^{cp}$, we let $\Ani(\n{C})$ be the animation of $\n{C}$, see \cite[\S 11.4]{ClauseScholzeAnalyticGeometry}.

We shall write $\AnRing$ for the $\infty$-category of complete commutative  analytic (animated)  rings as in \cite[Definition 2.3.10]{MannSix}; unless otherwise specified all analytic rings will be assume to be objects in $\AnRing$. Given $\n{A}$ an analytic ring, we let $\underline{\n{A}}$  be its underlying condensed ring, and for $S\in \Extdis$ we let $\n{A}[S]$ be the free $\n{A}$-module generated by $S$, we also write $\AnRing_{\n{A}}$ for the slice $\infty$-category of analytic $\n{A}$-algebras.  We denote by   $\Mod_{\geq 0}(\n{A})$  the $\infty$-category of animated $\n{A}$-modules, and  let $\Mod(\n{A})$  be its stabilization.  Throughout this paper we use homotopical notation, so for a complex $M$ the fundamental group $\pi_i(M)$ is the same as the $(-i)$-th cohomology group $H^{-i}(M)$.  We shall write $\Mod^{\heartsuit}(\n{A})$ for  the heart of $\Mod(\n{A})$.

 Given $\n{A}$ an analytic ring, we shall write $\AniAlg_{\n{A}}$ for the $\infty$-category of animated $\n{A}$-algebras, namely, the category of condensed animated $\underline{\n{A}}$-algebras that are $\n{A}$-complete. Given $B$ an animated $\n{A}$-algebra, we let $B_{\n{A}/}$ denote the analytic ring obtained by restriction of analytic ring structure from $\n{A}$ to $B$, see \cite[Definition 2.3.13]{MannSix}. More generally, given $B$ an $\bb{E}_1$-algebra in $\Mod(\n{A})$, we let $B_{\n{A}/}$ be the analytic ring with underlying condensed ring $\underline{B}$, and whose category of left modules is given by $\Mod(B_{\n{A}/})= \Mod_{B}(\Mod(\n{A}))$, see Definition \ref{DefinitionAnalyticErings}.

In this paper all analytic rings are complete, and we always consider colimits as complete analytic rings unless otherwise specified. Let $\n{A}$ be an analytic ring, we will write $-\otimes_{\n{A}}-$ and  $\iHom_{\n{A}}(-,-)$ for the tensor product and the internal $\Hom$ space on $\Mod(\n{A})$, omitting in this way further decorations regarding derived functors.  In case we want to consider a classical tensor or Hom space for objects sitting in degree $0$, we will write $\pi_0(-\otimes_{\n{A}}-)$ and $\pi_0(\iHom_{\n{A}}(-,-))$ instead. For $\n{C}$ a  $1$-category with all small colimits and generated by compact  projective objects, an object $X$ in $\Ani(\n{C})$ is called \textit{static} if it belongs to the essential image of $\n{C} \to \Ani(\n{C})$.  We call an analytic ring $\n{A}$  static if $\underline{\n{A}}$ is a static  condensed animated  ring, i.e. a usual condensed ring sitting in degree $0$.

\subsection*{Acknowledgements}

This project has been the result of long conversations with Johannes Ansch\"utz,  Ko Aoki,  Arthur Cesar le Bras, Lue Pan,  Joaqu\'in Rodrigues Jacinto and Peter Scholze; very special thanks to all of them. I am particularly grateful with   Lucas Mann and  Konrad Zou for their patience in several discussions about higher category theory and abstract six functor formalisms. I hearty thank Grigory Andreychev, Konstantin Ardakov, Dustin Clausen, Gabriel Dospinescu, Akhil Mathew,  Riccardo Pengo, Alberto Vezzani and Bogdan Zavyalov for very fruitful conversations.  This paper is the culmination of the passage of the author in the Max Planck Institute for Mathematics in Bonn during the year 2022-2023, my heartfelt thanks to the institute for their hospitality and  support that made this work possible. This project has been partially done while the author was a Junior Fellow of the Simons Society of Fellows at Columbia university.


\section{ Derived Tate  adic spaces}
\label{SectionAnalyticAdicSpaces}

Clausen and Scholze's analytic geometry is a framework where classical algebraic, archimedean and non-archimedean geometries can be treated as equals. Throughout this paper we will focus on the non-archimedean side of the theory, namely the solid theory.  By taking as inspiration classical (derived) algebraic geometry (eg. \cite{LurieDerivedAlgebraic}), and Huber's theory of (analytic) adic spaces \cite{HuberEtaleCohomology}, we will introduce a category of derived Tate adic spaces\footnote{Following the conventions of Clausen-Scholze, we will replace the adjective \textit{analytic} on Huber rings by the adjective \textit{Tate}, meaning that we work with Huber rings admitting a pseudo-uniformizer.}.

As primary point, we need to introduce the categories of rings that serve as building blocks of  our theories. The first approximation will be modelled by analytic rings associated to generalized Huber pairs \cite[Definition 2.12.8]{MannSix}, called in this paper \textit{solid affinoid rings}, see Definition \ref{DefinitionSolidAffinoidRing}. Roughly speaking, the data of a solid affinoid ring is provided by a pair $(A,A^+)$, where $A$ is a solid animated ring, and $A^+\subset \pi_0(A)(*)$ is a discrete subring that determines which variables of $A$ are ``solid''.   

The next step towards  non-archimedean analytic geometry requires some technical constructions. In one hand,  we want to differentiate algebraic varieties from rigid spaces. On the other hand, we want to define a class of rings that mimics the relevant features of analytic complete Huber pairs $(A,A^+)$,  endowed with a fixed pseudo-uniformizer $\pi$. By \cite{Andreychev}, the $1$-category of complete Huber pairs embeds fully faithfully in the category of analytic rings. Since $(A,A^+)$ admits a pseudo-uniformizer, the subring $A^0$ of power bounded elements is an open subring of $A$. We can determine the objects in $A^0$ in the following way: consider the map $R:=\bb{Z}((\pi)) \to A$ defined by the pseudo-uniformizer $\pi$. An element $a \in A$ belongs to $A^0$ if and only if the map $R[T] \to A$ sending $T\mapsto a$ extends to $R\langle T \rangle \to A$, where $R\langle T \rangle $ is the Tate algebra of $R$. Using this observation, we are able to define a class of \textit{bounded affinoid rings} that  provides the building blocks for our non-archimedean geometry. 

After the introduction of the category of bounded affinoid rings, we extend the construction of the adic spectrum $\Spa \n{A}$ from Huber pairs to bounded affinoid rings, then, by gluing along rational covers, we define the category of  (analytic) derived  adic spaces, which is a large generalization of the classical $1$-category of (analytic) adic spaces endowed with a fix pseudo-uniformizer.

\subsection{Preliminaries}
\label{Subsection:Preliminaries}
In this section we address some technical results and definitions that will be used throughout the paper. The reader can skip it on a first reading and come back when the corresponding statement is referenced.

\subsubsection{Analytic $\bb{E}_{\infty}$-rings}  The definition of (complete) analytic ring and the main properties in \cite[Lecture XII]{ClauseScholzeAnalyticGeometry} can be extended to connective $\bb{E}_{\infty}$-condensed algebras instead of animated rings with minor changes in  the proofs.  Moreover, using \cite[Proposition 12.20]{ClausenScholzeCondensed2019} one can even extend the definition of analytic ring to general $\bb{E}_1$ and $\bb{E}_{\infty}$-algebras. In this paper we will essentially  only use the animated definition of analytic ring (\cite[Definition 2.3.10]{MannSix}). However, it is useful to have this slightly more general notion in mind, for example, when constructing idempotent algebras in the category of complete  modules of analytic rings.

\begin{definition}
\label{DefinitionAnalyticErings}
An analytic $\bb{E}_1$-ring $\n{A}$ is the data of a condensed $\bb{E}_1$-ring $\underline{\n{A}}$ together with a full subcategory $\Mod(\n{A})\subset \Mod(\underline{\n{A}})$ satisfying the following properties. 

\begin{enumerate}

\item $\Mod(\n{A})$ is stable under small limits and colimits in $\Mod(\underline{\n{A}})$.

\item For all $S\in \Extdis$ and $M\in \Mod(\n{A})$ the object $\iHom_{\underline{\n{A}}}(\underline{\n{A}}[S],M)$ belongs to $\Mod(\n{A})$.

\item The inclusion $\Mod(\n{A})\subset \Mod(\underline{\n{A}})$ has a  left adjoint $\n{A}\otimes_{\underline{\n{A}}}-$. 

\end{enumerate}

We say that an analytic ring is complete if the natural map $\underline{\n{A}}\to \n{A}[*]$ is an equivalence. An analytic $\bb{E}_{\infty}$-ring is an analytic $\bb{E}_1$-ring whose underlying ring has a structure of $\bb{E}_{\infty}$-ring.  A morphism $\n{A}\to \n{B}$ of analytic $\bb{E}_1$ (resp. $\bb{E}_{\infty}$)-rings  is a morphism of condensed rings such that the forgetful functor $\Mod(\underline{\n{B}})\to \Mod(\underline{\n{A}})$ sends $\Mod(\n{B})$ to $\Mod(\n{A})$.  We let  let $\AnCRing$ denote the $\infty$-category of  complete $\bb{E}_{\infty}$-analytic rings.

\end{definition}

\begin{remark}
Let $\n{A}$ be an analytic $\bb{E}_{\infty}$-ring, then $\Mod(\n{A})$ is naturally a symmetric monoidal category. Indeed, the same argument of \cite[Proposition 12.4]{ClauseScholzeAnalyticGeometry} shows that the kernel of $\n{A}\otimes_{\underline{\n{A}}}-$ is a tensor ideal. Furthermore, by the proof of  \cite[Proposition 2.3.8]{MannSix}, we have a natural transformation of functors $\Mod(\underline{(-)})\Rightarrow \Mod(-)$ given by  the analytification functors. 
\end{remark}

 The following lemma says that an analytic $\bb{E}_{\infty}$-ring is completely determined by its category of complete modules

\begin{lemma}
\label{LemmaFullyFaithfulInjectionAnRings}
Let $\underline{\bb{S}}$ be the sphere spectrum considered as  a  condensed spectrum, let $\Mod(\underline{\bb{S}})$ be the symmetric monoidal $\infty$-category of condensed spectra and  $\CAlg(\Cat_{\infty}^{\colim})_{\underline{\bb{S}}/}$ the  $\infty$-category of  colimit preserving symmetric monoidal $\infty$-categories tensored over $\Mod(\underline{\bb{S}})$. Then the functor $\Mod(-):\AnCRing\to \CAlg(\Cat_{\infty}^{\colim})_{\underline{\bb{S}}/}$ sending an analytic $\bb{E}_{\infty}$-ring $\n{A}$ to the  symmetric monoidal $\infty$-category $\Mod(\n{A})$ is a fully faithful embedding. 
\end{lemma}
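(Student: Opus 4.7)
\emph{Proof sketch.} The strategy is to reduce the assertion to Lurie's universal property of module categories over $\bb{E}_\infty$-algebras (see \cite[\S 4.8.5]{HigherAlgebra}), applied inside the symmetric monoidal stable $\infty$-category $\Mod(\underline{\bb{S}})$ of condensed spectra. That universal property yields, for any condensed $\bb{E}_\infty$-ring $R$ and any $\n{D}\in \CAlg(\Cat_\infty^{\colim})_{\Mod(\underline{\bb{S}})/}$, an equivalence
\[
\Map_{\CAlg(\Cat_\infty^{\colim})_{\Mod(\underline{\bb{S}})/}}(\Mod(R), \n{D}) \simeq \Map_{\CAlg(\Mod(\underline{\bb{S}}))}(R, \mathbf{1}_{\n{D}}),
\]
where $\mathbf{1}_{\n{D}}$ denotes the unit $\bb{E}_\infty$-algebra. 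Applied to $R=\underline{\n{A}}$ and $\n{D}=\Mod(\n{B})$, and using completeness of $\n{B}$ to identify $\mathbf{1}_{\Mod(\n{B})}$ with $\underline{\n{B}}$, this identifies symmetric monoidal colimit-preserving functors $\Mod(\underline{\n{A}})\to \Mod(\n{B})$ under $\Mod(\underline{\bb{S}})$ with maps of condensed $\bb{E}_\infty$-algebras $\underline{\n{A}}\to \underline{\n{B}}$.

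Next, I would exhibit $\Mod(\n{A})$ as a symmetric monoidal Bousfield localization of $\Mod(\underline{\n{A}})$: by Definition \ref{DefinitionAnalyticErings}, the inclusion $\Mod(\n{A})\hookrightarrow \Mod(\underline{\n{A}})$ admits a colimit-preserving left adjoint $L=\n{A}\otimes_{\underline{\n{A}}}-$ which one verifies is symmetric monoidal, and, since $\n{A}$ is complete, the structural map factors as $\Mod(\underline{\bb{S}})\to \Mod(\underline{\n{A}})\xrightarrow{L}\Mod(\n{A})$. Standard localization theory then identifies
\[
\Map_{\CAlg(\Cat_\infty^{\colim})_{\Mod(\underline{\bb{S}})/}}(\Mod(\n{A}), \Mod(\n{B}))
\]
with the full subspace of $\Map(\Mod(\underline{\n{A}}),\Mod(\n{B}))$ consisting of those functors $F$ which invert every $L$-equivalence, equivalently, whose right adjoint factors through the local objects $\Mod(\n{A})\subset \Mod(\underline{\n{A}})$.

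Finally, I would translate this last condition into the definition of a morphism of analytic rings. Under the identification above, the functor corresponding to $f\colon \underline{\n{A}}\to\underline{\n{B}}$ is $F=\n{B}\otimes_{\underline{\n{A}}}-$, and its right adjoint is the restriction-of-scalars functor $\Mod(\n{B})\hookrightarrow \Mod(\underline{\n{B}})\to \Mod(\underline{\n{A}})$ along $f$. The requirement that this right adjoint take values in $\Mod(\n{A})$ is precisely the defining condition of $f$ being a morphism of analytic $\bb{E}_\infty$-rings in Definition \ref{DefinitionAnalyticErings}. Assembling the three steps gives the desired equivalence $\Map_{\AnCRing}(\n{A},\n{B})\simeq \Map(\Mod(\n{A}),\Mod(\n{B}))$, hence fully faithfulness.

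The main point to be careful with is the verification that the $L$-local objects in $\Mod(\underline{\n{A}})$ coincide with $\Mod(\n{A})$ as symmetric monoidal subcategories; this uses condition (2) of Definition \ref{DefinitionAnalyticErings}, which guarantees closure of $\Mod(\n{A})$ under the $\Mod(\underline{\bb{S}})$-action via the internal Homs out of the free modules $\underline{\n{A}}[S]$. Some standard but non-trivial size-theoretic bookkeeping is also required to invoke \cite[\S 4.8.5]{HigherAlgebra} in the very large condensed setting, handled as in \cite[Proposition 12.20]{ClausenScholzeCondensed2019}.
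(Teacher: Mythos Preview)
Your proposal is correct and follows essentially the same route as the paper's proof: both reduce to Lurie's universal property for module categories over $\bb{E}_\infty$-algebras in $\Mod(\underline{\bb{S}})$ to handle the underlying condensed rings, and then match the remaining condition with the definition of a morphism of analytic rings. The only cosmetic difference is that the paper recovers the underlying ring map by computing the condensed endomorphism ring of the unit $\underline{\End}_{\Mod(\n{A})}(\n{A}[*])=\underline{\n{A}}$ and then checks the induced square commutes, whereas you invoke the universal property as a black box and phrase the analytic compatibility via the Bousfield-localization description of $\Mod(\n{A})\subset \Mod(\underline{\n{A}})$; both packagings lead to the same identification.
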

\begin{proof}
 Consider  $\CAlg(\Mod(\underline{\bb{S}}))$  the $\infty$-category of $\bb{E}_{\infty}$-condensed rings and let 
\[
\Mod(\underline{(-)}): \CAlg(\Mod(\underline{\bb{S}})) \to \CAlg(\Cat_{\infty}^{\colim})_{\underline{\bb{S}}/}
\] 
be the functor sending a ring to its category of modules, by   \cite[Corollary 4.8.5.21]{HigherAlgebra}  this functor is fully faithful. Furthermore,  \cite[Proposition 2.3.8]{MannSix} provides a natural transformation $\Mod(\underline{(-)}) \Rightarrow \Mod(-)$  of functors $\AnCRing \to \CAlg(\Cat_{\infty}^{\colim})_{\underline{\bb{S}}/}$ given by the analytification functor $\n{A}\otimes_{\underline{\n{A}}}-: \Mod_{\underline{\n{A}}} \to \Mod_{\n{A}}$.

 Let $A,B\in \AnCRing$,  observe that any colimit preserving  $\Mod(\underline{\bb{S}})$-tensored symmetric monoidal morphism $f^*:\Mod(\n{A})\to \Mod(\n{B})$ is compatible with the natural morphisms 
\[
\begin{tikzcd}
\Mod(\n{A}) \ar[r, "f^*"]& \Mod(\n{B}) \\ 
\underline{\End}_{\Mod(\n{A})}(\n{A}[*])-\Mod(\Mod(\underline{\bb{S}})) \ar[u] \ar[r] & \underline{\End}_{\Mod(\n{B})}(\n{B}[*])-\Mod(\Mod(\underline{\bb{S}})) \ar[u],
\end{tikzcd}
\]
but  $\underline{\End}_{\Mod(\n{A})}(\n{A}[*])= \underline{\n{A}}$  and $\underline{\End}_{\Mod(\n{B})}(\n{B}[*])= \underline{\n{B}}$ since the analytic rings are complete. Then,  we have a natural  commutative diagram 
\[
\begin{tikzcd}
\Mod(\n{A}) \ar[r, "f^*"]& \Mod(\n{B}) \\ 
\Mod(\underline{\n{A}}) \ar[u, "\n{A}\otimes_{\underline{\n{A}}}-"] \ar[r, "\underline{\n{B}}\otimes_{\underline{\n{A}}}-"' ] & \Mod(\underline{\n{B}}) \ar[u, "\n{B}\otimes_{\underline{\n{B}}} -"'],
\end{tikzcd}
\]
which shows that $f^*$ is naturally equivalent to $\n{B}\otimes_{\n{A}}-$ by definition of the analytic base change. Therefore, by definition  of the mapping space of  analytic rings as a full subanima of the mapping space of the underlying condensed rings, cf. \cite[Lecture XII]{ClauseScholzeAnalyticGeometry} or \cite[Definition 2.3.1 (d)]{MannSix},  the mapping space from $\n{A}$ to $\n{B}$ in $\AnCRing$ is naturally equivalent to the mapping space from $\Mod(\n{A})$ to $\Mod(\n{B})$ in $\CAlg(\Cat_{\infty}^{\colim})_{\underline{\bb{S}}/}$, which finishes the proof. 
\end{proof}

\begin{remark}
\label{RemarkFullyFaithAnimatedAnalyticRing}
The previous lemma only applies for  analytic $\bb{E}_{\infty}$-rings and not for analytic animated rings. The  obstruction for the statement to hold for analytic animated rings is that the forgetful functor of animated rings towards $\bb{E}_{\infty}$-rings is not fully faithful. Nevertheless, the lemma holds for analytic animated rings over $\bb{Q}$.  In general, the functor $\Mod(-)$ is always conservative. 
\end{remark}

 \subsubsection{Generalized Huber pairs} For future reference we define generalized Huber pairs, see \cite[Definition 2.12.8]{MannSix}.  Let $\bb{Z}_{\sol}$ denote the analytic ring of solid integers, mapping a profinite set $S= \varprojlim_i S_i\in \Pro(\FinSet)$ to the condensed abelian group $\bb{Z}_{\sol}[S]= \varprojlim_{i} \bb{Z}[S_i]$. More generally, for $R$ a $\bb{Z}$-algebra of finite type we shall write $R_{\sol}$ for the analytic ring such that $R_{\sol}[S]= \varprojlim_i R[S_i]$, and for $R$ a discrete ring we set $R_{\sol}[S]= \varinjlim_{A \subset R} A_{\sol}[S]$ where $A$ runs over all the finitely generated subrings of $R$, cf. \cite[Examples 7.3]{ClausenScholzeCondensed2019}.

\begin{definition}
\label{DefinitionGeneralizedHuberPairs}

 A generalized Huber pair consists on a tuple $(A,S)$ with $A$  an animated $\bb{Z}_{\sol}$-algebra, and $S$ a set of elements $S\subset \pi_0(A)(*)$, such that $A$ is $\bb{Z}[X_s]$-solid for all $s\in S$, with $\bb{Z}[X_s]\to A$ a map sending $X_s\mapsto s$. We let $(A,S)_{\sol}$ denote the analytic ring $A_{\bb{Z}[X_{s}: s\in S]_{\sol}/}$. 

\end{definition}

\begin{remark}
In the notation of Definition \ref{DefinitionGeneralizedHuberPairs},  the analytic ring structure of $(A,S)_{\sol}$   only depends on the variables $S$, and not on the lifts $\bb{Z}[X_s]\to A$, thanks to \cite[Proposition 12.21]{ClauseScholzeAnalyticGeometry}.
\end{remark}

\subsection{Categorified locale}
\label{SubsectionCategorifiedLocale}

 In the following section we recall the formalism of categorified locales  of   \cite[Lectures V-VII]{CondensedComplex} and \cite{aoki2023sheavesspectrum}. The notion of categorified locale replaces the more classical definition of locally ringed space; the building blocks of analytic geometry are analytic rings, and these provide the data of a condensed ring and a category of complete modules. Thus, instead of gluing rings as in classical algebraic geometry we need to glue the categories of modules, the language of categorified locales formalizes this idea. Moreover, categorifies locales offer a clean understanding of open and closed  immersions from a six functor point of view, these notions will be repetitively used throughout the paper.

  Let $\CAlg(\Cat^{\colim,\ex}_{\infty})$ be the $\infty$-category of colimit preserving symetric monoidal stable $\infty$-categories, with morphisms denoted by pullback functors $f^{*}:C\to D$. In \S \ref{SectionAnalyticAdicStacks}  we shall restrict ourselves to the framework of presentably symmetric monoidal stable  $\infty$-categories  $\CAlg(\n{P}r^{L,\ex})$; as it is explain in   \cite{aoki2023sheavesspectrum}, this is not an important restriction since we will eventually take categories of $\kappa$-small condensed sets for some cut-off cardinal $\kappa$.  Given $C\in \CAlg(\Cat^{\colim,\ex}_{\infty})$, we let $\n{S}(C)$ denote  the  class of isomorphism classes  of idempotent algebras in $C$, that is, the class  of isomorphism classes of objects $A\in C$ endowed with a morphism from the tensor unit $1\to A$ such that the arrow 
 \[
 A\xrightarrow{1\otimes \id} A \otimes A
 \]
is an equivalence.  We endow $\n{S}(C)$ with a partial order as follows:  $A\leq A'$ if and only if  there is an arrow $A'\to A$ commuting with the unit maps.    By \cite[Theorem 3.13]{aoki2023sheavesspectrum}, if $C$ is presentably symmetric monoidal,  the category of idempotent algebras is in fact essentially small, so $\n{S}(C)$  defines a honest poset. 

\begin{prop}[{\cite[Proposition 5.3]{CondensedComplex}}]
\label{PropLocaleOperations}
The poset $\n{S}(C)$ is a locale with closed subspaces $Z\in \n{S}(C)$ defined by the isomorphism classes of idempotent algebras $A$. More explicitly, the following hold:
\begin{enumerate}
\item The ``empty subset'' corresponds to $0$.

\item The ``whole space'' corresponds to $1$.

\item The ``intersection'' $Z\cap Z'$ corresponds to $A\otimes A'$.

\item An ``arbitrary intersection'' $\bigcap_i Z_i$ corresponds to $\varinjlim_{i} A_i$. 

\item The ``union'' $Z\cup Z'$ corresponds to the fiber $B=[A\bigoplus A' \to A\otimes A']$ together with the unit $1\to B$ induced by the map $1 \xrightarrow{(1,-1)} A\oplus A'$.
\end{enumerate}
\end{prop}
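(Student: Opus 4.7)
The proof rests on two foundational observations. First, between any two idempotent algebras $A, A'$ the mapping space of unit-preserving morphisms is either empty or contractible, so $\n{S}(C)$ is genuinely a poset; this follows because any such $f \colon A \to A'$ is determined by the unit of $A'$ and idempotency collapses higher coherence data. Second, whenever $A \leq A'$ (i.e.\ there is a structural arrow $A' \to A$) one has a canonical equivalence $A \otimes A' \simeq A$, obtained by combining $A' \to A$ with the idempotency isomorphism $A \otimes A \simeq A$. Given these, items (1) and (2) are immediate: the zero object is bottom because every idempotent admits a (unique) map to $0$, and the monoidal unit $1$ is top because every idempotent receives its own unit.

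\textbf{Items (3) and (4).} For (3) I would compute $(A \otimes A') \otimes (A \otimes A') \simeq A \otimes A'$ via symmetry and idempotency of each factor, then verify universality: the maps $A \to A \otimes A'$ and $A' \to A \otimes A'$ (via the units) exhibit $A \otimes A'$ as a lower bound, and any other lower bound $B$ produces a map $A \otimes A' \to B \otimes B \simeq B$. For (4) I would reduce to a filtered colimit over finite subsets of the index set and use that tensor products commute with filtered colimits in each variable, yielding $(\varinjlim A_i) \otimes (\varinjlim A_i) \simeq \varinjlim_{i,j}(A_i \otimes A_j)$; by (3), each finite $A_i \otimes A_j$ is the meet, and the cofinality of the diagonal in the filtered diagram gives idempotency together with the universal property of the infinite meet.

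\textbf{Item (5).} The crucial case is the formula for the join. The object $B$ fits in a pullback (hence bicartesian) square $B \simeq A \times_{A \otimes A'} A'$, with projections $B \to A$ and $B \to A'$. The candidate unit $1 \to B$ is produced from the commuting outer square
\[
\begin{tikzcd}
1 \ar[r, "u_A"] \ar[d, "u_{A'}"'] & A \ar[d] \\
A' \ar[r] & A \otimes A',
\end{tikzcd}
\]
the sign in the paper's formula being the standard convention for extracting a section of a fiber in a stable $\infty$-category. To verify that $B$ is idempotent, I would tensor the pullback square with $B$; since $B \to A$, $B \to A'$ and $B \to A \otimes A'$ all exist, the absorption identities $X \otimes B \simeq X$ from the foundational observation apply to each of $X \in \{A, A', A \otimes A'\}$, and the tensored square is isomorphic to the original, giving $B \otimes B \simeq B$. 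For the universal property of the join, any idempotent $J$ with $J \to A$ and $J \to A'$ produces $J \to A \oplus A'$ whose two compositions to $A \otimes A'$ both factor through the common map $J \simeq J \otimes J \to A \otimes A'$, hence agree, and the pullback property yields the required factorization $J \to B$.

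\textbf{Main obstacle.} The delicate step is the idempotency of $B$ in (5), which hinges on the absorption identities $X \otimes B \simeq X$ together with the stability of the bicartesian square under tensoring with $B$. The latter uses exactness of the tensor product in each variable, while the former requires upgrading the poset relation to coherent equivalences at the $\infty$-categorical level. This is where the \emph{stable} (and not merely additive) hypothesis enters essentially, since it allows the fiber defining $B$ to acquire the structure of an idempotent algebra, and it is where the proof needs to be most careful.
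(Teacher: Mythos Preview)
The paper does not prove this proposition; it is quoted from \cite[Proposition 5.3]{CondensedComplex} with a citation and no argument. Your outline is the standard one and matches what appears in the cited source: the poset structure follows from the rigidity of unit-preserving maps between idempotent algebras, finite meets are tensor products, arbitrary meets are filtered colimits of finite tensor products, and the join is the pullback $A \times_{A\otimes A'} A'$, whose idempotency is verified by tensoring the defining bicartesian square with $B$ and invoking the absorption identity $X\otimes B \simeq X$ on the other three vertices (so that the map of squares induced by the unit $1\to B$ is an equivalence on three corners, hence on the fourth). Your identification of this last step as the delicate one is accurate, and your argument for it is correct.

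One point you do not address: the assertion that $\n{S}(C)$ (more precisely, its opposite) is a \emph{locale} requires the infinite distributive law $Z \cup \bigcap_i W_i = \bigcap_i (Z \cup W_i)$, not merely the existence of arbitrary meets and finite joins. This follows from your formulas (4) and (5) together with the hypothesis that the tensor product commutes with colimits in each variable---so the fiber sequence defining the join commutes with the filtered colimit computing the arbitrary meet---but it deserves an explicit sentence to complete the argument.
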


Let $Z\in \n{S}(C)$ correspond to $A$. The closed subspace $Z$ has a natural category of \textit{modules supported on $Z$} defined by the  symmetric monoidal $\infty$-category  $C(Z):= \Mod_{A}(C)$  of $A$-modules in $C$. For $Z\in \n{S}(C)$, the category $C(Z)$ is a tensor ideal of $C$ stable under all limits and colimits. The \textit{frame} $\n{S}(C)^{\op}$ can be thought as  the open complements of the class of closed subspaces in $\n{S}(C)$.  Let $U\in \n{S}(C)^{\op}$ be the open complement of $Z\in \n{S}(C)$, we can define an $\infty$-category of modules on $U$ by taking the localization $C(U)= C/C(Z)$. One can explicitly define natural six functors associated to open and closed immersions. 

\begin{definition}
\label{DefinitionOpenClosedMonoidal}
Let $Z\in \n{S}(C)$ be a closed subspace with associated idempotent algebra $A$ and complementary open $U$. We define the following functors: 
\begin{enumerate}
\item The upper star functors $\iota^*_Z:C\to C(Z)$ and   $j^*_U:C\to C(U)$ given by $\iota^*_Z M = A\otimes M$ and the natural projection respectively. 

\item The lower star functors $\iota_{Z,*}: C(Z)\to C$ and $j_{U,*} :C(U)\to C$ given by the forgetful functor and $j_{U,*} j^*_UM= \iHom_C([1\to A], M)$ for $M\in C$. 

\item The upper shriek functors $\iota^!_Z : C\to C(Z)$ and $j^!_U: C\to C(U)$ given by $\iota^!_Z M=\iHom_C(A,M)$ and $j^!_U=j^*_U$ respectively. 

\item The lower shriek functors $\iota_{Z,!}: C(Z)\to C$ and $j_{U,!}: C(U)\to C$ given by $\iota_{Z,!}=\iota_{Z,*}$ and $j_! j^*_U M = [1\to A]\otimes M$ for $M\in C$ respectively. 
\end{enumerate}

An edge $f^*:C\to D$ in $\CAlg(\Cat^{\colim,\ex}_{\infty})$ is said an \textit{open (resp. closed) immersion} if it is equivalent to an edge of the form  $j_U^*: C\to C(U)$ (resp. $\iota_Z^*:C\to C(Z)$). 

\end{definition}

\begin{remark}
\label{RemarkExcisionSequences}
Let $M\in C$, by construction we have natural excision fiber sequences 
\[
j_!j^*M \to M \to  \iota_*\iota^* M 
\]
and 
\[
 \iota_*\iota^!M \to M \to j_*j^* M.
\]
\end{remark}

The following proposition tells us that the notions of open and closed immersions in $\CAlg(\Cat^{\colim,\ex}_{\infty})$ behave categorically as expected from a $6$-functors point of view.

\begin{prop}[{\cite[Proposition 6.5]{CondensedComplex}}]
\label{PropClosedOpenLocalizationsInftyCat}
Let $f^*:C\to D$ in $\CAlg(\Cat^{\colim,\ex}_{\infty})$. 

\begin{enumerate}
\item $f$ is a closed immersion if and only if $f^*$ has a fully faithful right adjoint $f_*$ which preserves colimits and satisfies  the projection formula 
\[
c\otimes f_* d \xrightarrow{\sim} f_*(f^*c\otimes  d)
\]
for $c\in C$ and $d\in D$. 

\item  $f$ is an open immersion if and only if $f^*$ has a fully faithful left adjoint $f_!: D\to C$ which satisfies the projection formula
\[
f_!(f^*c\otimes d) \xrightarrow{\sim} c\otimes f_! d
\]
for $c\in C$ and $d\in D$. 

\end{enumerate}
\end{prop}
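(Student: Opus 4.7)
The forward directions of both statements are a direct verification against Definition \ref{DefinitionOpenClosedMonoidal}. If $f^{*}$ is equivalent to $\iota_{Z}^{*}: C \to \Mod_{A}(C)$ for an idempotent algebra $A$, the right adjoint $\iota_{Z,*}$ is the forgetful functor, which is fully faithful because $A \otimes_{A} M \simeq M$, preserves colimits since these are computed in $C$, and satisfies the projection formula tautologically through $f^{*}c \otimes_{A} d = (A \otimes c) \otimes_{A} d \simeq c \otimes d$. The open case is parallel: $j_{U,!}(-) \simeq I \otimes -$ for $I := [1_{C} \to A]$, and tensoring the fiber sequence $I \to 1_{C} \to A$ with $I$ gives $I \otimes I \simeq I$ using $A \otimes A \simeq A$, which yields fully faithfulness, while the projection formula is immediate.

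For the converse of (1), define $A := f_{*}1_{D}$, with algebra unit coming from the adjunction unit $1_{C} \to f_{*}f^{*}1_{C} = A$. Fully faithfulness of $f_{*}$ forces the counit $f^{*}f_{*} \simeq \id_{D}$, so $f^{*}A \simeq 1_{D}$, and the projection formula yields
\[
A \otimes A \simeq f_{*}(f^{*}A \otimes 1_{D}) \simeq f_{*}1_{D} = A,
\]
exhibiting $A$ as idempotent. Moreover the monad $T := f_{*}f^{*}$ satisfies $T(c) \simeq f_{*}(f^{*}c \otimes 1_{D}) \simeq c \otimes A$ by the projection formula, so it agrees with tensoring by the algebra $A$. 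Since fully faithful functors are conservative and $f_{*}$ preserves colimits by hypothesis, the Barr--Beck--Lurie theorem identifies $D \simeq \Mod_{T}(C) = \Mod_{A}(C) = C(Z)$ for $Z$ the closed subspace with idempotent algebra $A$, upgrading $f^{*} \dashv f_{*}$ to the closed immersion $\iota_{Z}^{*} \dashv \iota_{Z,*}$.

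For the converse of (2), set $I := f_{!}1_{D}$ and $A := \cofib[I \to 1_{C}]$, where $I \to 1_{C}$ is the counit of $f_{!} \dashv f^{*}$ at $1_{C}$. Fully faithfulness of $f_{!}$ gives $f^{*}I \simeq 1_{D}$, and the projection formula yields $I \otimes I \simeq f_{!}(f^{*}I \otimes 1_{D}) \simeq I$. Tensoring the fiber sequence $I \to 1_{C} \to A$ first with $I$ and then with $A$ produces $I \otimes A \simeq 0$ and $A \otimes A \simeq A$, so $A$ is idempotent. The subcategory $\Mod_{A}(C)$ is precisely the kernel of $f^{*}$, since $f^{*}A \simeq 0$ propagates to all $A$-modules via the projection formula; hence the universal property of $C(U) := C/\Mod_{A}(C)$ factors $f^{*}$ through a colimit-preserving functor $\bar{f}: C(U) \to D$, and combining the excision sequences of Remark \ref{RemarkExcisionSequences} with the fully faithfulness of $f_{!}$ shows that $\bar{f}$ is an equivalence. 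The main obstacle throughout is precisely this final identification of $D$ with the prescribed closed or open subspace: the construction of $A$ and its idempotency are formal consequences of the projection formula and adjoint fully faithfulness, whereas the equivalences $D \simeq C(Z)$ and $D \simeq C(U)$ require substantive input from Barr--Beck--Lurie monadicity and the universal property of the Bousfield localization, respectively.
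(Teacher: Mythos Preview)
The paper does not prove this proposition; it is simply quoted from \cite[Proposition~6.5]{CondensedComplex} without argument, so there is no in-paper proof to compare against. Your proposal is therefore a self-contained proof where the paper offers only a citation.

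Your argument is correct and is essentially the standard one. The forward directions are indeed direct checks from Definition~\ref{DefinitionOpenClosedMonoidal}. For the converse of (1), your use of Barr--Beck--Lurie with the monad $f_{*}f^{*}\simeq A\otimes-$ is the right mechanism. For the converse of (2), the key identity $c\otimes I\simeq f_{!}f^{*}c$ (from the projection formula) shows simultaneously that $\ker f^{*}=\Mod_{A}(C)$ and that the endofunctors $f_{!}f^{*}$ and $j_{U,!}j_{U}^{*}$ of $C$ agree; since both $f_{!}$ and $j_{U,!}$ are fully faithful, their essential images in $C$ coincide (both being the objects $P$ with $I\otimes P\xrightarrow{\sim}P$), which gives the equivalence $D\simeq C(U)$. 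Your closing sentence gestures at this via excision and the universal property of the localization, which is an equivalent way to package the same identification. One small point worth making explicit: when you deduce $I\otimes A\simeq 0$ from $I\otimes I\simeq I$, you are using that the equivalence $I\otimes I\to I$ produced by the projection formula is the one induced by tensoring the counit $I\to 1_{C}$ with $I$; this compatibility follows from the naturality of the projection formula but deserves a word.
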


Finally, one has the following theorem saying  that the  functor mapping $U\in \n{S}(C)$ to $C(U)$ is a sheaf for the natural topology of the locale. 

\begin{theorem}[{\cite[Theorem 6.7]{CondensedComplex}}]
\label{TheoLocaleTopology}
\begin{enumerate}

\item  There is a Grothendieck topology on $\CAlg(\Cat^{\colim,\ex}_{\infty})$ where the sieve coverings over $C$ are those which contain a set of open immersions whose corresponding open subsets cover $\n{S}(C)$.

\item   The identity functor $(\CAlg(\Cat^{\colim,\ex}_{\infty})^{\op} )^{\op} \to \CAlg(\Cat^{\colim,\ex}_{\infty})$ is a sheaf with respect to this Grothendieck topology.

\item The poset of open (resp. closed) immersions satisfies descent with respect to this Grothendieck topology.

\end{enumerate}
\end{theorem}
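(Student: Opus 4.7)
The plan is to verify the three claims in turn, using the dictionary between idempotent algebras and closed subspaces from Proposition \ref{PropLocaleOperations} together with the categorical characterizations of open and closed immersions from Proposition \ref{PropClosedOpenLocalizationsInftyCat}.

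For (1), I would check the Grothendieck topology axioms directly. The maximal sieve covers since $1 \in \n{S}(C)$ covers itself via the identity. Transitivity follows because the composition of open immersions $C \to C(U) \to C(U)(V)$ corresponds to a smaller open subset of $\n{S}(C)$, and such compositions remain open immersions. The key point is stability under pullback: given $f^*: C \to D$ and an open immersion $j_U^*: C \to C(U)$ with complementary idempotent algebra $A \in C$, the pushout $D \otimes_C C(U)$ is the localization of $D$ away from the idempotent algebra $f^*A \in D$, hence is itself an open immersion; since the induced map of locales $\n{S}(C) \to \n{S}(D)$ preserves arbitrary unions, preimages of covers are covers.

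For (2), the heart of the matter is the binary Mayer--Vietoris statement: if $U, V \in \n{S}(C)^{\op}$ satisfy $U \cup V = 1$ (equivalently, the complementary idempotent algebras $A, A'$ satisfy $A \otimes A' = 0$), then the comparison
$$
C \xrightarrow{\sim} C(U) \times_{C(U \cap V)} C(V)
$$
is an equivalence. This follows from the excision fiber sequences of Remark \ref{RemarkExcisionSequences}: the relation $A \otimes A' = 0$ guarantees that the pair $(j_U^*, j_V^*)$ is jointly conservative once one identifies their common restriction to $C(U \cap V)$, and the inverse gluing functor is obtained explicitly by assembling the two excision triangles. The case of an arbitrary open cover $\{U_i\}$ then follows by a standard \v{C}ech nerve argument, reducing formally to the binary case via an inductive manipulation of finite subcovers.

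For (3), the property of being an open (resp.\ closed) immersion is encoded by an idempotent algebra in the source category (the defining or complementary algebra). Descent for open and closed immersions therefore reduces to descent of idempotent algebras, which in turn follows from (2): being idempotent is a condition that can be verified after restriction to a cover, and the gluing of local idempotent algebras into a global one is an instance of the sheaf property. The main obstacle is the binary Mayer--Vietoris equivalence in (2), where one must carefully unwind the excision fiber sequences in the stable symmetric monoidal setting and check both full faithfulness and essential surjectivity of the comparison functor; all other parts of the theorem reduce formally to this computation.
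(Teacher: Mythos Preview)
The paper does not prove this theorem; it is stated with a citation to \cite[Theorem 6.7]{CondensedComplex} and used as a black box throughout. So there is no ``paper's proof'' to compare against, and your sketch is effectively a proposal for how the argument in the cited reference might run.

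Your outline is broadly reasonable, but there is one genuine soft spot. In part (2) you reduce arbitrary covers to the binary case ``via an inductive manipulation of finite subcovers.'' Induction handles finite covers, but the topology allows infinite covers of $\n{S}(C)$, and there is no compactness available to refine to a finite subcover. The missing step is a colimit argument: if $\{U_i\}$ covers, then the complementary closed subspaces $Z_i$ have empty intersection, which by Proposition \ref{PropLocaleOperations}(4) means $\varinjlim_J \bigotimes_{i\in J} A_i = 0$ (colimit over finite subsets $J$). Tensoring with any $M\in C$ gives $M \simeq \varinjlim_J j_{U_J,!}j_{U_J}^* M$, and this is what lets you pass from the finite case to the infinite one for both full faithfulness and essential surjectivity. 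Without this, the reduction is not ``formal.''

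Your treatment of (1) and (3) is fine; in (3) the reduction to descent of idempotent algebras is exactly the right idea.
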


With the previous preparations done we can  define the $\infty$-category of categorified locale. 

\begin{definition}[{\cite[Definition 7.1]{CondensedComplex} and \cite[Definition 4.2]{aoki2023sheavesspectrum}}]
A categorified locale is a triple $(X,C,f)$ consisting on a locale $X$,  a  presentably symmetric monoidal stable $\infty$-category $C\in \CAlg(\Cat^{\colim,\ex}_{\infty})$, and a morphism of locales $f: \n{S}(C)\to X$. Morphisms of categorified locales $F:(X,C,f)\to (Y,C,g)$ consist on morphisms on the topological spaces, $F:X\to Y$ and morphisms of presentably symmetric monoidal categories  $f^*: D\to C$ commuting with the arrows $f:\n{S}(C)\to X$ and $g:\n{S}(D)\to Y$. 

 Given $C\in \CAlg(\Cat^{\colim,\ex}_{\infty})$ we let $\CatLoc_{C}$ be the $\infty$-category of $C$-tensored categorified locales, equivalently, the $\infty$-category of categorified locales  $(X,C',f)$ with $C'\in \CAlg(\Cat^{\colim,\ex}_{\infty})_{C/}$ and morphisms given by $C$-linear morphisms of categorified locales.
\end{definition}

We record the following lemma for future reference:

\begin{lemma}
\label{LemmaDualizableObjectslocale}
Let $(X,C,f)$ be a categorified locale. The category of dualizable (resp. invertible) objects on $C=C(X)$ is a sheaf on $X$.
\end{lemma}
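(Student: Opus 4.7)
The plan is to deduce this from Theorem \ref{TheoLocaleTopology}(2), which already provides descent for the presheaf $U \mapsto C(U)$ on $X$. Given a cover $\{U_i\}$ of $X$, we have an equivalence $C(X) \simeq \lim C(U_\bullet)$ in $\CAlg(\Cat^{\colim,\ex}_{\infty})$ indexed by the \v{C}ech diagram of finite intersections, and by Proposition \ref{PropClosedOpenLocalizationsInftyCat} the transition functors are symmetric monoidal and colimit-preserving (open immersions). In particular they preserve dualizable and invertible objects, so one obtains natural functors $\mathrm{Dual}(C(X)) \to \lim \mathrm{Dual}(C(U_\bullet))$ and $\mathrm{Inv}(C(X)) \to \lim \mathrm{Inv}(C(U_\bullet))$. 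Since $\mathrm{Dual}$ and $\mathrm{Inv}$ are full subcategories, checking the sheaf property reduces to an object-level assertion: an $M \in C(X)$ is dualizable (resp.\ invertible) as soon as each restriction $j_i^{*} M \in C(U_i)$ is.

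For dualizability the key input is that in a symmetric monoidal $\infty$-category the space of duality data $(M^\vee,\mathrm{ev},\mathrm{coev})$ attached to a given $M$ is either empty or contractible (see \cite[\S 4.6.1]{HigherAlgebra}). Thus if $j_i^{*} M$ admits a dual $N_i \in C(U_i)$ for every $i$, then on any overlap both $N_i|_{U_i \cap U_j}$ and $N_j|_{U_i \cap U_j}$ are duals of $M|_{U_i \cap U_j}$ and are therefore canonically identified, with all higher coherences automatic. The collection $(N_i)$ hence assembles into a coherent object of $\lim C(U_\bullet) \simeq C(X)$, producing a global candidate dual $M^\vee$; the evaluation and coevaluation morphisms descend by the same contractibility argument, and the triangle identities hold globally because they are equalities of morphisms in a sheaf of categories and so detected on the cover.

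For invertibility one additionally uses that $M$ is invertible iff it is dualizable and the evaluation $M^\vee \otimes M \to 1_C$ is an equivalence; the first condition is local by the previous paragraph, and being an equivalence is preserved and detected by the limit $C(X) \simeq \lim C(U_\bullet)$. The one non-formal ingredient is the contractibility of the space of dualizing data, which guarantees that the local duals glue coherently rather than merely up to a $1$-categorical isomorphism; once this standard $\infty$-categorical fact is granted, the rest of the argument is pure descent along the sheaf $U \mapsto C(U)$ provided by Theorem \ref{TheoLocaleTopology}.
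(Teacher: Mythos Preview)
Your proposal is correct, but the paper takes a different and more direct route. The paper invokes an equational criterion for dualizability (from \cite[Lemma 6.2]{MannSix2}): an object $\n{L}$ is dualizable if and only if the natural map $\n{L}\otimes\iHom_C(\n{L},1_C)\to\iHom_C(\n{L},\n{L})$ is an equivalence, and invertible if in addition $1_C\to\iHom_C(\n{L},\n{L})$ is an equivalence. The point is then that open restriction $j_U^*$ commutes with internal $\iHom$---this follows formally from the existence of the left adjoint $j_{U,!}$ and its projection formula (Proposition~\ref{PropClosedOpenLocalizationsInftyCat}(2))---so both maps above are compatible with restriction, and being an equivalence is local by Theorem~\ref{TheoLocaleTopology}(2).

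Your argument via contractibility of the space of dualizing data is the ``abstract'' proof that $\mathrm{Dual}(-)$ and $\mathrm{Inv}(-)$ commute with limits of symmetric monoidal $\infty$-categories; it works for any sheaf of symmetric monoidal categories without needing a closed structure or the specific form of open immersions. The paper's argument, by contrast, exploits exactly those features of categorified locales to turn dualizability into a property of a single morphism, which is then trivially local. Your route is more portable; the paper's is shorter in context and avoids appealing to the $\infty$-categorical uniqueness of duals.
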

\begin{proof}
By Lemma \cite[Lemma 6.2]{MannSix2}, an object $\n{L}\in C$ is dualizable if and only if the natural map $\n{L} \otimes \iHom_{C}(\n{L},1_{C}) \to \iHom_{C}(\n{L}, \n{L})$ is an equivalence. Moreover, it is invertible if in addition the natural map $1_{C} \to \iHom_{C}(\n{L}, \n{L})$ is an equivalence. For $U \subset X$ an open subspace, and objects $N,M\in C$, we have a natural equivalence
\[
j_U^* \iHom_{C}(N,M)= \iHom_{C(U)}(j_U^*N, j_U^* M),
\]
the lemma follows. 
\end{proof}

\subsection{Tate adic spaces as categorified locale}
\label{SubsectionClassicalAffinoid}

The goal of this section is to construct a categorified locale for classical Tate Huber pairs using the main results of \cite{Andreychev}, obtaining an analogue of the construction of categorified locales for complex analytic spaces of \cite{CondensedComplex}. In the following we only consider   sheafy  Tate Huber rings $(A,A^+)$ that admit a pseudo-uniformizer $\pi$, we  let $\Spa(A,A^+)$ denote the adic spectrum of equivalence classes of  continuous multiplicative  valuations $|-|_x:A \to \Gamma$ that satisfy $|a|_x\leq 1$ for all $a\in A^+$, cf. \cite{HuberEtaleCohomology}. Let us recall some basic properties of the adic spectrum: by \cite[Theorem 3.5]{HuberValuations}, $\Spa(A,A^+)$ is a spectral space with a basis of quasi-compact open subsets given by rational localizations $\{|f_i|\leq |g|\neq 0:  i=1,\ldots, d \} $ for $f_1,\ldots, f_d,g\in A$ elements generating $A$.  Furthermore, since $\{|f_i|\leq |g|\neq 0:  i=1,\ldots, d \}$ is quasi-compact, there is $n\in \bb{N}$ such that $\{|f_i|\leq |g|\neq 0:  i=1,\ldots, d \}\subset \{ |\pi^{n}| \leq |g|\neq 0 \}$, so that we can always assume that some $f_i$ is a pseudo-uniformizer of $A$. We have the following lemma

\begin{lemma}
\label{LemmaDevisageRationalLocalizations}
Let $U\subset \Spa(A,A^+)$ be a rational subset, then $U$ can be written as a composition of rational localizations of the form $\{|g|\leq 1\}$ and $\{1\leq|g|\}$.
\end{lemma}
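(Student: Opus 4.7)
The plan is to reduce to a two-stage decomposition after normalising one of the numerators to be a power of the pseudo-uniformizer. Start with a rational subset
\[
U = \{|f_1|,\ldots,|f_d|\leq |g|\neq 0\}\subset \Spa(A,A^+),
\]
with $f_1,\ldots,f_d,g\in A$ generating an open ideal. As already noted in the paragraph preceding the lemma, the quasi-compactness of $U$ together with $\pi$ being a topologically nilpotent unit in the Tate ring $A$ ensures that $|\pi^m|\leq |g|$ on $U$ for some $m\gg 0$. Adding $\pi^m$ to the list of numerators does not change $U$, so I may relabel and assume $f_1 = \pi^m$.

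My first step is then the localization of the second type
\[
V_1 := \{1\leq |g/\pi^m|\} = \{|\pi^m|\leq |g|\},
\]
where $g/\pi^m$ makes sense as an element of $A$ because $\pi$ is a unit. This corresponds to the Huber map $A\to A\langle\pi^m/g\rangle$, and clearly $U\subset V_1$. On this localization $g$ becomes a unit (its absolute value is bounded below by that of a unit), so each element $f_i/g$ for $i=2,\ldots,d$ lives in $A_1 := A\langle \pi^m/g\rangle$ and has the right to be adjoined as a power-bounded element.

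For the second stage I compose with the $d-1$ successive rational localizations of the first type
\[
V_1\supset V_1\cap\{|f_2/g|\leq 1\}\supset \cdots\supset V_1\cap\{|f_2/g|,\ldots,|f_d/g|\leq 1\},
\]
each one of the form $\{|h|\leq 1\}$ for a single element $h$ of the ring at that stage. The composite corresponds to the Huber pair
\[
A\langle\pi^m/g\rangle\langle f_2/g\rangle\cdots\langle f_d/g\rangle = A\langle f_1/g,f_2/g,\ldots,f_d/g\rangle,
\]
which is exactly the rational localization defining $U$. The only mild subtlety is the initial reduction to $f_1 = \pi^m$, which uses both the quasi-compactness of $U$ and the Tate hypothesis on $A$; everything afterwards is a formal manipulation of rational localizations and requires no further argument.
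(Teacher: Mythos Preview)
Your proof is correct and follows essentially the same approach as the paper: first pass to $\{1\leq |\pi^{-m}g|\}$ to make $g$ invertible, then successively adjoin $\{|f_i/g|\leq 1\}$. The paper's write-up is terser but the decomposition is identical.
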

\begin{proof}
Let us write $U=\{|f_i|\leq |g|\neq 0:  i=1,\ldots, d \}$, with $f_{d}= \pi^{n}$. Then $U$ is the composite of the rational localizations $\{ 1\leq |\pi^{-n}g|\}$ and $\{|f_i/g|\leq 1\}$. 
\end{proof}

In \cite[Theorem 4.1]{Andreychev}, Andreychev proved that the functor mapping a rational localization $U\subset \Spa(A,A^+)$ to the category of solid modules $\Mod((\s{O}(U),\s{O}^+(U))_{\sol})$ is in fact a sheaf on $\CAlg(\Cat^{\colim,\ex}_{\infty})$ (see Definition \ref{DefinitionGeneralizedHuberPairs} for the notion of  generalized Huber pair and the  construction of $(A,S)_{\sol}$). The next proposition says that  this functor can be upgraded to a categorified locale. 

\begin{prop}
\label{PropositionAffinoidCatLocale}
Let $(A,A^+)$ be an Tate Huber pair and let $X=\Spa(A,A^+)^{\op}$ be the poset of open subspaces of $\Spa(A,A^+)$. Consider the functor 
\[
\Mod_{X,\sol}(-): \Spa(A,A^+)^{\op} \to \CAlg(\Cat^{\colim,\ex}_{\infty})
\]
sending a rational localization $U$ to $\Mod((\s{O}(U), \s{O}^+(U))_{\sol})$. Then for any open $U\subset X$  the localization functor 
\[
j^*_U:\Mod((A,A^+)_{\sol}) \to \Mod_{X,\sol}(U)
\]
is an open localization in the sense of Proposition \ref{PropClosedOpenLocalizationsInftyCat} (2). 
\end{prop}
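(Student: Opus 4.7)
The plan is to use Lemma~\ref{LemmaDevisageRationalLocalizations} to reduce to two basic localizations, then to establish the open immersion property for universal models and transport it via base change.

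Open immersions in $\CAlg(\Cat^{\colim,\ex}_\infty)$ are closed under composition — composing fully faithful left adjoints yields a fully faithful left adjoint, and projection formulas compose — and under base change: an idempotent algebra $A_Z\in C$ pulls back to an idempotent algebra $g^*A_Z$ in any $C'$ under a colimit-preserving symmetric monoidal $g:C\to C'$, and the pushout $D\otimes_C C'$ agrees with the open complement of $g^*A_Z$ in $C'$. By Lemma~\ref{LemmaDevisageRationalLocalizations}, every rational open is a finite composition of basic rational opens $\{|g|\leq 1\}$ and $\{1\leq |g|\}$, while the analytic ring $(\s{O}(\{|g|\leq 1\}),\s{O}^+(\{|g|\leq 1\}))_\sol$ equals the pushout $(A,A^+)_\sol\otimes_{\bb{Z}[T]_\sol}\bb{Z}\langle T\rangle_\sol$ along $T\mapsto g$, and similarly for $\{1\leq |g|\}$. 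Hence it suffices to verify that the two universal maps $\bb{Z}[T]_\sol\to \bb{Z}\langle T\rangle_\sol$ and $\bb{Z}[T^{\pm 1}]_\sol\to \bb{Z}\langle T^{-1}\rangle[T,T^{-1}]_\sol$ are open immersions.

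For the universal map $\bb{Z}[T]_\sol\to \bb{Z}\langle T\rangle_\sol$, I would exhibit the idempotent $\bb{Z}[T]_\sol$-algebra $\n{R}_Z$ associated to the complementary closed subspace $\{|T|>1\}$: the natural candidate is $\n{R}_Z=\bb{Z}((T^{-1}))_\sol$, obtained by inverting $T$ and imposing topological nilpotence of $T^{-1}$ in the solid sense. One then verifies: (i) $\n{R}_Z\otimes_{\bb{Z}[T]_\sol}\n{R}_Z\simeq \n{R}_Z$, so $\n{R}_Z$ is idempotent; (ii) $\n{R}_Z\otimes_{\bb{Z}[T]_\sol}\bb{Z}\langle T\rangle_\sol=0$, since power-boundedness of $T$ together with topological nilpotence of $T^{-1}$ forces $1=T\cdot T^{-1}$ to be topologically nilpotent; and (iii) the essential image of the forgetful $\Mod(\bb{Z}\langle T\rangle_\sol)\to \Mod(\bb{Z}[T]_\sol)$ coincides with the full subcategory of objects $M$ satisfying $\n{R}_Z\otimes_{\bb{Z}[T]_\sol}M=0$. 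Granting these, Definition~\ref{DefinitionOpenClosedMonoidal} produces the open immersion structure, with $j_!$ given by $[\bb{Z}[T]_\sol\to \n{R}_Z]\otimes_{\bb{Z}[T]_\sol}-$ and the projection formula automatic from that formula. The second universal map is handled by an identical argument after exchanging the roles of $T$ and $T^{-1}$.

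The main technical obstacle is step (iii), the identification of the orthogonal. One direction is immediate from (ii). The converse — that any $\bb{Z}[T]_\sol$-module annihilated by $\n{R}_Z\otimes-$ is automatically $\bb{Z}\langle T\rangle_\sol$-complete — should reduce, via testing on compact generators, to a solid fracture square
\[
\bb{Z}[T]_\sol\;\longrightarrow\;\bb{Z}\langle T\rangle_\sol\oplus \n{R}_Z\;\longrightarrow\;\bb{Z}\langle T\rangle_\sol\otimes_{\bb{Z}[T]_\sol}\n{R}_Z
\]
exhibiting $\bb{Z}[T]_\sol$ as glued from $\bb{Z}\langle T\rangle_\sol$ and $\n{R}_Z$ along their overlap. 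This is the solid counterpart of the decomposition of $\Spa\bb{Z}[T]$ into $\{|T|\leq 1\}$ and $\{|T|>1\}$; once verified on profinite generators of the relevant analytic rings, descent of modules along the fracture square yields (iii) and completes the proof.
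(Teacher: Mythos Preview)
Your overall strategy matches the paper's: reduce via Lemma~\ref{LemmaDevisageRationalLocalizations} to the two basic localizations, realize them as base changes of universal models, and identify each universal model as the open complement of an explicit idempotent algebra. The paper then simply cites \cite[Theorem 8.1]{ClausenScholzeCondensed2019} for this last identification, which is precisely the ``solid fracture square'' you sketch in your final paragraph.

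However, you have the universal models wrong, and this is not cosmetic. You write the base as $\bb{Z}[T]_\sol$ and the target as $\bb{Z}\langle T\rangle_\sol$. But over $\bb{Z}$ there is no pseudo-uniformizer, so $\bb{Z}\langle T\rangle_\sol$ has no independent meaning here; more seriously, $\bb{Z}[T]_\sol$ is the analytic ring in which $T$ is \emph{already} solid (i.e.\ power-bounded). An arbitrary element $g\in A$ does \emph{not} in general give a map of analytic rings $\bb{Z}[T]_\sol\to (A,A^+)_\sol$ --- that would require $g\in A^+$. The correct base is $(\bb{Z}[T],\bb{Z})_\sol$, the polynomial ring with the minimal analytic structure, which receives a map $T\mapsto g$ for any $g\in A$. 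The two universal open immersions are then
\[
(\bb{Z}[T],\bb{Z})_\sol\longrightarrow \bb{Z}[T]_\sol \quad\text{and}\quad (\bb{Z}[T],\bb{Z})_\sol\longrightarrow (\bb{Z}[T^{\pm 1}],\bb{Z}[T^{-1}])_\sol,
\]
with complementary idempotent $(\bb{Z}[T],\bb{Z})_\sol$-algebras $\bb{Z}((T^{-1}))=\bb{Z}[[T^{-1}]][T]$ and $\bb{Z}[[T]]$ respectively; the base-change formulas are those of \cite[Proposition 4.11]{Andreychev}. Once you fix the base ring, your argument goes through and coincides with the paper's.
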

\begin{proof}
It suffices to prove the statement for rational localizations, by Lemma \ref{LemmaDevisageRationalLocalizations} we can even reduce to rational localizations of the form  $\{|g|\leq 1\}$  or $U=\{1\leq |g|\}$  for $g\in A$. Then, by \cite[Proposition 4.11]{Andreychev} we have 
\[
(\s{O}(U), \s{O}^+(U))_{\sol}=(A,A^+)_{\sol} \otimes_{(\bb{Z}[T],\bb{Z})_{\sol}} \bb{Z}[T]_{\sol} \mbox{ and }(\s{O}(U), \s{O}^+(U))_{\sol}= (A,A^+)_{\sol} \otimes_{(\bb{Z}[T], \bb{Z})_{\sol}} (\bb{Z}[T^{\pm 1}], \bb{Z}[T^{-1}])_{\sol}
\]
respectively, where $T$ is mapped to $g$ . Thus, it suffices to show that $(\bb{Z}[T], \bb{Z})_{\sol} \to \bb{Z}[T]_{\sol}$ and $(\bb{Z}[T], \bb{Z})_{\sol} \to (\bb{Z}[T^{\pm 1}], \bb{Z}[T^{-1}])_{\sol}$ define open localizations for their categories of modules.  By the proof of \cite[Theorem 8.1]{ClausenScholzeCondensed2019}, the former localization  is the complement of the idempotent $(\bb{Z}[T],\bb{Z})_{\sol}$-algebra $\bb{Z}((T^{-1}))= \bb{Z}[[T^{-1}]][T]$, and the last is the complement of the idempotent algebra $\bb{Z}[[T]]$, this ends the proof of the proposition. 
\end{proof}

\begin{definition}
\label{DefinitionLocaleClassicalAffinoid}
Let $(A,A^+)$ be an Tate Huber pair, we let $\Spa (A,A^+)_{\sol}$ denote the categorified locale $(\Spa(A,A^+), \Mod((A,A^+)_{\sol}))$ obtained by Proposition \ref{PropositionAffinoidCatLocale}. 
\end{definition}

\begin{corollary}
The functor $(A,A^+)\mapsto \Spa (A,A^+)_{\sol}$ extends to a conservative functor from  the $1$-category of analytic adic spaces to the $\infty$-category of categorified locales tensored over $\Mod(\bb{Z}_{\sol})$. Moreover, this functor is fully faithful when restricted to the full subcategory of analytic adic spaces over $\bb{Q}$. 
\end{corollary}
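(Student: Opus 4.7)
The plan is to extend the functor to general analytic adic spaces by gluing along affinoid covers using the descent results already established, and then to obtain conservativity and fully faithfulness from the properties of the underlying functor $\Mod(-)$ on analytic rings. Given an analytic adic space $X$ together with an open cover by affinoids $\{U_i\}$, Proposition \ref{PropositionAffinoidCatLocale} produces the categorified locales $\Spa(\s{O}(U_i),\s{O}^+(U_i))_{\sol}$, compatible on overlaps thanks to \cite[Theorem 4.1]{Andreychev}, and Theorem \ref{TheoLocaleTopology}(2) allows us to glue these into a single categorified locale $\Spa(X)_{\sol}$ whose underlying topological space is $|X|$. Functoriality is verified on affinoid pieces after pulling back a rational affinoid cover of the target, using that on affinoid pairs the construction $(A,A^+)\mapsto (A,A^+)_{\sol}$ is manifestly functorial.

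For conservativity, suppose $f\colon X\to Y$ is a morphism of adic spaces such that the induced morphism of categorified locales is an equivalence. Then $|f|\colon|X|\to|Y|$ is a homeomorphism of the underlying topological spaces, and on each affinoid rational open $U\subseteq Y$ one obtains an equivalence of symmetric monoidal colimit-preserving $\infty$-categories between $\Mod((\s{O}(U),\s{O}^+(U))_{\sol})$ and the corresponding category on $f^{-1}(U)$. Since by Remark \ref{RemarkFullyFaithAnimatedAnalyticRing} the functor $\Mod(-)$ on analytic animated rings is conservative, and since by \cite{Andreychev} the $1$-category of complete Tate Huber pairs embeds fully faithfully into analytic rings, this forces $f$ to induce an isomorphism of Huber pairs on every affinoid open, and hence an isomorphism of adic spaces.

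For the fully faithfulness statement over $\bb{Q}$, Lemma \ref{LemmaFullyFaithfulInjectionAnRings} asserts that $\Mod(-)$ is fully faithful on the $\infty$-category of complete analytic $\bb{E}_\infty$-rings; since over $\bb{Q}$ the $\infty$-categories of animated rings and connective $\bb{E}_\infty$-rings coincide, combined with \cite{Andreychev} this yields fully faithfulness of our functor on classical affinoid pairs over $\bb{Q}$. To extend this to arbitrary adic spaces $X,Y$ over $\bb{Q}$, one computes the mapping set $\Map(X,Y)$ as the limit over an affinoid cover $\{U_i\}$ of $X$ of the spaces $\Map(U_i,Y)$, and analogously the mapping anima between the associated categorified locales is a limit over the same cover of mapping anima into $\Spa(Y)_{\sol}$; matching term by term via the affinoid case gives the desired equivalence.

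The main obstacle lies in this last matching step: one must check that a morphism of categorified locales $\Spa(U_i)_{\sol}\to \Spa(Y)_{\sol}$ automatically factors, at the level of the source, through the affinoid pieces of some rational affinoid cover of $Y$. This reduces to the analogous statement for morphisms of adic spaces from a quasi-compact affinoid source, together with the fact that the closed/open decomposition of the underlying locale of $\Spa(U_i)_{\sol}$ is spectral and hence compatible with the locale associated to $|Y|$. Handling this point cleanly requires unwinding the definition of morphisms in $\CatLoc$ in terms of the frame of open immersions, together with the locale description of $\n{S}(\Mod((A,A^+)_{\sol}))$ supplied by Proposition \ref{PropLocaleOperations}.
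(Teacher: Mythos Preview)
Your construction of the functor and the conservativity argument are essentially the same as the paper's, and they are fine. The gap is in the fully faithfulness step.

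The paper reduces directly to the case where both source and target are affinoid Huber pairs, and then invokes Lemma \ref{LemmaFullyFaithfulInjectionAnRings} together with \cite[Proposition 3.34]{Andreychev}. But these results only control the categorical part of a morphism of categorified locales: a map $(\Spa(A,A^+),\Mod((A,A^+)_{\sol}))\to(\Spa(B,B^+),\Mod((B,B^+)_{\sol}))$ also contains the datum of a continuous map of spectral spaces compatible with the locale maps $\rho$. The remaining issue is therefore to show that this continuous map is already determined by the map on module categories. The paper isolates this as a single clean statement: the morphism of locales $\n{S}(\Mod((A,A^+)_{\sol}))\to \Spa(A,A^+)$ is \emph{surjective}. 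Once one knows this, the spectral map is forced by functoriality of $\n{S}(-)$, and the argument concludes. The paper defers this surjectivity to Proposition \ref{PropAdicSpectralIsTopological}(3), where it is proved in greater generality for arbitrary bounded affinoid rings.

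Your final paragraph is circling around this point---you are trying to recover a factorization of the categorified-locale map through affinoid pieces of $Y$, which amounts to recovering the continuous map on underlying spaces from the categorical data---but you do not isolate the precise obstruction. The phrase ``the closed/open decomposition of the underlying locale of $\Spa(U_i)_{\sol}$ is spectral and hence compatible with the locale associated to $|Y|$'' is not a proof; what is needed is exactly the surjectivity of $\rho_{\n{A}}$. Without that, there is no mechanism preventing two distinct continuous maps $\Spa(A,A^+)\to\Spa(B,B^+)$ from underlying the same $\Mod$-functor, and your matching of mapping spaces does not go through.
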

\begin{proof}
Let $X$ be an analytic adic space, and let $U_{\bullet}$ be an hypercover of $X$ by open affinoid subspaces. By Proposition \ref{PropositionAffinoidCatLocale}, we can  construct the simplicial categorified locale $(|U_{\bullet}|, \Mod_{U_{\bullet},\sol})$, taking geometric realizations we obtain a categorified locale $(|X|, \Mod_{X,\sol})$, where $|X|$ is the underlying space of $X$, $\Mod_{X,\sol}$ is the $\infty$-category of solid quasi-coherent sheaves on $X$, and $f: \n{S}(\Mod_{X,\sol})\to |X|$ is the geometric realization of the map $\n{S}(\Mod_{U_{\bullet},\sol}) \to U_{\bullet}$. It is easy to verify that this construction is  independent of the hypercover, so that it gives rise a well defined functor from adic spaces to categorified locales. The conservativity of the functor is clear by Remark \ref{RemarkFullyFaithAnimatedAnalyticRing} and Lemma \ref{LemmaFullyFaithfulInjectionAnRings}.   To prove the last statement about the fully faithful inclusion for adic spaces over $\bb{Q}$, one can reduce to Huber pairs $(A,A^+)$, where by \cite[Proposition 3.34]{Andreychev} and  Lemma \ref{LemmaFullyFaithfulInjectionAnRings} it suffices to show that the morphism of categorified locales $\n{S}(\Mod((A,A^+)_{\sol})) \to \Spa(A,A^+)$ is surjective, this will be proved independently in more generality  in  Proposition \ref{PropAdicSpectralIsTopological} (3). 
\end{proof}

\subsection{Some idempotent algebras}
\label{SubsectionIdempotentAlgebras}

We let $\Mod(\underline{\bb{Z}})$ denote the $\infty$-derived category of condensed abelian groups.  Let $\Cond(\AniRing)$ be the $\infty$-category of  condensed  animated rings, the forgetful functor $\Cond(\AniRing)\to \Mod_{\geq 0}(\underline{\bb{Z}} )$ has a left adjoint given by the symmetric group algebra $\Sym^{\bullet} M$. Moreover, for each $n\geq 0$ we have a symmetric power functors $\Sym^{n} M$ that are computed as the sheafification of $S\mapsto \Sym^{n} (M(S))$ for $S\in \Extdis$. Analogously, one has wedge products $\bigwedge^n M= \Sym^{n} M[1][-n]$ (which are given by the sheafification of $S\mapsto \bigwedge^n M(S)$), and divided powers functors $\Gamma^{n}(M)= \Sym^{n}(M[2])[-2n]$. 

For a free abelian group $F$, a concrete description of its $n$-th symmetric and divided power functor is given by the (co-)invariants of the symmetric group $\Sigma_n$ in its $n$-th fold tensor product respectively: 
\[
\Sym^n F = (F^{\otimes n})_{\Sigma_n} \mbox{ and } \Gamma^n F= (F^{\otimes n})^{\Sigma_n}. 
\]

Thus,   for $S\in \Extdis$, we have  explicit  descriptions $\Sym^{n} \bb{Z}[S] = \bb{Z}[S^n]_{\Sigma_n} = \bb{Z}[S^n_{\Sigma_n}]$ and $\Gamma^{n} \bb{Z}[S]= \bb{Z}[S^{n}]^{\Sigma_n}$, where  $S^{n}_{\Sigma_n}$ is the quotient space of $S^{n}$ by the  natural action of $\Sigma_n$.  In particular, the symmetric algebra of $\bb{Z}[S]$ is described as  $\Sym^{\bullet}(\bb{Z}[S])= \bb{Z}[\bb{N}[S]]$ where $\bb{N}[S]= \bigcup_{c} \bb{N}[S]_{\leq c} $ with $\bb{N}[S]_{\leq c}= \varprojlim_{i} \bb{N}[S_i]_{\leq c}$, and $\bb{N}[S_i]_{\leq c}$  being the space of  sequences $\sum_{s\in S_i}a_s s$ with $a_s\in \bb{N}$ and $\sum_{s\in S_i} a_s \leq c$, note that $\bb{N}[S]_{=c}= S^{c}_{\Sigma_c}$.

We want to describe explicitly the solidification of the symmetric powers, wedge products and divided power functors for the groups $\bb{Z}[S]$. 
 
 \begin{lemma}
 \label{LemmaExactKoszulDiagrams}
 Let $S$ be an extremally disconnected set, we have natural  exact sequences 
 \[
 \begin{tikzcd}
 0 \ar[r] & \bigwedge^n \bb{Z}[S] \ar[r] & \cdots \ar[r] & \Sym^{n-1} \bb{Z}[S] \otimes \bigwedge^1 \bb{Z}[S]  \ar[r] & \Sym^{n} \bb{Z}[S] \ar[r] & 0  \\ 
  0 \ar[r] & \Gamma^n \bb{Z}[S] \ar[r] & \cdots \ar[r] & \bigwedge^{n-1} \bb{Z}[S] \otimes \Gamma^1 \bb{Z}[S]  \ar[r] & \bigwedge^{n} \bb{Z}[S] \ar[r] & 0 .
 \end{tikzcd}
 \]
 \end{lemma}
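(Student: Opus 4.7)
The two sequences are the classical Koszul complex and its divided-power dual attached to the identity map on $M = \bb{Z}[S]$. My plan is to identify them as natural functorial complexes, reduce the problem to an algebraic statement about discrete free abelian groups (no derived subtleties), and invoke the classical exactness of the Koszul resolution.

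First I would set up the differentials. For the first sequence, the map $\bigwedge^k M \otimes \Sym^{n-k} M \to \bigwedge^{k-1} M \otimes \Sym^{n-k+1} M$ is the usual
\[
(x_1 \wedge \cdots \wedge x_k) \otimes y \;\longmapsto\; \sum_{i=1}^{k} (-1)^{i+1} (x_1 \wedge \cdots \wedge \widehat{x_i} \wedge \cdots \wedge x_k) \otimes (x_i y),
\]
and for the second one uses the dual differential built from the comultiplication on $\Gamma^\bullet$ paired with the product on $\bigwedge^\bullet$. Both are defined functorially for any condensed abelian group $M$.

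Next, since $S$ is extremally disconnected, $\bb{Z}[S]$ is a compact projective object of $\Condab$. As recalled before the lemma, on such an object the animated functors $\Sym^n$, $\bigwedge^n$, $\Gamma^n$ agree with their undervied counterparts, and they admit the explicit descriptions $\Sym^n \bb{Z}[S] = \bb{Z}[S^n_{\Sigma_n}]$, $\Gamma^n \bb{Z}[S] = \bb{Z}[S^n]^{\Sigma_n}$, with the analogous description of $\bigwedge^n$ via the sign representation. In particular every term of the two complexes is a bona fide condensed abelian group and the problem becomes algebraic rather than derived.

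Finally I would check exactness sectionwise. For $T \in \Extdis$ the formulas above are compatible with evaluation $(-)(T)$, because every surjection onto an extremally disconnected set splits, so taking $\Sigma_n$-(co)invariants commutes with $(-)(T)$ and $\bb{Z}[-]$ commutes with the relevant colimits. Hence on sections the two sequences become the classical Koszul complex and its divided-power dual attached to the free abelian group $\bb{Z}[\Cont(T,S)]$. Their exactness is standard: the Koszul complex of a direct sum $F \oplus F'$ is the tensor product of the complexes for $F$ and $F'$, each rank-one Koszul complex is split exact, and the divided-power variant follows identically by dualising. The main technical point I anticipate is justifying cleanly that the (co)invariant formulas for $\Sym^n$, $\bigwedge^n$ and $\Gamma^n$ commute with sectionwise evaluation; this is where extremal disconnectedness of $S$ (and of the test objects $T$) is essential.
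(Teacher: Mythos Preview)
Your proposal is correct and follows essentially the same route as the paper: both reduce to the classical Koszul and divided-power Koszul complexes for the free abelian group $\bb{Z}[S(T)]=\bb{Z}[\Cont(T,S)]$ by working sectionwise, invoke the standard exactness for free modules, and then pass back to condensed abelian groups via sheafification. The paper is slightly terser (it just says the finite-free case is ``obvious'' and passes to arbitrary free modules by filtered colimits), while you spell out the differentials and flag the commutation of (co)invariants with evaluation, but the substance is the same.
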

\begin{proof}
By definition, $\bb{Z}[S]$ is the sheafification of  the presheaf mapping $T\in \Extdis$ to the free abelian group $\bb{Z}[S(T)]$. For a finite free $\bb{Z}$-module $F$ we have obvious exact sequences 
 \begin{equation}
 \label{ShortExactPDSYMWEDGE}
 \begin{tikzcd}
 0 \ar[r] & \bigwedge^n F \ar[r] & \cdots \ar[r] & \Sym^{n-1} F \otimes \bigwedge^1 F \ar[r] & \Sym^{n}F  \ar[r] & 0  \\ 
  0 \ar[r] & \Gamma^n F \ar[r] & \cdots \ar[r] & \bigwedge^{n-1}F \otimes \Gamma^1 F  \ar[r] & \bigwedge^{n} F \ar[r] & 0 .
 \end{tikzcd}
 \end{equation}
 where $\Sym^{i} F\otimes \bigwedge^j F \to  \Sym^{i+1} F \otimes \bigwedge^{j-1} F$ maps $(a_1 \odot \cdots \odot a_i) \otimes (b_1\wedge \cdots \wedge b_{j})\mapsto \sum_{k=1}^{j} (-1)^{k-1} (a_1\odot \cdots \odot a_{i}\odot b_{k}) \otimes (b_1\wedge \cdots \widehat{b_k}\wedge \cdots \wedge b_j) $, and the second sequence is obtained by taking duals to the first one evaluated at $F^{\vee}$. Note that both constructions are natural and covariant  for $F$, taking filtered colimits we obtain the same exact sequences for an arbitrary free $\bb{Z}$-module.  Taking $F= \bb{Z}[S(T)]$ and  sheafifications we get exact sequences as stated in the lemma. 
\end{proof}

\begin{definition}
For $\n{A}$ an analytic ring  and $M\in \Mod_{\geq 0}(\n{A})$,  let $\Sym^{\bullet}_{\n{A}} M$ be the left adjoint of the forgetful functor $\mathrm{AniAlg}_{\n{A}} \to \Mod_{\geq 0}(\n{A})$ from animated $\n{A}$-algebras towards connective $\n{A}$-modules. Equivalently, we let $\Sym^{\bullet}_{\n{A}}M = \bigoplus_{n\in \bb{N}} \Sym^{n}_{\n{A}} M$ and  $\Sym^{n}_{\n{A}} M =\n{A}\otimes_{\underline{\n{A}}} \Sym^n_{\underline{\n{A}}} M$, where $\Sym^n_{\underline{\n{A}}} M$ is the symmetric power as condensed $\underline{\n{A}}$-module, see \cite[Proposition 12.26]{ClauseScholzeAnalyticGeometry}. We denote the wedge  and divided power functors by $\bigwedge^{n}_{\n{A}} M = (\Sym^{n}_{\n{A}} M[1])[-n]$ and $\Gamma^n_{\n{A}} M = (\Sym^{n}_{\n{A}} M[2])[-2n]$. Finally, for $S$ an extremally disconnected set we write $\n{A}[\bb{N}[S]]:= \Sym_{\n{A}}^{\bullet} \n{A}[S]$. 
\end{definition}

\begin{lemma}
\label{LemmaKosulReolutionsSolidModules}
Let $S$ be a profinite set and  let $I$ be  an index set such that $C(S,\bb{Z})\cong \bigoplus_I \bb{Z} e_i$, so that  $\bb{Z}_{\sol}[S]\cong \prod_{I} \bb{Z} e_{i}^{\vee}$. The following hold: 
\begin{enumerate}
\item   $\Sym^{n}_{\bb{Z}_{\sol}} \bb{Z}_{\sol}[S]=\bb{Z}_{\sol}[S^n_{\Sigma_n}] \cong \prod_{\underline{\alpha}\in I^n_{\Sigma_n}} \bb{Z} (\odot_{i\in \underline{\alpha}} e_i^{\vee}) $, where $\odot$ is the symmetric tensor product. 

\item $\bigwedge^n_{\bb{Z}_{\sol}} \bb{Z}_{\sol}[S] \cong \prod_{\substack{ J\subset I \\ |J|=n}} \bb{Z} (\wedge_{j\in J} e_j^{\vee})$, where we have fixed a total order for $I$ in the wedge product.  

\item $\Gamma^n_{\bb{Z}_{\sol}} \bb{Z}_{\sol}[S]= \bb{Z}_{\sol}[S^{n}]^{\Sigma_n} \cong \prod_{\underline{\alpha}\in I^{n}_{\Sigma_n}} \bb{Z}(  (\odot_{i\in \underline{\alpha}} e_i ))^{\vee}$, where  $(-)^{\vee}$ is the dual basis.

\item The sequences of Lemma \ref{LemmaExactKoszulDiagrams} remain exact  after solidification. 
 \end{enumerate} 
\end{lemma}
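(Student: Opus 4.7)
The plan is to compute the solid symmetric, wedge and divided powers of $\bb{Z}_{\sol}[S]$ explicitly using the defining formulas $\Sym^n_{\bb{Z}_{\sol}}M = \bb{Z}_{\sol}\otimes_{\underline{\bb{Z}}} \Sym^n_{\underline{\bb{Z}}} M$ and its analogues for wedges and divided powers, combined with the identification $\Sym^n_{\underline{\bb{Z}}}\bb{Z}[S] = \bb{Z}[S^n_{\Sigma_n}]$ recalled at the start of the subsection. For part (1), this immediately yields $\Sym^n_{\bb{Z}_{\sol}}\bb{Z}_{\sol}[S] = \bb{Z}_{\sol}[S^n_{\Sigma_n}]$, using that $S^n_{\Sigma_n}$ remains profinite as the quotient of a profinite set by a finite group action. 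To identify this with the claimed product, I would use that $\bb{Z}_{\sol}[T]$ of a profinite $T$ is the continuous $\bb{Z}$-linear dual of $C(T,\bb{Z})$, together with the computation $C(S^n_{\Sigma_n},\bb{Z}) = (C(S,\bb{Z})^{\otimes n})^{\Sigma_n} = \Gamma^n C(S,\bb{Z})$. The basis of $\Gamma^n(\bigoplus_I \bb{Z} e_i)$ given by orbit sums indexed by $I^n_{\Sigma_n}$ then dualises to the desired product description.

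Part (2) I plan to handle by the same template, using a fixed total order on $I$ to produce the basis of $\bigwedge^n(\bigoplus_I \bb{Z} e_i)$ indexed by size-$n$ subsets $J\subset I$, and then dualising. For part (3), using that on flat condensed modules the derived divided power $\Gamma^n$ agrees with the classical $\Sigma_n$-invariants of the $n$-fold tensor power, I would compute $F^{\otimes n} = \bb{Z}_{\sol}[S^n] = \prod_{I^n}\bb{Z}$ for $F = \bb{Z}_{\sol}[S]$, and then extract the $\Sigma_n$-invariants coordinate-wise over the orbits of $\Sigma_n$ acting on $I^n$, obtaining $\prod_{I^n_{\Sigma_n}}\bb{Z}$.

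For part (4), the cleanest approach I see is via $\bb{Z}$-duality. Parts (1)--(3) identify the solid symmetric, wedge and divided powers of $\bb{Z}_{\sol}[S]$ with the continuous $\bb{Z}$-duals of the classical divided, wedge and symmetric powers of $C(S,\bb{Z}) = \bigoplus_I \bb{Z} e_i$ respectively. Under this duality the two solid Koszul sequences become, up to reversal of arrows, the $\bb{Z}$-duals of the two classical Koszul sequences on $C(S,\bb{Z})$. Writing $C(S,\bb{Z})$ as a filtered colimit of its finite-rank free submodules and invoking Lemma \ref{LemmaExactKoszulDiagrams} for each, both Koszul sequences on $C(S,\bb{Z})$ are split exact (split exactness being preserved under filtered colimits of sequences of free abelian groups). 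Since $\bb{Z}$-duality preserves split exact sequences, the solid Koszul sequences on $\bb{Z}_{\sol}[S]$ are also (split) exact, establishing (4). The main obstacle I anticipate lies in the bookkeeping involved in matching the basis conventions of parts (1)--(3), such as $\odot_{i\in \underline{\alpha}} e_i^\vee$ versus $(\odot_{i\in \underline{\alpha}} e_i)^\vee$, with the abstract duality isomorphism $(\Sym^n(-))^\vee = \Gamma^n((-)^\vee)$ (and its analogues)---a classical but slightly finicky check of how orbit-sum bases transform under dualisation.
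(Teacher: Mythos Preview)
Your approach for (1), (3), and (4) is close to the paper's in spirit—both routes compute the solid powers via $\bb{Z}$-duality with the corresponding classical constructions on the free abelian group $C(S,\bb{Z})$. However, your treatment of (2) has a genuine gap. The ``same template'' from (1) relied on first identifying $\Sym^n_{\bb{Z}_{\sol}}\bb{Z}_{\sol}[S]$ with $\bb{Z}_{\sol}[T]$ for the explicit profinite set $T=S^n_{\Sigma_n}$, and only then invoking the duality $\bb{Z}_{\sol}[T]=C(T,\bb{Z})^\vee$. For the wedge power there is no such profinite set: $\bigwedge^n_{\bb{Z}_{\sol}}\bb{Z}_{\sol}[S]$ is defined as the solidification of the derived functor $\Sym^n(\bb{Z}[S][1])[-n]$, and it is not a priori clear that this object is static, compact, or equal to the $\bb{Z}$-dual of $\bigwedge^n C(S,\bb{Z})$. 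Simply computing $\bigwedge^n(\bigoplus_I\bb{Z}e_i)$ and dualising gives you a candidate answer, but no identification with the derived solid wedge power. A related subtlety lurks in your argument for (3): writing ``$F^{\otimes n}=\bb{Z}_{\sol}[S^n]$'' presupposes the \emph{solid} tensor product, whereas $\Gamma^n_{\bb{Z}_{\sol}}$ is defined as the solidification of the \emph{condensed} $\Gamma^n$, so one must justify that these agree.

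The paper bridges this gap by first using the exact Koszul sequences of Lemma~\ref{LemmaExactKoszulDiagrams} together with an induction on $n$ to prove that $\bigwedge^n_{\bb{Z}_{\sol}}\bb{Z}_{\sol}[S]$ and $\Gamma^n_{\bb{Z}_{\sol}}\bb{Z}_{\sol}[S]$ are compact $\bb{Z}_{\sol}$-modules: each sequence expresses the unknown power as an iterated fibre of tensor products of symmetric powers with lower wedge or divided powers, which are compact by the inductive hypothesis. Compact solid $\bb{Z}$-modules are reflexive (being retracts of finite complexes of objects $\prod\bb{Z}$), so one may compute them by computing their $\bb{Z}$-duals. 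Taking $\bb{Z}$-duals of the solid Koszul sequences then produces the classical Koszul sequences on $C(S,\bb{Z})$; since $C(S,\bb{Z})$ is free these are exact, and dualising back yields both the explicit identifications (2)--(3) and the exactness (4) in one stroke. The logical order matters: compactness is established first, via the Koszul sequences themselves, and this is what makes the duality argument available.
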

\begin{proof}
We can assume without loss of generality that $S$ is extremally disconnected. The first equality in part (1) follows from the explicit description of the symmetric  power functor of the  free condensed abelian group generated by $S$, and the fact that $\bb{Z}_{\sol}[T]$ is the derived solidification of $\bb{Z}[T]$ for any profinite set $T$.  Then,  Lemma \ref{LemmaExactKoszulDiagrams} and an inductive argument show that  $\bigwedge^n_{\bb{Z}_{\sol}} \bb{Z}_{\sol}[S]$ and $\Gamma^n_{\bb{Z}_{\sol}} \bb{Z}_{\sol}[S]$ are compact $\bb{Z}_{\sol}$-modules for all $n\in \bb{N}$.  A compact $\bb{Z}_{\sol}$-module is reflexive, namely, it is a retract of a finite complex of compact projective modules $\bb{Z}_{\sol}[T]\cong \prod \bb{Z}$ and $\prod \bb{Z}$ is reflexive as solid $\bb{Z}_{\sol}$-module. Therefore, to compute  $\bigwedge^n_{\bb{Z}_{\sol}} \bb{Z}_{\sol}[S]$ and $\Gamma^n_{\bb{Z}_{\sol}} \bb{Z}_{\sol}[S]$ it suffices to compute their dual. But then, by taking duals of the sequences of Lemma \ref{LemmaExactKoszulDiagrams}  with $F=\bb{Z}[S]$ one obtains the analogue sequences
 \[
 \begin{tikzcd}
0 \ar[r] &  \Gamma^n C(S,\bb{Z}) \ar[r] & \cdots \ar[r] & \Gamma^{1} C(S, \bb{Z}) \otimes \bigwedge^{n-1} C(S,\bb{Z}) \ar[r] &  \bigwedge^n C(S,\bb{Z}) \ar[r] & 0 \\  
0 \ar[r] & \bigwedge^n C(S, \bb{Z}) \ar[r]& \cdots \ar[r] & \Sym^{n-1} C(S,\bb{Z})\otimes \bigwedge^1 C(S, \bb{Z}) \ar[r] & \Sym^n C(S,\bb{Z}) \ar[r] & 0  .  
 \end{tikzcd}
 \]
 By \cite[Theorem 5.4]{ClausenScholzeCondensed2019} the $\bb{Z}$-module $\Cont(S,\bb{Z})$ is a free abelian group, so that the previous sequences are actually exact. This gives the isomorphism $\Cont(S, \bb{Z})\cong \bigoplus_I \bb{Z} e_i$ we fixed in the proposition, proving that  $\Sym^n \Cont(S, \bb{Z})\cong \bigoplus_{\underline{\alpha}\in I^n_{\Sigma_n}} \bb{Z} (\odot_{i\in \underline{\alpha}} e_i)$, $\Gamma^n C(S,\bb{Z}) \cong \bigoplus_{\underline{\alpha}\in I^n_{\Sigma_n}} \bb{Z} (\odot_{i\in \underline{\alpha}} e_i^{\vee})^{\vee}$ and  $\bigwedge^n  \Cont(S, \bb{Z}) \cong \bigoplus_{\substack{J\subset I\\ |J|=n}} \bb{Z} (\wedge_{j\in J} e_j)$. Taking duals one deduces that the solidification of the exact sequences of Lemma  \ref{LemmaExactKoszulDiagrams} are still exact obtaining (4), and that the other explicit descriptions of (1)-(3) also hold. 
\end{proof}

\begin{corollary}
\label{CoroKoszulResolutionAlgebra}
Let $S$ be a profinite set, then the trivial $\bb{Z}_{\sol}[\bb{N}[S]]$-module $\bb{Z}$ has a long Koszul resolution
\begin{equation}
\label{eqLongKoszulresolution}
 \cdots \to  \bb{Z}_{\sol}[\bb{N}[S]] \otimes \bigwedge^2_{\bb{Z}_{\sol}} \bb{Z}_{\sol}[S] \to  \bb{Z}_{\sol}[\bb{N}[S]] \otimes \bigwedge^1_{\bb{Z}_{\sol}} \bb{Z}_{\sol}[S]\to  \bb{Z}_{\sol}[\bb{N}[S]] \to  \bb{Z}\to 0.
\end{equation}
Dually, we have a long co-Koszul resolution 
\begin{equation}
\label{eqLongcoKoszulResolution}
0 \to \bb{Z} \to \Gamma^{\bullet}_{\bb{Z}_{\sol}} \bb{Z}_{\sol}[S] \to   \Gamma^{\bullet}_{\bb{Z}_{\sol}} \bb{Z}_{\sol}[S] \otimes \bigwedge^1_{\bb{Z}_{\sol}} \bb{Z}_{\sol}[S] \to \Gamma^{\bullet}_{\bb{Z}_{\sol}} \bb{Z}_{\sol}[S] \otimes \bigwedge^2_{\bb{Z}_{\sol}} \bb{Z}_{\sol}[S] \to \cdots. 
\end{equation}
\end{corollary}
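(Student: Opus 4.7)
The strategy is the classical one for Koszul resolutions of a polynomial algebra, transported to the solid setting. Introduce the bigraded object
\[
K_j := \Sym^{\bullet}_{\bb{Z}_{\sol}} \bb{Z}_{\sol}[S] \otimes_{\bb{Z}_{\sol}} \bigwedge^{j}_{\bb{Z}_{\sol}} \bb{Z}_{\sol}[S] = \bigoplus_{i \geq 0} \Sym^{i}_{\bb{Z}_{\sol}} \bb{Z}_{\sol}[S] \otimes_{\bb{Z}_{\sol}} \bigwedge^{j}_{\bb{Z}_{\sol}} \bb{Z}_{\sol}[S],
\]
equipped with the internal grading $n = i + j$. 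Define the Koszul differential $d \colon K_j \to K_{j-1}$ on each internal-degree-$n$ summand as the map $\Sym^{n-j} \otimes \bigwedge^{j} \to \Sym^{n-j+1} \otimes \bigwedge^{j-1}$ appearing in the first sequence of Lemma~\ref{LemmaExactKoszulDiagrams}, and augment $K_0 = \bb{Z}_{\sol}[\bb{N}[S]] \twoheadrightarrow \bb{Z}$ by the projection onto the internal-degree-$0$ summand. This produces the candidate complex \eqref{eqLongKoszulresolution}.

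Two verifications remain: that $d^2 = 0$ and that the augmented complex is exact. The first is the classical Koszul identity; the explicit formula recalled in the proof of Lemma~\ref{LemmaExactKoszulDiagrams} is natural and sheaf-theoretic, so it suffices to check it on finite free abelian groups where the statement is standard. For the second, the internal grading decomposes the augmented complex as a direct sum, over $n \geq 0$, of complexes of the form
\[
0 \to \bigwedge^{n}_{\bb{Z}_{\sol}} \bb{Z}_{\sol}[S] \to \cdots \to \Sym^{n-1}_{\bb{Z}_{\sol}} \bb{Z}_{\sol}[S] \otimes \bigwedge^{1}_{\bb{Z}_{\sol}} \bb{Z}_{\sol}[S] \to \Sym^{n}_{\bb{Z}_{\sol}} \bb{Z}_{\sol}[S] \to 0,
\]
and this is precisely the solidification of the first sequence of Lemma~\ref{LemmaExactKoszulDiagrams}, which remains exact by Lemma~\ref{LemmaKosulReolutionsSolidModules}(4). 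Hence \eqref{eqLongKoszulresolution} is a resolution of $\bb{Z}$.

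For the co-Koszul resolution \eqref{eqLongcoKoszulResolution} the cleanest approach is to dualize: by Lemma~\ref{LemmaKosulReolutionsSolidModules}(1)--(3) the terms $\Sym^{n}_{\bb{Z}_{\sol}} \bb{Z}_{\sol}[S]$, $\bigwedge^{n}_{\bb{Z}_{\sol}} \bb{Z}_{\sol}[S]$ and $\Gamma^{n}_{\bb{Z}_{\sol}} \bb{Z}_{\sol}[S]$ are compact and reflexive solid abelian groups, with $\Gamma^{n}$ dual to $\Sym^{n}$ and $\bigwedge^{n}$ self-dual under the pairing induced by $\bb{Z}_{\sol}[S] \otimes \Cont(S,\bb{Z}) \to \bb{Z}$, so $\iHom_{\bb{Z}_{\sol}}(-, \bb{Z})$ applied to \eqref{eqLongKoszulresolution} yields \eqref{eqLongcoKoszulResolution} after matching the differentials. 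Alternatively, one runs the same internal-grading argument using the second exact sequence of Lemma~\ref{LemmaExactKoszulDiagrams}, augmented by the inclusion $\bb{Z} \hookrightarrow \Gamma^{0}_{\bb{Z}_{\sol}} \bb{Z}_{\sol}[S]$, whose solidification is again exact by Lemma~\ref{LemmaKosulReolutionsSolidModules}(4). The only mildly delicate point, and the one we would spell out carefully, is verifying summand-by-summand that the differentials assembled from the Koszul formula agree with those of the short exact sequences of Lemma~\ref{LemmaExactKoszulDiagrams}; once this identification is in hand, exactness is automatic.
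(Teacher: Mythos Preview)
Your argument for the Koszul resolution \eqref{eqLongKoszulresolution} is correct and is exactly the paper's: decompose by internal degree and invoke Lemma~\ref{LemmaKosulReolutionsSolidModules}(4).

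For the co-Koszul resolution, however, your ``cleanest approach'' via dualization does not work as stated. In the solid setting $\bigwedge^{n}_{\bb{Z}_{\sol}} \bb{Z}_{\sol}[S] \cong \prod \bb{Z}$ is not self-dual: its $\bb{Z}$-linear dual is $\bigwedge^{n} \Cont(S,\bb{Z}) \cong \bigoplus \bb{Z}$. Likewise $\iHom_{\bb{Z}_{\sol}}(\Sym^{n}_{\bb{Z}_{\sol}} \bb{Z}_{\sol}[S], \bb{Z})$ is $\Gamma^{n} \Cont(S,\bb{Z})$, a direct sum, not $\Gamma^{n}_{\bb{Z}_{\sol}} \bb{Z}_{\sol}[S]$, which by Lemma~\ref{LemmaKosulReolutionsSolidModules}(3) is again a product. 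So applying $\iHom_{\bb{Z}_{\sol}}(-,\bb{Z})$ to \eqref{eqLongKoszulresolution} produces a complex built from $\Cont(S,\bb{Z})$ rather than from $\bb{Z}_{\sol}[S]$, and in any case the full Koszul complex has terms that are infinite direct sums of compacts, whose duals are infinite products, not the direct sums appearing in \eqref{eqLongcoKoszulResolution}.

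Your alternative route is the right one and is precisely what the paper does: take the direct sum over $n$ of the \emph{second} exact sequence of Lemma~\ref{LemmaExactKoszulDiagrams}, which remains exact after solidification by Lemma~\ref{LemmaKosulReolutionsSolidModules}(4), and augment via $\bb{Z} = \Gamma^{0}$. So drop the duality paragraph and promote the alternative to the main argument.
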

\begin{proof}
This follows from Lemma \ref{LemmaKosulReolutionsSolidModules} by taking the direct sums of the exact sequences 
\[
0\to \bigwedge^n_{\bb{Z}_{\sol}} \bb{Z}_{\sol}[S] \to  \cdots \to \Sym^{n-1}_{\bb{Z}_{\sol}} \bb{Z}_{\sol}[S] \otimes \bigwedge^1_{\bb{Z}_{\sol}} \bb{Z}_{\sol}[S] \to \Sym^{n}_{\bb{Z}_{\sol}} \bb{Z}_{\sol}[S] \to 0
\]
and 
\[
  0 \to \Gamma^n_{\bb{Z}_{\sol}} \bb{Z}_{\sol}[S] \to  \cdots \to  \bigwedge^{n-1}_{\bb{Z}_{\sol}} \bb{Z}_{\sol}[S] \otimes \Gamma^1_{\bb{Z}_{\sol}} \bb{Z}_{\sol}[S] \to \bigwedge^{n}_{\bb{Z}_{\sol}} \bb{Z}_{\sol}[S] \to 0.
\]
\end{proof}

After the previous preparations  we can  introduce some large idempotent algebras. We let $(R,R^+)=(\bb{Z}((\pi)), \bb{Z}[[\pi]])$ and $R_{\sol}=(R,R^+)_{\sol}$.  

\begin{definition}
\label{DefinitionIdempotentAlgebrasSym}
Let $S$ be a profinite set, we define the following objects: 
\begin{enumerate}

\item Let $I^{\bullet}_{S}$ be the natural ideal decreasing filtration of $\bb{Z}[\bb{N}[S]]$. We define $\bb{Z}[\bb{N}[S]]_n:= \bb{Z}[\bb{N}[S]]/I^n_{S}$. For $\n{A}$ an analytic ring we let $I^{\bullet}_{S,\n{A}}$ and  $\n{A}[\bb{N}[S]]_n$ be the base change of $I^{\bullet}_S$ and $\bb{Z}[\bb{N}[S]]_n$ to $\n{A}$.

\item We let $\bb{Z}_{\sol}[[\bb{N}[S]]]:= \varprojlim_{n} \bb{Z}_{\sol}[\bb{N}[S]]_n$.

\item  We let $R^+_{\sol}\langle \bb{N}[S] \rangle = \varprojlim_{n} (R^+/\pi^n)_{\sol} [\bb{N}[S]]$ and $ R_{\sol}\langle \bb{N}[S] \rangle= R^+_{\sol}\langle \bb{N}[S] \rangle [\frac{1}{\pi}]$.

\item We let $R_{\sol} \{ \bb{N}[S]\}^{\dagger} =\varinjlim_{n\to \infty} R_{\sol} \langle \bb{N}[\frac{S}{\pi^n}] \rangle$. 

\item More generally, for an analytic ring $\n{A}$ over $\bb{Z}_{\sol}$ (resp. over $R^+_{\sol}$ or $R_{\sol}$) we let $\n{A}[[ \bb{N}[S] ]]$, $ \n{A}\langle \bb{N}[S]\rangle$  and $\n{A}\{\bb{N}[S]\}^{\dagger}$ denote the base change of the constructions in (2)-(4) to $\n{A}$. 
\end{enumerate}
\end{definition}

\begin{warning}
The ring $\n{A}[[\bb{N}[S]]]$ is not in general equal to the limit $\varprojlim_{n} \n{A}[\bb{N}[S]]_n$. This holds for example if $\n{A}= B_{\sol}$ it the analytic ring of a finitely generated algebra over $\bb{Z}$, or if $\n{A}= R^+= \bb{Z}[[\pi]]$ is a power series ring.  
\end{warning}

\begin{lemma}
\label{LemmaIdempotentAlgebrasVerification}
The following hold: 
\begin{enumerate}

\item The ring $\bb{Z}_{\sol}[[\bb{N}[S]]]$ is an idempotent $\bb{Z}_{\sol}[\bb{N}[S]]$-algebra.

\item  The ring $R^+_{\sol} \langle \bb{N}[S] \rangle$ is an idempotent $R^+_{\sol}[\bb{N}[S]]$-algebra.

\item The rings $R_{\sol}\langle \bb{N}[S] \rangle$  and $R_{\sol}\{\bb{N}[S]\}^{\dagger}$ are idempotent $R_{\sol}[\bb{N}[S]]$-algebras.
\end{enumerate}
 Moreover they have a natural structure of co-commutative  Hopf algebras.
\end{lemma}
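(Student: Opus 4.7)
The plan is to verify idempotence of each algebra by exhibiting it as a (derived) completion, or a filtered colimit of such, along an appropriate ideal, and then to transport the natural cocommutative Hopf algebra structure of $B := \bb{Z}_{\sol}[\bb{N}[S]] = \Sym^{\bullet}_{\bb{Z}_{\sol}} V$ (where $V := \bb{Z}_{\sol}[S]$) to each quotient. The augmentation ideal $I = \bigoplus_{n\ge 1}\Sym^n V$ satisfies $I^n = \bigoplus_{k\ge n} \Sym^k V$, so $\bb{Z}_{\sol}[[\bb{N}[S]]]$ coincides with the solid limit $A := \lim_n B/I^n \cong \prod_n \Sym^n V$, where each $\Sym^n V$ is a compact solid $\bb{Z}_{\sol}$-module by Lemma \ref{LemmaKosulReolutionsSolidModules}.

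For (1), I would show $A \otimes_B A \simeq A$ by computing $A \otimes_B^L (B/I^n)$ for each $n$ and passing to the limit. Since $B/I^n$ admits a finite filtration by $B$-submodules with graded pieces $\Sym^k V$ ($k<n$) carrying trivial $B$-action, everything reduces to verifying that derived tensor products over $B$ commute with the infinite product defining $A$; here the compactness from Lemma \ref{LemmaKosulReolutionsSolidModules} together with the Koszul-type resolution of Corollary \ref{CoroKoszulResolutionAlgebra} allow one to compute degree-by-degree, yielding $A \otimes_B^L B/I^n \simeq A/I^nA \simeq B/I^n$. Taking $\lim_n$ then gives $A \otimes_B^L A \simeq A$. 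For (2), the ring $R^+_{\sol}\langle \bb{N}[S]\rangle$ is the $\pi$-adic completion of $R^+_{\sol}[\bb{N}[S]]$ along the non-zero-divisor $\pi$, and idempotence follows from the standard argument using the perfect quotients $R^+_{\sol}/\pi^k$. Inverting $\pi$ yields idempotence of $R_{\sol}\langle \bb{N}[S]\rangle$. The overconvergent algebra is the filtered colimit $\varinjlim_n R_{\sol}\langle \bb{N}[S/\pi^n]\rangle$, and after identifying $R_{\sol}[\bb{N}[S/\pi^n]]$ with $R_{\sol}[\bb{N}[S]]$ via $\pi^n$-rescaling (an isomorphism after inverting $\pi$), each term is idempotent over $R_{\sol}[\bb{N}[S]]$; the colimit remains idempotent by the transitivity $A_n \otimes_{R_{\sol}[\bb{N}[S]]} A_m \simeq A_{\max(n,m)}$ for idempotent algebras with compatible maps.

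For the Hopf structure, $B$ carries the canonical cocommutative Hopf algebra structure in which $V$ consists of primitive elements, with antipode $v \mapsto -v$ and counit the augmentation $B\to \bb{Z}_{\sol}$. The grading ensures $\Delta(B_n) \subset \bigoplus_{k+j=n} B_k \otimes B_j$, so $\Delta$ extends canonically to $A = \prod_n B_n \to \prod_{n,m} B_n \otimes B_m$; here one identifies the solid tensor product $A \otimes_{\bb{Z}_{\sol}} A$ with $\bb{Z}_{\sol}[[\bb{N}[S \sqcup S]]]$ using the compactness provided by Lemma \ref{LemmaKosulReolutionsSolidModules}. The antipode and counit extend analogously, and the same argument, after tensoring with $R^+_{\sol}/\pi^k$ (for (2)), inverting $\pi$ (for (3)), and passing to the filtered colimit (for the overconvergent case), produces the required Hopf structures on each of the remaining algebras. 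The main obstacle throughout is controlling the interaction of solid derived tensor products with the infinite limits and colimits used in the completions; once the compactness of the symmetric, wedge, and divided-power functors of Lemma \ref{LemmaKosulReolutionsSolidModules} is invoked, the remaining steps are essentially formal.
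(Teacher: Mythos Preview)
Your Hopf-algebra argument is essentially the one in the paper: the primitive comultiplication on $B=\Sym^\bullet_{\bb Z_{\sol}}V$ respects the augmentation filtration and therefore passes to each completion, $\pi$-adic completion, localization and colimit in turn. That part is fine.

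The idempotency argument, however, has a real gap, and the paper's proof is organized precisely to avoid it. In part (1) you correctly compute $A\otimes_B^{\bb L}\bb Z\simeq\bb Z$ via the Koszul resolution (this is exactly the computation the paper carries out), and hence $A\otimes_B^{\bb L}(B/I^n)\simeq B/I^n$. But the step ``taking $\lim_n$ then gives $A\otimes_B^{\bb L}A\simeq A$'' is not justified: $A\otimes_B^{\bb L}(-)$ has no reason to commute with the inverse limit $A=\lim_n B/I^n$, since $A$ is neither compact nor dualizable as a $B$-module. The compactness you invoke from Lemma~\ref{LemmaKosulReolutionsSolidModules} only lets you commute $-\otimes_{\bb Z_{\sol}}\wedge^jV$ with the product defining $A$ (which is what makes the Koszul computation of $A\otimes_B^{\bb L}\bb Z$ go through); it says nothing about commuting $A\otimes_B^{\bb L}(-)$ with the limit in the second variable. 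The same issue arises in your treatment of (2): to run the ``standard argument'' you would need to know that $\widehat B\otimes_B^{\bb L}\widehat B$ is derived $\pi$-complete, and solid tensor products do not preserve $\pi$-completeness in general.

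The paper circumvents this entirely by reversing the order: it first establishes the Hopf structure, and then invokes \cite[Proposition~1.0.6]{RJRCSolidLocAn2}, which says that for a map of Hopf $A$-algebras $B\to C$ one has $C\otimes_B C\simeq C$ as soon as $C\otimes_B A\simeq A$. Geometrically this is the shear isomorphism $(x,y)\mapsto(xy^{-1},y)$ on the group $\Spec C$; it converts the diagonal $B$-action on $C\otimes_A C$ into the one-sided action, so that $C\otimes_B C\simeq C\otimes_A(C\otimes_B A)$. Thus the single computation you already did, namely $C\otimes_B(\text{base})\simeq(\text{base})$ via the Koszul complex, is by itself sufficient for idempotency---no passage to an inverse limit is required. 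Your filtered-colimit argument for $R_{\sol}\{\bb N[S]\}^\dagger$ in (3) is correct once (2) is in hand.
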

\begin{proof}
The ring $\bb{Z}_{\sol}[\bb{N}[S]]$ is naturally a co-commutative Hopf algebra since it correpresents the functor on solid animated $\bb{Z}$-algebras $A\mapsto A(S)$. Moreover, the co-multiplication map is induced from the map of solid   abelian groups 
\[
\bb{Z}_{\sol}[S] \to \bb{Z}_{\sol}[\bb{N}[S\bigsqcup S]]: \;\; s\mapsto s\otimes 1 + 1\otimes s,
\]
in particular it preserves the $I^{\bullet}_{S}$-filtration. Taking completions we see that $\bb{Z}_{\sol}[[\bb{N}[S]]]$ has a natural co-commutative Hopf algebra structure. The Hopf algebra structure for the other algebras is constructed by taking the base change from  $\bb{Z}_{\sol}$ to $R_{\sol}$ of the algebra $\bb{Z}_{\sol}[\bb{N}[S]]$,    taking $\pi$-adic completions, inverting $\pi$ and taking colimits along $R_{\sol}[\frac{S}{\pi^n}]\to R_{\sol}[\frac{S}{\pi^{n+1}}]$.

Now we prove  idempotency. Let $B$ denote $\bb{Z}_{\sol}[\bb{N}[S]]$ or its $R^+_{\sol}$ or $R_{\sol}$-base change,  and  let $C$ denote $\bb{Z}_{\sol}[[\bb{N}[S]]]$, $R^{(+)}_{\sol}\langle \bb{N}[S] \rangle$ or $R\{\bb{N}[S]\}^{\dagger}$. Let $A$ be $\bb{Z}$, $R^+$ or $R$ depending on the situation. Then both $B$ and $C$ are  $A$-linear Hopf algebras, and by \cite[Proposition 1.0.6]{RJRCSolidLocAn2}, to prove idempotency it suffices to show that $C\otimes_{(B,A)_{\sol}} A = A$.   By Corollary \ref{CoroKoszulResolutionAlgebra} the map  $C\otimes_{(B,A)_{\sol}}  A\to A$ is  represented by the long Koszul complex
\[
 \cdots \to  C \otimes_{A_{\sol}} \bigwedge^2_{A_{\sol}}A_{\sol}[S] \to  C \otimes_{A_{\sol}} \bigwedge^1_{A_{\sol}} A_{\sol}[S]\to C \to  A \to 0.
\]
In the case of (1) the previous sequence is exact since after taking graded pieces one recovers the long Koszul resolution of $\bb{Z}_{\sol}[\bb{N}[S]]$.  In the situation (2), exactness follows by taking the $\pi$-adic completion of the Koszul resolution of $R^+_{\sol}[\bb{N}[S]]\to R^+$. Finally, case (3) follows by inverting $\pi$ in (2) and taking colimits.  
\end{proof}

We address the following  technical lemma that will be used recurrently in the next section.

\begin{lemma}
\label{LemmaOverconvergentFunctionsIdeal}
Let $A_{\sol}= \bb{Z}[T_1,\ldots,T_n]_{\sol}$ be a solid polynomial algebra in $n$-variables, and let $B^{(+)}_{\sol}= A_{\sol}\otimes_{\bb{Z}_{\sol}} R^{(+)}_{\sol}$ be the associated solid Tate algebra over $R^{(+)}_{\sol}$.  Let $S_1$, $S_2$ and $S_3$ be profinite sets.

\begin{enumerate}

\item For any map $S_3 \to B^+_{\sol}\langle \bb{N}[S_1] \rangle$ the natural morphism $B^+_{\sol}[\bb{N}[S_3]] \to B^+_{\sol}\langle \bb{N}[S_1] \rangle$ extends uniquely to $B^+_{\sol} \langle \bb{N}[S_3] \rangle$. 

 \item  Consider the $B$-algebra $\s{T}=B_{\sol}\langle \bb{N}[S_1] \rangle \otimes_{B_{\sol}} B_{\sol}\{\bb{N}[S_2]\}^{\dagger}$ and let $\s{I}_2= \ker(B_{\sol}\{\bb{N}[S_2]\}^{\dagger}\to B)$ be the augmentation ideal of the second factor. For any map $S_3\to B_{\sol} \langle \bb{N}[S_1] \rangle \otimes_{B_{\sol}}\s{I}_2$ the natural morphism of algebras $B_{\sol}[\bb{N}[S_3]] \to \s{T}$ extends uniquely to $B_{\sol}\{\bb{N}[S_3]\}^{\dagger}$. 
 
\item Let $\s{I}= \ker(A_{\sol}[[\bb{N}[S_1]]]\to A)$ be the augmentation ideal. Then for all map $S_3\to \s{I}$, the natural morphism of algebras $ A_{\sol}[\bb{N}[S_3]] \to A_{\sol}[[\bb{N}[S_1]]]$ extends uniquely to $A_{\sol}[[\bb{N}[S_3]]]$. 
 
\end{enumerate} 
 
\end{lemma}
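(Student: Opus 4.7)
The strategy is to first establish uniqueness uniformly, then construct existence case by case. Uniqueness in all three parts is a formal consequence of Lemma \ref{LemmaIdempotentAlgebrasVerification}: the target algebras $B^+_{\sol}\langle \bb{N}[S_3]\rangle$, $B_{\sol}\{\bb{N}[S_3]\}^{\dagger}$ and $A_{\sol}[[\bb{N}[S_3]]]$ are idempotent over the corresponding polynomial algebras $B^{(+)}_{\sol}[\bb{N}[S_3]]$ and $A_{\sol}[\bb{N}[S_3]]$, so any two algebra extensions of a fixed map from the polynomial ring must coincide.

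For (1), I would construct the extension level by level using the presentation $B^+_{\sol}\langle \bb{N}[S_3]\rangle = \varprojlim_n (B^+/\pi^n)_{\sol}[\bb{N}[S_3]]$. Composing the input map with the reduction $B^+_{\sol}\langle \bb{N}[S_1]\rangle \to (B^+/\pi^n)_{\sol}[\bb{N}[S_1]]$, the universal property of the symmetric algebra yields a canonical $(B^+/\pi^n)_{\sol}$-algebra map $(B^+/\pi^n)_{\sol}[\bb{N}[S_3]] \to (B^+/\pi^n)_{\sol}[\bb{N}[S_1]]$, and these are compatible in $n$; the inverse limit gives the desired morphism. Part (3) is structurally analogous: modulo the $n$-th power of the augmentation ideal, the hypothesis $S_3\to \s{I}$ forces the image of $S_3$ into the nilpotent ideal $\s{I}/I^n$ of $A_{\sol}[\bb{N}[S_1]]/I^n$, so the induced algebra map from $A_{\sol}[\bb{N}[S_3]]$ annihilates the $n$-th power of its own augmentation ideal, hence factors through $A_{\sol}[\bb{N}[S_3]]/I^n$; passing to the limit yields the extension.

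Part (2) is the most delicate and is where I expect the main technical work to live. First I would use the compactness of $\bb{Z}_{\sol}[S_3]$ in the solid category together with the fact that $\s{I}_2 = \varinjlim_m \s{I}_{2,m}$ is a filtered colimit along monomorphisms (and that tensoring with $B_{\sol}\langle \bb{N}[S_1]\rangle$ preserves colimits) to factor the input through $B_{\sol}\langle \bb{N}[S_1]\rangle \otimes_{B_{\sol}} \s{I}_{2,m_0}$ for some $m_0$. The key algebraic input is that any element $x = \sum_{|\alpha|\geq 1} a_\alpha (s/\pi^{m_0})^\alpha \in \s{I}_{2,m_0}$ admits, for every $k\geq 0$, a factorization $x = \pi^k \cdot x^{(k)}$ with $x^{(k)} := \sum_{|\alpha|\geq 1} a_\alpha \pi^{k(|\alpha|-1)} (s/\pi^{m_0+k})^\alpha$ a power-bounded element of the larger Tate algebra $B_{\sol}\langle \bb{N}[S_2/\pi^{m_0+k}]\rangle$ (the rescaled coefficients $a_\alpha \pi^{k(|\alpha|-1)}$ still go to zero $\pi$-adically because $k(|\alpha|-1)\geq 0$ when $|\alpha|\geq 1$). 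This provides, for each $k$, a well-defined map $B_{\sol}\langle \bb{N}[S_3/\pi^k]\rangle \to \s{T}$ sending $s_i/\pi^k \mapsto x_i^{(k)}$, and the identity $x^{(k)} = \pi \cdot x^{(k+1)}$ makes this family compatible with the transition $B_{\sol}\langle \bb{N}[S_3/\pi^k]\rangle \to B_{\sol}\langle \bb{N}[S_3/\pi^{k+1}]\rangle$. Assembling these gives the sought morphism from $B_{\sol}\{\bb{N}[S_3]\}^{\dagger} = \varinjlim_k B_{\sol}\langle \bb{N}[S_3/\pi^k]\rangle$. The hard part will be rigorously justifying the compactness reduction and correctly handling the derived tensor products $B_{\sol}\langle \bb{N}[S_1]\rangle \otimes_{B_{\sol}} \s{I}_{2,m}$ as $m$ varies; once these set-theoretic preliminaries are in place, the argument reduces to the explicit $\pi$-adic manipulation above.
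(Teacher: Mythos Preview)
Your proposal is correct and follows essentially the same approach as the paper. For part (2) the paper is slightly more streamlined: it works integrally over $B^+_{\sol}$ throughout, observes that the image of $S_3$ becomes divisible by $\pi^m$ in $B^+_{\sol}\langle \bb{N}[S_1 \sqcup S_2/\pi^{n+m}]\rangle$, and then invokes part (1) directly before inverting $\pi$ at the end---this sidesteps your worry about derived tensor products, since integrally everything is $\pi$-torsion-free.
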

\begin{proof}
\begin{enumerate} 

\item This follows by taking $\pi$-adic completions. 

 \item Let us denote $ \s{S}_{i,n}= B^+_{\sol}\langle  \bb{N}[\frac{S_i}{\pi^n}] \rangle$ and let $\s{I}_{i,n}$ be the augmentation ideal of $\s{S}_{i,n}$. By construction of the algebras we can find $n$ big enough such that the image of $S_3$ in $\s{T}$ lands in  $\s{S}_{1,0}\otimes_{B^+_{\sol}} \s{I}_{2,n}$. This shows that for all $m\geq 0$ the image of $S_3$ in $\s{S}_{1,0}\otimes_{B^+_{\sol}} \s{S}_{2,n+m}$ lands in  $\pi^m \s{S}_{1,0}\otimes_{B^+_{\sol}} \s{S}_{2,n+m}$. Thus, dividing $S_3$ by $\pi^m$, by part (1) we get an arrow $\s{S}_{3,m} \to \s{S}_{1,0}\otimes_{B^+_{\sol}} \s{S}_{2,n+m}$. One gets (2) by inverting $\pi$ and taking colimits as $m\to \infty$.
 
\item  This follows from the fact that the filtration $I^{\bullet}_{S_1}$ of $A_{\sol}[\bb{N}[S_1]]$ is multiplicative.  
 
 \end{enumerate}
\end{proof}

\begin{corollary}
\label{CoroComoduleOverAplus}

The following hold

\begin{enumerate}
\item  The Hopf algebra $\bb{Z}_{\sol}[[\bb{N}[S]]]$  corepresents a module sheaf over the ring sheaf corepresented by $\bb{Z}[T]_{\sol}$. 

\item The Hopf algebra $R^+_{\sol}\langle \bb{N}[S] \rangle$ corepresents an algebra sheaf over the ring sheaf corepresented by $R^+\langle T \rangle_{\sol}:= R^+_{\sol} \otimes_{\bb{Z}_{\sol}} \bb{Z}[T]_{\sol}$.

\item The Hopf algebra $R_{\sol}\{\bb{N}[S]\}^{\dagger}$ corepresents a  module  sheaf over the ring sheaf corepresented by $R\langle T \rangle_{\sol}$. 
\end{enumerate}
\end{corollary}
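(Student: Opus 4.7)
The plan is to construct, in each of the three cases, the coaction map that encodes the claimed structure, and then to reduce the verification of the axioms to a calculation on the set of generators $S$ via the uniqueness clause of Lemma \ref{LemmaOverconvergentFunctionsIdeal}.

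For part (1), the module structure on the functor corepresented by $\bb{Z}_{\sol}[[\bb{N}[S]]]$ over the ring functor corepresented by $\bb{Z}[T]_{\sol}$ is, dually, a map of condensed rings
\[
\mu_1 : \bb{Z}_{\sol}[[\bb{N}[S]]] \longrightarrow \bb{Z}[T]_{\sol}\otimes_{\bb{Z}_{\sol}} \bb{Z}_{\sol}[[\bb{N}[S]]] \cong \bb{Z}[T]_{\sol}[[\bb{N}[S]]],
\]
which on generators $s \in S$ sends $s\mapsto T\cdot s$. Since $T\cdot s$ lies in the augmentation ideal $\s{I}=\ker(\bb{Z}[T]_{\sol}[[\bb{N}[S]]]\to \bb{Z}[T]_{\sol})$, Lemma \ref{LemmaOverconvergentFunctionsIdeal}(3), applied with $A_{\sol}=\bb{Z}[T]_{\sol}$ and $S_1=S_3=S$, produces a unique $\bb{Z}[T]_{\sol}$-algebra endomorphism of $\bb{Z}[T]_{\sol}[[\bb{N}[S]]]$ extending this assignment; precomposing with the base-change map $\bb{Z}_{\sol}[[\bb{N}[S]]] \to \bb{Z}[T]_{\sol}[[\bb{N}[S]]]$ gives $\mu_1$. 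Parts (2) and (3) follow by identical recipes: for (2) one uses Lemma \ref{LemmaOverconvergentFunctionsIdeal}(1) with $B^+_{\sol}=R^+\langle T\rangle_{\sol}$ to produce
\[
\mu_2 : R^+_{\sol}\langle \bb{N}[S]\rangle \longrightarrow R^+\langle T\rangle_{\sol}\langle \bb{N}[S]\rangle, \qquad s\mapsto T\cdot s,
\]
no augmentation-ideal hypothesis being needed because the target is already $\pi$-adically complete; for (3) one invokes Lemma \ref{LemmaOverconvergentFunctionsIdeal}(2) with $S_1=\emptyset$ and $S_2=S_3=S$, observing that $T\cdot s$ lies in the augmentation ideal of the overconvergent factor $R\langle T\rangle_{\sol}\{\bb{N}[S]\}^{\dagger}$, to produce $\mu_3: R_{\sol}\{\bb{N}[S]\}^{\dagger}\to R\langle T\rangle_{\sol}\{\bb{N}[S]\}^{\dagger}$.

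To verify that each $\mu_i$ defines the claimed module/algebra structure (unit axiom, associativity of scalar multiplication, and compatibility with the cocommutative Hopf-algebra structure of Lemma \ref{LemmaIdempotentAlgebrasVerification}) one translates each axiom into the commutativity of a diagram of ring maps out of the relevant completed symmetric algebra into a target of the same type (say $\bb{Z}[T_1,T_2]_{\sol}[[\bb{N}[S]]]$ for associativity, or $\bb{Z}[T]_{\sol}[[\bb{N}[S\sqcup S]]]$ for compatibility with the coproduct). In each such diagram both composites are, after restriction to the uncompleted algebra $\bb{Z}_{\sol}[\bb{N}[S]]$, determined by their values on the generators $s\in S$, where they trivially agree (e.g.\ $1\cdot s=s$ and $(T_1T_2)\cdot s = T_1\cdot(T_2\cdot s)$, and $\Delta(T\cdot s) = T\cdot s\otimes 1 + 1 \otimes T\cdot s$). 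By the uniqueness half of Lemma \ref{LemmaOverconvergentFunctionsIdeal} the two composites therefore coincide on the completion as well.

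The main (and essentially only) input is the extension property of Lemma \ref{LemmaOverconvergentFunctionsIdeal}, which does all the geometric work; the remaining steps are bookkeeping that is handled uniformly by the uniqueness principle, so no serious obstacle is anticipated. The only point requiring a small amount of care is the separate handling of (2) versus (1) and (3): in (2) the coaction $\mu_2$ is automatically a ring map (hence yields an algebra structure, not just a module structure), because Lemma \ref{LemmaOverconvergentFunctionsIdeal}(1) produces an $R^+\langle T\rangle_{\sol}$-algebra morphism without requiring the image to land in an augmentation ideal.
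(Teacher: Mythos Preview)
Your proposal is correct and follows essentially the same strategy as the paper: both construct the coaction $s\mapsto T\cdot s$ and use Lemma~\ref{LemmaOverconvergentFunctionsIdeal} as the sole nontrivial input to extend it to the completed algebras. The paper streamlines the axiom verification slightly by first observing that $\bb{Z}_{\sol}[\bb{N}[S]]$ already corepresents an algebra over the sheaf corepresented by $\bb{Z}[T]$ (since $\n{A}(S)$ is tautologically an $\n{A}(*)$-algebra), and then invoking idempotency of the completed algebras over the polynomial ones and of $\bb{Z}[T]_{\sol}$ over $\bb{Z}[T]$ to reduce all coherence checks to the level of $\pi_0$; your diagram-by-diagram use of the uniqueness clause accomplishes the same thing by hand.
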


\begin{proof}  The algebra $\bb{Z}_{\sol}[ \bb{N}[S]]$ corepresents an  algebra over the sheaf corepresented by $\bb{Z}[T]$, namely, for $\n{A}$ an analytic ring $\n{A}(S)$ is always an $\n{A}(*)$-algebra.   Its module action is given by the map 
\begin{equation}
\label{HopfAlgebraMAps1}
\bb{Z}_{\sol}[\bb{N}[S]]\to \bb{Z}_{\sol}[\bb{N}[S]] \otimes_{\bb{Z}} \bb{Z}[T]:\;\;  s\mapsto s\otimes t 
\end{equation}
which satisfies the diagrams of a comodule over a co-ring.  In particular, $\bb{Z}_{\sol}[S]$ lands in the ideal generated by the  augmentation ideal of $\bb{Z}_{\sol}[\bb{N}[S]]$. 

All the Hopf algebras on points (1)-(3) are idempotent over $\bb{Z}_{\sol}[\bb{N}[S]]$ or $R^+_{\sol}[\bb{N}[S]]$, also $\bb{Z}[T]_{\sol}$ is an idempotent analytic ring over $\bb{Z}[T]$. Thus, in order to show that the algebras of (1)-(3) correpresent algebras/modules over $\bb{Z}[T]_{\sol}$ or $R^+\langle T\rangle_{\sol}$, we only need to prove it for the  $\pi_0$ of the correpresented sheaves, and assume without loss of generality that $\n{A}$ is static.

Then, after taking base change of \eqref{HopfAlgebraMAps1} by $\bb{Z}[T]_{\sol}$ or $R^+[T]_{\sol}$, $\pi$-completions and colimits for points (2) and (3), Lemma \ref{LemmaOverconvergentFunctionsIdeal} implies that  the comodule (resp. co-algebra) diagrams of $\bb{Z}_{\sol}[\bb{N}[S]]$ over $\bb{Z}[T]$ can be extended to the corresponding diagrams in each point (1)-(3). 
\end{proof}

\subsection{Condensed Nil-radical}
\label{SubsectionCondensedNil-radical}

To motivate forthcoming constructions let us discuss the concept of nilpotency for condensed rings.  Let $A$ be a static condensed   ring over $\bb{Z}$,  since $A$ is a  sheaf on rings, the most natural definition of the nil-radical of $A$ consists on the ideal  $I$ whose values at $S$ are $\mathrm{nil}(A(S))$. Equivalently, we could  define 
\[
\mathrm{nil}(A)(S)= \varinjlim_n \Hom_{\CondRing_{\bb{Z}}}(\bb{Z}[T]/T^n, \mathrm{Cont}(S,A)).
\]
This definition only asks for a function $f:S \to A$ to be uniformly point-wise nilpotent, i.e  that there is $n\geq 0$ such that  $f(s)^n=0$  for all $s\in S$. However, we could also ask for a more uniform nilpotent condition, namely, that there is some $n>0$ such that for any familly of elements $s_1,\ldots, s_n\in S$, the product $f(s_1)\cdots f(s_n)$ vanishes. When $S$ is just a point $*$,  there is no difference between these two conditions. When $S=\{*,*\}$ is two points, there is a difference on the $n$-nilpotent elements, namely, one is correpresented by the ring $\bb{Z}[X,Y]/(X^n,Y^n)$ and the other by $\bb{Z}[X,Y]/(X,Y)^{n}$, yet both cofiltered systems  are cofinal. For a general profinite set both possible definitions of nilpotent elements differ:

\begin{example}
Let $S$ be a profinite set and let $\bb{Z}[\bb{N}[S]]$ be the symmetric algebra of $\bb{Z}[S]$. Let $n\geq 1$, consider the   map $\bb{Z}[S] \xrightarrow{\bb{Z}[\Delta]} \bb{Z}[ S^n_{\Sigma_n}] \subset \bb{Z}[\bb{N}[S]]$ with $\Delta:S\to S^{n}_{\Sigma_n}$ given by the diagonal map, and let $F:\bb{Z}[\bb{N}[S]]\to \bb{Z}[\bb{N}[S]]$ be  the induced map of algebras.  Then, the static ring that co-represents elements $f \in A(S)$ with $f^n=0$ is the algebra
\[
\n{R}= \pi_0(\bb{Z}[\bb{N}[S]]\otimes_{F,\bb{Z}[\bb{N}[S]]} \bb{Z}). 
\]
In particular, the $k$-th graded piece of $\n{R}$ for $k\geq n$ is non-zero and equal to the cokernel of $\bb{Z}[S] \otimes \bb{Z}[S^{k-n}_{\Sigma_{k-n}}] \xrightarrow{\Delta } \bb{Z}[S^k_{\Sigma_{k}}]$. On the other hand, the quotient 
\[
\bb{Z}[\bb{N}[S]]_n:= \bb{Z}[\bb{N}[S]]/I^n_S
\]
is the static ring that represents elements $f\in A(S)$ with the property that the $n$-th fold map $f^{\otimes n}:  S^n_{\Sigma_n}  \to A(S^n_{\Sigma_n})$ is zero.  In other words, it correpresents the maps   $f:S \to A$ such that $f(s_1)\cdots f(s_n)=0$ for any sequence $s_i\in S$. 
\end{example}

The previous discussion motivates the following definition
\begin{definition}
Let $A$ be a static condensed ring, we define the following presheaf on $\Extdis$: 
\[
\Nil'_{n}(A)(S):= \Hom_{\CondRing_{\bb{Z}}}(\bb{Z}[\bb{N}[S]]_{n} ,A).
\]
We let $\Nil'(A)$ denote the ind-presheaf $(\Nil'_{n}(A))_{n\in \bb{N}}$ on $\Extdis$. For an animated condensed ring $A$ we define $\Nil'_n(A)$ (resp. $\Nil'(A)$) to be the full condensed sub-anima of $A$ (resp. the constant ind-system of sub-anima of $A$) whose connected components are $\Nil'_n(\pi_0(A))$ (resp. $\Nil'(\pi_0(A))$). 
\end{definition}

An apparent disadvantage of the above definition is that the objects $\Nil'_{n}(A)(S)$ are not sheaves, the reason being that for $S$ and $S'$ extremally disconnected sets, the natural map  
\[
\bb{Z}[\bb{N}[S]]_n \otimes \bb{Z}[\bb{N}[S']]_{n}  \to \bb{Z}[\bb{N}[S\bigsqcup S']]_n
\]
is not an equivalence. However, in analogy to  the inclusions  $ (X,Y)^{2n}\subset (X^n,Y^n)\subset (X,Y)^{n}$ for two elements $X$ and $Y$ in a ring $R$, we have a factorization 
\[
\bb{Z}[ \bb{N}[S\bigsqcup S']]_{2n} \to \bb{Z}[\bb{N}[S]]_n \otimes \bb{Z}[\bb{N}[S']]_{n}  \to \bb{Z}[ \bb{N}[S\bigsqcup S']]_n,
\]
proving that the ind-system $\Nil'(A)$ is actually a sheaf. Furthermore, the formation of $\Nil'(A)$ is compatible with  analytic ring structures as follows: 
 
\begin{lemma}
Let $\n{A}$ be an analytic  ring, then $\varinjlim_{n} \Nil'_n(\underline{\n{A}})$ is a complete $\n{A}$-module. 
\end{lemma}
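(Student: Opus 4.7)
The proof falls into two steps: endow the filtered colimit $N := \varinjlim_n \Nil'_n(\underline{\n{A}})$ with a natural $\underline{\n{A}}$-module structure inside $\underline{\n{A}}$, and then verify $\n{A}$-completeness.

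For the module structure, $N$ sits inside $\underline{\n{A}}$ as a subsheaf closed under $\underline{\n{A}}$-scaling, because multiplying a uniformly $n$-nilpotent map $f \colon S \to \underline{\n{A}}$ by any element $a$ preserves the condition $a^n \cdot f(s_1)\cdots f(s_n) = 0$. Closure under addition fails at each finite stage but is recovered in the colimit via the multinomial formula: $\Nil'_n + \Nil'_m \subseteq \Nil'_{n+m-1}$.

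For $\n{A}$-completeness, since $\Mod(\n{A}) \hookrightarrow \Mod(\underline{\n{A}})$ is stable under small limits and colimits, it suffices to show that each $\Nil'_n(\underline{\n{A}})$ is $\n{A}$-complete. By the animated definition, there is a Cartesian square of condensed anima
\[
\begin{tikzcd}
\Nil'_n(\underline{\n{A}}) \ar[r] \ar[d] & \underline{\n{A}} \ar[d] \\
\Nil'_n(\pi_0 \underline{\n{A}}) \ar[r, hook] & \pi_0(\underline{\n{A}}),
\end{tikzcd}
\]
and both $\underline{\n{A}}$ and $\pi_0(\underline{\n{A}})$ are $\n{A}$-complete (the former because $\underline{\n{A}} = \n{A}[*]$ as $\n{A}$ is complete, the latter because the standard $t$-structure on $\Mod(\n{A})$ is compatible with that on $\Mod(\underline{\n{A}})$). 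This reduces the problem to showing that $\Nil'_n(\pi_0 \underline{\n{A}})$ is an $\n{A}$-complete subsheaf of $\pi_0(\underline{\n{A}})$. Using the short exact sequence $0 \to I^n_S \to \bb{Z}[\bb{N}[S]] \to \bb{Z}[\bb{N}[S]]_n \to 0$, one writes this subsheaf as the fiber of the natural map of $S$-sheaves
\[
\pi_0(\underline{\n{A}}) \;\longrightarrow\; \iHom_{\underline{\bb{Z}}}(I^n_S,\, \pi_0 \underline{\n{A}})
\]
induced by the $n$-fold symmetric product in $\bb{Z}[\bb{N}[S]]$; the target is $\n{A}$-complete because $\iHom$ into an $\n{A}$-complete module is $\n{A}$-complete, and the fiber of a map of $\n{A}$-complete modules is $\n{A}$-complete by closure under limits.

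\textbf{Main obstacle.} The chief subtlety is the mismatch between the definition of $\Nil'_n(\underline{\n{A}})$ as a condensed sub-\emph{anima} (where the nilpotency condition is naturally multiplicative and non-linear) and the linear $\underline{\n{A}}$-module structure that only emerges in the colimit $N$. One must be careful to distinguish set-theoretic fibers of the $n$-fold product map $\mu_n$ from linear kernels, and verify that the completeness arguments phrased at the sheaf level transfer correctly to the module-theoretic statement about $N$; alternatively, one can recognize $N$ directly as a submodule of $\underline{\n{A}}$ and prove completeness by showing the cofiber $\underline{\n{A}}/N$, whose $\pi_0$ is the reduced quotient $(\pi_0 \underline{\n{A}})^{\mathrm{red}}$, is $\n{A}$-complete.
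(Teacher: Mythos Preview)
Your completeness argument has a structural gap. You write ``it suffices to show that each $\Nil'_n(\underline{\n{A}})$ is $\n{A}$-complete'', invoking closure of $\Mod(\n{A})$ under colimits in $\Mod(\underline{\n{A}})$. But completeness is a property of objects of $\Mod(\underline{\n{A}})$, and the individual $\Nil'_n$ are \emph{not} $\underline{\n{A}}$-modules: as you yourself note, they are not closed under addition (and the paper points out they are not even sheaves). So the reduction to termwise completeness is not well-posed. The subsequent fiber description compounds this: the map $\pi_0(\underline{\n{A}}) \to \iHom_{\underline{\bb{Z}}}(I^n_S, \pi_0 \underline{\n{A}})$ is not $\bb{Z}$-linear, since it sends $f$ to the restriction to $I^n_S$ of the \emph{ring} homomorphism $\bb{Z}[\bb{N}[S]] \to \pi_0 \underline{\n{A}}$ determined by $f$, which involves $n$-fold products of the values of $f$. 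Hence the ``fiber'' is a set-theoretic preimage of a non-linear map, not a kernel in a module category, and the principle ``$\iHom$ into a complete module is complete'' does not apply. Your alternative via the cofiber $\underline{\n{A}}/N$ is a legitimate strategy, but you do not carry it out, and doing so directly amounts to proving invariance of the analytic structure under the nilradical, which in the paper is deduced \emph{from} this lemma.

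The paper's proof avoids these issues by never treating the individual $\Nil'_n$ as modules. After reducing to the static case, it works directly with the colimit $N$ (which \emph{is} a module) and proves completeness by a generation argument: for any map $S \to \bb{Z}[S'\times S'']$ there is a factorization $\bb{Z}[\bb{N}[S]]_n \to \bb{Z}[\bb{N}[S']] \otimes \bb{Z}[\bb{N}[S'']]_n$, and the same holds with $\n{A}$-coefficients. Since $\underline{\n{A}}$ is $\n{A}$-complete, any $f\colon S \to \Nil'_n(\underline{\n{A}})$ lifts to some $S \to \n{A}[S']$, whence $f$ factors as $\n{A}[\bb{N}[S]]_n \to \n{A}[\bb{N}[S']]_n \to \underline{\n{A}}$. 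This exhibits $N$ as the image of a map $\bigoplus_i \pi_0(\n{A}[S_i]) \to \underline{\n{A}}$, hence $\n{A}$-complete.
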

\begin{proof}
By \cite[Proposition 12.4]{ClauseScholzeAnalyticGeometry} it suffices to show that $\pi_0(\varinjlim_n \Nil'_n(\underline{\n{A}}))$ is $\n{A}$-complete, so we can assume that $\n{A}$ is static.  Let us first see that $\varinjlim_{n} \Nil'_n(\n{A})$ has a natural structure of $\underline{\n{A}}$-module. For this, it suffices to see that the pro-condensed sheaf $S \mapsto (\bb{Z}[\bb{N}[S]]_{n})_{n\in \bb{N}}$ is a condensed comodule  for the condensed co-ring  $S \mapsto \bb{Z}[\bb{N}[S]]$.  This follows from the fact that for profinite sets $S$, $S'$ and $S''$, and any map $S \to \bb{Z}[S'\times S'']$, we have a factorization  
\[
\bb{Z}[\bb{N}[S]]_{n}  \to \bb{Z}[\bb{N}[S']]\otimes_{\bb{Z}}  \bb{Z}[\bb{N}[S'']]_{n},
\]
as we have a natural map $ (S'\times S'')^n_{\Sigma_n}\to  S^{',n}_{\Sigma_n}\times S^{n}_{\Sigma_n}$.  Moreover, the fact that $\varinjlim_{n} \Nil'_n(\n{A})$ is $\n{A}$-complete follows by the same argument: any map $S' \to \n{A}[S'\times S'']$  induces a morphism
\[
\n{A}[\bb{N}[S]]_n \to \n{A}[\bb{N}[S']] \otimes_{\n{A}} \n{A}[\bb{N}[S'']]_{n}.
\] 
Therefore, any map $f:S \to \Nil'_n(\underline{\n{A}})$ can be lifted to a map $S\to \n{A}[S']$ for some extremally disconnected $S$,  so that we have a factorization $S \to \n{A}[\bb{N}[S']]_n \to \n{A}$, then  we can  extend $f$  to 
\[
\n{A}[\bb{N}[S]]_n \to \n{A}[\bb{N}[S']]_n \to \n{A},
\]
proving that $\varinjlim_n \Nil'_n(\n{A})$ is the image of maps $\bigoplus_i \pi_0(\n{A}[S]) \to \underline{\n{A}}$, so $\n{A}$-complete. 
\end{proof}

\begin{definition}
Let $\n{A}$ be an analytic ring, the \textit{condensed nil-radical} of $\n{A}$ is the $\n{A}$-analytic ideal 
\[
\Nil(\n{A})= \varinjlim_n \Nil_n'(\n{A}). 
\]
\end{definition}

Our next task is to show that the condensed nil-radical is corepresented by an explicit pro-system of analytic rings, this requires a slight modification of $\bb{Z}[\bb{N}[S]]_n$. 

\begin{definition} For $S\in \Extdis$ and $n\geq 0$  we let $\bb{Z}[\bb{N}[S]]^{\bb{L}}_{n}:= \bb{Z}[\bb{N}[S]] \otimes_{\bb{Z}[\bb{N}[S^n_{\Sigma_n}]]} \bb{Z}$. 
\end{definition}

\begin{lemma}
\label{LemmaHopfAlgebraNilpotent}
 Let $S\in \Extdis$, the pro condensed ring $(\bb{Z}[\bb{N}[S]]_n^{\bb{L}})_{n}$ has a natural structure of additive Hopf algebra such that  the natural map $\bb{Z}[\bb{N}[S]]\to (\bb{Z}[\bb{N}[S]]_n^{\bb{L}})_{n}$ is a morphism of Hopf algebras. 
\end{lemma}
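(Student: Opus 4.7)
The plan is to dualize and construct the Hopf structure geometrically on the associated pro-affine scheme. Write $\bb{A}^S := \Spec(\bb{Z}[\bb{N}[S]])$, viewed as an additive group scheme (the comultiplication $\Delta$ on $\bb{Z}[\bb{N}[S]]$ sends $s \mapsto s\otimes 1 + 1\otimes s$ on $\bb{Z}[S]$). Let $m_n\colon \bb{A}^S\to \bb{A}^{S^n_{\Sigma_n}}$ be the morphism dual to the algebra map $\bb{Z}[\bb{N}[S^n_{\Sigma_n}]]\to \bb{Z}[\bb{N}[S]]$ induced by $(s_1,\ldots,s_n)\mapsto s_1\cdots s_n$; on $R$-points $m_n$ sends a function $f\colon S\to R$ to $(s_1,\ldots,s_n)\mapsto f(s_1)\cdots f(s_n)$. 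By construction $F_n := \Spec(\bb{Z}[\bb{N}[S]]_n^{\bb{L}})$ is the derived fiber $m_n^{-1}(0)$, and the natural map $\bb{Z}[\bb{N}[S]]\to \bb{Z}[\bb{N}[S]]_n^{\bb{L}}$ is dual to the inclusion $F_n\hookrightarrow \bb{A}^S$.

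\textbf{Construction of the addition.} The central point is to produce, for each $n$, a natural map $F_n\times F_n\to F_{2n}$ lifting the addition of $\bb{A}^S$. By the universal property of the derived fiber, this amounts to providing a canonical null-homotopy of the composite
\[
\alpha_n\colon F_n\times F_n\hookrightarrow \bb{A}^S\times \bb{A}^S \xrightarrow{+} \bb{A}^S \xrightarrow{m_{2n}} \bb{A}^{S^{2n}_{\Sigma_{2n}}}.
\]
On $R$-points, $\alpha_n(f,g)$ sends $(s_1,\ldots,s_{2n})$ to $\prod_i (f(s_i)+g(s_i)) = \sum_{A\subset[2n]} \prod_{i\in A} f(s_i)\prod_{i\notin A} g(s_i)$. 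For every $A\subset [2n]$ either $|A|\geq n$ or $|[2n]\setminus A|\geq n$, so the corresponding summand factors through $m_n$ applied to $f$ or to $g$. Since $F_n$ carries a tautological null-homotopy of $m_n$, each summand comes with a canonical null-homotopy on $F_n\times F_n$; assembling them yields the required null-homotopy of $\alpha_n$. Varying $n$, these maps are compatible with the transition maps of the pro-system (obtained by replacing $n$ by $n+1$), and define after reindexing an addition on the ind-scheme $(F_n)_n$.

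\textbf{Axioms, counit, antipode.} Dualizing, we obtain morphisms $\bb{Z}[\bb{N}[S]]_{2n}^{\bb{L}}\to \bb{Z}[\bb{N}[S]]_n^{\bb{L}}\otimes \bb{Z}[\bb{N}[S]]_n^{\bb{L}}$ organizing the pro-system into an additive Hopf algebra. The counit is the augmentation map $\bb{Z}[\bb{N}[S]]_n^{\bb{L}}\to \bb{Z}$, and the antipode is induced by the $\bb{Z}$-linear endomorphism of $\bb{Z}[S]$ sending $s\mapsto -s$, which preserves the ideal $I^n_S$ used in the construction. Coassociativity, cocommutativity and the counit/antipode axioms follow from the corresponding axioms for $\bb{Z}[\bb{N}[S]]$, as every structure map is the restriction of the ambient one on $\bb{A}^S$; this restriction property is also precisely what ensures that $\bb{Z}[\bb{N}[S]]\to (\bb{Z}[\bb{N}[S]]_n^{\bb{L}})_n$ is a morphism of Hopf algebras.

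\textbf{Main obstacle.} The delicate step is the construction of a coherent null-homotopy in the derived fiber, since the factorization of a term $\prod_{i\in A} f(s_i)$ with $|A|\geq n$ through $m_n(f)$ depends on a choice of $n$-element subset of $A$, and these choices must combine into a single morphism of animated rings rather than a mere collection of nullhomotopies of each summand. This can be handled by using the explicit Koszul presentation of $\bb{Z}[\bb{N}[S]]_n^{\bb{L}}$ coming from Corollary~\ref{CoroKoszulResolutionAlgebra} (applied to $\bb{Z}[\bb{N}[S^n_{\Sigma_n}]]\to \bb{Z}$) to write $\alpha_n$ as an explicit Koszul differential and verify the required compatibilities on generators; alternatively, one can argue symmetrically by averaging over the $\Sigma_{2n}$-action, which is well-behaved because the source $\bb{Z}[\bb{N}[S^{2n}_{\Sigma_{2n}}]]$ is already a space of symmetric multi-indices.
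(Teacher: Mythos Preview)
Your geometric reformulation is sound and the combinatorial insight---that any $2n$-fold product of $(f+g)$'s has, term by term, at least $n$ factors coming from $f$ or from $g$---is exactly the content of the lemma. However, the ``main obstacle'' you flag is a genuine gap, and neither of your suggested fixes closes it. The difficulty is that the expansion $\sum_A \prod_{i\in A} f(s_i)\prod_{i\notin A} g(s_i)$ is a decomposition of a single map of condensed modules, not a decomposition into maps of animated \emph{rings}; moreover, the null-homotopy of each summand in $(\bb{Z}[\bb{N}[S]]_n^{\bb{L}})^{\otimes 2}$ depends on a lift of a degree-$a$ monomial ($a\ge n$) along the non-injective multiplication $\Sym^{a-n}\otimes \Sym^{n}\to \Sym^{a}$, and there is no canonical such lift over $\bb{Z}$. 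Averaging over $\Sigma_{2n}$ does not resolve this (the problem lives inside each summand), and the Koszul suggestion would require exhibiting an explicit functorial chain homotopy compatible with all higher cells, which you have not done. Even granting the comultiplication, your verification of coassociativity and the other axioms is not automatic, since the maps you build are \emph{not} restrictions of the ambient ones on $\bb{A}^S$: they are restrictions together with a chosen null-homotopy, and the axioms become higher coherence conditions among these choices.

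The paper sidesteps all of this by a different organization. Rather than aiming directly at $\bb{Z}[\bb{N}[S]]_{2n}^{\bb{L}}\to (\bb{Z}[\bb{N}[S]]_n^{\bb{L}})^{\otimes 2}$, it observes that the comultiplication $s\mapsto s\otimes 1+1\otimes s$ preserves \emph{total} degree, so $(\Sym^n\bb{Z}[\sqcup^k S])_{[k]}$ is a cosimplicial submodule of $(\bb{Z}[\bb{N}[\sqcup^k S]])_{[k]}$. Taking the derived pushout along the augmentation at each level yields a $(\le m)$-cosimplicial ring $[k]\mapsto \bb{Z}[\bb{N}[\sqcup^k S]]_n^{\bb{L}}$ with \emph{no choices involved}---all coherences are inherited from the cosimplicial structure upstairs. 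Only then does one pass to the pro-system in $n$ and use a cofinality argument to identify $[k]\mapsto (\bb{Z}[\bb{N}[\sqcup^k S]]_n^{\bb{L}})_n$ with $[k]\mapsto ((\bb{Z}[\bb{N}[S]]_n^{\bb{L}})^{\otimes k})_n$. The point is that the ``wrong'' ring $\bb{Z}[\bb{N}[S\sqcup S]]_n^{\bb{L}}$ (quotient by total degree $n$) receives the comultiplication for free, whereas the ``right'' one $(\bb{Z}[\bb{N}[S]]_n^{\bb{L}})^{\otimes 2}$ (quotient by degree $n$ in each factor) does not; but at the pro-level they coincide. Your direct approach can likely be completed along these lines, but as written the coherence step is missing.
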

\begin{proof}
The Hopf algebra structure of $\bb{Z}[\bb{N}[S]]$ is encoded in the cosimplicial ring $(\bb{Z}[\bb{N}[\bigsqcup_{i=1}^k S]])_{[k]\in \Delta}=(\bigotimes_{i=1}^k \bb{Z}[\bb{N}[S]])_{[k]\in \Delta}$ obtained by the  comultiplication map defined by $s \mapsto s\otimes 1+ 1 \otimes s$. Let us fix $m\geq 0$ and consider the truncation $(\bb{Z}[\bb{N}[\bigsqcup^k_{i=1} S]])_{[k]\in \Delta_{\leq m}}$. For a fix $k$ and any $l\geq k$ we have inclusions 
\begin{equation}
\label{eqComparisonSymPowers}
\bigoplus_{i=1}^k \bb{Z}[S^{ln}_{\Sigma^{ln}}] \subset \Sym^{ln} (\bigoplus_{i=1}^k\bb{Z}[S] ) \subset \Sym^l( \bigoplus_{i=1}^k \bb{Z}[S^n_{\Sigma^n}]).
\end{equation}
On the other hand, we have a $(\leq m)$-cosimplicial submodule $(\Sym^n (\bb{Z}[\bigsqcup_{i=1}^k S]))_{[k]\leq \Delta_{\leq m}} \subset (\bb{Z}[\bb{N}[ \bigsqcup_{i=1}^kS]])_{[k]\leq \Delta_{\leq m}}$, it induces a morphism of $(\leq m)$-cosimplicial algebras
\[
\left(\Sym^{\bullet} (\Sym^n( \bb{Z}[\bigsqcup_{i=1}^k S]))\right)_{[k]\in \Delta_{\leq m}} \to \left(\bb{Z}[\bb{N}[\bigsqcup_{i=1}^k S]]\right)_{[k]\leq \Delta_m},
\]
taking the push-out from the left term towards the constant $(\leq m)$-cosimplicial ring $(\bb{Z})_{[k]\in \Delta_{\leq m}}$ one gets a $(\leq m)$-cosimplicial ring 
\[
\left(\bb{Z}[\bb{N}[\bigsqcup_{i=1}^k S] ]^{\bb{L}}_{n}\right)_{[k]\in \Delta_{\leq m}}.
\]
Taking the pro-ring as $n\to \infty$, by \eqref{eqComparisonSymPowers} one gets a $(\leq m)$-cosimplicial pro-ring 
\[
\left(  \bigotimes_{i=1}^k ( \bb{Z}[\bb{N}[S]]^{\bb{L}}_n )_{n\in \bb{N}})\right )_{[k]\in \Delta_{\leq m}}.
\]
Taking colimits as $m\to \infty$, we get a cosimplicial pro-ring 
\begin{equation}
\label{eqHopfFormal}
\left(  \bigotimes_{i=1}^k ( \bb{Z}[\bb{N}[S]]^{\bb{L}}_n )_{n\in \bb{N}})\right)_{[k]\in \Delta}.
\end{equation}
By \cite[Proposition 6.1.2.6 (4)]{HigherTopos}, the cosimplicial ring \eqref{eqHopfFormal} pro-correpresents a group object in $\AnRing_{\bb{Z}}$, proving that it is in fact  a Hopf algebra.  By construction, it is clear that $\bb{Z}[\bb{N}[S]] \to (\bb{Z}[\bb{N}[S]]^{\bb{L}}_n)_{n\in \bb{N}}$ is a morphism of Hopf algebras, proving the lemma. 
\end{proof}

\begin{prop}
\label{PropRepresentedNilRadical}
Let $S\in \Extdis$, the functor $\n{A} \mapsto \Nil(\n{A})(S)$ is correpresented by the pro-condensed ring $(\bb{Z}[\bb{N}[S]]_n^{\bb{L}})_{n}$.
\end{prop}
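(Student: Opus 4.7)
The plan is to identify both sides of the claimed equivalence directly. Using the pushout presentation
\[
\bb{Z}[\bb{N}[S]]_n^{\bb{L}} \simeq \bb{Z}[\bb{N}[S]] \otimes^{\bb{L}}_{\bb{Z}[\bb{N}[S^n_{\Sigma_n}]]} \bb{Z},
\]
together with the adjunction between the free animated-ring functor $\bb{Z}[\bb{N}[-]]$ and the forgetful functor to condensed anima, I would identify, for any analytic ring $\n{A}$,
\[
\Hom_{\CondRing}(\bb{Z}[\bb{N}[S]]_n^{\bb{L}}, \underline{\n{A}}) \simeq \mathrm{fib}\bigl( \underline{\n{A}}(S) \xrightarrow{\mathrm{prod}_n} \underline{\n{A}}(S^n_{\Sigma_n}) \bigr)_0,
\]
where the fiber is taken at the zero section. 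On the other hand, unwinding the definition,
\[
\Nil'_n(\underline{\n{A}})(S) \simeq \mathrm{fib}\bigl( \underline{\n{A}}(S) \to \pi_0(\underline{\n{A}})(S^n_{\Sigma_n}) \bigr)_0,
\]
so the natural truncation $\underline{\n{A}}(S^n_{\Sigma_n}) \to \pi_0(\underline{\n{A}})(S^n_{\Sigma_n})$ induces a comparison $\phi_n$ between these fibers. These assemble compatibly with the transition maps in the pro-system (constructed as in Lemma \ref{LemmaHopfAlgebraNilpotent}, using that the degree-$(n+1)$ monomials in $\bb{Z}[\bb{N}[S]]$ factor through products involving the degree-$n$ generators) and with the natural inclusions $\Nil'_n \hookrightarrow \Nil'_{n+1}$.

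Passing to the colimit yields a morphism
\[
\phi : \varinjlim_n \Hom_{\CondRing}(\bb{Z}[\bb{N}[S]]_n^{\bb{L}}, \underline{\n{A}}) \to \Nil(\n{A})(S),
\]
which I would show is an equivalence by analyzing its effect on each $\pi_i$. On $\pi_0$ the image consists precisely of those maps $f : S \to \underline{\n{A}}$ whose $\pi_0$ is killed by $\bb{Z}[\bb{N}[S]]_n$ for some $n$, i.e.\ the $S$-points of $\Nil(\pi_0\underline{\n{A}})$; the extra ambiguity recorded by the strict null-homotopies, which a priori contributes a $\pi_1 \underline{\n{A}}(S^n_{\Sigma_n})$-summand via the long exact sequence of the fiber, is killed in the colimit because the transition $\bb{Z}[\bb{N}[S]]_{n+1}^{\bb{L}} \to \bb{Z}[\bb{N}[S]]_n^{\bb{L}}$ acts on a pair $(f,h)$ by $(f, f\cdot h)$, multiplying such ambiguity by the nilpotent element $f$. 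The same mechanism handles higher $\pi_i$ by shifting through the long exact sequences of the fibers.

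The main obstacle will be the precise verification that the transition map acts as "multiplication by $f$" on null-homotopies; this requires tracing through the pushout diagrams and identifying the natural factorization $\bb{Z}[\bb{N}[S^{n+1}_{\Sigma_{n+1}}]] \to \bb{Z}[\bb{N}[S]] \otimes \bb{Z}[\bb{N}[S^n_{\Sigma_n}]]$ that expresses an $(n+1)$-fold product as a factor of $f$ times an $n$-fold product. An alternative route, bypassing explicit transition formulas, would be to use the Hopf algebra structure of Lemma \ref{LemmaHopfAlgebraNilpotent} to identify the corepresented functor as a subgroup of $\underline{\n{A}}(S)$ and then match its universal property (stability under the Hopf comultiplication and killing of $n$-fold products) with that of $\Nil(\n{A})(S)$.
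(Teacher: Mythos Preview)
Your overall skeleton coincides with the paper's: identify $\Map(\bb{Z}[\bb{N}[S]]_n^{\bb{L}},\n{A})$ as the fiber of $\n{A}(S)\to\n{A}(S^n_{\Sigma_n})$ over $0$, match $\pi_0$, and show the extra $\pi_{i+1}(\n{A}(S^n_{\Sigma_n}))$-contributions die in the colimit. The gap is precisely where you flag it: the transition mechanism. Your formula ``$(f,h)\mapsto(f,f\cdot h)$'' for the step $n\to n+1$ does not organise into a map of the fiber sequences $\widetilde{\Nil}_n\to\n{A}(S)\to\n{A}(S^n_{\Sigma_n})$, because there is no natural map $\n{A}(S^n_{\Sigma_n})\to\n{A}(S^{n+1}_{\Sigma_{n+1}})$ making the relevant square commute over $\n{A}(S)$ --- the multiplication $S\times S^n_{\Sigma_n}\to S^{n+1}_{\Sigma_{n+1}}$ induces a map on $\n{A}(-)$ in the wrong direction. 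So while one can certainly build a null-homotopy of the $(n+1)$-fold product from one of the $n$-fold product, this does not give a handle on what happens to the ambiguity in the long exact sequence, and the appeal to nilpotence of $f$ is not precise for general $S$.

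The paper (following Gaitsgory--Rozenblyum) resolves this by combining your two routes rather than treating the Hopf-algebra approach as an alternative. It first invokes Lemma~\ref{LemmaHopfAlgebraNilpotent} to give $\widetilde{\Nil}(\n{A})(S)$ the structure of an animated $\bb{Z}$-module mapping to $\n{A}(S)$, reducing the comparison on $\pi_i$ for $i\geq 1$ to the single basepoint $0$. It then uses the cofinal transition $n\to nm$ ($m\geq 2$), which \emph{does} fit into a map of fiber sequences via the codomain map $\n{A}(S^n_{\Sigma_n})\to\n{A}(S^{nm}_{\Sigma_{nm}})$ induced by $S^{nm}_{\Sigma_{nm}}\to\bb{Z}[\bb{N}[S^n_{\Sigma_n}]]$. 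At basepoint $0$ this codomain map factors through an $m$-fold diagonal-then-multiply, and the induced map $\bigoplus_{k=1}^{m}\pi_i\to\pi_i(\bigotimes_{k=1}^{m})$ vanishes for $i\geq 1$; hence a single step $n\to 2n$ already kills the $\pi_{i+1}$-contribution.
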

\begin{proof}
We follow the same argument of \cite[Proposition 6.3.3]{gaitsgory2023dg}. Let $\widetilde{\Nil}$ be the functor 
\[
\widetilde{\Nil}(\n{A})(S):= \varinjlim_n \Map_{\AnRing_{\bb{Z}}}(\bb{Z}[\bb{N}[S]]_n^{\bb{L}}, \n{A}).
\]
By Lemma \ref{LemmaHopfAlgebraNilpotent}, $\widetilde{\Nil}(\n{A})(S)$ is naturally an animated $\bb{Z}$-module and the natural map $f:\widetilde{\Nil}(\n{A})(S) \to \underline{\n{A}}(S)$ is a morphism of animated $\bb{Z}$-modules. It suffices to show that $f$ is a fully faithful sub-anima with connected components $\pi_0(\Nil(\n{A})(S))$. The claim about $\pi_0$ is clear since $\pi_0(\bb{Z}[\bb{N}[S]]_n^{\bb{L}})= \bb{Z}[\bb{N}[S]]_n$. It is left to show that for all $i\geq 1$ the map $\pi_i(\widetilde{\Nil}(\n{A})(S)) \to \pi_i(\underline{\n{A}}(S))$ is an isomorphism. Let us denote $\widetilde{\Nil}_n(\n{A})(S):=\Map_{\AnRing_{\bb{Z}}}(\bb{Z}[\bb{N}[S]]^{\bb{L}}_n, \n{A})  $

  By definition, we have a cartesian square of anima
\[
\begin{tikzcd}
\widetilde{\Nil}_n(\n{A})(S) \ar[d] \ar[r] &  \n{A}(S)  \ar[d] \\
\{0\} \ar[r] & \n{A}(S^n_{\Sigma^n}) 
\end{tikzcd}
\]
where the map $\n{A}(S)\to \n{A}(S^n_{\Sigma^n})$ is induced by the map $S^{n}_{\Sigma^n} \to \bb{Z}[\bb{N}[S]]$.  Thus, by taking $0$ as marked point, we have a long exact sequence of homotopy groups for $i\geq 1$
\[
\pi_{i+1}(\n{A}(S^n_{\Sigma^n})) \to \pi_i(\widetilde{\Nil}_n(\n{A})(S)) \to \pi_{i}( \n{A}(S)) \to \pi_{i}(\n{A}(S^n_{\Sigma^n})).  
\]
For $m\geq 1$ we have a commutative triangle 
\[
\begin{tikzcd}
\n{A}(S) \ar[r] \ar[rd]&  \n{A}(S^{n}_{\Sigma^n})   \ar[d] \\ 
 &  \n{A}(S^{nm}_{\Sigma^{nm}})
 \end{tikzcd}
 \]
 induced by the map $S^{nm}_{\Sigma^{nm}} \to \bb{Z}[\bb{N}[S^n_{\Sigma^n}]] \to \bb{Z}[\bb{N}[S]]$. This gives rise a natural map $\widetilde{\Nil}_n(\n{A})(S) \to \widetilde{\Nil}_{nm}(\n{A})(S)$ defining a map of fiber sequences
 \[
 \begin{tikzcd}
 \widetilde{\Nil}_{n}(\n{A})(S) \ar[d]  \ar[r]& \n{A}(S) \ar[d, "\id"]  \ar[r]& \n{A}(S^n_{\Sigma^n})  \ar[d]\\ 
 \widetilde{\Nil}_{nm}(\n{A})(S) \ar[r] & \n{A}(S) \ar[r]& \n{A}(S^{nm}_{\Sigma^{nm}}). 
 \end{tikzcd}
 \]
 This induces a morphism of long exact sequence of homotopy groups
 \begin{equation}
 \label{DiagramCommutativeHomotopyGroups}
 \begin{tikzcd}
 \pi_{i+1}(\n{A}(S^n_{\Sigma^n})) \ar[r]\ar[d] & \pi_i(\widetilde{\Nil}_n(\n{A})(S)) \ar[d] \ar[r]& \pi_i(\n{A}(S))  \ar[d, "\id"] \ar[r]& \pi_{i}(\n{A}(S^n_{\Sigma^n})) \ar[d] \\ 
  \pi_{i+1}(\n{A}(S^{nm}_{\Sigma^{nm}}))  \ar[r]& \pi_i(\widetilde{\Nil}_{nm}(\n{A})(S)) \ar[r]& \pi_i(\n{A}(S))  \ar[r]& \pi_{i}(\n{A}(S^{nm}_{\Sigma^{nm}})).
 \end{tikzcd}
 \end{equation}
 We claim that the map $\pi_{i}(\n{A}(S^n_{\Sigma^n})) \to \pi_i(\n{A}(S^{nm}_{\Sigma^{nm}}))$ is zero for any $m\geq 2$. Indeed, it factors through the map 
 \[
 \n{A}(S^n_{\Sigma^n})\xrightarrow{\Delta} \bigoplus^{m}_{k=1} \n{A}(S^n_{\Sigma^n}) \to  \bigotimes_{k=1}^{m}\n{A}(S^n_{\Sigma^n}) \to \n{A}(S^{nm}_{\Sigma^{nm}}),
 \]
 and the induced arrow 
 \[
 \bigoplus_{k=1}^m \pi_i(\n{A}(S^n_{\Sigma^n})) \to \pi_i(\bigotimes_{k=1}^{m}\n{A}(S^n_{\Sigma^n}) )
 \]
 is zero for $i\geq 1$.  Taking colimits as $m\to \infty$ in \eqref{DiagramCommutativeHomotopyGroups} one finds that 
 \[
 \pi_i(\widetilde{\Nil}(\n{A})(S)) \xrightarrow{\sim} \pi_i(\n{A}(S))
 \]
 is an isomorphism, proving what we wanted. 
\end{proof}

With the previous proposition proven, we can define a stronger notion of nilpotent ideal. 

\begin{definition}
\label{DefUnifNilpotent}
Let $\n{A}\to \n{B}$ be a morphism of analytic rings surjective on $\pi_0$ with $\n{B}$ static and endowed with the induced analytic structure, let $I=[\underline{\n{A}}\to \underline{\n{B}}]$.  We say that $I$ is \textit{$n$-uniformly nilpotent}  if for any map $S \to I$ with $S$ an extremally disconnected set, there is an extension 
\[
\bb{Z}[\bb{N}[S]]^{\bb{L}}_n \to \n{A}. 
\]   
We say that $I$ is \textit{uniformly nilpotent} if it is $n$-uniformly nilpotent  for some $n\geq 1$. Finally, we say that $I$ is \textit{locally uniformly nilpotent} if for any map $f:S \to I$ there exists $n$ such that $f$ extends to $\bb{Z}[\bb{N}[S]]_n^{\bb{L}} \to \n{A}$. 
\end{definition}

\begin{remark}
Note that any uniformly nilpotent ideal of an analytic ring is also a nilpotent ideal as condensed ring. On the other hand,  by definition, $\Nil(\n{A})$ is a locally nilpotent ideal of $\n{A}$. 
\end{remark}

We finish this section by proving the invariance of solid structure under locally nilpotent ideals. 

\begin{prop}
\label{PropSolidInvarianceNilpotent}
Let $\n{A}$ be a solid affinoid ring and let $I \to \n{A}$ be a locally uniformly nilpotent ideal. Let $\n{B}= \n{A}/ I$. Then a map $\bb{Z}[T] \to \n{A}$ extends to $\bb{Z}[T]_{\sol}$ if and only if the composite $\bb{Z}[T] \to \n{B}$ does so. 
\end{prop}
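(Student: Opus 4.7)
The ``only if'' direction is immediate by composition with the projection $\n{A}\to\n{B}$. For the converse, the plan is to show that $\underline{\n{A}}$, viewed as a condensed $\bb{Z}[T]$-module via the map $\bb{Z}[T]\to \underline{\n{A}}$ sending $T$ to the fixed element $a$, is $\bb{Z}[T]_{\sol}$-complete; by definition this is equivalent to the desired extension of analytic rings.

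The first reduction uses the idempotent algebra characterization. By the proof of \cite[Theorem 8.1]{ClausenScholzeCondensed2019} (recalled in the proof of Proposition \ref{PropositionAffinoidCatLocale}), $\bb{Z}[T]_{\sol}$ is the complement of the idempotent $\bb{Z}[T]$-algebra $\bb{Z}((T^{-1}))=\bb{Z}[[T^{-1}]][T]$; equivalently, $\Mod(\bb{Z}[T]_{\sol})\subset \Mod(\bb{Z}[T])$ consists of those condensed $\bb{Z}[T]$-modules $M$ with $\bb{Z}((T^{-1}))\otimes^L_{\bb{Z}[T]} M=0$, and this subcategory is closed under fibers, cofibers, filtered colimits, and extensions. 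Applying this closure to the cofiber sequence $I\to \underline{\n{A}}\to \underline{\n{B}}$ of condensed $\bb{Z}[T]$-modules, and using that $\underline{\n{B}}$ is $\bb{Z}[T]_{\sol}$-complete by hypothesis, the problem reduces to showing that the fiber $I$ is itself $\bb{Z}[T]_{\sol}$-complete.

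To handle $I$, the plan is to exhibit it as a filtered colimit of pieces whose finite-step filtrations have graded pieces in $\Mod(\n{B})$; the latter is automatically contained in $\Mod(\bb{Z}[T]_{\sol})$ via the restriction of analytic structure along $\bb{Z}[T]_{\sol}\to \n{B}$. By Definition \ref{DefUnifNilpotent}, any map $f\colon S\to I$ from an extremally disconnected set factors through some $\bb{Z}[\bb{N}[S]]^{\bb{L}}_n\to \n{A}$, so the $\n{A}$-ideal $J_f\subset I$ generated by the image of $f$ satisfies $J_f^n=0$ and admits the finite filtration $J_f\supset J_f^2\supset\cdots\supset J_f^n=0$, whose graded pieces are annihilated by $J_f\subset I$ and hence descend to $\n{B}$-modules (with the $T$-action going through $b=a\bmod I$). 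Writing $I=\varinjlim_f J_f$ as $f$ varies, closure of $\bb{Z}[T]_{\sol}$-completeness under extensions and filtered colimits then yields the completeness of $I$. The main obstacle will be making the condensed filtered colimit $I=\varinjlim_f J_f$ precise and verifying that the power ideals $J_f^k$ define well-behaved sub-$\bb{Z}[T]$-modules at the level of condensed sheaves; this should follow from the Hopf-algebra structure developed in Lemma \ref{LemmaHopfAlgebraNilpotent} together with the multiplicative properties of the filtration ideals $I^{\bullet}_{S,\n{A}}$ introduced in Definition \ref{DefinitionIdempotentAlgebrasSym}.
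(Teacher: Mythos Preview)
Your proposal has a genuine gap at the key step. You claim that the graded pieces $J_f^k/J_f^{k+1}$ of the $J_f$-adic filtration ``are annihilated by $J_f\subset I$ and hence descend to $\n{B}$-modules''. But being annihilated by $J_f$ only makes them $\n{A}/J_f$-modules. Since $J_f\subset I$ (possibly strictly), the surjection goes the wrong way: $\n{A}/J_f\twoheadrightarrow \n{B}=\n{A}/I$, so $\n{B}$-modules form a full subcategory of $\n{A}/J_f$-modules, not conversely. The hypothesis only gives $\bb{Z}[T]_{\sol}$-completeness for $\n{B}$-modules; it says nothing about general $\n{A}/J_f$-modules. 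This circularity does not unwind: knowing that $\n{A}/J_f$ is $\bb{Z}[T]_{\sol}$-complete would require the proposition for the ideal $I/J_f\subset \n{A}/J_f$, which is again only locally uniformly nilpotent. Replacing the $J_f$-adic filtration by the $I$-adic filtration on $J_f$ does not help either: the graded pieces $I^kJ_f/I^{k+1}J_f$ would then be $\n{B}$-modules, but there is no reason for $I^{n-1}J_f$ to vanish (one only has $J_f^n=0$, and $J_f^n\subset I^{n-1}J_f$, not the other way around).

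Your strategy \emph{does} work when $I$ itself is uniformly nilpotent: then the $I$-adic filtration on $\n{A}$ terminates and its graded pieces are honest $\n{B}$-modules. The difficulty is precisely the passage from ``uniformly'' to ``locally uniformly'' nilpotent, and the filtered-colimit decomposition $I=\varinjlim_f J_f$ does not bridge it.

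The paper avoids this obstruction by working directly with the ring $C:=(\bb{Z}((T^{-1}))\otimes_{\bb{Z}[T]}\n{A})[*]$ rather than with $I$. From the cofiber sequence and the hypothesis $\bb{Z}((T^{-1}))\otimes_{\bb{Z}[T]}\n{B}=0$, one gets $C=(\bb{Z}((T^{-1}))\otimes_{\bb{Z}[T]}I)\otimes_{\underline{\n{A}}}\n{A}$; the paper then argues that $C$, viewed as the unit ideal of itself, is locally uniformly nilpotent, so in particular $1_C$ is nilpotent and $C=0$. This sidesteps any need to produce $\n{B}$-module filtrations on $I$.
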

\begin{proof}
The map $\bb{Z}[T] \to \n{A}$ extends to $\bb{Z}[T]_{\sol}$ if and only if $\bb{Z}((T^{-1}))\otimes_{\bb{Z}[T]} \n{A}=0$.  By hypothesis we know that $\bb{Z}((T^{-1})) \otimes_{\bb{Z}[T]} \n{B}=0$, this implies that 
\[
( \bb{Z}((T^{-1}))\otimes_{\bb{Z}[T]} I)\otimes_{\underline{\n{A}}} \n{A} = (\bb{Z}((T^{-1}))\otimes_{\bb{Z}[T]} \n{A}) [*].
\]
Therefore, $ (\bb{Z}((T^{-1}))\otimes_{\bb{Z}[T]} \n{A}) [*]$ is a locally uniformly nilpotent ideal when considered as ideal over itself, in particular the unit is nilpotent and so the ring must be zero proving what we wanted. 
\end{proof}

\subsection{Bounded affinoid rings and $\dagger$-nil-radicals}
\label{SubsectionBoundedRings}

Let $(R,R^+)= (\bb{Z}((\pi)), \bb{Z}[[\pi]])$ and $R_{\sol}=(R,R^+)_{\sol}$, we  denote by  $R^{(+)}\langle T_1,\ldots, T_n \rangle_{\sol}:= R^{(+)}_{\sol}\otimes_{\bb{Z}_{\sol}} \bb{Z}[T_1,\ldots, T_n]_{\sol}$ the solid Tate algebra over $R^{(+)}$ in $n$-variables. In the previous section we constructed a nilpotent radical for arbitrary analytic rings. The first motivation to introduce the category of bounded affinoid rings is the construction of a new nil-radical that will play a fundamental role in the definition of the analytic de Rham stack. This new nil-radical will measure elements $a \in A$ that are ``topologically zero'', namely, elements such that $|a| \leq |\pi^n|$ for all $n\in \bb{N}$.  The second motivation to define the bounded affinoid rings is to construct a category of rings that behaves as Tate affinoid algebras in classical rigid geometry, namely, algebras $A$ admitting some pseudo-uniformizer $\pi$ and some subring of ``power bounded functions'' $A^{0}$ with $A=A^{0}[\frac{1}{\pi}]$, such that any $a\in A^{0}$ satisfies the norm inequality $|a|\leq 1$ with respect to $\pi$ in a suitable sense. 

To make this idea precise we need some further definitions. 

\begin{definition}
\label{DefinitionBoundedNilpIdeal}
\begin{enumerate}

\item Let $\n{A}\in \AnRing_{\bb{Z}_{\sol}}$ be an analytic ring over $\bb{Z}_{\sol}$. The subring of \textit{$+$-bounded} or \textit{solid elements} is the discrete animated ring given by the mapping space
\[
\n{A}^+ = \Map_{\AnRing_{\bb{Z}_{\sol}}}(\bb{Z}[T]_{\sol}, \n{A}).
\]

\item Let $A \in \AniAlg_{\bb{Z}_{\sol}}$ be an animated algebra over $\bb{Z}_{\sol}$. We define the subgroup of  \textit{topologically nilpotent elements} to be the condensed animated abelian group mapping an extremally disconnected set $S$ to the anima 
\[
A^{00}(S):= \Map_{\AniAlg_{\bb{Z}_{\sol}}}(\bb{Z}_{\sol}[[\bb{N}[S]]], A). 
\]

\item Let $A\in \AniAlg_{R_{\sol}}$ be an animated algebra over $R_{\sol}$. We define its condensed subring of \textit{power bounded elements} to be the condensed animated ring with values at $S\in \Extdis$ given by  
\[
A^{0}(S)=\Map_{\AniAlg_{R_{\sol}}}(R_{\sol}\langle \bb{N}[S]\rangle, A).
\]

\item Let $A\in \AniAlg_{R_{\sol}}$, the  condensed $R$-subring of \textit{bounded elements} is defined as $A^{b}=A^{0}[\frac{1}{\pi}]$. 

\item Finally, let $A\in \AniAlg_{R_{\sol}}$, the \textit{$\dagger$-nil-radical ideal} is the condensed $A^{b}$-ideal  whose values at  $S\in \Extdis$ are 
\[
 \Nil^{\dagger}(A)(S)=\Map_{\AniAlg_{R_{\sol}}}(R_{\sol} \{\bb{N}[S]\}^{\dagger}, A). 
\]

\item For $\n{A}\in \AnRing_{\bb{Z}_{\sol}}$ we let $\n{A}^{00}$, $\n{A}^{0}$, $\n{A}^{b}$ and $\Nil^{\dagger}(\n{A})$ be as in  (2)-(5) for its underlying ring $\underline{\n{A}}$.
\end{enumerate}
\end{definition}

\begin{remark}
\label{RemarkCondensedFullAnimaIdempotent}
By definition, $\n{A}^+\subset \n{A}(*)$ is the full animated subring consisting on those connected components $a\in \n{A}(*)$ for which the induced map of analytic rings $\bb{Z}[a]\to \n{A}$ extends to $\bb{Z}[a]_{\sol} \to \n{A}$. Indeed,  the co-ring structure of $\bb{Z}[T]_{\sol}$ naturally induces a ring structure on $\pi_0(\n{A}^+)$, and we endow $\n{A}^+$ with an animated ring structure thanks to the following  cartesian diagram 
\[
\begin{tikzcd}
\n{A}^+ \ar[r] \ar[d] & \underline{\n{A}}(*) \ar[d] \\
\pi_0(\n{A}^+) \ar[r]  & \pi_0(\underline{\n{A}})(*).
\end{tikzcd}
\]
\end{remark}

\begin{remark}
\label{RemarkFullSubobjects}
The spaces $A^{00}$, $A^0$ and $\Nil^{\dagger}(A)$ are full condensed sub-anima of $A$. Indeed, they are condensed sheaves since for any of the  algebras $B(\bb{N}[S])$ as above we have $B(\bb{N}[S\bigsqcup S'])= B(\bb{N}[S]) \otimes B(\bb{N}[S'])$, and they are full condensed subanima since the algebras $B(\bb{N}[S])$ are idempotent over the corresponding free algebra generated by $S$.  Therefore, for $A^*$ representing any of the previous full condensed subanima of $A$, we have a cartesian square of anima 
\[
\begin{tikzcd}
A^* \ar[r] \ar[d] & A \ar[d] \\ 
\pi_0(A^*) \ar[r] & \pi_0(A).
\end{tikzcd}
\]
In particular, to endow $A^*$ with a natural animated abelian group, module or ring structure compatible with the map $A^*\to A$, it suffices to do so for $\pi_0(A^*)$.  Furthermore, Corollary \ref{CoroComoduleOverAplus} implies that these objects are naturally $\n{A}^+$-modules. Moreover, as $R^+_{\sol} \langle \bb{N}[S] \rangle$ is a co-ring algebra, $\n{A}^0$ is also a full condensed  animated subring of $\n{A}$. 
\end{remark}

\begin{remark}
\label{RemarkValuesAtProfinites}
In Definition \ref{DefinitionBoundedNilpIdeal} we restricted ourselves to define the $S$-valued points of different condensed objects attached to an animated solid ring $A$, for $S$ extremally disconnected. Since $\bb{Z}_{\sol}[S]$ is compact projective for $S$ an arbitrary profinite set, the  description of $S$-valued points of the objects in  Definition \ref{DefinitionBoundedNilpIdeal} also extends to $S$ profinite.  
\end{remark}

As a first reality check we prove that the objects $A^0$, $A^{00}$ and $\n{A}^+$ agree with the classical definitions for complete Tate algebras

\begin{lemma}
\label{ExampleHuberPairisComplete}
Let $(A,A^+)$ be a complete Tate Huber pair with pseudo-uniformizer $\pi$ and  set $\n{A}= (A,A^+)_{\sol}$. The condensed spaces $A^0$ and $A^{00}$ agree with the classical subspaces of $A$ of power bounded and topologically nilpotent elements. In addition, $\n{A}^+=A^+$. 
\end{lemma}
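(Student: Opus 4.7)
The plan is to unravel the three identifications from the definitions, using Andreychev's description of the solid structure on $(A,A^+)_{\sol}$ together with the universal properties of the Tate algebra and the power series ring.

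First, for $\n{A}^+=A^+$: by \cite{Andreychev} and Definition \ref{DefinitionGeneralizedHuberPairs}, the analytic ring $(A,A^+)_{\sol}$ is obtained from $A$ by restriction of analytic structure along the map $\bb{Z}[X_a\colon a\in A^+]_{\sol}\to A$ sending $X_a\mapsto a$. A morphism $\bb{Z}[T]_{\sol}\to\n{A}$ of analytic $\bb{Z}_{\sol}$-algebras is the data of an element $a\in A$ such that the induced morphism $\bb{Z}[T]\to A$ is compatible with the solid structure, and this holds exactly when $a\in A^+$.

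For $A^0$ and $A^{00}$, both are full condensed sub-anima of $A$ by Remark \ref{RemarkFullSubobjects}, so it suffices to identify them as condensed subsets of $A$ by computing the $S$-valued points for $S\in\Extdis$. Evaluating at a point, $A^0(*)$ classifies $R_{\sol}$-algebra maps $R_{\sol}\langle T\rangle\to A$; since $R_{\sol}\langle T\rangle=R^+_{\sol}\langle T\rangle[\tfrac{1}{\pi}]$ is the $\pi$-adic completion of $R^+[T]$ with $\pi$ inverted, and $A=A^+[\tfrac{1}{\pi}]$ with $A^+$ being $\pi$-adically complete, such a map corresponds precisely to an element $a\in A$ whose powers $\{a^n\}_{n\geq 0}$ form a bounded subset of $A$, i.e.\ a classically power bounded element. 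Likewise $A^{00}(*)$ classifies $\bb{Z}_{\sol}$-algebra maps $\bb{Z}_{\sol}[[T]]\to A$, which unwinds to those $a$ with $a^n\to 0$, recovering the classical topologically nilpotent elements. For general $S\in\Extdis$, the realization of $R^+_{\sol}\langle\bb{N}[S]\rangle$ and $\bb{Z}_{\sol}[[\bb{N}[S]]]$ as $\pi$-adic (resp.\ $I_S$-adic) completions of the free algebras on $S$ identifies $A^0(S)$ and $A^{00}(S)$ with the continuous maps $S\to A$ whose image families are \emph{uniformly} power bounded, resp.\ topologically nilpotent.

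The main technical point is the passage from point-valued to $S$-valued points: one must verify that the universal extension property along the idempotent algebras $R^+_{\sol}\langle\bb{N}[S]\rangle$ and $\bb{Z}_{\sol}[[\bb{N}[S]]]$ precisely encodes the uniform classical conditions on the family $\{f(s)\}_{s\in S}$ and not merely a pointwise condition. This is trivial for $S$ finite, and for general profinite $S$ follows from the compactness of $S$ combined with the openness of $A^0$ and $A^{00}$ in the Tate topology of $A$: a continuous map from a profinite set into $A$ has image contained in a single $\pi^{-N}A^+$, so the uniform boundedness (resp.\ uniform convergence of products to zero) demanded by the completion on the source matches the classical uniformity coming from compactness on the target.
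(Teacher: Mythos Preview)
Your treatment of $\n{A}^+=A^+$ and of the $*$-valued points of $A^0$, $A^{00}$ is correct and matches the paper. The gap is in the passage to general profinite $S$. Your observation that a continuous $f\colon S\to A$ has $f(S)\subset\pi^{-N}A^+$ only says that $f(S)$ is bounded; it does \emph{not} give an extension to $R_{\sol}\langle\bb{N}[S]\rangle$ or $\bb{Z}_{\sol}[[\bb{N}[S]]]$, because products of $n$ elements of $f(S)$ lie only in $\pi^{-Nn}A^+$, not in a fixed bounded set. To make your route work you would have to actually prove the uniformity you invoke: use that the image of the compact $f(S)$ in the discrete $A/A_0$ is finite, write $f(S)\subset\bigcup_i(a_i+A_0)$ with finitely many $a_i\in A^0_{cl}$ (resp.\ $A^{00}_{cl}$), and then bound all mixed products of the $a_i$ (a pigeonhole argument in the $A^{00}$ case). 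This can be done but is not what you wrote, and it is noticeably more work than necessary.

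The paper's argument is shorter and avoids this. Rather than attacking arbitrary $f\colon S\to A^{00}_{cl}$, it fixes a ring of definition $A_0\subset A$ and shows directly that the condensed $A^{00}$ contains the open subgroup $\pi A_0$: for any continuous $f\colon S\to\pi A_0$, products of $n$ elements of $f(S)$ lie in $\pi^n A_0$, so by $\pi$-adic completeness of $A_0$ the algebra map $R^+_{\sol}[\bb{N}[S]]\to A_0$ extends to $R^+_{\sol}[[\bb{N}[S]]]\to A_0$, whence $f\in A^{00}(S)$. Once the condensed subgroup $A^{00}$ contains an open subgroup of $A$ it is itself open, hence determined by its underlying points, and the identification with the classical $A^{00}_{cl}$ (and likewise $A^0=A^0_{cl}$, via $A_0\subset A^0$) follows immediately.
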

\begin{proof}
By \cite[Proposition 3.34]{Andreychev} one can recover the underlying discrete ring of $A^+$ simply as $\n{A}^+$. It is left to identity the condensed subobjects $A^0$ and $A^{00}$ for general $A$.  By definition, the underlying points of $A^0$ and $A^{00}$  consist on all the elements $a\in A$ for which the map $R^+[T]\to A$ extends to $R^+\langle T\rangle$ and $R^{+}[[T]]$  respectively. Then, by definition $A^{0}(*)$ is the subset of power-bounded elements, and $A^{00}(*)$ is the set of topologically nilpotent elements. Therefore, to prove the lemma it suffices to show that $A^{0}$ and $A^{00}$ are open subspaces of $A$, or equivalently, that $A/A^{00}$ is discrete. Let $A_0\subset A$ be a ring of definition of $A$, it will suffices to show that $\pi A_0\subset A^{00}$. Let $S$ be a profinite set and let $f:S\to \pi A_{0}$ be a map of condensed sets. Since $\pi A_0$ is $\pi$-adically complete, the map $f$ extends to a morphism  of rings
\[
R^{+}_{\sol}[[\bb{N}[S]]]\to A_0,
\]
this proves that $\pi A_0\subset A^{00}$, obtaining the claim. 
\end{proof}

  In order to  prove further properties of the condensed subspaces constructed previously, we need to introduce the category of solid rings that will serve as building blocks for the geometric theory treated in this paper. 

\begin{definition}
\label{DefinitionSolidAffinoidRing}
Let $\n{A}$ be an analytic ring over $\bb{Z}_{\sol}$, we say that $\n{A}$ is a \textit{solid  affinoid ring}  if the natural map $(\underline{\n{A}}, \pi_0(\n{A}^+))_{\sol} \to \n{A}$ is an equivalence of analytic rings. We let $\AffRing_{\bb{Z}_{\sol}} \subset \AnRing_{\bb{Z}_{\sol}}$ denote the full subcategory of solid  affinoid rings. Given $\n{A}$ a solid affinoid ring we let $\AffRing_{\n{A}}$ denote the slice category of solid affinoid $\n{A}$-algebras. 
\end{definition}

\begin{example}
\label{ExampleSolidRings}
\begin{enumerate}
\item Let $A$ be an animated discrete ring, by \cite[Proposition 3.34]{Andreychev} and \cite[Proposition 12.19]{ClauseScholzeAnalyticGeometry} solid affinoid ring structures on $A$ are in bijection with integrally closed  subrings $A^+\subset \pi_0(A)$ via the map $A^+\mapsto (A,A^+)_{\sol}$. If $A^+=A$ we denote $A_{\sol}=(A,A)_{\sol}$. 

\item An example of an analytic ring over $\bb{Z}_{\sol}$ that is not solid affinoid is the ring of ultra-solid rational numbers $\bb{Q
}_{\sol\sol}$ (construction due to Clausen and Scholze). It has by compact projective generators the $\bb{Q}$-vector spaces $\prod_I \bb{Q}$. In terms of locales, $\bb{Q}_{\sol\sol}$ is the open complement of $\bb{Z}_{\sol}$ associated to the idempotent solid algebra $\widehat{\bb{Z}}=\prod_p \bb{Z}_p$, we left the proof of this fact for a future work. 

\end{enumerate}
\end{example}

Polynomials algebras are the compact projective generators in the $\infty$-category of discrete commutative animated rings. Similarly,  one can explicitly provide a class of compact projective generators for the $\infty$-category of solid affinoid rings. 

\begin{prop}
\label{PropCompactProjectiveSolidAffinoid}
The $\infty$-category $\AffRing_{\bb{Z}_{\sol}}$ is stable under small colimits and finite products in $\AnRing_{\bb{Z}_{\sol}}$. Furthermore, it has a basis of compact projective generators given by the analytic rings $\bb{Z}[T_1,\ldots, T_n]_{\sol}[\bb{N}[S]]$, where $\{T_i\}_{i=1}^n$ is a finite set of variables, and $S$ is a profinite set.  Moreover, these rings are compact projective in $\AnRing_{\bb{Z}_{\sol}}$.
\end{prop}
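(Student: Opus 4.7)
The plan is to proceed in three steps: unwind the universal property corepresented by the claimed generators, verify they are compact projective in the full category $\AnRing_{\bb{Z}_{\sol}}$, and then deduce closure of $\AffRing_{\bb{Z}_{\sol}}$ under small colimits and finite products.

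First I would unwind the definition of $\bb{Z}[T_1,\ldots,T_n]_{\sol}[\bb{N}[S]]$. By construction this is the free analytic $\bb{Z}[T_1,\ldots,T_n]_{\sol}$-algebra on the condensed set $S$, equivalently the analytic-ring pushout $\bb{Z}[T_1,\ldots,T_n]_{\sol} \otimes_{\bb{Z}_{\sol}} \bb{Z}_{\sol}[\bb{N}[S]]$. The first factor tautologically corepresents $(-)^+$ in $n$ variables by Definition \ref{DefinitionBoundedNilpIdeal}(1), while the second corepresents $\underline{(-)}(S)$ as the free analytic ring on the condensed abelian group $\bb{Z}_{\sol}[S]$. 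This gives
\[
\Map_{\AnRing_{\bb{Z}_{\sol}}}\!\bigl(\bb{Z}[T_1,\ldots,T_n]_{\sol}[\bb{N}[S]],\, \n{A}\bigr) \;\simeq\; (\n{A}^+)^n \times \underline{\n{A}}(S).
\]

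Next, for compact projectivity in $\AnRing_{\bb{Z}_{\sol}}$, I would show that both factors on the right preserve sifted colimits. Sifted colimits of analytic rings over $\bb{Z}_{\sol}$ are computed on underlying condensed animated rings (which are automatically $\bb{Z}_{\sol}$-solid), so $\underline{\n{A}}(S) \simeq \Map_{\bb{Z}_{\sol}}(\bb{Z}_{\sol}[S], \underline{\n{A}})$ preserves sifted colimits because $\bb{Z}_{\sol}[S]$ is compact projective in $\Mod(\bb{Z}_{\sol})$ for any profinite $S$. For the $+$-bounded functor I would identify $\n{A}^+$ as the full sub-anima of $\underline{\n{A}}(*)$ on those $f$ for which the analytic tensor product $\n{A} \otimes_{\bb{Z}[T]} \bb{Z}((T^{-1}))_{\sol}$ (with $T\mapsto f$) vanishes; since this tensor product commutes with sifted colimits of analytic rings, and vanishing of a complete module is preserved by sifted colimits, the condition is sifted-colimit-stable, yielding $\colim_i \n{A}_i^+ \simeq (\colim_i \n{A}_i)^+$ in any sifted system.

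For stability of $\AffRing_{\bb{Z}_{\sol}}$ under colimits, given a diagram $\{\n{A}_i\}$ with colimit $\n{A}$ in $\AnRing_{\bb{Z}_{\sol}}$, the natural map $\n{B} := (\underline{\n{A}}, \pi_0(\n{A}^+))_{\sol} \to \n{A}$ admits an inverse: each $\n{A}_i \simeq (\underline{\n{A}_i}, \pi_0(\n{A}_i^+))_{\sol}$ factors through $\n{B}$ because the image of $\pi_0(\n{A}_i^+)$ in $\pi_0(\underline{\n{A}})(*)$ lies inside $\pi_0(\n{A}^+)$, producing the inverse by the universal property of the colimit. Finite products are handled identically once one observes $(\n{A}_1 \times \n{A}_2)^+ \simeq \n{A}_1^+ \times \n{A}_2^+$ and $\underline{\n{A}_1 \times \n{A}_2} \simeq \underline{\n{A}_1} \times \underline{\n{A}_2}$. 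Finally, generation of $\AffRing_{\bb{Z}_{\sol}}$ under sifted colimits by the listed family follows from a standard bar resolution for the free--forgetful adjunction between $\AffRing_{\bb{Z}_{\sol}}$ and the category of pairs (condensed animated $\bb{Z}_{\sol}$-algebra, discrete subring of its $\pi_0(-)(*)$): each term of the resolution is a sifted colimit of the generators once $\underline{\n{A}}$ is written as a sifted colimit of free animated condensed rings on extremally disconnected sets and $\pi_0(\n{A}^+)$ as a filtered colimit of finitely generated polynomial subalgebras.

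The main obstacle is the sifted-colimit preservation of $(-)^+$ in Step 2: the defining property of $\n{A}^+$ is not levelwise on the underlying condensed ring but involves solidness with respect to $\bb{Z}[T]_{\sol}$, so its stability under sifted colimits must be argued through the preservation of analytic tensor products and vanishing of complete modules, a nontrivial input from the Clausen--Scholze formalism.
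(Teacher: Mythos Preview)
Your approach is essentially the same as the paper's, with only a reordering of the steps (the paper first proves generation and closure under colimits, then compact projectivity). There is, however, one genuine slip in your Step~2.

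The claim that ``vanishing of a complete module is preserved by sifted colimits'' is not correct in the direction you need: for a sifted diagram of modules $\{M_i\}$, the vanishing of $\varinjlim_i M_i$ does \emph{not} force any $M_i$ to vanish. What saves the argument---and what the paper makes explicit---is that the objects $\bb{Z}((T^{-1}))\otimes_{(\bb{Z}[T],\bb{Z})_{\sol}} \n{A}_i$ are \emph{rings}, not just modules. If their sifted colimit is zero then the unit $1$ becomes $0$ in the colimit; since the colimit is computed on underlying condensed animated rings (and the unit is a point of the underlying ring), there exists some index $i$ at which the unit already vanishes, hence $\bb{Z}((T^{-1}))\otimes_{(\bb{Z}[T],\bb{Z})_{\sol}} \n{A}_{i'}=0$ for every $i'$ receiving a map from $i$, and the lift $f_i\colon \bb{Z}[T]\to \n{A}_{i'}$ extends to $\bb{Z}[T]_{\sol}$. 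With this correction your Step~2 goes through.

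Two minor stylistic remarks. First, the paper replaces your bar-resolution sketch for generation by the one-line formula
\[
\n{A}\;=\;(\underline{\n{A}},\bb{Z})_{\sol}\otimes_{\bb{Z}[\pi_0(\n{A}^+)]} \bb{Z}[\pi_0(\n{A}^+)]_{\sol},
\]
which exhibits every solid affinoid ring as a pushout of the generators and is quicker than setting up a free--forgetful adjunction. Second, for closure under colimits the paper reads it off from this formula, whereas you construct the inverse $\n{A}\to(\underline{\n{A}},\pi_0(\n{A}^+))_{\sol}$ directly; your version is equally valid but you should note that $\underline{\n{A}}$ is already complete for the structure generated by $\pi_0(\n{A}^+)$ (since every $a\in\pi_0(\n{A}^+)$ gives a map $\bb{Z}[T]_{\sol}\to\n{A}$ of analytic rings), so that the underlying rings of $\n{A}$ and $(\underline{\n{A}},\pi_0(\n{A}^+))_{\sol}$ agree.
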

\begin{proof}
It is clear that the category $\AffRing_{\bb{Z}_{\sol}}$ is generated by the rings $\bb{Z}_{\sol}[\bb{N}[S]]$ and $\bb{Z}[T]_{\sol}$ under colimits, namely, the rings $\bb{Z}_{\sol}[\bb{N}[S]]$ are generators of animated solid algebras and for any $\n{A}\in \AffRing_{\bb{Z}_{\sol}}$ we can write
\[
\n{A}=(\underline{\n{A}},\bb{Z} )_{\sol} \otimes_{\bb{Z}[\pi_0(\n{A}^+)]} \bb{Z}[\pi_0(\n{A}^+)]_{\sol}. 
\]
In particular, $\AffRing_{\bb{Z}_{\sol}}$ is stable under small colimits in $\AnRing_{\bb{Z}_{\sol}}$. Stability under finite products is clear since for $\n{A}$ and $\n{B}$ solid affinoid rings, one has $\n{A}\prod \n{B}= (\underline{\n{A}}\times \underline{\n{B}}, \n{A}^+\times \n{B}^+)_{\sol}$.

  It is left to see that the rings $\bb{Z}[\underline{T}]_{\sol}[\bb{N}[S]]$ are compact projective in the $\infty$-category of solid affinoid rings. Since the category of compact projective objecs is stable under finite coproducts, it suffices to show that $\bb{Z}[T]_{\sol}$ and $\bb{Z}_{\sol}[\bb{N}[S]]$ are compact projective. The ring $\bb{Z}_{\sol}[\bb{N}[S]]$ is clearly compact projective since it corepresents $\n{A}\mapsto \underline{\n{A}}(S)=R\Hom_{\bb{Z}_{\sol}}(\bb{Z}_{\sol}[S],\underline{\n{A}})$, and $\bb{Z}_{\sol}[S]$ is a  compact projective solid abelian group. It is left to show that $\bb{Z}[T]_{\sol}$ is compact projective  in $\AnRing_{\bb{Z}_{\sol}}$.
  
   Let $\{\n{A}_i\}_{i\in I}$ be a sifted diagram of analytic $\bb{Z}_{\sol}$-algebras with colimit $\n{A}$. We want to prove that the natural map 
\begin{equation}
\label{eqCompactTateAlgebra}
\varinjlim_{i} \Map_{\AnRing_{\bb{Z}_{\sol}}}(\bb{Z}[T]_{\sol}, \n{A}_i) \to \Map_{\AnRing_{\bb{Z}_{\sol}}}( \bb{Z}[T]_{\sol}, \n{A}) 
\end{equation}
is an equivalence. First,  note that both sides are full subanima of $\Map_{\AnRing_{\bb{Z}_{\sol}}}(R[T], \n{A})= \n{A}(*)$ as $\bb{Z}[T]_{\sol}$ is an idempotent $(\bb{Z}[T],\bb{Z})_{\sol}$-algebra, so it suffices to show that they have the same connected components. Let $f: \bb{Z}[T]_{\sol} \to \n{A}$ be a morphism of analytic rings, we want to show that $f$ factors through some $\n{A}_i$. As $\bb{Z}[T]$ is compact projective, we can find a lift $f_i:\bb{Z}[T]\to \n{A}_i$ to some $i$. The map $f_i$ extends to $\bb{Z}[T]_{\sol}$ if and only if $ \bb{Z}((T^{-1})) \otimes_{\bb{Z}_{\sol}} \n{A}_i=0$.   By hypothesis  $\bb{Z}((T^{-1})) \otimes_{(\bb{Z}[T],\bb{Z})_{\sol}} \n{A}=0$, and we have that 
\[
 \bb{Z}((T^{-1})) \otimes_{ (\bb{Z}[T],\bb{Z})_{\sol}} \n{A}=\varinjlim_i\bb{Z}((T^{-1})) \otimes_{ (\bb{Z}[T],\bb{Z})_{\sol}}  \n{A}_i.
\]
Then, there is some $i$ for which the unit of $\bb{Z}((T^{-1})) \otimes_{ (\bb{Z}[T],\bb{Z})_{\sol}}  \n{A}_i$ vanishes, implying that for any $i\to i'$ one has $\bb{Z}((T^{-1})) \otimes_{ (\bb{Z}[T],\bb{Z})_{\sol}}  \n{A}_{i'}=0$, this proves that $f_{i'}$ extends to $\bb{Z}[T]_{\sol}\to \n{A}_{i'}$ and that  \eqref{eqCompactTateAlgebra} is an equivalence. 
\end{proof}

We go back to Definition \ref{DefinitionBoundedNilpIdeal}, our next task is to show that the objects there constructed are complete and have the corresponding algebraic structure.

\begin{prop}
\label{PropositionConstructionsareSolid}
The following hold: 
\begin{enumerate}

\item  Let $\n{A}\in \AffRing_{\bb{Z}_{\sol}}$, then $\n{A}^{00}$ is a solid  $\n{A}^+_{\sol}$-module. 

\item Let $\n{A}\in \AffRing_{R^+_{\sol}}$, then $\n{A}^{0}$ is an animated  $\n{A}^{+}_{\sol}$-algebra. 

\item Let $\n{A}\in \AffRing_{R_{\sol}}$, then $\n{A}^b$ is a solid $\n{A}^+_{\sol}$-algebras  and $\Nil^{\dagger}(\n{A})$ a  solid $\n{A}^+_{\sol}$-module. Moreover,  $\Nil^{\dagger}(\n{A})$ has a natural structure of  $\n{A}^{b}$-module, defining a full subideal $\Nil^{\dagger}(\n{A})\subset \n{A}^b$. 

\end{enumerate}
\end{prop}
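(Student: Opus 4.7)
The proof plan splits cleanly into two steps: equipping each object with the claimed algebraic structure, and verifying completeness (solidity).

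For the algebraic structure, I would invoke Corollary \ref{CoroComoduleOverAplus} together with Remark \ref{RemarkFullSubobjects}. By that corollary, the Hopf algebra $\bb{Z}_{\sol}[[\bb{N}[S]]]$ corepresents a module over the ring sheaf corepresented by $\bb{Z}[T]_{\sol}$; evaluating at $\n{A}\in\AffRing_{\bb{Z}_{\sol}}$ and using the identification $\n{A}^+=\Map_{\AnRing_{\bb{Z}_{\sol}}}(\bb{Z}[T]_{\sol},\n{A})$ from Definition \ref{DefinitionBoundedNilpIdeal}(1) equips $\n{A}^{00}$ with a condensed $\pi_0(\n{A}^+)$-module structure on connected components; the cartesian square of Remark \ref{RemarkFullSubobjects} then promotes this to an animated $\n{A}^+$-module structure compatible with the inclusion $\n{A}^{00}\subset\underline{\n{A}}$. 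The same reasoning applied to Corollary \ref{CoroComoduleOverAplus}(2) and (3) gives $\n{A}^{0}$ a structure of animated $\n{A}^+$-algebra and $\Nil^{\dagger}(\n{A})$ the structure of an $\n{A}^+$-module, with $\n{A}^b=\n{A}^0[\tfrac{1}{\pi}]$ inheriting an $\n{A}^+$-algebra structure after inverting $\pi$.

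For solidity, I would argue that each of $\n{A}^{00}$, $\n{A}^0$, $\n{A}^b$, $\Nil^{\dagger}(\n{A})$ is a full condensed sub-anima of $\underline{\n{A}}$, and $\underline{\n{A}}$ itself is solid $\n{A}^+_{\sol}$-complete by the assumption that $\n{A}$ is a solid affinoid ring (so $\n{A}=(\underline{\n{A}},\pi_0(\n{A}^+))_{\sol}$). By the cartesian square of Remark \ref{RemarkFullSubobjects}, it suffices to check the claim on $\pi_0$, where it reduces to showing that the subsets are preserved under the solid operations $\bb{Z}_{\sol}[S]\to\underline{\n{A}}$ for profinite $S$. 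This follows from the fact that each corepresenting algebra $\bb{Z}_{\sol}[[\bb{N}[S]]]$, $R^+_{\sol}\langle\bb{N}[S]\rangle$, $R_{\sol}\langle\bb{N}[S]\rangle$, $R_{\sol}\{\bb{N}[S]\}^{\dagger}$ is itself a solid algebra built on top of $\bb{Z}_{\sol}[S]$ (by Definition \ref{DefinitionIdempotentAlgebrasSym}): any map from $\bb{Z}_{\sol}[S]$ into $\underline{\n{A}}$ obtained by extending a point of $\n{A}^*$ factors through the corepresenting algebra by construction, so the image lands in $\pi_0(\n{A}^*)$.

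The most delicate point, which I expect to be the main obstacle, is the last assertion of (3): that $\Nil^{\dagger}(\n{A})$ carries an $\n{A}^b$-module structure realizing it as a full sub-ideal of $\n{A}^b$. The $\n{A}^+$-module structure produced above only uses the variable $T\in R\langle T\rangle_{\sol}$ playing the role of the coefficient ring, but to multiply by an arbitrary power-bounded element one needs a second variable, and the compatibility of the two constructions requires the overconvergent algebra to absorb the bounded algebra. This is exactly the content of Lemma \ref{LemmaOverconvergentFunctionsIdeal}(2): any map $S_3 \to \n{A}^b(S_1)\cdot\Nil^{\dagger}(\n{A})(S_2)$ extends from $R_{\sol}[\bb{N}[S_3]]$ to $R_{\sol}\{\bb{N}[S_3]\}^{\dagger}$. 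Concretely, given $a\in\n{A}^b(T)$ and $b\in\Nil^{\dagger}(\n{A})(S)$, the product $a\cdot b$ is corepresented by a map out of $R_{\sol}\langle\bb{N}[T]\rangle\otimes_{R_{\sol}} R_{\sol}\{\bb{N}[S]\}^{\dagger}$ landing in the tensor of the augmentation ideal of the second factor, so the cited lemma produces the desired extension to $R_{\sol}\{\bb{N}[T\times S]\}^{\dagger}$, exhibiting $a\cdot b \in \Nil^{\dagger}(\n{A})(T\times S)$; one then checks the module axioms on $\pi_0$, which is automatic because $\Nil^{\dagger}(\n{A})\subset\underline{\n{A}}$ is a full sub-anima and the operations are inherited from the ambient ring.
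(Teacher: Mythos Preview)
Your outline for the algebraic structure (via Corollary \ref{CoroComoduleOverAplus} and Remark \ref{RemarkFullSubobjects}) and for the $\n{A}^b$-module structure on $\Nil^{\dagger}(\n{A})$ (via Lemma \ref{LemmaOverconvergentFunctionsIdeal}(2)) is correct and matches the paper.

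The gap is in the solidity step. You correctly identify that one must show the extension $\bb{Z}_{\sol}[S]\to\underline{\n{A}}$ of a point $S\to\n{A}^*$ lands in $\n{A}^*$, but your justification ``factors through the corepresenting algebra by construction, so the image lands in $\pi_0(\n{A}^*)$'' skips the essential point. Factoring as $\bb{Z}_{\sol}[S]\to B(\bb{N}[S])\to\underline{\n{A}}$ does \emph{not} automatically put the image in $\n{A}^*$: for that you must check that every profinite $T\to\bb{Z}_{\sol}[S]$ yields a $T$-point of $\n{A}^*$, i.e.\ that the composite $T\to B(\bb{N}[S])$ extends to $B(\bb{N}[T])\to B(\bb{N}[S])$. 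This is not formal; for example when $B(\bb{N}[S])=R_{\sol}\{\bb{N}[S]\}^{\dagger}$ it amounts to a compatibility of overconvergence radii. The paper handles this uniformly by noting that $B_I[S]$ lies in the augmentation ideal of $B_I(\bb{N}[S])$ and then invoking Lemma \ref{LemmaOverconvergentFunctionsIdeal} (parts (1), (2), (3), depending on the case) to produce the extension $B_I(\bb{N}[T])\to B_I(\bb{N}[S])$. You only cite this lemma for the ideal structure in (3), but it is equally needed here.

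There is a secondary issue: you only discuss $\bb{Z}_{\sol}[S]$, which at best gives $\bb{Z}_{\sol}$-completeness. The claim is $\n{A}^+_{\sol}$-completeness, so one must work over $B_I=\bb{Z}[T_I]_{\sol}$ (or its $R^{(+)}$-analogue) for finite $I\subset\n{A}^+$ and then pass to the colimit; this is exactly why Lemma \ref{LemmaOverconvergentFunctionsIdeal} is stated over a solid polynomial base $A_{\sol}=\bb{Z}[T_1,\ldots,T_n]_{\sol}$ rather than just $\bb{Z}_{\sol}$.
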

\begin{proof}
 By Remarks \ref{RemarkCondensedFullAnimaIdempotent} and \ref{RemarkFullSubobjects}, to prove that the objects in the proposition are solid $\n{A}^+_{\sol}$-modules and that have the claimed algebraic structures, we can take the $0$-truncation.   Therefore, we can assume without loss of generality that $\n{A}$ is static. Corollary \ref{CoroComoduleOverAplus} shows that any of the objects in (1)-(3) are $\n{A}^+$-modules, and that $\n{A}^0\subset \underline{\n{A}}$ is a subring. In particular, $\n{A}^+\to \n{A}^0\to \n{A}^b$ are morphisms of (static) commutative rings.  To prove $\n{A}^+_{\sol}$-completeness,   it suffices to do it for  $\n{A}^{00}$, $\n{A}^0$ and $\Nil^{\dagger}(\n{A})$, let  $\n{A}^*$ denote one of these condensed modules. We make the following conventions: 
 \begin{enumerate}
  \item We let $B$ denote $\bb{Z}$, $R^+$ or $R$ depending on the situation. 
  
  \item We take $I\subset \n{A}^+$ a finite set of variables, set $B_I=B \otimes_{\bb{Z}_{\sol}} \bb{Z}[T_I]_{\sol}$, and let  $B_I(\bb{N}[S])$  be the algebra $B_I[[\bb{N}[S]]]$, $B_I\langle \bb{N}[S] \rangle$ or $B_I\{\bb{N}[S]\}^{\dagger}$ depending on the situation. 
  \end{enumerate}

  Let $S$ be a profinite set and let $S\to \n{A}^* \subset \n{A}$ be a map of condensed sets, by definition it extends uniquely to a map $ B_I(\bb{N}[S]) \to \n{A}$, we claim that $B_{I}[S] \to \n{A}$ factors through $\n{A}^*$.   Suppose this holds, then $\n{A}^*$ would be the image of maps $\bigoplus_{S} B_{I}[S] \to \underline{\n{A}}$, proving that $\n{A}^*$ is $B_{I}$-complete, taking colimits along all  $I\subset \n{A}^+$, one gets that $\n{A}^*$ is indeed $\n{A}^{+}_{\sol}$-complete. Let $\bigsqcup_{i} S'_i \to B_{I}[S]$ be a surjection of condensed sets. Since $B_{I}[S]$ is in the augmentation ideal of $B_{I}(\bb{N}[S])$,  Lemma \ref{LemmaOverconvergentFunctionsIdeal} implies that  the map  $S'_i  \to B_{I}(\bb{N}[S])$ extends to  $B_{I}(\bb{N}[S'_i]) \to B_{I}(\bb{N}[S])$. Taking the composition we get maps $B_{I}(\bb{N}[S'_i]) \to \n{A}$ for all $i\in I$, that must send $S'_i$ to $\n{A}^*$ by definition. This shows that $B_{I}[S]$ is sent to $\n{A}^*$ proving the claim.

 Finally, suppose that $\n{A}$ is a solid affinoid ring over $R_{\sol}$, we want to prove that $\Nil^{\dagger}(\n{A})$ is naturally an $\n{A}^{b}$-module.  For this, by looking at the corresponding diagrams,  it suffices to prove that for all profinite set $S$ and all $n\in \bb{N}$,  the diagonal map $S \to S\times S$ induces a morphism of algebras 
 \[
 R\{\bb{N}[S]\}^{\dagger} \to R\langle \bb{N}[\pi^n S]\rangle \otimes_{R_{\sol}} R\{\bb{N}[S]\}^{\dagger},
 \]
 but this follows by Lemma \ref{LemmaOverconvergentFunctionsIdeal}.
\end{proof}

After the previous preparations we can define the desired category of bounded affinoid rings. 

\begin{definition}
\label{DefinitionBoundedAffinoidRings}
\begin{enumerate}

\item An animated $R_{\sol}$-algebra $A$ is \textit{bounded} if the natural map $A^{b}\to A$ is an equivalence.  We  let $\AniAlg^{b}_{R_{\sol}}$ be the full subcategory of $\AniAlg_{R_{\sol}}$ consisting on bounded animated $R_{\sol}$-algebras.

\item  Let $A$ be a bounded $R_{\sol}$-algebra, the cone $A^{\dagger-\red}$ of the map $\Nil^{\dagger}(A)\to A$  is called the \textit{$\dagger$-reduction} of $A$.  We say that $A$ is \textit{$\dagger$-reduced} if $\n{A}\to \n{A}^{\dagger-\red} $ is an equivalence. We let  $\AniAlg^{\dagger-\red}_{R_{\sol}}\subset \AniAlg_{R_{\sol}}$  be the full subcategory consisting on $\dagger$-reduced animated rings. 

\item A solid affinoid $R_{\sol}$-algebra is \textit{bounded} if its underlying condensed ring is bounded. We let $\AffRing^{b}_{R_{\sol}}\subset \AffRing_{R_{\sol}}$ be the full subcategory of bounded affinoid  $R_{\sol}$-algebras.  For $\n{A}\in \AffRing^{b}_{R_{\sol}}$ we let $\AffRing^b_{\n{A}}$ be the slice category of bounded affinoid $\n{A}$-algebras. 

\item Given $\n{A}$ a bounded affinoid ring, we let $\n{A}^{\dagger-\red}:= \underline{\n{A}}^{\dagger-\red}_{\n{A}/}$ be its $\dagger$-reduction, we say that $\n{A}$ is $\dagger$-reduced if the previous map is an equivalence. We let $\AffRing^{\dagger-\red}_{R_{\sol}} \subset \AffRing_{R_{\sol}}^b$ be the full subcategory of $\dagger$-reduced bounded affinoid rings. 
\end{enumerate}
\end{definition}

The following notation will be used throughout the  rest of the paper.

\begin{definition}
Let $\n{A}$ be a solid affinoid ring, we let $\n{A}[T]_{\sol}:= \n{A} \otimes_{\bb{Z}_{\sol}} \bb{Z}[T]_{\sol}$ be the \textit{solid polynomial algebra} over $\n{A}$. If $\n{A}$ is a bounded $R_{\sol}$-algebra, we write $\n{A}\langle T \rangle_{\sol}:= \n{A}[T]_{\sol}$ and call it the \textit{solid Tate algebra} over $\n{A}$. 
\end{definition}

We end this section by proving some permanence properties of the category of bounded affinoid rings, in particular that the $\dagger$-reduction is an idempotent functor.

\begin{lemma}
\label{LemmaTestBounded}
Let $A$ be an animated $R_{\sol}$-algebra. 

\begin{enumerate}

\item $A$ is bounded if and only if $\pi_0(A)$ is bounded.

\item  An animated  $R_{\sol}$-algebra is bounded if and only if there is a surjection $\bigoplus_{i} R_{\sol}[S_i]\to A$ of animated $R_{\sol}$-modules (i.e. surjection on $\pi_0$) with $S_i$ profinite,  such that each $R_{\sol}[S_i]\to A$ extends to an algebra morphism $R_{\sol}\langle \bb{N}[\pi^{n}S_i] \rangle\to A$ for some $n$ depending on $i$. 

\end{enumerate}
\end{lemma}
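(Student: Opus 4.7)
The plan for part (1) is to exploit that $A^b = A^0[1/\pi]$ is a full condensed sub-anima of $A$ (Remark \ref{RemarkFullSubobjects} together with Proposition \ref{PropositionConstructionsareSolid}), so that the natural map $A^b\to A$ is an equivalence of animated condensed rings if and only if it is surjective on $\pi_0$. Thus $A$ is bounded if and only if $\pi_0(A^b)=\pi_0(A)$ as sub-sheaves of $\pi_0(A)$, and the task reduces to identifying $\pi_0(A^b)$ with $\pi_0(A)^b$. Unwinding the definitions, this amounts to showing that for $S$ profinite the (connective) cofiber $M$ of $R_{\sol}[\bb{N}[S]] \to R_{\sol}\langle \bb{N}[S] \rangle$ satisfies $M \otimes_{R_{\sol}[\bb{N}[S]]} A = 0$ if and only if $M \otimes_{R_{\sol}[\bb{N}[S]]} \pi_0(A) = 0$. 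This follows from a standard Postnikov/connectivity argument: assuming vanishing on $\pi_0(A)$, each $\pi_n(A)$ is a $\pi_0(A)$-module, so $M\otimes \pi_n(A)=0$; the fiber sequences $\tau_{\geq n+1}(A)\to \tau_{\geq n}(A)\to \pi_n(A)[n]$ then force $M\otimes A \simeq M\otimes \tau_{\geq n}(A)$ for all $n$, and the latter is $n$-connective, so $M\otimes A=0$.

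For part (2), the key observation is that a map $f\colon R_{\sol}[S]\to A$ extends to $R_{\sol}\langle \bb{N}[\pi^n S]\rangle\to A$ precisely when $f$ factors through $\pi^{-n} A^0 \subset A^b = \varinjlim_n \pi^{-n} A^0$. For the forward direction, suppose $A=A^b$ and consider the canonical map
\[
\bigoplus_{(S,\, f\colon R_{\sol}[S]\to A)} R_{\sol}[S] \;\longrightarrow\; A,
\]
where the index set ranges over all profinite $S$ and all morphisms $f$ of animated $R_{\sol}$-modules; this is tautologically surjective on $\pi_0$. By compactness of $R_{\sol}[S]$ in $\Mod(R_{\sol})$ and $A = \varinjlim_n \pi^{-n} A^0$, each $f$ factors through some $\pi^{-n_{(S,f)}} A^0$, giving the desired extension.

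Conversely, given a surjection $\bigoplus_i R_{\sol}[S_i]\to A$ such that each summand extends to $R_{\sol}\langle \bb{N}[\pi^{n_i}S_i]\rangle\to A$, each $R_{\sol}[S_i]\to A$ factors through $A^b$ by the observation above, so the whole surjection factors as $\bigoplus_i R_{\sol}[S_i]\to A^b\to A$. Since the composite is surjective on $\pi_0$ and $A^b\to A$ is a monomorphism, $\pi_0(A^b)=\pi_0(A)$, and part (1) concludes that $A$ is bounded. The main obstacle is the identification $\pi_0(A^b)=\pi_0(A)^b$ underlying part (1), which requires the Postnikov argument above to bridge extension conditions for animated $A$ and for its $\pi_0$; once this is settled, part (2) is a direct unwinding of definitions combined with compactness of $R_{\sol}[S]$.
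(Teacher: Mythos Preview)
Your proof is correct. Let me compare it with the paper's approach.

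For part (1), the paper is terse, writing only ``clear since $A^b$ is a full condensed subanima of $A$''. Your Postnikov/connectivity argument supplies the honest content behind this: one really must check that for a fixed $\phi\colon R_{\sol}[\bb{N}[S]]\to A$, the extension to $R_{\sol}\langle \bb{N}[S]\rangle$ exists for $A$ if and only if it exists for $\pi_0(A)$, and your argument that $M\otimes A \simeq M\otimes \tau_{\geq n}(A)$ is $n$-connective for all $n$ is the natural way to see the nontrivial direction. (The easy direction, which you omit, is immediate: $\pi_0(A)$ is an $A$-module, so $M\otimes_C\pi_0(A)=(M\otimes_C A)\otimes_A\pi_0(A)=0$.)

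For part (2) sufficiency, the paper's argument is genuinely different and more computational: it reduces to static $A$ via part (1), then for an arbitrary map $S\to A$ it lifts $S$ through the surjection to a finite sum $\bigoplus_{i=1}^k R^+_{\sol}[S_i]$ and uses the identity $\bigotimes_{i=1}^k R^{+}\langle \bb{N}[S_i]\rangle = R^+\langle \bb{N}[\bigsqcup_i S_i]\rangle$ together with Lemma~\ref{LemmaOverconvergentFunctionsIdeal}(1) to produce the extension $R^+_{\sol}\langle \bb{N}[S]\rangle\to A$ directly. Your argument is cleaner: the extension hypothesis forces each summand to land in $A^b$, so the whole surjection factors through the monomorphism $A^b\hookrightarrow A$, and surjectivity on $\pi_0$ finishes. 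This bypasses the explicit lift-and-tensor step, at the mild cost of relying on $A^b$ being an $R_{\sol}$-submodule of $A$ (so that the factorization $R_{\sol}[S_i]\to A^b$ makes sense as a module map); this is supplied by Proposition~\ref{PropositionConstructionsareSolid} applied to $A$ with the induced analytic structure from $R_{\sol}$. One small imprecision: your phrase ``part (1) concludes'' in the last line really refers back to the observation in your first paragraph (that $A$ is bounded iff $\pi_0(A^b)=\pi_0(A)$), not to the full statement of part (1).
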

\begin{proof}
The first statement is clear since $A^b$ is a full condensed subanima of $A$. For the second statement, the hypothesis is clearly necessary, let us show that it is sufficient.  Let $A$ be a static algebra satisfying the hypothesis of the lemma and let $S\to A$ be a map from a profinite set.  By (1) we can assume that $A$ is static.  We can lift $S$ to a finite direct sum $\bigoplus_{i=1}^k R_{\sol}[S_i]$, after rescaling we can  even assume that it lands in $\bigoplus_{i=1}^k R^+_{\sol}[S_i]$ and that each map $R^+_{\sol}[S_i] \to A$ extends to $R^+_{\sol}\langle \bb{N}[S_i]\rangle$. Since 
\[
\bigotimes_{i=1}^k R^{+}\langle \bb{N}[S_i] \rangle = R^+\langle \bb{N}[\bigsqcup_{i=1}^k S_i]\rangle,
\]  the natural map 
\[
R^+_{\sol}[S]\to \bigoplus_{i=1}^k R^+_{\sol}[S_i]\to R^+_{\sol}\langle \bb{N}[\bigsqcup_{i=1}^k S _i]\rangle
\] can be extended to $R^+_{\sol}\langle \bb{N}[S] \rangle \to R^+_{\sol}\langle \bb{N}[\bigsqcup_{i=1}^k S_i]\rangle$, and  the map $S \to A$ extends to $R_{\sol} \langle \bb{N}[S] \rangle\to A$ proving that $A$ is bounded. 
\end{proof}

\begin{lemma}
\label{LemmaGenericFiberCompleteisBounded}
Let $A$ be a $\pi$-adically complete animated $R^{+}_{\sol}$-algebra, then $A[\frac{1}{\pi}]$ is a bounded subring. 
\end{lemma}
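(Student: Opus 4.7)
The plan is to verify the criterion of Lemma~\ref{LemmaTestBounded}(2) for $A[\tfrac{1}{\pi}]$. First, I would choose any surjection of solid $R_{\sol}$-modules $\bigoplus_i R_{\sol}[S_i] \twoheadrightarrow A[\tfrac{1}{\pi}]$ with each $S_i$ profinite (for instance, indexed by all pairs $(S, f\colon S\to A[\tfrac{1}{\pi}])$), and show that each component map $R_{\sol}[S_i] \to A[\tfrac{1}{\pi}]$ extends, after rescaling by a power of $\pi$, to an algebra morphism out of $R_{\sol}\langle \bb{N}[\pi^{n_i} S_i] \rangle$.

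The key observation would be that as a solid $R^+_{\sol}$-module one has
\[
A\bigl[\tfrac{1}{\pi}\bigr] \;=\; A \otimes_{R^+_{\sol}} R_{\sol} \;=\; \colim\bigl( A \xrightarrow{\pi} A \xrightarrow{\pi} A \to \cdots \bigr),
\]
so, since $R^+_{\sol}[S]$ is compact projective in $\Mod(R^+_{\sol})$ by Proposition~\ref{PropCompactProjectiveSolidAffinoid}, any map $R^+_{\sol}[S] \to A[\tfrac{1}{\pi}]$ factors through $\pi^{-n} A$ for some $n$; equivalently, each $R_{\sol}[S_i] \to A[\tfrac{1}{\pi}]$ sends $\pi^{n_i}S_i$ into $A$ for a suitable $n_i$. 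This reduces the problem to the following single statement: every map $R^+_{\sol}[T] \to A$, with $T$ profinite, extends to an $R^+_{\sol}$-algebra morphism $R^+_{\sol}\langle \bb{N}[T] \rangle \to A$. Once this is established, inverting $\pi$ (and using that $R_{\sol}\langle \bb{N}[T] \rangle = R^+_{\sol}\langle \bb{N}[T] \rangle[\tfrac{1}{\pi}]$) yields the algebra map $R_{\sol}\langle \bb{N}[\pi^{n_i} S_i] \rangle \to A[\tfrac{1}{\pi}]$ required by Lemma~\ref{LemmaTestBounded}(2).

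For the remaining extension, the free-algebra adjunction promotes $R^+_{\sol}[T] \to A$ to an algebra morphism $R^+_{\sol}[\bb{N}[T]] \to A$. By Definition~\ref{DefinitionIdempotentAlgebrasSym}(3), the target $R^+_{\sol}\langle \bb{N}[T] \rangle = \varprojlim_{k} R^+_{\sol}[\bb{N}[T]]/\pi^{k}$ is precisely the $\pi$-adic completion of $R^+_{\sol}[\bb{N}[T]]$. Since $A$ is $\pi$-adically complete by hypothesis, this morphism factors uniquely through the completion, yielding the desired map $R^+_{\sol}\langle \bb{N}[T] \rangle \to A$.

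The main technical obstacle is ensuring that, in the animated/solid setting, $\pi$-adic completeness of $A$ really does imply the universal lifting property against the derived $\pi$-adic completion of algebras such as $R^+_{\sol}[\bb{N}[T]]$. Once that is granted the argument is essentially formal. A secondary subtlety is that the compactness of the generators $R^+_{\sol}[S_i]$ in $\Mod(R^+_{\sol})$ allows a \emph{single} choice of exponent $n_i$ per generator (rather than only a pro-statement); this is immediate from the filtered-colimit description of $A[\tfrac{1}{\pi}]$, and it is exactly what Lemma~\ref{LemmaTestBounded}(2) requires.
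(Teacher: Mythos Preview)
Your proposal is correct and follows essentially the same route as the paper: rescale a map $S\to A[\tfrac1\pi]$ so that it lands in $A$, then use $\pi$-adic completeness of $A$ to extend $R^+_{\sol}[\bb{N}[S]]\to A$ to $R^+_{\sol}\langle\bb{N}[S]\rangle\to A$, and finally invert $\pi$. The paper does this in two lines without explicitly invoking Lemma~\ref{LemmaTestBounded}(2), but the content is identical. One small remark: the compactness of $R^+_{\sol}[S]$ you use is a fact about solid \emph{modules} (it is a compact projective generator of $\Mod(R^+_{\sol})$), not about solid affinoid rings, so Proposition~\ref{PropCompactProjectiveSolidAffinoid} is not quite the right citation; the statement you need is more elementary (cf.\ Remark~\ref{RemarkValuesAtProfinites}).
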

\begin{proof}
Let $S$ be profinite and $f:S\to A[\frac{1}{\pi}]$, after rescaling we can assume that $f$ factors through a map $f:S\to A$. Then, since $A$ is $\pi$-adically complete, we have an extension 
\[
R^+\langle \bb{N}[S]\rangle\to A
\]
and so a map $R\langle \bb{N}[S]\rangle\to A[\frac{1}{\pi}]$, proving that $A[\frac{1}{\pi}]$ is bounded as wanted.
\end{proof}

\begin{prop}
\label{PropStabilityAffinoidRings}
The following hold
\begin{enumerate}

\item The category of bounded animated $R_{\sol}$-algebras $\AniAlg^{b}_{R_\sol}$ is stable under all small colimits in $\AniAlg_{R_{\sol}}$.

\item The category of bounded animated $R_{\sol}$-algebras admits all limits. More precisely, let $\{A_i\}_{i\in I}$ be an $I$-diagram in $\AniAlg^b_{R_{\sol}}$, then its limit in $\AniAlg^b_{R_{\sol}}$ is given by the ``restricted limit''
\[
\varprojlim'_{I} A_i:= (\varprojlim_{i\in I} A_i^{0})[\frac{1}{\pi}]. 
\]

\item Let $A$ be an animated $R_{\sol}$-solid $R[T_1,\ldots, T_n]$-algebra whose underlying $R_{\sol}$-algebra is bounded. Then $A\otimes_{(R[T_1,\ldots, T_n], R^+)_{\sol}} R\langle T_1,\ldots T_n \rangle_{\sol}$ is bounded.

\item Let $\n{A}\to \n{B}$ be a morphism of bounded affinoid  $R_{\sol}$-algebras, let $C$ be an animated $\n{A}$-algebra whose underlying $R_{\sol}$-algebra is bounded, then $\n{B} \otimes_{\n{A}} C$ is a bounded algebra. 

\item More generally, the category of bounded affinoid rings  $\AffRing^{b}_{R_{\sol}}$ is stable under all colimits and finite products in $\AnRing_{R_{\sol}}$.

\item Let $\n{A}\to \n{B}$ be a morphism of bounded affinoid rings and let $\n{C}$ be a bounded  affinoid $\n{A}$-algebra. Then the natural map $\n{B}\otimes_{\n{A}} \Nil^{\dagger}(\n{C}) \to \n{B}\otimes_{\n{A}} \n{C}$ factors through $\Nil^{\dagger}(\n{B}\otimes_{\n{A}} \n{C})$. 

\end{enumerate}
\end{prop}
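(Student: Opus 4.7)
All six assertions rest on three ingredients: the characterization of boundedness in Lemma \ref{LemmaTestBounded}(2), the generic fiber criterion of Lemma \ref{LemmaGenericFiberCompleteisBounded}, and the functorial definitions of $(-)^{0}$, $(-)^{b}$ and $\Nil^{\dagger}(-)$ via the idempotent Hopf algebras of \S\ref{SubsectionIdempotentAlgebras}.

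For (1), I would reduce to sifted colimits and finite coproducts (which generate all small colimits). Given a diagram $\{A_i\}$ of bounded algebras with colimit $A$, Lemma \ref{LemmaTestBounded}(2) provides each $A_i$ with a presentation $\bigoplus_{j\in J_i} R_{\sol}[S_{i,j}] \to A_i$ whose summands extend to algebra maps $R_{\sol}\langle \bb{N}[\pi^{n_{i,j}} S_{i,j}]\rangle \to A_i$. For a sifted colimit, post-composing with $A_i \to A$ and collecting over $i$ produces the required presentation of $A$, since $\pi_0(A) = \colim_i \pi_0(A_i)$ as rings. For a finite coproduct $A \otimes_{R_{\sol}} B$, the product of two power-bounded elements is power-bounded, so the map $R_{\sol}[S_j \times T_k] \to A \otimes B$ extends via the tensor Tate algebra $R_{\sol}\langle \bb{N}[\pi^n S_j \sqcup \pi^n T_k]\rangle \to A \otimes B$, and these jointly surject on $\pi_0$ as $R_{\sol}$-modules. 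Lemma \ref{LemmaTestBounded}(2) then yields boundedness of $A$. Part (5) follows by combining (1) with Proposition \ref{PropCompactProjectiveSolidAffinoid}: colimits and finite products of solid affinoid rings are again solid affinoid, and boundedness is preserved by (1) (the finite-product case being handled by taking the maximum of the relevant rescaling exponents).

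For (2), a limit of $\pi$-adically complete animated $R^{+}_{\sol}$-algebras is $\pi$-adically complete, so Lemma \ref{LemmaGenericFiberCompleteisBounded} applied to $\varprojlim_i A_i^{0}$ shows $(\varprojlim_i A_i^{0})[1/\pi]$ is bounded. For the universal property in $\AniAlg^{b}_{R_{\sol}}$, any map $B \to A_i$ from a bounded $B$ restricts functorially to $B^{0} \to A_i^{0}$, inducing $B^{0} \to \varprojlim_i A_i^{0}$ compatibly with $i$, and inverting $\pi$ recovers the family since $B = B^{0}[1/\pi]$. For (3), I would identify
\[
A \otimes_{(R[T],R^+)_{\sol}} R\langle T \rangle_{\sol} \;=\; \bigl(A^{0} \otimes_{R^+[T]_{\sol}} R^+\langle T \rangle_{\sol}\bigr)[\tfrac{1}{\pi}],
\]
using that $R^+\langle T\rangle_{\sol}$ is the $\pi$-adic completion of $R^+[T]_{\sol}$ (Definition \ref{DefinitionIdempotentAlgebrasSym} and Corollary \ref{CoroComoduleOverAplus}); the inner tensor product is $\pi$-adically complete, and Lemma \ref{LemmaGenericFiberCompleteisBounded} applies. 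Part (4) then follows from (1) by recognizing $\n{B}\otimes_{\n{A}} C$ as a pushout in $\AniAlg_{R_{\sol}}$ of bounded algebras, using $\n{A}$-completeness of both factors to identify the relative tensor with this pushout.

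Finally, for (6), any map $S \to \Nil^{\dagger}(\n{C})$ corresponds by definition to an extension $R_{\sol}\{\bb{N}[S]\}^{\dagger} \to \n{C}$; post-composing with $\n{C}\to \n{B}\otimes_{\n{A}} \n{C}$ furnishes $R_{\sol}\{\bb{N}[S]\}^{\dagger} \to \n{B}\otimes_{\n{A}} \n{C}$, placing the image of $S$ inside $\Nil^{\dagger}(\n{B}\otimes_{\n{A}} \n{C})$ by the defining universal property. The main subtlety I anticipate is in step (3): verifying the identification of $A \otimes_{(R[T], R^+)_{\sol}} R\langle T \rangle_{\sol}$ with a generic fiber of a $\pi$-adically completed Tate algebra requires carefully tracking how the $(R[T], R^+)_{\sol}$-solidity of $A$ interacts with the idempotency of $R^+\langle T \rangle_{\sol}$ over $R^+[T]_{\sol}$ and the passage between module and algebra structures.
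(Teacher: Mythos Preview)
Your outline for (1) and (5) matches the paper's argument. The remaining parts have real gaps.

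The most serious is in (4): you treat $\n{B}\otimes_{\n{A}} C$ as a pushout in $\AniAlg_{R_{\sol}}$, but $\n{A}$ and $\n{B}$ are \emph{affinoid} rings and the tensor in question is the analytic pushout, which completes $\underline{\n{B}}\otimes_{\underline{\n{A}}} C$ with respect to the possibly larger structure $\n{B}^+$. The paper writes
\[
\n{B}\otimes_{\n{A}} C \;=\; \varinjlim_{I \subset \n{B}^+} R\langle T_I \rangle_{\sol} \otimes_{(R[T_I],R^+)_{\sol}} \bigl(\underline{\n{B}} \otimes_{\n{A}_{R_{\sol}/}} C\bigr),
\]
so (3) is indispensable for the solidification step, and (4) does not follow from (1) alone. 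Relatedly, your route to (3) via the identification $A \otimes_{(R[T],R^+)_{\sol}} R\langle T \rangle_{\sol} = (A^{0} \otimes_{R^+[T]_{\sol}} R^+\langle T \rangle_{\sol})[1/\pi]$ presupposes that $(-)^{0}$ commutes with this base change, which is nowhere established. The paper instead uses Lemma~\ref{LemmaTestBounded}(2) to reduce to showing $R\langle T_1,\ldots,T_n\rangle_{\sol}\langle \bb{N}[S]\rangle$ is bounded, and applies Lemma~\ref{LemmaGenericFiberCompleteisBounded} to that specific algebra. Your argument for (2) has the same flavour of gap: you invoke Lemma~\ref{LemmaGenericFiberCompleteisBounded} on $\varprojlim_i A_i^{0}$, but nowhere is $A_i^{0}$ shown to be $\pi$-adically complete; the paper simply notes that any $S\to \varprojlim_i A_i^{0}$ extends to $R^+_{\sol}\langle\bb{N}[S]\rangle$ because each projection does by the very definition of $(-)^{0}$.

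For (6), your argument only shows that the composite $\Nil^{\dagger}(\n{C})\to \n{C}\to \n{B}\otimes_{\n{A}}\n{C}$ lands in $\Nil^{\dagger}$, i.e.\ that $1\otimes c$ is $\dagger$-nilpotent. The assertion concerns the image of $\n{B}\otimes_{\n{A}}\Nil^{\dagger}(\n{C})$, which on $\pi_0$ is spanned by products $b\otimes c$ with $b$ ranging over $\n{B}$. The paper reduces to $\n{A}=R$, covers $\pi_0(B\otimes_{R_{\sol}}\Nil^{\dagger}(C))$ by images of $R_{\sol}[S_i\times T_j]$ with $S_i\to B^{0}$ and $T_j\to \Nil^{\dagger}(C)$, and then uses Lemma~\ref{LemmaOverconvergentFunctionsIdeal}(2) to extend $R_{\sol}[S_i\times T_j]\to R_{\sol}\langle\bb{N}[S_i]\rangle\{\bb{N}[T_j]\}^{\dagger}$ to an algebra map from $R_{\sol}\{\bb{N}[S_i\times T_j]\}^{\dagger}$. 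You could instead complete your argument by appealing to the $(\n{B}\otimes_{\n{A}}\n{C})^{b}$-module structure on $\Nil^{\dagger}(\n{B}\otimes_{\n{A}}\n{C})$ from Proposition~\ref{PropositionConstructionsareSolid}(3), together with boundedness of the tensor from (5); but this step must be stated, and in any event it rests on the same Lemma~\ref{LemmaOverconvergentFunctionsIdeal}(2).
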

\begin{proof}
\begin{enumerate}

\item Let $A$ be a bounded $R_{\sol}$-algebra, $B$ and $C$ bounded $A$-algebras  and $D=B\otimes_{(A,R^+)_{\sol}} C$. The property of being a bounded algebra only depends on $\pi_0$, so we can assume that $A$, $B$ and $C$  are static and take $D=\pi_0(B\otimes_{(A,R^{+})_{\sol}} C)$ the non-derived pushout.  Let us take surjections $\bigoplus_{i} R_{\sol}[S_i]\to B$ and $\bigoplus_{j}R_{\sol}[S'_j]\to C$, then $\bigoplus_{i,j} R_{\sol}[S_i\times S_j']\to D$ is a surjection. By hypothesis the maps $R_{\sol}[S_i]\to B$ and $R_{\sol}[S_j']\to C$ extend naturally to morphisms of algebras $R_{\sol}\langle \bb{N}[S_i]\rangle\to B$ and $R_{\sol}\langle \bb{N}[S_i']\rangle\to C$ respectively (after rescaling the maps). This implies that $R_{\sol}[S_i\times S_j']\to D$ extends to a morphism of algebras $R_{\sol}\langle \bb{N}[ S_i\times S_j']\rangle \to D$ which by Lemma \ref{LemmaTestBounded} proves that $D$ is bounded.  Next we prove stability under sifted colimits. Let $\{A_i\}_i$ be a sifted diagram of  bounded animated $R_{\sol}$-algebras with colimit $A$, let $S\to A$ be a map from a profinite set, then $S$ lifts to some $S\to A_i$ and after rescaling it extends to  $R_{\sol}\langle \bb{N}[S] \rangle \to A_i$. Thus, $S\to A$ extends to $ R_{\sol}\langle \bb{N}[S] \rangle \to A$ proving that $A$ is bounded.

\item Let $\{A_i\}_{i\in I}$ be a diagram of bounded animated $R_{\sol}$-algebras, and let $B\in \AniAlg^b_{R_{\sol}}$. We need to show that $\varprojlim_i' A_i$ is bounded and that the natural map
\[
\Map_{\AniAlg_{R_{\sol}}}(B,\varprojlim_i' A)\to \varprojlim_i\Map_{\AniAlg_{R_{\sol}}}(B, A)
\]
is an equivalence. To see that $\varprojlim_i' A_i$ is bounded, note that for any profinite set $S$, a map $f:S\to \varprojlim_i A^0_i$ naturally extends to $R^+_{\sol}\langle \bb{N}[S]\rangle \to \varprojlim_i A_i^0$ as so does any projection to $A_i$. On the other hand, since $B$ is bounded, there is a natural equivalence of mapping spaces 
\begin{equation}
\label{eqBoundedLimit}
\Map_{\AniAlg_{R_{\sol}}}(B,A_i)= \Map_{\AniAlg_{R^+_{\sol}}}(B^{0}, A_i)=\Map_{\AniAlg_{R^+_{\sol}}}(B^{0}, A_i^{0}),
\end{equation}
where the second equivalence follows from the fact that any map of animated $R^+$-algebras $B^0\to A_i$ factors through $A_i^0$, and $A_i^0$ is a full subring of $A_i$.
Taking limits along $i$ we see that 
\[
\varprojlim_i\Map_{\AniAlg_{R_{\sol}}}(B,A_i)=\Map_{\AniAlg_{R^+_{\sol}}}(B^{0}, \varprojlim_i A_i^{0}).
\]
Note that $\varprojlim_i A_i^0$ is a full condensed subanima of $\varprojlim_i A_i$, namely, for finite limit it is  an equivalence,  and  cofiltered limits are left exact with respect to the natural $t$-structure. Then, to prove that \eqref{eqBoundedLimit} is an equivalence, it suffices to show that $(\varprojlim'_i A_i)^0=\varprojlim_i A_i^0$. It is clear that $\varprojlim_i A_i^0\subset (\varprojlim'_i A_i)^0$, conversely, given $S$ profinite and a map $S\to (\varprojlim'_i A_i)^0$, we  have an extension 
\[
R^+_{\sol}\langle \bb{N}[S] \rangle \to (\varprojlim'_i A_i)^0,
\]
and by composing with projections, maps $R^+_{\sol}\langle \bb{N}[S] \rangle\to A_i^0$, proving that we have a factorization
\[
R^+_{\sol}\langle \bb{N}[S] \rangle \to \varprojlim_i A_i^0
\]
as wanted.

\item Let $S$ be a profinite set and $S\to A$, after rescaling we can assume that it lifts canonically to $R_{\sol} \langle \bb{N}[S] \rangle\to A$. Then, by Lemma \ref{LemmaTestBounded} it suffices to prove that $R\langle T_1,\ldots, T_{s} \rangle_{\sol} \langle \bb{N}[S] \rangle$ is a bounded algebra, but this follows from Lemma \ref{LemmaGenericFiberCompleteisBounded}.

\item This is a direct consequence of  parts (1) and  (3), namely, we have that 
\[
\n{B}\otimes_{\n{A}} C = \varinjlim_{I \subset \n{B}^+} R\langle T_I \rangle_{\sol} \otimes_{(R[T_I],R^+)_{\sol}} (\underline{\n{B}} \otimes_{\n{A}_{R_{\sol}/
}} C),
\]
 where $I$ runs over all the finite subsets. The tensor product is a bounded algebra by part (1), the solidification is bounded by part (3) and the colimit is bounded by part (1) again.

\item  We need to prove that $\AffRing^{b}_{R_{\sol}}$ is stable under pushouts and sifted colimits. Let $\n{C} \leftarrow \n{A}\to \n{B}$ be a diagram in $\AffRing^{b}_{R_{\sol}}$, we want to prove that $\n{B}\otimes_{\n{A}} \n{C}$ is still in $\Aff_{R_{\sol}}^b$. Since $\n{A}=(\underline{\n{A}}, \n{A}^+)_{\sol}$ (resp. for $\n{B}$ and $\n{C}$), by  construction of the pushout, we have $\n{B}\otimes_{\n{A}} \n{C}=(\underline{\n{E}}, \n{B}^+\otimes_{\n{A}^+} \n{C}^+)_{\sol}$, where $\n{E}$ is the completion of $\underline{\n{B}}\otimes_{\n{A}} \underline{\n{C}}$ with respect to  the variables in $\n{B}^+\otimes_{\n{A}^+} \n{C}^+$. By parts (1) and (3) one deduces that $\n{E}$ is bounded, so that $\n{B}\otimes_{\n{A}} \n{C}$ is a bounded affinoid ring.  Let $\{\n{A}_i\}_{i\in I}$  be a sifted diagram in $\AffRing^b_{R_{\sol}}$ with colimit $\n{A}$. By Proposition \ref{PropCompactProjectiveSolidAffinoid} one has that $\n{A}^+ = \varinjlim_{i} \n{A}_i^+$, we find that 
\[
\n{A} =(\underline{\n{A}}, \n{A}^+)_{\sol} = \varinjlim_i (\underline{\n{A}_i}, \n{A}_i^+)_{\sol} 
\]
proving that $\n{A}$ is bounded affinoid. Finally, for stability under finite products, note that $(\underline{\n{A}},\n{A}^+)_{\sol} \times (\underline{\n{B}}, \n{B}^+)_{\sol}= (\underline{\n{A}}\times \underline{\n{B}}, \n{A}^+\times \n{B}^+)_{\sol}$.

\item  Given a map $\n{D}\to \n{D}'$ of bounded affinoid rings, we have an induced map $\Nil^{\dagger}(\n{D})\to \Nil^{\dagger}(\n{D}')$ on the $\dagger$-nil radical. Then, by Proposition \ref{PropositionConstructionsareSolid} (3) we can assume without loss of generality that $\n{A}=A$, $\n{B}=B$ and $\n{C}=C$ have the induced analytic structure from $R_{\sol}$. Since $\pi_0(B\otimes_{A} \Nil^{\dagger}(C))$ is a quotient of $\pi_0(B\otimes_{R_{\sol}}  \Nil^{\dagger}(C))$  we can further assume that $A=R$, and that $B$ and $C$ are static.  It suffices to prove that the image of $\pi_0(B\otimes_{R_{\sol}} \Nil^{\dagger}(C) )$ in $\pi_0(B\otimes_{R_{\sol}} C)$ lands in $\pi_0(\Nil^{\dagger}(B\otimes_{R_{\sol}} C))$.  Let $\bigoplus_{i\in I} R_{\sol}[S_i] \twoheadrightarrow B $ and $\bigoplus_{j\in J} R_{\sol}[T_j] \twoheadrightarrow \Nil^{\dagger}(C)$ be surjections, by hypothesis we have extensions to morphisms of algebras after rescaling  $R_{\sol}\langle \bb{N}[S_i] \rangle \to B$ and $R_{\sol}\{\bb{N}[T_j]\}^{\dagger}\to C$. We then have  induced maps 
\[
R_{\sol}\langle \bb{N}[S_i] \rangle \{\bb{N}[T_j]\}^{\dagger} \to B\otimes_{R_{\sol}} C
\]
such that the image of $\bigoplus_{i,j}R_{\sol}[S_i\times T_j]$ in $\pi_0$  is the image of $\pi_0(B\otimes_{R_{\sol}} \Nil^{\dagger}(C))$. By Lemma \ref{LemmaOverconvergentFunctionsIdeal} (2) we can extend the inclusion $R_{\sol}[S_i\times T_j]\to R_{\sol}\langle \bb{N}[S_i] \rangle \{\bb{N}[T_j]\}^{\dagger} $  to a morphism of algebras $R_{\sol}\{\bb{N}[S_i\times T_j]\}^{\dagger} \to  R_{\sol}\langle \bb{N}[S_i] \rangle \{\bb{N}[T_j]\}^{\dagger}$, proving that the map $R_{\sol}[S_i\times T_j] \to B \otimes_{R} C$ extends to $R_{\sol}\{\bb{N}[S_i\times T_j]\}^{\dagger}$, in particular its image in $\pi_0(B\otimes_{R_{\sol}}C)$  lands in  $\pi_0(\Nil^{\dagger} (B\otimes_{R_{\sol}}C))$ as wanted.  
\end{enumerate}
\end{proof}

\begin{example}
\label{ExamplePowerSeriesBoundedAlgebra}
In  Proposition \ref{PropCompactProjectiveSolidAffinoid} we provided a class of compact projective generators for solid affinoid rings, we next describe the power series developements of their $\pi$-completions.

 Let $S$ be a profinite set with $R_{\sol}[S] \cong \prod_{i\in I} R s_{i}$ and $d\geq 1$, let us consider the algebra $\n{A}= R^+_{\sol} \langle T_1,\ldots, T_d \rangle_{\sol} \langle \bb{N}[S] \rangle$. First, by Lemma \ref{LemmaKosulReolutionsSolidModules} we have that 
\[
R^+_{\sol} \langle \bb{N}[S] \rangle = \widehat{\bigoplus_{n\in \bb{N}}} \prod_{ \alpha \in  I^n_{\Sigma_n}} R^+ \mathbf{s}^{\alpha}.
\]

 By definition $\n{A}= R^+_{\sol}\langle \bb{N}[S] \rangle \otimes_{\bb{Z}_{\sol}} \bb{Z}[T_1,\ldots, T_d]_{\sol}$. This implies that for $S'$ another profinite set one has
 \[
 \begin{aligned}
 \n{A}[S'] &=  \widehat{\bigoplus}_{n\in \bb{N}} \prod_{\alpha \in I^n_{\Sigma_n}} ( R^+\langle  T_1,\ldots, T_d\rangle_{\sol}[S']) \bbf{s}^{\alpha} \\ 
 & =\widehat{\bigoplus}_{n\in \bb{N}} \prod_{\alpha \in I^n_{\Sigma_n}} \widehat{\bigoplus}_{\beta \in \bb{N}^{d}} R^+_{\sol}[S']  T^{\beta} \bbf{s}^{\alpha}. 
 \end{aligned}
 \]
In particular, we can write an element in $\n{A}[*]$ as a power series 
\[
 f(T, \bbf{s})=\sum_{\substack{\beta \in \bb{N}^d\\ \alpha \in I^n_{\Sigma_n}}} c_{\alpha,\beta} T^{\beta} \bbf{s}^{\alpha}
 \]
such that for any reduction modulo $\pi^c$, there is $N>>0$ such that $c_{\alpha,\beta}=0$ for $|\alpha|\geq N$, and for  each $\alpha$ there is $M_{\alpha}>>0$ such that $c_{\alpha,\beta}=0$ for $|\beta|\geq M_{\alpha}$.

By construction of $\AffRing^b_{R_{\sol}}$, the rings $R\langle \underline{T}\rangle_{\sol}\langle \bb{N}[S] \rangle$ form a class of (non-compact!) generators. Moreover, since $\AffRing^b_{R_{\sol}}\subset \AffRing_{R_{\sol}}$ is a full subcategory, being bounded is a property and not additional data on solid affinoid $R_{\sol}$-algebras.
\end{example}

In Proposition \ref{PropSolidInvarianceNilpotent}  we showed that the solid  affinoid structure of an analytic  ring was independent of the condensed nil-radical. The next result will prove an analogue statement when restricted to the category of bounded affinoid rings.

\begin{prop}
\label{PropLiftingOverconvergentAlgebrasLift}
\begin{enumerate}

\item Let $\n{A}$ be a bounded affinoid $R_{\sol}$-algebra and $f:R[T]\to \n{A}$ a morphism of analytic $R_{\sol}$-algebras.  Then $f$ extends to $R\langle T \rangle_{\sol}$ if and only if the induced map $R[T]\to \n{A}^{\dagger,\red}$ extends to $R\langle T\rangle_{\sol}$.

\item Let $\n{A}$ be a bounded  affinoid $R_{\sol}$-algebra and $R[T]\to \n{A}$ a morphism of analytic $R_{\sol}$-algebras.  The image of $T$ is invertible if and only if its image in $\n{A}^{\red-\dagger}$ is invertible. 

\item Let $A$ be a bounded $R$-algebra, $S$ a profinite set and  $S\to A$  a map.  Then $S$ extends to $R_{\sol}\langle \bb{N}[S]  \rangle$ if and only if the composite $S\to A^{\dagger-\red}$ does so. 

\item Let $A$ be a bounded $R$-algebra.  Then $ (A^{\dagger-\red})^{\dagger-\red}= A^{\dagger-\red}$. 

\end{enumerate}
\end{prop}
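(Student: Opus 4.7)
All four parts have obvious forward directions by composition with $\n{A}\to\n{A}^{\dagger-\red}$. For (1), I would follow the strategy of Proposition \ref{PropSolidInvarianceNilpotent}: the extension of $f$ is equivalent to $\bb{Z}((T^{-1}))\otimes_{R[T]}\n{A}=0$, and the fiber sequence $\Nil^\dagger(\n{A})\to\n{A}\to\n{A}^{\dagger-\red}$ reduces the problem to $\bb{Z}((T^{-1}))\otimes_{R[T]}\Nil^\dagger(\n{A})=0$. Since $\Nil^\dagger(\n{A})$ is an $\n{A}^+$-module by Proposition \ref{PropositionConstructionsareSolid}, and overconvergent-to-zero elements are topologically nilpotent, hence lie in $\n{A}^+$ on underlying points, one shows that $\n{A}^+\to(\n{A}^{\dagger-\red})^+$ is $\pi_0$-surjective; consequently, the induced $R[T]$-action on $\Nil^\dagger(\n{A})$ factors through $\n{A}^+$ and so through $R[T]_{\sol}$ via the solid structure, forcing the tensor product to vanish.

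For (2), lift the inverse $\bar b\in\n{A}^{\dagger-\red}$ of $\bar T$ to some $b\in\n{A}$, so that $Tb=1+c$ with $c\in\Nil^\dagger(\n{A})$. The element $1+c$ is invertible in $\n{A}$: by definition of $\Nil^\dagger$, the map $R_{\sol}\{T'\}^\dagger\to\n{A}$ with $T'\mapsto c$ sends $1+T'$ to $1+c$, and $1+T'$ is invertible in $R_{\sol}\{T'\}^\dagger=\varinjlim_n R\langle T'/\pi^n\rangle$ because $T'=\pi^n(T'/\pi^n)$ is topologically nilpotent, so the Neumann series $\sum(-T')^k$ converges. Hence $T$ is invertible in $\n{A}$ with inverse $b(1+c)^{-1}$.

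For (3), given $f\colon S\to A$ whose composite $\bar f$ extends to an algebra map $\bar g\colon R_{\sol}\langle\bb{N}[S]\rangle\to A^{\dagger-\red}$, the strategy has two steps. First, lift $\bar g$ to an algebra morphism $g'\colon R_{\sol}\langle\bb{N}[S]\rangle\to A$: this is the main technical step, accomplished by an inductive $\pi$-adic lifting along the $\pi$-completion defining $R_{\sol}\langle\bb{N}[S]\rangle$, exploiting that $A\to A^{\dagger-\red}$ is $\pi_0$-surjective with kernel contained in $A^0$ and that the polynomial algebra $R^+_{\sol}[\bb{N}[S]]$ is formally smooth. Second, the difference $f-g'|_S\colon S\to A$ factors through $\Nil^\dagger(A)$, so by definition extends to $h'\colon R_{\sol}\{\bb{N}[S]\}^\dagger\to A$, and by precomposition with the natural morphism $R_{\sol}\langle\bb{N}[S]\rangle\to R_{\sol}\{\bb{N}[S]\}^\dagger$ to $h\colon R_{\sol}\langle\bb{N}[S]\rangle\to A$; the Hopf co-addition of Lemma \ref{LemmaIdempotentAlgebrasVerification} yields the sum $g'+h$, which is the desired extension of $f$. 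For (4), it suffices to show $\Nil^\dagger(A^{\dagger-\red})=0$: any map $\bar\phi\colon R_{\sol}\{\bb{N}[S]\}^\dagger\to A^{\dagger-\red}$ lifts to $\phi\colon R_{\sol}\{\bb{N}[S]\}^\dagger\to A$ by the analog of the lifting step from (3) applied levelwise along $R_{\sol}\{\bb{N}[S]\}^\dagger=\varinjlim_n R_{\sol}\langle\bb{N}[S/\pi^n]\rangle$, so the corresponding map $S\to A^{\dagger-\red}$ factors through $\Nil^\dagger(A)\to A\to A^{\dagger-\red}$, which is zero by construction. The main obstacle throughout is the algebra-level lifting in (3) and (4), which relies crucially on the structural containment $\Nil^\dagger(A)\subset A^0$ to control the $\pi$-adic completions.
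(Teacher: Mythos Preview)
Your argument for (2) is correct and essentially identical to the paper's. The remaining parts, however, have genuine gaps.

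\textbf{Part (1) is circular.} Your claim that ``$\n{A}^+\to(\n{A}^{\dagger-\red})^+$ is $\pi_0$-surjective'' is precisely the content of (1): an element $a\in\n{A}(*)$ lies in $\n{A}^+$ exactly when $\bb{Z}[T]\to\n{A}$, $T\mapsto a$, extends to $\bb{Z}[T]_{\sol}$, which for bounded $R_{\sol}$-algebras is the same as extending $R[T]\to R\langle T\rangle_{\sol}$. So asserting surjectivity of $\n{A}^+\to(\n{A}^{\dagger-\red})^+$ assumes what you are proving. The paper instead uses boundedness of $\n{A}$ to first extend to some $B=R\langle\pi^n T\rangle_{\sol}$, then works with the idempotent $B$-algebra $B_\infty=R^+\langle\pi^n T\rangle[[T^{-1}]][\tfrac{1}{\pi}]$: the fiber sequence shows that the whole of $B_\infty\otimes_B\n{A}$ is hit by $B_\infty\otimes_B\Nil^\dagger(\n{A})$, and Proposition~\ref{PropStabilityAffinoidRings}(6) forces this image into $\Nil^\dagger(B_\infty\otimes_B\n{A})$, so $1$ is $\dagger$-nilpotent and the ring vanishes.

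\textbf{Part (3): the lifting step does not work.} A bounded $R$-algebra $A$ is \emph{not} $\pi$-adically complete, and $A^0$ need not be $\pi$-complete either (cf.\ Lemma~\ref{LemmaVanishingNilradical}, where $\pi$-separatedness is a hypothesis, not a consequence). So there is no ``inductive $\pi$-adic lifting'' available to produce an algebra map $R_{\sol}\langle\bb{N}[S]\rangle\to A$ lifting $\bar g$. Formal smoothness of $R^+_{\sol}[\bb{N}[S]]$ would let you lift along square-zero or nilpotent extensions, but $\Nil^\dagger(A)$ is neither. The paper avoids lifting entirely: it forms the $\bb{E}_\infty$-algebra $D=\iHom_B([B\to C],A)$ (where $B=R_{\sol}\langle\bb{N}[\pi^n S]\rangle$ and $C=R_{\sol}\langle\bb{N}[S]\rangle$), which vanishes iff the extension exists, and shows via a functoriality argument that every map $R_{\sol}[S']\to D$ extends to $R_{\sol}\{\bb{N}[S']\}^\dagger\to D$; applying this to the unit forces $1=0$ in $\pi_0(D)$.

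\textbf{Part (4) inherits the same gap}, since you again invoke the unjustified lifting. The paper's proof of (4) instead \emph{uses} (3): given $S\to\Nil^\dagger(A^{\dagger-\red})$, any lift $S\to A$ has the property that for every $n$ the composite $S\to A^{\dagger-\red}$ extends to $R_{\sol}\langle\bb{N}[S/\pi^n]\rangle$; by (3) so does $S\to A$, hence $S\to A$ extends to $R_{\sol}\{\bb{N}[S]\}^\dagger$, i.e.\ $S$ lands in $\Nil^\dagger(A)$ and its image in $A^{\dagger-\red}$ is zero.
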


\begin{proof}

By Lemma \ref{LemmaTestBounded} (1) and \cite[Proposition 12.21]{ClauseScholzeAnalyticGeometry} we can assume that $A$ and $\n{A}$ are static rings. 

\begin{enumerate}

\item Let $\n{A}$ be a bounded affinoid ring over $R$.   We want to prove that a map $f:R[T]\to \n{A}$ of analytic rings extends to $R\langle T \rangle_{\sol}$ if and only if the composition $R[T]\to \n{A}^{\dagger-\red}$ does so. This condition is clearly necessary, let us show that it is sufficient.  Let $n>0$ be such that $f$ extends to  $B=R\langle \pi^nT \rangle_{\sol}$ and let $B_{\infty}= R^+\langle \pi^n T \rangle[[T^{-1}]][\frac{1}{\pi}]$.   Then $f$ extends to $R\langle T \rangle_{\sol}$ if  and only if $B_{\infty}\otimes_{B_{\sol}} \n{A}=0$, and this holds if and only if 
\[
(B_{\infty} \otimes_{B_{\sol}}  \n{A})[*]=0.
\]
We have a fiber sequence of $\n{A}$-modules
\[
( B_{\infty} \otimes_{B_{\sol}} \Nil^{\dagger}(\n{A}))\otimes_{\underline{\n{A}}} \n{A} \to (B_{\infty} \otimes_{B_{\sol}} \n{A})[*] \to( B_{\infty} \otimes_{B_{\sol}} \n{A}^{\dagger-\red})[*].
\]
Suppose that $B_{\infty} \otimes_{B_{\sol}} \n{A}^{\dagger-\red}=0$,  then $ (B_{\infty} \otimes_{B_{\sol}}  \Nil^{\dagger}(\n{A}))\otimes_{\underline{\n{A}}} \n{A} = (B_{\infty} \otimes_{B_{\sol}} \n{A}) [*]$.  The ring $B_{\infty}$ is bounded by Lemma \ref{LemmaGenericFiberCompleteisBounded},  and  by Proposition \ref{PropStabilityAffinoidRings} (6) the map $B_{\infty}\otimes_{B_{\sol}} \Nil^{\dagger}(\n{A}) \to B_{\infty}\otimes_{B_{\sol}} \n{A}$ lands in the $\dagger$-nil-radical of the tensor. This implies that the  map $R[T]\to B_{\infty}\otimes_{B_{\sol}} \n{A}$  sending $T\mapsto 1$ extends to $R\{T\}^{\dagger}$ which shows that $1=0$ as $R\{T\}^{\dagger}\otimes_{R[T],T\mapsto 1} R=0$, proving what we wanted.

\item Let $R[T]\to \n{A}$ be a morphism such that the composite $R[T]\to \n{A} \to \n{A}^{\dagger-\red}$ sends $T$ to an invertible element.  By hypothesis there is $a'\in \n{A}$ such that $aa'-1 \in  \Nil^{\dagger}(\n{A})$, as $T+1$ is invertible in $R\{T\}^{\dagger}$, we have that $aa'$ is invertible which implies that $a$ is invertible as we wanted.

\item  This follows a similar argument as parts (1) and (3). Let $S\to A$ be a map such that the composite $f:S\to A^{\dagger-\red}$ extends to $C=R_{\sol}\langle \bb{N}[S] \rangle$. We want to show that $f$ extends to $C$. As $A$ is bounded there is $n>>0$ such that $f$ extends to $B=R_{\sol}\langle \bb{N}[ \pi^n S] \rangle \to A$. Then, by the excision fiber sequences of Remark \ref{RemarkExcisionSequences}, $f$ extends to $C$ if and only if $D:=\iHom_{B}([B\to C], A)=0$, note that this $\iHom$ space is naturally an $\bb{E}_{\infty}$-algebra thanks to the formalism of locale and Definition \ref{DefinitionOpenClosedMonoidal}, namely, it is of the form $j_*j^* A$ for some open localization $j$ of $\Mod(B_{R_{\sol}/})$. By hypothesis we know that $\iHom_{B}([B\to C], A^{\dagger,\red})=0$, so  we have 
\begin{equation}
\label{eqLiftBoundedFunction}
\iHom_{B}([B\to C], \Nil^{\dagger}(\n{A}))= D.
\end{equation}
Let $S$ be a profinite set, we have  maps functorial on $R_{\sol}[S]$
\begin{equation}
\label{eqMapsIdempotentLocalization}
\begin{aligned}
\iHom_{R}(R_{\sol}[S], D) & =\iHom_{R}(R_{\sol}[S],  \iHom_{B}([B\to C],  \Nil^{\dagger}(A)))  \\ & = \iHom_{B}([B\to C], \iHom_{R}(R_{\sol}[S],  \Nil^{\dagger}(A))) \\ 
& = \iHom_{B}([B\to C],  \underline{\Map}_{\mathrm{AniRing}_{R}}( R_{\sol}\{\bb{N}[S]\}^{\dagger}, A)) \\ 
& \to \iHom_{B}([B\to C], \iHom_{R}(R_{\sol}\{\bb{N}[S]\}^{\dagger}, A))\\ 
& = \iHom_{R}(R_{\sol}\{\bb{N}[S]\}^{\dagger},\iHom_{B}([B\to C], A)) \\ 
& = \iHom_{R}(R_{\sol}\{\bb{N}[S]\}^{\dagger},D), 
\end{aligned}
\end{equation}
where $\underline{\Map}_{\mathrm{AniRing}_{R}}( R_{\sol}\{\bb{N}[S]\}^{\dagger}, A)$ is the condensed anima given by 
\[
\underline{\Map}_{\mathrm{AniRing}_{R}}( R_{\sol}\{\bb{N}[S]\}^{\dagger}, A)(S')= \Map_{\mathrm{AniRing}_{R}}( R_{\sol}\{\bb{N}[S \times S']\}^{\dagger}, A),
\]
that coincides with $\iHom_{R}(R_{\sol}[S], \Nil^{\dagger}(A))$.  This implies that any map $R_{\sol}[S] \to D$ can be naturally extended to a map $R_{\sol}\{\bb{N}[S]\}^{\dagger} \to D$. We claim that such a map induces an algebra homomorphism $R_{\sol}\{\bb{N}[S]\}^{\dagger}\to \pi_0(D)$. Suppose the claim holds, by taking the composite $R[T]\xrightarrow{T=1} R \xrightarrow{\mu} D$ where $\mu$ is the unit, the algebra morphism $R[T]\to \pi_0(D)$ extends to an algebra morphism $R\{T\}^{\dagger} \to \pi_0(D)$, which implies that $1=0$, this forces $\pi_0(D)=0$ and  $D=0$. 

Next, we prove the claim.  Let $S$ be a profinite set, let $f:R_{\sol}[S]\to D$ be a morphism of solid $R$-modules and  let  $g: R_{\sol}\{\bb{N}[S]\}^{\dagger} \to D$ be the  map constructed above. Let $\pi_0(g): R_{\sol}\{\bb{N}[S]\}^{\dagger} \to \pi_0(D)$ be the associated map on $\pi_0$. We want to prove that $\pi_0(g)$ is compatible with the multiplication  diagrams, for this, consider the map $R_{\sol}[S ]\oplus R_{\sol}[S] \xrightarrow{f\oplus f} D\oplus D \to D \otimes_{R_{\sol}} D$. By \eqref{eqMapsIdempotentLocalization} we have a natural map $R_{\sol}\{\bb{N}[S]\}\otimes_{R_{\sol}} R_{\sol}\{ \bb{N}[S] \} \xrightarrow{g\otimes g} D\otimes_{R_{\sol}} D$. Furthermore,  since we have a commutative diagram 
\[
\begin{tikzcd}
R_{\sol}[S] \oplus R_{\sol}[S]  \ar[r, "f\otimes 1\oplus 1\otimes f"]  \ar[d, "s_1+s_2"] & D\otimes_{R_{\sol}} D \ar[d, "m"] \\ 
R_{\sol}[S]  \ar[r, "f"]& D,
\end{tikzcd}
\]
we have an induced commutative diagram 
\[
\begin{tikzcd}
R_{\sol}\{\bb{N}[S]\}^{\dagger}\otimes_{R_{\sol}} R\{\bb{N}[S]\}^{\dagger} \ar[d, "m"] \ar[r, "g\otimes g"]& D \otimes_{R_{\sol}} D \ar[d, "m"] \\ 
R_{\sol}\{ \bb{N}[S] \}^{\dagger} \ar[r,"g"] &  D.
\end{tikzcd}
\]
Taking $\pi_0$ and knowing that $g\otimes g$ factors through $\pi_0(D)\otimes_{R_{\sol}} \pi_0(D) \to \pi_0(D\otimes_{R_{\sol}} D)$, one deduces that $\pi_0(g)$ is an algebra homomorphism. 
\item Finally, let $S$ be profinite, let $S \to \Nil^{\dagger}(A^{\dagger-\red})$ be a map, and take a lift   $S\to A$.  Then, $S \to  A^{\dagger-\red}$ extends to $B=R_{\sol}\{ \bb{N}[S]\}^{\dagger}$ by definition, and $S\to \n{A}$ extends to $B$ by part (4). This implies that the image of $S$ in $A$ is in its $\dagger$-nil-radical which shows that $S\to A^{\dagger-\red}$ is $0$, proving $\Nil^{\dagger}(A^{\dagger-\red})=0$ as wanted. 
\end{enumerate}
\end{proof}

\begin{remark}
We believe that the map $R_{\sol}\{\bb{N}[S]\}^{\dagger} \to D$ in the proof of part (3) of Proposition \ref{PropLiftingOverconvergentAlgebrasLift} can be  naturally promoted to a morphism of $\bb{E}_{\infty}$-rings. 
\end{remark}

The following lemma explains why classical Tate Huber pairs do not have many $\dagger$-nilpotent elements. 

\begin{lemma}
\label{LemmaVanishingNilradical}
Let $A$ be a solid animated $R_{\sol}$-algebra and suppose that $\pi_0(A^0)$ is $\pi$-adically separated. Then $\Nil^{\dagger}(A)=0$.
\end{lemma}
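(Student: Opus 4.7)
The plan is to unpack the definition of $\Nil^{\dagger}(A)$ in terms of the explicit colimit presentation of $R_{\sol}\{\bb{N}[S]\}^{\dagger}$, and to show that any $S$-point of $\Nil^{\dagger}(A)$ must land in the intersection of the powers $\pi^n \pi_0(A^0)$, which vanishes by hypothesis.

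First I would reduce the problem to a statement on $\pi_0$. By Remark \ref{RemarkFullSubobjects}, $\Nil^{\dagger}(A)$ is a full condensed sub-anima of $A$, and it always contains the zero section (given by the $0$-map $R_{\sol}\{\bb{N}[S]\}^{\dagger}\to A$). Thus to prove $\Nil^{\dagger}(A)=0$ it suffices to show that for every profinite $S$ the image of the map $\pi_0(\Nil^{\dagger}(A))(S)\to \pi_0(A)(S)$ is zero; equivalently, that for every morphism of animated $R_{\sol}$-algebras $\phi\colon R_{\sol}\{\bb{N}[S]\}^{\dagger}\to A$, the induced element $\phi|_S\in \pi_0(A)(S)$ vanishes.

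Next I would use the colimit presentation $R_{\sol}\{\bb{N}[S]\}^{\dagger}=\varinjlim_{n} R_{\sol}\langle \bb{N}[S/\pi^n]\rangle$ of Definition \ref{DefinitionIdempotentAlgebrasSym}(4). Such a $\phi$ is the same datum as a compatible family of morphisms $\phi_n\colon R_{\sol}\langle \bb{N}[S/\pi^n]\rangle\to A$, and by the corepresentability of $A^0$ (Definition \ref{DefinitionBoundedNilpIdeal}(3)), each $\phi_n$ corresponds to an element $a_n:=\phi_n(s/\pi^n)\in A^0(S)$. The transition maps in the colimit send $s/\pi^n\mapsto \pi\cdot (s/\pi^{n+1})$, so compatibility translates into $a_n=\pi\, a_{n+1}$ in $\pi_0(A^0)(S)$. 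Tracing the tautological generator $s\in S$: in the colimit we have $s=\pi^n\cdot (s/\pi^n)$ for every $n$, whence
\[
\pi_0(\phi|_S)=\pi^n\,\pi_0(a_n)\in \pi^n\,\pi_0(A^0)(S)\subset \pi_0(A)(S)
\]
for all $n\geq 0$.

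Finally, since the hypothesis says that $\pi_0(A^0)$ is $\pi$-adically separated as a condensed ring, one has $\bigcap_n \pi^n\,\pi_0(A^0)=0$, and this intersection of subsheaves vanishes on all profinite $S$. Therefore $\pi_0(\phi|_S)=0$, which completes the proof.

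The only potentially delicate point is the interpretation of ``$\pi$-adically separated'' at the level of the condensed ring $\pi_0(A^0)$, i.e.\ that the vanishing of the intersection $\bigcap_n \pi^n \pi_0(A^0)$ as condensed group implies its vanishing on every profinite $S$; but this is automatic since the intersection is a condensed subgroup and the statement reduces to checking it on underlying abelian groups of sections. The rest is an unraveling of the definitions via the explicit presentation of $R_{\sol}\{\bb{N}[S]\}^{\dagger}$.
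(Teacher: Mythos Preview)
Your proof is correct and follows essentially the same approach as the paper's own proof. The paper argues more tersely: given $S\to \Nil^{\dagger}(A)$, it observes that for all $n\geq 1$ the rescaled map $\pi^{-n}S$ lands in $A^0$, so $S$ is divisible by $\pi^n$ in $\pi_0(A^0)$ for every $n$, and $\pi$-adic separatedness forces the map to be zero. Your explicit unpacking via the colimit presentation $R_{\sol}\{\bb{N}[S]\}^{\dagger}=\varinjlim_n R_{\sol}\langle \bb{N}[S/\pi^n]\rangle$ and the elements $a_n$ is exactly the same content made more visible.
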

\begin{proof}
Let $S$ be a profinite set and  let $S \to \Nil^{\dagger}(A)$ be a map. For all $n\geq 1$ we  have that $\pi^{-n}S$ maps to $A^0$, which implies that $S$ is divisible by  $\pi^n$ for all $n\geq 0$ in $\pi_0(A^0)$. Then, as $\pi_0(A^0)$ is $\pi$-adically separated, the map $S \to \pi_0(A^0)$ must be zero proving the lemma. 
\end{proof}

\begin{example}
Let $A$ be a Tate algebra topologically of finite type over a non-archimidean field $K$, and let $\mathrm{nil}(A)$ be the classical nil-radical of $A$ seen as a closed ideal. Then the reduction $A^{\red}=A/\mathrm{nil}(A)$ is a Tate algebra topologically of finite type such that $A^{\red,0}$ is $\pi$-adically complete and separated (see \cite[Proposition 3.1.10]{BoschRigidFormalGeo}). Lemma \ref{LemmaVanishingNilradical} shows then that 
\[
\Nil^{\dagger}(A)=\mathrm{nil}(A),
\]
i.e. for classical Tate algebras the $\dagger$-nil-radical recovers the usual nil-radical of the ring. 
\end{example}

\begin{corollary}
\label{CorollaryDaggerNilRadical}
Let $\n{A}= R\langle X_1,\ldots, X_d \rangle_{\sol} \langle\bb{N}[S] \rangle \{\bb{N}[S']\}$ for profinite sets $S$ and $S'$. Let $I$ be the augmentation ideal of $R_{\sol}\{\bb{N}[S']\} \to R$. Then $\Nil^{\dagger}(\n{A})=  I \n{A}$.
\end{corollary}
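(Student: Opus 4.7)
The plan is to prove the two inclusions separately.

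\textbf{Inclusion $I\n{A} \subseteq \Nil^{\dagger}(\n{A})$.} By Proposition \ref{PropositionConstructionsareSolid} (3), $\Nil^{\dagger}(\n{A})$ is an $\n{A}^{b}$-submodule of $\n{A}$; since $\n{A}$ is bounded, $\n{A}=\n{A}^{b}$, so it suffices to prove $I \subseteq \Nil^{\dagger}(\n{A})$. Functoriality of $\Nil^{\dagger}$ along the algebra map $R_{\sol}\{\bb{N}[S']\}^{\dagger}\to \n{A}$ reduces this further to checking $I \subseteq \Nil^{\dagger}(R_{\sol}\{\bb{N}[S']\}^{\dagger})$. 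Given any map $T\to I$ from a profinite set $T$, Lemma \ref{LemmaOverconvergentFunctionsIdeal} (2) (applied with ambient algebra $R$, $S_{1}=\emptyset$, $S_{2}=S'$, $S_{3}=T$) produces a unique extension to an algebra morphism $R_{\sol}\{\bb{N}[T]\}^{\dagger}\to R_{\sol}\{\bb{N}[S']\}^{\dagger}$, which is precisely the defining condition for $T$ to factor through $\Nil^{\dagger}$.

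\textbf{Inclusion $\Nil^{\dagger}(\n{A}) \subseteq I\n{A}$.} Set $B := R\langle X_{1},\ldots,X_{d}\rangle_{\sol}\langle\bb{N}[S]\rangle$ and consider the algebra map $\n{A}\twoheadrightarrow B$ induced by the augmentation $R_{\sol}\{\bb{N}[S']\}^{\dagger}\to R$; its kernel is exactly $I\n{A}$. By functoriality, $\Nil^{\dagger}(\n{A})$ is sent into $\Nil^{\dagger}(B)$, so it suffices to show $\Nil^{\dagger}(B) = 0$. By Lemma \ref{LemmaVanishingNilradical}, this follows once $\pi_{0}(B^{0})$ is shown to be $\pi$-adically separated.

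\textbf{Verification of $\pi$-adic separatedness.} Write $B^{+} := R^{+}\langle X_{1},\ldots,X_{d}\rangle_{\sol}\langle\bb{N}[S]\rangle$, so that $B = B^{+}[\tfrac{1}{\pi}]$. The explicit power series description of Example \ref{ExamplePowerSeriesBoundedAlgebra} realizes $\pi_{0}(B^{+})$ as a $\pi$-adic limit whose coefficient modules are profinite products of copies of $R^{+}$; in particular $\pi_{0}(B^{+})$ is $\pi$-torsion free and $\pi$-adically separated. It follows that $\bigcap_{n}\pi^{n}\pi_{0}(B)=0$: any element is of the form $b/\pi^{m}$ with $b\in \pi_{0}(B^{+})$, and divisibility by arbitrarily high powers of $\pi$ forces $b$ to vanish by separatedness of $\pi_{0}(B^{+})$. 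Since $\pi_{0}(B^{0})\subseteq \pi_{0}(B)$, the same separatedness holds for $\pi_{0}(B^{0})$, and Lemma \ref{LemmaVanishingNilradical} yields $\Nil^{\dagger}(B)=0$. The one genuine calculation is this concluding verification of $\pi$-torsion freeness and $\pi$-adic separatedness of $\pi_{0}(B^{+})$, which requires unpacking the power series model of Example \ref{ExamplePowerSeriesBoundedAlgebra}; once in hand, both inclusions of the corollary are immediate.
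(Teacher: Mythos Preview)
Your overall strategy matches the paper's: both inclusions are handled via the same key inputs (Lemma~\ref{LemmaOverconvergentFunctionsIdeal}(2) for $I\n{A}\subseteq\Nil^{\dagger}(\n{A})$, and Lemma~\ref{LemmaVanishingNilradical} for the reverse after passing to the quotient $B$). Your reduction of the first inclusion via the $\n{A}$-module structure of $\Nil^{\dagger}(\n{A})$ and functoriality is a minor variation on the paper's direct application of Lemma~\ref{LemmaOverconvergentFunctionsIdeal}(2) to the full ambient algebra.

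There is, however, a genuine error in your verification of $\pi$-adic separatedness. You assert that $\bigcap_{n}\pi^{n}\pi_{0}(B)=0$, but this is false: $\pi$ is a \emph{unit} in $R=\bb{Z}((\pi))$, hence in any $R$-algebra, so $\pi^{n}\pi_{0}(B)=\pi_{0}(B)$ for every $n$. The sentence ``divisibility by arbitrarily high powers of $\pi$ forces $b$ to vanish'' conflates divisibility in $\pi_{0}(B)$ (trivially satisfied) with divisibility in $\pi_{0}(B^{+})$. Passing through the ambient ring $\pi_{0}(B)$ therefore cannot yield separatedness of $\pi_{0}(B^{0})$.

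The paper closes this gap differently: it identifies $B^{0}$ with $B^{+}=R^{+}\langle X_{1},\ldots,X_{d}\rangle_{\sol}\langle\bb{N}[S]\rangle$, which is $\pi$-adically complete (hence separated) by construction. To fix your argument you would need to supply this identification, or at least the inclusion $B^{0}\subseteq B^{+}$ as subrings of $B$; then $\bigcap_{n}\pi^{n}\pi_{0}(B^{0})\subseteq\bigcap_{n}\pi^{n}\pi_{0}(B^{+})=0$ gives what you need.
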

\begin{proof}
The quotient $\n{A}/I\n{A}$ is isomorphic to $ \n{B}=R_{\sol} \langle X_1,\ldots, X_d\rangle_{\sol} \langle \bb{N}[S] \rangle$. It is easy to see that  $\n{B}^0= R^+\langle X_1,\ldots, X_d \rangle_{\sol}  \langle \bb{N}[S] \rangle$ and that it  is $\pi$-adically separated. By Lemma \ref{LemmaVanishingNilradical} we have $\Nil^{\dagger}(\n{B})=0$. This shows that $\Nil^{\dagger}(\n{A}) \subset I \n{A}$. On the other hand, Lemma \ref{LemmaOverconvergentFunctionsIdeal} (2) implies  that $I\n{A} \subset \Nil^{\dagger}(\n{A})$ which proves the equality. 
\end{proof}

Finally, the $\dagger$-nil-radical is related with the closure of ideals in classical Huber rings. 

\begin{corollary}
\label{CoroQuotientTateAlgebra}
Let $(A,A^+)$ be a classical Tate Huber pair, and let $I\subset A$ be a non-necessarily closed ideal in $A$ generated by its global sections. Let $\overline{I}$ be the closure of $I$ in $A$ and suppose that $(A/\overline{I})^0$ is $\pi$-adically separated. Then $\Nil^{\dagger}(A/I)=\overline{I}/I$ and $(A/I)^{\dagger-\red}=A/\overline{I}$. 
\end{corollary}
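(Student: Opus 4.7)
We identify the condensed sub-abelian-groups $\Nil^{\dagger}(A/I)$ and $\overline{I}/I$ of $A/I$ by a two-sided inclusion; the second identity $(A/I)^{\dagger-\red} = A/\overline{I}$ then follows since, for a static ring, the $\dagger$-reduction coincides with the plain ring-theoretic quotient by $\Nil^{\dagger}(A/I)$.

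The inclusion $\Nil^{\dagger}(A/I) \subseteq \overline{I}/I$ is a functoriality argument: Lemma \ref{LemmaVanishingNilradical} combined with the hypothesis on $(A/\overline{I})^{0}$ gives $\Nil^{\dagger}(A/\overline{I}) = 0$, so $A/\overline{I}$ is $\dagger$-reduced. Applying $\Nil^{\dagger}$ to the surjection $A/I \twoheadrightarrow A/\overline{I}$ then forces $\Nil^{\dagger}(A/I)$ to sit inside the kernel $\overline{I}/I$.

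The opposite inclusion rests on a preliminary identity: for any bounded affinoid $R_{\sol}$-algebra $B$, one has $\Nil^{\dagger}(B) = \bigcap_{n \geq 0} p^{n} B^{0}$ as condensed subgroups. To see this, unwind Definition \ref{DefinitionBoundedNilpIdeal}(5) through the colimit presentation $R_{\sol}\{\bb{N}[S]\}^{\dagger} = \varinjlim_{n} R_{\sol}\langle \bb{N}[S/p^{n}]\rangle$, whose transition maps send $s/p^{n} \mapsto p \cdot s/p^{n+1}$: an algebra homomorphism $R_{\sol}\{\bb{N}[S]\}^{\dagger} \to B$ amounts to a compatible family of continuous maps $g_{n}\colon S \to B^{0}$ satisfying $p\, g_{n+1} = g_{n}$, whose underlying map $g_{0}\colon S \to B$ necessarily lies in $\bigcap_{n} p^{n} B^{0}$. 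Conversely, given $f\colon S \to \bigcap_{n} p^{n} B^{0}$, the $\pi$-torsion-freeness of $B$ (which holds since $p$ is a unit in any $R$-algebra) uniquely determines $g_{n} := f/p^{n}$; continuity is granted by the fact that $p\colon B^{0} \xrightarrow{\sim} p B^{0}$ is a homeomorphism with respect to the natural $\pi$-adic topology whose neighborhoods of $0$ are the $p^{k} B^{0}$.

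Apply this identity to $B := (A/I)^{\dagger-\red}$: Proposition \ref{PropLiftingOverconvergentAlgebrasLift}(4) gives $\Nil^{\dagger}(B) = 0$, hence $\bigcap_{n} p^{n} B^{0} = 0$. The surjection $A/I \twoheadrightarrow B$ sends $A^{0}$ into $B^{0}$ and $I$ to $0$; combined with the topological characterization $\overline{I} = \bigcap_{n}(I + p^{n} A^{0})$ of the closure in the $\pi$-adic topology, this places the image of $\overline{I}$ inside $\bigcap_{n} p^{n} B^{0} = 0$. Hence $\overline{I}/I \subseteq \ker\bigl(A/I \twoheadrightarrow B\bigr) = \Nil^{\dagger}(A/I)$, completing the identification. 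The main obstacle is the intermediate identity $\Nil^{\dagger}(B) = \bigcap_{n} p^{n} B^{0}$, which requires both the algebraic $\pi$-torsion-freeness of $B$ and the resulting topological fact that division by $p$ is continuous on $p B^{0}$.
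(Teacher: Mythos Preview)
Your proof is correct in substance, though it takes a slightly different route than the paper. The paper works directly in $A/I$: given $f\colon S\to \overline{I}/I$, it uses the density statement $\overline{I}+\pi^nA_0 = I+\pi^nA_0$ (for a ring of definition $A_0$) to lift $f$ to a map $S\to\pi^nA_0$ via projectivity of extremally disconnected sets, whence the map $S\to A/I$ factors through $\pi^n(A/I)^0$ and therefore extends to $R_{\sol}\langle\bb{N}[S/\pi^n]\rangle\to A/I$; taking the colimit over $n$ yields the extension to $R_{\sol}\{\bb{N}[S]\}^{\dagger}$. You instead isolate the general identity $\Nil^{\dagger}(B)=\bigcap_n\pi^nB^0$ and then argue through $B=(A/I)^{\dagger-\red}$. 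The underlying computation is the same, but your packaging extracts a clean reusable characterization of $\Nil^{\dagger}$. Note that your detour through the $\dagger$-reduction is unnecessary: once you have the identity, you can apply it directly to $A/I$ to get $\overline{I}/I\subset\bigcap_n\pi^n(A/I)^0=\Nil^{\dagger}(A/I)$, avoiding the appeal to Proposition~\ref{PropLiftingOverconvergentAlgebrasLift}(4).

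A few technical points to clean up. First, you write $p$ throughout where the pseudo-uniformizer is $\pi$; in particular the claim ``$p$ is a unit in any $R$-algebra'' should read $\pi$. Second, your justification for the continuity of $g_n=f/\pi^n$ via a ``$\pi$-adic topology on $B^0$ with neighborhoods $\pi^kB^0$'' is not the right frame: $B^0$ is a condensed set, not a priori a topological ring. The correct (and simpler) argument is that multiplication by $\pi$ is an automorphism of the condensed abelian group $B$, so its inverse restricts to a morphism of condensed sets $\pi^{-1}\colon\pi B^0\to B^0$. Third, the equality $\overline{I}=\bigcap_n(I+\pi^nA^0)$ need not hold unless $A^0$ is bounded (i.e.\ $A$ is uniform); you only use and only have the inclusion $\overline{I}\subseteq\bigcap_n(I+\pi^nA_0)\subseteq\bigcap_n(I+\pi^nA^0)$, which follows from the definition of the closure via a ring of definition $A_0$.
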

\begin{proof}
Since $(A/\overline{I})^0$ is $\pi$-adically separated, $A/\overline{I}$ is $\dagger$-reduced by Lemma \ref{LemmaVanishingNilradical}. This implies that $\Nil^{\dagger}(A/I)\subset \overline{I}/I$. Let $f:S\to \overline{I}$ be a map from a profinite set, we want to show that it extends to $R_{\sol}\{\bb{N}[S]\}^{\dagger}$. Let $A_0\subset A$ be a ring of definition, we can assume without loss of generality that $S$ lands in $A_0\cap \overline{I}$. By hypothesis, the subspace $A_0\subset I$ is dense in $A_0\cap \overline{I}$, then for any $n\geq 0$, we have that 
\[
\overline{I}+\pi^n A_0= I+\pi^n A_0.
\]
Thus, $\overline{I}+\pi^nA_0/I = I+\pi^n A_0/I\subset A/I$, and the image of $\pi^n A_0$ in $A/I$ contains $\overline{I}/I$ for all $n\geq 0$. Then, the composite map $S\to \overline{I}\to \overline{I}/I$ has a lift $S\to \pi^n A_0$, proving that we have a factorization 
\[
R_{\sol}\langle \bb{N}[\frac{S}{\pi^n}]\rangle \to A/I,
\]
taking colimits as $n\to \infty$ we get the desired map from $R_{\sol}\{ \bb{N}[\frac{S}{\pi^n}]\}^{\dagger}$, proving that $\underline{I}/I=\Nil^{\dagger}(A/I)$ as wanted. 
\end{proof}

\subsection{Adic spectrum and derived Tate adic spaces}
\label{SubsectionAnalyticAffinoid}

Let $(R,R^+)=(\bb{Z}((\pi)), \bb{Z}[[\pi]])$ and $R_{\sol}=(R,R^+)_{\sol}$.  Let $\AffRing^{b}_{R_{\sol}}$ be the $\infty$-category of bounded affinoid rings over $R_{\sol}$. Similarly as for Tate Huber pairs, given a bounded affinoid ring $\n{A}$ we want to construct  \textit{the adic spectrum} $|\Spa \n{A}|$, as well as a map of locales $\n{S}(\n{A}) \to |\Spa \n{A}|$ generalizing the one of   Definition \ref{DefinitionLocaleClassicalAffinoid}. Instead of trying to define this space using valuations, we  construct  it using the existing maps of locales for classical Huber rings. 

\begin{construction}
\label{ConstructionSpa}
Let $\n{A}\in \AffRing_{R_{\sol}}^b$ be a bounded affinoid $R_{\sol}$-algebra. For any finite set $I\subset A^{0}$ we have a morphism of analytic rings (depending on lifts) $(R\langle T_I \rangle,R^+)_{\sol} \to \n{A}$.  By Proposition \ref{PropositionAffinoidCatLocale} we have  maps of locales (independent of lifts)
\[
\n{S}(\n{A}) \to \n{S}((R\langle T_{I} \rangle, R^+)_{\sol}) \to \Spa(R\langle  T_I \rangle, R^++ R\langle  T_{I}\rangle^{00}).
\]
 Taking limits we  set $\n{T}_{\n{A}}:=   \varprojlim_{I\subset \pi_0(\n{A}^{0})} \Spa(R\langle  T_{I}\rangle, R^+ + R\langle  T_{I}\rangle^{00})$  and let
\[
\rho_{\n{A}}: \n{S}(\n{A}) \to  \n{T}_{\n{A}}
\]
be the associated map of locales. Note that the formation of both $\n{T}_{\n{A}}$ and $\rho_{\n{A}}$ are functorial on $\n{A}$ and only depend on $\pi_0(\n{A})$. 
\end{construction}

The following theorem  is the key input to define the adic spectrum of a bounded affinoid ring.

\begin{theorem}
\label{TheoExistenceSpaA}
Let $\n{A}\in \AffRing_{R_{\sol}}^{b}$. There is a maximal open subspace $U\subset \n{T}_{\n{A}}$ in the constructible topology such that $\rho_{\n{A}}$ factors through a map $\n{S}(\n{A})\to \n{T}_{\n{A}}\backslash U \to \n{T}_{\n{A}}$. Moreover, $\n{S}(\n{A})\to \n{T}_{\n{A}}\backslash U$ is surjective. 
\end{theorem}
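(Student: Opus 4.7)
The plan is to define $U$ explicitly as the union of all quasi-compact opens $V\subset\n{T}_{\n{A}}$ with $\rho_{\n{A}}^{-1}(V)=0$, and to verify the claims by identifying the image sublocale of $\rho_{\n{A}}$ with $\n{T}_{\n{A}}\setminus U$.

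By Construction \ref{ConstructionSpa}, $\n{T}_{\n{A}}$ is a cofiltered limit of the spectral spaces $Y_I:=\Spa(R\langle T_I\rangle,R^++R\langle T_I\rangle^{00})$ along spectral transition maps, so every quasi-compact open $V\subset\n{T}_{\n{A}}$ arises as the pullback of a rational subset $V_I\subset Y_I$ for some finite $I\subset\pi_0(\n{A}^0)$. By Proposition \ref{PropositionAffinoidCatLocale}, such $V_I$ corresponds to an idempotent algebra $B_{V_I}\in\Mod((R\langle T_I\rangle,R^++R\langle T_I\rangle^{00})_\sol)$, and $\rho_{\n{A}}^{-1}(V)\in\n{S}(\n{A})$ is the class of the idempotent $\n{A}$-algebra $B_{V_I}\otimes_{(R\langle T_I\rangle,R^++R\langle T_I\rangle^{00})_\sol}\n{A}$. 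Set
\[
U:=\bigcup\bigl\{V\subset\n{T}_{\n{A}}\text{ quasi-compact open}:\rho_{\n{A}}^{-1}(V)=0\bigr\}.
\]
As a union of opens, this is open in the spectral topology of $\n{T}_{\n{A}}$, hence also in the constructible topology; its complement is pro-constructible and carries a canonical induced spectral-space structure.

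Since $\rho_{\n{A}}$ is a morphism of locales, $\rho_{\n{A}}^*$ preserves arbitrary joins of opens, and in any frame a join equals $0$ iff every summand equals $0$; therefore, for any open $V\subset\n{T}_{\n{A}}$, $\rho_{\n{A}}^{-1}(V)=0$ iff every quasi-compact open contained in $V$ has zero preimage, iff $V\subset U$. In particular $\rho_{\n{A}}^{-1}(U)=0$, and $\rho_{\n{A}}$ factors canonically as $\n{S}(\n{A})\xrightarrow{\rho'}\n{T}_{\n{A}}\setminus U\hookrightarrow\n{T}_{\n{A}}$. The image sublocale of $\rho_{\n{A}}$ in $\n{T}_{\n{A}}$ corresponds to the quotient of the frame of opens of $\n{T}_{\n{A}}$ by the relations $V\sim 0$ for $V$ with $\rho_{\n{A}}^{-1}(V)=0$; by the above characterization this congruence is exactly ``$V\subset U$'', and the resulting quotient frame is the frame of opens of the spatial sublocale $\n{T}_{\n{A}}\setminus U$. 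Hence $\rho'$ surjects onto $\n{T}_{\n{A}}\setminus U$ in the locale-theoretic sense.

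Finally, maximality follows from surjectivity: if $U'\subset\n{T}_{\n{A}}$ is open in the constructible topology and $\rho_{\n{A}}$ factors through the pro-constructible complement $\n{T}_{\n{A}}\setminus U'$, then the image sublocale $\n{T}_{\n{A}}\setminus U$ of $\rho_{\n{A}}$ is contained in $\n{T}_{\n{A}}\setminus U'$ both as sublocales and as subsets of $\n{T}_{\n{A}}$, i.e., $U'\subset U$. The main non-formal input is the explicit description of $\rho_{\n{A}}^{-1}$ on quasi-compact opens provided by Proposition \ref{PropositionAffinoidCatLocale} together with the cofiltered presentation of $\n{T}_{\n{A}}$; the rest is standard locale theory, and the only mild subtlety is the identification of the quotient frame defining the image sublocale with the frame of opens of the pro-constructible subspace $\n{T}_{\n{A}}\setminus U$.
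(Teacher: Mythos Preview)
There are two genuine gaps.

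First, the $U$ you construct is open in the \emph{spectral} topology of $\n{T}_{\n{A}}$, but the theorem asserts maximality among opens in the \emph{constructible} topology, which is strictly finer. Your $U$ is the set of points whose \emph{adic} stalk $\n{A}(x)=\varinjlim_{x\in V\text{ open}}\n{A}(V)$ vanishes, whereas the paper defines $U$ as the set where the \emph{constructible} stalk $\n{A}(x)_{\cons}$ (Definition~\ref{DefStalksAnalyticRings}) vanishes, and only shows this to be constructibly open. A point $x$ may have $\n{A}(x)\neq 0$ while $\n{A}(x)_{\cons}=0$ --- some locally closed rational constructible neighborhood of $x$ has zero localization but no open one does --- and such an $x$ lies in the true maximal $U$ but not in yours. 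Since your $U$ is closed under generization (being spectrally open) while the paper's $U$ need not be, your maximality argument at the end cannot succeed in general.

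Second, your surjectivity argument conflates the image sublocale with a closed sublocale. The image sublocale of $\rho_{\n{A}}$ corresponds to the frame quotient by the congruence $V\sim V'\iff\rho_{\n{A}}^*V=\rho_{\n{A}}^*V'$; what you describe is the quotient by the coarser congruence generated by $V\sim 0$ for $\rho_{\n{A}}^*V=0$, i.e.\ $V\sim V'\iff V\cup U=V'\cup U$. Asserting these agree is precisely the nontrivial claim that $\rho_{\n{A}}^*V_1=\rho_{\n{A}}^*V_2$ forces $V_1$ and $V_2$ to agree outside $U$, which is not ``standard locale theory'' but the substance of the paper's condition~(b). The paper proves it by reducing (via the compactness of Lemma~\ref{LemmaIdempotentCompact}) to constructible closed $Z_1,Z_2$ and then testing the equality $\n{A}(Z_1)=\n{A}(Z_2)$ against all constructible stalks $\n{A}(x)_{\cons}$, which give a conservative family of localizations because finitely many rational constructible subspaces always cover $\n{T}_{\n{A}}$. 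The constructible stalks are exactly the missing ingredient; the formal frame manipulation you invoke does not supply them.
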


\begin{definition}
\label{DefinitionAdicSpectrumBoundedRing}
The \textit{adic spectrum} of $\n{A}$ is the  space $|\Spa \n{A}|=\n{T}_{\n{A}}\backslash U$, with $U$  as in Theorem  \ref{TheoExistenceSpaA}. We let $\rho_{\n{A}}:\n{S}(\n{A})\to |\Spa \n{A}|$ be the associated maps of locales, and let $\Spa \n{A}$ denote the categorified locale $(|\Spa \n{A}|, \Mod(\n{A}), \rho_{\n{A}})$. 
\end{definition}

In order to prove Theorem \ref{TheoExistenceSpaA} we need some preparations.

\begin{lemma}
\label{LemmaIdempotentCompact}
Let $Z\subset \n{T}_{\n{A}}$ be a constructible closed subspace,  then the idempotent algebra $\n{A}(Z):=\rho_{\n{A}}^{-1}(Z)$ is a compact module in $\Mod(\n{A})$.
\end{lemma}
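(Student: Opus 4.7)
My plan is to reduce the claim to an explicit compactness computation at the level of a single classical Tate Huber pair, using the description of $\n{T}_{\n{A}}$ as a cofiltered limit together with the Boolean structure of constructibles in spectral spaces. Since $\n{T}_{\n{A}} = \varprojlim_{I} \Spa(R\langle T_I\rangle, R^+ + R\langle T_I\rangle^{00})$ is a cofiltered limit of spectral spaces along spectral transition maps, any constructible closed $Z$ is pulled back from some constructible closed $Z_I \subset \Spa(R\langle T_I\rangle, R^+ + R\langle T_I\rangle^{00})$ for a finite subset $I \subset \pi_0(\n{A}^0)$. Writing $\n{B}_I := (R\langle T_I\rangle, R^+ + R\langle T_I\rangle^{00})_{\sol}$, functoriality of the assignment $\rho_{(-)}$ identifies $\n{A}(Z)$ with the base change $\n{A}\otimes_{\n{B}_I} \n{B}_I(Z_I)$. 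Since a symmetric monoidal colimit-preserving functor between presentably symmetric monoidal stable $\infty$-categories takes compact objects to compact objects, we are reduced to showing that $\n{B}_I(Z_I)$ is compact in $\Mod(\n{B}_I)$.

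Open constructibles in a spectral space are finite unions of rational subsets, so $Z_I = \bigcap_{k=1}^n (X_I \setminus U_k)$ is a finite intersection of complements of rational subsets. By Proposition \ref{PropLocaleOperations}(3) the associated idempotent algebra is the tensor product of the individual idempotent algebras, and tensor products of compact objects are compact; we may therefore assume $Z_I = X_I \setminus U$ for a single rational $U$. By Lemma \ref{LemmaDevisageRationalLocalizations}, $U$ is a composition of basic rational localizations of the form $\{|g|\leq 1\}$ or $\{1\leq |g|\}$, and iterated application of the excision fiber sequences of Remark \ref{RemarkExcisionSequences}, together with the fact that the lower-shriek functor along an open immersion preserves compact objects (its right adjoint $j^{*}$ being itself a left adjoint, hence preserving filtered colimits), presents $\n{B}_I(X_I \setminus U)$ as a finite iterated fiber of compact modules arising from basic rational complements.

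Finally, in the base case, Proposition \ref{PropositionAffinoidCatLocale} together with the proof of \cite[Theorem 8.1]{ClausenScholzeCondensed2019} identifies the idempotent algebras of the complements of $\{|g|\leq 1\}$ and $\{1\leq |g|\}$ with the base changes of $\bb{Z}((T^{-1}))$ and $\bb{Z}[[T]]$ along the map $(\bb{Z}[T],\bb{Z})_{\sol} \to \n{B}_I$ sending $T \mapsto g$. Since compactness is preserved by base change along such symmetric monoidal morphisms, it suffices to check that $\bb{Z}((T^{-1}))$ and $\bb{Z}[[T]]$ are compact as solid $(\bb{Z}[T],\bb{Z})_{\sol}$-modules, which one can see by direct inspection of the fiber sequences realizing them as idempotent algebras complementing the quasi-compact opens $\{|T|\leq 1\}$ and $\{1\leq |T|\}$. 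The main obstacle in the plan is packaging the iterated excision in the second step into a clean finite presentation in compact modules while tracking functoriality of the locale operations; the base case is standard but should be recorded explicitly to match the locale conventions adopted here.
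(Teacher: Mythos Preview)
Your overall strategy works, but there is a genuine gap in the second paragraph: the assertion that ``a symmetric monoidal colimit-preserving functor between presentably symmetric monoidal stable $\infty$-categories takes compact objects to compact objects'' is false in general. This would be true if compact coincided with dualizable, but in $\Mod(\n{B}_I)$ the relevant idempotent algebras (e.g.\ $\bb{Z}((T^{-1}))$, $\bb{Z}[[T]]$ base-changed to $\n{B}_I$) are compact without being dualizable. The correct reason base change $\n{A}\otimes_{\n{B}_I}-$ preserves compacts here is that its right adjoint, the forgetful functor $\Mod(\n{A})\to\Mod(\n{B}_I)$, preserves filtered colimits (since module categories of analytic rings are closed under colimits in condensed modules). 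With that fix, your argument goes through.

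That said, the reduction to $\n{B}_I$ is unnecessary, and the paper's proof is much shorter by avoiding it. The paper observes directly that the open complement $U\subset\n{T}_{\n{A}}$ of $Z$ is a finite union of rational localizations $U_i$, each of which has $j_{i,*}$ (the forgetful functor from $\Mod(\n{A}_{U_i})$) preserving colimits; hence $j_{U,*}$ preserves colimits. Since $j_{U,*}j_U^*M=\iHom_{\n{A}}\big([\underline{\n{A}}\to\n{A}(Z)],M\big)$ and $\underline{\n{A}}$ is compact, it follows that $[\underline{\n{A}}\to\n{A}(Z)]$, and therefore $\n{A}(Z)$, is compact. Your final step is essentially this same argument carried out at $(\bb{Z}[T],\bb{Z})_{\sol}$ and then transported back; the paper simply runs it at $\n{A}$ from the start, bypassing the cofiltered-limit bookkeeping, the decomposition into basic localizations, and the base-change step where your justification slipped.
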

\begin{proof}
This follows from the fact that the complement $U$ of $Z$ is a finite union of rational affinoid localizations $U_i$ associated to analytic rings $\n{A}_i$, and that the forgetful functor $j_{i,*}: \Mod(\n{A}_i)\to \Mod(\n{A})$ commutes with  colimits. Indeed, the forgetful functor $j_*:\Mod(U)\to \Mod(\n{A})$ commutes with colimits, and it is given by 
\[
j_*j^*M=\iHom_{\n{A}}([\underline{\n{A}}\to \n{A}(Z)],M),
\]  
since $\underline{\n{A}}$ is compact one deduces that $\n{A}(Z)$ is compact. 
\end{proof}

\begin{lemma}
\label{LemmaExistenceStalks}
Let $x\in X=\n{T}_{\n{A}}$, then the constructible neighbourhoods $C$ of $x$ of the form 
\[
X\{f_i\leq g:\;\; i=1,\ldots, n\}\cap X\{ g < h \},
\] 
with $f_n=\pi$ and $h\in \pi_0(\n{A})$, are cofinal in all the constructible neighbourhoods of $x$.   We call such a constructible space $C$ a \textit{rational} constructible subspace of $X$.  
\end{lemma}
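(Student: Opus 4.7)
The strategy is to first reduce the claim to the case of a classical Tate Huber spectrum via the cofiltered presentation of $\n{T}_{\n{A}}$, and then exploit the standard basis of the constructible topology of a Huber spectrum together with the flexibility provided by invertibility of $\pi$ in $R=\bb{Z}((\pi))$.

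For the reduction: Construction \ref{ConstructionSpa} exhibits $X = \n{T}_{\n{A}}$ as a cofiltered limit of the spectral spaces $X_I = \Spa(R\langle T_I\rangle, R^+ + R\langle T_I\rangle^{00})$ for $I$ running over the finite subsets of $\pi_0(\n{A}^0)$, and the transition maps are spectral and preserve rational subsets. By the standard description of a cofiltered limit of spectral spaces along spectral maps, the constructible topology of $X$ is the inverse limit of the constructible topologies of the $X_I$, so any constructible neighborhood of $x$ descends to a constructible neighborhood of the image of $x$ in some $X_I$; the stated form is preserved by pullback, so it is enough to treat each $X_I$. That reduces the problem to a classical Tate Huber spectrum $\Spa(B,B^+)$ with pseudo-uniformizer $\pi$.

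On such a Huber spectrum, the constructible topology has as basis the finite Boolean combinations of rational subsets $\{|f_i|\leq|g|\neq 0\}$, so any constructible neighborhood $C$ of $x$ contains a basic neighborhood of the form $U\cap V^c$ where $U$ and $V$ are rational subsets. Writing $V^c$ as the finite union $\bigcup_j\{|g'|<|f'_j|\}$ of locally closed pieces and choosing the piece $j_0$ that contains $x$, we obtain a neighborhood $U\cap\{|g'|<|f'_{j_0}|\}\subseteq C$ containing $x$. The rational subset $U$ is quasi-compact, hence contained in $\{|\pi^N|\leq|g|\}$ for some $N\geq 0$; using $\pi^{-1}\in R\subseteq B$ we may rescale the numerators and denominator simultaneously so as to present $U$ in the normalized form $\{|f_i|\leq|g|:i=1,\dots,n\}$ with $f_n=\pi$.

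The main step, and the main obstacle, is to consolidate $\{|f_i|\leq|g|,\,|g'|<|f'_{j_0}|\}$ into a single expression of the form $\{|F_i|\leq|G|<|H|\}$ with $F_n=\pi$, while keeping $x$ inside. The idea is to replace $g$ by the product $G:=g\cdot f'_{j_0}$ and to take $H:=g\cdot f'_{j_0}\cdot\pi^{-r}\in B$ for $r$ chosen so that $|G(x)|<|H(x)|$ is automatic and so that the strict inequality $|g'|<|f'_{j_0}|$ at $x$ is encoded by $|G|<|H|$; simultaneously multiply the old generators by $f'_{j_0}$ to keep the rational part, and add $\pi$ explicitly to the list of $F_i$ to guarantee $|\pi|\leq|G|$. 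The nontrivial verification is that at $x$ one has $|G(x)|\neq 0$ (using $|g(x)|\geq|\pi(x)|\neq 0$ and $|f'_{j_0}(x)|>|g'(x)|\geq 0$, which forces $|f'_{j_0}(x)|>0$), so all the inequalities carry over, and that the resulting rational–strict system lies inside the original $U\cap V^c$. The flexibility that makes this step work is precisely the invertibility of $\pi$ in $R$ together with the ability to scale both sides of a rational inequality by elements of $B$, which is what is lost in naive approaches that try to keep $g$ literally unchanged.
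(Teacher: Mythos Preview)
The reduction to a single $X_I=\Spa(R\langle T_I\rangle,R^++R\langle T_I\rangle^{00})$ is correct and matches the paper.

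Your ``main step'' has a genuine gap. With $H=G\cdot\pi^{-r}$ and $r>0$, the condition $|G|<|H|=|G|\cdot|\pi|^{-r}$ is equivalent to $|G|\neq 0$, so it imposes no constraint beyond the rational open part. Your candidate set is then just the rational open $\{|F_i|\leq|G|\neq 0\}$, which contains every generalization of $x$ and cannot lie inside a general constructible neighbourhood of $x$. If instead one tries the more natural $G=g\,g'$, $H=g\,f'_{j_0}$ (so that $|G|<|H|$ really does encode $|g'|<|f'_{j_0}|$ on $\{|g|\neq 0\}$), one runs into trouble when $|g'(x)|=0$: then $|G(x)|=0$ and the rational part $\{|F_i|\leq|G|\neq 0\}$ no longer contains $x$. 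There is also a preliminary oversimplification: a basic constructible neighbourhood has the shape $U\cap\bigcap_k V_k^{\,c}$, and after distributing you face several simultaneous strict inequalities $\{|g'_k|<|f'_{k,j_k}|\}$, not just one; your scheme would have to merge all of them with the single $g$ of $U$, and you give no mechanism for that.

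The paper sidesteps these manipulations by a structural observation you are missing. The set of generalizations $\{x\}_{\gen}$ is homeomorphic to $\Spa(k(x),k(x)^+)$, which is \emph{totally ordered}; hence every constructible closed subset of $\{x\}_{\gen}$ is already of the form $\{1<\tilde h\}$ for a single $\tilde h\in k(x)$. The consolidation you were trying to achieve by algebra over $B$ thus happens automatically at the residue field. One then lifts $\tilde h$ to $h'\in\s{O}(U)$ for a small rational $U=\{|f_i|\leq|g|\}$ and clears denominators (using that $g$ is a unit on $U$) to write $U\{1<h'\}=U\cap X\{g^n<h\}$ for some $h$ in the global ring, after which replacing $g,f_i$ by $g^n,f_i^n$ gives the stated form. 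Cofinality follows from $\{x\}=\bigcap_Z Z$ over constructible closed $Z\subset\{x\}_{\gen}$.
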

\begin{proof}
Since $\n{T}_{\n{A}}$ is a limit of spectra of Tate algebras over $R$, it suffices to prove the statement  for $X:=\Spa(R\langle  T_{I}\rangle, R^+ + R\langle  T_{I}\rangle^{00})$. It is clear that a basis of neighbourhoods of $x$ in $X$ for the adic topology are rational localizations of the form $X\{f_i\leq g: i=1,\ldots, n\}$ with $f_n=\pi$. Let $\{x\}_{\gen}$ be the space of generalizations of $x$ in $X$, then we can write 
\[
\{x\}_{\gen}=\bigcap_{x\in U\subset X} U
\]
where $U$ runs over all the open neighbourhoods of $x$. Then $\{x\}_{\gen}$ is a poset being homeomorphic to the adic spectrum of the residue field $\{x\}_{\gen}=\Spa(k(x), k(x)^+)$. But now, any rational subspace of $\{x\}_{\gen}$ is of the form $\{x\}_{\gen}\{ \tilde{h}\leq 1\}$ for some $\tilde{h}\in k(x)$, equivalently, any constructible closed subspace of $\{x\}_{\gen}$ is of the form $\{x\}_{\gen}\{1<\tilde{h}\}$ for $\tilde{h}\in k(x)$. Thus, we can find a neighbourhood $U=\{f_i\leq g \}$ of $x$, and a lift $h'$ of $\tilde{h}$ in $\s{O}(U)$ such that $\{x\}_{\gen }\cap U\{1 < h' \}=\{x\}_{\gen}\{1<\tilde{h}\}$. After multiplying $h'$ by a power of $g$, we can find an element $h\in R\langle T_I\rangle$ and an integer $n\in \bb{N}$ such that 
\[
U\cap X\{ g^n < h\}= U\{ 1< h'\}.
\]
Thus, after replacing $g$ by $g^n$ and $f_i$ by $f_i^n$, we have found an element $h$ such that 
\[
U\cap X\{g< h\}= U\{1<h'\}.
\]
The lemma follows from the previous construction, and the fact that 
\[
\{x\}= \bigcap_{Z\subset \{x\}_{\gen}} Z
\]
where $Z$ runs over the constructible closed subspaces. 
\end{proof}

\begin{lemma}
Let $C$ be a rational constructible subspace of $\n{T}_{\n{A}}$. Then the category $\Mod(C)$ obtained via $\rho^{-1}_{\n{A}}(C)$ defines a natural analytic ring structure for $\underline{\n{A}}$. We let $\n{A}_{\n{C}}$ denote the associated analytic ring.
\end{lemma}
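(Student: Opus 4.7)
The plan is to construct the analytic ring $\n{A}_C$ explicitly as a bounded affinoid $R_{\sol}$-algebra, using the decomposition of $C$ from Lemma \ref{LemmaExistenceStalks}: $C = X\{f_i \leq g : i = 1, \ldots, n\} \cap X\{g < h\}$ with $f_n = \pi$ and $h \in \pi_0(\n{A})$. The two parts of the rational constructible contribute separately to the construction.

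For the closed part $Z := X\{f_i \leq g\}$, Lemma \ref{LemmaIdempotentCompact} produces a compact idempotent algebra $\n{A}(Z) \in \Mod(\n{A})$ as the pullback of the corresponding idempotent in $\Mod((R\langle T_I\rangle, R^+ + R\langle T_I\rangle^{00})_{\sol})$ along the comparison maps of Construction \ref{ConstructionSpa}, for some finite subset $I \subset \pi_0(\n{A}^0)$ containing lifts of the elements $f_i, g, h$. Thanks to Andreychev's identification (Proposition \ref{PropositionAffinoidCatLocale}), this idempotent algebra can be concretely identified with the solid Tate localization $\n{A}\langle f_i/g\rangle_{\sol}$, a bounded affinoid $R_{\sol}$-algebra by Proposition \ref{PropStabilityAffinoidRings}.

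For the open part $U := X\{g < h\}$ viewed as an open subset of $Z$, Proposition \ref{PropositionAffinoidCatLocale} translates the rational localization into an open localization of $\Mod(\n{A}(Z))$ corresponding to inverting $h$ and making $g/h$ topologically nilpotent; by Proposition \ref{PropStabilityAffinoidRings} the result is again bounded affinoid. Combining both steps yields the desired bounded affinoid $R_{\sol}$-algebra $\n{A}_C$, with $\Mod(\n{A}_C)$ identified with $\Mod(C)$ as a full subcategory of $\Mod(\underline{\n{A}})$, functorially in $C$ via the categorified locale structure.

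To conclude, I would verify that $\Mod(\n{A}_C)$ satisfies the three axioms of Definition \ref{DefinitionAnalyticErings}: stability under colimits and existence of a left adjoint are immediate from $\n{A}_C$ being a colimit of analytic localizations in $\AffRing^{b}_{R_{\sol}}$; the internal-Hom closure follows from the compact-projective generation of $\AffRing^{b}_{R_{\sol}}$ (Proposition \ref{PropCompactProjectiveSolidAffinoid}); and stability under limits follows from Proposition \ref{PropStabilityAffinoidRings} (2). The main obstacle is reconciling the open-rational-localization step with the limit-stability axiom: by Proposition \ref{PropClosedOpenLocalizationsInftyCat} (2), such an open localization is represented in the categorified locale via a $j_!$-embedding that does not a priori preserve limits; the crucial input is Andreychev's theorem, which identifies this open localization with a genuine bounded affinoid algebra, allowing the analytic ring axioms to be checked on the nose for $\n{A}_C$ and, in turn, transported to the subcategory $\Mod(C) \subset \Mod(\underline{\n{A}})$.
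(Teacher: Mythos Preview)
Your decomposition has the open and closed pieces swapped. In the adic (and categorified-locale) topology, the rational subset $X\{f_i \le g : f_n = \pi\}$ is \emph{open} --- it is exactly the kind of rational localization covered by Proposition~\ref{PropositionAffinoidCatLocale} --- whereas $X\{g < h\}$ is the complement of the open $\{|h| \le |g|\}$ and is therefore \emph{closed}. So your invocation of Lemma~\ref{LemmaIdempotentCompact} (which is about closed constructible subspaces) for $X\{f_i \le g\}$ is misplaced, and calling the restriction to $X\{g < h\}$ an ``open localization'' is backwards: it is the step that passes to modules over an idempotent algebra. Once you fix the labels, the description you give (a solid Tate localization followed by making $g/h$ topologically nilpotent) does land on the right object, but the lemmas you cite no longer match the roles you assign them.

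There is also an overreach in the target: you aim to exhibit $\n{A}_C$ as a \emph{bounded affinoid} $R_{\sol}$-algebra, but the lemma only asks for an analytic ring, and the paper does not claim boundedness. The closed step is a tensor with $\bb{Z}((T^{-1}))$ over $(\bb{Z}[T],\bb{Z})_{\sol}$, and $\bb{Z}((T^{-1}))$ carries no $R_{\sol}$-structure, so Proposition~\ref{PropStabilityAffinoidRings} does not apply to show the result is bounded. The paper's argument is shorter and avoids this: write $C = U \cap Z$ with $U = X\{f_i \le g\}$ open and $Z = X\{g < h\}$ closed, note that $\n{A}_U$ is already an analytic ring (rational localization) and $\n{A}(Z)$ an idempotent $\n{A}$-algebra, and then identify
\[
\n{A}_U \otimes_{\n{A}} \n{A}(Z) \;=\; \n{A}_U \otimes_{(\bb{Z}[T],\bb{Z})_{\sol}} \bb{Z}((T^{-1})),\qquad T \mapsto h/g,
\]
which is manifestly a pushout of analytic animated rings; \cite[Proposition 12.20]{ClauseScholzeAnalyticGeometry} then upgrades this to the desired analytic ring structure on $\Mod(C)$. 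Your final paragraph about ``reconciling the open-rational-localization step with the limit-stability axiom'' is a red herring: once $\n{A}_C$ is an honest analytic ring, stability of $\Mod(\n{A}_C)$ under limits in $\Mod(\underline{\n{A}_C})$ is automatic, and the behaviour of $j_!$ with respect to limits plays no role.
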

\begin{proof}
We have a natural localization functor $f^*:\Mod(\n{A})\to \Mod(C)$ with  fully faithful right adjoint $f_*$. We let $\Mod(C)_{\geq 0}= f_* \Mod(C)\cap \Mod(\n{A})_{\geq 0}$.   By \cite[Proposition 12.20]{ClauseScholzeAnalyticGeometry} it suffices to show that $\Mod(C)_{\leq 0}$ is the category of complete modules of an analytic animated ring. Take any presentation $C=U\cap Z$ where $U=X\{f_i\leq g: i=1,\ldots, n\}$ with $f_n=\pi$, and $Z=X\{g< h\}$. Then $\n{A}_{U}$  is an analytic ring structure of $\underline{A}$, and $\n{A}(Z)$ is an idempotent algebra in $\Mod(\n{A})$.  Then, the category  $f_*\Mod(C)$ is the category  of $\n{A}_U\otimes_{\n{A}} \n{A}(Z)$-modules in $\Mod(\n{A}_U)$. But we can write 
\[
\n{A}_U\otimes_{\n{A}} \n{A}(Z)=\n{A}_U\otimes_{(\bb{Z}[T],\bb{Z})} \bb{Z}((T^{-1})),
\]
where $T$ is sent to $h/g$ in $\n{A}_{U}$. This last tensor is clearly an analytic animated ring, proving that $\Mod(C)_{\geq 0}$ is the the category of animated modules over $\n{A}_U\otimes_{\n{A}} \n{A}(Z)$. 
\end{proof}

\begin{definition}
\label{DefStalksAnalyticRings}
Let $x\in \n{T}_{\n{A}}$. 

\begin{enumerate}

\item The \textit{adic stalk} of $\n{A}$ at $x$ is the filtered colimit of analytic animated rings 
\[
\n{A}(x):=\varinjlim_{x\in U} \n{A}(U),
\]
where $U$ runs over all the  open rational neighbourhoods of $x$ in $\n{T}_{\n{A}}$.

\item The \textit{constructible stalk} of $\n{A}$ at $x$ is the filtered  colimit of analytic animated rings 
\[
\n{A}(x)_{\cons}:= \varinjlim_{x\in C} \n{A}_{C},
\]
where $C$ runs over all rational constructible neighbourhoods of $x$ as in Lemma \ref{LemmaExistenceStalks}. 
\end{enumerate}
\end{definition}

\begin{proof}[Proof of Theorem  \ref{TheoExistenceSpaA}]
We define $U$ as the set of $x\in \n{T}_{\n{A}}$ such that $\n{A}(x)_{\cons}=0$. To deduce the proposition it suffices to show the  following claim:
\begin{claim}
$U$ us  an open subspace in the constructible topology of $\n{T}_{\n{A}}$.   
\end{claim}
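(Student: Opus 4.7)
The plan is to reduce the claim to the observation that any rational constructible neighborhood witnessing the vanishing of the constructible stalk at a point automatically witnesses the same vanishing at every point it contains. Concretely, take $x \in U$ so that $\n{A}(x)_{\cons}=\varinjlim_{x\in C} \n{A}_C =0$, where $C$ ranges over the rational constructible neighborhoods of $x$ in the sense of Lemma \ref{LemmaExistenceStalks}. I want to extract a single $C$ with $\n{A}_C = 0$, and then show that such a $C$ is contained in $U$, which will identify $C$ as a constructible neighborhood of $x$ inside $U$, proving openness in the constructible topology.

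The first step is to argue that if the filtered colimit of analytic rings $\n{A}_C$ vanishes, then $\n{A}_C=0$ already at some finite stage. Each $\n{A}_C$ is obtained from $\n{A}$ by tensoring with an idempotent algebra (combining the open rational localization $\n{A}\{f_i \leq g\}$ with the closed idempotent cutting out $\{g<h\}$), so morphisms in the directed system are themselves localizations. In particular, the underlying condensed ring of $\varinjlim_{x\in C}\n{A}_C$ is the filtered colimit of the $\underline{\n{A}_C}$, and vanishing of an analytic ring is equivalent to $1=0$ in its underlying condensed ring. Since $1=0$ in a filtered colimit of rings is witnessed at some finite stage, we get a rational constructible neighborhood $C$ of $x$ with $\n{A}_C = 0$.

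The second step is immediate: a rational constructible set of the form $\{f_i \leq g\}\cap \{g<h\}$ is an intrinsic subset of $\n{T}_{\n{A}}$, and for any $y \in C$ it qualifies equally well as a rational constructible neighborhood of $y$. Thus $\n{A}_C=0$ appears as a term in the directed system defining $\n{A}(y)_{\cons}$; since any further localization $\n{A}_{C\cap C'}$ is a module over the zero ring $\n{A}_C$ and so also vanishes, the filtered colimit at $y$ is zero, i.e.\ $y\in U$. Therefore $C\subseteq U$, and since $C$ is constructible and contains $x$, this exhibits $U$ as open in the constructible topology.

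The main subtle point is the passage from vanishing of the filtered colimit to vanishing at a finite stage, which hinges on the claim that the underlying condensed ring of a filtered colimit of open/closed localizations of a fixed analytic ring is the filtered colimit of the underlying condensed rings. This should follow from general properties of open and closed immersions of analytic rings as in the categorified-locale formalism of \S \ref{SubsectionCategorifiedLocale} (in particular, the forgetful functors $j_{*}$ and $\iota_*$ commute with filtered colimits, so compactness of $\underline{\n{A}}$ detects $1=0$ in the colimit). Once this is granted, the rest of the claim is essentially formal.
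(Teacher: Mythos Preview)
Your proof is correct and follows essentially the same approach as the paper. The paper's argument is extremely terse: it simply notes that $\n{A}(x)_{\cons}[*]=\varinjlim_{x\in C}\n{A}_C[*]$, so vanishing of the colimit forces $\n{A}_C[*]=0$ at some finite stage $C$, hence $C\subset U$. Your two steps (extracting a single $C$ with $\n{A}_C=0$ via $1=0$ at a finite stage, then observing $C$ serves as a rational constructible neighborhood for every $y\in C$) are exactly this argument spelled out, and your ``subtle point'' about the underlying condensed ring commuting with the filtered colimit is precisely what the paper encodes in the displayed identity $\n{A}(x)_{\cons}[*]=\varinjlim_{x\in C}\n{A}_C[*]$.
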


Suppose that the claim holds and let us write $|\Spa \n{A}|=\n{T}_{\n{A}}\backslash U$. We want to show that $\rho_{\n{A}}$ factors by a surjective map onto $|\Spa \n{A}|$. We have to prove the following:

\begin{itemize}

\item[(a)]  If $Z_1,Z_2$ are closed subspaces of $\n{T}_{\n{A}}$ such that $Z_1\cap |\Spa \n{A}|= Z_2\cap |\Spa \n{A}|$ then $\rho_{\n{A}}^{-1}(Z_1)=\rho_{\n{A}}^{-1}(Z_2)$.

\item[(b)] Let $Z_1$ and $Z_2$ be closed subspaces of $\n{T}_A$ such that $\rho_{\n{A}}^{-1}(Z_1)=\rho_{\n{A}}^{-1}(Z_2)$, then $Z_1\cap |\Spa \n{A}|= Z_2\cap |\Spa \n{A}|$. 

\end{itemize}

 We can assume without loss of generality that $Z_1\subset Z_2$. We first prove part (a). For a closed subspaces $Z\subset \n{T}_{\n{A}}$ we let $\n{A}(Z)=\rho_{\n{A}}^{-1}(Z)$ be its associated idempotent algebra in $\Mod(\n{A})$.   Let us write $Z_i= \bigcap_{j} C_{i,j}$ as an intersection of a filtered collection of constructible closed subspaces, we have that 
 \[
 \n{A}(Z)=\varinjlim_i \n{A}(C_{i,j}). 
 \] 
By Lemma \ref{LemmaIdempotentCompact} each $\n{A}$-module $\n{A}(C_{i,j})$ is compact. Then, by replacing $C_{1,j}$ with $C_{1,j}\cap C_{2,j}$, we can assume without loss of generality  the  $Z_i$ are constructible subspaces. We want to show that the natural map  $\n{A}(Z_2)\to \n{A}(Z_1)$ is an equivalence. By the claim, and the assumption of (a), we know that for all $x\in \n{T}_{\n{A}}$ the natural arrow 
\[
\n{A}(x)_{\cons}\otimes_{\n{A}} \n{A}(Z_2) \to \n{A}(x)_{\cons} \otimes_{\n{A}} \n{A}(Z_1).
\]
Indeed, if $x\in U$ then both terms are zero, and if $x\in |\Spa \n{A}|$ this follows from the fact that $Z_1\cap |\Spa \n{A}|= Z_2\cap |\Spa \n{A}|$ and that the $Z_i$ are constructible.   Since the algebras $\n{A}(Z_1)$ are compact $\n{A}$-modules, for each $x\in \n{T}_{\n{A}}$ there is a rational constructible neighbourhood $C_{x}$ such that $\n{A}_{C_x}\otimes_{\n{A}} \n{A}(Z_2)\to \n{A}_{C_x}\otimes_{\n{A}} \n{A}(Z_1)$ is an equivalence. Since $\n{T}_{\n{A}}$ is compact for the constructible topology, we can find a finite  cover $\{C_i\}$ by such $C_x$. But now the spaces $C_i$ are locally closed for the adic topology and their union is the whole $\n{T}_{\n{A}}$.  Therefore the localization functor 
\[
\Mod(\n{A})\to  \prod_{i} \Mod(\n{A}_{C_i})
\]
 is conservative, which proves that $\n{A}(Z_2)=\n{A}(Z_1)$ as wanted. 
 
 Next we prove part (b). Let $Z_1$ and $Z_2$ be closed subspaces of $\n{T}_{\n{A}}$ such that $\n{A}(Z_1)=\n{A}(Z_2)$. We can assume without loss of generality that $Z_1\subset Z_2$. Moreover, by writing $Z_i$ as colimits of constructible closed subspaces, by Lemma \ref{LemmaIdempotentCompact} we can even assume that $Z_1$ and $Z_2$ are constructible. By hypothesis, we know that for all $x\in \n{T}_{\n{A}}$ we have $\n{A}(x)_{\cons}\otimes_{\n{A}} \n{A}(Z_2)=\n{A}(x)_{\cons}\otimes_{\n{A}} \n{A}(Z_1)$, but the set of those $x$ such that $\n{A}(x)_{\cons}\otimes_{\n{A}} \n{A}(Z_i)\neq 0$ is precisely $Z_i\cap |\Spa \n{A}|$ thanks to the claim. One gets part (b).

Finally, we prove the claim. Let $x\in \n{T}_{\n{A}}$ be such that $\n{A}(x)_{\cons}=\varinjlim_{x\in C} \n{A}_{C}=0$, where $C$ runs over all the rational constructible neighbourhoods of $x$ in $\n{T}_{\n{A}}$. Since 
\[
\n{A}(x)_{\cons}[*]= \varinjlim_{x\in C} \n{A}_{C}[*],
\] 
there is some $C$ such that $\n{A}_C[*]=0$, so $C\subset U$, proving that $U$ is open in the constructible topology as wanted. 
\end{proof}

Our next task is to prove that the adic spectrum of a bounded affinoid ring  enjoys the same properties of adic spectra of Tate  Huber rings. More precisely, we shall prove the following:

\begin{prop}
\label{PropAdicSpectralIsTopological}

Let $\n{A}\to \n{B}$ be a morphism of bounded affinoid rings. 

\begin{enumerate}
\item $|\Spa \n{A}|$  is a spectral space and has a basis of qcqs open subspaces given by pullbacks of rational localizations of the adic spaces $\Spa(R\langle T_I \rangle, R^+)$ for some finite set  $I\subset \n{A}^0$.

\item The morphism $\n{A} \to \n{B}$  induces a spectral map $|\Spa \n{B}| \to |\Spa \n{A}|$. Moreover, the pullback of an open rational subspace is a rational subspace.

\item Let $\n{A}=(A,A^+)_{\sol}$ be the analytic ring associated to a Tate Huber pair, then the natural map 
\[
|\Spa \n{A}| \to |\Spa(A,A^+)|
\]
is a homeomorphism. 
\end{enumerate}
\end{prop}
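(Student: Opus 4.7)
My plan is to handle the three parts separately, with part (3) being the main technical step.

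For part (1), I would first observe that each factor $\Spa(R\langle T_I\rangle, R^+ + R\langle T_I\rangle^{00})$ is a classical spectral adic space attached to a sheafy Tate Huber pair, and the transition maps in the cofiltered diagram defining $\n{T}_{\n{A}}$ are spectral. Hence $\n{T}_{\n{A}}$ is spectral as a cofiltered limit of spectral spaces along spectral maps. By construction of $|\Spa \n{A}|$ via Theorem \ref{TheoExistenceSpaA}, it is the complement of a constructible-open subset of $\n{T}_{\n{A}}$, so it is pro-constructible, and therefore spectral with the induced topology. For the statement about a basis of qcqs opens, I would show that a rational open $V \subset \Spa(R\langle T_I\rangle, R^+)$ pulls back to a rational open of $\Spa(R\langle T_I\rangle, R^+ + R\langle T_I\rangle^{00})$ (the ring of integral elements does not affect which rational opens exist), and then pullback along the projection $\n{T}_{\n{A}} \to \Spa(R\langle T_I\rangle, R^+ + R\langle T_I\rangle^{00})$ followed by intersection with $|\Spa \n{A}|$ gives the desired basis; the corresponding idempotent $\n{A}$-algebra is the analytic ring $\n{A}_V$ obtained as in the construction after Lemma \ref{LemmaExistenceStalks}, hence an open localization of $\Mod(\n{A})$.

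For part (2), functoriality of the construction $\n{A} \mapsto \n{T}_{\n{A}}$ in $\AffRing^b_{R_{\sol}}$ gives a map $\n{T}_{\n{B}} \to \n{T}_{\n{A}}$, which is spectral as it is a cofiltered limit of spectral maps of classical adic spectra. To check that this restricts to a well-defined map $|\Spa \n{B}| \to |\Spa \n{A}|$, I would argue as follows: if $x \in |\Spa \n{B}|$ has image $y \in \n{T}_{\n{A}}$, then by the compatibility of stalks, $\n{A}(y)_{\cons}\otimes_{\n{A}}\n{B}$ surjects onto $\n{B}(x)_{\cons}$; since the latter is nonzero, so is $\n{A}(y)_{\cons}$, proving $y \in |\Spa \n{A}|$. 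That rational opens pull back to rational opens is then tautological from the construction.

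The main obstacle is part (3). My strategy is to construct an inverse $|\Spa(A,A^+)| \to |\Spa \n{A}|$ and verify that the topologies agree using the common basis of rational opens. Any continuous valuation $v$ on $(A,A^+)$ restricts, via the canonical maps $R\langle T_I\rangle \to A$ for each finite $I \subset A^0$, to a compatible system of continuous valuations on each Tate algebra $R\langle T_I\rangle$ satisfying $v(R^+ + R\langle T_I\rangle^{00}) \leq 1$; this defines a point $\tilde v \in \n{T}_{\n{A}}$. To see $\tilde v \in |\Spa \n{A}|$, I would compute the constructible stalk $\n{A}(\tilde v)_{\cons}$ and exhibit a nonzero quotient of the form $(k(v), k(v)^+)_{\sol}$, using that rational constructible neighbourhoods of $\tilde v$ correspond to conditions of the form $\{v(f_i) \leq v(g) < v(h)\}$ on $A$ (by Lemma \ref{LemmaDevisageRationalLocalizations} applied to each finite approximation). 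Conversely, a point $x \in |\Spa \n{A}|$ gives, by projection to each $\Spa(R\langle T_I\rangle, R^+ + R\langle T_I\rangle^{00})$, a compatible family of valuations on the $R\langle T_I\rangle$; I would show this family factors through $A$ because the whole category $\Mod(\n{A})$ receives these structures compatibly, and because $A = \pi_0(\n{A})(\ast) = \varinjlim_I R\langle T_I\rangle / (\text{relations})$ at the level of points. The resulting valuation is continuous since its restriction to each $R\langle T_I\rangle$ is. Bijectivity together with the matching bases of rational opens established in part (1) (compared with Lemma \ref{LemmaDevisageRationalLocalizations} for the classical side) then yields the homeomorphism, with the hard point being to rule out spurious compatible valuations on the $R\langle T_I\rangle$ that do not descend to $A$, which one handles by noting that the defining condition $\n{A}(x)_{\cons} \neq 0$ exactly forces the relations of $A$ to hold.
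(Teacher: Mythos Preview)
Your arguments for parts (1) and (2) match the paper's approach: spectrality of $|\Spa \n{A}|$ comes from it being pro-constructible in the spectral space $\n{T}_{\n{A}}$, and functoriality follows from the commutative square relating $|\Spa \n{B}|, |\Spa \n{A}|, \n{T}_{\n{B}}, \n{T}_{\n{A}}$. One minor correction in (2): you do not need a surjection $\n{A}(y)_{\cons}\otimes_{\n{A}}\n{B} \twoheadrightarrow \n{B}(x)_{\cons}$; it suffices that the fibre $\n{A}(y)_{\cons}\otimes_{\n{A}}\n{B}$ is nonzero, which follows since it detects the point $x$.

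For part (3), your strategy has a genuine gap in the direction $|\Spa \n{A}| \to |\Spa(A,A^+)|$. Given $x \in |\Spa \n{A}| \subset \n{T}_{\n{A}}$, you propose to descend the compatible family of valuations on the $R\langle T_I\rangle$ to a valuation on $A$, justifying this by ``$A = \varinjlim_I R\langle T_I\rangle/(\text{relations})$'' and ``$\n{A}(x)_{\cons} \neq 0$ exactly forces the relations of $A$ to hold''. But a point of $\n{T}_{\n{A}}$ does not know the relations in $A$: if $a,b,ab \in A^0$ give a map $R\langle T_a,T_b,T_{ab}\rangle \to A$, the valuation $v_I$ on the Tate algebra has no reason to satisfy $v_I(T_a T_b - T_{ab}) = 0$. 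It is not at all clear how nonvanishing of the constructible stalk would enforce such conditions without, in effect, first producing a valuation on $A$ by other means.

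The paper bypasses this entirely. Since $(A,A^+)$ is a genuine Tate Huber pair, Proposition~\ref{PropositionAffinoidCatLocale} already provides a map of locales $\n{S}(\n{A}) \to \Spa(A,A^+)$, and this map factors the projection $\n{S}(\n{A}) \to \n{T}_{\n{A}}$. Because $\Spa(A,A^+) \to \n{T}_{\n{A}}$ is a pro-constructible immersion, one immediately gets $|\Spa \n{A}| \subseteq \Spa(A,A^+)$ inside $\n{T}_{\n{A}}$ for free. The remaining inclusion $\Spa(A,A^+) \subseteq |\Spa \n{A}|$ is then handled by passing to residue fields $(K,K^+)$ and using that the open subsets of $\Spa(K,K^+)$ are totally ordered and correspond to intermediate integrally closed subrings $K^+ \subset \widetilde K^+ \subset \n{O}_K$; combined with Andreychev's full faithfulness of $(A,A^+) \mapsto (A,A^+)_{\sol}$, these give distinct analytic ring structures and hence distinct points of the locale, forcing $|\Spa(K,K^+)_{\sol}| \to \Spa(K,K^+)$ to be a bijection. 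Your residue-field idea for this direction is in the same spirit, though the paper's use of full faithfulness is cleaner than exhibiting a nonzero quotient of the constructible stalk.
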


\begin{proof}

\begin{enumerate}

\item By Theorem \ref{TheoExistenceSpaA} we know that the space $| \Spa \n{A}|$ is pro-constructible in $\n{T}_{\n{A}}$, so an spectral space. Since $\n{T}_{\n{A}}$ has a basis given by rational localizations, the same holds for $|\Spa \n{A}|$. 

\item   Let $\n{A}\to \n{B}$ be a morphism in $\AffRing^{b}_{R_{\sol}}$. We have a commutative diagram 
\[
\begin{tikzcd}
{|\Spa \n{B}|} \ar[r]  \ar[d] & \n{T}_{\n{B}} \ar[d] \\ 
{|\Spa \n{A}|} \ar[r] & \n{T}_{\n{A}},
\end{tikzcd}
\]
namely, the fiber of $x\in \n{T}_{\n{A}} $ in $|\Spa \n{B}|$ is given by analytic ring  $\n{A}(x)_{\cons}\otimes_{\n{A}} \n{B}$, and this vanishes if $\n{A}(x)_{\cons}=0$.  The right vertical arrow is spectral and the horizontal arrows are pro-constructible immersions, this implies that the left vertical arrow is spectral. It is clear that the inverse image of a  rational localization  is again a  rational localization.

\item Let $\n{A}=(A,A^+)_{\sol}$ be the analytic ring attached to an Tate Huber pair.  Let us write $\n{T}_{\n{A}}= \varprojlim_{I} \Spa(R\langle T_I \rangle, R^+ +R\langle T_I \rangle^{00})$, it is easy to see that  $\Spa(A,A^+) \to \n{T}_{\n{A}}$ is a pro-constructible immersion. Since the map $\rho_{\n{A}}:\n{S}((A,A^+)_{\sol})\to |\Spa \n{A}|$ is surjective, we  have immersions $|\Spa \n{A}| \to \Spa (A,A^+) \to \n{T}_{\n{A}}$. We are left to show that the map of locales $\n{S}(\n{A}) \to \Spa(A,A^+)$ is surjective. Let $x\in \Spa(A,A^+)$, we have a  map of affinoid rings $(A,A^+)\to (\kappa(x), \kappa(x)^+)$ and an induced map of topological spaces $|\Spa((\kappa(x),\kappa(x)^+ )_{\sol})| \to \Spa(\kappa(x),\kappa(x)^+ )$, thus one can reduce to the case of an affinoid field $\Spa(K,K^+)$. Then the open subsets of $\Spa(K,K^+)$ form a totally ordered set, and the connected constructible subspaces of $ \Spa(K,K^+)$ containing the generic point are in bijection with open integrally closed subrings $K^+\subset \tilde{K}^+\subset \n{O}_K$, with $\n{O}_K$ the valuation ring of $K$. On the other hand, the  functor $(A,A^+)\mapsto (A,A^+)_{\sol}$ is a fully faithful embedding of Huber pairs in analytic rings by \cite[Proposition 3.34]{Andreychev}. This shows that $|\Spa (K,K^+)_{\sol}| \to \Spa(K,K^+)$ must be a bijection which finishes the proof. 
\end{enumerate}
\end{proof}

\begin{remark}
\label{RemarkEmbeddingBoundedAffinoid}
By Lemma \ref{LemmaFullyFaithfulInjectionAnRings} and Theorem  \ref{TheoExistenceSpaA}, the functor $\AffRing^{b}_{R_{\sol}} \to \CatLoc_{\Spa R_{\sol}}$ sending $\n{A}$ to $\Spa \n{A}$ is  fully faithful  when restricted to bounded affinoid $R_{\sol}$-algebras over $\bb{Q}$. In particular, after specializing to $R\to \bb{Q}_p$ for any prime number $p$, we have a fully faithful embedding  $\AffRing^{b}_{\bb{Q}_p,\sol}\to \CatLoc_{\Spa \bb{Q}_{p,\sol}}$ from bounded affinoid $\bb{Q}_{p}$-algebras to categorified locales over $\Spa \bb{Q}_{p,\sol}$. 
\end{remark}

Thanks to the $\dagger$-nilradical we can define residue fields for both the analytic and constructible topologies of $|\Spa \n{A}|$.

\begin{definition}
Let $\n{A}\in \AffRing^{b}_{R_{\sol}}$, and let $x\in |\Spa \n{A}|$
\begin{enumerate}

\item The \textit{residue field}  of $\n{A}$ at $x$ is defined as the  $\dagger$-reduced quotient $\kappa(x):=\n{A}(x)^{\dagger-\red}$.

\item The \textit{constructible residue field} of $\n{A}$ at $x$ is the  $\dagger$-reduced quotient $\kappa(x)_{\cons}= \n{A}(x)_{\cons} ^{\dagger-\red}$. 

\end{enumerate}
\end{definition}

Next, we prove that the underlying rings of  the previous  residue fields are honest fields. We need the following lemma.

\begin{lemma}
\label{LemmaStalksAtPoints}
Let $\n{A}$ be a bounded affinoid ring. 
\begin{enumerate}
\item  The following are equivalent 
\begin{itemize}
\item[(a)]  The open subsets of $|\Spa \n{A}|$ form a totally ordered set. 

\item[(b)] There is a unique closed point in $|\Spa \n{A}|$. 

\item[(c)] For any $f,g\in \n{A}\backslash \Nil^{\dagger}(\n{A})$ either $\{|f| \leq |g|\neq 0\}= |\Spa \n{A}|$ or $\{|g|\leq |f|\neq 0\}=|\Spa \n{A}|$. 
\end{itemize}
 Moreover, adic stalks of bounded affinoid rings satisfy these equivalent properties.

\item  The following are equivalent 
\begin{itemize}
\item[(a)] $|\Spa \n{A}|$ is a point.

\item[(b)] $|\Spa \n{A}|$ has a unique closed point and for any $f\in \n{A}$ we have $f\in \n{A}^+$ or $f$ is invertible and $f^{-1}\in \n{A}^{00}$.
\end{itemize}
Moreover, constructible stalks of bounded affinoid rings satisfy these equivalent properties.
\end{enumerate}

\end{lemma}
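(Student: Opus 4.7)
By part (6) of Theorem~\ref{TheoremBoundedAffinoidIntro} combined with Proposition~\ref{PropLiftingOverconvergentAlgebrasLift}(4), I may replace $\n{A}$ by its $\dagger$-reduction and assume $\Nil^{\dagger}(\n{A})=0$. For Part~(1), the equivalence (a)$\Leftrightarrow$(b) rests on the spectrality of $|\Spa \n{A}|$ from Proposition~\ref{PropAdicSpectralIsTopological}(1). The implication (a)$\Rightarrow$(b) follows by taking the intersection of the totally ordered chain of non-empty closed subsets, which is non-empty by quasi-compactness in the constructible topology and reduces to a single point by the $T_0$ axiom. For (b)$\Rightarrow$(a), I exploit the pro-constructible embedding $|\Spa \n{A}| \hookrightarrow \n{T}_{\n{A}}$ from Construction~\ref{ConstructionSpa}: in each classical adic spectrum comprising $\n{T}_{\n{A}}$, the chain of generalizations of any point is totally ordered (being indexed by convex subgroups of the value group), and this property passes to cofiltered limits and pro-constructible subspaces. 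Under~(b) every point of $|\Spa \n{A}|$ specializes to $x_0$, so $|\Spa \n{A}|=\{x_0\}_{\gen}$ is totally ordered; open subsets of such a chain are precisely the downward-closed sets in the specialization order, so they themselves form a total order, giving (a).

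For the equivalence with (c), the implication (c)$\Rightarrow$(a) is immediate since rational subspaces of the form $\{|f|\leq |g|\neq 0\}$ form a basis of opens by Proposition~\ref{PropAdicSpectralIsTopological}(1), and (c) renders any two of them comparable. For (a)$\Rightarrow$(c), the key observation is that under~(b), a non-empty open subset of $|\Spa \n{A}|$ equals the whole space if and only if it contains $x_0$ (any strictly smaller open has a non-empty closed complement, which must contain some closed point and hence $x_0$). Thus (c) reduces to showing that $x_0$ lies in one of the two rational subspaces associated to $f,g$. Given $f,g \notin \Nil^{\dagger}(\n{A})$, this follows from the valued-field structure on the constructible stalk $\n{A}(x_0)_{\cons}$, to be established in Part~(2) below: the $\dagger$-reducedness ensures $f,g$ have non-zero images in the stalk, whose valuation is totally ordered and yields comparable non-zero values at $x_0$.

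For Part~(2), (a)$\Rightarrow$(b) follows from the valued-field structure on $\kappa(x_0)$: any $f\in \n{A}$ either satisfies $|f(x_0)|\leq 1$ (forcing $f\in \n{A}^+$) or is invertible with $|f^{-1}(x_0)|<1$ (giving $f^{-1}\in \n{A}^{00}$). For (b)$\Rightarrow$(a), condition~(b) provides a unique closed point, so opens are totally ordered by Part~(1); the dichotomy on elements then forces every rational subspace $\{|f|\leq |g|\neq 0\}$ to be either empty (when $g/f\in \n{A}^{00}$ after suitable localization) or the whole space (when $f/g\in \n{A}^+$), so the basis of rational subspaces is trivial and $|\Spa \n{A}|$ is a single point. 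The ``moreover'' assertions follow from the definitions: $|\Spa \n{A}(x)|=\{x\}_{\gen}$ has $x$ as its unique closed point, verifying Part~(1), while $|\Spa \n{A}(x)_{\cons}|$ is a single point because the rational constructible neighborhoods of $x$ from Lemma~\ref{LemmaExistenceStalks} shrink to $\{x\}$ in the cofiltered limit, verifying Part~(2). The main obstacle is the coordination needed for (a)$\Rightarrow$(c) in Part~(1): one must first establish via the analysis of the constructible stalk in Part~(2) that $\n{A}(x_0)_{\cons}$ carries a valued-field structure, then propagate comparability of values back to $\n{A}$ through the $\dagger$-reduction.
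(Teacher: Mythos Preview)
Your proof has a structural circularity. The initial reduction to $\Nil^\dagger(\n{A})=0$ invokes the homeomorphism $|\Spa \n{A}|=|\Spa \n{A}^{\dagger-\red}|$, which is Proposition~\ref{PropAdicSpectrumInvariantReduced}; but that proposition is proved \emph{after} the present lemma, and its proof (through Lemma~\ref{LemmaQuotientRing}) uses part~(2) of the present lemma. Likewise, your arguments for (a)$\Rightarrow$(c) in Part~(1) and (a)$\Rightarrow$(b) in Part~(2) appeal to a ``valued-field structure on the constructible stalk'', i.e.\ that the underlying discrete ring of $\kappa(x_0)_{\cons}$ is a field; this is again a conclusion of Proposition~\ref{PropAdicSpectrumInvariantReduced}, deduced from the lemma you are proving. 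The paper avoids this by arguing directly: for (b)$\Rightarrow$(c) it shows that if $f\notin\Nil^\dagger(\n{A})$ then for some $n$ the open $\{|f|\leq|\pi^n|\}$ misses $x_0$, so its complement $\{|\pi^n|\leq|f|\}$ contains $x_0$ and equals the whole space, whence $f$ is invertible; comparability of $f,g$ then comes from the covering $\{|f/g|\leq1\}\cup\{|g/f|\leq1\}$. For Part~(2) (a)$\Rightarrow$(b) the paper just observes that the map $|\Spa \n{A}|\to\Spa(R\langle T\rangle,R^++R\langle T\rangle^{00})$ determined by $f$ sends the unique point into either $\{|T|\leq1\}$ or $\{|T|>1\}$. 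Your claim that ``$\dagger$-reducedness ensures $f,g$ have non-zero images in the stalk'' is also unjustified: an element not in $\Nil^\dagger(\n{A})$ need not map to a non-zero element in a particular stalk; the paper instead proves invertibility globally.

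There is a separate gap in your (c)$\Rightarrow$(a). Condition~(c) compares a \emph{single pair} $f,g$: one of $\{|f|\leq|g|\neq0\}$ or $\{|g|\leq|f|\neq0\}$ is the whole space. This does not by itself make two arbitrary rational subspaces $\{|f_1|\leq|g_1|\neq0\}$ and $\{|f_2|\leq|g_2|\neq0\}$ comparable, nor does it treat the multi-condition subspaces $\{|f_i|\leq|g|\neq0:i=1,\ldots,d\}$ forming the actual basis. The paper's argument is substantive: first (c) forces every $f\notin\Nil^\dagger(\n{A})$ to be invertible (compare with $\pi^n$); then each rational subspace reduces to a single condition $\{|h|\leq1\}$ by dividing through by $g$ and picking the $h_i$ of maximal norm in the total preorder supplied by (c); finally two such conditions are comparable. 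Your (b)$\Rightarrow$(a) via the chain of generalizations in $\n{T}_{\n{A}}$ is a reasonable alternative, but the paper proves this direction as the composite (b)$\Rightarrow$(c)$\Rightarrow$(a), so you still need the direct arguments through (c).
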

\begin{proof}
\begin{enumerate} 

\item Suppose that the open subsets of $|\Spa \n{A}|$ form a total order. By taking complements, the closed subspaces also form a total order. Let $\s{I}$ be the total ordered family of non-empty  closed subspaces. Then, since $|\Spa \n{A}|$ is constructible,  by Zorn's lemma  one has that $Z= \bigcap_{C\in \s{I}} C$ is the minimal non-empty closed subspace of $|\Spa \n{A}|$.  The space $Z$ is pro-constructible, so it is spectral. Suppose that $Z$ has more than two points, as it is a $T_0$-topological space,  there is a non-empty properly contained closed subspace in $Z$ which is a contradiction with the fact that it is the minimal closed subspace of $|\Spa \n{A}|$. Thus, $Z$ is a point showing that (a) implies (b). 

Suppose that $|\Spa \n{A}|$ has a unique closed point $x$. As $|\Spa \n{A}|$ is spectral,  the unique open subset of $|\Spa \n{A}|$ containing $x$ is $|\Spa \n{A}|$.  Let $f\in \n{A}$, if for all $n\in \bb{N}$ the open set $\{|f|\leq |\pi^n|\}$ contains $x$, then the map $R[T]\to \n{A}$ defined by $f$ extends to $R\{T\}^{\dagger} \to \n{A}$ proving that $f\in \Nil^{\dagger}(\n{A})$. Thus, for $f\in \n{A} \backslash \Nil^{\dagger}(\n{A})$ there is some $n\in \bb{N}$ such that $\{|f|\leq |\pi^n|\}$ does not contain $x$, which implies that $\{|\pi^n|\leq |f|\}$ does contain $x$ and therefore that $\{|\pi^n|\leq |f|\}= |\Spa \n{A}|$.  In particular such an $f$ must be invertible.  Now let $f,g\in \n{A} \backslash \Nil^{\dagger}(\n{A})$, then the open sets $\{|f/g|\leq 1\}$ and $\{|g/f|\leq 1\}$ form an open cover of $|\Spa \n{A}|$, in particular $x$ belongs to one of them, which shows that either $\{|f|\leq |g|\neq 0\}=|\Spa\n{A}|$ or $\{|g|\leq |f|\neq 0\}=|\Spa \n{A}|$ as wanted. 

Now suppose that (c) holds. It suffices to show that the poset of open rational subspaces forms a total order.  Let $f\in \n{A}\backslash  \Nil^{\dagger}(\n{A})$. Then there is some $n\in \bb{N}$ such that $\{|f|\leq |\pi^n|\}\neq |\Spa \n{A}|$, by hypothesis this implies that $\{|\pi^n|\leq |f|\neq 0\}=|\Spa \n{A}|$ proving that $f$ is invertible. We define the following partial order in $\n{A}\backslash  \Nil^{\dagger}(\n{A})$:  we say that $|f|\leq |g| $ if $\{|f|\leq |g|\neq 0\}=|\Spa \n{A}|$.  By hypothesis, given two elements $f,g\in \n{A}\backslash \Nil^{\dagger}(\n{A})$ we have either $|f|\leq |g|$ or $|g|\leq |f|$.   Let $U \subset \Spa \n{A}$ be a rational set of the form $\{|f_i|\leq |g|\neq 0:\; i=1,\ldots, d\}$ with $f_d = \pi^n$ for some $n\in \bb{N}$. If $U$ is non-empty then $g\notin \Nil^{\dagger}(\n{A})$, in particular it is invertible and by taking $h_i=f_i/g$ we can write $U=\{|h_i|\leq 1: i=1,\ldots,d\}$. Let $h$ be one of the $h_i$ with maximal norm $|h|$, then $U=\{|h|\leq 1 \}$.  Now, if $U=\{|h|\leq 1\}$ and $V=\{|g|\leq 1\}$, as we have either $|h|\leq |g|$ or $|g|\leq |h|$, then $U\subset V$ or $V\subset U$ proving that the rational open subspaces of $\Spa \n{A}$ form a total order. 

Finally the last assertion about stalks of bounded affinoid rings hold since property (c) can be easily verified by construction. 

\item   Suppose that $|\Spa \n{A}|$ is a point and let $f\in \n{A}$, we have an induced map $|\Spa \n{A}| \to \Spa( R\langle T \rangle, R^++R\langle T \rangle^{00})$ sending $T$ to $f$. We can write $\Spa( R\langle T \rangle, R^++R\langle T \rangle^{00})=\{|T|\leq 1\}\bigsqcup \{|T|>1\}$. Since $|\Spa \n{A}|$ is a point it must land in one and only one term of the disjoint union, which translates in property (b) by definition of $\n{A}^+$ and $\n{A}^{00}$.

Conversely, suppose that (b) holds. By the proof of part (1), all the rational subspaces of $|\Spa \n{A}|$ are of the form $\{|f|\leq 1\}$ for some $f\in \n{A}$. But then, if $\{|f|\leq 1\}$ does not contain the maximal point of $|\Spa \n{A}|$, one has that $f\notin \n{A}^+$, which implies that the complement $\{|f^{-1}|<1\}= |\Spa \n{A}|$, i.e. that $\{|f|\leq 1\}=\emptyset$. This shows that $|\Spa \n{A}|$ has the trivial topology, and being a spectral space with a unique closed point it must consist on a single point.

Finally, the last assertion about constructible stalks holds since property (b) can be easily verified by construction. 
\end{enumerate}

\end{proof}

\begin{lemma}
Let $\{\n{A}_i\}_{i}$ be a sifted diagram of bounded affinoid rings with colimit $\n{A}$, then the natural map $|\Spa \n{A}| \to \varprojlim_i |\Spa \n{A}_i|$ is a homeomorphism.
\end{lemma}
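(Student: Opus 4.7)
The plan is to embed all adic spectra into their ambient spaces $\n{T}_{(-)}$ of Construction \ref{ConstructionSpa}, identify $\n{T}_{\n{A}}$ with $\varprojlim_i \n{T}_{\n{A}_i}$, and reduce the statement to a bijection of sets; the topology then follows automatically because both sides are pro-constructible subspaces of the same spectral space. First I would check that $\pi_0(\n{A}^+)=\varinjlim_i \pi_0(\n{A}_i^+)$. By Proposition \ref{PropCompactProjectiveSolidAffinoid}, $\bb{Z}[T]_{\sol}$ is compact projective in $\AnRing_{\bb{Z}_{\sol}}$, and by Proposition \ref{PropStabilityAffinoidRings} (5) sifted colimits in $\AffRing^{b}_{R_{\sol}}$ agree with those in $\AnRing_{R_{\sol}}$. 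Since any finite subset $I\subset \pi_0(\n{A}^+)$ factors through some $\pi_0(\n{A}_i^+)$, the indexing poset of finite subsets in the definition of $\n{T}_{(-)}$ is cofinal in the inverse system, giving $\n{T}_{\n{A}}=\varprojlim_i \n{T}_{\n{A}_i}$ as spectral spaces with compatible bases of rational subsets. By Proposition \ref{PropAdicSpectralIsTopological} (2), the inclusion $|\Spa \n{A}|\hookrightarrow \n{T}_{\n{A}}$ is compatible with the projections, so the canonical map lands inside $\varprojlim_i|\Spa \n{A}_i|$.

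Next I would verify bijectivity at the level of underlying sets. Fix $x\in \n{T}_{\n{A}}$ with images $x_i\in \n{T}_{\n{A}_i}$. By Theorem \ref{TheoExistenceSpaA}, $x\notin |\Spa \n{A}|$ if and only if there is a rational constructible neighbourhood $C$ of $x$ with $\n{A}_{C}=0$, and likewise for $x_i$. Any such $C$ involves only finitely many elements of $\pi_0(\n{A}^{+})$ and therefore arises by pullback from a rational constructible $C_i\subset \n{T}_{\n{A}_i}$ for all $i$ sufficiently large. Using that both tensor products and sifted colimits of bounded affinoid rings are computed inside $\AnRing_{R_{\sol}}$, one obtains
\[
\n{A}_C \;=\; \n{A}_{i,C_i}\otimes_{\n{A}_i}\n{A}\;=\;\varinjlim_{j\geq i}\n{A}_{j,C_j}.
\]
The key point is that $\n{A}_C=0$ if and only if $0=1$ in $\pi_0(\underline{\n{A}_C})(*)=\varinjlim_{j\geq i}\pi_0(\underline{\n{A}_{j,C_j}})(*)$; as this is a filtered colimit of discrete rings, the relation $0=1$ descends to a finite stage, so $\n{A}_{j,C_j}=0$ for some $j$. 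Running this equivalence in both directions shows $x\in |\Spa \n{A}|$ if and only if $x_j\in |\Spa \n{A}_j|$ for every $j$, which is precisely the condition defining $\varprojlim_i |\Spa \n{A}_i|$. Hence the map is bijective.

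Finally, since $|\Spa \n{A}|$ and $\varprojlim_i|\Spa \n{A}_i|$ are both pro-constructible subspaces of the spectral space $\n{T}_{\n{A}}$ coinciding as sets, they agree as topological spaces, with matching bases of rational open subsets inherited from $\n{T}_{\n{A}}$. The main obstacle is the identification $\n{A}_C=\varinjlim_{j\geq i}\n{A}_{j,C_j}$ and the passage of the vanishing condition through the sifted colimit: once one confirms that the forgetful functor from bounded affinoid rings to condensed animated rings preserves the relevant sifted colimits (via compact projectivity arguments as in Proposition \ref{PropCompactProjectiveSolidAffinoid} and the structure theorem Proposition \ref{PropStabilityAffinoidRings}), the rest of the argument is formal.
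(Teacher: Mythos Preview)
Your argument has a genuine gap in the first step. The ambient space $\n{T}_{\n{A}}$ of Construction~\ref{ConstructionSpa} is a limit indexed by finite subsets of $\pi_0(\n{A}^{0})$, not $\pi_0(\n{A}^{+})$. The compact projectivity of $\bb{Z}[T]_{\sol}$ (Proposition~\ref{PropCompactProjectiveSolidAffinoid}) gives you $\pi_0(\n{A}^{+})=\varinjlim_i\pi_0(\n{A}_i^{+})$, but what you would need for $\n{T}_{\n{A}}=\varprojlim_i\n{T}_{\n{A}_i}$ is the analogous statement for $\n{A}^{0}$, i.e.\ that the algebra $R\langle T\rangle$ is compact in $\AniAlg_{R_{\sol}}$. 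This is not established anywhere, and the paper explicitly flags (Example~\ref{ExamplePowerSeriesBoundedAlgebra}) that the generators $R\langle\underline{T}\rangle_{\sol}\langle\bb{N}[S]\rangle$ are \emph{non-compact}. Without this identification of ambient spaces, your bijectivity argument has no common space in which to compare $|\Spa\n{A}|$ and $\varprojlim_i|\Spa\n{A}_i|$ as subsets, so the rest of the argument does not go through as written.

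The paper circumvents this by never identifying the $\n{T}$-spaces. Instead it uses only that $\pi_0(\underline{\n{A}})(*)=\varinjlim_i\pi_0(\underline{\n{A}_i})(*)$, which suffices to show that every rational (hence every constructible) subset of $|\Spa\n{A}|$ is pulled back from some $|\Spa\n{A}_i|$; this reduces the homeomorphism to a bijection of sets. For the bijection, rather than characterising the complement of $|\Spa\n{A}|$ inside an ambient space, the paper works intrinsically: given a compatible family $(x_i)_i$, the colimit $\varinjlim_i\n{A}_i(x_i)_{\cons}$ is nonzero and satisfies condition~(2.b) of Lemma~\ref{LemmaStalksAtPoints}, so its adic spectrum is a single point; this colimit computes the fibre $f^{-1}((x_i)_i)$, giving both surjectivity and injectivity at once. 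Your vanishing-descent idea (``$1=0$ in a filtered colimit descends to a finite stage'') is morally the same mechanism that makes the colimit of constructible stalks nonzero, but packaging it via stalks avoids the need for a common ambient space.
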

\begin{proof}
We have a natural map $f:|\Spa \n{A}| \to \varprojlim_{i} |\Spa \n{A}_{i}|$. Since $\n{A}[*]= \varinjlim_{i} \n{A}_i[*]$, any rational localization of $|\Spa \n{A}|$ arises as the pullback of a rational localization of some $|\Spa \n{A}_i|$. In particular, any constructible set of $|\Spa \n{A}|$ is the pullback of some constructible set of some $|\Spa \n{A}_i|$. Thus, it suffices to show that $f$ is a bijection, this can be proved using the constructible topology. Let $x_i\in |\Spa \n{A}_i|$ be a compatible sequence of points, and let $\n{A}_{i}(x_i)_{\cons}$ be the constructible stalk of $\n{A}_i$ at $x_i$. Then we have a map 
\[
\n{A} \to \varinjlim_{i} \n{A}_{i}(x_i)_{\cons}
\]
where the right term is non-zero as none of the analytic rings are zero. By Lemma \ref{LemmaStalksAtPoints} (2.b), the adic spectrum of  $ \varinjlim_{i} \n{A}_{i}(x_i)_{\cons}$ is a point. But  $\varinjlim_{i} \n{A}_{i}(x_i)_{\cons}$ is also the fiber of $f$ along the sequence $(x_i)_{i}$, this shows that $f^{-1}((x_i)_i)=\{x\}$ is a point, proving that $f$ is indeed a bijection. 
\end{proof}

\begin{lemma}
\label{LemmaQuotientRing}
Let $\n{A}\to \n{B}$ be a morphism in $\AffRing^{b}_{R_{\sol}}$ such that $\n{B}= \underline{\n{B}}_{\n{A}/}$ and that $\pi_0 \underline{\n{A}} \to \pi_0 \underline{\n{B}}$ is surjective. Then $F: |\Spa \n{B}| \to |\Spa \n{A}|$ is an  immersion. 
\end{lemma}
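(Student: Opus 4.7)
I will show that $F$ is a spectral injection with pro-constructible image, which, combined with Proposition \ref{PropAdicSpectralIsTopological}(2), implies that $F$ is an immersion of spectral spaces. The backbone of the argument is the natural identification, for any $y \in |\Spa \n{B}|$ with image $x := F(y)$, of the constructible stalks
\[
\n{B}(y)_{\cons} \;\cong\; \n{A}(x)_{\cons} \otimes_{\n{A}} \n{B},
\]
which implies that the fiber $F^{-1}(\{x\})$ is homeomorphic to $|\Spa \n{C}_x|$ where $\n{C}_x := \n{A}(x)_{\cons} \otimes_{\n{A}} \n{B}$.

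To justify this identification, I would start from the cofinal system of rational constructible neighborhoods of $y$ in $\n{T}_{\n{B}}$ given by Lemma \ref{LemmaExistenceStalks}, of the form $\{|f_i| \leq |g| \neq 0\} \cap \{|g| < |h|\}$ with $f_i, g, h \in \pi_0(\underline{\n{B}})$. The key point is that, since $\n{A}$ is bounded, one has $\underline{\n{A}} = \n{A}^0[\pi^{-1}]$ by Definition \ref{DefinitionBoundedNilpIdeal}(4), so combined with the $\pi_0$-surjectivity of $\underline{\n{A}} \to \underline{\n{B}}$, every element of $\pi_0(\underline{\n{B}})$ admits a lift to $\pi_0(\n{A}^0)$ after scaling by a suitable power of $\pi$. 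Since the rational constructible subspace is invariant under simultaneous scaling of its defining functions by a common power of $\pi$, this produces a rational constructible neighborhood of $x$ in $\n{T}_{\n{A}}$ whose pullback under the natural map $\n{T}_{\n{B}} \to \n{T}_{\n{A}}$ recovers the original neighborhood of $y$. Taking the filtered colimit yields the stalk isomorphism. For injectivity, I then observe that $\n{C}_x$ inherits both the dichotomy of Lemma \ref{LemmaStalksAtPoints}(2.b) from $\n{A}(x)_{\cons}$ (lifting $g \in \n{C}_x$ to $f \in \n{A}(x)_{\cons}$: if $f \in \n{A}(x)_{\cons}^{+}$ then $g \in \n{C}_x^{+}$, while if $f^{-1} \in \n{A}(x)_{\cons}^{00}$ then $g$ is invertible with $g^{-1} \in \n{C}_x^{00}$, using $\n{C}_x \neq 0$ to prevent units from becoming zero) as well as the total-ordering condition \ref{LemmaStalksAtPoints}(1.c) (lifts of elements of $\n{C}_x \setminus \Nil^{\dagger}(\n{C}_x)$ remain outside $\Nil^{\dagger}(\n{A}(x)_{\cons})$ by Proposition \ref{PropStabilityAffinoidRings}(6)), so that $|\Spa \n{C}_x|$ is either empty or a single point.

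For pro-constructibility of the image $V := \{x \in |\Spa \n{A}| : \n{C}_x \neq 0\}$: if $x \notin V$, then the filtered colimit $\varinjlim_{x \in C} \n{B}_C = 0$ as a bounded affinoid ring, where $C$ ranges over rational constructible neighborhoods of $x$ in $\n{T}_{\n{A}}$; since global sections commute with filtered colimits of analytic rings, the vanishing of the unit in the colimit forces $1 = 0$ in some $\n{B}_C[*]$, hence $\n{B}_C = 0$, giving a constructible neighborhood of $x$ disjoint from $V$ and showing that $V$ is closed in the constructible topology of $|\Spa \n{A}|$. The main obstacle is the stalk identification in the second paragraph: one must verify that rational constructible neighborhoods of $y$ in $\n{T}_{\n{B}}$ are cofinally pullbacks from $\n{T}_{\n{A}}$, which uses both the $\pi_0$-surjectivity hypothesis and the boundedness of $\n{A}$ in an essential way.
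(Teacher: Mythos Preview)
Your approach is essentially the paper's: verify that $\n{C}_x := \n{A}(x)_{\cons} \otimes_{\n{A}} \n{B}$ satisfies condition (2.b) of Lemma~\ref{LemmaStalksAtPoints} (using the $\pi_0$-surjectivity to lift elements), so that fibers of $F$ are singletons or empty, and use lifting of elements to show that rational subspaces of $|\Spa \n{B}|$ arise as pullbacks from $|\Spa \n{A}|$. The paper is terser --- it just says ``one easily verifies (2.b)'' and ``any element $g \in \n{B}$ can be lifted'' --- whereas you spell out the verification via both (1.c) and the dichotomy in (2.b), and you make the stalk identification $\n{B}(y)_{\cons} \cong \n{C}_x$ explicit.

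There is, however, a gap in your Plan paragraph: the implication ``spectral injection with pro-constructible image $\Rightarrow$ immersion'' is \emph{false} in general (for instance, the identity from the discrete two-point space to the Sierpi\'nski space is a spectral bijection that is not a homeomorphism). What actually forces $|\Spa \n{B}|$ to carry the subspace topology is precisely the content of your second paragraph --- that every rational (constructible) subset of $|\Spa \n{B}|$ is the pullback of one from $|\Spa \n{A}|$, by lifting the defining parameters along $\pi_0 \underline{\n{A}} \twoheadrightarrow \pi_0 \underline{\n{B}}$. You should invoke this directly to conclude that $F$ is a homeomorphism onto its image; Proposition~\ref{PropAdicSpectralIsTopological}(2) only supplies the reverse direction (pullbacks of rationals are rational). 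With that correction in place, your pro-constructibility argument in the third paragraph becomes unnecessary for the lemma as stated, though it is correct and anticipates Corollary~\ref{PropZariskiCloseImmersion1}.
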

\begin{proof}
Let $x\in \Spa \n{A}$, then $\n{B}\otimes_{\n{A}}  \n{A} (x)_{\cons}$ is either $0$ or one has a surjection on $\pi_0$ of $\n{A}(x)_{\cons} \to \n{B}\otimes_{\n{A}} \n{A} (x)_{\cons}$.  One easily verifies that the tensor satisfies the condition (2.b) of Lemma \ref{LemmaStalksAtPoints}, this shows that the fiber $F^{-1}(x)$ is either empty or a point. Furthermore, any element $g\in \n{B}$ can be lifted to an element $\tilde{g}\in \n{A}$, this implies that a rational subspace of $|\Spa \n{B}|$ arises as the pullback of a rational subspace of $|\Spa \n{A}|$, and that $F$ is an immersion. 
\end{proof}

\begin{prop}
\label{PropAdicSpectrumInvariantReduced}
Let $\n{A}$ be a bounded affinoid ring. Then the natural map $|\Spa \n{A}| \to |\Spa \n{A}|^{\dagger-\red}$ is a homeomorphism preserving rational localizations. Moreover, for any $x\in |\Spa \n{A}|$ the underlying discrete rings of $\kappa(x)$ and $\kappa_{\cons}(x)$ are fields.
\end{prop}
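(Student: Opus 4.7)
The plan is to analyze the natural map $F : |\Spa \n{A}^{\dagger-\red}| \to |\Spa \n{A}|$ induced by the quotient $\n{A}\to \n{A}^{\dagger-\red}$. Since this quotient is surjective on $\pi_0$ and, by Proposition \ref{PropStabilityAffinoidRings} (1), the $\dagger$-reduction is itself a bounded affinoid ring with the induced analytic structure (Definition \ref{DefinitionBoundedAffinoidRings}), Lemma \ref{LemmaQuotientRing} immediately tells us that $F$ is an injective immersion along which rational localizations pull back to rational localizations. The only remaining geometric content is surjectivity.

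For surjectivity, I would fix $x \in |\Spa \n{A}|$ and reduce, via Proposition \ref{PropStabilityAffinoidRings} (6), to showing that $\n{A}(x)^{\dagger-\red}_{\cons}$ is non-zero. The key input is that $1$ cannot lie in the $\dagger$-nil-radical of a non-zero bounded affinoid ring: an extension $R\{T\}^{\dagger} \to \n{A}(x)_{\cons}$ sending $T \mapsto 1$ would collapse the ring, since $R\{T\}^{\dagger}\otimes_{R[T], T\mapsto 1} R = \varinjlim_n R\langle \pi^{-n}\rangle = 0$, contradicting $x \in |\Spa \n{A}|$.

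I then treat the residue fields in two parallel arguments; both $\kappa(x)$ and $\kappa_{\cons}(x)$ are $\dagger$-reduced by Proposition \ref{PropLiftingOverconvergentAlgebrasLift} (4). For $\kappa_{\cons}(x)$, the first part of the proposition combined with Lemma \ref{LemmaStalksAtPoints} (2) yields that $|\Spa \kappa_{\cons}(x)|$ is a point, so the dichotomy (2.b) of that lemma applies; given a non-zero $f$, applying the dichotomy to $\pi^{-n} f$ for each $n$ shows that either the resulting extensions assemble into a map $R\{T\}^{\dagger}\to \kappa_{\cons}(x)$ with $T\mapsto f$---placing $f \in \Nil^{\dagger}(\kappa_{\cons}(x)) = 0$, a contradiction---or else some $\pi^{-n}f$ is invertible and $f$ is a unit. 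For $\kappa(x)$, the analogous argument uses Lemma \ref{LemmaStalksAtPoints} (1.c), whose hypothesis is inherited from $\n{A}(x)$ via the first part of the proposition, applied to $f$ and $g = \pi^n$.

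The main obstacle is the final step for $\kappa(x)$: converting the geometric condition $\{|\pi^n| \leq |f|\neq 0\} = |\Spa \kappa(x)|$ into honest invertibility of $f$ in the ring. My approach is to view this rational localization as an open immersion $\kappa(x) \to \kappa(x)\langle \pi^n/f\rangle$ in the sense of Definition \ref{DefinitionOpenClosedMonoidal} whose image in the locale $|\Spa \kappa(x)|$ is the whole space; by Proposition \ref{PropClosedOpenLocalizationsInftyCat} this makes the induced functor on module categories an equivalence, and conservativity of $\Mod(-)$ (Remark \ref{RemarkFullyFaithAnimatedAnalyticRing}) promotes this to an equivalence of analytic rings, placing $\pi^n/f$ inside $\kappa(x)$ and making $f$ a unit.
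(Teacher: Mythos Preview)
Your proposal is correct and follows essentially the same path as the paper. The paper's proof is very terse: Lemma \ref{LemmaQuotientRing} gives the immersion, the factorization of $\n{A} \to \kappa_{\cons}(x)$ through $\n{A}^{\dagger-\red}$ gives surjectivity, and the field claim is dispatched by a bare citation of Lemma \ref{LemmaStalksAtPoints}. You unpack the latter step in detail, re-deriving for $\kappa(x)$ and $\kappa_{\cons}(x)$ what is already contained in the proof of that lemma (specifically the sentence ``In particular such an $f$ must be invertible'' inside the implication (b)$\Rightarrow$(c) of part (1)).

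One correction in your final paragraph: Proposition \ref{PropClosedOpenLocalizationsInftyCat} does not do what you need. That proposition only characterizes open immersions abstractly; it does not let you pass from the equality $\{|\pi^n|\leq|f|\neq 0\} = |\Spa \kappa(x)|$, which lives in the quotient locale $|\Spa \kappa(x)|$, to the corresponding equality in the full locale $\n{S}(\Mod(\kappa(x)))$. The bridge is property (a) established in the proof of Theorem \ref{TheoExistenceSpaA}: if two closed subsets of $\n{T}_{\n{A}}$ have the same intersection with $|\Spa \n{A}|$, then their preimages under $\rho_{\n{A}}$ coincide. Apply this with $Z_1$ the closed complement of your rational open and $Z_2 = \emptyset$; you obtain that the complementary idempotent algebra vanishes, so the localization $\kappa(x) \to \kappa(x)\langle\pi^n/f\rangle$ is an equivalence of analytic rings and $f$ is invertible. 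The paper itself asserts this step without justification inside Lemma \ref{LemmaStalksAtPoints}, so you are simply making explicit something the paper treats as clear.
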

\begin{proof}
 By Lemma \ref{LemmaQuotientRing} we have an immersion $|\Spa \n{A}^{\dagger-\red}| \to |\Spa \n{A}|$. It suffices to show that it is bijective. But the constructible residue field of $\n{A}$ at $x$ factors through $\n{A}^{\dagger-\red}$, proving  the claim. Finally, the fact that the underlying discrete rings of  $\kappa(x)$ and $\kappa_{\cons}(x)$ are fields follows from Lemma \ref{LemmaStalksAtPoints}. 
\end{proof}

\begin{cor}
\label{PropZariskiCloseImmersion1}
 Let $\n{A}\to \n{B}$ be as in Lemma \ref{LemmaQuotientRing}.  If $\pi_0(I)$ is generated by its discrete points $\pi_0(I(*))$, then  the image of $F$ is the Zariski closed subspace  $\{|f|=0 : f\in I\}$.
\end{cor}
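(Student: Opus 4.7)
The plan is to combine Lemma \ref{LemmaQuotientRing} with a fibrewise analysis via constructible stalks, leveraging the field property of the constructible residue field. Since Lemma \ref{LemmaQuotientRing} already gives that $F$ is an immersion, only the identification of its image remains. For $x \in |\Spa \n{A}|$, the point $x$ will lie in the image of $F$ if and only if $\n{B} \otimes_{\n{A}} \n{A}(x)_{\cons} \neq 0$, equivalently (since a bounded affinoid ring is zero iff its underlying condensed ring is zero) if and only if the $\pi_0$ of its underlying ring is nonzero.

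Using $\n{B} = \underline{\n{B}}_{\n{A}/}$, the fiber sequence of $\underline{\n{A}}$-modules $I \to \underline{\n{A}} \to \underline{\n{B}}$ will give, after tensoring over $\n{A}$ with $\n{A}(x)_{\cons}$, a presentation of $\pi_0(\n{B} \otimes_{\n{A}} \n{A}(x)_{\cons})$ as $\pi_0(\n{A}(x)_{\cons})/J_x$, where $J_x$ is the ideal generated by the image of $\pi_0(I \otimes_{\n{A}} \n{A}(x)_{\cons})$ in $\pi_0(\n{A}(x)_{\cons})$. Since $\pi_0(I)$ is generated as a $\pi_0(\underline{\n{A}})$-module by its discrete sections $\pi_0(I)(*)$, the ideal $J_x$ will coincide with the ordinary ideal of $\pi_0(\n{A}(x)_{\cons})$ generated by the images of the sections $f \in \pi_0(I)(*)$.

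The main step will be to argue that $J_x$ is a proper ideal if and only if every such $f$ vanishes in the constructible residue field $\kappa_{\cons}(x) = \n{A}(x)_{\cons}^{\dagger-\red}$. For the forward direction, by Proposition \ref{PropAdicSpectrumInvariantReduced} the underlying ring of $\kappa_{\cons}(x)$ is a field; thus any $f \in \pi_0(I)(*)$ with nonzero image in $\kappa_{\cons}(x)$ is invertible there, and by Proposition \ref{PropLiftingOverconvergentAlgebrasLift}(2) its invertibility lifts to $\pi_0(\n{A}(x)_{\cons})$, forcing $J_x$ to be the unit ideal. Conversely, if $J_x = \pi_0(\n{A}(x)_{\cons})$ then the image of $J_x$ in the field $\kappa_{\cons}(x)$ is all of $\kappa_{\cons}(x)$, so at least one generator $f$ must have nonzero image. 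Putting these together, the image of $F$ consists exactly of the points $x$ for which $|f|_x = 0$ for every $f \in I$, which is the Zariski closed subspace $\{|f|=0 : f\in I\}$.

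The main subtlety I anticipate is cleanly translating the generation hypothesis on $\pi_0(I)$ into the description of $J_x$ as an ordinary (not merely condensed) ideal generated by discrete elements, and ensuring that the base change of the fiber sequence $I \to \underline{\n{A}} \to \underline{\n{B}}$ along $\n{A} \to \n{A}(x)_{\cons}$ interacts correctly with $\pi_0$; once this is set up, the field property of $\kappa_{\cons}(x)$ and the unit-lifting statement of Proposition \ref{PropLiftingOverconvergentAlgebrasLift}(2) do the rest of the work.
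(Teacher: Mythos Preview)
Your argument is correct but takes a different route from the paper. The paper works globally: it identifies the closed subset $Z=\{|f|=0:f\in I\}$ with $|\Spa \n{A}_{I^{\dagger}}|$ for the idempotent algebra $\n{A}_{I^{\dagger}}=\varinjlim_{f\in I,\,n}\n{A}\langle f/\pi^n\rangle$, notes that each $f\in I(*)$ lies in $\Nil^{\dagger}(\n{A}_{I^{\dagger}})$, and uses the generation hypothesis to factor $\n{A}\to(\n{A}_{I^{\dagger}})^{\dagger-\red}$ through $\n{B}$; then Proposition~\ref{PropAdicSpectrumInvariantReduced} gives $Z=|\Spa \n{A}_{I^{\dagger}}|\subset|\Spa\n{B}|$, the reverse inclusion being immediate. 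Your approach instead works pointwise through constructible stalks, reducing to whether the ideal generated by the images of the $f$'s in $\pi_0(\n{A}(x)_{\cons})$ is proper, and settling this via the field property of $\kappa_{\cons}(x)$ together with Proposition~\ref{PropLiftingOverconvergentAlgebrasLift}(2). Both ultimately rest on the same $\dagger$-reduction machinery; the paper's version is shorter and exploits the locale formalism directly, while yours makes the fibrewise picture explicit and perhaps more transparent. The one step you leave implicit---that $x\in\{|f|=0\}$ is equivalent to $f\in\Nil^{\dagger}(\n{A}(x)_{\cons})$---is essentially the definition of the rational subsets $\{|f|\leq|\pi^n|\}$ and the $\dagger$-nilradical, but it would be worth stating this explicitly in a final version.
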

\begin{proof}
We can assume that both rings are static.  Then  $Z = \bigcap_{f\in I} \{|f|=0\}\subset |\Spa \n{A}|$ corresponds to the analytic ring $\n{A}_{I^{\dagger}}= \varinjlim_{\substack{f\in I \\ n\in \bb{N}}} \n{A}\langle \frac{f}{\pi^n} \rangle$. Therefore, the map $ \n{A} \to (\n{A}_{I^{\dagger}})^{\dagger-\red}$ factors through $\n{B}$ proving that the image of $|\Spa \n{B}| $ in $|\Spa \n{A}|$ is $Z$ by Proposition \ref{PropAdicSpectrumInvariantReduced}. 
\end{proof}

We do not know if a morphism $\n{A} \to \n{B}$ in $\AffRing^{b}_{R_{\sol}}$ that is surjective on $\pi_0$ induces a closed immersion in the underlying adic spaces. Nevertheless, it defines a closed subspace in a suitable quotient of the locale $S(\n{A})$.

\begin{definition}
\label{DefDaggerLocale}
Let $\n{A}\in \AffRing^{b}_{R_{\sol}}$, we let $|\Spa^{\dagger}\n{A}|$ denote the quotient of $S(\n{A})$ consisting of the idempotent algebras generated under arbitrary intersections and finite unions by iterations of idempotent  algebras of the form $\n{A}\otimes_{\bb{Z}[T]}\bb{Z}[[T]]$ and $\n{A}\otimes_{\bb{Z}[T]} \bb{Z}((T^{-1}))$  for some $\bb{Z}[T]\to \n{A}$, and algebras $\n{A}\otimes_{R_{\sol}[\bb{N}[S]]} R_{\sol}\langle \bb{N}[S]\rangle$ for a map from a profinite set $S\to \n{A}$. 
\end{definition}

\begin{lemma}
The map of locales $S(\n{A}) \to |\Spa \n{A}|$ factors as a map
\[
S(\n{A}) \to |\Spa^{\dagger}(\n{A})| \to |\Spa \n{A}|.
\]
\end{lemma}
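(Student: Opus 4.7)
My plan is to show the factorization by identifying every idempotent algebra in $\n{S}(\n{A})$ pulled back from the topology of $|\Spa\n{A}|$ as one generated by the three types of algebras listed in Definition \ref{DefDaggerLocale}. By Proposition \ref{PropAdicSpectralIsTopological}(1), the topology on $|\Spa\n{A}|$ has a basis of qcqs open subspaces arising from pullbacks of rational localizations of $\Spa(R\langle T_I\rangle, R^+ + R\langle T_I\rangle^{00})$ for finite sets $I\subset \pi_0(\n{A}^0)$, via the chain of locale maps
\[
\n{S}(\n{A}) \longrightarrow \n{S}\bigl((R\langle T_I\rangle, R^+)_{\sol}\bigr) \longrightarrow \Spa\bigl(R\langle T_I\rangle, R^+ + R\langle T_I\rangle^{00}\bigr)
\]
from Construction \ref{ConstructionSpa}, where the first arrow is induced by an actual morphism of analytic rings because $\n{A}$ is bounded. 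It therefore suffices to verify that each such pulled-back closed idempotent algebra lies in the sub-locale $|\Spa^{\dagger}\n{A}|\subset \n{S}(\n{A})$.

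Next I would apply Lemma \ref{LemmaDevisageRationalLocalizations} to reduce to rational localizations of the form $\{|g|\leq 1\}$ and $\{1\leq |g|\}$ for elements $g\in R\langle T_I\rangle$. From the proof of Proposition \ref{PropositionAffinoidCatLocale}, these correspond (inside $(R\langle T_I\rangle, R^+)_{\sol}$) to the open idempotent algebras obtained by base change of $\bb{Z}[T]_{\sol}$ and $(\bb{Z}[T^{\pm 1}], \bb{Z}[T^{-1}])_{\sol}$ over $(\bb{Z}[T], \bb{Z})_{\sol}$ via $T\mapsto g$, whose closed complements are the idempotent algebras generated by $\bb{Z}((T^{-1}))$ and $\bb{Z}[[T]]$ respectively. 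Pulling back these closed complements along the analytic ring map $(R\langle T_I\rangle, R^+)_{\sol}\to \n{A}$ and using the transitivity of base change yields the closed idempotent algebras $\n{A}\otimes_{\bb{Z}[T]}\bb{Z}((T^{-1}))$ and $\n{A}\otimes_{\bb{Z}[T]}\bb{Z}[[T]]$ with $T$ sent to the image $\bar{g}$ of $g$ in $\n{A}$. These are exactly the first two types of generating algebras in Definition \ref{DefDaggerLocale}.

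Since $|\Spa^{\dagger}\n{A}|$ is defined to be closed under arbitrary intersections and finite unions of the three listed types of algebras, the above identifications suffice to conclude the factorization
\[
\n{S}(\n{A})\longrightarrow |\Spa^{\dagger}\n{A}|\longrightarrow |\Spa\n{A}|.
\]
The main care I anticipate lies in justifying the base-change identification at the level of idempotent algebras, which should be a formal consequence of the symmetric monoidal functoriality of $\Mod(-)$ from Lemma \ref{LemmaFullyFaithfulInjectionAnRings} together with the fact that colimit-preserving symmetric monoidal functors carry idempotent algebras to idempotent algebras. I also note that the third type of generating algebra $\n{A}\otimes_{R_{\sol}[\bb{N}[S]]}R_{\sol}\langle\bb{N}[S]\rangle$ for profinite $S$ is not strictly required for the present factorization, but will be relevant for comparing $|\Spa^{\dagger}\n{A}|$ with $|\Spa\n{A}|$ more finely downstream, since it encodes the $\pi$-adic completions implicit in the formation of $(R\langle T_I\rangle, R^+)_{\sol}$ from $R_{\sol}[T_I]_{\sol}$.
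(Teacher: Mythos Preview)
Your proposal is correct and follows essentially the same approach as the paper: reduce via Lemma~\ref{LemmaDevisageRationalLocalizations} to the basic rational localizations $\{|g|\leq 1\}$ and $\{1\leq |g|\}$, identify their closed complements with the idempotent algebras $\n{A}\otimes_{\bb{Z}[T]}\bb{Z}((T^{-1}))$ and $\n{A}\otimes_{\bb{Z}[T]}\bb{Z}[[T]]$, and observe these are among the generators of $|\Spa^{\dagger}\n{A}|$. Your additional remarks about the chain of locale maps from Construction~\ref{ConstructionSpa} and the role of the third type of generator are correct but not needed for this particular lemma.
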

 \begin{proof}
 By Lemma \ref{LemmaDevisageRationalLocalizations} the open subsets of $|\Spa \n{A}|$ are generated by  composite of   subspaces  of the form $\{1\leq |g|\}$ and $\{ |g| \leq 1\}$. The complement of these spaces  correspond to the idempotent algebras $\n{A} \otimes_{\bb{Z}[T]} \bb{Z}[[T]]$ and $\n{A}\otimes_{\bb{Z}[T]} \bb{Z}((T^{-1}))$ respectively. The lemma follows by Theorem \ref{TheoExistenceSpaA} and the definition of $|\Spa^{\dagger} \n{A}|$.  
 \end{proof}

 \begin{lemma}
 \label{LemmaInvarianceSpaOfNilreduction}
 Let $\n{A} \in \AffRing^b_{R_{\sol}}$, then the natural map 
 \[
 |\Spa^{\dagger} \n{A}^{\dagger-\red}| \to  |\Spa^{\dagger} \n{A}|
 \]
 is an isomorphism of locales.
 \end{lemma}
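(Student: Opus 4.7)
The quotient map $\n{A}\to \n{A}^{\dagger-\red}$ induces a base-change functor $B\mapsto B\otimes_{\n{A}}\n{A}^{\dagger-\red}$ on idempotent algebras, which sends each generating idempotent of $|\Spa^{\dagger}\n{A}|$ of one of the three listed forms ($\n{A}\otimes_{\bb{Z}[T],f}\bb{Z}[[T]]$, $\n{A}\otimes_{\bb{Z}[T],f}\bb{Z}((T^{-1}))$, or $\n{A}\otimes_{R_{\sol}[\bb{N}[S]],g}R_{\sol}\langle\bb{N}[S]\rangle$) to the corresponding generator of $|\Spa^{\dagger}\n{A}^{\dagger-\red}|$ defined by the composite morphism into $\n{A}^{\dagger-\red}$. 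This yields the natural map of locales in the statement; to verify it is an isomorphism, we will construct an inverse on generators.

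Given any generating idempotent algebra of $|\Spa^{\dagger}\n{A}^{\dagger-\red}|$ specified by $\bar f:\bb{Z}[T]\to \n{A}^{\dagger-\red}$, or by $\bar g:\bb{Z}_{\sol}[S]\to \n{A}^{\dagger-\red}$ for a profinite set $S$, lift it to a morphism $f:\bb{Z}[T]\to \n{A}$ (resp.\ $g:\bb{Z}_{\sol}[S]\to \n{A}$) using the compact projectivity of $\bb{Z}[T]$ and $\bb{Z}_{\sol}[S]$ together with the surjectivity of $\pi_0(\n{A})\to \pi_0(\n{A}^{\dagger-\red})$. The resulting generator of $|\Spa^{\dagger}\n{A}|$ base-changes to the original generator of $|\Spa^{\dagger}\n{A}^{\dagger-\red}|$ by construction. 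The inverse map is therefore defined once we verify that the generator of $|\Spa^{\dagger}\n{A}|$ produced this way does not depend on the choice of lift; two lifts $f_1,f_2$ of the same $\bar f$ differ by an element $h=f_1-f_2$ lying in $\pi_0(\Nil^{\dagger}(\n{A}))(*)$, and the same holds \textit{mutatis mutandis} for the profinite case.

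This well-definedness is the main obstacle, and is proved as follows. Consider the generator $\n{A}\otimes_{\bb{Z}[T],f}\bb{Z}((T^{-1}))$, whose complementary open is the analytic ring structure where $f$ is invertible with topologically nilpotent inverse. Since $h\in\Nil^{\dagger}(\n{A})$ implies that the rescaling $h/\pi^n$ is power-bounded for every $n\in\bb{N}$ (in particular $h\in\n{A}^{00}$), in this localization $hf_1^{-1}$ is topologically nilpotent, so $1+hf_1^{-1}$ is a unit and $f_2=f_1(1+hf_1^{-1})$ is a unit with topologically nilpotent inverse; by symmetry the same holds reversing the roles of $f_1,f_2$, so the two open complements coincide and consequently the two idempotent algebras agree. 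The case of the generator $\n{A}\otimes_{\bb{Z}[T],f}\bb{Z}[[T]]$ is dual: it equals the intersection over $n$ of the rational subspaces $\{|f|\leq|\pi^n|\}$, and since $f_1/\pi^n$ and $f_2/\pi^n$ differ by the power-bounded element $h/\pi^n$, Proposition \ref{PropLiftingOverconvergentAlgebrasLift}(1) together with boundedness of sums of power-bounded elements ensures the two rational localizations agree for each $n$. The profinite case of $\n{A}\otimes_{R_{\sol}[\bb{N}[S]],g}R_{\sol}\langle\bb{N}[S]\rangle$ is handled by the analogous rescaling argument using Proposition \ref{PropLiftingOverconvergentAlgebrasLift}(3): in any analytic $\n{A}$-algebra in which $g_1$ is power-bounded, the difference $g_1-g_2$ remains $\dagger$-nilpotent hence power-bounded, so $g_2$ is power-bounded and the corresponding idempotent algebras coincide. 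Together these arguments produce a well-defined inverse map on generators, completing the proof.
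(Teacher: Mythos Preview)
Your overall strategy---lift generators from $\n{A}^{\dagger-\red}$ to $\n{A}$ and verify independence of the lift---is exactly what the paper's one-line proof intends, with Proposition~\ref{PropLiftingOverconvergentAlgebrasLift} supplying the invariance. However, you have swapped the open complements of the two $\bb{Z}[T]$-type idempotents (see the proof of Proposition~\ref{PropositionAffinoidCatLocale}). The complement of $\bb{Z}((T^{-1}))$ is $\bb{Z}[T]_{\sol}$, i.e.\ the rational open $\{|f|\le 1\}$ where $f$ becomes \emph{solid}; the complement of $\bb{Z}[[T]]$ is $(\bb{Z}[T^{\pm1}],\bb{Z}[T^{-1}])_{\sol}$, i.e.\ $\{|f|\ge 1\}$ where $f$ is invertible with solid (not ``topologically nilpotent'') inverse. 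So your invertibility/unit argument is actually the correct argument for the $\bb{Z}[[T]]$-generator, while the $\bb{Z}((T^{-1}))$-generator is handled by your other argument (only the case $n=0$ is needed). Your description of $\n{A}\otimes_{\bb{Z}[T]}\bb{Z}[[T]]$ as ``the intersection over $n$ of $\{|f|\le|\pi^n|\}$'' is also incorrect: that intersection is the $\dagger$-neighbourhood of $\{f=0\}$, not the complement of $\{|f|\ge 1\}$.

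Once the swap is repaired, the cleanest way to finish is to invoke Proposition~\ref{PropLiftingOverconvergentAlgebrasLift} directly rather than argue with ``sums of power-bounded elements''. For the $\bb{Z}((T^{-1}))$-generator: set $\n{C}=\n{A}\langle f_1\rangle_{\sol}$ (bounded affinoid); then $f_1\in\n{C}^+$, the difference $f_1-f_2\in\Nil^{\dagger}(\n{A})$ maps into $\Nil^{\dagger}(\n{C})$, so $f_1$ and $f_2$ agree in $\n{C}^{\dagger-\red}$ and part~(1) gives $f_2\in\n{C}^+$; by symmetry the two localizations coincide. The $\bb{Z}[[T]]$-generator is handled the same way using parts~(1) and~(2) applied to $f^{-1}$. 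Your ``boundedness of sums'' justification is hazardous because the needed condition is membership in $\n{C}^+$, which is not in general closed under adding elements of $\n{C}^0$; for the profinite generator your argument does work, since there the condition is precisely membership in $\n{C}^0$ and $\n{C}^0$ is a subring---this is part~(3). Finally, Definition~\ref{DefDaggerLocale} allows iterated generators (maps into intermediate idempotent algebras, not just into $\n{A}$); you should remark that the same argument applies inductively at each stage.
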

\begin{proof}
This follows from the invariance of localizations of the form $\bb{Z}[T] \to \bb{Z}[T]_{\sol}$ and $ R_{\sol}[\bb{N}[S]] \to R_{\sol}\langle \bb{N}[S]  \rangle$ under the $\dagger$-nil-radical of Proposition \ref{PropLiftingOverconvergentAlgebrasLift}. 
\end{proof}

\begin{proposition}
\label{PropositionCLosedImageLocale}
Let $\n{A} \to \n{B}$ be a map of bounded affinoid rings such that $\n{B}$ has the induced analytic structure and that is surjective on $\pi_0$. Then the natural map 
\[
|\Spa^{\dagger} \n{B}| \to |\Spa^{\dagger} \n{A}|
\]
is a closed immersion of locales. 
\end{proposition}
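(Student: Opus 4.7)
The strategy is to show that the idempotent $\n{A}$-algebra defined by $\n{B}$ gives a well-defined closed subspace of the sublocale $|\Spa^{\dagger}\n{A}|\subset S(\n{A})$, and that $|\Spa^{\dagger}\n{B}|$ is naturally identified with this closed subspace. By Lemma \ref{LemmaInvarianceSpaOfNilreduction} we may replace both rings by their $\dagger$-reductions, as this does not change $|\Spa^{\dagger}|$; the hypotheses (surjectivity on $\pi_0$ and the induced analytic structure) pass to $\dagger$-reductions. Let $I = \fiber[\n{A} \to \n{B}]$, a condensed ideal of $\n{A}$ generated by its sections $f: S \to I$ for $S$ profinite, and write $\n{B}$ (as an idempotent $\n{A}$-algebra, equivalently, a closed subspace of $S(\n{A})$) as the intersection $\bigcap_f \n{A}/(f)$ of idempotent algebras forcing each generator $f$ to vanish.

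For each $f$, a lift to $R_{\sol}[\bb{N}[S]] \to \n{A}$ realizes $\n{A}/(f)$ as the pushout $\n{A}\otimes_{R_{\sol}[\bb{N}[S]]} R_{\sol}$ along the augmentation. Factoring the augmentation as $R_{\sol}[\bb{N}[S]] \to R_{\sol}\{\bb{N}[S]\}^{\dagger} \to R_{\sol}$, Corollary \ref{CorollaryDaggerNilRadical} identifies the kernel of the second map with the $\dagger$-nilradical of $R_{\sol}\{\bb{N}[S]\}^{\dagger}$. Combined with Proposition \ref{PropStabilityAffinoidRings}(6) on the compatibility of $\Nil^{\dagger}$ with tensor products, this shows that the natural surjection
\[
\n{A}\otimes_{R_{\sol}[\bb{N}[S]]} R_{\sol}\{\bb{N}[S]\}^{\dagger} \;\twoheadrightarrow\; \n{A}/(f)
\]
has $\dagger$-nilpotent kernel and hence defines the same closed subspace in $|\Spa^{\dagger}\n{A}|$. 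Using the filtered colimit description $R_{\sol}\{\bb{N}[S]\}^{\dagger} = \varinjlim_n R_{\sol}\langle \bb{N}[\frac{S}{\pi^n}]\rangle$ together with the correspondence between filtered colimits of idempotent algebras and intersections of closed subspaces (Proposition \ref{PropLocaleOperations}(4)), this pushout corresponds to the intersection of the generators $\n{A}\otimes_{R_{\sol}[\bb{N}[S]]} R_{\sol}\langle \bb{N}[\frac{S}{\pi^n}]\rangle$ listed in Definition \ref{DefDaggerLocale}. Thus $\n{A}/(f)$, and hence $\n{B} = \bigcap_f \n{A}/(f)$, defines a closed subspace $W$ of $|\Spa^{\dagger}\n{A}|$, giving the closed immersion of locales $|\Spa^{\dagger}\n{B}| \to W \hookrightarrow |\Spa^{\dagger}\n{A}|$.

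The identification $|\Spa^{\dagger}\n{B}| = W$ follows because every generator of $|\Spa^{\dagger}\n{B}|$, of the form $\n{B}\otimes_{\bb{Z}[T]} \bb{Z}[[T]]$, $\n{B}\otimes_{\bb{Z}[T]} \bb{Z}((T^{-1}))$, or $\n{B}\otimes_{R_{\sol}[\bb{N}[S']]} R_{\sol}\langle \bb{N}[S']\rangle$, is obtained by pulling back the corresponding generator of $|\Spa^{\dagger}\n{A}|$ after lifting its structure map through the surjection $\pi_0\n{A} \twoheadrightarrow \pi_0\n{B}$. The main technical obstacle is the identification, within $|\Spa^{\dagger}\n{A}|$, of the pushout $\n{A}/(f)$ with a filtered intersection of allowed generators; this reduces to the structural input that the augmentation ideal of $R_{\sol}\{\bb{N}[S]\}^{\dagger}$ agrees with its $\dagger$-nilradical (Corollary \ref{CorollaryDaggerNilRadical}).
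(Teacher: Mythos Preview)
Your argument assembles the right ingredients and is close to the paper's proof, but there is a genuine error in the framing. You assert that $\n{B}$ and the derived quotients $\n{A}/(f)=\n{A}\otimes_{R_{\sol}[\bb{N}[S]]}R_{\sol}$ are idempotent $\n{A}$-algebras, hence closed subspaces of $S(\n{A})$. This is false: for a surjection one has $\n{B}\otimes_{\n{A}}\n{B}\simeq\Sym^\bullet_{\n{B}}\bb{L}_{\n{B}/\n{A}}$ (Lemma~\ref{LemmaSelfTesnsorQuotient}), which equals $\n{B}$ only when the cotangent complex vanishes. Consequently the expression ``$\n{B}=\bigcap_f\n{A}/(f)$ in $S(\n{A})$'' is not meaningful, and one cannot speak of ``the closed subspace defined by $\n{A}/(f)$'' in either $S(\n{A})$ or $|\Spa^\dagger\n{A}|$.

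The repair uses exactly the idempotent algebras you already construct. Set
\[
\n{A}':=\varinjlim_{f:S\to\pi_0(I)}\;\n{A}\otimes_{R_{\sol}[\bb{N}[S]]}R_{\sol}\{\bb{N}[S]\}^\dagger.
\]
Each term is idempotent over $\n{A}$ (Lemma~\ref{LemmaIdempotentAlgebrasVerification}), the filtered colimit stays idempotent, and your argument that each term lies in $|\Spa^\dagger\n{A}|$ is correct (since $\pi$ is a unit, $R_{\sol}\langle\bb{N}[S/\pi^n]\rangle$ is a listed generator for the rescaled map $\pi^{-n}f:S\to\n{A}$). The crucial step is not comparing $\n{A}'$ to the non-idempotent $\n{A}/(f)$, but observing directly that in $\n{A}'$ every $f:S\to\pi_0(I)$ lands in $\Nil^\dagger(\n{A}')$ by construction; hence $\n{A}\to\n{A}'^{\,\dagger\text{-}\red}$ kills $I$ and factors through $\n{B}$, which combined with $\n{A}'\to\n{B}$ gives $\n{A}'^{\,\dagger\text{-}\red}=\n{B}^{\dagger\text{-}\red}$. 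Lemma~\ref{LemmaInvarianceSpaOfNilreduction} then identifies $|\Spa^\dagger\n{B}|$ with the closed sublocale of $|\Spa^\dagger\n{A}|$ cut out by $\n{A}'$. This is precisely the paper's proof; your final paragraph becomes unnecessary once you work with $\n{A}'$ rather than the non-idempotent $\n{B}$.
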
 
\begin{proof}
By lemma \eqref{LemmaInvarianceSpaOfNilreduction} it suffices to construct an idempotent algebra $\n{A}'$ in $|\Spa^{\dagger} \n{A}|$ such that we have a factorization $\n{A}\to \n{A}' \to \n{B}$ and that $\n{A}^{',\dagger-\red}= \n{B}^{\dagger-\red}$. Let $I=[\n{A} \to\n{B}]$ be the fiber, for any  profinite set $S$ and any map $S\to \pi_0(I)$ let us consider the base change $\n{A}\otimes_{R_{\sol}[\bb{N}[S]]} R_{\sol}\{\bb{N}[S]\}^{\dagger}$, and let $\n{A}'$ be the colimit of all such algebras. Then, by construction, the map $\n{A}\to \n{A}'$ sends $I$ to  the $\dagger$-nil-radical $\Nil^{\dagger}(\n{A}')$ of $\n{A}'$. This shows that 
\[
\n{A}^{',\dagger-\red}= \n{B}^{\dagger-\red}
\]
as wanted. 
\end{proof}

\subsubsection{Derived Tate adic spaces} We end this section with the definition of   derived  Tate   adic spaces.

\begin{definition}
\begin{enumerate}
\item We let $\Aff_{\bb{Z}_{\sol}}:= \AffRing_{\bb{Z}_{\sol}}^{\op}$ be the $\infty$-category of \textit{solid affinoid spaces}. For a ring $\n{A} \in \AffRing_{\bb{Z}_{\sol}}$, we  let $\Aff_{\n{A}}$ be the $\infty$-category of solid affinoid spaces over $\n{A}$.  We also denote by $\AnSpec \n{A}$  the representable presheaf on anima over $\Aff_{\bb{Z}_{\sol}}$ defined by $\n{A}$, we call $ \AnSpec \n{A}$ the \textit{analytic spectrum} of $\n{A}$.  

 \item We let $\Aff^{b}_{R_{\sol}}:= \AffRing^{b,\op}_{R_{\sol}}$  be the category of \textit{bounded affinoid spaces} over $R_{\sol}$.   The \textit{analytic topology} in $\Aff^{b}_{R_{\sol}}$ is the Grothendieck topology defined by open affinoid coverings of $\Spa \n{A}$. 
\end{enumerate}
\end{definition}

\begin{lemma}
\label{PropLocaleTopIsSubcanonial}
The analytic topology of $\Aff^{b}_{R_{\sol}}$ is subcanonical. 
\end{lemma}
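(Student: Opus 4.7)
The plan is to prove that every representable presheaf $h_{\n{B}} = \Map_{\AffRing^{b}_{R_{\sol}}}(\n{B}, -)$ is an analytic sheaf. Fix an analytic cover $\{\AnSpec \n{A}_{i} \to \AnSpec \n{A}\}_{i \in I}$; since $\Spa \n{A}$ is spectral by Proposition \ref{PropAdicSpectralIsTopological}(1), it suffices to check sheafiness on finite covers, and we let $\n{A}^{\bullet}$ be the associated \v{C}ech nerve in $\AffRing^{b}_{R_{\sol}}$ (bounded affinoid rings are stable under the relevant pushouts by Proposition \ref{PropStabilityAffinoidRings}). I would split the proof into descent of the underlying condensed ring, followed by descent of the analytic ring structure.

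For the first step, Theorem \ref{TheoremBoundedAffinoidIntro}(7) gives an equivalence $\Mod(\n{A}) \simeq \lim_{[n] \in \Delta} \Mod(\n{A}^{n})$ in presentably symmetric monoidal stable categories. Applying the functor of condensed endomorphisms of the unit $\n{A}[*] = \underline{\n{A}}$ (which is limit-preserving and natural in symmetric monoidal colimit-preserving morphisms) yields $\underline{\n{A}} \simeq \lim_{[n] \in \Delta} \underline{\n{A}^{n}}$ as condensed animated rings, and hence
\[
\Map_{\CondRing_{R_{\sol}}}(\underline{\n{B}}, \underline{\n{A}}) \simeq \lim_{[n] \in \Delta} \Map_{\CondRing_{R_{\sol}}}(\underline{\n{B}}, \underline{\n{A}^{n}}).
\]

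For the second step, I use Definition \ref{DefinitionSolidAffinoidRing} to identify $\Map_{\AffRing^{b}_{R_{\sol}}}(\n{B}, \n{A}) \subset \Map_{\CondRing_{R_{\sol}}}(\underline{\n{B}}, \underline{\n{A}})$ as the full subanima of those $f$ whose restriction $\underline{\n{B}}(*) \to \underline{\n{A}}(*)$ sends the subanima $\n{B}^{+}$ into $\n{A}^{+}$ (note that boundedness is a property of the ring alone, so it imposes no further condition on morphisms between bounded rings). For each connected component $b \in \pi_{0}(\n{B}^{+})$, the condition $f(b) \in \pi_{0}(\n{A}^{+})$ is, by definition of $\n{A}^{+}$, the extendability of $\bb{Z}[T] \to \n{A}$ (with $T \mapsto f(b)$) to $\bb{Z}[T]_{\sol}$; this in turn is equivalent to the vanishing of the idempotent base change $\bb{Z}((T^{-1})) \otimes_{(\bb{Z}[T], \bb{Z})_{\sol}} \n{A}$. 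Vanishing of an $\n{A}$-module can be tested conservatively on the cover by Theorem \ref{TheoremBoundedAffinoidIntro}(7), and this tensor commutes with the base change $\n{A} \to \n{A}_{i}$, so the condition is equivalent to $f_{i}(b) \in \pi_{0}(\n{A}_{i}^{+})$ for every $i$. Combining both steps yields the desired equivalence
\[
\Map_{\AffRing^{b}_{R_{\sol}}}(\n{B}, \n{A}) \simeq \lim_{[n] \in \Delta} \Map_{\AffRing^{b}_{R_{\sol}}}(\n{B}, \n{A}^{n}),
\]
which is subcanonicity. The only step demanding genuine input is descent for $\Mod(-)$; the rest is formal bookkeeping with full subanima and the behaviour of idempotent algebras under base change.
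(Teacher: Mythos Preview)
Your proof is correct and follows essentially the same strategy as the paper: both reduce to descent of $\Mod(-)$ for analytic covers (which the paper cites as Theorem~\ref{TheoLocaleTopology}), deduce that $\underline{\n{A}} \simeq \varprojlim_{[n]\in\Delta}\underline{\n{A}^{n}}$, and then verify that the analytic ring structure descends. The only difference is in this last verification: the paper argues abstractly that a map $\underline{\n{B}}\to\underline{\n{A}}$ glued from analytic-ring maps is itself analytic because $\Mod(\n{B})\subset\Mod(\underline{\n{B}})$ is closed under limits, whereas you unwind the solid-affinoid condition and test $f(b)\in\n{A}^{+}$ elementwise via vanishing of $\bb{Z}((T^{-1}))\otimes_{(\bb{Z}[T],\bb{Z})_{\sol}}\n{A}$ on the cover. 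Your route is slightly more explicit but relies on the specific structure of solid affinoid rings; the paper's is shorter and works verbatim for arbitrary analytic rings once module descent is known.
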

\begin{proof}
Let $\SpecAn \n{B}\in \Aff^{b}_{R_{\sol}}$, we want to prove that the functor $\Map_{\Aff^b_{R_{\sol}}}(-, \SpecAn \n{B})=\Map_{\AffRing^{b}_{R_{\sol}}}(\n{B},-)$ satisfies descent for the analytic topology of $\Aff^{b}_{R_{\sol}}$. Given $\SpecAn \n{A}$ a bounded affinoid ring, by definition of  the category of analytic rings,  $\Map_{\AffRing_{R_{\sol}}}(\n{B}, \n{A})$ is the full subanima of $\Map_{\AniRing_{R_{\sol}}}(\underline{\n{B}}, \underline{\n{A}})$ whose connected components are those arrows $f:\underline{\n{B}} \to  \underline{\n{A}}$ such that any $\n{A}$-complete module is $\n{B}$-complete. Now let $\{\n{A}_i\}_{i=1}^n$ be an analytic cover of $\n{A}$, let $\n{C}= \prod_{i=1}^n \n{A}_i$ and let $\n{C}^{n}$ be the $n$-th fold tensor product of $\n{C}$ over $\n{A}$. The maps $\{\Spa \n{A}_i\}_{i}$ form an open cover of  the categorified locale $\Spa \n{A}$ and   by  Theorem \ref{TheoLocaleTopology} one has descent  of modules
\begin{equation}
\label{eqDescentAnalyticCover}
\Mod(\n{A}) \to \varprojlim_{[n]\in \Delta} \Mod(\n{C}^{n+1}). 
\end{equation}
In particular, the natural map $\underline{\n{A}} = \varprojlim_{[n]\in \Delta} \underline{\n{C}}^{n+1}$ is an equivalence. Thus, the map 
\[
\Map_{\AffRing^{b}_{R_{\sol}}}(\n{B},\n{A}) \to \varprojlim_{[n]\in \Delta} \Map_{\AffRing^b_{R_{\sol}}}(\n{B}, \n{C}^{n+1})
\]
is a fully faithful embedding, and to prove that it is an equivalence it suffices to check that it is essentially surjective, but this follows from \eqref{eqDescentAnalyticCover} and the fact that $\Mod(\n{B})$ is stable under limits and colimits in $\Mod(\underline{\n{B}})$. 
\end{proof}

\begin{definition}
\label{DefinitionAnalyticDerivedAdicSpace}
We let $\Sh_{\an}(\Aff^{b}_{R_{\sol}})$  denote the sheaves on anima of bounded affinoid spaces with respect to the analytic topology.   A \textit{derived  Tate adic space over $R_{\sol}$} (or more shortly, a derived  adic space) is a sheaf $X\in \Sh_{\an}(\Aff^{b}_{R_\sol})$ that admits an open analytic cover by representable sheaves.  We let $\AdicSp_{R_{\sol}}$ be the full $\infty$-subcategory  of $\Sh_{\an}(\Aff^{b}_{R_{\sol}})$ consisting on   derived   adic spaces over $R_{\sol}$.

Given $X$ a derived Tate adic space, let $\Mod_{X,\sol}=\varprojlim_{\SpecAn \n{A}\to X}\Mod(\n{A})$ be its $\infty$-category of solid quasi-coherent sheaves  on $X$, and let   $|X|= \varinjlim_{\SpecAn \n{A}\to X} |\SpecAn \n{A}|$ be  its associated topological space. We let   $X_{adic}:=(|X|, \Mod_{X,\sol})$ denote the categorified locale of $X$ obtained as the colimit of the categorified locales on bounded affinoid spaces mapping to $X$ 
\end{definition}

The following  corollary follows from the definitions and Lemma \ref{LemmaFullyFaithfulInjectionAnRings}.

\begin{corollary}
Let $X$ be a  derived Tate adic space, $\{U_{i}\}_{i}$ an analytic  open cover of $X$ by affinoid spaces, and $\Mod_{X,\sol}=\varprojlim_{i} \Mod_{U_i,\sol}$. Then $|X|= \varinjlim_{i} |U_{i}|$ is a locally spectral space. Morphisms of derived Tate adic spaces $X\to Y$ induce morphisms of locally spectral spaces $|X|\to |Y|$. When restricted to derived Tate adic spaces over $\bb{Q}\otimes R_{\sol}$, the functor $X\mapsto X_{adic}$ from derived Tate adic spaces to categorified locales over $\bb{Q}\otimes R_{\sol}$ is fully faithful (eg. for derived Tate adic spaces over $\bb{Q}_p$).  
\end{corollary}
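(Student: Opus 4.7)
The first two claims should reduce routinely to the affinoid case. I would note that each $|U_i|=|\SpecAn\n{A}_i|$ is spectral with a basis of qcqs rational subspaces by Proposition \ref{PropAdicSpectralIsTopological}(1), and that transition maps in the analytic cover are open immersions preserving rational localizations by part (2) of the same proposition; gluing these pieces produces a locally spectral space $|X|=\varinjlim_i|U_i|$. For functoriality, a morphism $f\colon X\to Y$ restricts, after refining affinoid covers of both sides, to compatible maps $\SpecAn\n{A}\to\SpecAn\n{B}$ of affinoid opens, each inducing a spectral continuous map on adic spectra by Proposition \ref{PropAdicSpectralIsTopological}(2); these glue to a continuous map $|X|\to|Y|$.

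The main content is the fully faithfulness of $X\mapsto X_{adic}$ over $\bb{Q}\otimes R_{\sol}$, which I would prove by descent on both sides. Fix derived Tate adic spaces $X,Y$ over $\bb{Q}\otimes R_{\sol}$ and let $U_\bullet\to X$ be the \v{C}ech nerve of an affinoid open cover. Since the analytic topology is subcanonical (Lemma \ref{PropLocaleTopIsSubcanonial}),
\[
\Map_{\AdicSp_{R_{\sol}}}(X,Y)=\varprojlim_{[n]\in\Delta}\Map_{\AdicSp_{R_{\sol}}}(U_n,Y),
\]
and by Theorem \ref{TheoLocaleTopology} combined with Proposition \ref{PropositionAffinoidCatLocale} the analogous formula holds for $\Map_{\CatLoc}(X_{adic},Y_{adic})$. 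Next, covering $Y$ by affinoids $\{V_j\}$ and invoking sheafiness once more reduces both mapping spaces to limits of mapping spaces of affinoids into affinoids, where the desired equivalence is Remark \ref{RemarkEmbeddingBoundedAffinoid} (whose proof rests on Lemma \ref{LemmaFullyFaithfulInjectionAnRings} and uses the $\bb{Q}$-linearity hypothesis in an essential way, via Remark \ref{RemarkFullyFaithAnimatedAnalyticRing}).

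The key technical obstacle will be verifying that the two notions of ``open immersion''---analytic open immersion in $\AdicSp_{R_{\sol}}$ and open immersion of categorified locales in the sense of Proposition \ref{PropClosedOpenLocalizationsInftyCat}(2)---match under $(-)_{adic}$, so that analytic covers in the source translate to Grothendieck covers on the target. Locally on affinoids this is Proposition \ref{PropositionAffinoidCatLocale}, but globally one must check that fibered products of affinoid opens over a general base $X$ correspond to intersections of the associated idempotent algebras in $\n{S}(\Mod_{X,\sol})$. This compatibility ultimately reduces to the projection formula for $j_!$ along open immersions, which is built into the locale formalism recalled in Section \ref{SubsectionCategorifiedLocale}; once this bookkeeping is in place, the descent argument of the previous paragraph goes through and the corollary follows.
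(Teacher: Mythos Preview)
Your approach is correct and is essentially the elaboration of the paper's own argument, which consists of the single sentence ``The following corollary follows from the definitions and Lemma \ref{LemmaFullyFaithfulInjectionAnRings}.'' The descent-to-affinoids strategy you describe, together with Remark \ref{RemarkEmbeddingBoundedAffinoid} (which is itself a direct application of that lemma and Theorem \ref{TheoExistenceSpaA}), is exactly what the paper leaves implicit.
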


\subsubsection{Analytification functor}

We finish this section by defining an analytification functor. Let $\PSh(\Aff_{R_{\sol}})$ be the category of presheaves on anima of solid affinoid rings over $R_{\sol}$. 

\begin{definition}
\label{DefinitionAnalytificationFunctor}
We define the analytification functor $(-)^{\an}$  to be  the composite $\PSh(\Aff_{R_{\sol}})\xrightarrow{k^{*}} \PSh(\Aff^{b}_{R_{\sol}})\to \Sh_{\an}(\Aff^{b}_{R_{\sol}})$, where the first is the restriction along the inclusion $k:\Aff^{b}_{R_{\sol}}\to \Aff_{R^{\sol}}$, and the second is sheafification. 
\end{definition}

By Proposition \ref{PropCompactProjectiveSolidAffinoid}, the category $\AffRing_{\bb{Z}_{\sol}}$ of solid affinoid rings is generated by the compact projective objects $\bb{Z}[\underline{T}]_{\sol} [\bb{N}[S]]$, where $\underline{T}$ is a finite set of variables, and $S$ a profinite set. Therefore, the analytification functor $(-)^{\an}$ is the left Kan extension of its restriction to the objects $R\langle\underline{T} \rangle_{\sol} [\bb{N}[S]]$. These are computed as follows:

\begin{lemma}
\label{LemmaAnalytificationFunctor}
Let $\bb{D}_{R}:=\AnSpec R\langle  T \rangle_{\sol}$ be the unit affinoid disc. For $S$ a profinite set let us write $\bb{A}^{\alg}_{R,S}:=\AnSpec(R_{\sol}[\bb{N}[S] ])$ and  $\bb{A}_{R,S}:=\bigcup_{n\in\bb{N}} \AnSpec(R_{\sol}\langle \bb{N}[\pi^{n} S] \rangle) $. Then there is a natural equivalence
\[
(\bb{D}_{R}^{n}\times \bb{A}_{R,S}^{\alg} )^{\an}= \bb{D}_{R}^{n}\times \bb{A}_{R,S}^{\an}.
\]
\end{lemma}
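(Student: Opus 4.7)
The strategy is to compute both sides directly on bounded affinoid test objects and then use that the right-hand side is already an analytic sheaf (so the sheafification step in the definition of $(-)^{\an}$ is vacuous on it). First I would identify the restriction $k^{*}(\bb{D}_{R}^{n}\times \bb{A}^{\alg}_{R,S})$. For any bounded affinoid ring $\n{B}$, the compact-projective decomposition $R\langle T_{1},\ldots,T_{n}\rangle_{\sol}[\bb{N}[S]] \cong R\langle T_{1},\ldots,T_{n}\rangle_{\sol}\otimes_{R_{\sol}} R_{\sol}[\bb{N}[S]]$ gives
\[
(\bb{D}_{R}^{n}\times \bb{A}^{\alg}_{R,S})(\n{B}) \;=\; \bb{D}_{R}^{n}(\n{B})\times \underline{\n{B}}(S),
\]
where $\bb{D}_{R}^{n}(\n{B})=\Map_{\AffRing_{R_{\sol}}}(R\langle T_{1},\ldots,T_{n}\rangle_{\sol},\n{B})$ and $\underline{\n{B}}(S)=\Map_{\AffRing_{R_{\sol}}}(R_{\sol}[\bb{N}[S]],\n{B})$ by the universal property of $R_{\sol}[\bb{N}[S]]=\Sym^{\bullet}_{R_{\sol}} R_{\sol}[S]$.

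Next I would use boundedness to rewrite $\underline{\n{B}}(S)$. Because $\n{B}=\n{B}^{b}$, any map $f\colon S\to \underline{\n{B}}$ factors (after rescaling, by Lemma \ref{LemmaTestBounded} applied to a lift $R_{\sol}[S]\to \n{B}$) through an algebra extension $R_{\sol}\langle \bb{N}[\pi^{m} S]\rangle\to \n{B}$ for $m$ sufficiently large; moreover such extensions are unique because $R_{\sol}\langle \bb{N}[\pi^{m} S]\rangle$ is idempotent over $R_{\sol}[\bb{N}[\pi^{m}S]]$. This produces an equivalence
\[
\underline{\n{B}}(S)\;=\;\varinjlim_{m}\Map_{\AffRing^{b}_{R_{\sol}}}\bigl(R_{\sol}\langle \bb{N}[\pi^{m}S]\rangle,\n{B}\bigr),
\]
so that $k^{*}(\bb{D}_{R}^{n}\times \bb{A}^{\alg}_{R,S})(\n{B})=\varinjlim_{m}\Map\bigl(R\langle T_{1},\ldots,T_{n}\rangle_{\sol}\langle\bb{N}[\pi^{m}S]\rangle,\n{B}\bigr)$, which is precisely the presheaf of points of $\bb{D}_{R}^{n}\times\bb{A}^{\an}_{R,S}=\bigcup_{m}\AnSpec\bigl(R\langle T_{1},\ldots,T_{n}\rangle_{\sol}\langle\bb{N}[\pi^{m}S]\rangle\bigr)$.

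Finally I would check that the filtered union $\bb{D}_{R}^{n}\times \bb{A}^{\an}_{R,S}$ is itself an analytic sheaf (in fact a derived Tate adic space), so that the sheafification in the definition of $(-)^{\an}$ does not alter it. For this one verifies that each transition $\AnSpec\bigl(R\langle T\rangle_{\sol}\langle\bb{N}[\pi^{m+1}S]\rangle\bigr)\hookrightarrow \AnSpec\bigl(R\langle T\rangle_{\sol}\langle\bb{N}[\pi^{m}S]\rangle\bigr)$ is an open immersion, realised as the rational subspace cut out by the inequalities $|s|\le|\pi|$ for $s$ ranging over (a compact-generator set for) $S$; the transitions compose to an open cover in the analytic topology on the union. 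The previous identification on bounded affinoids then upgrades to an isomorphism of analytic sheaves, proving the claim.

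The main point to keep honest is the boundedness-driven extension $R_{\sol}[\bb{N}[S]]\to \n{B}\;\Rightarrow\; R_{\sol}\langle \bb{N}[\pi^{m}S]\rangle\to \n{B}$ together with its uniqueness; everything else is a matter of combining this with Lemma \ref{LemmaTestBounded} and the fact that the functor $\Mod$ detects analytic ring structures (Lemma \ref{LemmaFullyFaithfulInjectionAnRings}) so that the open cover structure defining $\bb{A}^{\an}_{R,S}$ is well-posed.
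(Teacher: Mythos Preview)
Your argument is correct and matches the paper's approach in spirit: both identify $k^{*}(\bb{A}^{\alg}_{R,S})$ with the functor $\n{B}\mapsto\underline{\n{B}}(S)$ on bounded affinoids via the boundedness condition, and note that this presheaf is already a sheaf. The paper's proof is slightly cleaner in that it first observes that both $k^{*}$ and sheafification commute with finite limits, so $(\bb{D}_{R}^{n}\times\bb{A}^{\alg}_{R,S})^{\an}=(\bb{D}_{R}^{n})^{\an}\times(\bb{A}^{\alg}_{R,S})^{\an}$ and the two factors can be treated separately; this avoids your final paragraph entirely. One small slip: you have the direction of the transition maps backwards (the disc of radius $|\pi^{-m}|$ sits inside the disc of radius $|\pi^{-(m+1)}|$, not the other way), and your claim that these transitions are rational localizations cut out by $|s|\le|\pi|$ is imprecise for profinite $S$ since rational localizations involve only finitely many inequalities. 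Neither point is needed for the argument, since the sheaf property of $\n{B}\mapsto\underline{\n{B}}(S)$ follows directly from descent of modules (Theorem~\ref{TheoLocaleTopology}) and the fact that $(-)(S)$ commutes with limits.
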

\begin{proof}
This follows from the fact that both $k^*$ and sheafification commute with finite limits, that $\bb{D}_R^n$ is already a bounded affinoid space, and that  $\bb{A}^{\an}_{R,S}$ represents the functor on $\Aff^{b}_{R_{\sol}}$ given by $\n{A}\mapsto \underline{\n{A}}(S)$. 
\end{proof}

\section{Tate stacks}
\label{SectionAnalyticAdicStacks}

In this section we introduce a geometric framework to do derived rigid geometry. Following the theory of analytic stacks of Clausen and Scholze, we use the  abstract $6$-functor formalisms of Mann \cite{MannSix,MannSix2}, revisited in \cite{zavyalov2023poincare} and \cite{SixFunctorsScholze}, to construct  very general categories of solid and Tate stacks. We discuss other classical geometric objects and features like finitely presented morphisms of derived Tate adic spaces, the theory of the cotangent complex for analytic rings, formally \'etale and smooth morphisms, and  Serre duality. Finally, we introduce   new deformation properties for morphisms, called $\dagger$-formally smoothness and \'etaleness, that will be key in the theory of the analytic de Rham stack. 

\subsection{Recollections on abstract six functor formalisms}
\label{SubsectionAbstractSix}

In this section we briefly recall the definition of a six functor formalism and some of its most important features, we follow \cite{MannSix,MannSix2} and \cite{SixFunctorsScholze}.

\subsubsection{Abstract six functor formalisms}  A \textit{geometric set up} is a pair $(\n{C},E)$ consisting on an $\infty$-category $\n{C}$ and a collection $E$ of homotopy classes of edges in $\n{C}$ such that $E$ contains all isomorphisms, and  is stable under compositions and pullbacks. Throughout this section we  assume that $\n{C}$ admits finite limits. 

Provided the data $(\n{C},E)$, one constructs a symmetric monoidal $\infty$-category of correspondences $\Corr(\n{C}, E)$, see \cite[Definition A.5.4]{MannSix} and \cite[Definition 2.3]{SixFunctorsScholze}.  In a more instructive way, the homotopy category of $\Corr(C,E)$ has the following description: the objects of $\Corr(\n{C},E)$ are the objects of $\n{C}$, the symmetric monoidal structure is given by direct products,  an arrow from $X$ to $Y$ is a correspondence 
\[
\begin{tikzcd}
& W\ar[ld,"f"'] \ar[rd,"g"] & \\ 
X & & Y
\end{tikzcd}
\]
with $g\in E$, and the composition of two arrows is given by  the outer correspondence of the following diagram
\[
\begin{tikzcd} 
& & W\times_{Y} W' \ar[ld] \ar[rd] & & \\  & W\ar[ld] \ar[rd] & & W' \ar[ld] \ar[rd] \\ X & & Y & & Z.
\end{tikzcd}
\]

\begin{definition}[{\cite[Definition A.5.6]{MannSix}}]
A $3$-functor formalism  (or a pre $6$-functor formalism) on $(\n{C},E)$ is a lax symmetric monoidal functor 
\[
\s{D}:\Corr(\n{C},E)\to \Cat_{\infty}
\]
where $\Cat_{\infty}$ is endowed with the cartesian symmetric monoidal structure. 
\end{definition}

As it is explain in the paragraph after \cite[Definition 2.4]{SixFunctorsScholze}, the data of a $3$-functor formalism encodes a functor $\s{D}: \n{C} \to \Cat^{\otimes}_{\infty}$ from $\n{C}$ to symmetric monoidal $\infty$-categories, the pullback functors $f^*$, and the lower shriek functors $f_!$, in such a way that a diagram 
\[
\begin{tikzcd}
& W\ar[ld,"f"'] \ar[rd,"g"] & \\ 
X & & Y
\end{tikzcd}
\]
is sent to the functor $g_!f^*: \s{D}(X)\to \s{D}(Y)$.

\begin{definition}
A $6$-functor formalism is a $3$-functor formalism for which the symmetric monoidal categories $\s{D}(X)$ for $X\in \n{C}$ are closed, and the functors $f^*$ and $f_!$ have right adjoints $f_*$ and $f^!$ respectively. 
\end{definition}

\begin{remark}[Dual $3$-functor formalism]
\label{RemarkDualSixFunctor}
From the datum of a $3$-functor formalism $\s{D}:\Corr(\n{C},E)\to \Cat_{\infty}$ it is possible to construct a \textit{dual}  $3$-functor formalism $\s{D}^{op}$ as in \cite[Remark 6.3]{SixFunctorsScholze}. Concretely, $\s{D}^{op}$ is constructed as the composite $\Corr(\n{C},E)\xrightarrow{\s{D}} \Cat_{\infty} \xrightarrow{(-)^{op}} \Cat_{\infty}$, see  \cite[Remark 2.4.2.7]{HigherAlgebra}. At the level of objects, it maps $X$ to the opposite symmetric monoidal category $\s{D}(X)^{op}$.
\end{remark} 

The following  lemma allows us to construct six functors by taking a precomposition. 

\begin{lemma}
\label{LemmaFunctoriality6Functors}
Let $(\n{C},E)$ be a geometric set up and let $\n{C}'$ be an $\infty$-category with finite limits.   Let $F: \n{C}' \to  \n{C}$ be a functor preserving final objects and cartesian squares.   Let $E'$ be the class of edges $s$ in $\n{C}'$ such that $F(s)\in E$, then $(\n{C}',E')$ is a geometric set up and $F$ induces  a natural symmetric monoidal functor
\[
\Corr(F):\Corr(\n{C}',E')\to \Corr(\n{C},E).
\]
In particular, if $\s{D}: \Corr(\n{C},E) \to \Cat_{\infty}$ is a $3$-functor formalism then $\s{D}\circ \Corr(F): \Corr(\n{C}',E')\to \Cat_{\infty}$ is also a $3$-functor formalism.
\end{lemma}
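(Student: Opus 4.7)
The plan is to verify the three assertions in order: that $(\n{C}',E')$ is a geometric set up, that $F$ induces the claimed symmetric monoidal functor of correspondence $\infty$-categories, and that precomposition with this functor produces a $3$-functor formalism.

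First I would check that $E'$ defines a geometric set up. Any functor of $\infty$-categories sends equivalences to equivalences, so $E'$ contains all isomorphisms of $\n{C}'$ because $E$ does. Since $F$ preserves composition up to coherent homotopy and $E$ is closed under composition, so is $E'$. For stability under pullbacks, given a cartesian square in $\n{C}'$
\[
\begin{tikzcd}
W' \ar[r, "g'"] \ar[d, "f'"'] & X' \ar[d, "f"] \\
Y' \ar[r, "g"'] & Z'
\end{tikzcd}
\]
with $f \in E'$, applying $F$ yields a cartesian square in $\n{C}$ (by hypothesis on $F$), with $F(f)\in E$; since $E$ is stable under pullback, $F(f')\in E$, hence $f'\in E'$.

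Next I would construct $\Corr(F)$. The construction $(\n{C},E)\mapsto \Corr(\n{C},E)$ is functorial in pairs: a morphism of geometric set-ups is a functor $F:\n{C}'\to\n{C}$ preserving pullbacks and sending $E'$ into $E$, and any such functor induces a map of the underlying simplicial/$\infty$-categorical models for correspondences (see \cite[\S A.5]{MannSix} or \cite[\S 2]{SixFunctorsScholze}). Here $F$ satisfies both conditions tautologically: $E'$ is defined as $F^{-1}(E)$, and $F$ preserves pullbacks by assumption. For the symmetric monoidal enhancement, the tensor structure on $\Corr(\n{C},E)$ is induced from the cartesian product on $\n{C}$ (using that $E$ is closed under products of edges, which follows from pullback-stability and the final object). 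Since $F$ preserves final objects and pullbacks, it preserves all finite products, so the induced functor $\Corr(F):\Corr(\n{C}',E')\to\Corr(\n{C},E)$ is symmetric monoidal with respect to these cartesian-product structures.

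Finally, if $\s{D}:\Corr(\n{C},E)\to\Cat_\infty$ is lax symmetric monoidal (with $\Cat_\infty$ endowed with the cartesian monoidal structure), then the composite $\s{D}\circ\Corr(F):\Corr(\n{C}',E')\to\Cat_\infty$ is lax symmetric monoidal, because the composition of a symmetric monoidal functor with a lax symmetric monoidal functor is again lax symmetric monoidal.

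The only non-routine step is the construction of $\Corr(F)$ at the $\infty$-categorical level and the verification of its symmetric monoidal structure; once the functoriality of the correspondence construction is granted (which is essentially formal from the explicit models), the rest reduces to the elementary checks above.
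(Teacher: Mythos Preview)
Your proposal is correct and follows essentially the same approach as the paper. The only difference is one of explicitness: where you defer to the general functoriality of the correspondence construction and its cartesian symmetric monoidal structure, the paper unwinds the specific simplicial model from \cite[Definition~A.5.4]{MannSix}, checking that $B(F):B(\n{C}')\to B(\n{C})$ restricts to $\Corr(\n{C}',E')\to\Corr(\n{C},E)$ and then handling the symmetric monoidal enhancement via the identification $\Corr(\n{C},E)^{\otimes}=\Corr((\n{C}^{\op,\bigsqcup})^{\op},E^{-})$.
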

\begin{proof}
We use the notation of \cite[Definitions A.5.2 and A.5.4]{MannSix}. First, note that the class of arrows $E'$ is stable under compositions and pullbacks since $F$ preserves cartesian squares.  Let us first see that the hypothesis imply that there is a natural functor $\Corr(F): \Corr(\n{C}',E')\to \Corr(\n{C}, E)$. Indeed by construction, $\Corr(\n{C},E)$ is the simplicial subset of $B(\n{C})$ whose $n$-cells are maps $C(\Delta^n) \to \n{C}$ sending vertical edges to $E$ and exact squares to pullback squares. Since $F:\n{C}' \to \n{C}$ preserves cartesian squares, the restriction of $B(F): B(\n{C}')\to B(\n{C})$ to $\Corr(\n{C}',E')$ lands in $\Corr(\n{C},E)$ as wanted. For the symmetric monoidal structure, by \cite[Definition A.5.4]{MannSix} one has 
\[
\Corr(\n{C},E)^{\otimes}=\Corr(( \n{C}^{\op,\bigsqcup})^{\op},E^{-}),
\]
where $\n{C}^{\op,\bigsqcup}$ is the symmetric monoidal structure define by co-products, and the class of edges $E^-$ are those living over $\id:\langle n \rangle \to \langle n \rangle$ for $n\in \bb{N}$ of the form $f: (Y_j)_{1\leq j\leq n} \to (X_i)_{1\leq i\leq n}$ where $Y_i \to X_i$ is in $E$. Then, since $F$ preserves final objects and cartesian squares, it sends co-products to co-products in the opposite categories, so that we have a symmetric monoidal functor 
\[
(F^{\op,\bigsqcup})^{\op}: (\n{C}^{'\op,\bigsqcup})^{\op} \to (\n{C}^{\op,\bigsqcup})^{\op}.
\]
Moreover, by definition the functor $(F^{\op,\bigsqcup})^{\op}$ still sends the edges $E^{'-}$ to $E^{-}$. Then, to finish the proof, we need to see that the natural functor 
\[
B((F^{\op,\bigsqcup})^{\op}): B(  (\n{C}^{'op,\bigsqcup})^{\op}) \to  B((\n{C}^{op,\bigsqcup})^{\op})
\]
restricts to a functor in the correspondence categories. This follows from the fact that  $(F^{\op,\bigsqcup})^{\op}$ still preserves cartesian diagrams and that it sends $E^{'-}$ to $E^{-}$.   
\end{proof}

One of the major contributions of \cite{MannSix} is the construction of $6$-functor formalisms from a minimal amount of data that is of easy access in practice, namely, we are usually given  a functor $\s{D}: \n{C}\to \Cat^{\otimes}_{\infty}$ with values in symmetric monoidal (stable)  $\infty$-categories, and two classes of \textit{\'etale} and \textit{proper} maps $I$ and $P$.  It turns out that  if  the data $(\n{C},\s{D}, I ,P)$ satisfies a minimal set of expected properties, one can construct a $6$-functor formalism  for $(C,E)$ in such a way that all element in $E$ is written as $p\circ j$ with $p\in P$ and $j\in I$, that for $f\in P$ one has $f_*=f_!$, and that for $f\in I$ one has $f^!=f^*$. For the precise statement see \cite[Proposition A.5.10]{MannSix}.

On the other hand, the results of Mann permit the extension of a six functor formalism on $(\n{C}, E)$ to a very general class of arrows in a  suitable category of sheaves on anima of $\n{C}$. To state such an extension theorem we need some definitions, we refer to \cite[Appendix of Lecture IV]{SixFunctorsScholze} for more details.

   Let $(\n{C}, E)$ be a geometric set up and suppose that the six functor formalism $\s{D}:\Corr(\n{C},E)\to \n{P}r^{L,\ex}$ takes  values in presentably  stable  $\infty$-categories.    Let $\widetilde{\n{C}}^{psh}$ be the $\infty$-category of presheaves of anima of $\n{C}$, and $\s{D}: \Corr(\widetilde{\n{C}}^{psh}, \widetilde{E}^0)\to \n{P}r^{L,ex}$ its natural extension to a six-functor formalism on presheaves of anima, where $\widetilde{E}^0$ are the arrows whose pullbacks to $\n{C}$ are representable in $E$ (cf. \cite[Proposition A.5.16]{MannSix}).

\begin{definition}[{\cite[Definition 4.14]{SixFunctorsScholze}}]
\label{DefinitionSixFunctorDtop}

Consider $\{f_i:X_i\to Y\}$ a family of objects in $\n{C}$. 

\begin{enumerate}

\item The maps $f_i$ form a \textit{canonical cover} if for all $Z\in \n{C}$ an any $Y'\to Y$ in $\n{C}$ with pullback $f_i': X_i'\to Y'$, the functor $\Hom_{\n{C}}(-,Z)$ satisfies descent along  $\{f_i'\}$. 

\item  The maps $f_i$ satisfy \textit{universal $*$-descent} if for all pullbacks $\{f_i':X_i'\to Y'\}$ along a map $Y'\to Y$ in $\n{C}$,  the functor $\s{D}^*$ satisfy  descent along $\{f_i'\}$ (i.e. where the transition maps are given by $f^*$-maps).

\item Assume all $f_i$ are in $E$. The maps $f_i$ satisfy \textit{universal $!$-descent} if for all pullbacks  $\{f_i': X_i'\to Y'\}$ along $Y'\to Y$ from a presheaf on anima on $\n{C}$, the functor $\s{D}^!$ satisfies descent along $\{f_i'\}$ (i.e. where the transition maps are given by $f^!$-maps). 

\end{enumerate}

 A \textit{$\s{D}$-cover} is a family  $\{f_i: X_i\to Y\}$ of objects in $E$ such that they form a cover in the canonical topology, satisfy universal $*$-descent, and satisfy universal $!$-descent.  The \textit{$\s{D}$-topology} on $\n{C}$ is the topology generated by  $\s{D}$-covers,  we let $\widetilde{\n{C}}$ denote the $\infty$-category of sheaves on $\n{C}$ for the $\s{D}$-topology.    
\end{definition}
 
Let $\widetilde{E}^0$ the class of arrows in $\widetilde{\n{C}}$  represented by arrows in $E$. As we saw above, the six functor formalism of $(\n{C},E)$ extends to $(\widetilde{\n{C}}, \widetilde{E}^0)$, we want to use the theory of \cite[Appendix A.5]{MannSix} to enlarge the class of arrows $\widetilde{E}^0$ by localizing the target and the source of a map, this leads to the following definition. 

\begin{definition}[{\cite[Definition 4.18]{SixFunctorsScholze}}]
\label{DefinitionStabilityE}
 Let $\widetilde{E}^0\subset \widetilde{E}$ be a class of morphisms in $\widetilde{\n{C}}$ that is stable under pullbacks and compositions. 
 
 \begin{enumerate}
\item The class $\widetilde{E}$ is \textit{stable under disjoint unions} if whenever $f_i :\widetilde{X}_i\to \widetilde{Y}$ are  morphisms in $\widetilde{E}$ then $\bigsqcup_i f_i: \bigsqcup_i \widetilde{X}_i \to \widetilde{Y}$ is in $\widetilde{E}$. 

\item The class $\widetilde{E}$ is \textit{local on the target} if whenever $\widetilde{f}: \widetilde{X}\to \widetilde{Y}$ is a morphism in $\n{C}$ such that  for all $Y\in \n{C}$ with map $Y\to \widetilde{Y}$, the pullback $\widetilde{X}\times_{\widetilde{Y}} Y \to Y$ is in $\widetilde{E}$, then $f\in \widetilde{E}$.  

\item Assume that the six functors of  $(\widetilde{\n{C}},\widetilde{E}^0)$ extend  uniquely to $(\widetilde{\n{C}},\widetilde{E})$. The class $\widetilde{E}$ is \textit{local on the source} if whenever $\widetilde{f}: \widetilde{X}\to \widetilde{Y}$ is a morphism in $\widetilde{C}$ such that there is some map $\widetilde{g}: \widetilde{X}'\to \widetilde{X}$ in $\widetilde{E}$ that is of universal $!$-descent, and such that $\widetilde{f}\circ \widetilde{g}$ lies in $\widetilde{E}$, then $\widetilde{f}\in \widetilde{E}$.

\item  Assume that the six functors of  $(\widetilde{\n{C}},\widetilde{E}^0)$ extend  uniquely to $(\widetilde{\n{C}},\widetilde{E})$. The class $\widetilde{E}$ is \textit{tame} if whenever $Y\in \n{C}$ and $\widetilde{f}: \widetilde{X}\to Y$ is a map in $\widetilde{E}$, then there are morphisms $h_i:X_i\to Y$ in $E$ and a  morphism $\bigsqcup_i X_i\to \widetilde{X}$ over $Y$ that lies in  $\widetilde{E}$ and is of universal $!$-descent. 
 
 \end{enumerate}
\end{definition}

\begin{theorem}[{\cite[Theorem 4.20]{SixFunctorsScholze}}]
\label{TheoSixFunctorsScholze}
There is a minimal collection of morphisms $\widetilde{E}^0\subset \widetilde{E}$ of $\widetilde{\n{C}}$ such that $\s{D}$ extends uniquely from $(\widetilde{\n{C}}, \widetilde{E}^0)$ to $(\widetilde{\n{C}},\widetilde{E})$, and such that $\widetilde{E}$ is stable under disjoint unions, local on the  target, local on the source, and tame. 
\end{theorem}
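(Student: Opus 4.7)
The plan is to construct $\widetilde{E}$ by transfinite iteration of four closure operations, interleaving with extensions of the six-functor formalism; then to show the resulting class is minimal.

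First I would introduce notation for the four closure operators on a class $F$ containing $\widetilde{E}^0$ and stable under pullback and composition: $\sqcup(F)$ (disjoint union closure), $\tau(F)$ (target-local closure), $\sigma(F)$ (source-local closure), and $t(F)$ (tame closure). The first two of these make sense purely combinatorially. For the latter two, one first needs that the six functor formalism $\s{D}$ extends from $(\widetilde{\n{C}}, F)$ to a slightly larger class; here I would invoke the analogue of \cite[Proposition A.5.14 and A.5.16]{MannSix} (as adapted in \cite[Proposition 4.11 and 4.13]{SixFunctorsScholze}), which says that $\s{D}$ always extends uniquely to $\sqcup(F)$ and to $\tau(F)$ because both operations preserve universal $!$-descent, and to $\sigma(F)$ using the local-on-source extension lemma once $F$ is already target-local, and finally to $t(F)$ via universal $!$-descent along the maps $\bigsqcup_i X_i \to \widetilde{X}$ witnessing tameness.

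Next I would perform the transfinite iteration. Set $\widetilde{E}_0 = \widetilde{E}^0$, define $\widetilde{E}_{\alpha+1} = t\bigl(\sigma(\tau(\sqcup(\widetilde{E}_\alpha)))\bigr)$ at successor stages, and take unions at limits. By the extension lemmas recalled above, $\s{D}$ extends uniquely at every stage to the enlarged class; moreover the extensions are compatible along the inclusions, so taking the colimit along the chain still yields a well-defined $3$-functor formalism, which is in fact a $6$-functor formalism because the adjoints $f_*$, $f^!$ exist stage by stage. I expect this chain to stabilize at some ordinal $\alpha$ of cardinality bounded by the size of $\widetilde{\n{C}}$ (a cardinality argument on representing data of the four closure operations); let $\widetilde{E}$ be the stable value. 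By construction $\widetilde{E}$ is stable under all four operations.

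For minimality, let $\widetilde{E}'$ be any class containing $\widetilde{E}^0$, supporting an extension of $\s{D}$, and closed under the four operations. A straightforward transfinite induction shows $\widetilde{E}_\alpha \subset \widetilde{E}'$ at every stage: the four closure operations cannot escape $\widetilde{E}'$ by its own stability, and the uniqueness of the $6$-functor extension (guaranteed by the aforementioned Mann--type lemmas, since all operations preserve universal $!$-descent) forces the extensions to agree. Passing to the stable value gives $\widetilde{E} \subset \widetilde{E}'$, and the extensions of $\s{D}$ match.

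The main obstacle I anticipate is verifying that each closure operation preserves the property that the intermediate class admits a unique extension of $\s{D}$; in particular, one must check that the tame closure $t(F)$ does not destroy the $!$-descent needed for the source-local and target-local extensions to remain valid at the next round. This requires carefully invoking the fact that universal $!$-descent is itself stable under the four operations, which is precisely the content of the extension lemmas adapted from \cite[Appendix A.5]{MannSix}. The bookkeeping for the transfinite induction, while tedious, should reduce to these descent-stability statements combined with an accessibility argument bounding the ordinal at which stabilization occurs.
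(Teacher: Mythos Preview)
The paper does not prove this theorem: it is stated with a citation to \cite[Theorem 4.20]{SixFunctorsScholze} and no proof is given. So there is no ``paper's own proof'' to compare against.

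That said, your sketch is broadly in line with how the result is actually established in the cited reference (building on \cite[Appendix A.5]{MannSix}): one does iterate the four closure operations transfinitely, extending $\s{D}$ uniquely at each stage via the extension lemmas, and minimality follows by induction. One point worth sharpening: the order in which you apply the operations matters, since $\sigma(F)$ (local on source) is only defined once $\s{D}$ already extends to $F$, and the tame closure $t(F)$ likewise presupposes the extension. Your ordering $\sqcup \to \tau \to \sigma \to t$ is essentially the one used in the reference, but you should be explicit that after applying $\sigma$ or $t$ the resulting class is again stable under pullback and composition before iterating. The stabilization argument is correct in outline; in practice one does not need a sharp ordinal bound, only that the union over all ordinals below a sufficiently large cardinal is closed under all four operations.
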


\subsubsection{The Lu-Zheng $2$-category}

Let $(\n{C},E)$ be a geometric set up  with finite limits and $\s{D}$ a six functor formalism on $(\n{C},E)$. We  assume that $\s{D}:\Corr(\n{C},E)\to \n{P}r^{L,\ex}$ takes values in presentable stable $\infty$-categories.  Another important tool in the theory of six-functor formalisms is the $2$-category constructed by Lu-Zheng \cite{lu_zheng_2022} which encodes the Fourier-Mukai kernels between  objects $X$ and $Y$ in $\n{C}$ living over a base  $S$.

\begin{definition}[{\cite[Definition 7.1]{MannSix2}}]
Let $S\in \n{C}$,  the \textit{Lu-Zheng category} $\LZ_{\s{D},S}$ of $(\n{C},E)$ (relative to $\s{D}$ and $S$), is the $2$-category with objects given by arrows $X\to S$ in $E$, for each pair of objects $X,Y$ a $1$-category of functors $\Hom_{\LZ,S}(X,Y)=\s{D}(X\times_S Y)$.  The identity functor in $\s{D}(X\times_S X)$ is given by $\Delta_{!} 1_{X}$ where $\Delta: X\to X\times_S X$ is the diagonal map.  For a triple of objects $X,Y,Z$ the composite transformations
\[
 \Hom_{\LZ,S}(Y,Z) \times \Hom_{\LZ,S}(X,Y) \to  \Hom_{\LZ,S}(X,Z)
\]
are given by the Fourier-Mukai transform $M\star N = \pi_{1,3,!}(\pi_{1,2}^*N\otimes \pi_{2,3}^* M)$ for $N\in \s{D}(X\times_S Y)$, $M\in \s{D}(Y\times_S Z)$ and $\pi_{i,j}$ the corresponding projection of $X\times_{S} Y \times_{S} Z$.
\end{definition}

\begin{remark}
In \cite[Proposition 2.2.6]{zavyalov2023poincare}, Zavyalov shows that the Lu-Zheng category has a natural $(\infty,2)$-categorical enhancement. 
\end{remark}

With the help of the Lu-Zheng category one defines smooth and proper objects, cf. \cite[Definition 6.1]{SixFunctorsScholze}.

\begin{definition}
\label{DefinitionSmoothEtaleProper}
Let $(\n{C},E)$ be a six functor formalism, $S\in \n{C}$ and $f:X\to S$ an arrow in $E$. 
\begin{enumerate}
\item An object $M \in \s{D}(X)=\Hom_{\LZ,S}(X,S) $ is called $f$-\textit{smooth} if it is a left adjoint in $\LZ_{\s{D},S}$.

\item  An object $M\in \s{D}(X)=\Hom_{\LZ,S}(S,X)$ is called $f$-\textit{proper} if it is a left adjoint in  $\LZ_{\s{D},S}$.  

\end{enumerate}
\end{definition}

The following proposition provides different equivalent characterizations of $f$-smooth and $f$-proper objects. 

\begin{prop}
\label{PropSmoothProperObjects}
Let $f:X\to S$ be an arrow in $E$, $p_{i}:X\times_S X\to X$ the projection maps and $\Delta: X\to X\times_{S} X$ the diagonal map. Let $Q\in \s{D}(X)$.

\begin{enumerate}
\item  Let $\n{D}_f(Q)=\iHom_{X}(Q,f^! 1_{S})$. The following are equivalent

\begin{itemize}
\item[(a)]  $Q$ is $f$-smooth.

\item[(b)] The natural map $p_1^* D_{f}(Q)\otimes p_2^*Q\to \iHom_{X\times_{S} X}(p_1^* Q, p_2^! Q)$ is an equivalence.

\item[(c)] For all $g:S'\to S$ with pullback $f': X'\to S'$ and projection map $g:X'\to X$ the following natural functors are equivalences
\[
\begin{gathered}
\n{D}_{f'}(g^{'*}Q)\otimes f^{'*} \to \iHom_{X'}(g^{'*}Q, f^{'!}), \\ 
g^{'*}\iHom_{X}(Q, f^!) \to \iHom_{X'}(g^{'*}Q, f^{'!}g^*).
\end{gathered}
\]
\end{itemize}

If these conditions holds then $\n{D}_{f}(Q)$ is also $f$-smooth with right adjoint $Q$.

\item Let $\n{P}_{f}(Q)= p_{2,*}( \iHom_{X\times_{S} X}(p_{1}^* Q,\Delta_{!} 1_{X}))$. The following are equivalent

\begin{itemize}
\item[(a)] $Q$ is $f$-proper.
 
\item[(b)] The natural map $f_!(Q\otimes \n{P}_{f}(Q)) \to f_* \iHom_{X}(Q,Q)$ is an equivalence. 

\item[(c)]  For all $g:S'\to S$ with pullback $f': X'\to S'$ and projection map $g:X'\to X$ the following natural functors are equivalences
\[
\begin{gathered}
f'_!(-\otimes  \n{P}_{f'}(g^{'*}Q)) \to f'_* \iHom_{X'}(g^{'*}Q,-), \\
g^{*}  f_*\iHom_{X}(Q, -) \to f'_{*}\iHom_{X'}(g^{'*}Q, g^{'*}(-)).  
\end{gathered}
\]

\end{itemize}

If these conditions holds then $\n{P}_{f}(Q)$ is also $f$-proper with right adjoint $Q$. 

\end{enumerate}

\end{prop}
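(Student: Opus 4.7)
The plan is to systematically translate the $2$-categorical condition of being a left adjoint in $\LZ_{\s{D},S}$ into explicit conditions involving the six functors, using the Fourier-Mukai formula for horizontal composition. The first step, which governs the entire proof, is to unpack the composition in $\LZ_{\s{D},S}$. For a candidate right adjoint $R \in \Hom_{\LZ,S}(S,X) = \s{D}(X)$ of $Q \in \Hom_{\LZ,S}(X,S) = \s{D}(X)$, the Fourier-Mukai formula applied to the identifications $X\times_S S\times_S X = X\times_S X$ and $S\times_S X\times_S S = X$ yields
\begin{equation*}
R\star Q = p_1^*Q\otimes p_2^*R \in \s{D}(X\times_S X), \qquad Q\star R = f_!(Q\otimes R) \in \s{D}(S).
\end{equation*}
Since the identities on $X$ and $S$ in $\LZ_{\s{D},S}$ are $\Delta_! 1_X$ and $1_S$, the adjunction data consists of a unit $\eta : \Delta_! 1_X \to p_1^*Q\otimes p_2^*R$ and a counit $\varepsilon : f_!(Q\otimes R) \to 1_S$ satisfying the zigzag identities. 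A parallel computation handles part (2): for $Q\in \Hom_{\LZ,S}(S,X)$ with candidate left adjoint $L\in \Hom_{\LZ,S}(X,S)$, one obtains $L\star Q = f_!(Q\otimes L)$ and $Q\star L = p_1^*L\otimes p_2^*Q$ with unit $1_S\to f_!(Q\otimes L)$ and counit $p_1^*L\otimes p_2^*Q\to \Delta_! 1_X$.

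For the implication (a)$\Rightarrow$(b), (c) in part (1), given an abstract adjunction $Q\dashv R$, the counit $\varepsilon$ translates by $(f_!,f^!)$- and $(\otimes,\iHom)$-adjunction into a natural map $R\to \iHom_X(Q,f^!1_S) = \n{D}_f(Q)$, which the zigzag identities force to be an equivalence, so $R = \n{D}_f(Q)$. The unit $\eta$ similarly translates into the map of (b), and the zigzag identities then imply this map is an equivalence. Statement (c) is the formal consequence that adjunctions are preserved by $2$-functors: the base change along $g: S'\to S$ defines a symmetric monoidal $2$-functor $\LZ_{\s{D},S}\to \LZ_{\s{D},S'}$ (built from the $*$-pullback datum of $\s{D}$ on correspondences), so the adjunction $Q\dashv \n{D}_f(Q)$ is sent to the adjunction $g^{'*}Q\dashv g^{'*}\n{D}_f(Q)$, which by the uniqueness of right adjoints identifies $g^{'*}\n{D}_f(Q)$ with $\n{D}_{f'}(g^{'*}Q)$; rewriting this equivalence and the corresponding one for (b) pulled back along $g$ yields precisely the two displayed maps of (c).

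For (b)$\Rightarrow$(a), one defines $R := \n{D}_f(Q)$ and produces $\varepsilon$ as the tautological evaluation $f_!(Q\otimes \iHom(Q,f^!1_S))\to 1_S$. The unit $\eta : \Delta_! 1_X\to p_1^*Q\otimes p_2^*\n{D}_f(Q)$ is built using (b): composing the equivalence
\begin{equation*}
p_1^*\n{D}_f(Q)\otimes p_2^*Q \xrightarrow{\sim} \iHom_{X\times_S X}(p_1^*Q, p_2^!Q)
\end{equation*}
(swapping the two factors of $X\times_S X$) with the canonical map $\Delta_! 1_X\to \iHom(p_1^*Q,p_2^!Q)$ obtained from $(\Delta_!,\Delta^!)$-adjunction together with the identity $\Delta^!(p_2^!Q) = Q$ (since $p_2\circ\Delta = \id$) and the unit $Q\to \iHom(Q,Q)$. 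Verifying the zigzag identities is then a diagram chase using base change for the cartesian squares built from the projections $p_1,p_2$ and the diagonal $\Delta$, together with the projection formula. The main technical obstacle is ensuring the construction of $\eta$ and the zigzag check interact correctly with base change, and this is precisely the content of the equivalence between (b) and (c); I expect this to be the most delicate point, to be handled by reducing everything to the universal case.

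Part (2) proceeds along the same lines but dually. The counit $p_1^*L\otimes p_2^*Q\to \Delta_! 1_X$ corresponds under adjunction to $p_1^*L\to \iHom(p_2^*Q,\Delta_! 1_X)$ and hence, by pushing along $p_{1,*}$ and using the symmetry between $p_1$ and $p_2$, identifies the left adjoint as $L = \n{P}_f(Q)$. The translation of the zigzag identities into the equivalence of (b) follows by applying $(f_!,f^!)$- and $(\otimes,\iHom)$-adjunctions to the unit and using the projection formula. As in part (1), condition (c) is obtained by base changing along $g:S'\to S$ via the symmetric monoidal $2$-functor $\LZ_{\s{D},S}\to \LZ_{\s{D},S'}$, and the final assertion that $\n{P}_f(Q)$ is itself $f$-proper with right adjoint $Q$ follows formally by reversing the roles of the two objects in the adjunction.
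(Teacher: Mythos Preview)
Your approach is correct and is essentially what lies behind the paper's proof, which is largely by citation: part (1) is quoted from \cite[Proposition~7.7]{MannSix2}, and the equivalence (a)$\Leftrightarrow$(b) in part (2) from \cite[Proposition~6.9]{SixFunctorsScholze}. Your unpacking of the Fourier--Mukai composition and the translation of unit/counit data into the displayed maps is exactly the argument those references carry out.

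Two points to tighten. First, in part (2) you call $L$ the ``candidate left adjoint'' of $Q$, but $Q\in\Hom_{\LZ,S}(S,X)$ being $f$-proper means $Q$ is itself a left adjoint, so the partner you seek is its \emph{right} adjoint; your unit and counit formulas are in fact the correct ones for $Q\dashv L$, so this is only a terminology slip. Second, the final assertion that $\n{P}_f(Q)$ is $f$-proper with right adjoint $Q$ does not follow ``formally by reversing the roles'': having $Q\dashv \n{P}_f(Q)$ does not give $\n{P}_f(Q)\dashv Q$ for free. One must check biduality $\n{P}_f(\n{P}_f(Q))\simeq Q$, which is done by applying condition (b) to $\n{P}_f(Q)$ and identifying the resulting unit/counit with those of the original adjunction (the analogous point for part (1) is the equivalence $\n{D}_f(\n{D}_f(Q))\simeq Q$).

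For (a)$\Rightarrow$(c) in part (2) the paper takes a slightly more direct route than your ``pull back (b) along $g$'': after transporting the adjunction along the base-change $2$-functor $\LZ_{\s{D},S}\to\LZ_{\s{D},S'}$ one observes that $f'^*(-)\otimes g'^*Q:\s{D}(S')\to\s{D}(X')$ has right adjoint $f'_*\iHom_{X'}(g'^*Q,-)$ by the usual $(\otimes,\iHom)$ and $(f'^*,f'_*)$ adjunctions, and also right adjoint $f'_!(-\otimes \n{P}_{f'}(g'^*Q))$ by the Lu--Zheng adjunction; uniqueness of right adjoints gives the first displayed equivalence of (c), and the second then follows from the first together with proper base change and the compatibility $g'^*\n{P}_f(Q)\simeq \n{P}_{f'}(g'^*Q)$.
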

\begin{proof}
The point (1) is precisely \cite[Proposition 7.7]{MannSix2}. For point (2), the equivalence between (a) and (b) is \cite[Proposition 6.9]{SixFunctorsScholze}. The implication (c) to (b) follows by taking $g=\id_{S}$ and evaluating the first equivalence at $Q$. For (a), (b) implies (c), consider the natural transformation $\LZ_{\s{D},S}\to \LZ_{\s{D},S'}$ obtained by taking pullback along $g$, then $g^{*}$ preserves adjunctions  which implies that $g^{'*}Q$ is $f'$-proper with  dual $g^{'*} \n{P}_{f}(Q)$. By \cite[Proposition 6.8 (3)]{SixFunctorsScholze} there is a natural equivalence $g^{'*} \n{P}_f(Q)\cong \n{P}_{f'}(g^{'*} Q)$.  Now, the adjunction between  $g^{'*}Q$ and $\n{P}_{f'}(g^{'*}Q)$ shows  that the functor $f'^* \otimes g^{'*}Q: \s{D}(S')\to \s{D}(X') $ has by right adjoint the functor $f_!(-\otimes \n{P}_{f'}(g^{'*}Q))$, but the first has also by right adjoint 
the functor $f_*'\iHom_{X'}(g^{'*} Q,-)$, which provides the first equivalence of functors.   The second isomorphism follows from the first, proper base change, and the  natural identification $g^{'*} \n{P}_{f}(Q)\cong \n{P}_{f'}(g^{'*}Q)$. 
\end{proof}

\begin{prop}[{\cite[Proposition 7.11]{MannSix2}}]
\label{PropCompositionProperSmoothObjects}
Let $f:Y\to X$ and $g: Z\to Y$ be maps in $E$, let $P\in \s{D}(Y)$ and $Q\in \s{D}(Z)$. 
\begin{enumerate}

\item  If $P$ is $f$-smooth and $Q$ is $g$-smooth then $Q\otimes g^*P$ is $(f\circ g)$-smooth, and the natural map 
\[
g^* \n{D}_{f}(P)\otimes \n{D}_{g}(Q)\to \n{D}_{f\circ g} (Q\otimes g^* P)
\]
is an equivalence. 

\item  If $P$ is $f$-proper and $Q$ is $g$-proper then $Q\otimes g^*P$ is $(f\circ g)$-proper, and there is a natural equivalence
\[
\n{P}_{f\circ g} (Q\otimes g^* P) \cong g^* \n{P}_{f}(P)\otimes \n{P}_{g}(Q).
\]
\end{enumerate}
\end{prop}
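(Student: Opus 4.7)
The plan is to interpret the smoothness/properness of $P$ and $Q$ as left-adjunction data in the Lu--Zheng $2$-categories $\LZ_{\s{D},X}$ and $\LZ_{\s{D},Y}$, and then exploit the fact that composition of left adjoints is again a left adjoint. The main auxiliary input is a base-change $2$-functor $\LZ_{\s{D},Y} \to \LZ_{\s{D},X}$ which sends an object $W \to Y$ to the composite $W \to Y \to X$, and on each mapping category $\Hom_{\LZ_{\s{D},Y}}(W_1,W_2) = \s{D}(W_1 \times_Y W_2)$ is given by $!$-pushforward along the diagonal $W_1 \times_Y W_2 \to W_1 \times_X W_2$. Since this pushforward commutes with the Fourier--Mukai compositions (by the projection formula and proper base change), it preserves identities, compositions, and in particular it sends unit/counit data of adjunctions to unit/counit data of adjunctions.

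For (1), the smoothness hypothesis says that $P \in \Hom_{\LZ_{\s{D},X}}(Y,X)$ is a left adjoint with right adjoint $\n{D}_f(P) \in \Hom_{\LZ_{\s{D},X}}(X,Y)$, and similarly that $Q \in \Hom_{\LZ_{\s{D},Y}}(Z,Y)$ is a left adjoint with right adjoint $\n{D}_g(Q)$. Pushing $Q$ along the base-change $2$-functor yields a left adjoint $\tilde{Q} \in \Hom_{\LZ_{\s{D},X}}(Z,Y) = \s{D}(Z\times_X Y)$, explicitly $\tilde{Q} = (g,\id_Z)_{!}Q$, and composing with $P$ in $\LZ_{\s{D},X}$ yields a left adjoint $P\star \tilde{Q}: Z \to X$. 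Unfolding the Fourier--Mukai formula and applying the projection formula along $(g,\id_Z)_!$ gives
\[
P\star \tilde{Q} = \pi_{1,3,!}\bigl(\tilde{Q}\otimes \pi_{2,3}^{*}P\bigr) = \pi_{1,3,!}(g,\id)_{!}\bigl(Q\otimes g^{*}P\bigr) = Q\otimes g^{*}P,
\]
since $\pi_{2,3}\circ (g,\id) = g$ and $\pi_{1,3}\circ (g,\id) = \id_Z$. Hence $Q\otimes g^{*}P$ is $(f\circ g)$-smooth. An entirely parallel computation applied to the right adjoints produces $\widetilde{\n{D}_g(Q)} \star \n{D}_f(P) = g^{*}\n{D}_f(P)\otimes \n{D}_g(Q)$, which, by uniqueness of right adjoints, is naturally equivalent to $\n{D}_{f\circ g}(Q\otimes g^{*}P)$.

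For (2), the argument is formally dual: now $P\in \Hom_{\LZ_{\s{D},X}}(X,Y)$ and $Q\in \Hom_{\LZ_{\s{D},Y}}(Y,Z)$ are the left adjoints. Base-changing $Q$ to $\LZ_{\s{D},X}$ yields $\tilde{Q}\in \Hom_{\LZ_{\s{D},X}}(Y,Z)$ with $\tilde{Q} = (g,\id_Z)_{!}Q$, and the composition $\tilde{Q}\star P$ is again a left adjoint. The same projection-formula calculation identifies $\tilde{Q}\star P$ with $Q\otimes g^{*}P\in \s{D}(Z)$, so $Q\otimes g^{*}P$ is $(f\circ g)$-proper. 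Composing the right adjoints in reverse order gives the claimed identification $\n{P}_{f\circ g}(Q\otimes g^{*}P)\cong g^{*}\n{P}_f(P)\otimes \n{P}_g(Q)$.

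The only genuinely nontrivial step is establishing that the base-change assignment $\LZ_{\s{D},Y}\to \LZ_{\s{D},X}$ is really a $2$-functor compatible with Fourier--Mukai composition, since after that the result reduces to the formal calculus of adjoints in a $2$-category together with the explicit Fourier--Mukai computations above; alternatively, one can bypass the $2$-functor and verify the criterion of Proposition \ref{PropSmoothProperObjects}(1)(c) (respectively (2)(c)) for $Q\otimes g^{*}P$ directly, using the smoothness/properness of $P$ and $Q$ together with base change in the six functor formalism.
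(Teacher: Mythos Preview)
Your proposal is correct and follows essentially the same strategy as the reference the paper cites: the base-change $2$-functor $\iota_!:\LZ_{\s{D},Y}\to \LZ_{\s{D},X}$ you describe is exactly the one the paper itself spells out later in the proof of Proposition~\ref{PropPreserveAdjointsComposition}, and your Fourier--Mukai computations are the standard unwindings. The only difference is in part (2): the paper deduces it from part (1) by passing to the dual $3$-functor formalism $\s{D}^{\op}$ (Remark~\ref{RemarkDualSixFunctor}), whereas you redo the adjoint composition directly in $\LZ_{\s{D},X}$; these are formally equivalent, though your version arguably makes the identification of $\n{P}_{f\circ g}$ slightly more transparent (compare the paper's own comment that the equivalence in (2) ``is not very explicit'').
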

\begin{proof}
In \textit{loc. cit.} it is shown part (1), the same argument using the dual six functors  $\s{D}^{\op}$ recovers  part (2), see Remark \ref{RemarkDualSixFunctor}.
\end{proof}

\begin{remark}
The equivalence in (2) of Proposition \ref{PropCompositionProperSmoothObjects} is not very explicit, it is obtained from a very involved adjunction in the Lu-Zheng category.  
\end{remark}

 \begin{prop}[Local on the target and stable by base change]
\label{PropLocalTargetBaseChange}
Consider a cartesian square
\[
\begin{tikzcd}
X' \ar[d, "f'"] \ar[r, "g'"] & X \ar[d,  "f"] \\ 
S'\ar[r, "g"] & S
\end{tikzcd}
\]
with $f\in E$, and let $Q\in \s{D}(X)$.  The following hold: 

\begin{enumerate}
\item If $Q$ is $f$-smooth (resp. $f$-proper) then $g^{'*}Q$ is $f'$-smooth (resp. $f'$-proper).

\item If $g$ satisfies universal $*$-descent  and $g^{'*} Q$ is $f'$-smooth (resp. $f'$-proper) then $Q$ is $f$-smooth (resp. $f$-proper).
\end{enumerate}
\end{prop}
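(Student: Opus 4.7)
The plan for part (1) is to observe that base change along $g: S' \to S$ extends to a symmetric monoidal 2-functor $g^*_{\LZ}: \LZ_{\s{D}, S} \to \LZ_{\s{D}, S'}$: on objects $X\to S$ it sends $X$ to $X' = X\times_S S'$, and on Hom-categories $\s{D}(X\times_S Y) \to \s{D}(X'\times_{S'} Y')$ it acts by pullback along the natural projection. That $g^*_{\LZ}$ respects Fourier-Mukai convolution up to the coherent isomorphisms of a 2-functor amounts to the standard base change and projection formula identities of the six functor formalism. Since 2-functors preserve adjunctions and $g^*_{\LZ}(Q) = g^{'*}Q$ (viewed either in $\Hom_{\LZ, S}(X, S)$ or in $\Hom_{\LZ, S}(S, X)$), both smoothness and properness are preserved. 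Alternatively, the result is immediate from criterion (c) of Proposition \ref{PropSmoothProperObjects} by restricting the indexing test maps $T\to S$ to those factoring through $g$.

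For part (2) the argument also runs through criterion (c) of Proposition \ref{PropSmoothProperObjects}. Given an arbitrary test map $h: T \to S$, form the cartesian square
\[
\begin{tikzcd}
T' \ar[r, "p"] \ar[d, "h'"] & T \ar[d, "h"] \\
S' \ar[r, "g"] & S.
\end{tikzcd}
\]
Universal $*$-descent is stable under pullback, so both $p$ and its further base change $\tilde p: X_{T'} \to X_T$ inherit it. Applying the hypothesis on $g^{'*}Q$ together with (c) relative to the test map $h': T' \to S'$ yields that the natural transformations defining smoothness (resp.\ properness) are equivalences at the level of $X_{T'}$. One then identifies these, via the standard compatibilities of the six functors with $\tilde p^*$, with the pullback to $X_{T'}$ of the corresponding natural transformations for $Q$ over $X_T$. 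Since $\tilde p$ is of universal $*$-descent, the transformations over $X_T$ are themselves equivalences, which verifies (c) for $Q$ with respect to $h$.

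The main technical point I anticipate is the identification in the last step: checking that $\tilde p^*$ intertwines, up to canonical base change isomorphism, the operations $f^!$, $\otimes$, $\iHom$, and the derived duals $\n{D}_{f_T}(-)$ and $\n{P}_{f_T}(-)$ entering in (1.c) and (2.c). The ingredients are proper base change, the projection formula, and the defining adjunctions in the six functor formalism; the delicate part is arranging the coherent diagrams so that the descent along $\tilde p$ applies cleanly. Once this bookkeeping is in place, the argument sketched above goes through and yields the statement for both smoothness and properness simultaneously, since they are formally dual in the Lu-Zheng category.
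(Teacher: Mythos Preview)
Your treatment of part (1) is correct and matches the paper: the pullback $g^*_{\LZ}$ is a 2-functor between Lu--Zheng categories and therefore preserves adjunctions.

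For part (2), there is a genuine gap. The ``standard compatibilities'' you invoke---that $\tilde p^*$ intertwines $\iHom_{X_T}(h_X^*Q,-)$, $f_T^!$, and $\n{D}_{f_T}$, $\n{P}_{f_T}$---are \emph{not} consequences of proper base change and the projection formula. The identity
\[
\tilde p^*\iHom_{X_T}(h_X^*Q, f_T^!(-)) \;\simeq\; \iHom_{X_{T'}}(\tilde p^* h_X^* Q, f_{T'}^! p^*(-))
\]
is precisely the second equivalence in (1.c) for the object $h_X^*Q$ with test map $p:T'\to T$, and the analogous identity for $\n{D}_{f_T}$ is the first. These hold if $h_X^*Q$ is $f_T$-smooth, which is a special case of what you are trying to prove. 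Replacing $(S,S',Q)$ by $(T,T',h_X^*Q)$ puts you in exactly the same situation one level down, so the argument does not terminate.

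The paper avoids this circularity by descending a \emph{functor} along the \v{C}ech nerve $S'_\bullet$ of $g$ rather than verifying condition (c) for $Q$ at an arbitrary test $h$. By part (1), each $g'^*_n Q$ is $f'_n$-proper; the second equivalence in (2.c), applied at each simplicial level, says exactly that the functor $M_\bullet \mapsto f'_{\bullet,*}\iHom_{X'_\bullet}(g'^*_\bullet Q, M_\bullet)$ preserves cocartesian sections. Since $\s{D}(S)=\varprojlim_n \s{D}(S'_n)$ and $\s{D}(X)=\varprojlim_n \s{D}(X'_n)$ by universal $*$-descent, this simplicial functor descends to $f_*\iHom_X(Q,-)$, and simultaneously the duals $\n{P}_{f'_\bullet}(g'^*_\bullet Q)$ glue to an object $B\in\s{D}(X)$ satisfying $f_!(-\otimes B)\simeq f_*\iHom_X(Q,-)$; one then identifies $B=\n{P}_f(Q)$ by specializing. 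The key point is that the needed compatibilities are only available along the structure maps of the \v{C}ech nerve of $g$---because those maps live over $S'$, where the hypothesis applies---not along an arbitrary $\tilde p: X_{T'}\to X_T$.
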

\begin{proof}
For smooth objects this is  \cite[Corollary 7.8]{MannSix2}, the same proof applies in the abstract context. Note that the only property of a $v$-cover that is used in \textit{loc. cit.} is that it satisfies universal $*$-descent which holds in our case by hypothesis.  The case for proper objects follows by dual arguments in the sense of  Remark \ref{RemarkDualSixFunctor} as we describe next: point (1) follows from the fact that the natural transformation of $2$-categories $ g^*:\LZ_{\s{D},S}\to \LZ_{\s{D},S'}$ preserves adjunctions. For part (2), by Proposition \ref{PropSmoothProperObjects} (2.b) it suffices to show that the natural transformation
\[
f_!(-\otimes \n{P}_{f}(Q))\to f_* \iHom_{X}(Q,-)
\] 
is an equivalence. Let $S'_{\bullet}$ be the \v{C}ech nerve of $g$, and $X'_{\bullet}$ its pullback to $X$. By universal $*$-descent we have natural equivalences 
\[
\s{D}(S) \to \varprojlim_{[n]\in \Delta} \s{D}(S'_{n}) \mbox{ and } \s{D}(X) \to \varprojlim_{[n]\in \Delta} \s{D}(X'_{n}).
\]
 Consider the functor 
\[
\s{D}(X'_{\bullet})\to  \s{D}(S'_{\bullet}) \;\;\; M_{\bullet} \mapsto f'_{\bullet,*} \iHom_{X'_{\bullet}}(g^{'*}_{\bullet}Q,M_{\bullet}).
\]
By the second equivalence in Proposition \ref{PropSmoothProperObjects} (2.c), it preserves cocartesian sections, so it descends to a functor 
\[
\s{D}(X)\to \s{D}(S),
\]
by looking at left adjoints one shows that  this functor is actually equal to $f_* \iHom_X(Q,-)$.  It follows that the natural map 
\[
g^{'*}f_* \iHom_X(Q,-)\to f^{'}_* \iHom_{X'}(g^{'*}Q,-)
\]
is an equivalence of functors. On the other hand, since $g^{'*} Q$ is $f'$-proper, the simplicial object $\n{P}_{f'_{\bullet}}(g^{'*}_{\bullet} Q)$ is a cocartesian section of $\s{D}(X'_{\bullet})$ and it descents to an object $B\in \s{D}(X)$. Moreover, the isomorphism of functors
\[
f'_{\bullet, !}(-\otimes \n{P}_{f'_{\bullet}}(g^{'*}_{\bullet} Q))\to  f'_{\bullet,*}\iHom_{X'_{\bullet}}(g^{'*}_{\bullet}Q,-)
\]provided by Proposition \ref{PropSmoothProperObjects} (2.c) descents to an equivalence of functors 
\[
f_!(-\otimes B) \to f_*\iHom_{X}(Q,-).
\]
Note that the previous equivalence passes through any base change $S''\to S$. Taking the base change along $X\to S$ and evaluating at $\Delta_! 1_{X}$ one gets that $B= \n{P}_{f}(Q)$ and that the previous arrow is the natural one coming from the adjunction of \cite[Proposition 6.9]{SixFunctorsScholze}, this finishes the proof.  
\end{proof}

The following are two practical ways to construct $\s{D}$-covers in a six functor formalism, they correspond to smooth and proper descent respectively.

\begin{prop}[{\cite[Proposition 6.18]{SixFunctorsScholze}}]
\label{PropositionSmoothDescent}
Let $f:X\to Y$ be a morphism in $E$ such that $1_X$ is $f$-smooth. Then 
\[
f^*: \s{D}(Y)\to \s{D}(X)
\]
is conservative if and only if the natural map 
\[
\varinjlim_{[n]\in \Delta^{\op}} f_{n+1,!}f^{n+1,!}(1_Y) \to 1_Y
\]
is an isomorphism (where $f_{n+1}: X^{n+1/Y}\to Y$ is the $n+1$-th fold fiber product), and this condition passes to any base change.  In that case, the pullback functors 
\[
(f^*_{n+1})_n: \s{D}(Y) \to \varprojlim_{[n]\in \Delta} \s{D}(X^{n+1/Y}) \mbox{ and } (f^!_{n+1})_n: \s{D}(Y) \to \varprojlim_{[n]\in \Delta} \s{D}(X^{n+1/Y})
\]
are equivalences. In particular, if $f$ is a canonical cover, then it is a $\s{D}$-cover, and of universal $*$ and $!$-descent. 
\end{prop}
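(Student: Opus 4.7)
The strategy is to apply a comonadic descent argument to the adjunction $f_! \dashv f^!$, leveraging the fact that $f$-smoothness of $1_X$ makes this adjunction behave almost as well as $f^\sharp \dashv f^*$. First I would observe that, by Proposition~\ref{PropSmoothProperObjects}(1) applied to $Q=1_X$, the object $\omega := f^! 1_Y$ is invertible in $\s{D}(X)$ and there is a natural equivalence $f^!(-) \cong f^*(-) \otimes \omega$. In particular, conservativity of $f^*$ is equivalent to conservativity of $f^!$, so it suffices to phrase everything in terms of $(f_!,f^!)$. Combining the projection formula for $f_!$ with this twist identification yields
\[
f_! f^! M \;\cong\; f_!\bigl(f^*M \otimes \omega\bigr) \;\cong\; M \otimes f_! f^! 1_Y.
\]
Thus the endofunctor $T := f_! f^!$ on $\s{D}(Y)$ is given by tensoring with the algebra-like object $A := f_! f^! 1_Y$, which carries a natural comonad structure via the counit $A\to 1_Y$ and comultiplication $A\to A\otimes A$.

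Next I would identify the Čech-nerve cosimplicial object with the bar complex of this comonad. Using iterated base change along the cartesian squares $X^{n+1/Y} = X\times_Y X^{n/Y}$ (which are available in any six-functor formalism) together with the smoothness-induced isomorphisms of the first paragraph, one obtains $f_{n+1,!} f_{n+1}^! 1_Y \cong A^{\otimes (n+1)}$ compatibly with face and degeneracy maps; the augmentation to $1_Y$ is induced by the counit. Therefore the condition that
\[
\varinjlim_{[n]\in\Delta^{\op}} f_{n+1,!} f_{n+1}^!(1_Y) \;\longrightarrow\; 1_Y
\]
be an equivalence is precisely the statement that the bar resolution of $1_Y$ with respect to the comonad $T$ converges. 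Because $T$ acts by tensoring with $A$ and tensor products commute with colimits, convergence for $1_Y$ is equivalent to convergence for every $M\in\s{D}(Y)$, i.e.\ to the $T$-comonadicity of the adjunction.

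The main step is then to invoke Barr--Beck--Lurie for the adjunction $(f_!,f^!)$. The two hypotheses to check are (i) $f^!$ is conservative (equivalent to $f^*$ being conservative, which we have assumed) and (ii) $f^!$ preserves $f^!$-split geometric realizations. Point (ii) is a formal consequence of the Beck--Chevalley base change built into the six-functor formalism, together with the projection formula and the invertibility of $\omega$. From Barr--Beck--Lurie one concludes $\s{D}(Y) \xrightarrow{\sim} \mathrm{coMod}_T(\s{D}(Y))$, and identifying the latter with $\varprojlim_{[n]\in \Delta} \s{D}(X^{n+1/Y})$ via base change yields the claimed equivalences for both $(f_{n+1}^*)_n$ and $(f_{n+1}^!)_n$. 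Stability under arbitrary base change follows from Proposition~\ref{PropLocalTargetBaseChange}(1) (smoothness is preserved) together with the compatibility of the formation of the Čech nerve with base change; all the identifications above are manifestly natural.

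Finally, for the canonical-cover statement: if $f$ is a canonical cover, then $\Hom_{\n{C}}(-,Z)$ satisfies descent along the Čech nerve of any base change of $f$, which by Yoneda for $\s{D}$-valued presheaves translates into $f^*$ being universally conservative; in particular the hypotheses of the proposition hold after any base change, giving the required universal $*$- and $!$-descent and hence that $f$ is a $\s{D}$-cover. The main technical obstacle throughout is the careful verification of the Beck--Chevalley adjointability needed for Barr--Beck--Lurie to apply to cosimplicial objects, but this is encoded in the axioms of the six-functor formalism once one knows $1_X$ is $f$-smooth.
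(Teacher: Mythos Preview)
The paper does not prove this statement; it is cited from \cite[Proposition 6.18]{SixFunctorsScholze} without argument, so there is nothing in the paper to compare against. Your overall strategy (identify $f_!f^!$ with tensoring by $A=f_!f^!1_Y$ via the projection formula, then run a Barr--Beck argument) is the right one, but several of the steps as written do not go through.

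The most serious issue is the claim that $\omega=f^!1_Y$ is invertible: that is precisely the \emph{additional} hypothesis defining ``cohomologically smooth'' in Definition~\ref{DefinitionCohoSmooth} and is not part of ``$1_X$ is $f$-smooth''. Proposition~\ref{PropSmoothProperObjects}(1)(c) does give $f^!\cong f^*(-)\otimes\omega$, which suffices for your identity $f_!f^!M\cong M\otimes A$ and for $f^!$ to preserve all colimits, but not for ``$f^*$ conservative $\Leftrightarrow f^!$ conservative''. Conservativity of $f^!$ should instead be read off the colimit condition directly (if $f^!M=0$ then every $f_{n+1,!}f_{n+1}^!M$ vanishes, hence $M=0$). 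You also have not argued the forward direction of the iff; the point is that $f^*$ applied to the augmented bar diagram acquires an extra degeneracy, which one sees using proper base change together with the smoothness identity $p_1^*\omega\cong p_2^!1_X$ of Proposition~\ref{PropSmoothProperObjects}(1)(b) to produce a section of $f^*A\to 1_X$. Next, your Barr--Beck conclusion is mis-stated: monadicity of $f^!$ yields $\s{D}(Y)\simeq\Mod_{f^!f_!}(\s{D}(X))$, not ``$\mathrm{coMod}_T(\s{D}(Y))$'', and this gives the $!$-descent equivalence; the $*$-descent equivalence still needs its own argument since without invertibility of $\omega$ you cannot simply twist between $f^!$ and $f^*$. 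Finally, the attempt to derive $f^*$ conservative from the canonical-cover hypothesis via ``Yoneda for $\s{D}$-valued presheaves'' does not work---canonical covers concern representable presheaves $\Hom_{\n{C}}(-,Z)$, not the functor $\s{D}$---and is in any case unnecessary: the ``In particular'' clause is to be read under the standing assumption that $f^*$ is conservative, so that universal $*$- and $!$-descent are already in hand and the canonical-cover hypothesis supplies the remaining ingredient of Definition~\ref{DefinitionSixFunctorDtop}.
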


\begin{prop}[{\cite[Proposition 6.19]{SixFunctorsScholze}}]
\label{PropositionProperDescent}
  Let $f: X\to Y$ be an arrow in $E$ such that $1_X$ is $f$-proper, with $f_n: X^{n/Y}\to Y$ the $n$-th fold fiber product. Assume that  the map 
\[
1_Y \to ``{\varprojlim_{[n]\in \Delta}}" f_{n+1,*} 1_{X^{n+1/Y}}
\] 
is an isomorphism in $\mathrm{Pro}(\s{D}(Y))$; equivalently, $f_* 1_X \in \mathrm{CAlg}(\s{D}(Y))$ is descendable. 

Then the pullback functors 
\[
(f^*_{n+1})_n: \s{D}(Y) \to \varprojlim_{[n]\in \Delta} \s{D}(X^{n+1/Y}) \mbox{ and } (f^!_{n+1})_n: \s{D}(Y) \to \varprojlim_{[n]\in \Delta} \s{D}(X^{n+1/Y})
\]
are equivalences. In particular, if $f$ is a canonical cover, then it is a $\s{D}$-cover, and of universal $*$ and $!$-descent.
\end{prop}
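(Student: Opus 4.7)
The plan is to deduce both $*$- and $!$-descent from Mathew's theorem on descendable algebras applied to $A := f_* 1_X$, exactly parallel to the smooth case (Proposition \ref{PropositionSmoothDescent}), using the $f$-properness of $1_X$ as the structural input that ensures the projection formula and universal base-change for $f_*$.

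First, I would unpack the consequences of $1_X$ being $f$-proper via Proposition \ref{PropSmoothProperObjects}(2.c): we obtain the projection formula $M \otimes f_*N \simeq f_*(f^*M \otimes N)$ together with the universal base-change isomorphism $g^*f_* \simeq f'_* g^{'*}$ for any $g: Y' \to Y$ with pullback $f'$. Specializing to $N = 1_X$ yields $f_*f^*M \simeq M \otimes A$, identifying the monad of the adjunction $f^* \dashv f_*$ with $A \otimes (-)$. The universal base-change then promotes this to the cosimplicial level: the Amitsur cosimplicial diagram $f_{\bullet+1,*}\,1_{X^{\bullet+1/Y}}$ is identified with the cobar construction $A^{\otimes(\bullet+1)}$, and the Barr--Beck--Lurie theorem identifies each $\s{D}(X^{n+1/Y})$ with $\Mod_{A^{\otimes(n+1)}}(\s{D}(Y))$ compatibly along this structure.

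Second, by Mathew \cite{MathewDescent}, descendability of $A$ is precisely the statement that the canonical map $\s{D}(Y) \to \Tot(\Mod_{A^{\otimes(\bullet+1)}}(\s{D}(Y)))$ is an equivalence with uniformly bounded totalization. Combined with the identifications above this yields the $*$-descent equivalence $(f_{n+1}^*)_n$.

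Third, for $!$-descent I would exploit that $1_X$ being $f$-proper gives the relation $f_!(-\otimes \n{P}_f(1_X)) \simeq f_*$ from Proposition \ref{PropSmoothProperObjects}(2.b). This means $f^!$ and $f^*$ differ by tensoring with the dualizing twist $\n{P}_f(1_X)$, and by Proposition \ref{PropCompositionProperSmoothObjects}(2) these twists are multiplicative along the simplicial face maps of the \v{C}ech nerve, so the $!$-cosimplicial diagram is obtained from the $*$-one by a levelwise invertible twist that extends to the totalization. This reduces $!$-descent to the established $*$-descent. The final assertion about canonical covers being $\s{D}$-covers follows by running the argument after arbitrary base change, since both properness (Proposition \ref{PropLocalTargetBaseChange}) and descendability pass through pullback. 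The main obstacle, in my view, is the cosimplicial bookkeeping in the third step: ensuring that the twists $\n{P}_{f_n}(1_{X^{n/Y}})$ fit into a coherent cosimplicial invertible object whose tensor action implements the difference between $(f_{n+1}^*)_n$ and $(f_{n+1}^!)_n$ as functors into the limit.
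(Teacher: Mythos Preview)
The paper does not prove this proposition; it is quoted from \cite[Proposition 6.19]{SixFunctorsScholze} without argument. So I assess your proposal on its own merits.

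Your $*$-descent argument (steps 1--2) is essentially the standard route, with one over-claim: you assert that Barr--Beck--Lurie identifies each $\s{D}(X^{n+1/Y})$ with $\Mod_{A^{\otimes(n+1)}}(\s{D}(Y))$. That would require $f_{n+1,*}$ to be monadic, hence conservative, which is not part of the hypotheses and is generally false. What is actually available, and sufficient, is that the monad $f_{n+1,*}f_{n+1}^*$ on $\s{D}(Y)$ is $A^{\otimes(n+1)}\otimes(-)$ (projection formula and base change from Proposition~\ref{PropSmoothProperObjects}(2.c)); Mathew's theorem then gives the pro-constancy of $M\to\Tot(A^{\otimes(\bullet+1)}\otimes M)$ directly, and hence the equivalence $\s{D}(Y)\xrightarrow{\sim}\varprojlim^*\s{D}(X^{\bullet+1/Y})$. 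No identification of the individual levels is needed.

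Your $!$-descent argument (step 3) has a genuine gap. From $f_!(-\otimes \n{P}_f(1_X))\simeq f_*$ you infer that ``$f^!$ and $f^*$ differ by tensoring with $\n{P}_f(1_X)$'', but this does not follow. The relation from Proposition~\ref{PropSmoothProperObjects}(2.b) is between the \emph{lower} functors $f_!$ and $f_*$; passing to adjoints yields no formula of the shape $f^!\simeq \n{P}_f(1_X)^{\pm 1}\otimes f^*$ unless $\n{P}_f(1_X)$ is \emph{invertible}. That invertibility is exactly the co-smooth hypothesis of Definition~\ref{DefinitionCohoSmooth}, which is strictly stronger than $1_X$ being $f$-proper. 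Without it, the ``levelwise invertible twist'' you propose is not an auto-equivalence of $\s{D}(X^{n+1/Y})$ and cannot intertwine the $d^*$- and $d^!$-transition maps; the multiplicativity in Proposition~\ref{PropCompositionProperSmoothObjects}(2) concerns the $\n{P}$-objects themselves, not any relation between $*$- and $!$-pullbacks. You correctly flag the cosimplicial bookkeeping as the obstacle, but the obstruction is more fundamental than coherence: the objects $\n{P}_{f_n}(1_{X^{n/Y}})$ are simply not invertible in general, so there is no ``cosimplicial invertible object'' to assemble. The $!$-descent in \cite{SixFunctorsScholze} is not obtained by twisting the $*$-descent; a separate argument is required.
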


\subsubsection{Cohomologically smooth and co-smooth maps}

Let $\s{D}$  be a six functor formalism on $(\n{C},E)$ taking values in  presentable stable $\infty$-categories, and  suppose that $\n{C}$ admits finite limits. One of the main advantages of the abstract six functor formalisms is that one can axiomatize cohomological properties of smooth and proper maps in algebraic geometry via the Lu-Zheng category. We follow \cite[\S 8]{MannSix2} and \cite[Lecture V]{SixFunctorsScholze} for the definition of cohomologically smooth maps. For the replacement of proper maps,  we will use a weaker notion suggested at the beginning of \cite[\S 9]{MannSix2} which we shall call \textit{co-smooth maps}, this is the same as  cohomologically smooth  for the dual six functor formalism $\s{D}^{\op}$.

\begin{definition}
\label{DefinitionCohoSmooth}
An arrow $f: X\to Y$ in $E$ is \textit{cohomologically smooth} if $1_{X}$ is $f$-smooth and $\n{D}_{f}(1_{X})=f^! 1_{Y}$ is invertible.  Similarly, $f$ is \textit{cohomologically co-smooth} if $1_{X}$ is $f$-proper and $\n{P}_{f}(1_{X})$ is invertible. 
\end{definition}

Translating the definition of cohomologically smooth and co-smooth maps  from the Lu-Zheng category to functors, a smooth map $f:X\to Y$ gives a natural equivalence of functors $ f^! 1_Y \otimes f^* \xrightarrow{\sim} f^!$ while a co-smooth map gives a natural equivalence of functors $f_!(- \otimes \n{P}_{f}(1_X)) \xrightarrow{\sim } f_*$. Among smooth and co-smooth maps, there are two special families consisting on \'etale and proper maps.

\begin{definition}[{\cite[Definitions 6.10 and 6.12]{SixFunctorsScholze}}]
\label{DefinitionCohoEtaleProper}
Let $f:Y\to X$ be an $n$-truncated map in $E$.

\begin{enumerate}

\item We say that $f$ is \textit{cohomologically proper} if $\Delta_{f}$ is cohomologically proper or an isomorphism, and if $1_Y$ is $f$-proper.  

\item We say that $f$ is \textit{cohomologically \'etale} if $\Delta_f$ is cohomologically \'etale or an isomorphism, and if $1_{Y}$ is $f$-smooth. 

\end{enumerate}
\end{definition}

In coherent cohomology, the maps $f:Y \to X$ that are proper in a suitable geometric sense are far from being $n$-truncated. However, one still would expect to have identifications $f_!= f_*$, and that the functor $f_*$ preserves ``coherent'' sheaves. Moreover, if the six functor formalism arises from a geometric decomposition $(I,P)$ one would expect that the arrows in $I$ and $P$ are \'etale and proper in a suitable sense respectively.  As a replacement of cohomologically \'etale and proper maps we define the following weaker notion:

\begin{definition}
\label{DefinitionWeaklyProper}
Consider and arrow $f: Y\to X$  in $E$ and  let $\Delta_{f}: Y \to Y\times_X Y$. We say that $f$ is  \textit{weakly cohomologically proper} (resp. \'etale) if the following hold: 
\begin{enumerate}

\item $1_{Y}$ is $\Delta_{f}$-proper (resp. $f$-\'etale) and there is a (non-canonical) equivalence $\n{P}_{\Delta_{f}}(1_Y) \simeq 1_{Y}$ (resp. an equivalence $\n{D}_{\Delta_{f}}(1_Y)\simeq 1_Y$). 
\item $1_{Y}$ is $f$-proper (resp. $f$-\'etale). 

\end{enumerate}
\end{definition}

\begin{remark}
Note that, under the hypothesis of the definition,  the equivalence $\n{P}_{\Delta_f}(1_{Y})\simeq  1_{Y}$, induces an equivalence $\n{P}_{f}(1_Y) \simeq  1_{Y}$. In particular, we have  (non-natural) isomorphisms of functors $\Delta_{f,!}\simeq \Delta_{f,*}$ and $f_! \simeq f_*$. Similarly for weakly cohomological \'etale maps. 

\end{remark}

The following lemma gives a way to construct weakly cohomologically \'etale and proper maps from  $I$ and $P$.

\begin{lemma}
\label{LemmaRepresentableInP}
Suppose that the six functor formalism on $(\n{C},E)$ arises from a suitable decomposition $(I,P)$. Let $f: \widetilde{X}\to \widetilde{S}$ be a map in $\widetilde{\n{C}}$  that is, locally in the $\s{D}$-topology on $\widetilde{S}$, representable by an arrow in $I$ (resp. $P$). Then $f$ is weakly cohomologically \'etale (resp. proper). 
\end{lemma}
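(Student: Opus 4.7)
The plan is to reduce to the representable case in $\n{C}$ via $\s{D}$-descent on the target, and then verify the conditions directly using the axiomatic properties of arrows in $I$ and $P$.

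First I would reduce to the case where $f$ is itself a single arrow $f_0 : X \to S$ in $\n{C}$ lying in $I$ (resp.\ $P$). By hypothesis there is a $\s{D}$-cover $\{\widetilde{S}_i \to \widetilde{S}\}$ over which $f$ pulls back to arrows $f_i$ representable in $\n{C}$ and lying in $I$ (resp.\ $P$). The formation of the diagonal commutes with base change, so $\Delta_f$ becomes representable over the same cover. By Proposition \ref{PropLocalTargetBaseChange}(2), both $f$-smoothness/properness of $1_{\widetilde X}$ and $\Delta_f$-smoothness/properness of $1_{\widetilde X}$ descend along $\s{D}$-covers, which satisfy universal $*$-descent by definition. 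Moreover, the base-change compatibilities of $\n{D}_{(-)}(1)$ and $\n{P}_{(-)}(1)$ given in Proposition \ref{PropSmoothProperObjects}(1.c)/(2.c) show that the equivalences $\n{D}_{\Delta_f}(1_{\widetilde X}) \simeq 1_{\widetilde X}$ and $\n{P}_{\Delta_f}(1_{\widetilde X}) \simeq 1_{\widetilde X}$ descend as well.

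In the \'etale case, by the axiomatics of a six-functor formalism arising from a decomposition $(I,P)$ in the sense of \cite[Proposition A.5.10]{MannSix}, every $j \in I$ satisfies $j^! = j^*$ canonically. In particular $j^! 1_S = 1_X$, and unpacking the characterization of Proposition \ref{PropSmoothProperObjects}(1.b) shows that $1_X$ is $j$-smooth with $\n{D}_j(1_X) = 1_X$. Specializing to $j = f_0$ gives the second clause of Definition \ref{DefinitionWeaklyProper}. Since the diagonal $\Delta_{f_0}$ of an arrow in $I$ is an open immersion and hence again in $I$, applying the same argument to $\Delta_{f_0}$ yields the first clause. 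In the proper case, arrows $p \in P$ canonically satisfy $p_! = p_*$. Using Proposition \ref{PropSmoothProperObjects}(2) one computes
\[
\n{P}_{f_0}(1_X) = p_{2,*} \iHom_{X \times_S X}(1, \Delta_{f_0,!} 1_X) = p_{2,*} \Delta_{f_0,!} 1_X = p_{2,*} \Delta_{f_0,*} 1_X = 1_X,
\]
where we used that the diagonal of an arrow in $P$ is a closed immersion (hence again in $P$, so $\Delta_{f_0,!} = \Delta_{f_0,*}$) and that $p_2 \circ \Delta_{f_0} = \id_X$. Applying the same computation to $\Delta_{f_0}$, whose own diagonal is an isomorphism, gives $\n{P}_{\Delta_{f_0}}(1_X) = 1_X$ and establishes the first clause of Definition \ref{DefinitionWeaklyProper}.

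The main obstacle is the descent step: one must verify not just the \emph{existence} but the \emph{canonicity} of the triviality of $\n{D}_{\Delta_f}(1_{\widetilde X})$ or $\n{P}_{\Delta_f}(1_{\widetilde X})$ under pullback along the $\s{D}$-cover, so that the local identifications glue to a global one. This is automatic because the natural transformations $j^* \Rightarrow j^!$ for $j \in I$ and $p_! \Rightarrow p_*$ for $p \in P$ are part of the structure built into the six-functor formalism generated by $(I,P)$, and are therefore compatible with base change by construction.
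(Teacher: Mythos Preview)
Your proof is correct and follows essentially the same strategy as the paper: reduce to the representable case via Proposition~\ref{PropLocalTargetBaseChange}(2), then use that diagonals of arrows in $I$ (resp.\ $P$) again lie in $I$ (resp.\ $P$) together with the structural identifications $j^! = j^*$ and $p_! = p_*$. The paper defers the representable case to \cite[Propositions 6.11 and 6.13]{SixFunctorsScholze}, whereas you carry out the verification of $\n{P}_{f_0}(1_X)\simeq 1_X$ and $\n{D}_{f_0}(1_X)\simeq 1_X$ explicitly and address the coherence needed to descend the triviality of $\n{P}_{\Delta_f}(1)$ and $\n{D}_{\Delta_f}(1)$, a point the paper leaves implicit; one minor terminological slip is that in the abstract setting you should say $\Delta_f \in I$ (resp.\ $P$) by two-out-of-three rather than calling it an ``open (resp.\ closed) immersion''.
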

\begin{proof}
 By Proposition \ref{PropLocalTargetBaseChange} (2) it suffices to prove that an arrow of $I$ (resp. $P$) satisfies the conclusion of the lemma, this follows from the proof of \cite[Propositions 6.11 and 6.13]{SixFunctorsScholze} since diagonal maps of arrows in $I$ are in $I$ (resp. $P$), and by construction $f^*=f^!$ for $f\in I$ (resp. $f_*=f_!$ for $f\in P$). 
\end{proof}

\begin{remark}
If the category $\n{C}$ is $n$-truncated then the maps in $I$ and $P$ are cohomologically \'etale and proper as in Definition \ref{DefinitionCohoEtaleProper}. For a general $\n{C}$, it is not clear to the author how to define a natural notion of cohomologically \'etale and proper maps that contains the $n$-truncated ones, and the arrows of $I$ and $P$ respectively.  
\end{remark}

\begin{lemma}[{\cite[Lemma 8.7]{MannSix2}}]
\label{LemmaStabilitycoSmoothCompositionPullback}
Cohomologically smooth and co-smooth maps are stable under pullbacks and compositions, and $*$-local on the target. 
\end{lemma}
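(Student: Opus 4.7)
The plan is to treat both halves in parallel via the dual six-functor formalism $\s{D}^{\op}$ of Remark \ref{RemarkDualSixFunctor}: a map is cohomologically co-smooth for $\s{D}$ precisely when it is cohomologically smooth for $\s{D}^{\op}$. I therefore only need to prove the statement for cohomologically smooth morphisms; the co-smooth case then follows formally by substituting $\n{P}_f$ for $\n{D}_f$ and invoking Proposition \ref{PropSmoothProperObjects} (2.c) and Proposition \ref{PropCompositionProperSmoothObjects} (2) in place of their (1)-counterparts.

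For stability under base change, let $f:X\to S$ be cohomologically smooth and consider the pullback $f':X'\to S'$ along $g:S'\to S$, with projection $g':X'\to X$. Proposition \ref{PropLocalTargetBaseChange} (1) yields that $1_{X'}=g^{'*}1_X$ is $f'$-smooth, and the second equivalence of Proposition \ref{PropSmoothProperObjects} (1.c) applied to $Q=1_X$ gives a natural equivalence $\n{D}_{f'}(1_{X'})\simeq g^{'*}f^!1_S$, which is the pullback of an invertible object and hence invertible. For composition, given cohomologically smooth $f:Y\to X$ and $g:Z\to Y$, I will apply Proposition \ref{PropCompositionProperSmoothObjects} (1) with $P=1_Y$ and $Q=1_Z$; since $Q\otimes g^*P=1_Z$, this produces $(f\circ g)$-smoothness of $1_Z$ together with the identification $\n{D}_{f\circ g}(1_Z)\simeq g^*f^!1_X\otimes g^!1_Y$, a tensor of invertibles, hence invertible.

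For $*$-locality on the target, suppose $g:S'\to S$ is of universal $*$-descent and the base change $f':X'\to S'$ is cohomologically smooth. Proposition \ref{PropLocalTargetBaseChange} (2) provides $f$-smoothness of $1_X$, so it remains to show that $f^!1_S$ is invertible. The projection $g':X'\to X$ inherits universal $*$-descent from $g$, hence one gets a descent equivalence $\s{D}(X)\simeq\varprojlim_{[n]\in\Delta}\s{D}(X^{'n+1/X})$ in $\CAlg(\n{P}r^{L,\ex})$; applying the limit-preserving Picard functor reduces invertibility of $f^!1_S$ to invertibility of each of its pullbacks, and $g^{'*}f^!1_S\simeq f^{'!}1_{S'}$ is invertible by hypothesis (and the cosimplicial compatibilities follow from the base-change isomorphisms of Proposition \ref{PropSmoothProperObjects} (1.c) applied to the successive projections).

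The main obstacle will be this last step, namely the descent of invertibility along universal $*$-descent covers. The cleanest route, as sketched, relies on the fact that the Picard space is a limit-preserving invariant of presentably symmetric monoidal stable $\infty$-categories, which sidesteps the subtle question of whether internal $\iHom$ commutes with $g^{'*}$ for objects not already known to be dualizable; alternatively, one can check directly that the evaluation map $f^!1_S\otimes \iHom_X(f^!1_S,1_X)\to 1_X$ pulls back under $g^{'*}$ to the evaluation map of an invertible object, then invoke conservativity of $g^{'*}$ together with the compatibility of $\iHom$ with pullback on the pro-dualizable object $f^{'!}1_{S'}$.
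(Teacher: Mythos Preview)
Your proof is correct and follows the same approach as the paper, which simply invokes Propositions \ref{PropLocalTargetBaseChange} and \ref{PropCompositionProperSmoothObjects} (themselves quoting \cite{MannSix2}). You are more careful than the paper's terse argument about the invertibility of the dualizing object under pullback, composition, and $*$-descent; your descent argument via the limit description $\s{D}(X)\simeq\varprojlim_{\Delta}\s{D}(X'^{\bullet+1/X})$ together with the base-change compatibility of $\n{D}_f(1_X)$ from Proposition \ref{PropSmoothProperObjects} (1.c) is exactly what is needed to fill in what the paper leaves implicit.
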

\begin{proof}
Stability under pullbacks and  local in the target for universal $*$-descent maps  follows from Proposition \ref{PropLocalTargetBaseChange}. Stability under composition follows from Proposition \ref{PropCompositionProperSmoothObjects}. 
\end{proof}

  One of the interest of cohomologically smooth and co-smooth maps is that they will preserve smooth or proper objects in a suitable sense.

\begin{prop}[Local on the source and stable under composition]
\label{PropLocalSourceTarget}
Let $f: Y\to X$ and $g: Z\to Y$ be maps in $E$ and let $P\in \s{D}(Y)$. The following hold: 

\begin{enumerate}

\item Assume that $g$ is smooth.
\begin{itemize}

\item[(i)] If $P$ is $f$-smooth then $g^* P$ is $(f\circ g)$-smooth with dual $g^* \n{D}_{f}(P)\otimes g^! 1_{Y}$. 

\item[(ii)] If $g$ is a $\s{D}$-cover   and $g^* P$ is $(f\circ g)$-smooth then $P$ is $f$-smooth.

\end{itemize}

\item Assume that $g$ is co-smooth.

\begin{itemize}

\item[(i)] If $P$ is $f$-proper then $g^* P$ is $(f\circ g)$-proper with dual $g^* \n{P}_{f}(P)\otimes \n{P}_{g}(1_{Z})$.

\item[(ii)] If $g$ is a $\s{D}$-cover satisfying the hypothesis of Proposition \ref{PropositionProperDescent} and $g^* P$ is $(f\circ g)$-proper then $P$ is $f$-proper.
\end{itemize}

\end{enumerate}
\end{prop}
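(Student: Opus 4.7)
The plan is to split the argument: parts (1)(i) and (2)(i) follow directly from the composition formula of Proposition~\ref{PropCompositionProperSmoothObjects}, while parts (1)(ii) and (2)(ii) are deduced by verifying the concrete characterizations of Proposition~\ref{PropSmoothProperObjects} after pulling back along $g$ and invoking conservativity of $g^*$.

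For (1)(i) I would apply Proposition~\ref{PropCompositionProperSmoothObjects}(1) to $P$ and $Q = 1_Z$. Cohomological smoothness of $g$ is precisely the statement that $1_Z$ is $g$-smooth with invertible dual $\n{D}_g(1_Z) = g^! 1_Y$; combined with the $f$-smoothness of $P$, the composition formula yields that $1_Z \otimes g^* P = g^* P$ is $(f\circ g)$-smooth with dual
\[
g^* \n{D}_f(P) \otimes \n{D}_g(1_Z) = g^* \n{D}_f(P) \otimes g^! 1_Y.
\]
Part (2)(i) is dual and follows by the same strategy applied to Proposition~\ref{PropCompositionProperSmoothObjects}(2), using that cohomological co-smoothness of $g$ means exactly that $1_Z$ is $g$-proper with $\n{P}_g(1_Z)$ invertible.

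For (1)(ii) I would verify condition (1.c) of Proposition~\ref{PropSmoothProperObjects} for $P$ and $f$. Fix any base change $h: S' \to X$ with resulting pullbacks $h': Y' \to Y$, $h'': Z' \to Z$, $f': Y' \to S'$, and $g': Z' \to Y'$. Since $g$ is a $\s{D}$-cover and cohomologically smooth, the base change $g'$ is again cohomologically smooth (Lemma~\ref{LemmaStabilitycoSmoothCompositionPullback}) and a $\s{D}$-cover, so $g'^*$ is conservative and it suffices to prove the two natural transformations of (1.c) become equivalences after pullback along $g'$. Using the twist $g'^* f'^{!}(-) \simeq (f'g')^{!}(-) \otimes (g'^{!} 1_{Y'})^{-1}$ provided by cohomological smoothness of $g'$ (with $g'^{!} 1_{Y'}$ invertible), together with the projection formula $g'^* \iHom_{Y'}(A,B) \simeq \iHom_{Z'}(g'^*A, g'^*B)$, the pulled-back transformations, after tensoring by the invertible $g'^{!} 1_{Y'}$, are identified with the equivalences of (1.c) witnessing $(f\circ g)$-smoothness of $g^*P$ pulled back along $h$; these hold by hypothesis. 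Part (2)(ii) is proved by the same descent strategy applied to Proposition~\ref{PropSmoothProperObjects}(2.c): conservativity of $g'^*$ on any base change now comes from the proper descent of Proposition~\ref{PropositionProperDescent} (built into the hypothesis of (2)(ii)), and the twist by the invertible $\n{P}_{g'}(1_{Z'})$ plays the role of $g'^{!} 1_{Y'}$.

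The hard part will be the identification, in both descent arguments, of the pulled-back natural transformations for $P$ with the corresponding transformations for $g^*P$ up to twist by an invertible. This requires tracking compatibilities of $\iHom$, $f^{!}$ (resp.\ $f_!$) and $g^{!}$ (resp.\ $\n{P}_g$) via projection and exchange formulas; everything should be entirely formal in the six functor formalism once these compatibilities are carefully assembled.
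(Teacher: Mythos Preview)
Your treatment of (1)(i), (1)(ii), and (2)(i) is correct and matches the paper: part (1) is quoted as \cite[Proposition 8.6]{MannSix2}, and (2)(i) is exactly Proposition~\ref{PropCompositionProperSmoothObjects}(2) applied with $Q=1_Z$.

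The gap is in (2)(ii). Your descent argument for (1)(ii) works because the natural transformations of Proposition~\ref{PropSmoothProperObjects}(1.c) take values in $\s{D}(Y')$, so the conservative functor $g'^*:\s{D}(Y')\to\s{D}(Z')$ can be applied; and because smoothness of $g'$ supplies both the identity $g'^*f'^{!}\simeq(f'g')^{!}\otimes(g'^{!}1_{Y'})^{-1}$ and the compatibility $g'^*\iHom_{Y'}(-,-)\simeq\iHom_{Z'}(g'^*-,g'^*-)$ of Lemma~\ref{LemmaSmoothvsHom}, which together let you match the pulled-back maps with those for $g^*P$. Neither ingredient survives in the proper case: the transformations of Proposition~\ref{PropSmoothProperObjects}(2.c) land in $\s{D}(X')$, where $g'^*$ gives no conservativity; and co-smoothness of $g'$ relates $g'_!$ to $g'_*$, not $g'^*$ to $g'^!$, so there is no analogue of Lemma~\ref{LemmaSmoothvsHom} letting you transport $\iHom$ or $f'_*$ along $g'^*$. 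Your sentence ``the twist by the invertible $\n{P}_{g'}(1_{Z'})$ plays the role of $g'^{!}1_{Y'}$'' is precisely where the analogy fails. The paper flags this explicitly (``we cannot directly dualize the argument of \textit{loc.\ cit.}\ since the functors $f^*$ and $f_!$ in $\s{D}^{\op}$ might not have right adjoints'') and instead argues via the \v{C}ech nerve $Z_\bullet\to Y$: the cocartesian section $\n{P}_{f\circ g_\bullet}(g_\bullet^*P)\otimes\n{P}_{g_\bullet}(1_{Z_\bullet})^{-1}$ descends to a candidate $Q\in\s{D}(Y)$, and the right adjoint of $P\otimes f^*$ is computed as a totalization over $Z_\bullet$ which, thanks to the descendability hypothesis of Proposition~\ref{PropositionProperDescent} (the pro-system $(g_{n,*}M_n)_n$ is pro-constant), commutes with $f_!$ and yields $f_!(-\otimes Q)\simeq f_*\iHom_Y(P,-)$. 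This is why (2)(ii) carries the extra descendability assumption that (1)(ii) does not need.
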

\begin{proof}
Part (1) is \cite[Proposition 8.6]{MannSix2}.  Part (2.i) follows from Proposition \ref{PropCompositionProperSmoothObjects}. For part (2.ii), we cannot directly dualize the argument of \textit{loc. cit} since the functors $f^*$ and $f_!$ in $\s{D}^{\op}$ might not have right adjoints, instead we will make use of the descendability property of $g$.

By Proposition \ref{PropSmoothProperObjects} (2.c), it suffices to prove that there is some $Q\in \s{D}(Z)$ and a natural equivalance of functors 
\[
f_!(-\otimes Q)\to f_*\iHom_{Y}(P,-),
\] 
that holds after any base change $X'\to X$; one then necessarily has that $Q= \n{P}_{f}(P)$ by taking the pullback along $Y\to X$ and evaluating at $\Delta_! 1_{Y}$.   Let $g_{\bullet}:Z_{\bullet}\to Z$ be the \v{C}ech nerve of $Z$ over $Y$. By Lemma \ref{LemmaStabilitycoSmoothCompositionPullback}, co-smooth covers are stable under composition and pullbacks, in particular, any projection $Z_{n} \to Z$ is co-smooth and by part (2.i) $g_{n}^* P \in \s{D}(Z_{n})$ is $(f\circ g_{n})$-proper over $X$. Furthermore, part (2.i) also gives rise a  cosimplicial object $( \n{P}_{f\circ g_{\bullet}} (g^*_{\bullet} P )\otimes \n{P}_{g_{\bullet}}(1_{Z_{\bullet}})^{-1})_{[n]\in \Delta}$ which is a co-cartesian section  in $\s{D}(Z_{\bullet})$, defining an object $Q\in \s{D}(Z)$.  Therefore, the functor 
\begin{equation}
\label{equationLocalSource1}
g_{\bullet}^* P \otimes (f\circ g_{\bullet} )^* : \s{D}(X)\to \s{D}(Z_{\bullet})
\end{equation}
descends to the functor $P\otimes f^*: \s{D}(X)\to \s{D}(Y)$. For each $[n]\in \Delta$, the functor  $g_{n}^*P \otimes (f\circ g_n)^*$ has by right adjoint $  (f\circ g_n)_*\iHom_{Z_n}( g_n^* P,- )$ which by Proposition \ref{PropSmoothProperObjects} (2.c) is naturally isomorphic to 
\[
(f\circ g_{n})_! ( - \otimes \n{D}_{(f\circ g_n)}(g_n^* P)) =  (f\circ g_{n})_!  (- \otimes g_n^* Q \otimes \n{P}_{g_n}(1_{Z_n})) = f_!(g_{n,*}(-) \otimes Q ).
\]
 Therefore, \eqref{equationLocalSource1} has  by right adjoint the totalization
\[
\varprojlim_{[n]\in \Delta} f_!( g_{n,*}(-) \otimes Q ). 
\]
Evaluating at a co-cartesian section $M_{n}$ of $\s{D}(Z_{\bullet})$, the proof of \cite[Proposition 6.19]{SixFunctorsScholze} shows that the Pro-system $(g_{n,*}M_n)_{[n]\in \Delta}$ is pro-constant, this implies that 
\[
\varprojlim_{[n]\in \Delta} f_! (g_{n,*} M_n \otimes Q) \cong  f_! (\varprojlim_{[n]\in \Delta}(g_{n,*}M_n)\otimes Q).
\]
Therefore,  $P\otimes f^*$ has by right adjoint $f_!(-\otimes Q)$, which provides the natural equivalence 
\begin{equation}
\label{eqLocalSource2}
f_!(-\otimes Q)  \xrightarrow{\sim } f_* \iHom_{Y}(P, -).
\end{equation}
Finally, note that the formation of \eqref{eqLocalSource2} is natural with respect to base change  $X'\to X$ by construction. This finishes the proof. 
\end{proof}

\begin{definition}
Let $f: Y\to X$ be an arrow in $E$. We say that $f$ is a  \textit{smooth (resp. descendable) $\s{D}$-cover} if it is smooth (resp. co-smooth), it is a canonical cover, and it satisfies the hypothesis of Proposition \ref{PropositionSmoothDescent} (resp. of Proposition \ref{PropositionProperDescent}). 
\end{definition}

We deduce the following corollary from Proposition \ref{PropLocalSourceTarget}.

\begin{corollary}
\label{CorollaryDescentSmoothProperCovers}
Being cohomomologically smooth  is smooth $\s{D}$-local on the source. Analogously, being cohomologically co-smooth is   descendably $\s{D}$-local on the source. 
\end{corollary}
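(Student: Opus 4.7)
The plan is to deduce the corollary directly from Proposition \ref{PropLocalSourceTarget}, by splitting the two conditions in the definition of cohomologically smooth/co-smooth: \textit{(i)} the unit being $f$-smooth (resp. $f$-proper), and \textit{(ii)} the associated dual $\n{D}_f(1_Y) = f^!1_X$ (resp. $\n{P}_f(1_Y)$) being invertible. Part (i) is handled directly by Proposition \ref{PropLocalSourceTarget}; for part (ii) I will use that invertibility descends along $\s{D}$-covers, together with the chain-rule isomorphisms for $\n{D}$ and $\n{P}$ supplied by Proposition \ref{PropLocalSourceTarget} (1.i) and (2.i).

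More explicitly, suppose $g:Z\to Y$ is a smooth $\s{D}$-cover and $f:Y\to X$ is an arrow in $E$ such that $f\circ g$ is cohomologically smooth. Then $1_Z=g^*1_Y$ is $(f\circ g)$-smooth, so by Proposition \ref{PropLocalSourceTarget} (1.ii) the object $1_Y$ is $f$-smooth. Applying now (1.i) of the same proposition to the smooth morphism $g$ and the $f$-smooth object $1_Y$, we obtain a natural equivalence
\[
g^*\n{D}_f(1_Y)\otimes g^!1_Y \;\xrightarrow{\sim}\; \n{D}_{f\circ g}(1_Z) = (f\circ g)^!1_X.
\]
Since $f\circ g$ and $g$ are cohomologically smooth, both $(f\circ g)^!1_X$ and $g^!1_Y$ are invertible, so $g^*\n{D}_f(1_Y)$ is invertible as well. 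The fact that $g$ is a $\s{D}$-cover means the symmetric monoidal functor $\s{D}(Y)\to \varprojlim_{[n]\in\Delta}\s{D}(Z^{n+1/Y})$ is an equivalence; since invertibility is a symmetric monoidal property preserved and reflected by such equivalences (one simply descends the inverse of $g^*\n{D}_f(1_Y)$ along the resulting descent datum), we conclude that $\n{D}_f(1_Y)=f^!1_X$ is invertible, hence $f$ is cohomologically smooth.

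The co-smooth case is dual and uses part (2) of Proposition \ref{PropLocalSourceTarget}: if $g:Z\to Y$ is a descendable $\s{D}$-cover and $f\circ g$ is cohomologically co-smooth, then $1_Z$ is $(f\circ g)$-proper, so by (2.ii) the object $1_Y$ is $f$-proper, and (2.i) yields an equivalence
\[
g^*\n{P}_f(1_Y)\otimes \n{P}_g(1_Z)\;\xrightarrow{\sim}\;\n{P}_{f\circ g}(1_Z),
\]
where the right-hand side and $\n{P}_g(1_Z)$ are invertible, so $g^*\n{P}_f(1_Y)$ is invertible, and descent along the $\s{D}$-cover $g$ gives invertibility of $\n{P}_f(1_Y)$, proving $f$ is cohomologically co-smooth.

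The main (modest) subtlety is verifying that invertibility genuinely descends along a $\s{D}$-cover; this is not an extra assumption but follows from the symmetric monoidal descent equivalence characterizing $\s{D}$-covers (Propositions \ref{PropositionSmoothDescent} and \ref{PropositionProperDescent}), together with the fact that the inverse of an invertible object is unique up to canonical isomorphism and hence automatically assembles into a cocartesian section. Everything else in the argument is bookkeeping around the chain-rule formulas for $\n{D}_f$ and $\n{P}_f$ already supplied by Proposition \ref{PropLocalSourceTarget}.
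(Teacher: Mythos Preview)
Your proof is correct and follows exactly the approach the paper intends: the paper does not give an explicit proof but simply says the corollary is deduced from Proposition \ref{PropLocalSourceTarget}, and you have spelled out precisely how that deduction goes, using (1.ii)/(2.ii) to get $f$-smoothness (resp. $f$-properness) of $1_Y$ and then the chain-rule formulas (1.i)/(2.i) together with descent of invertibility along a $\s{D}$-cover to conclude invertibility of the dualizing object.
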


Smooth and descendable $\s{D}$-covers provides a description of the coefficients of the quotient as modules and comodules respectively.

\begin{prop}
\label{PropCoverModuleComoduleDescription}
Let $f:Y\to X$ be an arrow in $E$. 

\begin{enumerate}

\item Suppose that $f$ is a smooth $\s{D}$-cover. Then there is a natural equivalence of stable $\infty$-categories
\[
\s{D}(X)\xrightarrow{\sim} \Mod_{f^!f_! }(\s{D}(Y)),
\]
where the monad $f^!f_!$ naturally belongs to $\Alg((\End^{L}_{\s{D}(X)}( \s{D}(Y)))$.

\item Suppose that $f$ is a descendable $\s{D}$-cover. Then there is a natural equivalence of $\infty$-categories
\[
\s{D}(X)\xrightarrow{\sim} \mathrm{CoMod}_{f^* f_* }(\s{D}(Y)),
\]
where the comonad $f^*f_*$ naturally belongs to $\mathrm{CoAlg}((\End^{L}_{\s{D}(X)}( \s{D}(Y)))$. 
\end{enumerate}

Moreover, if $f$ has a retraction $g:X\to Y$ then the monad $f^!f_!$ in (1) arises from an object $\n{D}_{f}\in \Alg(\End_{\s{D}(Y)}(\s{D}(Y)))=\Alg(\s{D}(Y))$, and the comonad $f^*f_*$ arises from an object $\n{C}_f\in \mathrm{CoAlg}(\End^L_{\s{D}(Y)}(\s{D}(Y)))=\mathrm{CoAlg}(\s{D}(Y))$.  

\end{prop}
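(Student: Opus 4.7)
My plan is to deduce parts (1) and (2) from the Barr--Beck--Lurie (co)monadicity theorem applied to the adjunctions $f_! \dashv f^!$ and $f^* \dashv f_*$ supplied by the six-functor formalism, and then to upgrade the resulting (co)monads to (co)algebra objects in $\s{D}(Y)$ by a linearity argument furnished by the retraction.

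For the first equivalence, the monad is $T := f^! f_!$ on $\s{D}(Y)$, and the two hypotheses of \cite[Theorem 4.7.3.5]{HigherAlgebra}---conservativity of $f^!$ and preservation of $f^!$-split geometric realizations by $f^!$---reduce via the smoothness identification $f^! \simeq f^!1_X \otimes f^*$ with $f^!1_X$ invertible (Definition \ref{DefinitionCohoSmooth}) to the analogous statements for $f^*$, which hold by Proposition \ref{PropositionSmoothDescent} and the fact that $f^*$ is a left adjoint. For the second equivalence, I apply the comonadic form of Barr--Beck--Lurie to $f^* \dashv f_*$, whose comonad is $C := f^* f_*$. Conservativity of $f^*$ holds by the universal $*$-descent property of a $\s{D}$-cover, while preservation of $f^*$-split totalizations is precisely the comonadic reformulation of the descendability of $f_* 1_Y \in \Alg(\s{D}(X))$ (\cite{MathewDescent} and Proposition \ref{PropositionProperDescent}). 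This yields both equivalences, with comparison functors $A \mapsto f^! A$ and $A \mapsto f^* A$ respectively.

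For the moreover statement, let $g: X \to Y$ be the retraction with $g \circ f = \mathrm{id}_Y$, so that $f^* g^* = \mathrm{id}_{\s{D}(Y)}$. Combining this with the projection formula $f_!(M \otimes f^*A) \simeq f_!M \otimes A$ and the smoothness identification, one computes for $M, N \in \s{D}(Y)$
\[
T(M \otimes N) \simeq f^!1_X \otimes f^*f_!(M \otimes f^* g^* N) \simeq f^!1_X \otimes f^*(f_!M \otimes g^*N) \simeq T(M) \otimes N,
\]
exhibiting $T$ as a $\s{D}(Y)$-linear colimit-preserving endofunctor---this is the genuinely nontrivial use of the retraction, since without it only $\s{D}(X)$-linearity (via the $\s{D}(X)$-module structure on $\s{D}(Y)$ through $f^*$) would be automatic from the projection formula. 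The canonical equivalence $\End^L_{\s{D}(Y)}(\s{D}(Y)) \simeq \s{D}(Y)$ by evaluation at $1_Y$ (\cite[Proposition 4.8.5.8]{HigherAlgebra}) then identifies $T$ with the object $\n{D}_f := f^!f_! 1_Y$, and the monad multiplication on $T$ transports to an $\bb{E}_1$-algebra structure on $\n{D}_f$. The comonad case is entirely dual, producing the coalgebra $\n{C}_f := f^*f_* 1_Y \in \s{D}(Y)$, with the main technical input again being the $\s{D}(Y)$-linearity of $C$ supplied by the retraction.
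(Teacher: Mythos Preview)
Your proof is correct and follows essentially the same route as the paper: both apply the Barr--Beck--Lurie theorem \cite[Theorem 4.7.3.5]{HigherAlgebra} to the adjunctions $f_!\dashv f^!$ and $f^*\dashv f_*$, with the smoothness identification $f^!\simeq f^!1_X\otimes f^*$ reducing part (1) to colimit-preservation and conservativity of $f^*$, and both derive the $\s{D}(Y)$-linearity in the ``moreover'' clause from the retraction and the projection formula.

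The one place where the paper is more explicit is the verification that $f^*$ preserves $f^*$-split totalizations in part (2). Your phrase ``precisely the comonadic reformulation of the descendability'' compresses a genuine argument: descendability of $f_*1_Y$ is a property of an algebra in $\s{D}(X)$, while what Barr--Beck needs is a property of the functor $f^*$. The paper bridges this by first invoking Proposition \ref{PropositionProperDescent} to get the descent equivalence $\s{D}(X)\simeq\varprojlim_{[n]\in\Delta}\s{D}(Y^{n+1/X})$, and then checking directly that for an $f^*$-split cosimplicial object $(M_m)_m$ in $\s{D}(X)$, the totalization can be computed levelwise in this limit (where it becomes a split totalization, hence absolute), so that $f^*\varprojlim M_m\simeq\varprojlim f^*M_m$. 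Your citation of \cite{MathewDescent} is justified---this implication is standard---but you should be aware that it is not a tautology.
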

\begin{proof}
In the case of (1), both functors $f^!$ and $f_!$ are linear over $\s{D}(X)$, namely $f^!=f^*\otimes f^* 1_X$ and $f^*$ is $\s{D}(X)$-linear being symmetric monoidal, and $f_!$ is $\s{D}(X)$-linear by the projection formula. This implies that modules of the monad $f^! f_!$ arises from an object $f^!f_!\in \Alg(\End^{L}_{\s{D}(X)}(\s{D}(Y)))$.  

 Similarly, in the case of (2), we have a natural  equivalence $f_* = f_!(-\otimes \n{P}_f(1_Y))$, proving that $f_*$ satisfies the projection formula so that is $\s{D}(X)$-linear. Therefore, the comonad $f^*f_*$ arises from an object  $f^*f_* \in \mathrm{CoAlg}(\End^{L}_{\s{D}(X)}( \s{D}(Y)))$.
 
In order to prove the proposition we only need to show that the functor $f^!$ is monadic in the case (1), and that $f^*$ is comonadic in the case (2), see \cite[Theorem 4.7.3.5]{HigherAlgebra}. It is clear that both functors are conservative in both situations. In the case of (1), the functor $f^!$ already preserves colimits being isomorphic to $f^*\otimes f^! 1_X$, and the monadicity theorem can be applied. In the case of (2),  in order to apply the comonadicity theorem we need to show that $f^*$ preserves $f^*$-split totalizations.
 
 By Proposition \ref{PropositionProperDescent} we have that 
 \[
 \s{D}(X)=\varprojlim_{[n]\in \Delta} \s{D}(Y^{n+1/X})
 \]
 along pullback maps. Let $(M_m)_{[m]\in \Delta}$ be a cosimplicial object in $\s{D}(X)$ whose pullback to $\s{D}(Y)$ is split. Then, for all $n\geq 0$, the pullback of $(M_m)_{[m]\in \Delta}$ to $\s{D}(Y^{n+1/X})$ is split with limit $N_{n+1}$. Moreover, the object $(N_{n+1})_{[n]\in \Delta}$ is a cocartesian section in $\s{D}(Y^{\bullet+1/X})$ because of the splitting, and it  defines an object $N$ in the limit $\s{D}(X)=\varprojlim_{[n]\in \Delta} \s{D}(Y^{n+1/X})$. We deduce that 
 \[
 \begin{aligned}
 \varprojlim_{[m]\in \Delta} M_m & = \varprojlim_{[m]\in \Delta}  \varprojlim_{[n]\in \Delta} f_{n+1,*} f_{n+1}^* M_{m}  \\
 				& =  \varprojlim_{[n]\in \Delta} f_{n+1,*} \varprojlim_{[m]\in \Delta}   f_{n+1}^* M_{m} \\
 				& =  \varprojlim_{[n]\in \Delta} f_{n+1,*} N_{n+1} \\
 				& = N.
 \end{aligned}
 \]
 We deduce that 
 \[
 f^*(\varprojlim_{[m]\in \Delta} M_m )= f^* N = N_{1}= \varprojlim_{[m]\in \Delta}   f^* M_{m},
 \]
 proving that $f^*$ preserves $f^*$-split totalizations.

 Finally, if $f:Y\to X$ has a retraction $g:Y\to X$, the functors $f^!$ and $f_!$ are $\s{D}(Y)$-linear in the case of (1), and the functors $f^*$ and $f_*$ are $\s{D}(Y)$-linear in the case of (2). This shows that the monad and comonad $f^!f_!$ and $f^*f_*$ in (1) and (2) respectively, arise from objects $\n{D}_{f}\in \Alg(\End_{\s{D}(Y)}^L(\s{D}(Y)))=\Alg(\s{D}(Y))$ and  $\n{C}_f\in \mathrm{CoAlg}(\End^L_{\s{D}(Y)}(\s{D}(Y)))=\mathrm{CoAlg}(\s{D}(Y))$ respectively (see \cite[Lemma A.4.7]{MannSix} for the natural equivalence $\End_{\s{D}(Y)}^L(\s{D}(Y))=\s{D}(Y)$).
\end{proof}

\begin{remark}
\label{RemarkComoduleDescription}
In the part (2) of Proposition \ref{PropCoverModuleComoduleDescription} the only important conditions for the statement to hold is that $f_*$ satisfies the projection formula, and that $f$ satisfies universal $*$-descent. 
\end{remark}

We end this section by recalling how smooth and proper objects are preserved by lower shrieck functors under suitable hypothesis.  

\begin{prop}
\label{PropPreserveAdjointsComposition}
Let $f: Y\to X$ and $g:Z\to Y$ be maps in $E$. Let $P,Q\in \s{D}(Z)$. The following hold
\begin{enumerate}
\item If $P$ is $(f\circ g)$-smooth and $Q$ is $g$-proper then $g_*\iHom_{Z}(Q,P)=g_!(\n{P}_{g}(Q)\otimes P)$ is $f$-smooth.

\item If $P$ is $(f\circ g)$-proper and $Q$ is $g$-smooth then $g_!(Q\otimes P)$ is $f$-proper. 

\end{enumerate}
\end{prop}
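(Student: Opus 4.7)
The plan is to realize each formula in the statement as the composition of two Fourier–Mukai kernels attached to left adjoints in the Lu–Zheng $2$-category $\LZ_{\s{D},X}$, so that the composition is itself a left adjoint, i.e.\ an $f$-smooth (respectively $f$-proper) object.

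For part (1), I would factor the Fourier–Mukai functor $N\mapsto f_{!}(N\otimes g_{!}(\n{P}_{g}(Q)\otimes P))$ of $g_{!}(\n{P}_{g}(Q)\otimes P)$ as the composite
\[
\s{D}(Y)\xrightarrow{\;g^{*}(-)\otimes \n{P}_{g}(Q)\;} \s{D}(Z)\xrightarrow{\;(f\circ g)_{!}(-\otimes P)\;}\s{D}(X),
\]
the identification being a direct application of the projection formula. By Proposition \ref{PropSmoothProperObjects}(2), the dual $\n{P}_{g}(Q)$ of the $g$-proper object $Q$ is itself $g$-proper with right adjoint $Q$, so the first functor is a left adjoint with $\s{D}(Y)$-linear right adjoint $g_{!}(-\otimes Q)$; by the $(f\circ g)$-smoothness of $P$ the second functor has $\s{D}(X)$-linear right adjoint $(f\circ g)^{*}(-)\otimes \n{D}_{f\circ g}(P)$. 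Composing these right adjoints, another projection-formula computation identifies the right adjoint of the total functor with $M\mapsto f^{*}M\otimes g_{!}(Q\otimes \n{D}_{f\circ g}(P))$, which is the Fourier–Mukai functor attached to $g_{!}(Q\otimes \n{D}_{f\circ g}(P))\in \s{D}(Y)$; in particular, it is $\s{D}(X)$-linear, establishing via Proposition \ref{PropSmoothProperObjects}(1) that $g_{!}(\n{P}_{g}(Q)\otimes P)$ is $f$-smooth. The equality $g_{!}(\n{P}_{g}(Q)\otimes P)=g_{*}\iHom_{Z}(Q,P)$ is obtained by applying Proposition \ref{PropSmoothProperObjects}(2.c) with $-=P$.

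Part (2) is completely analogous: factor the Fourier–Mukai functor attached to $g_{!}(Q\otimes P)$ as the composite
\[
\s{D}(X)\xrightarrow{\;(f\circ g)^{*}(-)\otimes P\;}\s{D}(Z)\xrightarrow{\;g_{!}(-\otimes Q)\;}\s{D}(Y),
\]
which is a composition of left adjoints by the $(f\circ g)$-properness of $P$ and the $g$-smoothness of $Q$. A parallel calculation of the composite right adjoint shows it is again of Fourier–Mukai form and $\s{D}(X)$-linear, whence $g_{!}(Q\otimes P)$ is a left adjoint in $\LZ_{\s{D},X}$, i.e.\ $f$-proper.

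The main obstacle is not the factorizations themselves but rather the verification that the base-change compatibilities of Proposition \ref{PropSmoothProperObjects}(1.c) and (2.c) hold universally, not only for the identity base change. The same factorizations and projection-formula manipulations, however, are natural in $S'\to X$, and they reduce the base-change statements for the composite object to the corresponding base-change statements for $P$ (provided by its $(f\circ g)$-smoothness or $(f\circ g)$-properness) and for $Q$ and $\n{P}_{g}(Q)$ (provided by their $g$-properness or $g$-smoothness). This is essentially a bookkeeping exercise encoding the fact that the $2$-functor $\LZ_{\s{D},Y}\to \LZ_{\s{D},X}$ induced by $f\colon Y\to X$ preserves adjunctions.
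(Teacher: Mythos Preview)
Your approach is correct and your factorizations are accurate, but the paper takes a cleaner and more conceptual route that you yourself identify only in your final paragraph. Rather than working with explicit functors $\s{D}(Y)\to\s{D}(Z)\to\s{D}(X)$ and then verifying the base-change conditions of Proposition~\ref{PropSmoothProperObjects}(1.c)/(2.c), the paper argues directly in the Lu--Zheng $2$-category. The key point is that there is a $2$-functor $\iota_!\colon\LZ_{\s{D},Y}\to\LZ_{\s{D},X}$ sending $[W\to Y]$ to $[W\to X]$ and a kernel $M\in\s{D}(W\times_Y V)$ to $\iota_!M\in\s{D}(W\times_X V)$; since any $2$-functor preserves adjunctions, $Q$ being a left adjoint in $\LZ_{\s{D},Y}$ (i.e.\ $g$-smooth) forces $\iota_!Q$ to be a left adjoint in $\LZ_{\s{D},X}$. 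Composing with $P\in\Hom_{\LZ,X}(X,Z)$, which is a left adjoint by hypothesis, gives that $\iota_!Q\star P$ is a left adjoint in $\Hom_{\LZ,X}(X,Y)$, and a short projection-formula computation identifies this composite with $g_!(Q\otimes P)$. Part (1) is the dual statement.

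What your approach buys is an explicit formula for the adjoint kernel, namely $g_!(Q\otimes\n{D}_{f\circ g}(P))$ in part~(1); the paper's argument gives existence without writing this down. What the paper's approach buys is that the entire base-change verification you call a ``bookkeeping exercise'' simply disappears: once you know $\iota_!$ is a $2$-functor, preservation of adjunctions is automatic and there is nothing further to check. Your factorization mixes morphisms from $\LZ_{\s{D},Y}$ (the first step) with morphisms from $\LZ_{\s{D},X}$ (the second step), which is why you must then pass through the functor-level criterion and check it universally; the $2$-functor $\iota_!$ is precisely the device that transports the first step into $\LZ_{\s{D},X}$ so that the composition takes place entirely there.
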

\begin{proof}
Part (1) is \cite[Proposition 7.13]{MannSix2}, part (2) is proven with the same argument that we recall down below: 

One has a morphisms of $2$-categories $\iota_!:\LZ_{\s{D},Y}\to \LZ_{\s{D},X}$ mapping $[W\to Y]$ to $[W\to X]$ and $M\in \Hom_{\LZ,Y}(W,V)=\s{D}(W\times_Y V)$ to  $\iota_!M\in \Hom_{\LZ,X}(W,V)=\s{D}(W\times_X V)$ where $\iota: W\times_Y V \to W\times_{X} V$. By hypothesis $P\in \Hom_{\LZ,X}(X,Z)$ is a left adjoint and $Q\in \Hom_{\LZ,Y}(Z,Y)$ is a left adjoint. Then, since $\iota_!$ sends left adjoints to left adjoints, one has that 
\[
\iota_! Q \star P= \pi_{Y,!}( \pi_{Z}^* P\otimes \iota_! Q)=\pi_{Y,!}\iota_!( \iota^* \pi_Z^* P \otimes Q )= g_!(Q\otimes P)
\]
is a left adjoint in $\Hom_{\LZ,X}(X,Y)$, proving what we wanted. 
\end{proof}

\begin{prop}[{\cite[Proposition 9.10]{MannSix} }]
Let $f: Y\to X$ and $g: Z\to Y$ be maps in $E$, and $P\in \s{D}(Y)$. The following hold: 
\begin{enumerate}

\item If $g$ is cohomologically co-smooth and $P$ is $(f\circ g)$-smooth then $g_*P$ and $g_! P$ are $f$-smooth.

\item If $g$ is cohomologically smooth and $P$ is $(f\circ g)$-proper then $g_! P$ is $f$-proper. 

\end{enumerate}
\end{prop}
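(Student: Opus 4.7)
The plan is to deduce both statements from Proposition \ref{PropPreserveAdjointsComposition} combined with the defining properties of cohomologically smooth and cohomologically co-smooth maps. A preliminary observation, which I will use repeatedly, is that tensoring with an invertible object preserves smoothness in the sense of Definition \ref{DefinitionSmoothEtaleProper}. Indeed, if $L\in\s{D}(W)$ is invertible and $Q$ is $h$-smooth for some $h\colon W\to S$, then $\n{D}_h(Q\otimes L)\simeq \n{D}_h(Q)\otimes L^{-1}$, and a direct check using the criterion of Proposition \ref{PropSmoothProperObjects} (1.b) shows that the test comparison map for $Q\otimes L$ is obtained from the one for $Q$ by tensoring both sides with $p_1^*L^{-1}\otimes p_2^*L$; hence it is an equivalence whenever the original one is.

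For part (1), since $g$ is cohomologically co-smooth, Definition \ref{DefinitionCohoSmooth} gives that $1_Z$ is $g$-proper and $\n{P}_g(1_Z)\in\s{D}(Z)$ is invertible. Applying Proposition \ref{PropPreserveAdjointsComposition} (1) with $Q=1_Z$ then yields that
\[
g_*P \;=\; g_*\iHom_Z(1_Z,P) \;\simeq\; g_!\bigl(\n{P}_g(1_Z)\otimes P\bigr)
\]
is $f$-smooth. To handle $g_!P$, I would use the $f$-smoothness just obtained applied to the $(f\circ g)$-smooth object $P\otimes \n{P}_g(1_Z)^{-1}$ (smooth by the preliminary observation): this gives that
\[
g_*\bigl(P\otimes \n{P}_g(1_Z)^{-1}\bigr) \;\simeq\; g_!\bigl(\n{P}_g(1_Z)\otimes P\otimes \n{P}_g(1_Z)^{-1}\bigr) \;\simeq\; g_!P
\]
is $f$-smooth, which finishes this case.

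For part (2), the argument is more direct: cohomological smoothness of $g$ means by Definition \ref{DefinitionCohoSmooth} that $1_Z$ is $g$-smooth. Taking $Q=1_Z$ in Proposition \ref{PropPreserveAdjointsComposition} (2) immediately gives that $g_!(1_Z\otimes P) \simeq g_!P$ is $f$-proper, as desired.

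The only non-formal input is the stability of smoothness (and properness, though unused here) under tensoring with invertibles, which is the subtle point to verify first; once that is in hand, the rest is an unwinding of the definitions and a single invocation of Proposition \ref{PropPreserveAdjointsComposition} in each part.
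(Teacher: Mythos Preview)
Your proof is correct and takes essentially the same approach as the paper. The only difference is in obtaining $g_!P$ in part (1): rather than your preliminary observation on invertibles, the paper applies Proposition \ref{PropPreserveAdjointsComposition}(1) directly with $Q=\n{P}_g(1_Z)$ (which is $g$-proper with $\n{P}_g(\n{P}_g(1_Z))\simeq 1_Z$ by Proposition \ref{PropSmoothProperObjects}(2)), so that the conclusion reads $g_!(1_Z\otimes P)=g_!P$ is $f$-smooth.
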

\begin{proof}
This follows from Proposition \ref{PropPreserveAdjointsComposition} by taking $Q= 1_{Z}$ or $Q= \n{P}_f(1_Z)$ for point (1), and taking $Q=1_{Z}$ for point (2). 
\end{proof}

We end this section with a couple of lemmas that will be useful later. 

\begin{lemma}
\label{LemmaSmoothvsHom}
Let $f:Y\to X$, then there is  a natural equivalence 
\[
f^! \iHom_{X}(\s{F},\s{G})\cong \iHom_{Y}(f^*\s{F}, f^!\s{G}).
\]
In particular, if $f$ is cohomological smooth we have that 
\[
f^*\iHom_{X}(\s{F}, \s{G})\cong \iHom_{Y}(f^* \s{F}, f^* \s{G}).
\]
\end{lemma}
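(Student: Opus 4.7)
The plan is to prove the first equivalence by a Yoneda argument using the $(f_!, f^!)$-adjunction together with the internal tensor/Hom adjunction and the projection formula, all of which are available in any six-functor formalism. Explicitly, for an arbitrary test object $\s{H}\in \s{D}(Y)$, I will compute
\[
\Map_{\s{D}(Y)}(\s{H}, f^!\iHom_X(\s{F},\s{G}))
\]
by first applying $(f_!, f^!)$-adjunction to rewrite it as $\Map_{\s{D}(X)}(f_!\s{H}, \iHom_X(\s{F},\s{G}))$, then the tensor/internal-Hom adjunction in $\s{D}(X)$ to bring it to $\Map_{\s{D}(X)}(f_!\s{H}\otimes \s{F}, \s{G})$, then the projection formula $f_!\s{H}\otimes \s{F} \simeq f_!(\s{H}\otimes f^*\s{F})$, then $(f_!, f^!)$-adjunction again to get $\Map_{\s{D}(Y)}(\s{H}\otimes f^*\s{F}, f^!\s{G})$, and finally the tensor/internal-Hom adjunction in $\s{D}(Y)$ to land at $\Map_{\s{D}(Y)}(\s{H}, \iHom_Y(f^*\s{F}, f^!\s{G}))$. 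All of these identifications are natural in $\s{H}$, so Yoneda produces a canonical equivalence $f^!\iHom_X(\s{F},\s{G}) \simeq \iHom_Y(f^*\s{F}, f^!\s{G})$.

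The main technical ingredient is the projection formula for $f_!$, but this is built into the definition of a six-functor formalism (in particular it holds for the $\s{D}$ constructed in Theorem \ref{TheoSixFunctorsScholze}), so there is no real obstacle. Both adjunctions are also part of the structure of a closed symmetric monoidal $\infty$-category together with the six-functor formalism on $(\n{C}, E)$. Since each step is a natural equivalence of mapping anima compatible with composition in $\s{H}$, the resulting comparison map is constructed once and for all by taking $\s{H} = f^!\iHom_X(\s{F},\s{G})$ and chasing the identity.

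For the second assertion, I specialise to the case where $f$ is cohomologically smooth. By Definition \ref{DefinitionCohoSmooth} the object $L := f^!1_X$ is invertible and, for any $\s{G}\in \s{D}(X)$, there is a natural equivalence $f^!\s{G} \simeq L\otimes f^*\s{G}$ (smoothness identity). Substituting this in both sides of the equivalence just established gives
\[
L\otimes f^*\iHom_X(\s{F},\s{G}) \simeq \iHom_Y(f^*\s{F}, L\otimes f^*\s{G}).
\]
Since $L$ is invertible, it is a dualisable object of $\s{D}(Y)$, and the natural map $L\otimes \iHom_Y(f^*\s{F}, f^*\s{G}) \to \iHom_Y(f^*\s{F}, L\otimes f^*\s{G})$ is an equivalence (this is a general feature of closed symmetric monoidal categories, obtained by Yoneda against an arbitrary $\s{H}$ and using that $\s{H}\otimes L^{-1}$ ranges over all of $\s{D}(Y)$). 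Tensoring the displayed equivalence with $L^{-1}$ then yields the second claimed equivalence $f^*\iHom_X(\s{F},\s{G}) \simeq \iHom_Y(f^*\s{F}, f^*\s{G})$.
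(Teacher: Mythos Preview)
Your proof is correct and follows essentially the same approach as the paper: the Yoneda chain using $(f_!,f^!)$-adjunction, the tensor/internal-Hom adjunction, and the projection formula is exactly the argument given there, and the deduction of the second statement from the invertibility of $f^!1_X$ via $f^!\simeq f^*\otimes f^!1_X$ is likewise the same.
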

\begin{proof}
This follows from the adjunctions:
\[
\begin{aligned}
\Hom_{Y}(\s{H}, f^! \iHom_{X}(\s{F}, \s{G})) & \cong \Hom_{X}(f_! \s{H}, \iHom_{X}(\s{F}, \s{G})) \\ 
& \cong \Hom_{X}(f_!\s{H} \otimes \s{F}, \s{G}) \\
& \cong \Hom_{X}(f_! (\s{H} \otimes f^* \s{F}), \s{G}) \\ 
&\cong= \Hom_{Y}(\s{H} \otimes f^* \s{F}, f^! \s{G}) \\
& \cong \Hom_{Y}(\s{H} ,\iHom_{Y}(f^* \s{F}, f^! \s{G})). 
\end{aligned}
\]
The claim about cohomologically smooth maps follows from the fact that $f^! = f^* \otimes f^! 1_{X}$ and that $f^! 1_{X}$ is an invertible object in $\s{D}(Y)$. 
\end{proof}

\begin{lemma}
Let $\s{F} \in \s{D}(X)$ and let $f: Y\to X$ be an arrow in $E$. 

\begin{enumerate}
 \item If $f$ is a smooth cover and $f^* \s{F}$ is $f$-smooth, then $\s{F}$ is dualizable. 
 
 \item If $f$ is a descendable cover and $f^* \s{F}$ is $f$-proper, then $\s{F}$ is dualizable. 
\end{enumerate} 
\end{lemma}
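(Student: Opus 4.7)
The plan is to prove both parts by first reducing to the statement that $f^*\s{F}$ is dualizable in $\s{D}(Y)$, and then extract this dualizability on $Y$ from the $f$-smooth (resp.\ $f$-proper) hypothesis via the characterizations in Proposition \ref{PropSmoothProperObjects}. For (1), the reduction is direct: since $f$ is cohomologically smooth, Lemma \ref{LemmaSmoothvsHom} gives the natural equivalence $f^*\iHom_X(\s{A},\s{B}) \cong \iHom_Y(f^*\s{A},f^*\s{B})$, and since $f$ is a $\s{D}$-cover, $f^*$ is conservative; hence the natural map $\s{F}\otimes \iHom_X(\s{F},1_X)\to \iHom_X(\s{F},\s{F})$ is an equivalence iff its pullback to $Y$ is iff $f^*\s{F}$ is dualizable in $\s{D}(Y)$. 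For (2), the reduction invokes Proposition \ref{PropositionProperDescent}, yielding $\s{D}(X)\simeq \varprojlim_{[n]\in\Delta}\s{D}(Y^{n+1/X})$ as a limit of symmetric monoidal $\infty$-categories along symmetric monoidal pullback transitions. In such a limit, dualizability is detected factor-by-factor: symmetric monoidal functors preserve duals, so the factor-wise duals of any object assemble automatically into a cartesian section and the triangle identities hold termwise. In particular $\s{F}$ is dualizable in $\s{D}(X)$ iff $f^*\s{F}$ is dualizable in $\s{D}(Y)$.

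For part (1), applying Proposition \ref{PropSmoothProperObjects} (1.b) to $Q=f^*\s{F}$ and the cartesian square with projections $p_1,p_2\colon Y\times_X Y\to Y$, the $f$-smoothness yields the equivalence $p_1^*\n{D}_f(f^*\s{F})\otimes p_2^*f^*\s{F}\xrightarrow{\sim} \iHom_{Y\times_X Y}(p_1^*f^*\s{F},p_2^! f^*\s{F})$. Writing $\omega_f := f^!1_X$, which is invertible by cohomological smoothness of $f$, Lemma \ref{LemmaSmoothvsHom} gives $\n{D}_f(f^*\s{F})= f^*\iHom_X(\s{F},1_X)\otimes \omega_f$; Proposition \ref{PropLocalTargetBaseChange} (1) applied to $\omega_f$ (which is $f$-smooth with trivial dual) shows that $p_1^*\omega_f$ is the invertible dualizing sheaf of $p_2$, whence $p_2^!f^*\s{F}=p_2^*f^*\s{F}\otimes p_1^*\omega_f$. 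Setting $\tilde{f}:=f\circ p_1=f\circ p_2$ and using Lemma \ref{LemmaSmoothvsHom} for the smooth map $\tilde{f}$ to identify $\tilde{f}^*\iHom_X(\s{F},1_X)$ with $\iHom_{Y\times_X Y}(\tilde{f}^*\s{F},1_{Y\times_X Y})$, one cancels the invertible $p_1^*\omega_f$ from both sides and the $f$-smoothness condition becomes precisely the dualizability of $\tilde{f}^*\s{F}$ in $\s{D}(Y\times_X Y)$. Since $\tilde{f}$ is a smooth $\s{D}$-cover (composition of the two smooth $\s{D}$-covers $f$ and $p_1$), applying the reduction of the previous paragraph to $\tilde{f}$ yields that $\s{F}$ is dualizable in $\s{D}(X)$.

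For part (2), after the reduction we must establish that $f^*\s{F}$ is dualizable in $\s{D}(Y)$. The idea is to combine Proposition \ref{PropSmoothProperObjects} (2.c) with cohomological co-smoothness of $f$: the latter gives the projection formula for $f_*$ (since $f_*=f_!(-\otimes \n{P}_f(1_Y))$ with $\n{P}_f(1_Y)$ invertible) and, after a symmetry argument with the cartesian square, the Beck-Chevalley identity $f^*f_* \cong p_{2,*}p_1^*$; together with the base change $\n{P}_{p_2}(p_1^*f^*\s{F})= p_1^*\n{P}_f(f^*\s{F})$ of \cite[Proposition 6.8 (3)]{SixFunctorsScholze}, the $f$-properness of $f^*\s{F}$ promotes to the identity of functors $f^*\iHom_X(\s{F},-\otimes f_*1_Y)\cong \iHom_Y(f^*\s{F},f^*(-\otimes f_*1_Y))$. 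Descendability of the algebra $1_X\to f_*1_Y$ extends this identity to $f^*\iHom_X(\s{F},-)\cong \iHom_Y(f^*\s{F},f^*-)$ on all of $\s{D}(X)$, after which the argument of (1) applies verbatim to give dualizability of $f^*\s{F}$ on $Y$. The main obstacle lies precisely in this last step of (2): pullback along a merely co-smooth map does not commute with $\iHom$ a priori, so the crucial extension of the functorial identity from free $f_*1_Y$-modules to all of $\s{D}(X)$ requires a careful use of the fact that both sides preserve the totalization of the \v{C}ech nerve associated to the descendable algebra $f_*1_Y$.
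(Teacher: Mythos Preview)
The paper's proof is a one-line application of Proposition~\ref{PropLocalSourceTarget}: being dualizable is the same as being $\id_X$-smooth (resp.\ $\id_X$-proper), so one simply invokes part (1)(ii) (resp.\ (2)(ii)) of that proposition with the roles of $f$ and $g$ there played by $\id_X$ and your map $f$. Your route is quite different and substantially longer; you are essentially reproving the special case $f=\id_X$ of that proposition by hand.

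For part (1) your argument is correct. The reduction via Lemma~\ref{LemmaSmoothvsHom} and conservativity is fine, and the manipulation with Proposition~\ref{PropSmoothProperObjects}(1.b) does unwind to the dualizability condition for $\tilde f^*\s{F}$ on $Y\times_X Y$ (modulo the routine check that the resulting map is the canonical comparison). This buys nothing over the paper's approach, but it is not wrong.

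For part (2), however, there is a genuine gap. You correctly reduce to showing $f^*\s{F}$ is dualizable in $\s{D}(Y)$, and you correctly derive from Proposition~\ref{PropSmoothProperObjects}(2.c) and co-smoothness the equivalence $f^*\iHom_X(\s{F}, N\otimes f_*1_Y)\cong \iHom_Y(f^*\s{F}, f^*(N\otimes f_*1_Y))$. Even granting the descendability extension to all $N$ (which requires the pro-constancy of the \v{C}ech tower together with a check that your equivalence is the \emph{natural} comparison map, neither of which you carry out), the sentence ``after which the argument of (1) applies verbatim to give dualizability of $f^*\s{F}$ on $Y$'' does not hold. The identity $f^*\iHom_X(\s{F},-)\cong \iHom_Y(f^*\s{F},f^*-)$ only re-derives the reduction ``$\s{F}$ dualizable iff $f^*\s{F}$ dualizable'', which you already had from descent; it produces no dualizing datum for $f^*\s{F}$. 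The analogue of your step in (1) would need to extract dualizability from Proposition~\ref{PropSmoothProperObjects}(2.b), but that characterization lives in $\s{D}(X)$ under $f_!$ and $f_*$, and $f_!$ is not conservative. What is actually required is precisely the argument in the proof of Proposition~\ref{PropLocalSourceTarget}(2)(ii): descend the cosimplicial family $\n{P}_{f_\bullet}(f_\bullet^*\s{F})$ along the \v{C}ech nerve to a candidate dual $Q\in\s{D}(X)$ and verify it works after arbitrary base change. You gesture at this mechanism but never run it.
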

\begin{proof}
This follows from Proposition \ref{PropLocalSourceTarget} as being $\id_X$-smooth or proper is equivalent to being a dualizable object in $\s{D}(X)$. 
\end{proof}

\subsection{Solid and Tate stacks}
\label{SubsectionSolidStacks}

In this section we explain how the  theory of  abstract six functor formalisms of   \cite{MannSix, MannSix2}  and \cite{SixFunctorsScholze}  provides a very general six functor formalism of solid quasi-coherent sheaves for stacks.  Throughout this section we fix an uncountable cutoff cardinal $\kappa$  as in \cite[Definition 2.9.11]{MannSix}, in real world applications the construction of  the six functors down below will be independent of $\kappa$ large enough. 

 By \cite[Lemma 2.9.12]{MannSix}, if $(A,A^+)$ is a discrete animated Huber ring, then the forgetful functor $\Mod((A,A^+)_{\sol})\to \Mod(\underline{A})$ preserves $\kappa$-small objects. Therefore, if $B\in \Mod((A,A^+)_{\sol})$ is a $\kappa$-small algebra, the forgetful functor $\Mod((B,A^+)_{\sol})\to \Mod(\underline{B})$ also preserves $\kappa$-small objects. From now on we will work with $\kappa$-small condensed sets.  Recall that a solid affinoid ring $\n{A}$ is an analytic $\bb{Z}_{\sol}$-algebra such that the natural map $(\underline{\n{A}}, \n{A}^+)_{\sol}\to \n{A}$ is an equivalence.  We let $\AffRing_{\bb{Z}_{\sol},\kappa}$ denote the $\infty$-category of solid affinoid rings $\n{A}$ with $\underline{\n{A}}$ being a $\kappa$-small condensed set, we let $\Aff_{\bb{Z}_{\sol},\kappa}$ denote its opposite category of $\kappa$-small solid affinoid spaces, we also let $\SpecAn \n{A} \in \Aff_{\bb{Z}_\sol,\kappa}$ denote the analytic spectrum of the solid affinoid ring $\n{A}$.

We recall  some basic properties of the categories of $\kappa$-small complete modules of analytic rings.

\begin{prop}[{\cite[Proposition 2.3.9]{MannSix}}]
Let $\AnRing_{\kappa}$ be the full subcategory of analytic rings $\n{A}$ with $\kappa$-small underlying condensed rings, and let $\Mod(\n{A})_{\kappa}$ be the full subcategory of $\Mod(\n{A})$ generated under sifted colimits  by the objects  $\n{A}[S]$  with $S$ a $\kappa$-small extremally disconnected set.   The functor  $\Mod(-):\AnRing \to \CAlg(\Cat^{\colim,\ex}_{\infty})$ of complete modules restricts to a functor 
\[
\Mod(-)_{\kappa}: \AnRing_{\kappa} \to \CAlg(\mathcal{P}r^{L,\ex}),
\]
of $\kappa$-small complete modules. In other words, for $\n{A}$ a $\kappa$-small analytic ring, $\Mod(\n{A})_{\kappa}$ is a  presentably symmetric monoidal  stable $\infty$-category, and for a map $\n{A}\to \n{B}$ of $\kappa$-small analytic rings,  the  base change $\n{B}\otimes_{\n{A}}-: \Mod(\n{A}) \to \Mod(\n{B})$ preserves $\kappa$-small analytic modules. 
\end{prop}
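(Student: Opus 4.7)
The plan is to reduce everything to the explicit generators $\n{A}[S]$ with $S$ a $\kappa$-small extremally disconnected set, exploiting that $\kappa$ is uncountable (so in particular closed under finite products). First I would verify that these objects form a set of $\kappa$-compact projective generators of $\Mod(\n{A})$: projectivity follows from the fact that $\Map_{\Mod(\n{A})}(\n{A}[S], -) = \underline{(-)}(S)$ commutes with geometric realizations (this is the defining property of extremally disconnected sets in the condensed formalism), and compactness follows from the assumption that $\n{A}$ is $\kappa$-small together with \cite[Lemma 2.9.12]{MannSix}. Since there are only essentially $\kappa$-many isomorphism classes of such $S$ (up to the cutoff), the closure of this set under sifted colimits yields a presentable $\infty$-category in the sense of \cite{HigherTopos}; this gives $\Mod(\n{A})_\kappa \in \n{P}r^L$. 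Stability is automatic: the subcategory is generated by shifts of projectives, and finite colimits of such objects are still built from $\n{A}[S]$'s under sifted colimits.

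Next I would check that $\Mod(\n{A})_\kappa$ is stable under the symmetric monoidal structure inherited from $\Mod(\n{A})$. The key identity is
\[
\n{A}[S] \otimes_{\n{A}} \n{A}[T] \simeq \n{A}[S \times T],
\]
and $S \times T$ is again $\kappa$-small because $\kappa$ is uncountable (hence closed under finite products, and in fact under $\kappa$-small products). Since the tensor product commutes with colimits in each variable, the tensor product of two objects generated under sifted colimits from $\{\n{A}[S]\}_{S \in \Extdis_\kappa}$ remains in $\Mod(\n{A})_\kappa$. The unit $\n{A} = \n{A}[\ast]$ is clearly in $\Mod(\n{A})_\kappa$, so we obtain a presentably symmetric monoidal stable structure, promoting $\Mod(\n{A})_\kappa$ to an object of $\CAlg(\n{P}r^{L,\ex})$.

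Functoriality is then the easy step. Given $f: \n{A}\to \n{B}$ of $\kappa$-small analytic rings, the base change $\n{B}\otimes_{\n{A}}-$ sends the generator $\n{A}[S]$ to $\n{B}[S]$, which lies in $\Mod(\n{B})_\kappa$. Because base change preserves colimits, it preserves sifted colimits of generators, hence restricts to a symmetric monoidal colimit-preserving functor $\Mod(\n{A})_\kappa \to \Mod(\n{B})_\kappa$. The compatibility with compositions is inherited from the functoriality of $\Mod(-)$, assembling into a functor $\Mod(-)_\kappa:\AnRing_\kappa \to \CAlg(\n{P}r^{L,\ex})$.

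The main subtlety, and essentially the only technical point requiring care, is ensuring that the chosen cutoff $\kappa$ is really compatible with the analytic ring structure: that forming free modules $\n{A}[S]$ and tensor products does not leave the $\kappa$-small world. This is exactly the content of \cite[Lemma 2.9.12]{MannSix} and the uncountability of $\kappa$; once this is granted the rest of the argument proceeds by a straightforward unwinding of the definitions of sifted-colimit closure in a presentable $\infty$-category.
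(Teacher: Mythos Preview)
The paper does not supply its own proof of this proposition; it is quoted directly from \cite[Proposition 2.3.9]{MannSix} and left unproven in the text. Your outline is the standard argument and is correct in substance, so there is nothing in the paper to compare against.

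Two small points are worth tightening. First, in the tensor-product step you use $\n{A}[S]\otimes_{\n{A}}\n{A}[T]\simeq\n{A}[S\times T]$ and note that $S\times T$ is $\kappa$-small; but $S\times T$ is in general only profinite, not extremally disconnected, so $\n{A}[S\times T]$ is not literally one of your generators. The fix is immediate: a $\kappa$-small profinite set admits a surjection from a $\kappa$-small extremally disconnected one (a cutoff cardinal in the sense of \cite[Definition 2.9.11]{MannSix} is a strong limit), so $\n{A}[S\times T]$ is a retract of a generator and hence lies in the sifted-colimit closure. Second, the sifted-colimit closure of the $\n{A}[S]$'s inside the stable category $\Mod(\n{A})$ only produces the connective part, so your claim that stability is ``automatic'' implicitly requires the generators to include shifts; your parenthetical ``generated by shifts of projectives'' shows you are aware of this, but strictly speaking this is an imprecision in the statement as formulated rather than in your argument.
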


We now want to define a six functor formalism for solid affinoid rings, and then  apply \cite[Theorem 4.20]{SixFunctorsScholze} to construct a very large six functor formalism for suitable stacks over $\Aff_{\bb{Z}_{\sol},\kappa}$.  For this, by   \cite[Proposition A.5.10]{MannSix}, all we need is a minimal amount of data consisting on \textit{\'etale} and \textit{proper} arrows $(I,P)$ in $\Aff_{\bb{Z}_{\sol},\kappa}$ satisfying some minimal properties, cf. Definition A.5.9 of \textit{loc. cit}. The following definition is due to  Clausen and Scholze.

\begin{definition}
\label{DefinitionSuitableDecompositionSolid}
We denote $\n{C}= \Aff_{\bb{Z}_{\sol},\kappa}$. 

\begin{enumerate}
\item Let $I$ be the family of arrows in $\n{C}$ consisting on morphisms $f:\SpecAn \n{B}\to \SpecAn \n{A}$ such that $f^*: \Mod(\n{A})\to \Mod(\n{B})$ is an open localization in the sense of Definition \ref{DefinitionOpenClosedMonoidal}, and such that the  associated idempotent algebra $D$ lies in $\Mod(\n{A})_{\kappa}$.

\item  Let $P$ be the family of arrows in $\n{C}$ consisting on morphisms $f:\SpecAn \n{B} \to \SpecAn \n{A}$ such that  $\n{B}=\n{B}_{\n{A}/}$ is induced from $\n{A}$. 

\item We let $E$ be the family of arrows in $\n{C}$  of the form  $f\circ i$ with $i\in I$ and $f\in P$. 

\end{enumerate}

\end{definition}

We first need to check that $(\n{C},E)$ is a geometric set up. 

\begin{lemma}
 The class of arrows $I$, $P$ and  $E$ are stable under composition and pullbacks in $\n{C}$. 
\end{lemma}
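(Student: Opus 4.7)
The plan is to verify stability for the three classes separately, treating $P$ (induced analytic structure) and $I$ (open localization with $\kappa$-small complement) first, then deducing the statement for $E$ via a factorization trick.

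For the class $P$, stability under composition follows from the transitivity of induced analytic structures: if $\Mod(\n{B}) \subset \Mod(\n{A})$ and $\Mod(\n{C}) \subset \Mod(\n{B})$ are the fully faithful inclusions corresponding to $\n{B} = \underline{\n{B}}_{\n{A}/}$ and $\n{C} = \underline{\n{C}}_{\n{B}/}$, then $\Mod(\n{C}) \subset \Mod(\n{A})$ identifies $\n{C}$ with $\underline{\n{C}}_{\n{A}/}$. For stability under pullback, if $f: \n{A} \to \n{B}$ lies in $P$ and $g: \n{A} \to \n{A}'$ is any morphism of solid affinoid rings, then the pushout $\n{B} \otimes_{\n{A}} \n{A}'$ in $\AnRing_{\n{A}}$ has by construction the analytic structure induced from $\n{A}'$, so it lies in $P$.

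For the class $I$, stability under pullback is immediate: pullbacks of open localizations in $\CAlg(\Cat_{\infty}^{\colim,\ex})$ are open localizations, and if the complement is represented by an idempotent algebra $D \in \Mod(\n{A})_{\kappa}$, then its base change $\n{A}' \otimes_{\n{A}} D$ lies in $\Mod(\n{A}')_{\kappa}$ since analytic base change preserves $\kappa$-small modules. For stability under composition, given $j_1: \n{A} \to \n{B}$ and $j_2: \n{B} \to \n{C}$ in $I$ with complementary idempotent algebras $D_1 \in \Mod(\n{A})_{\kappa}$ and $D_2 \in \Mod(\n{B})_{\kappa}$, Proposition \ref{PropLocaleOperations} (5) identifies the closed complement of $j_2 \circ j_1$ with the union, represented by the fiber $[D_1 \oplus D_2 \to D_1 \otimes D_2]$. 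Since the forgetful functor $\Mod(\n{B}) \to \Mod(\n{A})$ sends $\Mod(\n{B})_{\kappa}$ into $\Mod(\n{A})_{\kappa}$ (the generators $\n{B}[S] = \n{B} \otimes_{\n{A}} \n{A}[S]$ are $\kappa$-small in $\Mod(\n{A})$ because $\n{B}$ itself fits in the excision sequence with $D_1$ and hence is $\kappa$-small in $\Mod(\n{A})$), both $D_1$ and $D_2$ live in $\Mod(\n{A})_{\kappa}$, and so does the fiber.

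For $E$, stability under pullback reduces to stability for $I$ and $P$ separately once one observes that pullbacks preserve the factorization $f \circ i$: pulling back each factor separately yields a factorization of the pullback. The main point is stability under composition. Given $i_1: \n{A} \to \n{A}_1$ in $I$, $f_1: \n{A}_1 \to \n{B}_1$ in $P$, $i_2: \n{B}_1 \to \n{B}_2$ in $I$ and $f_2: \n{B}_2 \to \n{B}_3$ in $P$, the key lemma is that the composition $i_2 \circ f_1$ admits a factorization of the form $f_1' \circ i_2'$ with $i_2' \in I$ and $f_1' \in P$. To construct it, note that since $\n{B}_1 = \underline{\n{B}}_{1,\n{A}_1/}$ is induced, the inclusion $\Mod(\n{B}_1) \subset \Mod(\n{A}_1)$ is a fully faithful reflective subcategory, and the idempotent algebra $D \in \Mod(\n{B}_1)_{\kappa}$ defining $i_2$ is simultaneously an idempotent algebra in $\Mod(\n{A}_1)_{\kappa}$ (here one uses again that $\n{B}_1$ itself, together with $D$, remains $\kappa$-small when viewed over $\n{A}_1$). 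The idempotent $D$ thus defines an open immersion $i_2': \n{A}_1 \to \n{A}_1'$ in $I$, and one checks via the universal property of pushouts of localizations that $\n{B}_2 = \underline{\n{B}}_{2,\n{A}_1'/}$, producing the desired map $f_1': \n{A}_1' \to \n{B}_2$ in $P$. Applying this commutation twice allows the rewriting of $f_2 \circ i_2 \circ f_1 \circ i_1$ in the form (element of $P$) $\circ$ (element of $I$), using that $P$ and $I$ are themselves closed under composition.

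The main obstacle I anticipate is the bookkeeping involved in the factorization $i_2 \circ f_1 = f_1' \circ i_2'$, specifically checking the two conditions that $D$ remains $\kappa$-small when regarded in $\Mod(\n{A}_1)$ and that the resulting algebra $\n{B}_2$ really has the $\n{A}_1'$-induced structure; both ultimately come down to the general interplay between reflective localizations and idempotent algebras in $\CAlg(\Cat_{\infty}^{\colim,\ex})$.
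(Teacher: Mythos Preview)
Your treatment of $P$ and $I$ is fine, but the argument for $E$ has a genuine gap: you have the factorization order reversed. By definition, an arrow of $E$ in $\n{C}=\Aff_{\bb{Z}_{\sol},\kappa}$ is of the form $f\circ i$ with $i\in I$ and $f\in P$; translated to the ring direction this means $\n{A}\xrightarrow{p}\n{A}'\xrightarrow{j}\n{B}$ with $p\in P$ first and $j\in I$ second. Your chain $i_1,f_1,i_2,f_2$ with $i_1\in I$ first and $f_1\in P$ second is the opposite convention, so even if your commutation worked you would not land back in $E$.

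More seriously, the commutation lemma you state is false. You claim that for $f_1:\n{A}_1\to \n{B}_1$ in $P$ the forgetful $\Mod(\n{B}_1)\subset\Mod(\n{A}_1)$ is fully faithful and that an idempotent $\n{B}_1$-algebra $D$ remains idempotent over $\n{A}_1$. Neither holds: when $\n{B}_1=\underline{\n{B}_1}_{\n{A}_1/}$ the forgetful functor simply forgets a module structure and is not full, and $D\otimes_{\n{A}_1}D$ has no reason to agree with $D\otimes_{\n{B}_1}D=D$. For instance, take $\n{A}_1=\bb{Z}_{\sol}$, $\n{B}_1=(\bb{Z}[T],\bb{Z})_{\sol}$ and $D=\bb{Z}((T^{-1}))$: this $D$ is idempotent over $\bb{Z}[T]$ but certainly not over $\bb{Z}$.

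The paper's approach avoids any commutation trick. Given $\n{A}\xrightarrow{p_1}\n{A}'\xrightarrow{j_1}\n{B}\xrightarrow{p_2}\n{B}'\xrightarrow{j_2}\n{D}$ with $p_i\in P$ and $j_i\in I$, one simply refactors as $\n{A}\to \n{B}'_{\n{A}/}\to \n{B}'_{\n{B}/}\to \n{D}$. The first arrow lies in $P$ by definition of induced structure. For the second, observe that $\n{B}'_{\n{A}/}=\n{B}'_{\n{A}'/}$ since every $\underline{\n{B}'}$-module is already an $\underline{\n{A}'}$-module; then $\Mod(\n{B}'_{\n{B}/})\subset\Mod(\n{B}'_{\n{A}'/})$ is the open complement of the base-changed idempotent $\underline{\n{B}'}\otimes_{\n{A}'}D_1$, which stays $\kappa$-small. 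Composing with $j_2$ gives the required $I$-part, its complement being the union (in the locale sense) of two $\kappa$-small idempotents. This is the step your bookkeeping should aim for, in the correct direction.
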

\begin{proof}

The stability under pullbacks and compositions for the class $P$ is obvious, for the class $I$ follows from Theorem \ref{TheoLocaleTopology}. Stability under pullbacks of the class $E$ follows from the stability for $I$ and $P$,   we are left to prove stability under composition for $E$. Consider two maps of analytic rings $f:\n{A} \to \n{B}$ and $g:\n{B} \to \n{D}$. Suppose that we have factorizations $f= j_1 \circ p_1$ with $p_1:  \n{A} \to \n{A}' $ an induced analytic ring and  $j_1: \n{A}' \to \n{B}$ an open immersion. Similarly, suppose that $g=  j_2\circ p_2$ with $p_2: \n{B} \to \n{B}'$ and $j_2: \n{B}' \to \n{D}$.  Then we can write $g\circ f$ as the composite 
\[
\n{A} \to \n{B}'_{\n{A}/} \to \n{B}'_{\n{B}/} \to \n{D},
\]
the first arrow is in $P$ by definition, the second and third arrows are open immersions, so it is their composite. This proves that $E$ is stable under composition as wanted. Moreover, the idempotent algebra associated to the map $ \n{B}'_{\n{A}/} \to \n{D}$ is a "union" in the sense of Proposition \ref{PropLocaleOperations} (5) of two $\kappa$-small $\n{B}$-algebras, so it is $\kappa$-small. 
\end{proof}

\begin{remark}
\label{RemarkInternalHomAnRing}
Given $\SpecAn \n{A}\in \Aff_{\bb{Z}_{\sol},\kappa}$, the stable $\infty$-category $\Mod(\n{A})_{\kappa}$ is closed by the adjoint functor theorem.  Indeed, the inclusion $\Mod(\n{A})_{\kappa} \to \Mod(\n{A})$ has a right adjoint $(-)_{\kappa}$ given by taking the underlying $\kappa$-small set, and the internal $\iHom$ of $\Mod(\n{A})_{\kappa}$ is equal to $\iHom_{\n{A}}(-,-)_{\kappa}$.   This implies that  both internal $\iHom$'s could differ for general objects $N,M\in \Mod(\n{A})_{\kappa}$. However, after taking some big enough   cardinal $\kappa'>\kappa$ one has that 
\[
\iHom_{\n{A}}(N,M)_{\kappa'}= \iHom_{\n{A}}(N,M).
\]
Actually, the proof of \cite[Proposition 2.1.11 (2)]{MannSix} shows that the choice of the cardinal $\kappa'$ only depends on $N$: write $N=\varinjlim_{I} \n{A}[S_i]$ as a small colimit of compact projective generators with $S_i$ a $\kappa$-small extremally disconnected set. Then, 
\[
\iHom_{\n{A}}(N,M)= \varprojlim_{I} \iHom_{\n{A}}(\n{A}[S], M).
\]
Thus, after taking $\kappa'$ big enough such that $|I|<\kappa'$ and $\Mod(\n{A})_{\kappa'} \to \Mod(\n{A})$ commutes with $|I|$-small limits, one is reduced to prove that $\iHom_{\n{A}}(\n{A}[S],M)$ is $\kappa'$-small for all $\kappa$-small extremally disconnected set $S$  and $M\in \Mod(\n{A})_{\kappa}$. Writing $M$ as a colimit of compact projective generators one just needs to take $\kappa'$ such  that $\iHom_{\n{A}}(\n{A}[S], \n{A}[S'])$ is $\kappa'$-small for all $\kappa$-small extremally disconnected sets  $S$ and $S'$. 
\end{remark}

\begin{lemma}
\label{LemmaSixFunctorBabycase}
Keep the notation of Definition \ref{DefinitionSuitableDecompositionSolid}.  The pair $(I,P)$ is a suitable decomposition  of the geometric set up $(\n{C},E)$. Moreover, it satisfies the criteria of \cite[Proposition A.5.10]{MannSix}, so that  the functor $\Mod(-)_{\kappa}: \n{C} \to \CAlg(\n{P}r^{L,\ex})$ enhances to  a six functor formalism
\[
\s{D}= \Mod(-)_{\kappa}: \Corr(\n{C}, E) \to \n{P}r^{L,\ex}.
\]
Furthermore, for any arrow $f:\AnSpec \n{B} \to \AnSpec\n{A}$ the functors $f^*$ and $f_*$ are independent of $\kappa$. For any $f\in E$, the functor  $f_{!}$ is  independent of $\kappa$  and   there is some $\kappa'>\kappa$ such that for all $\kappa''\geq \kappa'>\kappa$ the restriction of the functor $f^!$  from $\kappa''$-small modules to $\kappa$-small modules stabilizes.  
\end{lemma}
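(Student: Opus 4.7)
The plan is to verify in turn the axioms of a suitable decomposition \cite[Definition A.5.9]{MannSix}, the hypotheses of \cite[Proposition A.5.10]{MannSix}, and the $\kappa$-independence statements. First, I would observe that every arrow in $E$ factors essentially uniquely as $p \circ i$ with $i \in I$ and $p \in P$: given $f:\n{A}\to \n{B}$ written as $j\circ q$ with $q:\n{A}\to \n{A}'$ inducing the analytic structure on $\underline{\n{A}'}$ and $j:\n{A}'\to \n{B}$ an open immersion with $\kappa$-small idempotent algebra, any other such factorization is equivalent via the universal property of the induced structure and the locale structure (Proposition \ref{PropLocaleOperations}). Combined with the stability under composition and pullback already verified, and the compatibility of factorization with base change (which follows because the formation of $\n{B}_{\n{A}/}$ and of idempotent algebras are both preserved under $\otimes_{\n{A}}\n{A}''$), this gives the suitable decomposition.

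Next, I would verify the conditions of \cite[Proposition A.5.10]{MannSix} for each class separately. For $i \in I$, Proposition \ref{PropClosedOpenLocalizationsInftyCat} (2) provides a fully faithful left adjoint $i_!$ satisfying the projection formula, and base change along arbitrary pullbacks holds because open immersions are stable under base change in the locale $\n{S}(\Mod(\n{A}))$; we set $i^! = i^*$. For $p \in P$ with $\n{B}=\n{B}_{\n{A}/}$, the forgetful functor $\Mod(\n{B})_\kappa \to \Mod(\n{A})_\kappa$ preserves all small limits and colimits (since completeness along the induced structure is tautological), so we define $p_! := p_*$; the projection formula $p_*(p^*M \otimes_{\n{B}} N) \cong M\otimes_{\n{A}} p_* N$ holds tautologically, as $\n{B} \otimes_{\n{A}} M \otimes_{\n{B}} N = M \otimes_{\n{A}} N$ with its canonical $\underline{\n{B}}$-action, and base change follows from the compatibility of $-\otimes_{\n{A}}-$ with the operation of restricting/inducing analytic structure. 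The compatibility between $I$ and $P$ needed in axiom \cite[Definition A.5.9]{MannSix} reduces, after unraveling, to checking that pullbacks of open immersions along maps in $P$ remain open immersions with $\kappa$-small idempotent algebra, which is immediate from $\n{B}\otimes_{\n{A}} D = \n{B}\otimes_{\underline{\n{A}}} D$ being $\kappa$-small whenever $D$ is.

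Finally, for the $\kappa$-independence statements: the functor $f^* = \n{B}\otimes_{\n{A}} -$ is defined on all of $\Mod(\n{A})$ independently of cardinal bounds, and preserves $\kappa$-small objects; similarly the forgetful functor $f_*$ (which exists for \emph{any} morphism of analytic rings) is manifestly independent of $\kappa$, and preserves $\kappa$-small objects by the very definition of $\Mod(\n{A})_\kappa$ as generated by $\n{A}[S]$ with $S$ a $\kappa$-small extremally disconnected set. For $f = p \circ i \in E$, the functor $f_! = p_* \circ i_!$ is the composition of the fully faithful embedding attached to an idempotent algebra $D \in \Mod(\n{A})_\kappa$ (whose formation via the fiber construction of Definition \ref{DefinitionOpenClosedMonoidal} is independent of the ambient cardinal as long as the source category is large enough to contain the relevant cofiber) with the forgetful functor, both independent of $\kappa$.

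The most subtle point, which I expect to be the main obstacle, is the stabilization of $f^!$. Writing $f^! = i^! \circ p^!$, the issue is that $p^!$ is given by an internal Hom along $p_*$, namely $p^! M = \iHom_{\n{A}}(\n{B}, M)$ with its $\underline{\n{B}}$-structure, and $i^!$ likewise involves $\iHom_{\n{A}}([\underline{\n{A}}\to D], -)$. By Remark \ref{RemarkInternalHomAnRing}, for each $\kappa$-small input $N$ the internal $\iHom$ computed in $\Mod(\n{A})_{\kappa'}$ stabilizes once $\kappa'$ is large enough to resolve $N$ by compact projectives and to contain the relevant internal mapping objects between $\n{A}[S]$ and $\n{A}[S']$. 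Since $D$ and $\underline{\n{B}}$ are both fixed $\kappa$-small objects, one may choose a single $\kappa' > \kappa$ that works uniformly; for any $\kappa'' \geq \kappa'$, the restriction of $f^!$ from $\Mod(\n{B})_{\kappa''}$ to $\Mod(\n{B})_{\kappa}$ then coincides with its value at $\kappa'$, as required. The remaining verification, namely that $f_!$ with this definition is genuinely left adjoint to $f^!$ and satisfies base change against arbitrary maps, follows from pasting the corresponding properties for $i$ and $p$ established above.
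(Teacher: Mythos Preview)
Your outline follows the paper's approach, but two of the required conditions are missing or misidentified. First, for the suitable decomposition (\cite[Definition A.5.9]{MannSix}), you never verify that arrows in $I$ are $(-1)$-truncated; the paper checks this by observing that an open immersion $\n{A}\to\n{B}$ satisfies $\n{B}\otimes_{\n{A}}\n{B}=\n{B}$, so $\SpecAn\n{B}\to\SpecAn\n{A}$ is a monomorphism. Your claim of ``essential uniqueness'' of the factorization $p\circ i$ is not one of the axioms and plays no role. Second, and more seriously, the key hypothesis (iii) of \cite[Proposition A.5.10]{MannSix} is the mixed Beck--Chevalley condition: for a cartesian square with $j\in I$ and $f\in P$, the natural map $j_! f'_* \to f_* j'_!$ must be an equivalence. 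What you call ``the compatibility between $I$ and $P$ needed in \cite[Definition A.5.9]{MannSix}'' is only stability of open immersions under pullback, a different and much weaker statement that was already used in the preceding lemma. The paper verifies (iii) by an explicit computation showing that $[\n{A}\to D]\otimes_{\n{A}} M \cong [\n{A}'\to D']\otimes_{\n{A}'} M$ for $M\in\Mod(\n{B}')$, using $[\n{A}'\to D']=[\n{A}\to D]\otimes_{\n{A}}\underline{\n{A}'}$ and the fact that $\n{A}'$ has the induced analytic structure from $\n{A}$.

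There is also a slip in your treatment of $f^!$: for $i\in I$ one has $i^! = i^*$ directly (this is built into the construction of \cite[Proposition A.5.10]{MannSix}), not an internal $\iHom$ against $[\underline{\n{A}}\to D]$; the latter formula computes $j_* j^*$, not $j^!$. This makes the stabilization of $f^!$ for arrows in $I$ immediate, so only the case $p\in P$, where $p^! = \iHom_{\n{A}}(\n{B},-)$, actually requires the argument via Remark \ref{RemarkInternalHomAnRing}.
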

\begin{proof}
First, we check that the conditions of a \textit{suitable decomposition} hold, cf. \cite[Definition A.5.9]{MannSix}. By definition, the objects in $E$ are compositions $p\circ j$ with  $j\in I$ and $p\in P$, so property (a) in \textit{loc. cit.} holds. Next, if $f: \n{A} \to \n{B}$ is an object in $I$, then $f^*: \Mod(\n{A})\to \Mod(\n{B})$ is an open immersion in the sense of Definition \ref{DefinitionOpenClosedMonoidal}. Then, $\n{A} \to \n{B}$ is a localization of analytic rings in the sense that $\n{B}\otimes_{\n{A}} \n{B}=\n{B}$, and a morphism $\n{A}\to \n{D}$ extends to $\n{B}\to \n{D}$ if and only if the natural map $\n{D}\to \n{B}\otimes_{\n{A}} \n{D}$ is an equivalence, this shows that $f: \SpecAn \n{B} \to \SpecAn \n{A}$ is $-1$-truncated, which implies condition (b). Finally, it is easy to check that $I$ and $P$ contain the identity maps and that satisfy the two-out-of-three property (use  Theorem \ref{TheoLocaleTopology} for $I$), this gives (c) and (d) in \cite[Definition A.5.9]{MannSix}.

 For the existence of a $3$-functor formalism we need to check the following conditions: 
\begin{itemize}

\item[(i)] for $[j: \AnSpec \n{B} \to  \AnSpec \n{A} ]\in I$ the following hold
\begin{itemize}

\item $j^*$ admits a left adjoint $j_!$

\item $j_!$ satisfies the proper base change.

\item  $j_!$ satisfies  the projection formula. 

\end{itemize}

\item[(ii)] for $f: \AnSpec \n{B} \to \AnSpec \n{A}$ the following hold
\begin{itemize}

\item $f^*$ admits a colimit preserving right adjoint $f_*$. 

\item $f_*$ satisfies proper base change

\item $f_*$ satisfies the projection formula. 

\end{itemize}

\item[(iii)] For every cartesian diagram 
\[
\begin{tikzcd}
U' \ar[r, "j'"] \ar[d,"f'"] & X' \ar[d, "f"] \\ 
U \ar[r, "j"] & X
\end{tikzcd}
\]
in $\n{C}$ such that $j\in I$ and $f\in P$, the natural map $  j_!f'_* \xrightarrow{\sim}  f_*j'_!$ is an isomorphism.

\end{itemize}

We will prove the properties  for the derived categories $\Mod(\n{A})$ and then show that they preserve $\kappa$-small objects.   Part (i) is a consequence of Proposition \ref{PropClosedOpenLocalizationsInftyCat}. For part (ii),  $f_*$ is the forgetful functor, then the projection formula is clear as $f$ is defined by a morphism of analytic rings $\n{A}\to \n{B}$ where $\n{B}$  has the induced structure of $\n{A}$. It is left to check   that $f_*$ satisfies proper base change, but if $\n{A}\to \n{D}$ is another morphism of analytic rings, then $\n{B}\otimes^{L}_{\n{A}} \n{D} = (\underline{\n{B}}\otimes_{\n{A}} \n{D})_{\n{D}/}$ and the proper base change formula is clear.  Finally we prove (iii), let $X$, $X'$, $U$ and $U'$ be the analytic spectrum of $\n{A}$, $\n{A}'$, $\n{B}$ and $\n{B}'$. Then we have that $\n{A}'=\n{A}'_{\n{A}/}$, and that $\n{B}'=\n{A}'\otimes_{\n{A}} \n{B}$. Let $D\in \Mod(\n{A})$ be the idempotent algebra that complements $U$ in $X$, then $D'= \n{A}' \otimes_{\n{A}} D$ is the idempotent algebra  that complements $U'$ in $X'$. Let $M\in \Mod(\n{B}')$, by definition we have that 
\[
j_!f'_* M = [\n{A}\to D]\otimes_{\n{A}} M 
\]  
and 
\[
f_* j'_! M =  [\n{A'}\to D']\otimes_{\n{A'}} M,
\]
but $[\n{A}'\to D']= [\n{A}\to D]\otimes_{\n{A}} \underline{\n{A'}}$ so that 
\[
\begin{aligned}
 [\n{A'}\to D']\otimes_{\n{A'}} M  & = [\n{A}\to D]\otimes_{\n{A}} \underline{\n{A}'} \otimes_{\underline{\n{A}'}_{\n{A}/}} M \\ 
 & = [\n{A}\to D] \otimes_{\n{A}} M
 \end{aligned}
\]
proving that the natural map $j_!f'_*\to f_*j'_!$ is an equivalence.  

Finally, we need to show that the functors $f_*$ and $f_!$ are independent of $\kappa$, and that $f^!$ is stabilized for $\kappa'>>\kappa$ large enough. The claim about $f_*$ follows from \cite[Lemma 2.9.12]{MannSix} and the discussion at the beginning of the section. For the functors $f_!$ for $f\in E$, it suffices to  prove it for $f\in I$ or $f\in P$. If $f\in P$ then $f_!=f_*$ and we are done, if $f\in I$  and $f: \n{A}\to \n{B}$, then $f_! = [\n{A}\to D]\otimes_{\n{A}}-$ for $D$ the idempotent algebra that complements $\n{B}$. By hypothesis $D$ is a $\kappa$-small algebra, which implies that the tensor product $[\n{A}\to D]\otimes_{\n{A}}-$ preserves $\kappa$-small objects as wanted. For the stability of $f^!$, we can assume that $f\in I$ or $f\in P$, in the first case $f^!=f^*$ and we are done, in the second case $f$ corresponds to a map $\n{A}\to \n{B}_{\n{A}/}$ and $f^! = \iHom_{\n{A}}(\n{B},-)$. Then the stability of $f^!$ for large enough $\kappa'$ follows by   Remark \ref{RemarkInternalHomAnRing}. 
\end{proof}

With the minimalistic $6$-functor formalism for solid affinoid spaces $\n{C}$ we can create a very large class of stacks $\widetilde{\n{C}}$, and a large class of arrows  $\widetilde{E}$ as in Theorem \ref{TheoSixFunctorsScholze}. One has the following corollary.

\begin{cor}
\label{CorollaryStacksSixFunctors}
Let $\n{C}=\Aff_{\bb{Z}_{\sol},\kappa}$ be the  category of $\kappa$-small solid affinoid spaces. Let $E$ be as in Definition \ref{DefinitionSuitableDecompositionSolid}.  Let $\widetilde{\n{C}}= \Sh_{\s{D}}(\Aff_{\bb{Z}_{\sol},\kappa})$ be the $\infty$-category of sheaves on anima for the $\s{D}$-topology where $\s{D}= \Mod_{(-),\kappa}$. Then there is a minimal class of arrows $\widetilde{E}$ in $\widetilde{\n{C}}$  containing the arrows represented in $E$ such that  the six functor formalism $(\n{C},E)$ obtained from Lemma \ref{LemmaSixFunctorBabycase} extends uniquely to $(\widetilde{\n{C}}, \widetilde{E})$, and such that $\widetilde{E}$ is stable under disjoint unions, local on the target, local on the source, and tame. 
\end{cor}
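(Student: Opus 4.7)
The proof plan is to apply Theorem \ref{TheoSixFunctorsScholze} directly to the six functor formalism $(\mathcal{C}, E)$ constructed in Lemma \ref{LemmaSixFunctorBabycase}. The theorem takes as input a geometric set up $(\mathcal{C}, E)$ with $\mathcal{C}$ admitting finite limits and a six functor formalism $\mathcal{D}: \mathrm{Corr}(\mathcal{C}, E) \to \mathcal{P}r^{L,\mathrm{ex}}$, and produces the minimal class $\widetilde{E}$ of arrows in the $\mathcal{D}$-sheaf category $\widetilde{\mathcal{C}}$ satisfying stability under disjoint unions, locality on target, locality on source, and tameness. So the strategy reduces to verifying that the two input hypotheses are met.

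First I would check that $\mathcal{C} = \mathrm{Aff}_{\mathbb{Z}_{\sol},\kappa}$ admits finite limits, equivalently that $\mathrm{AffRing}_{\mathbb{Z}_{\sol},\kappa}$ admits finite colimits. The full subcategory $\mathrm{AffRing}_{\mathbb{Z}_{\sol}} \subset \mathrm{AnRing}_{\mathbb{Z}_{\sol}}$ is stable under small colimits and finite products by Proposition \ref{PropCompactProjectiveSolidAffinoid} (the same argument used for bounded affinoid rings in Proposition \ref{PropStabilityAffinoidRings}(5) applies), so the only remaining verification is that finite colimits preserve $\kappa$-smallness of the underlying condensed ring, which is automatic for an uncountable cutoff cardinal $\kappa$ by the standard cardinal arithmetic used in \cite[Lemma 2.9.12]{MannSix}.

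Second, I would invoke Lemma \ref{LemmaSixFunctorBabycase}, which gives the six functor formalism $\mathcal{D} = \mathrm{Mod}(-)_{\kappa}: \mathrm{Corr}(\mathcal{C}, E) \to \mathcal{P}r^{L,\mathrm{ex}}$. The fact that the values are presentable and stable is built into the construction: $\mathrm{Mod}(\underline{\mathcal{A}})_{\kappa}$ is generated under sifted colimits by the compact projective objects $\mathcal{A}[S]$ for $S$ a $\kappa$-small extremally disconnected set, so it lies in $\mathcal{P}r^{L,\mathrm{ex}}$, and the lemma shows all the six functors are well-defined and compatible with the cardinality cutoff (with the caveat that $f^{!}$ may only stabilize for a strictly larger cardinal, which does not affect the extension argument).

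With these two inputs in hand, Theorem \ref{TheoSixFunctorsScholze} produces the $\mathcal{D}$-topology on $\mathcal{C}$ (via Definition \ref{DefinitionSixFunctorDtop}), the category of sheaves $\widetilde{\mathcal{C}}$, and the minimal class $\widetilde{E}$ with the claimed stability properties, together with a unique extension of the six functor formalism. The only potential subtlety is ensuring the $\kappa$-smallness conventions are compatible with the sheafification and the extension procedure in \cite[Proposition A.5.16]{MannSix} and \cite[Theorem 4.20]{SixFunctorsScholze}; since every functor in the formalism either preserves $\kappa$-smallness outright or can be made to do so by enlarging $\kappa$ as in Remark \ref{RemarkInternalHomAnRing}, one obtains a bona fide six functor formalism on $(\widetilde{\mathcal{C}}, \widetilde{E})$. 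This bookkeeping with $\kappa$ is the main place where care is required, but no new ideas beyond those already in the cited references are needed.
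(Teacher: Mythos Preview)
Your proposal is correct and follows exactly the approach the paper takes: the paper simply remarks, just before the corollary, that one can apply Theorem~\ref{TheoSixFunctorsScholze} to the six functor formalism produced in Lemma~\ref{LemmaSixFunctorBabycase}, and then states the corollary without further proof. Your write-up actually makes the verification of the hypotheses (finite limits in $\mathcal{C}$, values in $\mathcal{P}r^{L,\mathrm{ex}}$) more explicit than the paper does.
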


\begin{definition}
\label{DefinitionSolidStacks}
With the notation of Corollary \ref{CorollaryStacksSixFunctors}, we call $\Sh_{\s{D}}(\Aff_{\bb{Z}_{\sol},\kappa})$ the $\infty$-category of \textit{$\kappa$-small solid $\s{D}$-stacks}. If $\kappa$ is omitted in the notation we write instead $\Sh_{\s{D}}(\Aff_{\bb{Z}_{\sol}})$ and call it the $\infty$-category of \textit{solid $\s{D}$-stacks}. 
\end{definition}

\begin{remark}
The six functor formalism for solid quasi-coherent sheaves constructed before depends on the cardinal $\kappa$, in particular the functors $f_*$, $f_!$ and $f^!$ might depend on $\kappa$. Nevertheless, in practice we will always have formulas for these functors that will make them independent in large enough cardinals. 
\end{remark}

Next, we prove that the locale topology of Theorem \ref{TheoLocaleTopology} gives rise to cohomologically proper and \'etale $\s{D}$-covers. 

\begin{lemma}
\label{LemmaOpenClosedCoverLocaleCoho}
The following hold:

\begin{enumerate}

\item Let $f: \SpecAn \n{B} \to \SpecAn \n{A}$. If $f$ is open in the associated locale then $f$ is cohomologically \'etale. Similarly, if $\n{B}_{\n{A}/}= \n{B}$ and $\n{B}$ is an idempotent algebra over $\n{A}$, then $f$ is cohomologically proper.  

\item Let $\SpecAn \n{A}\in \Aff_{\bb{Z}_{\sol}}$ and let $\{ f_i:\SpecAn \n{B}_i \to \SpecAn \n{A}\}_{i=1}^n$ be a collection of morphisms of solid affinoid rings. If $\{f_i\}_{i\in I}$ is an open  cover of locales then $\bigsqcup f_i $ is a smooth $\s{D}$-cover. Similarly, if $\{f_{i}\}_{i\in I}$ is a closed cover of locales then $\bigsqcup_{i} f_i$ is a descendable $\s{D}$-cover. 

\end{enumerate} 
\end{lemma}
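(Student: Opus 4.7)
The plan is to exploit the fact that for both types of maps in (1), the diagonals are isomorphisms, so the definition of cohomologically étale and proper (Definition \ref{DefinitionCohoEtaleProper}) reduces to checking that $1_Y$ is $f$-smooth or $f$-proper respectively. For the open case, where $f^*: \Mod(\n{A}) \to \Mod(\n{B})$ is an open immersion in the sense of Definition \ref{DefinitionOpenClosedMonoidal}, Proposition \ref{PropClosedOpenLocalizationsInftyCat}(2) provides a fully faithful left adjoint $f_!$ satisfying the projection formula; fully-faithfulness implies $\n{B} \otimes_{\n{A}} \n{B} = \n{B}$, so $\Delta_f$ is an isomorphism and in particular $f$ is $(-1)$-truncated. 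Moreover, $f^! = f^*$ by construction, so $f^! 1_X = 1_Y$ is invertible. To see that $1_Y$ is $f$-smooth one applies criterion (1.b) of Proposition \ref{PropSmoothProperObjects}: since $Y \times_X Y = Y$ with both projections equal to $\id_Y$, the required map is trivially an equivalence.

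For the closed case, with $\n{B}$ an idempotent $\n{A}$-algebra carrying the induced analytic structure, Proposition \ref{PropClosedOpenLocalizationsInftyCat}(1) yields a fully faithful, colimit-preserving $f_*$ satisfying the projection formula; the induced structure together with idempotency forces $\n{B} \otimes_{\n{A}} \n{B} = \n{B}$, so $\Delta_f$ is again an isomorphism. The identification $f_! = f_*$ in this case, combined with the reduction $Y \times_X Y = Y$, allows us to verify criterion (2.b) of Proposition \ref{PropSmoothProperObjects} for $f$-properness of $1_Y$: the map $f_!(- \otimes \n{P}_f(1_Y)) \to f_* \iHom_Y(1_Y, -)$ is trivially an equivalence with $\n{P}_f(1_Y) = 1_Y$ thanks to the projection formula for $f_* = f_!$. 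This establishes cohomological properness.

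For (2), we first apply (1) to each summand of $f = \bigsqcup_i f_i$ and use stability of smoothness (resp. properness) under disjoint unions, which follows from $\s{D}(\bigsqcup Y_i) = \prod_i \s{D}(Y_i)$, to conclude that $1_Y$ is $f$-smooth (resp. $f$-proper). In the open cover case, to apply Proposition \ref{PropositionSmoothDescent} it remains to verify that the natural map $\varinjlim_{[n] \in \Delta^{\op}} f_{n+1, !} f^{n+1, !} 1_X \to 1_X$ is an equivalence after arbitrary base change; unwinding the Čech nerve in terms of the idempotent algebras $D_i$ complementary to each $\n{B}_i$ and using Proposition \ref{PropLocaleOperations}, this reduces to the analytic descent statement of Theorem \ref{TheoLocaleTopology}(2), which is stable under base change of categorified locales. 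In the closed cover case, to apply Proposition \ref{PropositionProperDescent} we must check that $f_* 1_Y = \prod_i \n{B}_i$ is descendable as an $\n{A}$-algebra: the cover condition from Proposition \ref{PropLocaleOperations}(5), iterated finitely many times, presents $\underline{\n{A}}$ as the totalization of a bounded cosimplicial diagram whose terms are finite tensor products $\bigotimes_{j \in J} \n{B}_j$, and such a bounded totalization witnesses descendability in the sense of \cite{MathewDescent}. The main technical point to check is that the iterated fiber sequence remains bounded in length (hence produces a genuine descendability witness rather than a limit in Pro) and that it is functorial under arbitrary base change, but both assertions follow from the finiteness of the cover and the fact that the locale operations of Proposition \ref{PropLocaleOperations} are preserved by pullback.
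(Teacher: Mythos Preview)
Your argument for part (1) is correct and essentially the same as the paper's, though more explicit: the paper simply notes that $f\in I$ gives $f^!=f^*$ (resp.\ $f\in P$ gives $f_!=f_*$), that idempotency forces $(-1)$-truncatedness, and appeals directly to Definition~\ref{DefinitionCohoEtaleProper}. Your verification via Proposition~\ref{PropSmoothProperObjects}(1.b) and (2.b) is a fine way to fill in the implicit step.

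For part (2), your approach is in the same spirit but takes a detour and omits a step. In the open cover case, rather than verifying the colimit criterion $\varinjlim_{[n]\in\Delta^{\op}} f_{n+1,!}f^{n+1,!}1_X \to 1_X$ directly, the paper uses the equivalent but much easier criterion in Proposition~\ref{PropositionSmoothDescent}: that $f^*$ is conservative. Conservativity is immediate from the definition of an open cover of locales, whereas your unwinding via Theorem~\ref{TheoLocaleTopology}(2) is not quite the right citation (that result gives $*$-descent of categories, not the $!$-colimit formula for the unit) and would require extra work to bridge. For the closed cover case your descendability argument via iterated fibers from Proposition~\ref{PropLocaleOperations}(5) matches the paper's; the paper phrases it as $\underline{\n{A}}$ lying in the thick tensor ideal generated by $\prod_i \n{B}_i$.

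One genuine omission: a $\s{D}$-cover must in particular be a \emph{canonical} cover (Definition~\ref{DefinitionSixFunctorDtop}), and you do not address this. The paper handles it by invoking subcanonicity of the locale topology (Proposition~\ref{PropLocaleTopIsSubcanonial}, adapted to $\Aff_{\bb{Z}_\sol}$) for the open case, and by observing that a closed cover is refined by an open cover for the closed case. Without this, the conclusion that $\bigsqcup_i f_i$ is a $\s{D}$-cover is incomplete.
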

\begin{proof}
\begin{enumerate}

\item Suppose that $f$ is open in the associated locale, by definition $f\in I$ and $f^*= f^!$. Moreover, $f$ is $-1$-truncated as $\n{B}\otimes_{\n{A}} \n{B}= \n{B}$, this shows that $f$ is cohomologically \'etale. Similarly, if $\n{B}=\n{B}_{\n{A}/}$ is an idempotent algebra over $\n{A}$, then $f$ is $-1$-truncated and we have $f_*=f_!$, this implies that $f$ is cohomologically proper.

\item Let $F= \bigcup_i f_i : \bigcup_{i} \SpecAn \n{B}_i \to \SpecAn \n{A}$ be a finite cover.  In the case that the $\{f_i\}_{i\in I}$ form an open cover of the locale, by  Proposition \ref{PropLocaleTopIsSubcanonial} adapted to $\Aff_{\bb{Z}_{\sol}}$, the family $\{f_i\}$ form a canonical cover, and the pullback along $F$ is conservative. Then the conditions of Proposition \ref{PropositionSmoothDescent} hold and $F$ is a smooth $\s{D}$-cover.  Similarly, if $\{f_i\}_{i\in I}$ is a closed cover on the locales, then it is refined by an open cover and therefore it defines a subcanonical cover. By Proposition \ref{PropositionProperDescent}, we are left to prove that $F_* 1 = \prod_i \n{B}_i $  is a descendable $\n{A}$-algebra. But by definition of closed covering in the locales, we know that $\underline{\n{A}}$ is equal to the ``union''  of the algebras $\n{B}_i$, which clearly belongs to the thick tensor ideal generated by $F_* 1$ in $\Mod(\n{A})$. This proves the lemma. 
\end{enumerate}
\end{proof}

We finish this section with the definition of  Tate stacks.

\begin{definition}
\label{DefinitionAdicStacks}
Let $R_{\sol}=(R,R^+)_{\sol}=(\bb{Z}((\pi)), \bb{Z}[[\pi]])_{\sol}$, and let $\Aff^{b}_{R_{\sol},\kappa}$ be the $\infty$-category of $\kappa$-small bounded affinoid spaces. The $\infty$-category of \textit{$\kappa$-small Tate stacks} over $R_{\sol}$ is  the category $\Sh_{\s{D}}(\Aff^{b}_{R_{\sol}, \kappa})$ of sheaves on anima of  $\Aff^{b}_{R_{\sol},\kappa}$ for the $\s{D}$-topology, with $\s{D}=\Mod(-)_{\kappa}$. If $\kappa$ is omitted in the notation we simply call $\Sh_{\s{D}}(\Aff^{b}_{R_{\sol}})$ the category of  \textit{Tate stacks}. 
\end{definition}

The following lemma gives a sufficient criteria for the existence of $!$-functors for a morphism of solid stacks.

\begin{lemma}
Let $f:X\to Y$ be a map in $\Sh_{\s{D}}(\Aff_{\bb{Z}_{\sol}})$ such that there is an epimorphism $\bigsqcup_I \AnSpec \n{A}_i\to Y$ with $\n{A}_i$ solid affinoid rings, such that for all pullback $X_{i} \to \AnSpec  \n{A}_i$ there is a $\s{D}$-cover $\bigsqcup_{j} \AnSpec \n{B}_{i,j}\to X_i$, such that the maps $\n{A}_i\to \n{B}_{i,j}$ factor through maps
\[
\n{A}_i \to \n{A}[T_1,\ldots, T_{d}]_{\sol} \to \n{B}_{i,j},
\]
where $\n{A}[ T_1,\ldots, T_{d}]_{\sol}\to \n{B}_{i,j}$ has the induced analytic structure. Then $f\in \widetilde{E}$ has $!$-functors for the theory of solid quasi-coherent sheaves.  The same holds for $\s{D}$-stacks over $\Aff^{b}_{R_{\sol}}$.
\end{lemma}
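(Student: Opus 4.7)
My plan will exploit the four closure properties of $\widetilde{E}$ provided by Theorem \ref{TheoSixFunctorsScholze} (stability under disjoint unions, locality on the target and on the source, and tameness) to reduce the statement to the fact that each affinoid-to-affinoid composite $\AnSpec \n{B}_{i,j} \to \AnSpec \n{A}_i$ already lies in the base class $E$.

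First, since $\bigsqcup_I \AnSpec \n{A}_i \to Y$ is an epimorphism of $\s{D}$-stacks, locality of $\widetilde{E}$ on the target reduces the problem to showing that each pullback $f_i: X_i \to \AnSpec \n{A}_i$ belongs to $\widetilde{E}$. Writing $g_i: \bigsqcup_j \AnSpec \n{B}_{i,j} \to X_i$ for the given $\s{D}$-cover (which in particular provides universal $!$-descent), I will then apply locality on the source combined with stability under disjoint unions to reduce further to two facts: (a) each composite $\AnSpec \n{B}_{i,j} \to X_i \to \AnSpec \n{A}_i$ belongs to $\widetilde{E}$, and (b) each individual cover map $\AnSpec \n{B}_{i,j} \to X_i$ also belongs to $\widetilde{E}$. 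The subtlety is (b), which I plan to handle by iterating the same strategy: the fiber products $\AnSpec \n{B}_{i,j} \times_{X_i} \AnSpec \n{B}_{i,k}$ are affinoids inheriting an analogous factorization, so the pullback of $g_i$ to each $\AnSpec \n{B}_{i,j}$ fits the hypotheses of the lemma again, and locality on the target then yields (b). This recursive step is the main obstacle.

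The core computation is the verification of (a). By hypothesis the composite decomposes as
\[
\AnSpec \n{B}_{i,j} \xrightarrow{p} \AnSpec \n{A}_i[T_1,\ldots,T_d]_{\sol} \xrightarrow{q} \AnSpec \n{A}_i.
\]
The map $p$ belongs to $P$ because $\n{B}_{i,j}$ carries the analytic structure induced from $\n{A}_i[T_1,\ldots,T_d]_{\sol}$. The map $q$ is the base change to $\AnSpec \n{A}_i$ of $\AnSpec \bb{Z}[T_1,\ldots,T_d]_{\sol} \to \AnSpec \bb{Z}_{\sol}$, which I will further decompose as
\[
\AnSpec \bb{Z}[T_1,\ldots,T_d]_{\sol} \xrightarrow{\iota} \AnSpec (\bb{Z}[T_1,\ldots,T_d],\bb{Z})_{\sol} \xrightarrow{\pi} \AnSpec \bb{Z}_{\sol},
\]
where $\pi$ lies in $P$ (the middle ring has the analytic structure induced from $\bb{Z}_{\sol}$) and $\iota$ lies in $I$, since by the proof of Proposition \ref{PropositionAffinoidCatLocale} the ring $\bb{Z}[T_1,\ldots,T_d]_{\sol}$ realizes an open localization of the categorified locale attached to $(\bb{Z}[T_1,\ldots,T_d],\bb{Z})_{\sol}$, cut out as the complement of the idempotent algebras generated by $\bb{Z}((T_k^{-1}))$ for $k=1,\ldots,d$. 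By stability of $E$ under composition and pullback (Lemma \ref{LemmaSixFunctorBabycase}) one obtains $q \in E$, and hence $\AnSpec \n{B}_{i,j} \to \AnSpec \n{A}_i$ in $E \subset \widetilde{E}$. The bounded version over $\Aff^b_{R_{\sol}}$ is handled identically, with $R\langle T_1,\ldots,T_d\rangle_{\sol} = R_{\sol}\otimes_{\bb{Z}_{\sol}} \bb{Z}[T_1,\ldots,T_d]_{\sol}$ playing the role of $\bb{Z}[T_1,\ldots,T_d]_{\sol}$.
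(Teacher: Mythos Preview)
Your core computation for (a) is correct and is exactly the paper's argument: decompose $\AnSpec \n{B}_{i,j}\to\AnSpec \n{A}_i$ through $\AnSpec \n{A}_i[T_1,\ldots,T_d]_{\sol}$, and then handle $\bb{Z}_{\sol}\to\bb{Z}[T]_{\sol}$ by the open-localization complement to $\bb{Z}((T^{-1}))$. The reduction using locality on target and on source is also the same.

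The complication you flag as the ``main obstacle'' (item (b)) is a phantom. By the natural extension of Definition~\ref{DefinitionSixFunctorDtop} to $\widetilde{\n{C}}$, a $\s{D}$-cover $g_i:\bigsqcup_j\AnSpec\n{B}_{i,j}\to X_i$ is by definition a family of morphisms in $\widetilde{E}$ satisfying universal $!$-descent; so the hypothesis of locality on the source (Definition~\ref{DefinitionStabilityE}(3)) is already given. There is nothing to prove for (b). Your recursive attempt to establish it has genuine problems anyway: the fiber products $\AnSpec\n{B}_{i,j}\times_{X_i}\AnSpec\n{B}_{i,k}$ over the non-affinoid stack $X_i$ have no reason to be affinoid, the ``analogous factorization'' is not explained, and the recursion has no terminating step. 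Simply delete that paragraph and your proof coincides with the paper's.
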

\begin{proof}
By construction, the category $\widetilde{E}$ of maps admitting $!$-functors is stable under disjoint unions, local on the target and local on the source, thus it suffices to show that each map $\AnSpec \n{B}_{i,j}\to \AnSpec \n{A}_i$ has $!$-functors. Since $\widetilde{E}$ is stable under compositions, it suffices to see that $\n{A}\to \n{A}[ T_1,\ldots, T_d]_{\sol}$ has $!$-functors, for which is enough to show that $\bb{Z}\to \bb{Z}[T]_{\sol}$ has  $!$-functors. But we can write $\bb{Z}\to \bb{Z}[T]\to \bb{Z}[T]_{\sol}$ where the first arrow has the induced analytic structure, and the second is an open immersion of locales (see Proposition \ref{PropositionAffinoidCatLocale}), thus the composite has $!$-functors proving what we wanted. 
\end{proof}

\subsection{Morphisms of finite presentation}
\label{SubsectionMorphismsFinitePresentation}

In applications we find different definitions of morphisms of finite presentation depending on the geometry we are studying. In this section we explain a way to treat some formal properties of any of these situations in a more axiomatic way. We shall restrict ourselves to the case of solid affinoid rings. 

\begin{definition}
\label{DefinitionCoordinateTheory}
Let $\n{A}$ be a  solid affinoid ring and let $\n{A}[T]$ be the polynomial algebra. A \textit{coordinate theory} over $\n{A}$ is an idempotent map of solid $\n{A}$-algebras $f:\n{A}[T] \to \n{A}(T)$ such that 
\begin{itemize}
\item[(i)] $\n{A}(T)$ is an animated ring stack in $\Aff_{\n{A}}$, i.e. the functor correpresented by $\n{A}(T)$ has a given enhancement in animated rings,  and $f$ is a morphism of animated ring stacks over $\n{A}$.

\item[(ii)] The natural map $\AnSpec \n{A}(T) \bigsqcup \AnSpec \n{A}(T^{-1}) \to \bb{P}^1_{\n{A}}$ is a $\s{D}$-cover. 
\end{itemize}

We define $\n{A}(T^{\pm 1}):= \n{A}(T)[T^{-1}]\otimes_{\n{A}[T^{\pm 1}]} \n{A}(T^{-1})[T]$. 
\end{definition}

Let us give different examples of coordinate theories that occur in practice: 

\begin{example}
\label{ExampleCoordinateTheories}
\begin{enumerate}

\item  Of course the trivial example is the identity map $\bb{Z}[T] \to \bb{Z}[T]$, in this case the ``coordinate" is the classical one from algebraic geometry. A more interesting example is the solidification functor $\bb{Z}[T] \to \bb{Z}[T]_{\sol}$, here we think of $T$ as the ``solid coordinate''.

\item In rigid geometry we have at least two examples: the first one is given by $(\bb{Q}_p[T],\bb{Z}_p)_{\sol} \to (\bb{Q}_p \langle T \rangle, \bb{Z}_p)$, the second one by $(\bb{Q}_p[T], \bb{Z}_p)_{\sol} \to \bb{Q}_p\langle T \rangle_{\sol}$.  The first coordinate is the adic compactification of the affinoid disc, the last is parametrized by the algebra $\bb{Q}_p\langle T \rangle_{\sol}$. Note that $\bb{Q}_p\langle T \rangle_{\sol}= \bb{Q}_p \otimes_{\bb{Z}_{\sol}} \bb{Z}[T]_{\sol}$, in general, the base change of a coordinate theory is a coordinate theory.

\item Let $\bb{Q}_p\langle T \rangle^{\dagger}:= \varinjlim_{\epsilon \to 0^+} \bb{Q}_p \langle p^{\epsilon} T \rangle$, this "coordinate" is the one used in the theory of dagger spaces over $\bb{Q}_p$. 

\end{enumerate}
\end{example}

We now define  rational localizations and morphims of (almost) finite presentation. 

\begin{definition}
\label{DefinitionGeneralFinitePresentation}
Let $\n{A}$ be a solid affinoid ring and $\n{A}[T] \to \n{A}(T)$  a coordinate theory over $\n{A}$.

\begin{enumerate}

\item A morphism $\n{A} \to \n{B}$ is an \textit{$\n{A}(T)$-rational localization} if it is a composite of morphisms of the form $\n{A} \to \n{A} \otimes_{\n{A}[T]} \n{A}(T)$ or $\n{A}\otimes_{\n{A}[T]} \n{A}(T^{-1})[T]$.  The \textit{$\n{A}(T)$-analytic topology} on $\Aff_{\n{A}}$ is the Grothendieck topology with covers given by $\s{D}$-covers consisting on finite disjoint unions of $\n{A}(T)$-rational localizations. We let $\Sh_{\n{A}(T)}(\Aff_{\n{A}})$ denote the category of $\n{A}(T)$-analytic sheaves on anima.  

\item A morphism $f:\n{A} \to \n{B}$ is of \textit{$\n{A}(T)$-finite presentation} if it belong to the smallest category of $\n{A}$-algebras stable under finite colimits and containing $\n{A}(T)$. We say that $f$ is of \textit{local  $\n{A}(T)$-finite presentation} if it is a retract of a morphism of $\n{A}(T)$-finite presentation. If $\n{A}(T)$ is clear from the context we simply say that $f$ is of (local)  finite presentation. 

\item An \textit{$\n{A}(T)$-adic space} is an object in $\Sh_{\n{A}(T)}(\Aff_{\n{A}})$ which is representable by an affinoid ring locally in the $\n{A}(T)$-analytic topology. 

\item A morphism $X \to Y$ of $\n{A}(T)$-adic spaces is   \textit{locally  of (local) finite presentation} if it is of (local) finite presentation locally in the $\n{A}(T)$-analytic topology of $X$ and $Y$. 

\end{enumerate}

\end{definition}

\begin{remark}
We use the name ``local of finite presentation'' for what \cite{LurieDerivedAlgebraic} calls ``locally of finite presentation''. The reason for this difference is to avoid properties on spaces  such as ``locally of locally of finite presentation'' which might be confusing. 
\end{remark}

\begin{remark}
\label{RemarkNonTrivialCoverCoordinatetheory}
Condition (ii) in Definition \ref{DefinitionCoordinateTheory} guarantees that we have non trivial rational covers, namely, if $b \in \n{B}$ is depicted from a map $\n{A}[T] \to \n{B}$, the localizations 
\[
\n{B} \to \n{B}(g)= \n{B} \otimes_{\n{A}[T]} \n{A}(T) \mbox{ and } \n{B} \to \n{B}(\frac{1}{g})= \n{B} \otimes_{\n{A}[T]} \n{A}(T^{-1})[T]
\]
form a $\s{D}$-cover of $\AnSpec \n{B}$. 
\end{remark}

Next, given a suitable six functor formalism on solid prestacks $\PSh(\Aff_{\n{A}})$, we want to give a simple criteria for morphisms locally of finite presentation of $\n{A}(T)$-adic spaces to admit $!$-functors.  Let $X\in \Sh_{\s{D}}(\Aff_{\bb{Z}_{\sol}})$ be a solid $\s{D}$-stack, suppose we are given a finite limit preserving functor $F: \PSh(\Aff_{\n{A}}) \to \Sh_{\s{D}}(\Aff_{\bb{Z}_{\sol}})_{/X}$. Let $E'$ be the  class of edges in $\PSh(\Aff_{\n{A}})$ such that $\sigma \in E'$ if and only if $F(\sigma) \in \widetilde{E}$, by Lemma \ref{LemmaFunctoriality6Functors} we have an induced $6$-functor formalism $\s{D}'$ on $(\PSh(\Aff_{\n{A}}), E')$, we let $\Sh_{\s{D}'}(\Aff_{\n{A}})$ denote its category of $\s{D}'$-stacks and $\widetilde{E}'$ the class of arrows of Theorem \ref{TheoSixFunctorsScholze}.

\begin{prop} 
\label{PropositionDevisageSixFiniteType}
Let us keep the previous notation. Suppose that the following condition hold
\begin{enumerate}

\item $F$ preserves coproducts. 

\item $F$ sends $\n{A}(T)$-analytic covers to $\s{D}$-covers.

\item The images by $F$ of $\AnSpec \n{A} \to \AnSpec \n{A}(T)$ and  $\AnSpec \n{A}(T) \to \AnSpec \n{A}$ are in $\widetilde{E}$, i.e. they admit $!$-functors.

\item For all $n\geq 1$  the image by $F$ of the map $\AnSpec \n{A} \to \AnSpec (\Sym_{\n{A}}^\bullet \n{A}[n] )$ is in $\widetilde{E}$. 

\end{enumerate}

Then $\n{A}(T)$-analytic covers are $\s{D}'$-covers, and any map in $\Sh_{\s{D}'}(\Aff_{\n{A}})$ representable, locally in the $\s{D}'$-topology, by morphisms of $\n{A}(T)$-adic spaces locally of  finite presentation is in the class $\widetilde{E'}$ of morphisms admitting $!$-functors. 
\end{prop}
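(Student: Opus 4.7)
The proof splits into the two claims. For the first, that $\n{A}(T)$-analytic covers are $\s{D}'$-covers, I would invoke condition (2) together with the identification $\s{D}'=\s{D}\circ F$: given an $\n{A}(T)$-analytic cover $\{f_i:U_i\to U\}$, by (2) its image $\{F(f_i)\}$ is a $\s{D}$-cover in $\Sh_{\s{D}}(\Aff_{\bb{Z}_{\sol}})$ and hence satisfies universal $*$- and $!$-descent for $\s{D}$. Since $F$ preserves finite limits and the $!$-functors in $\s{D}'$ are defined precisely as pullbacks of those in $\s{D}$ by Lemma \ref{LemmaFunctoriality6Functors}, these descent properties transfer to $\{f_i\}$ for $\s{D}'$. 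Subcanonicity is automatic since $\n{A}(T)$-analytic covers are $\s{D}$-covers by definition, and $\s{D}$-covers are canonical.

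For the second claim, the plan is to use that $\widetilde{E}'$ is local on source and target by Theorem \ref{TheoSixFunctorsScholze}, combined with part (a), to reduce to showing that affine morphisms $f:\AnSpec \n{B}\to \AnSpec \n{A}'$ corresponding to maps $\n{A}'\to \n{B}$ of $\n{A}(T)$-finite presentation satisfy $F(f)\in\widetilde{E}$. Let $\s{C}$ denote the class of morphisms $g$ in $\Aff_{\n{A}}$ with $F(g)\in\widetilde{E}$. Since $F$ preserves finite limits and $\widetilde{E}$ is closed under pullbacks and compositions, so is $\s{C}$. By conditions (3) and (4), $\s{C}$ contains $\AnSpec \n{A}(T)\to \AnSpec \n{A}$, its zero section $\AnSpec \n{A}\to \AnSpec \n{A}(T)$, and the zero sections $\AnSpec \n{A}\to \AnSpec(\Sym^{\bullet}_{\n{A}}\n{A}[n])$ for $n\geq 1$; by pullback stability the same holds after base change along any $\AnSpec \n{C}\to \AnSpec \n{A}$.

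The remaining task is to induct on the cell structure of $\n{A}'\to \n{B}$. A morphism of $\n{A}(T)$-finite presentation decomposes as a finite sequence of pushouts of two forms: adding a coordinate, $\n{C}_i\to \n{C}_i(T):=\n{C}_i\otimes_{\n{A}}\n{A}(T)$; or killing a cell, $\n{C}_i\otimes_{\s{K}}\n{C}_i$, where $\s{K}$ is either $\n{C}_i(T_1,\ldots,T_n)$ or $\Sym^{\bullet}_{\n{C}_i}\n{C}_i[m]$ with $m\geq 1$ (the latter being the natural higher cells appearing in iterated derived pushouts starting from $\n{A}(T)$). The first form is a pullback of $\AnSpec \n{A}(T)\to \AnSpec \n{A}$ and lies in $\s{C}$. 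For the second form, the morphism $\AnSpec \n{C}_{i+1}\to \AnSpec \n{C}_i$ is the pullback of two sections $\AnSpec \n{C}_i\rightrightarrows \AnSpec\s{K}$; the zero sections are in $\s{C}$ by pulling back conditions (3) and (4), and non-zero sections differ from the zero section by a translation automorphism of $\s{K}$ over $\n{C}_i$, coming from the commutative group object structure guaranteed by item (i) of Definition \ref{DefinitionCoordinateTheory} (and its iteration for the higher cells). Composing through the sequence yields $F(f)\in \widetilde{E}$.

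The main obstacle is making precise the cell-attachment decomposition for $\n{A}(T)$-finite presentation morphisms in the animated setting and verifying that the higher cells $\Sym^{\bullet}_{\n{A}}\n{A}[n]$ with $n\geq 1$ are the only new objects that appear as building blocks beyond the base operation of adding an $\n{A}(T)$-coordinate. This is precisely why condition (4) is assumed for every $n\geq 1$: iterated derived pushouts in the finite colimit closure of $\n{A}(T)$ produce objects whose underlying animated algebras are of the form $\Sym^{\bullet}_{\n{A}}\n{A}[n]$, and access to their zero sections is required to propagate the inductive construction. Once the cell decomposition is established, the conclusion follows formally from the closure of $\widetilde{E}$ under pullbacks and compositions and the finite-limit preservation of $F$.
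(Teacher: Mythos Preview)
Your strategy matches the paper's: reduce via locality of $\widetilde{E}'$ to morphisms of $\n{A}(T)$-finite presentation between affines, then decompose into elementary cell attachments whose images under $F$ lie in $\widetilde{E}$ by conditions (3) and (4). Your argument for the first claim is correct and essentially the paper's.

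There are two points of comparison worth noting. First, your translation argument for non-zero sections is valid (using the animated ring stack structure from Definition~\ref{DefinitionCoordinateTheory}(i)) but unnecessary: in the pushout $\n{C}_{i+1} = \n{C}_i \otimes_{\s{K}} \n{C}_i$, the map $\AnSpec \n{C}_{i+1} \to \AnSpec \n{C}_i$ is the pullback of \emph{one} section along the other, so it suffices that a single section lie in $\s{C}$. The paper's cell decomposition is arranged so that one map is always the augmentation, which lies in $\s{C}$ directly by (3) and (4); no translation is needed. Second, and more importantly, you correctly identify the cell decomposition as the main obstacle but do not prove it. The paper supplies this as an explicit lemma: it introduces the algebras $\n{A}(T_{\bb{I}})/^{\bb{L}}(f_{\bb{J}})$ (free on generators in various degrees modulo a derived regular sequence) and shows that the class of composites of such algebras is closed under pushouts. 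The nontrivial step is checking that the multiplication map $\n{B}\otimes_{\n{A}}\n{B} \to \n{B}$ lies in this class for $\n{B} = \n{A}(T_{\bb{I}})/^{\bb{L}}(f_{\bb{J}})$, which one verifies by hand using that $\n{C}/^{\bb{L}}(0_n) = \n{C}(T[n+1])$. This is exactly the content needed to make your inductive sketch rigorous.
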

\begin{proof}
Since $F$  sends $\n{A}(T)$-analytic covers to $\s{D}$-covers, rational localizations belong to $E'$ and admit $!$-functors. Indeed, by Remark \ref{RemarkNonTrivialCoverCoordinatetheory} any $\n{A}(T)$-rational localization belongs to an analytic $\n{A}(T)$-cover, and since $F$ preserves disjoint unions, the image under $F$ of rational localizations must admit $!$-functors.   Furthermore, since $F$ preserve co-products, disjoint union of rational localizations also admit $!$-functors. This implies that  analytic $\n{A}(T)$-covers of $\n{A}(T)$-adic spaces are $\s{D}'$-covers.  Let $f:Y \to Z$ be a map of $\s{D}'$-stacks over $\n{A}$ representable by a morphism of $\n{A}(T)$-adic spaces  locally of    finite presentation. Since the class $\widetilde{E'}$ is both local on the target and the source,  to show that $f$ admits $!$-functors it suffices to treat the case of a morphism of algebras $\n{B} \to \n{D}$ of finite presentation.  Now,   since $\n{A}[T] \to \n{A}(T)$ is idempotent, we have
\[
\n{A} \otimes_{\n{A}(T)} \n{A} = \n{A} \otimes_{\n{A}[T]} \n{A} = \Sym^{\bullet}_{\n{A}} \n{A}[1].
\] 
Then, as $\Sym^{\bullet}_{\n{A}} \n{A}[n+1]= \n{A} \otimes_{\Sym^{\bullet}_{\n{A}} \n{A}[n]} \n{A}$ for all $n\geq 1$, all the morphisms $\Sym^{\bullet}_{\n{A}} \n{A}[n] \to\n{A}$ and $\n{A}\to \Sym^{\bullet}_{\n{A}} \n{A}[n] $ are of $\n{A}(T)$-finite presentation. Moreover, since $F$ preserves finite limits, the map $\AnSpec (\Sym^{\bullet}_{\n{A}} \n{A}[n]) \to \AnSpec \n{A}$ belongs to $E'$. Therefore, to show that a morphism of finite presentation is in the class $\widetilde{E'}$, it suffices to see that it is constructed by composites of pushouts along the maps of the form 
\begin{itemize}
\item $\AnSpec \n{A} \to \AnSpec \Sym^{\bullet}_{\n{A}} \n{A}[n]$,
\item $ \AnSpec \Sym^{\bullet}_{\n{A}} \n{A}[n]\to \AnSpec \n{A}$,
\item $\AnSpec \n{A}(T) \to \n{A}$,

\item $\AnSpec \n{A} \to \AnSpec \n{A}(T)$.  
\end{itemize}
The claim follows by the following lemma:

\begin{lemma}
Let $\bb{I}:=(I_{n})_{n\in \bb{N}}$ be a sequence of finite sets with almost all $I_n$ empty, let $\n{A}(T_{\bb{I}})$ denote the algebra
\[
\n{A}(T_{\bb{I}}):= \n{A}(T_i:i\in I_0) \otimes_{\n{A}} \Sym_{\n{A}}^{\bullet} (\n{A}^{\oplus I_1}[1]) \otimes_{\n{A}} \cdots \otimes_{\n{A}} \Sym_{\n{A}}^{\bullet} (\n{A}^{\oplus I_n} [n] ) \otimes_{\n{A}} \cdots .
\]
Let  $(f_{n,j})_{j\in J_n}$ be elements in $\pi_n(\underline{\n{A}}(T_{\bb{I}}))$ such that $J_n$ is empty for almost all $n$, and such that  $f_{0,j}$ extends to a map $\n{A}[X]\to \n{A}(X)$.  Write $\bb{J}=(J_n)_{n\in\bb{N}}$. We denote by  $\n{A}(T_{\bb{I}})/^{\bb{L}} (f_{\bb{J}})$  the pushout of the diagram 
\[
\begin{tikzcd}
\n{A}(T_{\bb{J}}) \ar[r, "f"] \ar[d] & \n{A}(T_{\bb{I}}) \\
\n{A} .
\end{tikzcd}
\]
Then, any $\n{A}$-algebra of $\n{A}(T)$-finite presentation is isomorphic to a composite of algebras of the form $\n{A}(T_{\bb{I}})/^{\bb{L}} (f_{\bb{J}})$. 
\end{lemma}
\begin{proof}
We need to show that the category $\s{C}_{\n{A}}$ of  $\n{A}$-algebras constructed as composites of  algebras of the form $\n{A}(T_{\bb{I}})/^{\bb{L}} (f_{\bb{J}})$ is stable under finite colimits. By \cite[Corollary 4.4.2.4]{HigherTopos} it suffices to show that it is stable under pushouts.  Note that if $\n{B}\in \s{C}_{\n{A}}$ then the objects of $\s{C}_{\n{B}}$ are in $\s{C}_{\n{A}}$.  Consider a diagram $ \n{C} \leftarrow \n{B} \to \n{D}$ in $\s{C}_{\n{A}}$, we can write 
\[
\n{C}\otimes_{\n{B}} \n{D} = (\n{C}\otimes_{\n{A}} \n{D})\otimes_{\n{B}\otimes_{\n{A}} \n{B}} \n{B}. 
\]
It is clear that $\n{C}\otimes_{\n{A}} \n{D}\in \s{C}_{\n{A}}$.  We claim that the multiplication map $\n{B}\otimes_{\n{A}} \n{B}\to\n{B}$ is in $\s{C}_{\n{B}}$, if this holds then $\n{C}\otimes_{\n{B}} \n{D} \in \s{C}_{\n{C}\otimes_{\n{A}} \n{D}}$ whose objects are in $\s{C}_{\n{A}}$. Let us write $\n{B}=\n{A}(T_{\bb{I}})/^{\bb{L}} (f_{\bb{J}})$, then 
\[
\n{B}\otimes_{\n{A}} \n{B} = \n{A}(T_{\bb{I}},S_{\bb{I}})/^{\bb{L}} (f_{\bb{J}}(T), g_{\bb{J}}(S)).
\]
Since $\n{A}(T)$ is a animated ring stack, the maps $\n{A}[T_i-S_i]\to \n{B}\otimes_{\n{A}} \n{B} $ naturally extend to $\n{A}(T_i-S_i)\to \n{B}\otimes_{\n{A}} \n{B}$. We have that
\[
\n{B}\otimes_{\n{A}} \n{B} /^{\bb{L}}(T_{\bb{I}}-S_{\bb{I}}) = \n{B}/^{\bb{L}} (0_{\bb{J}}),
\]
where $0_{\bb{J}}$ is the sequence of $|J_n|$-zeros in $\pi_n(\n{B})$ for $n\in \bb{N}\geq 0$. But for any ring $\n{C}$ we have that  $\n{C}/^{\bb{L}} (0_n) = \n{C}(T[n+1])$ is the free algebra over $\n{C}$ with one generator in degree $n+1$. Thus, we can find elements $g_{n+1,J_n}$ in $\pi_{n+1}(\n{B}/^{\bb{L}} (0_{\bb{J}}))$ such that 
\[
\left(\n{B}/^{\bb{L}} (0_{\bb{J}})\right)/^{\bb{L}} (g_{\bb{J}}) =\n{B},
\]
proving the claim. 
\end{proof}

\end{proof}

\subsection{The cotangent complex}
\label{SubsectionCotangent}

In this section we briefly discuss some basic properties of cotangent complexes for prestacks on analytic rings.   We will follow mutatis mutandis \cite[\S 3.2]{LurieDerivedAlgebraic}. 

 We let $\PSh(\AnRing^{\op})$ be the $\infty$-category of presheaves of anima on $\AnRing^{\op}$. Given $\n{A}\in \AnRing$ we let $\SpecAn \n{A}$ denote its representable presheaf that we  refer as the \textit{analytic spectrum of $\n{A}$}. As it is standard, we let 
\[
\Mod(-): \PSh(\AnRing^{\op}) \to \CAlg(\Cat^{\colim,\ex}_{\infty})
\]
denote the right Kan extension of the functor of complete modules of analytic rings.

\begin{definition}[{\cite[Definition 3.2.5]{LurieDerivedAlgebraic}}]
    Let $M\in \Mod(\n{A})$, we say that $M$ is \textit{almost connective} if $M[n]$ is connective for some $n\geq 0$. Let $\n{F}: \AnRing\to \Ani$ be a  functor and $M\in \Mod(\n{F})$ a quasi-coherent complex, we say that $M$ is \textit{locally almost connective} if for all  analytic ring $\n{A}$ and all $\eta\in \n{F}(\n{A})$, $M(\eta)\in \Mod(\n{A})$ is almost connective. 
\end{definition}

Let $\n{A}$ be an analytic ring and $M$ a connective $\n{A}$-module. We have an adjunction $F: \AniAlg_{\underline{\n{A}}} \to \Mod_{\geq 0}(\underline{\n{A}}): \Sym^{\bullet}_{\underline{\n{A}}}$ between animated algebras over $\underline{\n{A}}$ and connective $\underline{\n{A}}$-modules. Given $M\in \Mod_{ \geq 0}(\underline{\n{A}})$, we can form the  $\underline{\n{A}}$-algebra $\underline{\n{A}}\oplus M$ as a condensed animated ring, which is obtained by forgetting the terms of degree $\geq 2$ in $\Sym^{\bullet}_{\underline{\n{A}}} M$.  

Since $M$ is  a nilpotent ideal of $\underline{\n{A}}\oplus M$, by Proposition 12.23 of \cite{ClauseScholzeAnalyticGeometry} the analytic ring structures on $ \underline{\n{A}}\oplus M$ and $\underline{\n{A}}$ are in bijection, and we have that 
\[
(\underline{\n{A}}\oplus M)_{\n{A}/}[*] = \n{A}\oplus \n{A}\otimes_{\underline{\n{A}}} M.
\] Given $M\in\Mod_{\geq 0}(\n{A})$ we shall denote by $\n{A}\oplus M$ the \textit{trivial square-zero extension} of $\underline{\n{A}}$ by $M$ endowed with the analytic ring structure arising from $\n{A}$.  Let $\n{F}: \AnRing \to \Ani$ be a presheaf, we say that $\n{F}$ admits an \textit{absolute cotangent complex} if  there exists a locally almost connective quasi-coherent complex $\bb{L}_{\n{F}}$ of $\n{F}$ such that the functor mapping a triple $(\n{A},\eta,M)$ consisting of $\n{A}\in \AnRing$, $\eta\in \n{F}(\n{A})$ and $M\in \Mod_{\geq 0}(\n{A})$ to the fiber product of
\[
\begin{tikzcd}
 & \eta \ar[d] \\
\n{F}(\n{A}\oplus M) \ar[r] & \n{F}(\n{A})
\end{tikzcd}
\]
is correpresented  by $ \bb{L}_{\n{F}}(\eta)$. 

One deduces easily the following property:

\begin{proposition}[{\cite[Proposition 3.2.9]{LurieDerivedAlgebraic}}]
\label{PropositionColimitsCotangentComplex}
    Let $\{\n{F}_i\}_{i\in I}$ be a diagram of functors $\n{F}_i: \AnRing \to \Ani$. Suppose that each $\n{F}_i$ has an absolute cotangent complex $\bb{L}_{i} \in \Mod(\n{F}_{i})$. Let $\n{F} = \varprojlim_{i} \n{F}_i$ and $\bb{L}= \varinjlim_{i} \bb{L}_i|_{\n{F}}$. Then $\bb{L}$ is an absolute cotangent complex for $\n{F}$ provided that it is locally almost connective. 
\end{proposition}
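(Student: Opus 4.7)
The plan is to verify directly that the complex $\bb{L}$ corepresents the relative tangent functor attached to $\n{F}$, exploiting that both limits of anima and the fiber construction $[\n{F}(\n{A}\oplus M)\to\n{F}(\n{A})]$ commute with arbitrary limits in the functor variable. Fix a triple $(\n{A},\eta,M)$ with $\n{A}\in\AnRing$, $\eta\in\n{F}(\n{A})$ and $M\in\Mod_{\geq 0}(\n{A})$, and let $\eta_i\in\n{F}_i(\n{A})$ denote the image of $\eta$ under the projection $\n{F}\to\n{F}_i$. First I would rewrite the fiber
\[
\fiber\bigl(\n{F}(\n{A}\oplus M)\to \n{F}(\n{A})\bigr)_{\eta} = \varprojlim_{i\in I}\fiber\bigl(\n{F}_i(\n{A}\oplus M)\to \n{F}_i(\n{A})\bigr)_{\eta_i},
\]
using that $\n{F}=\varprojlim_i\n{F}_i$ and that fibers (being limits) commute with limits in $\Ani$.

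Next I would invoke the defining property of the absolute cotangent complex $\bb{L}_i$ of $\n{F}_i$ to identify each factor on the right with $\Map_{\Mod(\n{A})}(\bb{L}_i(\eta_i),M)$. Then, using that $\Map_{\Mod(\n{A})}(-,M)$ sends colimits to limits, I would obtain the chain of equivalences
\[
\varprojlim_{i\in I}\Map_{\Mod(\n{A})}(\bb{L}_i(\eta_i),M)\;\simeq\;\Map_{\Mod(\n{A})}\!\bigl(\varinjlim_{i\in I}\bb{L}_i(\eta_i),M\bigr)\;\simeq\;\Map_{\Mod(\n{A})}(\bb{L}(\eta),M),
\]
where the last equivalence is the definition $\bb{L}=\varinjlim_i\bb{L}_i|_{\n{F}}$. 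Putting these two strings of equivalences together shows that $\bb{L}$ correpresents the desired functor.

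The only nontrivial ingredient is the hypothesis that $\bb{L}$ be locally almost connective: this is built into the definition of a cotangent complex and is exactly what fails in general for an infinite filtered colimit of almost connective objects (a colimit of $\bb{L}_i(\eta_i)[-n_i]$ with unbounded $n_i$ need not be almost connective). Assuming this hypothesis, which is precisely what the proposition asks us to postulate, no further argument is required. I would therefore expect the proof to be essentially a formal chase of adjunctions and commutation of limits, and I do not anticipate a substantive obstacle; the only point worth spelling out carefully is that the equivalence $\Map(\varinjlim_i\bb{L}_i(\eta_i),M)\simeq\varprojlim_i\Map(\bb{L}_i(\eta_i),M)$ is valid in $\Mod(\n{A})$ for any $M\in\Mod_{\geq 0}(\n{A})$, which is immediate from the universal property of colimits.
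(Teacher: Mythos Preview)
Your argument is correct and is exactly the standard direct verification one would expect; the paper itself does not supply a proof of this proposition but simply records it (with a citation to Lurie) as an easily deduced property, so your chase of limits and adjunctions is precisely the intended justification.
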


More generally, let $\n{F},\n{G}: \AnRing \to \Ani$ be two functors and let $\varphi:\n{F}\to \n{G}$ be a natural transformation. We say that the morphism $\varphi$ has a \textit{relative cotangent complex} if there is a locally almost connective quasi-coherent complex $\bb{L}_{\n{F}/\n{G}}\in \Mod(\n{F})$ such that for all $\n{A}\in \AnRing$, $\eta\in \n{F}(\n{A})$ and any connective $\n{A}$-module $M$, the object $\bb{L}_{\n{F}/\n{G}}(\eta)$ correpresents the fiber product
\[
\begin{tikzcd}
    & \eta \ar[d] \\
  \n{F}(\n{A} \oplus M)   \ar[r] & \n{F}(\n{A})\times_{\n{G}(\n{A})} \n{G}(\n{A}\oplus M)
\end{tikzcd}
\]
where the map from $\eta$ to $\n{G}(\n{A} \oplus M)$ is induced by the evaluation of $\n{G}$ at the zero section $\n{A}\to \n{A} \oplus M$. 

The relative cotangent complexes satisfy the following  properties, whose same proofs also hold in our context.

\begin{proposition}[{\cite[Proposition 3.2.10]{LurieDerivedAlgebraic}}]
    Let $\n{F}\to \n{G}$ be a natural transformation of functor from $\AnRing$ to $\Ani$. Suppose that $\bb{L}_{\n{F}/\n{G}}$ exists,  let $\n{G}'\to \n{G}$ be a natural transformation and  $\n{F'}=\n{F}\times_{\n{G}} \n{G}'$. Then $\bb{L}_{\n{F}/\n{G}}|_{\n{F}'}$ is the cotangent complex of $\n{F'}\to \n{G}'$. 
\end{proposition}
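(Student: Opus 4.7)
The plan is to verify directly that the restriction $\bb{L}_{\n{F}/\n{G}}|_{\n{F}'}$ satisfies the universal property of the relative cotangent complex for $\n{F}'\to \n{G}'$, by a pullback diagram chase. Since local almost connectivity is inherited from $\bb{L}_{\n{F}/\n{G}}$, the only content is to identify the correpresented fiber product.

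Fix $\n{A}\in\AnRing$, a point $\eta'\in \n{F}'(\n{A})$ and a connective $\n{A}$-module $M$. Write $\eta'=(\eta,\gamma')$ under the identification $\n{F}'(\n{A})=\n{F}(\n{A})\times_{\n{G}(\n{A})}\n{G}'(\n{A})$, and denote by $\gamma_0\in \n{G}(\n{A}\oplus M)$ and $\gamma'_0\in \n{G}'(\n{A}\oplus M)$ the images of $\gamma$ and $\gamma'$ under the zero section $\n{A}\to \n{A}\oplus M$. By definition we must show that
\[
P' \;:=\; \n{F}'(\n{A}\oplus M)\times_{\n{F}'(\n{A})\times_{\n{G}'(\n{A})}\n{G}'(\n{A}\oplus M)}\{(\eta',\gamma'_0)\}
\]
is correpresented by $\bb{L}_{\n{F}/\n{G}}(\eta)$.

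The key step is now the pullback calculation. Because $\n{F}'=\n{F}\times_{\n{G}}\n{G}'$ is a limit and limits commute with limits, evaluating at $\n{A}$ and $\n{A}\oplus M$ gives cartesian squares, and one computes
\[
\n{F}'(\n{A})\times_{\n{G}'(\n{A})}\n{G}'(\n{A}\oplus M)\;\simeq\; \n{F}(\n{A})\times_{\n{G}(\n{A})}\n{G}'(\n{A}\oplus M).
\]
Combining this with the cartesian decomposition of $\n{F}'(\n{A}\oplus M)$ and factoring through the intermediate pullback $\n{F}(\n{A})\times_{\n{G}(\n{A})}\n{G}(\n{A}\oplus M)$, a straightforward rewriting of iterated fiber products shows that $P'$ is naturally equivalent to
\[
\n{F}(\n{A}\oplus M)\times_{\n{F}(\n{A})\times_{\n{G}(\n{A})}\n{G}(\n{A}\oplus M)}\{(\eta,\gamma_0)\},
\]
where $\gamma_0$ is the image of $\eta$ under the zero section (since $\gamma$ is the image of $\eta$ in $\n{G}(\n{A})$, compatibly with the data of $\eta'$).

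The right-hand side is by definition the fiber product correpresented by $\bb{L}_{\n{F}/\n{G}}(\eta)$, hence also by $(\bb{L}_{\n{F}/\n{G}}|_{\n{F}'})(\eta')$ through the map $\n{F}'\to \n{F}$. This yields the desired natural correpresentability, completing the proof. The only mildly subtle point is the diagram chase ensuring that the base points line up correctly under the zero sections and the compatibility data packaged into $\eta'=(\eta,\gamma')$; everything else is formal from the universal property of pullbacks in $\Ani$.
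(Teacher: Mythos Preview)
Your proof is correct and is the standard argument: unwinding the universal property of the relative cotangent complex and using that pullbacks of anima commute with each other to identify the two fiber products. The paper does not give its own proof of this proposition; it simply cites \cite[Proposition 3.2.10]{LurieDerivedAlgebraic} and remarks that the same proofs hold in the analytic setting, so your direct verification is exactly what is implicitly being invoked.
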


\begin{proposition}[{\cite[Proposition 3.2.12]{LurieDerivedAlgebraic}}]
\label{PropLurieDerived3.2.12}
    Let $\n{F}\to \n{F}'\to \n{F}''$ be a sequence of natural transformations of functors. Suppose that  there exists a cotangent complex $\bb{L}_{\n{F}'/\n{F}''}$.  Then there is an exact triangle 
    \[
    \bb{L}_{\n{F'}/ \n{F}''}|_{\n{F}} \to \bb{L}_{\n{F}/\n{F}''} \to \bb{L}_{\n{F}/\n{F}'}
    \]
in the sense that if either the second or the third term exists, then so does the other and there is a triangle above. 
\end{proposition}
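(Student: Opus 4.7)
The plan is to translate the statement, via Yoneda, into a fiber sequence of derivation anima, mimicking Lurie's argument in \cite[Proposition 3.2.12]{LurieDerivedAlgebraic}. For $\n{A} \in \AnRing$, $\eta \in \n{F}(\n{A})$ with image $\eta' \in \n{F}'(\n{A})$, and $M \in \Mod_{\geq 0}(\n{A})$, I denote by $\mathrm{Der}_{\n{F}/\n{G}}(\eta, M)$ the fiber, at the canonical basepoint, of the map
\[
\n{F}(\n{A} \oplus M) \to \n{F}(\n{A}) \times_{\n{G}(\n{A})} \n{G}(\n{A} \oplus M),
\]
where $\n{G}$ is either $\n{F}'$ or $\n{F}''$ (with the evident basepoint convention). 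Whenever $\bb{L}_{\n{F}/\n{G}}$ exists, this anima is naturally identified with $\Map_{\n{A}}(\bb{L}_{\n{F}/\n{G}}(\eta), M)$. Since $\bb{L}_{\n{F}'/\n{F}''}|_{\n{F}}$ exists by hypothesis, it correpresents $M \mapsto \mathrm{Der}_{\n{F}'/\n{F}''}(\eta', M)$.

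The key step is to produce, for every triple $(\n{A}, \eta, M)$, a natural fiber sequence of anima
\[
\mathrm{Der}_{\n{F}/\n{F}'}(\eta, M) \to \mathrm{Der}_{\n{F}/\n{F}''}(\eta, M) \to \mathrm{Der}_{\n{F}'/\n{F}''}(\eta', M).
\]
This is the pasting of fibers obtained from the canonical factorization
\[
\n{F}(\n{A} \oplus M) \xrightarrow{p} \n{F}(\n{A}) \times_{\n{F}'(\n{A})} \n{F}'(\n{A} \oplus M) \xrightarrow{q} \n{F}(\n{A}) \times_{\n{F}''(\n{A})} \n{F}''(\n{A} \oplus M),
\]
induced by $\n{F}' \to \n{F}''$ together with the universal property of the fiber products: the fiber of $p$ at the basepoint is $\mathrm{Der}_{\n{F}/\n{F}'}(\eta, M)$, the fiber of $q \circ p$ is $\mathrm{Der}_{\n{F}/\n{F}''}(\eta, M)$, and the fiber of $q$ at the image basepoint unwinds to $\mathrm{Der}_{\n{F}'/\n{F}''}(\eta', M)$. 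The standard long fiber sequence of a composition then yields the desired sequence (together with its leftward extension).

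From here the proposition follows by (co)representability. If $\bb{L}_{\n{F}/\n{F}''}$ exists, then the map $\mathrm{Der}_{\n{F}/\n{F}''}(\eta, -) \to \mathrm{Der}_{\n{F}'/\n{F}''}(\eta', -)$ produces by Yoneda a natural transformation $\bb{L}_{\n{F}'/\n{F}''}|_{\n{F}} \to \bb{L}_{\n{F}/\n{F}''}$ in $\Mod(\n{F})$; I would define $\bb{L}_{\n{F}/\n{F}'}$ as its cofiber and verify directly from the fiber sequence of anima that it correpresents $\mathrm{Der}_{\n{F}/\n{F}'}(\eta, -)$. Conversely, if $\bb{L}_{\n{F}/\n{F}'}$ exists, extending the sequence one step to the left and using the identification $\Omega \mathrm{Der}_{\n{F}'/\n{F}''}(\eta', M) \simeq \Map_{\n{A}}(\bb{L}_{\n{F}'/\n{F}''}|_{\n{F}}(\eta)[1], M)$ provides a natural boundary map $\bb{L}_{\n{F}/\n{F}'} \to \bb{L}_{\n{F}'/\n{F}''}|_{\n{F}}[1]$, and I take $\bb{L}_{\n{F}/\n{F}''}$ to be its fiber. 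The main obstacle will be upgrading these pointwise constructions to a global quasi-coherent complex on $\n{F}$, natural in $\eta$, while preserving local almost connectivity. Naturality is forced by functoriality of the entire diagram in $(\n{A}, \eta)$, and almost connectivity is preserved under fibers and cofibers of maps between locally almost connective complexes, so no additional hypothesis beyond what is stated is needed.
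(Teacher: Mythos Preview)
Your proposal is correct and follows essentially the same argument as the reference \cite[Proposition 3.2.12]{LurieDerivedAlgebraic} that the paper cites without giving its own proof: you reduce to a fiber sequence of derivation anima via the factorization of the map $\n{F}(\n{A}\oplus M)\to \n{F}(\n{A})\times_{\n{F}''(\n{A})}\n{F}''(\n{A}\oplus M)$, then invoke (co)representability and stability to define the missing cotangent complex as a (co)fiber. There is nothing to add.
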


The following  is the analogue to Proposition  3.2.24 of \cite{LurieDerivedAlgebraic} regarding the existence of relative cotangent complexes for morphisms of analytic rings. 

\begin{proposition}
    Let $f:\n{A} \to \n{B}$ be a morphism in $\AnRing$ and let $f':\SpecAn \n{B} \to \SpecAn \n{A}$ be the associated natural transformation at the level of presheaves. Then there exists a cotangent complex $\bb{L}_{\SpecAn \n{B}/ \SpecAn \n{A}}$ that is associated to the  $\n{B}$-module $\bb{L}_{\n{B}/\n{A}}$. Furthermore, if  $\bb{L}_{\underline{\n{B}}/ \underline{\n{A}}}$ is the cotangent complex of the map of  underlying  condensed rings, then 
    \[
    \bb{L}_{\n{B}/\n{A}} = \n{B} \otimes_{\underline{\n{B}}} \bb{L}_{\underline{\n{B}}/ \underline{\n{A}}}. 
    \]
\end{proposition}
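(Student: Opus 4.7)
The plan is to construct $\bb{L}_{\SpecAn \n{B}/\SpecAn \n{A}}$ by importing the relative cotangent complex from the underlying condensed animated rings and transferring it to the analytic setting via the invariance of analytic structures under square-zero extensions. First, I would invoke the classical existence of the relative cotangent complex in $\mathrm{Cond}(\AniRing)$: for $\underline{\n{A}}\to \underline{\n{B}}$ there is $\bb{L}_{\underline{\n{B}}/\underline{\n{A}}} \in \Mod_{\geq 0}(\underline{\n{B}})$ such that for every $N \in \Mod_{\geq 0}(\underline{\n{B}})$, the fiber of $\Map_{\mathrm{Cond}(\AniRing)_{\underline{\n{A}}}}(\underline{\n{B}}, \underline{\n{B}} \oplus N) \to \Map_{\mathrm{Cond}(\AniRing)_{\underline{\n{A}}}}(\underline{\n{B}}, \underline{\n{B}})$ at the identity is canonically $\Map_{\Mod(\underline{\n{B}})}(\bb{L}_{\underline{\n{B}}/\underline{\n{A}}}, N)$. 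This follows by the standard simplicial animation procedure applied to the free-forgetful adjunction between condensed animated rings and connective condensed modules.

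The key step is to promote this equivalence to analytic rings. For any $\n{A}'\in \AnRing_{\n{A}}$ and any $M\in \Mod_{\geq 0}(\n{A}')$, the trivial square-zero extension $\n{A}'\oplus M$ has underlying condensed ring $\underline{\n{A}}'\oplus M$, and by Proposition 12.23 of Clausen-Scholze its analytic structure is uniquely determined by that of $\n{A}'$: a module is $(\n{A}'\oplus M)$-complete iff its restriction to $\underline{\n{A}}'$ via the zero section is $\n{A}'$-complete. I would then argue that for any $\eta:\n{B}\to \n{A}'$ in $\AnRing_{\n{A}}$, the natural map
\[
\Map_{\AnRing_{\n{A}}}(\n{B}, \n{A}'\oplus M)_{\eta} \longrightarrow \Map_{\mathrm{Cond}(\AniRing)_{\underline{\n{A}}}}(\underline{\n{B}}, \underline{\n{A}}'\oplus M)_{\underline{\eta}}
\]
is an equivalence, where the subscripts denote fibers at $\eta$ and $\underline{\eta}$. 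For this I need to check that any condensed ring lift $\tilde\eta$ of $\underline{\eta}$ preserves complete modules under restriction: if $N$ is $(\n{A}'\oplus M)$-complete, then $\tilde\eta^\ast N$ and $\eta^\ast N$ have the same underlying condensed abelian group, their $\underline{\n{B}}$-module structures differing by the derivation $d:\underline{\n{B}}\to M$ whose image lands in the square-zero nilpotent ideal. Since $\n{B}$-completeness is stable under such nilpotent twists, a direct consequence of the invariance of analytic structures under nilpotent extensions, the lift $\tilde\eta$ is automatically a morphism of analytic rings.

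Combining these two ingredients, the fiber defining the relative cotangent complex of $\SpecAn \n{B}\to \SpecAn \n{A}$ at a triple $(\n{A}', \eta, N)$ with $N\in \Mod_{\geq 0}(\n{A}')$ is identified with $\Map_{\Mod(\underline{\n{B}})}(\bb{L}_{\underline{\n{B}}/\underline{\n{A}}}, N)$, viewing $N$ as a condensed $\underline{\n{B}}$-module via $\eta$. By the adjunction between $\n{B}\otimes_{\underline{\n{B}}}(-)$ and the forgetful functor, and using that $N$ is $\n{B}$-complete, this further equals $\Map_{\Mod(\n{B})}(\n{B} \otimes_{\underline{\n{B}}} \bb{L}_{\underline{\n{B}}/\underline{\n{A}}}, N)$. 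Hence $\bb{L}_{\n{B}/\n{A}} := \n{B}\otimes_{\underline{\n{B}}} \bb{L}_{\underline{\n{B}}/\underline{\n{A}}}$ is the desired relative cotangent complex, and it is locally almost connective since its classical counterpart is connective.

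The main obstacle I anticipate is the rigorous verification of the claim in the second paragraph: that every condensed ring lift into the square-zero extension automatically promotes to a morphism of analytic rings. This requires carefully comparing the two $\underline{\n{B}}$-module structures on the restriction of a complete $(\n{A}'\oplus M)$-module and appealing to Proposition 12.23 of Clausen-Scholze on nilpotent-invariance of analytic structures. Once this is settled, everything else reduces to formal manipulations with the classical cotangent complex in the condensed setting, and the formula for $\bb{L}_{\n{B}/\n{A}}$ emerges from the completion adjunction.
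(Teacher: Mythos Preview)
Your proposal is correct and follows essentially the same approach as the paper: reduce to the condensed cotangent complex via the nilpotent-invariance of analytic structures on square-zero extensions (Proposition~12.23 of Clausen--Scholze), then apply the completion adjunction $\n{B}\otimes_{\underline{\n{B}}}(-)$. The paper's only organizational difference is that it first passes to the absolute case via the fiber sequence $\bb{L}_{\n{A}}|_{\n{B}} \to \bb{L}_{\n{B}} \to \bb{L}_{\n{B}/\n{A}}$ and proves $\bb{L}_{\n{A}} = \n{A}\otimes_{\underline{\n{A}}}\bb{L}_{\underline{\n{A}}}$, whereas you work directly with the relative complex; the key promotion step is phrased more tersely there but rests on the same input you identify.
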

\begin{proof}
    By Proposition \ref{PropLurieDerived3.2.12} it is enough to show that the functor $\SpecAn \n{A}$ has an absolute cotangent complex given by $\bb{L}_{\n{A}}=   \n{A} \otimes_{\underline{\n{A}}}\bb{L}_{\underline{\n{A}}}$. Let $\n{A}\to \n{C}$ be a morphism of analytic rings and let $M$ be a connective $\n{C}$-module. We want to describe the space 
    \begin{equation}
    \label{eqSectionsSquareZero}
    \Map_{\AnRing_{ /\n{C}} }(\n{A}, \n{C} \oplus M)
    \end{equation}
   in terms of the cotangent complex of $\underline{\n{A}}$. Since the analytic ring structure of $\n{C}\oplus M$ only depends on $\n{C}$, and we have already fixed a morphism of analytic rings $\n{A} \to \n{C}$, the above space is equivalent to the space
   \[
   \Hom_{\Cond(\AniRing)/\underline{\n{C}}}(\underline{\n{A}}, \underline{\n{C}} \oplus M)
   \]
   of morphisms of condensed animated  rings over $\underline{\n{C}}$.  Therefore, \eqref{eqSectionsSquareZero} is naturally equivalent to 
   \[
   \Hom_{\Mod_{ \geq 0}(\underline{\n{A}})}(\bb{L}_{\underline{\n{A}}}, M) = \Hom_{\Mod_{ \geq 0}(\n{A})}( \n{A} \otimes_{\underline{\n{A}}}\bb{L}_{\underline{\n{A}}},M)
   \]
   proving that $\bb{L}_{\n{A}}= \n{A} \otimes_{\underline{\n{A}}}\bb{L}_{\underline{\n{A}}}$ is an absolute cotangent complex for $\SpecAn \n{A}$. 
\end{proof}

\begin{remark}
    Let $\n{A} \to \n{B}$ and $\n{A}\to \n{C}$ be two morphisms of analytic rings. The base change property of the cotangent complex  is now given by 
    \[
    \bb{L}_{\n{C}/\n{A}} \otimes_{\n{C}} (\n{C}\otimes_{\n{A}} \n{B}) = \bb{L}_{\n{C} \otimes_{\n{A}} \n{B} / \n{B}}.
    \]
    If $\n{A}\to \n{B}$ is steady (\cite[Definition 12.13]{ClauseScholzeAnalyticGeometry} and \cite[Definition 2.3.16]{MannSix}) then it can be written simply as $\bb{L}_{\n{C}/\n{A}} \otimes_{\n{A}} \n{B}= \bb{L}_{\n{C} \otimes_{\n{A}} \n{B} / \n{B}}$. 
\end{remark}

We deduce the following construction that helps to identify the $\pi_0$ of a relative cotangent complex of rings with continuous  K\"ahler differentials.

\begin{proposition}[{\cite[Proposition 3.2.16]{LurieDerivedAlgebraic}}]
\label{PropAproxCotangent}
    Let $\n{A}\to \n{B}$ be a morphism of analytic rings, and let $K$ be the cone of this map seen as an object in $\Mod(\n{A})$. Then there is a natural map $\phi: K\otimes_{\n{A}} \n{B} \to  \bb{L}_{\n{B}/\n{A}}$. Moreover, if $f$ is $n$-connected for $n\geq 0$, then $\phi$ is $(n+2)$-connected. 
\end{proposition}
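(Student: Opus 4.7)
The plan is to adapt the argument of \cite[Proposition 3.2.16]{LurieDerivedAlgebraic} to the setting of analytic animated rings; the necessary tools --- Postnikov towers in $\AniAlg_{\n{A}}$, square-zero extensions, and the transitivity triangle of Proposition \ref{PropLurieDerived3.2.12} --- are all available in our setting with essentially the same proofs.

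I would first construct $\phi$. The universal $\n{A}$-derivation, classifying the identity endomorphism of $\bb{L}_{\n{B}/\n{A}}$, provides an $\underline{\n{A}}$-linear map $d: \underline{\n{B}} \to \bb{L}_{\n{B}/\n{A}}$ whose restriction to $\underline{\n{A}}$ is null-homotopic (being the composite with the zero section of the trivial square-zero extension). This produces a factorization $\underline{\n{B}} \to \underline{K} \to \bb{L}_{\n{B}/\n{A}}$, and after analytifying along $\underline{\n{A}} \to \n{A}$ and invoking the $\n{B}$-module structure on $\bb{L}_{\n{B}/\n{A}}$, $\n{B}$-linearization yields the desired map $\phi: K \otimes_{\n{A}} \n{B} \to \bb{L}_{\n{B}/\n{A}}$.

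For the connectivity claim, assume $f$ is $n$-connected with $n \geq 0$, so that $K$ is $(n+1)$-connective, and hence so is $K \otimes_{\n{A}} \n{B}$. Standard estimates via the transitivity triangle (applied to a free resolution of $\n{B}$ as an animated analytic $\n{A}$-algebra) show that $\bb{L}_{\n{B}/\n{A}}$ is also $(n+1)$-connective, so it remains to verify that $\phi$ induces an isomorphism on $\pi_{n+1}$. For this I would use the relative Postnikov tower $\n{A} = \n{B}^{(n)} \to \n{B}^{(n+1)} \to \cdots \to \n{B}$ in animated analytic $\n{A}$-algebras: each layer $\n{B}^{(k)} \to \n{B}^{(k+1)}$ is a square-zero extension by $\pi_{k+1}(K)[k+1]$, and the map $\tau_{\leq n+1}\n{B} \to \n{B}$ is $(n+2)$-connected. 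The computation on $\pi_{n+1}$ therefore reduces to the first Postnikov layer $\n{B}^{(n+1)}$, whose algebra structure is essentially that of a trivial square-zero extension $\n{A} \oplus M$ with $M = \pi_{n+1}(K)[n+1]$. For such a trivial extension a direct computation using the universal property yields $\bb{L}_{\n{A}\oplus M / \n{A}} \cong M \otimes_{\n{A}} (\n{A}\oplus M)$, and under the identification $K \cong M$ the map $\phi$ becomes the identity.

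The main obstacle I expect is justifying that the Postnikov truncations $\tau_{\leq k}\n{B}$ of an animated analytic $\n{A}$-algebra inherit canonical analytic ring structures making the tower a sequence of genuine square-zero extensions in $\AniAlg_{\n{A}}$. This should follow from \cite[Proposition 12.21]{ClauseScholzeAnalyticGeometry}, which identifies analytic ring structures on nilpotent thickenings with those on their quotients, but requires some care. Alternatively, one may sidestep this issue entirely by using the identification $\bb{L}_{\n{B}/\n{A}} = \n{B} \otimes_{\underline{\n{B}}} \bb{L}_{\underline{\n{B}}/\underline{\n{A}}}$ proved in the preceding proposition, together with the analogous compatibility for $K \otimes_{\n{A}} \n{B}$ (using that $\underline{K}$ is already $\n{A}$-complete), to reduce the statement to the analogous one for the underlying condensed animated ring map $\underline{\n{A}} \to \underline{\n{B}}$, which is the classical situation.
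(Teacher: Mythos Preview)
Your alternative at the end is precisely the paper's proof, and is considerably more efficient than your main line. The paper simply invokes \cite[Proposition 3.2.16]{LurieDerivedAlgebraic} for the map $\underline{\n{A}} \to \underline{\n{B}}$ of condensed animated rings to obtain the $(n+2)$-connected map $K \otimes_{\underline{\n{A}}} \underline{\n{B}} \to \bb{L}_{\underline{\n{B}}/\underline{\n{A}}}$, and then applies the analytification $\n{B} \otimes_{\underline{\n{B}}}-$; since this functor preserves connective objects (it is right $t$-exact), the cofiber of the analytified map remains $(n+2)$-connective. Your Postnikov-tower route is not incorrect --- the analytic structures on the truncations are indeed determined by \cite[Proposition 12.21]{ClauseScholzeAnalyticGeometry} as you suspect --- but it amounts to re-running Lurie's argument inside the analytic world when one can instead import the conclusion wholesale and pass it through a single right $t$-exact functor. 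The only thing the longer argument would buy is self-containedness; the paper opts for the two-line reduction.
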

\begin{proof}
    By \textit{loc. cit} we have a map of the underlying condensed rings 
    \[
    K\otimes_{\underline{\n{A}}} \underline{\n{B}} \to \bb{L}_{\underline{\n{B}}/\underline{\n{A}}}
    \]
    satisfying the prescribed properties of the proposition. Since the analytification functor $\n{B} \otimes_{\underline{\n{B}}}-$ preserves connective objects, after $\n{B}$-analytification one has a natural map $\phi$ as stated satisfying the same connectness properties. 
\end{proof}

\begin{corollary}[{\cite[Corollary 3.2.17]{LurieDerivedAlgebraic}}]
\label{CoroEquiCotAndpi0}
 A morphism of analytic rings $\n{A}\to \n{B}$ is an equivalence if and only if $\pi_0(\n{A}) \to \pi_0(\n{B})$ is an isomorphism of analytic rings  and $\bb{L}_{\n{B}/\n{A}}=0$. 
\end{corollary}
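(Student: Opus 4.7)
The ``only if'' direction is immediate from the definition of the cotangent complex. For the converse, suppose that $\pi_0(\n{A}) \to \pi_0(\n{B})$ is an isomorphism of analytic rings and that $\bb{L}_{\n{B}/\n{A}} = 0$. My first reduction is to observe that once we show that the underlying map $\underline{\n{A}} \to \underline{\n{B}}$ of condensed animated rings is an equivalence, the induced map of analytic rings is automatically an equivalence: by Proposition 12.23 of \cite{ClauseScholzeAnalyticGeometry} the analytic ring structure on a connective condensed animated ring is determined by the analytic structure on its $\pi_0$, and the two analytic structures agree on $\pi_0$ by hypothesis. Thus the problem reduces to proving an equivalence on underlying condensed rings.

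To establish this, I would induct on $n \geq 0$ to show that the cofiber $K := \cofib(\n{A} \to \n{B}) \in \Mod(\n{A})$ is $(n+1)$-connective (i.e.\ $\pi_i(K) = 0$ for $i \leq n$). The base case $n = 0$ is immediate from the long exact sequence of homotopy groups together with the hypothesis on $\pi_0$. For the inductive step, suppose $\pi_i(K) = 0$ for $i \leq n-1$, so that $f:\n{A}\to\n{B}$ is $(n-1)$-connected in the sense of Proposition \ref{PropAproxCotangent}. That proposition yields a map
\[
\phi: K \otimes_{\n{A}} \n{B} \longrightarrow \bb{L}_{\n{B}/\n{A}}
\]
which is $(n+1)$-connected. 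Combined with $\bb{L}_{\n{B}/\n{A}} = 0$, this forces $K \otimes_{\n{A}} \n{B}$ to be $(n+1)$-connective, and in particular $\pi_n(K \otimes_{\n{A}} \n{B}) = 0$.

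The final input is a bottom-homotopy identification: since $K$ is $n$-connective and $\n{B}$ is $0$-connective over $\n{A}$, the first nonzero homotopy of the tensor product is given by
\[
\pi_n(K \otimes_{\n{A}} \n{B}) \;\cong\; \pi_n(K) \otimes_{\pi_0(\n{A})} \pi_0(\n{B}),
\]
as condensed $\pi_0(\n{A})$-modules. Using the hypothesis $\pi_0(\n{A}) \xrightarrow{\sim} \pi_0(\n{B})$, the right-hand side is just $\pi_n(K)$. Therefore $\pi_n(K) = 0$, advancing the induction by one step. Running the induction to infinity shows that $K$ is $\infty$-connective, hence zero, so $\n{A} \to \n{B}$ is an equivalence on underlying condensed rings, and we are done.

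The main (and really only) technical point to verify carefully is the bottom-homotopy identification of $\pi_n(K \otimes_{\n{A}} \n{B})$ with $\pi_n(K) \otimes_{\pi_0(\n{A})} \pi_0(\n{B})$. In the classical setting this is standard via the convergent spectral sequence of the tensor product in the natural $t$-structure; in our condensed/analytic setting it follows formally by replacing $K$ by its Postnikov truncation $\tau_{\leq n}K \simeq \pi_n(K)[n]$ and $\n{B}$ by $\pi_0(\n{B})$, noting that the fibers of these truncations contribute only in strictly higher degrees of the tensor product. The rest of the argument is formal and essentially mirrors the proof of \cite[Corollary 3.2.17]{LurieDerivedAlgebraic}.
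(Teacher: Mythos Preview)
Your proof is correct and follows essentially the same approach as the paper: reduce to the underlying condensed rings via the fact that analytic structures are determined on $\pi_0$ (the paper cites \cite[Proposition 12.21]{ClauseScholzeAnalyticGeometry} rather than 12.23, but the content is the same), then run Lurie's connectivity induction using Proposition~\ref{PropAproxCotangent}. The paper simply cites \cite[Corollary 3.2.17]{LurieDerivedAlgebraic} for the second step, whereas you spell it out.
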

\begin{proof}
    By \cite[Proposition 12.21]{ClauseScholzeAnalyticGeometry} the analytic ring structures of $\n{A}$ and $\pi_0(\n{A})$ are in bijection, so we only need to check that the underlying condensed rings are isomorphic. But then, the same argument of {\cite[Corollary 3.2.17]{LurieDerivedAlgebraic}} can be applied to deduce the equivalence. 
\end{proof}

\begin{remark}
Note that the hypothesis in the corollary asks for the map on connected components to be an isomorphism of \textbf{analytic rings}. If $\n{A} \to \n{A}'$ is a morphism of analytic rings with same underlying condensed ring then $\bb{L}_{\n{A}/\n{A}'}=0$. 
\end{remark}

\begin{example}
\label{ExampleCotangentComplexes}
    \begin{enumerate}
        \item The cotangent complex of a discrete affinoid  ring $(A,A^+)_{\sol}$ is simply the cotangent complex of the underlying condensed discrete ring.

        \item Let $f:(A,A^+)\to (B,B^+)$  be a  morphism of Huber pairs, by Proposition \eqref{PropAproxCotangent} we can compute the continuous K\"ahler differentials of $f$ as 
        \[
        \pi_0( \bb{L}_{(B,B^+)_{\sol}/(A,A^+)})= \Omega^1_{(B,B^+)/(A,A^+)} = \n{I} /\n{I}^2
        \]
        where $\n{I}$ is the augmentation ideal of $\pi_0(B\otimes_{(A,A^+)_{\sol}} B) \to B$, note that this tensor product coincides with the solid tensor product over $(A,\bb{Z})_{\sol}$ which is equal to the classical completed tensor product of Huber rings. 

        \item Let $A$ be a $I$-adically complete $I$-torsion bounded ring where $I$ is a finitely generated ideal of $A$. We have that $A\otimes_{\bb{Z}_{\sol}} \bb{Z}[T]_{\sol} = (A\langle T \rangle, \bb{Z}[T])_{\sol}$ where $A\langle T \rangle$ is the $I$-adic completion of the polynomial algebra. Then the cotangent complex
        \[
        \bb{L}_{(A\langle T \rangle, \bb{Z}[T])_{\sol}/ (A, \bb{Z})_{\sol}} = \bb{L}_{\bb{Z}[T]_{\sol}/ \bb{Z}}  \otimes_{\bb{Z}[T]_{\sol}} (A\langle T \rangle,\bb{Z}[T])_{\sol} \cong   A\langle T \rangle \cdot dT
        \]
        is just the usual continuous K\"ahler differentials.

        \item Let $\n{A}\to \n{C}$  be an idempotent morphism or analytic rings, i.e. such that $\n{C}\otimes_{\n{A}} \n{C}= \n{C}$. Then $\bb{L}_{\n{C}/\n{A}}=0$. Indeed, we have that
        \[
        \bb{L}_{\n{C}/\n{A}}=(\n{C}\otimes_{\n{A}} \n{C})\otimes_{\n{C}} \bb{L}_{\n{C}/\n{A}} = \bb{L}_{\n{C}\otimes_{\n{A}} \n{C}/\n{C}}= \bb{L}_{\n{C}/\n{C}}=0. 
        \]
  
    \end{enumerate}

\end{example}

Now let us restrict to morphisms of solid affinoid rings. 

\begin{definition}
\label{DefinitionSolidFinitePresentation}
We say that a morphism $f:\n{A}\to \n{B}$ of solid affinoid rings  is of \textit{solid finite presentation} if it belongs to the smallest category containing $\n{A}[ T ]_{\sol}:=\n{A}\otimes_{\bb{Z}_{\sol}} \bb{Z}[T]_{\sol}$ and stable under finite colimits (see Definition \ref{DefinitionGeneralFinitePresentation} and Example \ref{ExampleCoordinateTheories}). If the underlying rings of  $\n{A}$ and $\n{B}$ are static, we say that $f$ is  of \textit{solid finite presentation as static rings} if $\n{B}$ is a quotient of $\n{A}[ T_1,\ldots, T_d]_{\sol}$ by a finitely generated ideal. 
\end{definition}

One easily deduces the following lemma thanks to Proposition \ref{PropositionColimitsCotangentComplex} and the computation of the cotangent complex $\bb{L}_{\bb{Z}[T]_{\sol}/\bb{Z}}=\bb{Z}[T]dT$. 
\begin{lemma}
 Let $\n{A}$ be a solid affinoid ring, and let $\n{B}$ be an $\n{A}$-algebra of solid finite presentation. Then $\bb{L}_{\n{B}/\n{A}}$ is a finitely presented $\n{B}$-module, in particular it is discrete. 
\end{lemma}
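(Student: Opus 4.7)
The plan is to proceed by induction on the construction of $\n{B}$ as a finite iterated colimit in the $\infty$-category of analytic $\n{A}$-algebras built from the generator $\n{A}[T]_{\sol}$. More precisely, I will verify that the class $\n{P}$ of analytic $\n{A}$-algebras $\n{C}$ for which $\bb{L}_{\n{C}/\n{A}}$ is finitely presented (and in particular discrete) contains both $\n{A}$ and $\n{A}[T]_{\sol}$ and is stable under finite colimits; by Definition \ref{DefinitionSolidFinitePresentation}, this is exactly what is required.

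For the base cases, $\bb{L}_{\n{A}/\n{A}}=0$ is trivial, while for $\n{A}[T]_{\sol}=\n{A}\otimes_{\bb{Z}_{\sol}}\bb{Z}[T]_{\sol}$ the base change property of the cotangent complex, combined with the identity $\bb{L}_{\bb{Z}[T]_{\sol}/\bb{Z}}=\bb{Z}[T]\cdot dT$ recorded in Example \ref{ExampleCotangentComplexes}(3), gives
\[
\bb{L}_{\n{A}[T]_{\sol}/\n{A}}=\n{A}[T]_{\sol}\cdot dT,
\]
which is a free module of rank one, hence finitely presented and discrete.

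For the inductive step I observe that finite colimits in the $\infty$-category of analytic $\n{A}$-algebras are generated by the initial object $\n{A}$ together with pushouts, so it suffices to check stability under pushouts. Given $\n{C}_{1}\leftarrow\n{C}_{0}\to\n{C}_{2}$ with each $\n{C}_{i}\in\n{P}$ and pushout $\n{E}=\n{C}_{1}\otimes_{\n{C}_{0}}\n{C}_{2}$, the analytic spectrum $\SpecAn \n{E}$ is the fibre product $\SpecAn \n{C}_{1}\times_{\SpecAn \n{C}_{0}}\SpecAn \n{C}_{2}$, a finite limit of presheaves over $\SpecAn \n{A}$. Applying Proposition \ref{PropositionColimitsCotangentComplex} produces a cofibre sequence
\[
\bb{L}_{\n{C}_{0}/\n{A}}\otimes_{\n{C}_{0}}\n{E}\longrightarrow \bb{L}_{\n{C}_{1}/\n{A}}\otimes_{\n{C}_{1}}\n{E}\;\oplus\;\bb{L}_{\n{C}_{2}/\n{A}}\otimes_{\n{C}_{2}}\n{E}\longrightarrow \bb{L}_{\n{E}/\n{A}},
\]
which exhibits $\bb{L}_{\n{E}/\n{A}}$ as a finite colimit of base changes of finitely presented $\n{C}_{i}$-modules and is therefore itself finitely presented.

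The main obstacle is ensuring that the discreteness asserted in the lemma is also preserved: the tensor products $\bb{L}_{\n{C}_{i}/\n{A}}\otimes_{\n{C}_{i}}\n{E}$ must remain concentrated in degree zero, and the cofibre must not acquire higher homotopy. This reduces to the observation that, at each step of the induction, the modules being tensored are built by finite colimits out of the free generators $dT$ coming from the base case, so that the relevant solid base changes compute classically and no higher Tor terms intervene.
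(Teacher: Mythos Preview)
Your inductive argument for finite presentation is correct and is exactly the paper's approach: the paper's one-line proof cites Proposition \ref{PropositionColimitsCotangentComplex} and the base case $\bb{L}_{\bb{Z}[T]_{\sol}/\bb{Z}}=\bb{Z}[T]\,dT$, which is precisely your induction along pushouts.

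However, your last paragraph reveals a misreading of the word ``discrete''. In this paper ``discrete'' does \emph{not} mean concentrated in degree~$0$; it means that the $\n{B}$-module lies in the essential image of $\Mod(\n{B}(*))\to\Mod(\n{B})$, i.e.\ arises by base change from the underlying (non-condensed) animated ring (see Remark \ref{RemarkDeformationNormalCone}). With the correct interpretation there is nothing to check: a finitely presented $\n{B}$-module is by construction a finite colimit of copies of the unit $\n{B}$, and the unit is discrete, so finite presentation already implies discreteness. Your worry that ``the cofibre must not acquire higher homotopy'' and that ``no higher Tor terms intervene'' is misplaced and in fact false: for instance the pushout $\n{B}=\n{A}\otimes_{\n{A}[T]_{\sol}}\n{A}$ (sending $T\mapsto f$) gives $\bb{L}_{\n{B}/\n{A}}\cong \n{B}[1]$, which is not in degree~$0$ but is certainly finitely presented and discrete in the intended sense. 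Simply delete the last paragraph and the proof is complete.
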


\subsection{Solid  \'etale and smooth maps}
\label{SubsectionFormalleEtaleSmooth}

In this section we review the definition of formally smooth and formally \'etale maps in the  $\infty$-categories of presheaves on  bounded affinoid rings over $R_{\sol}=(R,R^+)_{\sol}=(\bb{Z}((\pi)), \bb{Z}[[\pi]])_{\sol}$. We will characterize formally smooth maps of solid finite presentation in more geometric way, in analogy to classical algebraic geometry. Let us first adapt the definition of small extensions to our setting. 

\begin{definition}[{\cite[Definition 3.3.1]{LurieDerivedAlgebraic}}]
  Let $\n{A}\to \n{B}$ be a morphism of analytic rings and $M$ a $\n{B}$-module. A small extension of $\n{B}$ by $M$ over $\n{A}$ is an analytic $\n{A}$-algebra $\widetilde{\n{B}}$   with a morphism $\widetilde{\n{B}}\to \n{B}$ whose underlying condensed ring is a small extension  of $\underline{\n{B}}$ by $M$ as $\underline{\n{A}}$-animated algebra. 

\end{definition}

\begin{remark}
    Note that the map $\pi_0(\underline{\widetilde{\n{B}}}) \to \pi_0(\underline{\n{B}})$ is a square zero extension, so the analytic ring structure of $\widetilde{\n{B}}$ is uniquely determined by that of $\n{B}$ by \cite[Proposition 12.23]{ClauseScholzeAnalyticGeometry}.  Therefore, the $\infty$-category of square zero extensions of $\n{B}$ over $\n{A}$ is the full subcategory of square zero-extensions $\widetilde{B}$ of  $\underline{\n{B}}$ over $\underline{\n{A}}$ as condensed rings such that the fiber $[\widetilde{B}\to \underline{\n{B}}]$ is in $\Mod(\n{B}) \subset \Mod(\underline{\n{B}})$. 
\end{remark}

\begin{definition}[{\cite[Definition 3.4.1]{LurieDerivedAlgebraic}}]
    Let $\n{F}: \AnRing_{\bb{Z}} \to \Ani$ be a functor, we say that $\n{F}$ is \textit{nilcomplete} if for all $\n{A}\in \AnRing$ the natural map $\n{F}(\n{A}) \to \varprojlim_n \n{F}(\tau_{\leq n}\n{A})$ is an equivalence. 

    We say that $\n{F}$ is \textit{infinitesimally cohesive} if for all small extension $\widetilde{\n{A}}$ of $\n{A}$ by an $\n{A}$-module $M$, the natural map
    \[
    \n{F}(\widetilde{\n{A}})\to \n{F}(\n{A})\times_{\n{F}(\n{A} \oplus M[1])} \n{F}(\n{A}) 
    \]
    is an equivalence. 
\end{definition}

\begin{definition}[{\cite[Definition 3.4.3]{LurieDerivedAlgebraic}}]
    Let $T:\n{F} \to \n{F}'$ be a natural transformation of functors $\n{F},\n{F}':\AnRing \to \Ani$.  We say that $T$ is 
    \begin{enumerate}
        \item \textit{weakly formally smooth} if it has a relative cotangent complex $\bb{L}_{\n{F}/\n{F}'}$  which is the dual of a connective (discrete) perfect complex. 

        \item \textit{formally smooth} if it is weakly formally smooth, nilcomplete and infinitesimally cohesive. 

        \item \textit{formally \'etale} if it is formally smooth and $\bb{L}_{\n{F}/\n{F}'}=0$.

    \end{enumerate}
\end{definition}

We can define  solid smooth and \'etale morphisms as follows.

\begin{definition}
\label{DefinitionSolidSmoothetale}
\begin{enumerate}
   \item  A morphism $f:\n{A}\to \n{B}$ of solid affinoid rings  is \textit{solid smooth (resp. \'etale)} if it is formally smooth (resp. formally \'etale) and of solid finite presentation. If $\n{A}$ and $\n{B}$ are static we say that $f$ is  \textit{solid smooth or \'etale as static rings} if it is of finite presentation as static rings (Definition \ref{DefinitionSolidFinitePresentation}) and formally smooth or \'etale with respect to square-zero extensions of static rings. 
   
   \item A morphism $f:\n{A} \to \n{B}$ of solid affinoid rings  is \textit{standard solid smooth} if $\n{B}= \n{A}[T_1,\ldots, T_n]_{\sol} /^{\bb{L}} (f_1,\ldots, f_k)$ for some sequence of elements $f_i \in \pi_0(\n{A}[T_1,\ldots, T_n]_{\sol} ) $  such that $\det((\frac{\partial f_i}{\partial T_j})_{1 \leq i,j\leq k})$ is invertible in $\n{B}$, it is \textit{standard solid \'etale} if it is standard solid smooth with $n=k$.  If $\n{A}$ and $\n{B}$ are static we say that $f$ is \textit{standard solid smooth or \'etale as static rings} if it is the $\pi_0$ of a standard solid smooth or \'etale algebra over $\n{A}$.
    \end{enumerate}
\end{definition}

Our main goal is to show that (1) and (2) in Definition \ref{DefinitionSolidSmoothetale} are equivalent after taking suitable rational covers. If $\n{A}$ is discrete,  any finitely presented $\n{A}$-algebra is discrete and different characterizations of (classical) smooth morphisms can be deduced from  \cite[Proposition 3.4.9]{LurieDerivedAlgebraic}. The main case of interest for us is when $\n{A}$ is a  bounded affinoid algebra over $R_{\sol}$. We have the following theorem.

\begin{theorem}
\label{TheoFormalSmoothnesvsSmoothness}
   Let $\n{A}$ be a bounded affinoid algebra over $R_{\sol}$.   Let $T:\n{A}\to \n{B}$ be a morphism of bounded affinoid $R_{\sol}$-algebras. The following are equivalent
    \begin{enumerate}
        \item $T$ is formally smooth and $\n{B}$ is of solid finite presentation over $\n{A}$. 
        
        \item $T$ is formally smooth and $\pi_0 \n{B}$ is  of solid finite presentation over $\pi_0 \n{A}$ as static rings. 
        
        \item $T$ is, locally in the analytic topology of $\n{B}$, a standard solid smooth $\n{A}$-algebra. 
    \end{enumerate}
   
   Furthermore, let $\underline{\n{A}}$ be nuclear over $R_{\sol}$,  and $f:\n{A}\to \n{B}$ a morphism  of the form  $\n{B}=\n{A}\langle T_1,\ldots, T_d \rangle_{\sol}/^{\bb{L}}(f_1,\ldots,f_d)$ with $f_i\in \pi_0(\n{A}\langle T_1,\ldots, T_d \rangle_{\sol})$.  Let $\n{D}=R\langle X_{n}: n\in \bb{N} \rangle $, then there is a map $\n{D} \to \n{A}$ and elements $g_i \in \n{D}\langle T_1,\ldots, T_d \rangle$ mapping to $f_i$ in $\n{A}\langle T_1,\ldots, T_d\rangle_{\sol}$.  In particular, $f$ is the pushout of a morphism of solid finite presentation over $\n{D}$. Moreover,   if $f$ is standard solid smooth (resp. \'etale)  we can can take the $g_i$ to define a  standard solid smooth (resp. \'etale) algebra over $\n{D}$.  Finally,  let $K$ be a non-archimedean field, then a classical standard solid smooth algebra over   $K\langle X_{n}: n\in \bb{N} \rangle$ is already derived.  In particular, solid smooth and \'etale maps of nuclear analytic $K$-algebras arise, locally in the analytic topology, as pushouts of classical  smooth and \'etale maps from  sous-perfectoid rings. 
\end{theorem}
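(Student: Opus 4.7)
The ``Furthermore'' statement decomposes into three assertions: (i) the existence of the lift $\n{D}\to \n{A}$ and the $g_i$, (ii) preservation of standard solid smoothness/\'etaleness under this lift, and (iii) the coincidence of classical and derived quotients over $\n{D}=K\langle X_n:n\in \bb{N}\rangle$, from which the concluding sous-perfectoid description follows. For (i), the key input is that nuclearity of $\underline{\n{A}}$ over $R_{\sol}$ lets one describe elements of $\n{A}\langle T_1,\ldots,T_d\rangle_{\sol}$ as honest $\pi$-adically convergent power series $\sum c_\beta T^\beta$ with $c_\beta\in \n{A}(*)$ (simplifying the more intricate description of Example \ref{ExamplePowerSeriesBoundedAlgebra} for general bounded $\n{A}$). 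Since there are only countably many multi-indices $\beta\in\bb{N}^d$ and finitely many $f_i$, the collection of coefficients $c_{i,\beta}$ is countable; after rescaling by suitable powers of $\pi$ to make them power-bounded, I assemble them into a single countable family $\{a_n\}\subset \n{A}^0$, define $\n{D}=R\langle X_n:n\in\bb{N}\rangle$ with the map $\n{D}\to \n{A}$ sending $X_n\mapsto a_n$, and lift each $f_i$ to $g_i\in \n{D}\langle T_1,\ldots,T_d\rangle$ by the same series with $X_n$ in place of $a_n$. By construction $\n{B}=\n{A}\otimes_{\n{D}}\n{C}$, with $\n{C}=\n{D}\langle T\rangle/^{\bb{L}}(g_1,\ldots,g_d)$.

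For (ii), if $f$ is standard solid smooth then the Jacobian $J=\det((\partial f_i/\partial T_j)_{1\le i,j\le k})$ is invertible in $\pi_0(\n{B})$; choose $h\in \pi_0(\n{B})$ with $Jh=1$, lift $h$ to $\pi_0(\n{A}\langle T\rangle_{\sol})$, and enlarge the countable family $\{a_n\}$ to include the coefficients of that lift. Then $Jh\equiv 1 \pmod{(g_1,\ldots,g_k)}$ already holds in $\pi_0(\n{D}\langle T\rangle)$, so the image of $J$ in $\pi_0(\n{C})$ is invertible, and $\n{C}$ is standard solid smooth (resp. \'etale when $k=d$) over $\n{D}$.

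The main obstacle is (iii): showing that over $\n{D}=K\langle X_n:n\in\bb{N}\rangle$ a classical standard smooth algebra is already derived, equivalently that the sequence $(g_1,\ldots,g_k)$ is Koszul-regular in $\pi_0(\n{D}\langle T\rangle)$ after inverting $J$. I would argue by writing $\n{D}=\varinjlim_m \n{D}_m$ with $\n{D}_m=K\langle X_1,\ldots,X_m\rangle$ a Noetherian Tate algebra, and observing that the finitely many $g_i$ land in some $\n{D}_m\langle T_1,\ldots,T_d\rangle$. In the Noetherian setting, the classical Jacobian criterion gives that $(g_1,\ldots,g_k)$ is a regular sequence in the localization $(\n{D}_m\langle T\rangle)[J^{-1}]$, hence Koszul-regular; and since the transition maps $\n{D}_m\to \n{D}_{m+1}$ are flat, Koszul-regularity propagates to the filtered colimit, so the Koszul complex on $(g_1,\ldots,g_k)$ over $\n{D}\langle T\rangle[J^{-1}]$ remains a resolution of the classical quotient. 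Combining this with (i), (ii), and the equivalence (1)$\Leftrightarrow$(3) of the theorem yields the final claim: locally in the analytic topology, every solid smooth or \'etale morphism of nuclear analytic $K$-algebras is the pushout of a classical standard smooth or \'etale map from $\n{D}$, which is sous-perfectoid as a filtered colimit of the sous-perfectoid rings $\n{D}_m$ (upon choosing any perfectoid subfield of $K$).
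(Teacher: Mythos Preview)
Your argument for (iii) has a genuine gap. You write $\n{D}=K\langle X_n:n\in\bb{N}\rangle$ as $\varinjlim_m\n{D}_m$ with $\n{D}_m=K\langle X_1,\ldots,X_m\rangle$ and claim the $g_i$ land in some $\n{D}_m\langle T_1,\ldots,T_d\rangle$; but this is false. The Tate algebra in countably many variables is not a filtered colimit of the $\n{D}_m$ (it contains elements such as $\sum_n\pi^nX_n$ using all variables at once), and the $g_i$ you built in (i) are of the form $\sum_\beta X_{n(i,\beta)}T^\beta$ with the index $n(i,\beta)$ ranging over infinitely many values as $\beta$ varies over $\bb{N}^d$. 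So the Noetherian approximation never gets off the ground, and with it your appeal to the classical Jacobian criterion collapses. Your sous-perfectoid claim at the end inherits the same error: $\n{D}$ is not a filtered colimit of the $\n{D}_m$.

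The paper's route to (iii) is entirely different and does not pass through Noetherian commutative algebra. Lemma~\ref{LemmaStandarSmoothRegular} proves Koszul regularity over a sous-perfectoid base by reducing (via the open mapping theorem and $\pi$-adic completeness) to bounding torsion in the integral Koszul complex, then passing to a perfectoid cover, using $v$-descent to reduce to totally disconnected perfectoids, localizing until the standard \'etale extension splits as a finite product of copies of the base, and invoking \cite[Lemma~IV.4.16]{FarguesScholze}. For (i), your idea is correct in spirit but the sentence ``nuclearity lets one describe elements of $\n{A}\langle T\rangle_{\sol}$ as honest power series with coefficients in $\n{A}(*)$'' hides real work: Lemma~\ref{LemmaFinitenessAnalytictoAdic} first lifts the $f_i$ to $R_{\sol}[S]\langle T_1,\ldots,T_d\rangle$ for some profinite $S$ (this is where nuclearity enters, making $\bigoplus_i R_{\sol}[S_i]\otimes R\langle T\rangle\to\pi_0(\n{A}\langle T\rangle)$ surjective), then extracts the coefficient null-sequences in $M_{\sol}[S]$ for a quasi-finitely generated $M\subset R$, and finally uses boundedness of $\n{A}$ to factor through a map $R\langle X_{i,\alpha}\rangle\to\n{A}$. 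Your (ii) is fine and is how one fills in the ``Moreover'' clause.
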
 

In order to prove the theorem we need some  standard preparations in commutative algebra. 

\begin{lemma}
\label{LemmaFormSmooth1}
    Let $\n{A}\in \AffRing_{R_{\sol}}^b$ and let $M$ be a finite  projective connective $\n{A}$-module together with a surjection $\bigoplus_{i=1}^k \underline{\n{A}}e_i \to M$ (i.e. a surjection on $\pi_0$). Let $E\subset \{1,\ldots, k\}$, there is a maximal analytic open subspace $U_E\subset \Spa(\n{A})$ such that $\{e_i: \; i\in E\}$ is a basis of $M$. Moreover, the open subsets $U_E$ are Zariski open and cover $\Spa(\n{A})$. 
\end{lemma}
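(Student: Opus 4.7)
The strategy is to exhibit each $U_E$ as the non-vanishing locus of a single explicit element $d_E \in \pi_0(\underline{\n{A}})$ obtained as a minor of a splitting, which makes the Zariski-openness automatic, and then to check the covering property pointwise using the constructible residue fields of Proposition \ref{PropAdicSpectrumInvariantReduced}.

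First, since $M$ is a finite projective connective $\underline{\n{A}}$-module, the given surjection $f\colon F:=\bigoplus_{i=1}^k \underline{\n{A}} e_i \twoheadrightarrow M$ admits a section $s\colon M\hookrightarrow F$, and I would form the idempotent $p := s\circ f \in \End_{\underline{\n{A}}}(F)$, whose image is canonically identified with $M$. For each $E\subset\{1,\ldots,k\}$, let $\iota_E\colon F_E:=\bigoplus_{i\in E}\underline{\n{A}} e_i \hookrightarrow F$ and $\pi_E\colon F\twoheadrightarrow F_E$ be the natural inclusion and projection, and consider
\[
h_E \;:=\; \pi_E\circ p\circ\iota_E \;\in\; \End_{\underline{\n{A}}}(F_E),
\]
which is an endomorphism of a free module of rank $|E|$. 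Set $d_E:=\det(h_E)\in\pi_0(\underline{\n{A}})$ and define $U_E:=\{x\in\Spa(\n{A})\mid d_E(x)\neq 0\}$, the complement of the Zariski closed set $\{|d_E|=0\}$.

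The heart of the argument is the equivalence, at each point $x$, between $d_E(x)\neq 0$ and the condition that $f_E:=f\circ\iota_E$ induces an isomorphism $F_E\xrightarrow{\sim} M$ after pulling back to the constructible residue field $\kappa_{\cons}(x)$. In one direction, if $h_E$ is invertible at $x$, then $h_E^{-1}\circ\pi_E\circ p$ provides a retraction of $p|_{F_E}\colon F_E\hookrightarrow M$; combined with the fact that on a finite projective module the rank function is locally constant (so $\Spa(\n{A})$ is Zariski-stratified by the loci where $\mathrm{rank}(M)=r$ for each $r\geq 0$), the inclusion $p|_{F_E}\hookrightarrow M$ forces $|E|\leq\mathrm{rank}(M)_x$, and at points where equality holds $f_E$ is an iso. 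In the other direction, if $f_E$ is an iso at $x$, then $h_E = \pi_E\circ s\circ f_E$ factors as a composition of isos (using that $s$ is a section), so $d_E(x)\neq 0$. Refining $d_E$ by multiplying by the clopen idempotent of the rank stratum $\{\mathrm{rank}(M)=|E|\}$ if necessary, one arranges that $U_E$ coincides exactly with the locus where $f_E$ is an isomorphism, which yields both Zariski openness and maximality.

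Finally, for the covering property, fix $x\in\Spa(\n{A})$. Pulling back to $\kappa_{\cons}(x)$, the image of $f$ becomes a surjection of a free module of rank $k$ onto a finite-dimensional $\kappa_{\cons}(x)$-vector space of some dimension $r$; classical linear algebra over the field $\kappa_{\cons}(x)$ (Lemma \ref{LemmaStalksAtPoints} guarantees this is a genuine field) produces a subset $E\subset\{1,\ldots,k\}$ with $|E|=r$ such that $\{\bar e_i\mid i\in E\}$ is a basis, equivalently $d_E(x)\neq 0$, so $x\in U_E$. The main technical point is the bookkeeping with the rank stratification and the verification that the elementary linear-algebra computation behind ``$h_E$ iso iff $f_E$ iso'' passes from classical modules to the solid setting; this reduces to statements about $\pi_0$ since the objects involved are all static and finitely presented.
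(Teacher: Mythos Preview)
Your argument has a genuine error in the ``other direction'' of the claimed equivalence. You assert that if $f_E$ is an isomorphism at $x$, then $h_E = \pi_E\circ s\circ f_E$ is a composition of isomorphisms, hence $d_E(x)\neq 0$. But $\pi_E\circ s\colon M\to F_E$ need not be an isomorphism even when $f_E$ is: the image $s(M)$ is a complement to $\ker f$ in $F$, but projection onto $F_E$ is along $F_{E^c}$, not along $\ker f$. Concretely, take $k=2$, $M=\underline{\n{A}}$, $f(a,b)=a+b$, the splitting $s(m)=(0,m)$, and $E=\{1\}$. Then $f_E(a)=a$ is the identity, so $f_E$ is an isomorphism everywhere, yet $h_E(a)=\pi_E(s(a))=\pi_E(0,a)=0$, so $d_E=0$ and your $U_E$ is empty. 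The determinant $d_E$ you construct depends on the splitting $s$ and does not cut out the locus where $\{e_i:i\in E\}$ is a basis; it only detects a sublocus.

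The paper avoids this by not working with splittings at all: it uses the natural map $\Spa(\n{A})\to\Spec(\pi_0(\n{A})(*))$, applies the classical commutative-algebra result (Stacks, Lemma 00O0) on the target to obtain a Zariski cover $\{U_E^{\zar}\}$, and then pulls back to $\Spa(\n{A})$. The passage from discrete to condensed works because $M$, being finite projective, is a discrete $\n{A}$-module (i.e.\ base-changed from $\n{A}(*)$). If you want to salvage the explicit-function approach, you should work instead with the top Fitting ideal of $\coker(f_E)$ restricted to the rank-$|E|$ stratum, or with the section $\det(f_E)$ of the line bundle $\det(M)\otimes\det(F_E)^{-1}$; either of these is independent of any splitting.
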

\begin{proof}
    Let $\Spa(\n{A}) \to \Spec(\pi_0(\n{A})(*))$ be the natural map that sends a basic open Zariski $U_g^{\zar}=\{g\neq 0\}$ to the analytic set $U_g= \bigcup_{n\in \bb{N}} \{ |\pi^{n}| \leq |g| \neq 0\}$.  By  {\cite[\href{https://stacks.math.columbia.edu/tag/00O0}{Lemma 00O0}]{stacks-project}} there is an open cover $\{U_E^{\zar}\}_{E\subset \{1,\ldots, k\}}$ of $\Spec(\pi_0(\n{A}(*)))$ such that $U_E^{\zar}$ is the locus where the map $\bigoplus_{i\in E} \pi_0(\n{A})(*) e_i\to \pi_0(M)(*)$ is an isomorphism. This implies that over $U_{E}^{\zar}$, the map $\pi: \bigoplus_{i\in E}\n{A}(*) e_i \to M(*)$ is an equivalence and the same holds for $\bigoplus_{i\in E}\n{A} \to M$ since both are discrete $\n{A}$-modules. By taking the analytification $U_E\subset \Spa(\n{A})$ of $U_{E}^{\zar}$ we get the lemma. 
\end{proof}

\begin{lemma}
\label{LemmaFormSmooth2}
     Let $\n{A}$ be a static  bounded  $R_{\sol}$-algebra. Let  $\n{B}$ be a solid finitely presented $\n{A}$-algebra as static rings.  Suppose we have a presentation $\n{B}= \pi_0(\n{A} \langle T_1,\ldots, T_n\rangle_{\sol})/I$ with $I$ finitely generated and $I/I^{2}$ a free $\n{B}$-module. Then $\n{B}$ has a (non-derived) presentation of the form $\n{B}= \pi_0(\n{A}\langle S_1,\ldots, S_l \rangle_{\sol})/ (f_1,\ldots, f_c)$ such that $(f_1,\ldots, f_c)/(f_1,\ldots, f_c)^{2}$ is a free $\n{B}$-module with basis $(f_1,\ldots, f_c)$.   
\end{lemma}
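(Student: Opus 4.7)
I would prove this lemma by adapting the classical algebraic construction used to exhibit smooth algebras as local complete intersections (cf. the presentation arguments for standard smooth algebras in the Stacks Project), taking advantage of the fact that everything happens at the level of $\pi_0$ (static rings), where the usual commutative algebra manipulations apply. The plan has four main moves.

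First, starting from the given presentation $\n{B} = \pi_0(\n{A}\langle T_1,\ldots,T_n\rangle_{\sol})/I$, I would pick lifts $f_1,\ldots,f_c \in I$ of the chosen free basis of $I/I^2$, and enlarge the set of generators to $(f_1,\ldots,f_c, h_1,\ldots,h_s)$. Using the fact that the $f_j$ generate $I$ modulo $I^2$, I would replace each $h_i$ by $h_i' = h_i - \sum_j a_{ij} f_j \in I^2$. This reduces the problem to a presentation in which the ``extra'' generators $h_i'$ lie in $I^2$.

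Second, I would unpack the containment $h_i' \in I^2$ by writing each $h_i'$ as a sum of products of pairs from $\{f_k, h_l'\}$. Collecting the terms involving some $h_l'$ on the left, this gives a matrix equation $\sum_j M_{ij} h_j' = \sum_{k,l} \alpha_{ikl} f_k f_l$, where the $s \times s$ matrix $M = (M_{ij})$ satisfies $M \equiv I_s \pmod{I}$ (entrywise), so in particular $\det(M) \equiv 1 \pmod{I}$. Formally inverting $\det(M)$ would transform the $h_i'$ into explicit $\n{A}\langle T\rangle_{\sol}$-linear combinations of $f_k f_l$, i.e.\ elements of $(f_1,\ldots,f_c)$.

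Third, I would realize this inversion inside the solid finitely presented world: consider the algebra $\n{C} = \pi_0(\n{A}\langle T_1,\ldots,T_n, Z\rangle_{\sol})/(f_1,\ldots,f_c, Z\det(M)-1)$. Since $\det(M) \equiv 1$ modulo $I$ and thus modulo $(f_1,\ldots,f_c)$, mapping $Z \mapsto 1$ gives a well-defined morphism $\n{C} \to \n{B}$. Conversely, inside $\pi_0(\n{A}\langle T, Z\rangle_{\sol})/(Z\det(M)-1)$, the equation from step two forces $h_j' \in (f_1,\ldots,f_c)$, so the full ideal $I$ is contained in $(f_1,\ldots,f_c, Z\det(M)-1)$, giving the inverse map $\n{B} \to \n{C}$; hence $\n{B} \cong \n{C}$, which is a presentation of the desired shape with $l = n+1$ and $c' = c+1$ relations.

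The main obstacle is the last verification: that the conormal $J/J^2$ of the new ideal $J = (f_1,\ldots,f_c, Z\det(M)-1)$ in $\n{A}\langle T, Z\rangle_{\sol}$ is a free $\n{B}$-module with basis exactly the given generators. I would do this via the conormal sequence for $\pi_0(\n{A}\langle T,Z\rangle_{\sol}) \to R'' := \pi_0(\n{A}\langle T,Z\rangle_{\sol})/(Z\det(M)-1) \to \n{B}$. The quotient $R''$ is a localization of $\pi_0(\n{A}\langle T\rangle_{\sol})$ inverting $\det(M)$ (hence formally \'etale), and under this localization $I \cdot R'' = (f_1,\ldots,f_c) R''$, so the conormal of $R'' \to \n{B}$ equals $(I/I^2) \otimes_{\n{B}} \n{B}[\det(M)^{-1}] = I/I^2$, which is free of rank $c$ on $f_1,\ldots,f_c$ by hypothesis. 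The contribution from $(Z\det(M)-1)/(Z\det(M)-1)^2 \otimes \n{B}$ is free of rank one on the class of $Z\det(M)-1$, since $Z\det(M)-1$ cuts out a section of the smooth extension obtained by adjoining $Z$. The resulting short exact sequence of conormals splits for dimensional/free reasons, giving $J/J^2 \cong \n{B}^{c+1}$ with basis $(f_1,\ldots,f_c, Z\det(M)-1)$, as required.
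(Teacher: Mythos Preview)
Your approach is essentially that of the paper, which adapts Stacks Project Lemma 07CF: the element $\det(M)$ you construct via the determinant trick is exactly the element $g \in 1+I$ that the paper obtains by invoking Nakayama's lemma directly, and both proofs then adjoin a new variable to invert it.

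The one imprecision is in your final conormal verification. You claim that $R'' = \pi_0(\n{A}\langle T,Z\rangle_{\sol})/(Z\det(M)-1)$ is a localization of $\pi_0(\n{A}\langle T\rangle_{\sol})$ at $\det(M)$, but this is not literally correct: the solid Tate algebra in $Z$ is not the polynomial ring in $Z$, so quotienting by $Z\det(M)-1$ does not give the ordinary localization. The paper handles this by separating two steps. First it adjoins $T_{n+1}$ as an \emph{ordinary polynomial} variable, obtaining $\n{B} = \pi_0(\n{A}\langle T\rangle_{\sol}[T_{n+1}])/J$ with $J = (f_1,\ldots,f_c, gT_{n+1}-1)$; in this polynomial setting classical commutative algebra applies verbatim and $J/J^2$ is free on the listed generators. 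Then, since $g$ maps to $1$ in $\n{B}$, one may pass to the rational localization $\{|g| \geq 1\}$, which solidifies $T_{n+1} = g^{-1}$; the new conormal $J'/J'^2$ equals $J/J^2 \otimes_{(\bb{Z}[T_{n+1}],\bb{Z})_{\sol}} \bb{Z}[T_{n+1}]_{\sol} = J/J^2$ because $J/J^2$ is already a $\n{B}$-module and $T_{n+1} = 1$ is solid there. Your conclusion is correct, but this two-step passage is what makes the argument work in the solid setting.
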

\begin{proof}
    This is \cite[\href{https://stacks.math.columbia.edu/tag/07CF}{Lemma 07CF}]{stacks-project}, we will see that the same proof can be adapted in this setting. Let $f_1,\ldots, f_c\in I(*)$ be such that they form a basis of $I/I^{2}$. By Nakayama's lemma there is $g\in 1+I(*)$ such that $g\cdot I \subset (f_1,\ldots, f_c)$: this holds for the underlying ring $\pi_0(\n{A}\langle T_1,\ldots, T_n \rangle_{\sol}(*))$, but  $I$ and $I^2$ are finitely generated, then by taking the non-derived tensor $\pi_0(\n{A}\langle T_1,\ldots, T_d\rangle  \otimes_{\n{A}\langle T_1,\ldots, T_d\rangle(*)}-)$ the same holds for the condensed ideal  $I$. Then, $I[\frac{1}{g}]$ is generated by $f_1,\ldots, f_c$ and we can write
    \[
    \n{B}=\pi_0( \n{A} \langle T_1,\ldots, T_n \rangle_{\sol}[T_{n+1}] )/(f_1,\ldots, f_c, gT_{n+1}-1).
    \]
   where the ideal $J=(f_1,\ldots, f_c, gT_{n+1}-1)$ satisfies that $\{f_1,\ldots, f_c, gT_n-1\}$ is a basis for $J/J^2$.  On the other hand,  $g$ maps to $1$ in $\n{B}$ and we can localize at the open $\{ |g|\geq 1\}$.  We obtain a presentation 
   \[
   \n{B}= \pi_0(\n{A}\langle T_1,\ldots, T_n , T_{n+1}\rangle_{\sol}) / (f_1,\ldots, f_c, gT_{n+1}-1)
   \]
   with kernel $J'$ satisfying $J'/J^{'^2}= J/J^2\otimes_{(\bb{Z}[T],\bb{Z})} \bb{Z}[T]_{\sol} = J/J^{2}$,  where the solidification is with respect to $T=g^{-1}$. This proves the lemma. 
\end{proof}

\begin{lemma}
\label{LemmaFormSmooth3}
    Let $\n{A} \to \n{B}$ be a map of static bounded  affinoid $R_{\sol}$-algebras  and let $I\subset \n{B}$ be a finitely generated ideal. Set $\n{B}'= \n{B}/I^{n+1}$.  The map $\Omega^1_{\n{B}/\n{A}} \to \Omega^1_{\n{B}'/\n{A}}$ induces an isomorphism of non-derived tensors
    \[
    \pi_0(\Omega^1_{\n{B}/\n{A}} \otimes_{\n{B}} \n{B}/I^{n}) \xrightarrow{\sim} \pi_0( \Omega^1_{\n{B}'/\n{A}}\otimes_{\n{B}'} \n{B}/I^{n}).
    \]
\end{lemma}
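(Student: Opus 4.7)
My plan is to adapt the classical conormal-sequence plus Leibniz-rule proof to the condensed solid setting. The statement reduces to checking that a certain boundary map arising from the cotangent triangle vanishes modulo $I^n$, which is exactly the Leibniz computation once that boundary map is identified correctly.

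First, I would apply Proposition \ref{PropLurieDerived3.2.12} to the composite $\n{A} \to \n{B} \to \n{B}' = \n{B}/I^{n+1}$ to obtain a cofiber sequence
\[
\bb{L}_{\n{B}/\n{A}} \otimes_{\n{B}} \n{B}' \to \bb{L}_{\n{B}'/\n{A}} \to \bb{L}_{\n{B}'/\n{B}}.
\]
Since $\n{B}$ and $\n{B}'$ are static, the surjection $\n{B} \to \n{B}'$ is $0$-connected with cofiber equivalent to $I^{n+1}[1]$; Proposition \ref{PropAproxCotangent} then gives $\pi_0(\bb{L}_{\n{B}'/\n{B}}) = 0$ and identifies $\pi_1(\bb{L}_{\n{B}'/\n{B}}) = I^{n+1}/I^{2(n+1)}$ as a condensed $\n{B}'$-module. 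In particular $\bb{L}_{\n{B}'/\n{B}}$ is $1$-connective, so tensoring the cofiber sequence derivedly with $\n{B}/I^n$ over $\n{B}'$ and taking the long exact sequence on $\pi_0$ yields
\[
\pi_1(\bb{L}_{\n{B}'/\n{B}} \otimes^L \n{B}/I^n) \xrightarrow{\bar{\delta}} \pi_0(\Omega^1_{\n{B}/\n{A}} \otimes_{\n{B}} \n{B}/I^n) \to \pi_0(\Omega^1_{\n{B}'/\n{A}} \otimes_{\n{B}'} \n{B}/I^n) \to 0,
\]
where the first term equals $I^{n+1}/I^{2(n+1)} \otimes_{\n{B}'} \n{B}/I^n$ (the spectral sequence collapses at $\pi_0$ because $\bb{L}_{\n{B}'/\n{B}}$ is $1$-connective and $\n{B}/I^n$ is static).

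Second, I would identify $\bar{\delta}$ with the Leibniz map $\bar{f} \otimes 1 \mapsto df \otimes 1$. This follows from tracking the condensed version of the classical identification: the boundary map in the cotangent triangle is determined by the adjunction defining the cotangent complex (Example \ref{ExampleCotangentComplexes}), which realizes $\Omega^1_{\n{B}/\n{A}}$ as corepresenting condensed $\n{A}$-linear derivations and equips it with a universal derivation $d$ satisfying the condensed Leibniz rule $d(ab) = a\cdot db + b\cdot da$ (encoded by the trivial square-zero extension structure on the condensed ring $\underline{\n{B}} \oplus \Omega^1_{\n{B}/\n{A}}$).

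Third, I would finish via the Leibniz computation. Choosing a finite set of generators $g_1, \ldots, g_d$ of $I$, the condensed $\n{B}'$-module $I^{n+1}/I^{2(n+1)}$ is generated by products $\overline{g_{i_1} \cdots g_{i_{n+1}}}$, and Leibniz yields
\[
d(g_{i_1} \cdots g_{i_{n+1}}) = \sum_{k=1}^{n+1} g_{i_1} \cdots \widehat{g}_{i_k} \cdots g_{i_{n+1}} \, dg_{i_k} \in I^n \Omega^1_{\n{B}/\n{A}},
\]
whose image in $\Omega^1_{\n{B}/\n{A}} \otimes \n{B}/I^n$ vanishes. Hence $\bar{\delta} = 0$, so the natural map in question is surjective with trivial kernel, i.e.\ an isomorphism. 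The main technical obstacle I anticipate is justifying the identification of $\bar{\delta}$ with the classical Leibniz conormal map in the condensed setting, that is, showing compatibility between the isomorphism $\pi_1(\bb{L}_{\n{B}'/\n{B}}) \cong I^{n+1}/I^{2(n+1)}$ from Proposition \ref{PropAproxCotangent} and the universal derivation on $\n{B}$; once that compatibility is in place, the Leibniz step is immediate.
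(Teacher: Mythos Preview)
Your argument is correct, but it takes a different route from the paper. The paper's proof is two lines: it invokes the classical result \cite[\href{https://stacks.math.columbia.edu/tag/02HQ}{Lemma 02HQ}]{stacks-project} for the underlying condensed rings $\underline{\n{A}} \to \underline{\n{B}}$, and then applies the analytification functor $\n{B}\otimes_{\underline{\n{B}}}-$ (which is right exact, hence preserves $\pi_0$ and isomorphisms) to transport the isomorphism to the analytic setting. You instead rerun the conormal-sequence-plus-Leibniz argument directly inside the analytic cotangent-complex formalism, using Propositions~\ref{PropLurieDerived3.2.12} and~\ref{PropAproxCotangent} to set up the long exact sequence and identify the boundary map. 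This is essentially the classical proof of 02HQ transplanted to the analytic category, so your approach is more self-contained but longer; the paper's approach is a clean reduction that avoids redoing the Leibniz computation. The ``technical obstacle'' you flag (identifying $\bar\delta$ with the conormal map $\bar f \mapsto df\otimes 1$) is exactly what the paper sidesteps by citing the discrete case and analytifying, whereas in your approach it follows from the construction of the map $\phi$ in Proposition~\ref{PropAproxCotangent} and the compatibility of analytic and condensed cotangent complexes.
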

\begin{proof}
    By \cite[\href{https://stacks.math.columbia.edu/tag/02HQ}{Lemma 02HQ}]{stacks-project} we know that this is true for the differentials of the underline condensed rings. The lemma follows by taking the analytification with respect to $\n{B}$. 
\end{proof}

\begin{lemma}
\label{LemmaFormSmooth4}
    Let $\n{A} \to \n{B}$ be a morphism of static bounded affinoid $R_{\sol}$-algebras  that is  of solid finite presentation as static rings.  Let  $P=\pi_0(\n{A} \langle T_1,\ldots, T_n\rangle_{\sol})$ and write  $\n{B}= \pi_0(\n{A} \langle T_1, \ldots, T_n\rangle_{\sol})/I$   with $I$ a finitely generated ideal. Then the sequence 
    \begin{equation}
    \label{eqShortExactSmooth}
    0 \to I/I^2 \to \Omega^1_{P/\n{A}} \otimes_{P} \n{B} \to \Omega^1_{\n{B}/\n{A}}\to 0.
    \end{equation}
    is exact and split if and only if $\n{A}\to \n{B}$ is formally smooth when restricted to static analytic rings.
\end{lemma}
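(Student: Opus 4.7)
The plan is to mimic the classical commutative-algebra argument (e.g. \cite[\href{https://stacks.math.columbia.edu/tag/031K}{Tag 031K}]{stacks-project}), being careful that everything remains within the category of static bounded affinoid rings. The key inputs from earlier in the paper are: $P = \pi_0(\n{A}\langle T_1,\ldots,T_n\rangle_{\sol})$ is formally smooth over $\n{A}$ on static rings (because any map $P\to \n{C}$ amounts to choosing bounded elements $T_i \in \n{C}^0$, and if $\widetilde{\n{C}}\to \n{C}$ is a square-zero extension with kernel $M$ a $\n{C}$-module, then arbitrary lifts $\widetilde{T}_i \in \widetilde{\n{C}}$ remain bounded since $M$ is already an analytic $\n{C}$-module), and $\Omega^1_{P/\n{A}} = \bigoplus_{i=1}^n P\, dT_i$ is free as in Example \ref{ExampleCotangentComplexes}(3).

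First I will establish the exactness of the conormal sequence independently of smoothness. Surjectivity on the right is standard: any $\n{A}$-derivation of $\n{B}$ extends uniquely to an $\n{A}$-derivation of $P$ which kills $I$. The composite $I/I^2 \to \Omega^1_{P/\n{A}}\otimes_P \n{B} \to \Omega^1_{\n{B}/\n{A}}$ is zero because $df \otimes 1 = d(\bar f) = 0$ in $\Omega^1_{\n{B}/\n{A}}$ for $f\in I$, and exactness in the middle is the universal property of $\Omega^1_{\n{B}/\n{A}}$ as the quotient of $\Omega^1_{P/\n{A}}\otimes_P \n{B}$ by the submodule generated by the $df \otimes 1$ for $f \in I$ (all of which makes sense because $I$ is finitely generated, so the tensor products here agree with the static/non-derived ones, cf.\ Lemma \ref{LemmaFormSmooth3}).

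For the forward implication, assume $\n{A}\to \n{B}$ is formally smooth on static rings. The projection $P/I^2 \to P/I = \n{B}$ is a square-zero extension of static bounded affinoid $\n{A}$-algebras with kernel $I/I^2$ (which is a $\n{B}$-module, hence an $\n{A}$-complete module since $\n{B}$ is). By formal smoothness there exists an $\n{A}$-algebra section $s: \n{B} \to P/I^2$ of this projection. The assignment $\delta: p \mapsto s(\bar p) - \bar p \in I/I^2$ (for $p \in P$ with image $\bar p \in P/I^2$) is an $\n{A}$-linear derivation valued in the $\n{B}$-module $I/I^2$, hence corresponds to a $\n{B}$-linear map $r: \Omega^1_{P/\n{A}}\otimes_P \n{B} \to I/I^2$ which, by construction, retracts the first map of the conormal sequence; this splits the sequence and in particular forces the first map to be injective.

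For the converse, assume the sequence is split exact. Let $\widetilde{\n{C}} \to \n{C}$ be a square-zero extension of static bounded affinoid $\n{A}$-algebras with kernel $M$, and let $\varphi: \n{B} \to \n{C}$ be a map of $\n{A}$-algebras. Composing with $P \to \n{B}$ and using that $P$ is formally smooth over $\n{A}$ on static rings, lift to a map $\widetilde{\psi}: P \to \widetilde{\n{C}}$. Since $\varphi$ kills $I$, the restriction $\widetilde{\psi}|_I$ lands in $M$, and because $M^2 = 0$ it induces a $\n{B}$-linear map $I/I^2 \to M$. Using the chosen splitting $r: \Omega^1_{P/\n{A}}\otimes_P \n{B} \to I/I^2$, extend this to a $\n{B}$-linear map $\Omega^1_{P/\n{A}}\otimes_P \n{B} \to M$, equivalently an $\n{A}$-derivation $D: P \to M$ that agrees with $\widetilde{\psi}|_I$ on $I$. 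Then $\widetilde{\psi}' := \widetilde{\psi} - D$ is still an $\n{A}$-algebra lift of $P \to \n{C}$ (since $D$ is a derivation into a square-zero ideal) and now vanishes on $I$, so it descends to the desired lift $\n{B} \to \widetilde{\n{C}}$.

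The main technical obstacle is verifying that each construction stays in the analytic category: that $P/I^2$ is a bounded affinoid ring with the right analytic structure (automatic from Proposition \ref{PropStabilityAffinoidRings} and the fact that $I$ is finitely generated), that lifts $\widetilde{T}_i \in \widetilde{\n{C}}$ of bounded elements remain bounded (immediate since $M$ is an analytic $\n{C}$-module, so $\widetilde{\n{C}}^0 \to \n{C}^0$ is surjective on sections), and that the derivation $D: P \to M$ coming from the splitting is automatically continuous in the solid sense (which holds because $\Omega^1_{P/\n{A}}$ is free and $M$ is an $\n{A}$-complete module). Once these are checked, the classical argument goes through verbatim.
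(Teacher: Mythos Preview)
Your proof is correct and takes essentially the same route as the paper (adapting the classical argument of \cite[\href{https://stacks.math.columbia.edu/tag/031I}{Tag 031I}]{stacks-project} to the solid setting, using invariance of the analytic structure under nilpotent thickenings for the formal smoothness of $P$). The only cosmetic difference is in the converse: the paper constructs an explicit section $\n{B}\to P/I^2$ by sending $T_i\mapsto T_i-a_i$ with $da_i=dT_i-\sigma(d\overline{T}_i)$, rather than directly verifying the lifting property for an arbitrary square-zero extension as you do.
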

\begin{proof}
    This is  the analogue of  \cite[\href{https://stacks.math.columbia.edu/tag/031I}{Lemma 031I}]{stacks-project}, let us see that the same argument works.  First, by invariance of analytic ring structures under nilpotent thickenings \cite[Proposition 12.23]{ClauseScholzeAnalyticGeometry}, and the fact that $\n{A}\langle T_1,\ldots, T_n \rangle_{\sol} = \n{A}[T_1,\ldots, T_n]_{\sol}$,   it is clear that the Tate ring $\n{A}\langle T_1,\ldots, T_d\rangle_{\sol}$ is formally smooth over $\n{A}$. Thus, one can easily check that $\n{B}$ is formally smooth  over $\n{A}$ (as static rings) if and only if there is a section of algebras $\n{B} \to P/I^{2}$. 

    Now, if $\n{B}$ is formally smooth we can find a split as above, this provides a section of the map $\Omega^1_{P/\n{A}}\otimes_{\n{P}} B \to \Omega^1_{\n{B}/\n{A}}$ and by applying \cite[\href{https://stacks.math.columbia.edu/tag/02HP}{Lemma 02HP}]{stacks-project} one gets that the sequence \eqref{eqShortExactSmooth} is also exact (this argument uses Lemma \ref{LemmaFormSmooth3}).   Conversely, suppose that the above sequence is exact and split. We want to construct a section of $P/I^{2}\to \n{B}$.  Let $\sigma: \Omega^1_{\n{B}/\n{A}} \to \Omega^1_{P/\n{A}}\otimes_{P} \n{B}$ be a section, and let us take $a_i\in I$ such that $d a_{i} = dT_{i}- \sigma(d\overline{T_i})\in \Omega^1_{P/\n{A}}\otimes_{P} \n{B}$. Consider the map  $f:P\to P/I^2$  sending  $T_i$ to $T_i-a_i$.  We claim that $f$ factors through $\n{B}$ providing the desired split. Since $I$ is finitely generated, it is enough to show that for any $b\in I$ we have $f(b)=0$. By exactness of the sequence it suffices to show that $d(b(T_i-a_i))=0$, but we have that 
    \[
    d(b(T_i-a_i))= d(b -\sum_{i=1}^{k} (\frac{\partial b}{\partial T_i}  )a_i) = \sum_{i=1}^{k} (\frac{\partial b}{\partial T_i}  ) \sigma (d\overline{T_i}) = \sigma(db)=0.
    \]
\end{proof}

\begin{lemma}[{\cite[\href{https://stacks.math.columbia.edu/tag/00TA}{Lemma 00TA}]{stacks-project}}]
\label{LemmaSmoothAsStandarSmooth}
    Let $\n{A}\to \n{B}$ be a solid smooth morphism of bounded affinoid rings. Then there is a finite analytic affinoid cover $\{\Spa \n{B}_i\}_{i}$ of $\Spa \n{B}$ such that the composite map $\n{A}\to \n{B}_i$ is standard solid smooth. 
\end{lemma}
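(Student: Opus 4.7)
The plan is to adapt the classical argument of \cite[\href{https://stacks.math.columbia.edu/tag/00TA}{Lemma 00TA}]{stacks-project} to the setting of bounded affinoid rings, using the lemmas developed just above the statement.

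First I would reduce to the static case. By Theorem~\ref{TheoFormalSmoothnesvsSmoothness} (the implications (1) $\Leftrightarrow$ (2) that we may assume are in hand at this point in the proof), the map $\n{A} \to \n{B}$ is solid smooth if and only if it is formally smooth and $\pi_0(\n{A}) \to \pi_0(\n{B})$ is of solid finite presentation as static rings. Moreover, since the cotangent complex of a formally smooth morphism is the dual of a connective perfect complex, it is automatically concentrated in degree zero and finite projective, hence by Corollary~\ref{CoroEquiCotAndpi0} (and the vanishing of higher obstructions coming from nilcompleteness and infinitesimal cohesiveness) the map $\underline{\n{B}} \to \underline{\n{B}}^{\heartsuit} = \pi_0(\underline{\n{B}})$ is an equivalence of condensed rings; thus the question of finding an affinoid cover by standard solid smooth $\n{A}$-algebras reduces entirely to the static world.

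Next I would choose a presentation $\pi_0(\n{B}) = \pi_0(\n{A}\langle T_1,\ldots, T_n\rangle_{\sol})/I$ with $I$ finitely generated. By Lemma~\ref{LemmaFormSmooth4}, formal smoothness is equivalent to the split exactness of
\[
0 \to I/I^2 \to \Omega^1_{P/\n{A}} \otimes_P \n{B} \to \Omega^1_{\n{B}/\n{A}} \to 0,
\]
with $P = \pi_0(\n{A}\langle T_1,\ldots, T_n\rangle_{\sol})$. Thus $\Omega^1_{\n{B}/\n{A}}$ is finite projective, and $I/I^2$ is a direct summand of the free module $\bigoplus_i \n{B}\, dT_i$, so it is itself finite projective. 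Applying Lemma~\ref{LemmaFormSmooth1} to $I/I^2$ (equipped with the obvious surjection from $\bigoplus_i \n{B}\,dT_i$), I obtain a finite analytic cover of $\Spa \n{B}$ by Zariski open subsets $U_E$ on which a choice of subset $E \subset \{1,\ldots, n\}$ picks out a basis of $I/I^2$.

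On each such open $\Spa \n{B}_E$, I would lift a basis of $I/I^2$ to elements $f_1,\ldots, f_c \in I(*)$ and then invoke Lemma~\ref{LemmaFormSmooth2} to rewrite, after possibly enlarging the number of variables, $\n{B}_E$ as $\pi_0(\n{A}\langle S_1,\ldots, S_l\rangle_{\sol})/(f_1,\ldots, f_c)$ with $(f_1,\ldots, f_c)/(f_1,\ldots, f_c)^2$ free on the $f_i$. By the split exactness above, after rearranging the variables $S_j$, the Jacobian matrix $(\partial f_i/\partial S_j)_{1\leq i,j \leq c}$ represents an isomorphism $I/I^2 \xrightarrow{\sim} \bigoplus_{j\in E'} \n{B}_E\, dS_j$, hence its determinant becomes invertible after a further Zariski localization on $\Spa \n{B}_E$ (using Lemma~\ref{LemmaFormSmooth1} again applied to this determinant). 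This realizes $\n{B}_E$ as standard solid smooth as static rings.

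The main obstacle, and the step I would treat last, is to promote the static standard smooth presentation to the derived one appearing in Definition~\ref{DefinitionSolidSmoothetale}: namely, one must check that the derived quotient
\[
\n{A}\langle S_1,\ldots, S_l\rangle_{\sol}/^{\bb{L}}(f_1,\ldots, f_c)
\]
coincides with the static quotient when $\det(\partial f_i/\partial S_j)$ is invertible. For this I would argue via the Koszul complex: invertibility of the Jacobian on $\n{B}_E$ implies that the sequence $(f_1,\ldots,f_c)$ is Koszul-regular after localization (its associated conormal sheaf $I/I^2$ being free of the correct rank), so the Koszul complex computing $\n{A}\langle S_1,\ldots,S_l\rangle_{\sol}/^{\bb{L}}(f_1,\ldots,f_c)$ is concentrated in degree zero and agrees with the static quotient $\pi_0(\n{B}_E)$. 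Since we already know $\n{B}_E = \pi_0(\n{B}_E)$ by the initial reduction, this produces the desired standard solid smooth factorization and finishes the proof.
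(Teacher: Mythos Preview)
Your argument has two related gaps, both in places where you diverge from the paper.

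First, the reduction to the static case is incorrect. You claim that because $\bb{L}_{\n{B}/\n{A}}$ is finite projective in degree $0$, the ring $\n{B}$ itself must be static. This does not follow: if $\n{A}$ has higher homotopy groups then so will $\n{B}$ (already $\n{B}=\n{A}\langle T\rangle_{\sol}$ is a counterexample). Corollary~\ref{CoroEquiCotAndpi0} applied to $\n{B}\to\pi_0(\n{B})$ would require $\bb{L}_{\pi_0(\n{B})/\n{B}}=0$, which has nothing to do with $\bb{L}_{\n{B}/\n{A}}$ being projective. The paper does not reduce to static $\n{B}$; it only works with $\pi_0(\n{B})$ to build a presentation and then returns to the derived world at the end.

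Second, and more seriously, your final step asserts that invertibility of the Jacobian forces the sequence $(f_1,\ldots,f_c)$ to be Koszul regular, so that the derived quotient $\n{B}':=\n{A}\langle S_1,\ldots,S_l\rangle_{\sol}/^{\bb{L}}(f_1,\ldots,f_c)$ is static. Knowing that $I/I^2$ is free of the correct rank does \emph{not} imply Koszul regularity in this generality; indeed Lemma~\ref{LemmaStandarSmoothRegular} later in the paper establishes exactly this regularity, but only over sous-perfectoid bases and by a separate argument. The paper avoids this obstacle entirely: rather than comparing $\n{B}'$ to $\pi_0(\n{B})$, it lifts the images of the $T_i$ to a map $\n{A}\langle T_1,\ldots,T_n\rangle_{\sol}\to\n{B}$, obtains a map $\n{B}'\to\n{B}$ which is an isomorphism on $\pi_0$, computes directly that $\bb{L}_{\n{B}'/\n{A}}$ is free on $dT_{c+1},\ldots,dT_n$, and then concludes $\bb{L}_{\n{B}/\n{B}'}=0$ so that $\n{B}'\xrightarrow{\sim}\n{B}$ by Corollary~\ref{CoroEquiCotAndpi0}. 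This bypasses any Koszul regularity question. Your localization steps using Lemmas~\ref{LemmaFormSmooth1}, \ref{LemmaFormSmooth2}, \ref{LemmaFormSmooth4} match the paper's, but the endgame needs to be replaced by this cotangent-complex comparison.
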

\begin{proof}
    Let us write $\pi_0(\n{B})= \pi_0(\n{A}\langle T_1,\ldots, T_n\rangle_{\sol})/(f_1,\ldots, f_k)$, and  $I=(f_1,\ldots, f_k)$. Since $\n{A}\to \n{B}$ is formally smooth, the morphism $\pi_0(\n{A})\to \pi_0(\n{B})$ is formally smooth as static rings. By Lemma \ref{LemmaFormSmooth4}  we have a split short exact sequence  \eqref{eqShortExactSmooth}. In particular, $I/I^2$ is a projective $\pi_0(\n{B})$-module. By Lemma \ref{LemmaFormSmooth1} we can find a finite analytic Zariski cover of $\Spa\n{B}$ of the form $\{U_{g}\}_{g\in \pi_0(\n{B})}$ such that the module $I/I^2$ restricted to $U_g$ is free. Taking an open cover by affinoids $\{ |\pi^n|\leq g \neq 0\}$ we find that 
    \[
    \pi_0(\n{B}(\frac{\pi^n}{g})) = \pi_0(\n{A} \langle T_1,\ldots, T_n, T_{n+1} \rangle_{\sol})/ (f_1,\ldots, f_k, gT_{n+1}-\pi^n)
    \]
    is a presentation with kernel $J$ such that $J/J^2 = \pi_0(I/I^2\otimes_{\n{B}} \n{B}(\frac{\pi^n}{g})) \oplus (dT_{n+1}-\pi^n)  \pi_0(\n{B}(\frac{\pi^n}{g}))$ is a free $\pi_0(\n{B}(\frac{\pi^n}{g}))$-module. Thus without loss of generality we can assume that $I/I^2$ is free. Then by Lemma \ref{LemmaFormSmooth2} we have a presentation 
    \[
    \pi_0(\n{B})= \pi_0(\n{A}\langle T_1,\ldots, T_n\rangle_{\sol})/(f_1,\ldots, f_c)
    \]
    such that the elements $f_1,\ldots, f_c$ form a basis of $(f_1,\ldots, f_c)/(f_1,\ldots, f_c)^2$. By arguing as in \cite[\href{https://stacks.math.columbia.edu/tag/00TA}{Lemma 00TA}]{stacks-project} we can find a Zariski cover of the form $\{U_g\}_{g}$ such that the composition $I/I^2\to \bigoplus_{i=1}^n \pi_0(\n{B}) dT_i \to \bigoplus_{i=1}^{c} \pi_0(\n{B})d T_i $ is an isomorphism (after reordering the variables for each open $U_g$). Thus, we find covers of the form 
    \[
    \pi_0(\n{B}(\frac{\pi^n}{g})) = \pi_0(\n{A} \langle  T_1, \ldots, T_n , T_{n+1}\rangle_{\sol})/(f_1,\ldots, f_c, gT_{n+1}-\pi^{n}).
    \]
Reordering the variables, we get a standard solid smooth presentation of $\pi_0(\n{B}(\frac{\pi^n}{g}))$ as static rings. Thus, we can assume that the map $\pi_0(\n{A}) \to \pi_0(\n{B})$ is standard solid smooth as static rings: $\pi_0(\n{B})= \pi_0(\n{A} \langle T_1,\ldots, T_n \rangle_{\sol})/(f_1,\ldots, f_c)$ with $\det(\frac{\partial f_i}{\partial T_j})_{1\leq i,j\leq c}$ invertible in $\pi_0(\n{B})$. Consider $\n{B}'=\n{A} \langle T_1,\ldots, T_n \rangle_{\sol}/^{\bb{L}}(f_1,\ldots, f_c)$. We can lift the elements $\overline{T_i}\in \n{B}$ to a map of rings $\n{A}[T_1,\ldots, T_n]\to \n{B}$ that can be completed to $\n{A}\langle T_1,\ldots, T_n \rangle_{\sol}$. Moreover, the elements $f_i$ are mapped to $0$ in $\pi_0(\n{B})$ so that this map extends to a morphism of analytic rings $\n{B}'\to \n{B}$ which is an isomorphism on $\pi_0$. On the other hand, it is clear that the cotangent complex of $\bb{L}_{\n{B}'/\n{A}}$ is free and generated by $dT_{c+1},\ldots, dT_{n}$. Thus, the morphism of cotangent complexes 
\[
\bb{L}_{\n{B}'/\n{A}}\otimes_{\n{B}'}\n{B} \to \bb{L}_{\n{B}/\n{A}}
\]
is a surjective morphism of projective $\n{B}$-modules that is an isomorphism on $\pi_0$, then it must be an isomorphism. One gets that  $\bb{L}_{\n{B}/\n{B}'}=0$ and by Corollary \ref{CoroEquiCotAndpi0} we must have an equivalence $\n{B}' \cong \n{B}$. 
\end{proof}

\begin{corollary}
\label{CoroFormallySmoothStuff}
    A morphism of bounded affinoid rings $\n{A}\to \n{B}$ is solid smooth if and only if locally on the analytic topology of $\n{B}$ it factors as a composition $\n{A} \to \n{A} \langle T_1,\ldots, T_s \rangle_{\sol} \to \n{B}$ where the second arrow is standard solid \'etale. 
\end{corollary}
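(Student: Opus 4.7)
The corollary is essentially a repackaging of Lemma \ref{LemmaSmoothAsStandarSmooth} combined with a variable splitting, so the plan is short.

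For the forward implication, I would apply Lemma \ref{LemmaSmoothAsStandarSmooth} to reduce, after passing to a finite analytic cover of $\Spa \n{B}$, to the situation where $\n{A} \to \n{B}$ is standard solid smooth. That is, I may assume
\[
\n{B} = \n{A}\langle T_1,\ldots,T_n\rangle_{\sol}/^{\bb{L}}(f_1,\ldots,f_c),
\]
with $\det\bigl((\partial f_i/\partial T_j)_{1\leq i,j\leq c}\bigr)$ invertible in $\n{B}$ (reordering the $T_i$ so that the invertible Jacobian minor sits in the first $c$ columns, which the proof of Lemma \ref{LemmaSmoothAsStandarSmooth} already arranges). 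I would then use the identification
\[
\n{A}\langle T_1,\ldots,T_n\rangle_{\sol} = \n{A}\langle T_{c+1},\ldots,T_n\rangle_{\sol} \otimes_{\bb{Z}_{\sol}} \bb{Z}[T_1,\ldots,T_c]_{\sol} = \n{A}\langle T_{c+1},\ldots,T_n\rangle_{\sol}\langle T_1,\ldots,T_c\rangle_{\sol},
\]
which follows from the base change definition of the solid Tate algebra. Setting $s:=n-c$ and $\n{A}':=\n{A}\langle T_{c+1},\ldots,T_n\rangle_{\sol}$, the presentation becomes $\n{B} = \n{A}'\langle T_1,\ldots,T_c\rangle_{\sol}/^{\bb{L}}(f_1,\ldots,f_c)$ with invertible Jacobian of size $c\times c$, which is by definition standard solid \'etale over $\n{A}'$. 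This gives the desired factorization $\n{A}\to \n{A}\langle T_1,\ldots,T_s\rangle_{\sol}\to \n{B}$.

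For the converse, I would check the two properties separately. The map $\n{A}\to \n{A}\langle T_1,\ldots,T_s\rangle_{\sol}$ is solid smooth because it is of solid finite presentation by definition, and its cotangent complex is the free $\n{A}\langle T_1,\ldots,T_s\rangle_{\sol}$-module $\bigoplus_i \n{A}\langle T_1,\ldots,T_s\rangle_{\sol}\,dT_i$ (cf.\ Example \ref{ExampleCotangentComplexes}~(3) together with base change along $\bb{Z}_{\sol}\to \n{A}$), while infinitesimal cohesiveness and nilcompleteness follow from the fact that its analytic structure is induced from a solid polynomial ring. A standard solid \'etale morphism is formally \'etale since its cotangent complex vanishes by the Jacobian criterion (the Koszul presentation has $dT_i - \sum_j (\partial f_i/\partial T_j)\,dT_j$-type relations that become an isomorphism when the Jacobian is invertible), and it is clearly of solid finite presentation. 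Since solid smooth morphisms are stable under composition (the class is closed under finite colimits in the arrow category, and both properties pass through composites), the composite $\n{A}\to\n{B}$ is solid smooth.

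The forward direction is where the content lies, and it is already packaged in Lemma \ref{LemmaSmoothAsStandarSmooth}; the only novelty is the bookkeeping of variables needed to view a standard solid smooth map as a standard solid \'etale map over an intermediate Tate algebra. The only mild subtlety I anticipate is verifying that the intermediate analytic ring structure on $\n{A}\langle T_{c+1},\ldots,T_n\rangle_{\sol}\langle T_1,\ldots,T_c\rangle_{\sol}$ literally agrees with $\n{A}\langle T_1,\ldots,T_n\rangle_{\sol}$, but this is immediate from associativity of the analytic base change $\n{A}\otimes_{\bb{Z}_{\sol}}\bb{Z}[T_1,\ldots,T_n]_{\sol}$ and the tensor decomposition of $\bb{Z}[T_1,\ldots,T_n]_{\sol}$ over $\bb{Z}_{\sol}$.
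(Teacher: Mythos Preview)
Your proposal is correct and follows the paper's approach essentially verbatim: the forward direction invokes Lemma~\ref{LemmaSmoothAsStandarSmooth} and peels off the last $n-c$ variables to exhibit a standard solid \'etale map over the intermediate Tate algebra, and the converse checks the cotangent complex of each factor. The only nuance the paper makes explicit (and which you elide) is that rational localizations are themselves standard solid \'etale; this is what lets you pass from the \emph{local} factorization to the global conclusion that $\n{A}\to\n{B}$ is formally smooth, since your argument as written only shows each $\n{A}\to\n{B}_i$ is solid smooth.
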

\begin{proof}
Suppose $f:\n{A}\to \n{B}$ is solid smooth.  By Lemma \ref{LemmaSmoothAsStandarSmooth}, locally on $\n{B}$ we can write $\n{B}= \n{A} \langle T_1,\ldots, T_n \rangle_{\sol}/^{\bb{L}}(f_1,\ldots, f_c)$ as a standard solid smooth map. It is clear that $\n{A} \langle T_{c+1} ,\ldots, T_{n} \rangle_{\sol} \to \n{B}$ is standard solid \'etale. 

Conversely, suppose that $f$ is, locally in the analytic topology, standard solid smooth. Since rational localizations are  standard solid \'etale maps, it suffices to show that standard solid smooth (resp. \'etale) morphisms is formally smooth (resp. formally \'etale). By invariance of analytic ring structure under nilpotent thickenings, one is even reduced to show that standard solid \'etale morphisms are formally \'etale. But the standard  computation of the cotangent complex (using $\bb{L}_{\bb{Z}[T]_{\sol}/\bb{Z}}=\bb{Z}[T]dT$) shows that if $f$ is standard solid \'etale  then $\bb{L}_{\n{B}/\n{A}}=0$ proving that it is formally \'etale.
\end{proof}

\begin{lemma}
\label{LemmaStandarSmoothRegular}
Let $(K,K^+)$ be a non-archimedean field,  $X=\Spa(A,A^+)$  an affinoid sous-perfectoid space over $(K,K^+)$ and  $B= A\langle T_1,\ldots, T_n \rangle/(f_1,\ldots, f_n)$ a  (non-derived) standard solid \'etale extension of $A$. Then the sequence $(f_1,\ldots, f_n)$ is regular, i.e. the natural map of condensed anima 
\begin{equation}
\label{eqKoszulregularEtale}
\Kos(A\langle T_1,\ldots, T_n \rangle; f_1,\ldots, f_n) \to B
\end{equation}
is an equivalence. 
\end{lemma}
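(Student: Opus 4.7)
The plan is to verify the hypotheses of Corollary \ref{CoroEquiCotAndpi0} for the canonical map $B' \to B$, where $B' := A\langle T_1, \ldots, T_n\rangle /^{\mathbf L}(f_1, \ldots, f_n)$ is the derived Koszul quotient endowed with the analytic structure induced from $A\langle T_1,\ldots, T_n\rangle_{\sol}$. Both $\pi_0(B')$ and $B$ coincide, as the same quotient of the Tate algebra equipped with the same induced analytic structure, so it is enough to prove $\bb{L}_{B/B'} = 0$; together with Corollary \ref{CoroEquiCotAndpi0} this will give $B' \simeq B$, i.e.\ acyclicity of \eqref{eqKoszulregularEtale}.

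First I compute $\bb{L}_{B'/A}$ directly. Writing $C := \Sym^{\bullet}_A(A^n)$ with the map $C \to A\langle T \rangle$ sending generators $e_i \mapsto f_i$ and the augmentation $C \to A$ sending $e_i \mapsto 0$, one has $B' = A\langle T\rangle \otimes^{\mathbf L}_C A$. Since $\bb{L}_{A/C} = A^n[1]$, base change gives $\bb{L}_{B'/A\langle T \rangle} = B'^n[1]$, and the transitivity triangle
\[
\bb{L}_{A\langle T \rangle/A} \otimes_{A\langle T \rangle} B' \to \bb{L}_{B'/A} \to \bb{L}_{B'/A\langle T \rangle},
\]
combined with $\bb{L}_{A\langle T \rangle/A} = A\langle T \rangle \cdot dT$, identifies $\bb{L}_{B'/A}$ with the fibre of the shifted Jacobian $J \colon B'^n[1] \to B'^n[1]$, $e_i \mapsto \sum_j (\partial f_i/\partial T_j)\, dT_j$. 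Since $\det(\partial f_i/\partial T_j)$ is a unit in $\pi_0(B') = B$ by the standard solid \'etale hypothesis, and $B'$ is a connective animated ring, this determinant lifts to a unit in $B'$; hence $J$ is an isomorphism of free $B'$-modules and $\bb{L}_{B'/A} = 0$.

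Next, Corollary \ref{CoroFormallySmoothStuff} applied to the standard solid \'etale map $A \to B$ gives $\bb{L}_{B/A} = 0$ directly. Feeding both vanishings into the transitivity triangle
\[
\bb{L}_{B'/A} \otimes_{B'} B \to \bb{L}_{B/A} \to \bb{L}_{B/B'}
\]
yields $\bb{L}_{B/B'} = 0$, which concludes the proof via Corollary \ref{CoroEquiCotAndpi0}.

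The main technical subtlety is the identification $\bb{L}_{A\langle T \rangle/A} = A\langle T \rangle \cdot dT$ as a free module concentrated in degree zero. For a general bounded affinoid $R_{\sol}$-algebra the Tate algebra need not behave so cleanly on the derived level, but the sous-perfectoid hypothesis on $A$ guarantees that $A \to A\langle T \rangle$ is flat in the relevant derived sense (compare Example \ref{ExampleCotangentComplexes}(3)), so the cotangent complex of the Tate algebra is the expected free module; once this input is available the rest of the argument is a formal application of transitivity for cotangent complexes as developed in Section \ref{SubsectionCotangent}.
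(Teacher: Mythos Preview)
Your argument is circular. The key unjustified step is the claim that $\bb{L}_{B/A}=0$ for the \emph{static} quotient $B$. Corollary~\ref{CoroFormallySmoothStuff} and the computation in its proof show that the \emph{derived} standard solid \'etale algebra has vanishing cotangent complex; that is exactly your computation $\bb{L}_{B'/A}=0$, and it says nothing about the non-derived $B$. In fact the transitivity triangle you write down, together with $\bb{L}_{B'/A}=0$, gives $\bb{L}_{B/A}\simeq\bb{L}_{B/B'}$, so asserting $\bb{L}_{B/A}=0$ is equivalent to asserting $\bb{L}_{B/B'}=0$, which is precisely what you are trying to prove. Corollary~\ref{CoroEquiCotAndpi0} therefore cannot be fed anything nontrivial.

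A symptom of this is that your argument never uses the sous-perfectoid hypothesis in any essential way. Your final paragraph misidentifies its role: the identification $\bb{L}_{A\langle T\rangle_{\sol}/A}\cong A\langle T\rangle\,dT$ holds for \emph{any} bounded affinoid $A$, by base change from $\bb{L}_{\bb{Z}[T]_{\sol}/\bb{Z}}=\bb{Z}[T]\,dT$ (see Example~\ref{ExampleCotangentComplexes}(3)); no flatness input is needed. The paper's proof uses sous-perfectoidness in a completely different way: after passing to an integral model it reduces modulo $\pi$, descends via $v$-covers to totally disconnected perfectoids, spreads out on $\Spa(A,A^+)$ to reduce to the case where $B$ is a finite product of copies of $A$, and ultimately invokes \cite[Lemma~IV.4.16]{FarguesScholze} for Koszul-regularity. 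None of this is formal cotangent-complex bookkeeping.
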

\begin{proof}
  By the open mapping theorem it suffices to show that  \eqref{eqKoszulregularEtale} is an equivalence for the underlying sets. Without loss of generality we can take $f_1,\ldots, f_n\in A^+\langle T_1,\ldots T_n\rangle$, then we have to prove that the $\pi_i$ of the Koszul complex 
  \begin{equation}
  \label{eqKoszulregularEtale2}
  \Kos(A^+\langle T_1,\ldots, T_n\rangle; f_1,\ldots, f_n)
  \end{equation}
  have bounded torsion for $i>0$. Since $A$ is sous-perfectoid, and the terms of the Koszul complex are free Banach $A^+$-modules, we can assume that it is a perfectoid ring. Moreover, the Koszul complex \eqref{eqKoszulregularEtale2} is $\pi$-adically complete, so by $v$-descent we can even assume that $A$ is totally disconnected. Since solid almost $\pi$-adically complete modules glue in the analytic topology of $X$ (cf. \cite{MannSix}), it suffices to prove the claim locally on $X$. 
  
  Let $x\in \Spa(A,A^+)$ be a point with residue field $(\kappa(x), \kappa(x)^+)$. Then $B\otimes_{A} \kappa(x)$ is a finite \'etale extension of $\kappa(x)$, so a finite product of  $\kappa(x)$'s since it is algebraically closed. Thus, since $\Spa(\kappa(x), \kappa(x)^+)= \varprojlim_{x\in U\subset X} U$, there is an open neighbourhood $x\in U_x$ such that $B \otimes_{A} \s{O}(U)$ is just a finite product  of copies of $A$. Then, after localizing $B$, we can assume that it is equal to $\prod_{s} A$.  In this case, there are almost idempotent elements $e_1,\ldots, e_s \in B^+$ corresponding to the projections on each component of the product, and the localization $B(\frac{1}{e_i})=B\langle T \rangle/(T-e_i)$ corresponds to taking the $i$-th component. Therefore, we can even assume that $B= A$, in this case the Koszul-regularity follows from \cite[Lemma IV.4.16]{FarguesScholze}.
\end{proof}

\begin{lemma}
\label{LemmaFinitenessAnalytictoAdic}
    Let $\n{A}$ be a nuclear bounded $R_{\sol}$-algebra.    Let $\n{B}$ be a solid finitely presented algebra over $\n{A}$ of the form $\n{A} \langle T_1,\ldots, T_d \rangle_{\sol}/^{\bb{L}}(f_1, \cdots, f_k)$ with $f_i\in \pi_0(\n{A}\langle  T_1,\ldots, T_d\rangle_{\sol})$. Let $C= R\langle X_{n}: n\in \bb{N} \rangle$ be the Tate algebra over $R$ in countably many variables. Then there is a map $(C,R^+)_{\sol}\to \n{A}$ and elements $g_1,\ldots, g_k\in C\langle  T_1,\ldots, T_d\rangle$ mapping to $f_1,\ldots, f_k$. In particular, we have  an equivalence 
    \[
    \n{B} = C \langle T_1,\ldots, T_d \rangle_{\sol}/^{\bb{L}}(g_1,\ldots, g_k) \otimes_{(C,R^{+})_{\sol}} \n{A}. 
    \]
\end{lemma}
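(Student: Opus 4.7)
The plan is to construct the map $(C, R^+)_{\sol} \to \n{A}$ and the lifts $g_i$ by extracting a universal power series expansion of the $f_i$. Since $\n{A}$ is nuclear (and bounded) over $R_{\sol}$, one expects $\n{A}\langle T_1,\ldots,T_d\rangle_{\sol}$ to be described as a completed tensor product $\n{A}\widehat{\otimes}_{R}R\langle T_1,\ldots,T_d\rangle$ so that each $f_i$ admits a power series expansion $f_i=\sum_{\alpha\in\bb{N}^d} a_{i,\alpha} T^\alpha$ with $a_{i,\alpha}\in\pi_0(\n{A})$ and with coefficients going to zero in an appropriate sense. The countable family $\{a_{i,\alpha}:1\leq i\leq k,\;\alpha\in\bb{N}^d\}$ will then be encoded as a map from the universal Tate algebra $C=R\langle X_n:n\in\bb{N}\rangle$.

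Carrying out this strategy, first I would set $m_\alpha=\min_{i}v_\pi(a_{i,\alpha})$; the convergence of each $f_i$ forces $m_\alpha\to\infty$ as $|\alpha|\to\infty$, and so I can write $a_{i,\alpha}=\pi^{m_\alpha}b_{i,\alpha}$ where the $b_{i,\alpha}$ lie in the subring $\n{A}^{0}$ of power-bounded elements (uniformly, though not necessarily tending to zero). After enumerating the countable index set $\{(i,\alpha)\}$ as $(i_n,\alpha_n)_{n\in\bb{N}}$, I would define the map $(C,R^{+})_{\sol}\to\n{A}$ by sending $X_n\mapsto y_n:=b_{i_n,\alpha_n}\in\n{A}^{0}$. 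Because the $y_n$ are uniformly in $\n{A}^{0}$, for any convergent series $\sum c_\beta X^\beta\in C$ with $c_\beta\to 0$ in $R$ the image $\sum c_\beta y^\beta$ converges in $\n{A}$, so the map is well-defined; this uses exactly the characterization of maps out of $R\langle\bb{N}[\bb{N}]\rangle$ established in Lemma~\ref{LemmaOverconvergentFunctionsIdeal}.

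Next I would define $g_i=\sum_{\alpha}\pi^{m_\alpha}X_{n(i,\alpha)}T^\alpha\in C\langle T_1,\ldots,T_d\rangle$, where $n(i,\alpha)$ denotes the enumeration index matching $(i,\alpha)$. The coefficients $\pi^{m_\alpha}X_{n(i,\alpha)}$ tend to zero in $C$ because $m_\alpha\to\infty$ while the $X_n$ remain power-bounded, so $g_i$ lies in the classical Tate algebra $C\langle T_1,\ldots,T_d\rangle$. By construction, the induced map $C\langle T_1,\ldots,T_d\rangle\to\n{A}\langle T_1,\ldots,T_d\rangle_{\sol}$ sends $g_i\mapsto \sum_{\alpha}\pi^{m_\alpha}b_{i,\alpha}T^\alpha=f_i$. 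The displayed equivalence
\[
C\langle T_1,\ldots,T_d\rangle_{\sol}/^{\bb{L}}(g_1,\ldots,g_k)\otimes_{(C,R^{+})_{\sol}}\n{A}=\n{A}\langle T_1,\ldots,T_d\rangle_{\sol}/^{\bb{L}}(f_1,\ldots,f_k)=\n{B}
\]
then follows formally from the compatibility of pushouts with base change (i.e.\ commutation of quotients by Koszul relations with tensoring along $(C,R^{+})_{\sol}\to\n{A}$), together with the identity $C\langle T_1,\ldots,T_d\rangle_{\sol}\otimes_{(C,R^{+})_{\sol}}\n{A}=\n{A}\langle T_1,\ldots,T_d\rangle_{\sol}$, which expresses that adjoining the $T_j$ variables solidly commutes with changing the base ring.

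The main obstacle is the first step: rigorously extracting the power series expansion $f_i=\sum a_{i,\alpha}T^\alpha$ from an abstract element $f_i\in\pi_0(\n{A}\langle T_1,\ldots,T_d\rangle_{\sol})(*)$ and ensuring its coefficients are eventually $\pi$-adically small enough that $m_\alpha\to\infty$. This is exactly where nuclearity of $\n{A}$ over $R_{\sol}$ is essential, since for non-nuclear $\n{A}$ the solid Tate algebra need not decompose as a well-behaved completion of the polynomial ring in the $T_j$'s, and elements may fail to admit such a coefficient-wise expansion. Once this expansion is available, the rest of the argument is bookkeeping with enumerations and factorizations by powers of $\pi$.
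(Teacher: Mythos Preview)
Your overall strategy---extract a power-series expansion of the $f_i$, encode the coefficients as the image of the variables $X_n$, and define $g_i$ as the universal lift---is exactly what the paper does. The gap is in the middle step: you assert that once the $b_{i,\alpha}=\pi^{-m_\alpha}a_{i,\alpha}$ lie in $\n{A}^{0}(*)$ individually, the assignment $X_n\mapsto y_n$ defines a map of condensed rings $(C,R^+)_{\sol}\to\n{A}$. This is not true for general nuclear bounded $\n{A}$. The Tate algebra $C=R\langle X_n:n\in\bb{N}\rangle$ is \emph{not} the filtered colimit $\varinjlim_N R\langle X_1,\dots,X_N\rangle$; for example $\sum_n\pi^nX_n\in C(*)$ lies in no finite stage. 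So to evaluate it you need the infinite sum $\sum_n\pi^ny_n$ to make sense in $\underline{\n{A}}(*)$, and for a condensed ring this is extra structure, not a consequence of each $y_n$ being power-bounded. A concrete failure: take $\n{A}=\bb{Q}_p\langle T\rangle^{\dagger}$ and $y_n=\sum_{k\geq 0}p^{\lceil k/n\rceil}T^k$; each $y_n$ lies in $\n{A}^0(*)$ but only converges on the disc of radius $p^{1/n}$, so $\sum_n p^n y_n$ converges on no disc of radius $>1$ and the map $C\to\n{A}$ does not exist. Your appeal to Lemma~\ref{LemmaOverconvergentFunctionsIdeal} is also misplaced: that lemma produces extensions to $R^+\langle\bb{N}[S]\rangle$ from maps out of a \emph{profinite} set $S$, whereas your index set is discrete.

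The paper fixes exactly this point, and it is where nuclearity is genuinely used. Rather than reading off coefficients in $\n{A}(*)$, one uses nuclearity to surject $\bigoplus_I R_{\sol}[S_i]\otimes R\langle T_1,\dots,T_d\rangle\twoheadrightarrow\pi_0(\n{A}\langle T_1,\dots,T_d\rangle_{\sol})$ and then lifts each $f_i$ to some $\widetilde f_i\in R_{\sol}[S]\langle T_1,\dots,T_d\rangle$ for a single profinite $S$. In that explicit ring one can write $\widetilde f_i=\sum_\alpha\pi^{r_\alpha}a_{i,\alpha}T^\alpha$ with $r_\alpha\to\infty$ \emph{and} the $a_{i,\alpha}$ forming a null sequence in $M_{\sol}[S]$; the null-sequence condition is precisely the uniformity you are missing. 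It packages the $(a_{i,\alpha})$ as a map from the profinite set $\bb{N}^d\cup\{\infty\}$ into $\n{A}$, which after rescaling lands in $\n{A}^0$, and then by the very definition of $\n{A}^0$ extends to an algebra map $R_{\sol}\langle\bb{N}[\bb{N}^d\cup\{\infty\}]\rangle/(\infty)\to\underline{\n{A}}$. The desired map $C\to\n{A}$ factors through this. Your expression $m_\alpha=\min_i v_\pi(a_{i,\alpha})$ also presumes a $\pi$-adic valuation on $\n{A}$ that need not exist; the paper's approach avoids this by working in $M_{\sol}[S]$ where the structure is explicit.
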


\begin{proof}
   Write  $\pi_0(\n{A})$ as a quotient of compact projective generators
    \[
    \bigoplus_{I} R_{\sol}[S_i] \to \pi_{0}(\underline{\n{A}}).
    \]
    Since $\underline{\n{A}}$ is nuclear, we have a surjection 
    \[
    \bigoplus_{I} R_{\sol}[S_i] \otimes_{R_{\sol}} R\langle T_1,\ldots, T_d\rangle \to \pi_{0}(\underline{\n{A}} \langle  T_1, \ldots, T_d\rangle).
    \]
    Let $f_1,\ldots, f_k\in \pi_{0}(\underline{\n{A}} \langle  T_1, \ldots, T_d\rangle)$, by taking finite disjoint unions of the $S_i$'s if necessary, we can find a profinite set $S$ and a lift  $\widetilde{f}_i \in R_{\sol}[S] \langle T_1,\ldots, T_d  \rangle$ of the $f_i$. Then, we can find a quasi-finitely generated subalgebra $E\subset R^+$ and a quasi-finitely generated $E$-module $M\subset R$ such that 
    \[
    \widetilde{f}_i \in M_{\sol}[S] \langle T_1,\ldots, T_d \rangle
    \]
    for all $i$ (cf. \cite[\S 3.1]{Andreychev}). We can write $f_{i}= \sum_{\alpha \in \bb{N}^d}  \pi^{r_{\alpha}}a_{i,\alpha} \underline{T}^{\alpha}$ with $r_{\alpha}\to \infty$ as $|\alpha| \to \infty$ and the $a_{i,\alpha}$ converging to $0$ in $M_{\sol}[S]$. 
    Let $\NS^{d}(R)\cong R^+[[T_1,\ldots, T_d]][\frac{1}{\pi}]$ be the space of null-$d$-sequences of $R_{\sol}$, i.e.  $\NS^{d}( R_{\sol}) = R_{\sol}[\bb{N}^{d}\cup \{\infty\}]/(\infty)$.  Then, we have a map 
   \[
    \bigoplus_{i=1}^{k}\NS^{d}(R_{\sol}) \to R_{\sol}[S]
   \]
   defined by the $d$ null-sequences $(a_{i,n})_{n\in \bb{N}^d}$ for $i=1,\ldots, k$.   Since $\n{A}$ is bounded, by multiplying the nullsequences by a power of $\pi$ we can assume that the composition $\bigsqcup_i \bb{N}\cup\{\infty\} \to \bigoplus_{i=1}^{k} \NS^{d}(R_{\sol})\to R_{\sol}[S] \to \n{A}$ lands in $\n{A}^{0}$. Thus, by definition of $\n{A}^{0}$, it lifts to a morphism of algebras 
   \[
     \n{R}= \bigotimes_{i=1}^{k} R_{\sol} \langle \bb{N}[ \bb{N}^{d} \cup \infty]   \rangle /(\infty) \to \underline{\n{A}}.
   \]
   Let $C=R\langle X_{i,\alpha}: 1\leq i\leq d, \;\; \alpha \in \bb{N}^d \rangle$, we can take the composite map 
   \[
   C \to  \n{R} \to \underline{\n{A}}
   \]
   mapping $X_{i,\alpha}$ to $a_{i,\alpha}$. Then letting $g_i= \sum_{\alpha\in \bb{N}}  \pi^{r_{\alpha}}X_{i,\alpha} \underline{T}^{\alpha}  \in C\langle  T_1,\ldots, T_d\rangle$, we see that $g_i$ maps to $f_k$ as wanted. 
\end{proof}

\begin{remark}
If $\n{A}$ admits a surjection from a nuclear $R_{\sol}$-algebra then the conclusion of Lemma \ref{LemmaFinitenessAnalytictoAdic} holds. Indeed,  one can lift a finitely presented algebra  of $\n{A}$ to the nuclear algebra and then apply the lemma. 
\end{remark}

\begin{proof}[Proof of Theorem \ref{TheoFormalSmoothnesvsSmoothness}]
The equivalences between (1)-(3) follow from Lemma \ref{LemmaSmoothAsStandarSmooth} and Corollary \ref{CoroFormallySmoothStuff}. The second statement for nuclear affinoid rings follows from Lemmas  \ref{LemmaStandarSmoothRegular} and \ref{LemmaFinitenessAnalytictoAdic}. 
\end{proof}

\begin{corollary}[{\cite[Corollary 3.4.10]{LurieDerivedAlgebraic}}]
\label{CoroEtaleMapsStacks}
    Let $T:\n{A} \to \n{B}$ be a morphism of bounded affinoid rings. The following are equivalent
    \begin{enumerate}
        \item $T$ is formally \'etale and of solid finite presentation.
        
        \item $T$ is formally \'etale and  $\pi_0(\n{B})$ is of solid finite presentation as static rings over $\pi_0(\n{A})$.

        \item $T$ is, locally on the analytic topology of  $\n{B}$, a standard solid \'etale map over $\n{A}$.
    \end{enumerate}
\end{corollary}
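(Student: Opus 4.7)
The strategy is to mirror the proof of Theorem \ref{TheoFormalSmoothnesvsSmoothness}, using the vanishing of the cotangent complex to cut down the standard solid smooth presentation to a standard solid \'etale one.

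The implication (3) $\Rightarrow$ (1) will be checked directly. A standard solid \'etale map gives a presentation $\n{B} = \n{A}\langle T_1,\ldots, T_n\rangle_{\sol}/^{\bb{L}}(f_1,\ldots, f_n)$ with $\det(\partial f_i/\partial T_j)$ invertible in $\n{B}$. Using the base change property of the cotangent complex along with $\bb{L}_{\bb{Z}[T]_{\sol}/\bb{Z}} = \bb{Z}[T]\,dT$, one obtains a fiber sequence
\[
\bigoplus_{i=1}^n \n{B}\,df_i \to \bigoplus_{i=1}^n \n{B}\,dT_i \to \bb{L}_{\n{B}/\n{A}}
\]
in which the left map is given by the Jacobian matrix. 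Since the Jacobian is invertible, $\bb{L}_{\n{B}/\n{A}} = 0$, so $T$ is formally \'etale; it is of solid finite presentation by construction. The implication (1) $\Rightarrow$ (2) is immediate: taking $\pi_0$ of a finite colimit presentation yields a finite presentation of $\pi_0(\n{B})$ as static $\pi_0(\n{A})$-algebra.

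The main step is (2) $\Rightarrow$ (3). Since $\bb{L}_{\n{B}/\n{A}} = 0$ is trivially the dual of the connective perfect complex $0$, the map $T$ is weakly formally smooth; together with the hypotheses of nilcompleteness and infinitesimal cohesiveness implicit in "formally \'etale", it is formally smooth. By Theorem \ref{TheoFormalSmoothnesvsSmoothness}, $T$ is then solid smooth and, locally in the analytic topology of $\n{B}$, admits a standard solid smooth presentation
\[
\n{B} = \n{A}\langle T_1,\ldots, T_n\rangle_{\sol}/^{\bb{L}}(f_1,\ldots, f_k),
\qquad \det\!\left(\tfrac{\partial f_i}{\partial T_j}\right)_{1\le i,j\le k}\in \pi_0(\n{B})^{\times}.
\]
By the same cotangent complex computation used in the previous paragraph, the cokernel of the Jacobian map is free on the classes of $dT_{k+1},\ldots, dT_n$, so $\bb{L}_{\n{B}/\n{A}} \simeq \bigoplus_{i=k+1}^n \n{B}\,dT_i$. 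The vanishing $\bb{L}_{\n{B}/\n{A}} = 0$ forces $n = k$ on the locus where $\n{B}\neq 0$ (on loci where $\n{B}=0$ the conclusion is vacuous since $\Spa \n{B} = \emptyset$), which is exactly a standard solid \'etale presentation.

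The only nontrivial point, already handled by Theorem \ref{TheoFormalSmoothnesvsSmoothness} and Lemma \ref{LemmaSmoothAsStandarSmooth}, is the passage from formal smoothness plus finite presentation on $\pi_0$ to an honest standard solid smooth presentation in a rational cover; once this is available, the \'etale case follows by the purely linear-algebra observation about the rank of the free part of $\bb{L}_{\n{B}/\n{A}}$ sketched above.
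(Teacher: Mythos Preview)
Your proof is correct and follows the approach the paper intends: the corollary is stated without proof as an immediate consequence of Theorem \ref{TheoFormalSmoothnesvsSmoothness}, and your argument---apply the theorem to get a local standard solid smooth presentation, then use $\bb{L}_{\n{B}/\n{A}}=0$ to force $n=k$---is exactly the expected deduction (and mirrors Corollary \ref{CoroFormallySmoothStuff}). One cosmetic point: in (3) $\Rightarrow$ (1) you phrase the argument as if a single global standard \'etale presentation exists, whereas (3) only furnishes this locally; the clean fix is to note that (3) here is a special case of (3) in Theorem \ref{TheoFormalSmoothnesvsSmoothness}, which already yields solid finite presentation globally, and then your cotangent computation gives formal \'etaleness.
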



\subsection{Derived rigid geometry}
\label{SubsectionDerivedRigidGeometry}

In this section we study properties of solid smooth and \'etale morphisms of derived Tate adic spaces. We will show that these morphisms are cohomologically smooth and \'etale respectively, for the solid quasi-coherent six functor formalism. Finally, following \cite{CondensedComplex}, we give a proof of Serre duality for solid smooth maps, by identifying the dualizing sheaf $f^! 1$ with the canonical line bundle.

\subsubsection{Zariski closed immersions} Let us begin with a brief discussion of Zariski closed immersions and affinoid morphisms. 

\begin{definition}
\label{DefinitionAffinoidZariskiClosed}

\begin{enumerate}
\item  Let $f: X\to Y$ be a morphism of   derived Tate adic spaces.  We say that $f$ is \textit{affinoid} for the analytic topology if there is an open affinoid cover $\{U_i \}_{i}$  of $Y$ such that $V_i=X\times_{Y} U_i $ is an affinoid analytic ring. We say that $f$ is \textit{strictly affinoid} if in addition $V_i$ has the induced analytic structure from $U_i$.

\item Let $f: X\to Y$ be a morphism of   derived Tate adic spaces. We say that $f$ is a \textit{Zariski closed immersion} for the analytic topology if it is strictly affinoid, and we can find a cover  as before such that the map of animated condensed  rings $\s{O}(U_i)\to \s{O}(V_i)$ is surjective on $\pi_0$.

\item More generally, let $f:X \to Y$ be a map of Tate stacks, we say that $f$ is affinoid (resp. strictly affinoid, resp. Zariski closed immersion) in the $\s{D}$-topology,  if $f$ is of the form $\AnSpec \n{B} \to \AnSpec \n{A}$ (resp. $\AnSpec \n{B}_{\n{A}/} \to \AnSpec \n{A}$, resp. surjective on $\pi_0$) locally in the $\s{D}$-topology of $Y$. We say that $\s{O}_X$ is an analytic $\s{O}_Y$-algebra and that $X = \AnSpec_Y \s{O}_X$ is the relative analytic spectrum of $\s{O}_X$ over $Y$. 

\end{enumerate}
\end{definition}

\begin{lemma}
\label{PropQuotientfinitePresentation}
Let $\n{A} \to \n{B}$ be a morphism of bounded affinoid rings. Suppose that the following hold:
\begin{itemize}

\item $\n{A} \to \n{B}$ has the induced analytic structure and is surjective on $\pi_0$.

\item $\n{B}$ is a retract of an algebra of the form $\n{A}\langle \underline{T} \rangle_{\sol}/^{\bb{L}}(f_1,\ldots, f_d)$ for $f_i\in \pi_0(\n{A} \langle \underline{T} \rangle_{\sol})$. 

\end{itemize}

Then $\n{B}$ is a dualizable $\n{A}$-module. 

\end{lemma}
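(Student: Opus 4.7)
The idea is to realize $\n{B}$ as a retract, in $\Mod(\n{A})$, of a Koszul complex on finitely many elements of $\n{A}$. Such a Koszul complex is a bounded complex of finite free $\n{A}$-modules, hence perfect and dualizable, and dualizability is preserved under retracts in any symmetric monoidal $\infty$-category. The bulk of the work is to construct the appropriate Koszul complex from the retraction datum.

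Write $C := \n{A}\langle \underline{T}\rangle_{\sol}/^{\bb{L}}(f_1, \ldots, f_d)$, let $p: \n{A}\langle \underline{T}\rangle_{\sol} \to C$ denote the canonical projection, and fix the retraction data $i: \n{B} \to C$, $r: C \to \n{B}$ with $r \circ i = \id_{\n{B}}$. Using the hypothesis that the structure map $\phi:\n{A} \to \n{B}$ is surjective on $\pi_0$, choose lifts $a_1, \ldots, a_d \in \pi_0(\n{A})$ of the images in $\pi_0(\n{B})$ of the coordinates $T_i$; let $\mathrm{ev}: \n{A}\langle \underline{T}\rangle_{\sol} \to \n{A}$ be the $\n{A}$-algebra map sending $T_i \mapsto a_i$. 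Since $\n{A}\langle \underline{T}\rangle_{\sol} = \n{A} \otimes_{\bb{Z}_{\sol}} \bb{Z}[\underline{T}]_{\sol}$ is the free analytic $\n{A}$-algebra on $d$ solid variables, the two $\n{A}$-algebra maps $r \circ p$ and $\phi \circ \mathrm{ev}$ from $\n{A}\langle \underline{T}\rangle_{\sol}$ to $\n{B}$ coincide: both send each $T_i$ to the common image of $a_i$, and a map out of the free solid algebra is pinned down by this choice.

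Form the pushout $D := C \otimes_{\n{A}\langle \underline{T}\rangle_{\sol}} \n{A}$ of analytic $\n{A}$-algebras along $\mathrm{ev}$. Base change of the iterated Koszul cofibers defining $C$ along $T_i \mapsto a_i$ identifies
\[
D \simeq \n{A}/^{\bb{L}}(b_1, \ldots, b_d), \qquad b_i := f_i(a_1, \ldots, a_d) \in \pi_0(\n{A}),
\]
a perfect (hence dualizable) object of $\Mod(\n{A})$. The coincidence $r \circ p \simeq \phi \circ \mathrm{ev}$ together with the universal property of the pushout produces a factorization $r = \bar{r} \circ q$, where $q: C \to D$ is the canonical map and $\bar{r}: D \to \n{B}$. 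Setting $\bar{i} := q \circ i: \n{B} \to D$, we obtain $\bar{r} \circ \bar{i} = r \circ i = \id_{\n{B}}$, exhibiting $\n{B}$ as a retract of $D$ in $\Mod(\n{A})$, and hence dualizable.

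The main obstacle is the identification of the two composites $r \circ p$ and $\phi \circ \mathrm{ev}$ as morphisms in $\AniAlg_{\n{A}}$ (or at least in $\Mod(\n{A})$), not just as maps on $\pi_0$. Hypothesis (i)—that $\n{B}$ carries the induced analytic structure from $\n{A}$—is used here: it guarantees that an element of $\pi_0(\n{A})$ produces a well-defined map of analytic $\n{A}$-algebras out of $\n{A}\langle T \rangle_{\sol}$ to $\n{B}$ via evaluation, so that the universal property of the free solid algebra applies to pin down both $r\circ p$ and $\phi\circ \mathrm{ev}$ by the single datum $(\bar a_1,\ldots,\bar a_d)\in \underline{\n{B}}(*)^d$.
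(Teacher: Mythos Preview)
Your argument has a genuine gap: the evaluation map $\mathrm{ev}: \n{A}\langle \underline{T}\rangle_{\sol} \to \n{A}$ sending $T_i \mapsto a_i$ need not exist. The analytic ring $\n{A}\langle \underline{T}\rangle_{\sol} = \n{A}\otimes_{\bb{Z}_{\sol}}\bb{Z}[\underline{T}]_{\sol}$ is not the free analytic $\n{A}$-algebra on an arbitrary $d$-tuple in $\underline{\n{A}}(*)$; it corepresents the functor $\n{C}\mapsto (\n{C}^+)^d$ of $d$-tuples of \emph{solid} elements (Definition~\ref{DefinitionBoundedNilpIdeal}(1) and Proposition~\ref{PropCompactProjectiveSolidAffinoid}). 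Hence the map $\bb{Z}[\underline{T}]\to \n{A}$, $T_i\mapsto a_i$, extends to $\bb{Z}[\underline{T}]_{\sol}$ only when each $a_i\in\pi_0(\n{A}^+)$, and there is no reason the chosen lifts along the surjection $\pi_0(\underline{\n{A}})\to\pi_0(\underline{\n{B}})$ should land there. Your final paragraph invokes the induced analytic structure on $\n{B}$, but that hypothesis governs which $\n{B}$-modules are complete; it says nothing about whether the $a_i$ are solid in $\n{A}$, and the map you need has target $\n{A}$, not $\n{B}$.

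The paper's proof confronts exactly this obstruction and resolves it by working locally on the locale $\Spa^{\dagger}\n{A}$, using that dualizability is local (Lemma~\ref{LemmaDualizableObjectslocale}). Boundedness of $\n{A}$ only guarantees that $T_i\mapsto a_i$ extends to $\n{A}\langle \pi^k\underline{T}\rangle_{\sol}\to\n{A}$ for some $k\ge 0$. One then covers $\Spa^{\dagger}\n{A}$ by the open complement $U$ of the closed immersion $\Spa^{\dagger}\n{B}\hookrightarrow\Spa^{\dagger}\n{A}$ (Proposition~\ref{PropositionCLosedImageLocale}), over which $\n{B}$ localizes to zero, together with the open $V$ defined by the localization $\n{A}\to\n{A}':=\n{A}\otimes_{\n{A}\langle \pi^k\underline{T}\rangle_{\sol}}\n{A}\langle \underline{T}\rangle_{\sol}$, over which the $a_i$ become genuinely solid. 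Over $\n{A}'$ your Koszul retraction argument then goes through verbatim and exhibits $\n{B}$ as a retract of the perfect complex $\n{A}'/^{\bb{L}}(c_i)$. Without this localization step the argument does not close.
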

\begin{proof}
By hypothesis $\n{B}$ is a retract of an algebra of the form $\n{C}:=\n{A} \langle \underline{T}\rangle_{\sol}/^{\bb{L}} (f_i)$ for some finite set of variables $\{\underline{T}\}$ and a finite sequence $(f_i)_{i}$ in $\pi_0(\n{A})$. We can even assume that $\pi_0(\n{B})=\pi_0(\n{C})$ by killing additional  elements. Then, it suffices to show that $\n{C}$ is a dualizable $\n{A}$-module, and we can take $\n{B}=\n{C}$. Let $a_i\in \n{A}$ be a lift of the variables $T_i$, since $\n{A}$ is bounded there is some $k\geq 0$ such that the map $\n{A}[T_i] \to \n{A}$ sending $T_i$ to $a_i$ extends to $\n{A} \langle\pi^k \underline{T} \rangle_{\sol} \to \n{A}$.  By Lemma \ref{LemmaDualizableObjectslocale}, it suffices to prove that $\n{B}$ is dualizable locally in the open topology of the locale $S(\n{A})$. Actually, we will show that $\n{B}$ is dualizable locally in the topology of the locale $\Spa^{\dagger} \n{A}$ of Definition \ref{DefDaggerLocale}. By Proposition \ref{PropositionCLosedImageLocale} we have a closed immersion $\Spa^{\dagger} \n{B} \to \Spa^{\dagger} \n{A}$, let $U$ be the  open complement.  Let $V$ be the open subspace of $\Spa^{\dagger} \n{A}$ that corresponds to the open localization  $\n{A}\to \n{A} \otimes_{\n{A}\langle \pi^k  \underline{T} \rangle_{\sol}} \n{A} \langle T \rangle_{\sol}$. Then $V$ contains $\Spa^{\dagger} \n{B}$ and we have that $V\cup U = \Spa^{\dagger} \n{A}$. The localization of $\n{B}$ at $U$ is zero  by construction. On the other hand, the localization at $V$ of $\Spa^{\dagger} \n{A}$ is defined by an analytic ring $\n{A}'$ such that we have a commutative diagram 
\[
\begin{tikzcd}
\n{A}' \ar[r] &  \n{B} \\
\n{A}' \langle \underline{T} \rangle_{\sol} \ar[u] \ar[ur]
\end{tikzcd}
\] 
Since $\n{B}$ is an $\n{A}'$-module, we also have that 
\[
\n{B}=\n{B} \otimes_{\n{A}} \n{A}' = \n{A}'\langle T\rangle_{\sol}/^{\bb{L}}(f_i).
\]
Let $c_i$ be the image of $f_i$ in $\pi_0(\n{A}'\langle \underline{T} \rangle)$, then we have a retraction 
\[
\n{B}= \n{A}'\langle \underline{T} \rangle_{\sol}/^{\bb{L}}(f_i) \to \n{A}'/^{\bb{L}} (c_i) \to \n{B}
\]
proving that $\n{B}$ is a perfect $\n{A}'$-complex, so dualizable. 
\end{proof}

\begin{remark}
The proof of Lemma \ref{PropQuotientfinitePresentation} shows that $\n{B}$ is actually a perfect complex locally in the topology of $\Spa^{\dagger}(\n{A})$. However, this does not necessarily imply that it is a perfect complex over $\n{A}$, only a dualizable sheaf. If the ring $\n{A}$ is Fredholm (cf. \cite[Definition 9.7]{CondensedComplex}) then any dualizable $\n{A}$-module is perfect so $\n{B}$ would be  perfect as well. 
\end{remark}

\begin{remark}
We do not know how to prove that if $X\to Y$ is an affinoid (resp. strictly affinoid) map of derived Tate adic spaces such that $Y$ is affinoid, then $X$ is affinoid. One of the main obstacles is that it is not clear (and probably unlikely) whether the category of animated  algebras over an analytic ring satisfy analytic descent. Similarly, if $f$ is a Zariski closed immersion, even if we assume that both $X$ and $Y$ are affinoids, we do not know if the map on $\pi_0$ is surjective (the problem here is the lack of flatness for rational localizations). 
\end{remark}

\begin{example}
The hypothesis that $\n{B}$ is a retract of an algebra of finite presentation obtained by killing some $0$-cells of a solid Tate algebra is actually necessary. For example, $ \bb{Z} \to  \Sym_{\bb{Z}}^{\bullet} (\bb{Z}[2])$ is a Zariski closed immersion but $\Sym_{\bb{Z}}^{\bullet} (\bb{Z}[2])$ is not a perfect $\bb{Z}$-algebra since $\Sym^{n}_{\bb{Z}} (\bb{Z}[2])= (\Gamma^{n}_{\bb{Z}}\bb{Z}) [2n] \cong \bb{Z}[2n]$. 
\end{example}

\subsubsection{Solid \'etale and smooth maps}

\begin{definition}
\label{DefinitionDerivedGeometryProperties}
Let $f:X\to Y$ be a morphism of    derived Tate adic spaces over $\n{A}$. We define the following notions for $f$.
\begin{enumerate}
\item $f$ is  \textit{locally of (local) solid  finite presentation} if locally in the analytic topology of $X$ and $Y$, it is a morphism of (local) solid finite presentation of bounded affinoid algebras. We say that it is of   \textit{(local) solid finite presentation} if it is locally of finite presentation and qcqs for the analytic topology (cf. Definition \ref{DefinitionGeneralFinitePresentation}).

\item $f$ is \textit{solid smooth (resp. \'etale)} if it is,  locally  in the analytic topology of $X$ and $Y$,  a  solid smooth (resp. \'etale) morphism of bounded affinoid algebras.

\end{enumerate}
\end{definition}

In order to relate Zariski closed immerions of solid finite presentation with conormal cones we need the following lemmas.

\begin{lemma}
\label{LemmaSelfTesnsorQuotient}
Let $A\to B$ be a map of animated commutative rings that is surjective in $\pi_0$. Then $B\otimes_{A} B = \Sym^{\bullet}_{B} \bb{L}_{B/A}$.
\end{lemma}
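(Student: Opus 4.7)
The strategy is a standard one in derived algebraic geometry: reduce by a sifted colimit argument to a case of a ``standard cell'' and verify the formula there by direct computation.

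First, both functors $B\mapsto B\otimes_A B$ and $B\mapsto \Sym^\bullet_B \bb{L}_{B/A}$, regarded as functors from animated $A$-algebras with $\pi_0$-surjective structure map to animated $B$-algebras, commute with sifted colimits in $B$. For the tensor square this is immediate. For the symmetric algebra it combines the fact that $\bb{L}_{(-)/A}$ commutes with sifted colimits (as a left adjoint to a square-zero extension functor) with the analogous fact for $\Sym^\bullet$ over the moving base $B$; concretely, writing $B=\varinjlim_\alpha B_\alpha$ one has $\bb{L}_{B/A}=\varinjlim_\alpha \bb{L}_{B_\alpha/A}\otimes_{B_\alpha}B$ and $\Sym^\bullet_B(-)$ commutes with the colimit.

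Next, every such $B$ is a sifted colimit of ``standard cells'' of the form $B_0=A/\!\!/(f_1,\dots,f_n)$, i.e.\ the derived quotient by finitely many elements $f_i\in\pi_0(A)$, defined as the pushout
\[
B_0\;=\;A\otimes^{\bb{L}}_{A[x_1,\dots,x_n]}A,
\]
where one structure map sends $x_i\mapsto f_i$ and the other $x_i\mapsto 0$. For such $B_0$ the transitivity triangle for $A\to A[\underline{x}]\to A$ (the latter $x_i\mapsto f_i$) together with $\bb{L}_{A[\underline{x}]/A}\cong A^n$ yields $\bb{L}_{A/A[\underline{x}]}\cong A^n[1]$, and base change gives $\bb{L}_{B_0/A}\cong B_0^n[1]$. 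On the other hand, since $f_i=0$ in $B_0$,
\[
B_0\otimes_A B_0 \;=\; B_0\otimes^{\bb{L}}_A A/\!\!/(f_1,\dots,f_n)\;=\; B_0/\!\!/(0,\dots,0)\;\cong\; \Sym^\bullet_{B_0}(B_0^n[1]),
\]
where the last equality uses that killing a zero element of $B_0$ adds a free generator in homological degree $1$. Comparing, $B_0\otimes_A B_0\simeq \Sym^\bullet_{B_0}\bb{L}_{B_0/A}$, and this equivalence is natural in morphisms of cells.

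To assemble these into a functorial equivalence, one observes that there is a canonical comparison map: the fiber sequence
\[
\bb{L}_{B/A}[-1]\;\longrightarrow\; B\otimes_A B\otimes_B \iota \;\longrightarrow\; B
\]
(where $\iota$ denotes the augmentation ideal of the multiplication $B\otimes_A B\to B$) provides a $B$-linear map $\bb{L}_{B/A}\to B\otimes_A B$, and by the universal property of $\Sym^\bullet_B$ this extends to a natural morphism of animated $B$-algebras $\Sym^\bullet_B\bb{L}_{B/A}\to B\otimes_A B$. The step-$2$ computation shows this is an equivalence on standard cells, and the step-$1$ commutation with sifted colimits then promotes it to an equivalence for all $B$. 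The main technical point is organizing the sifted resolution so that the comparison map is compatible along transition morphisms; this is standard once one invokes the cellular model for animated $A$-algebras $\pi_0$-surjecting onto $B$.
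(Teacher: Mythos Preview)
Your overall strategy—reduce by a sifted-colimit argument to elementary building blocks and verify there via a Koszul computation—is exactly the paper's approach. The point that needs more care is the reduction step. You fix $A$ and assert that every $\pi_0$-surjective animated $A$-algebra $B$ is a sifted colimit of derived quotients $A/\!\!/(f_1,\dots,f_n)$ with $f_i\in\pi_0(A)$; but when $A$ is not regular these quotients carry extra higher homotopy (e.g.\ $A=k[\epsilon]/\epsilon^2$, $B=k$: already $A/\!\!/(\epsilon)$ has $\pi_1\neq 0$), and it is not clear how to assemble $B$ from such objects by sifted colimits of $A$-algebras alone without attaching cells in higher degrees. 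The paper handles this by letting $A$ vary as well and working in the category of \emph{animated pairs} (in the sense of Mao), which is the animation of discrete pairs $(R,I)$ and has as compact projective generators the polynomial pairs $\bigl((\underline Y)\subset\bb{Z}[\underline X,\underline Y]\bigr)$. Both $B\otimes_A B$ and $\Sym^\bullet_B\bb{L}_{B/A}$ are functors on animated pairs commuting with sifted colimits, so one reduces to these generators, where the verification is precisely your Step-2 Koszul computation (with $A$ a polynomial ring and the $f_i$ a subset of the variables). Your displayed construction of the comparison map is also garbled as written—the ``fiber sequence'' does not typecheck—but once both functors are recognized as left Kan extensions from the polynomial generators, a comparison map is not needed separately: the natural identification on generators propagates.
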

\begin{proof}
By \cite[Theorem 2.23]{MaoCrystalline} the surjection $A\to B$ can be viewed as an animated pair $I\to A$ where $I=[A\to B]$ is the fiber. Then, since all the formulas commute with sifted colimits, one is reduced to prove the lemma for a animated pair of the form $(\underline{Y})\to \bb{Z}[\underline{X}, \underline{Y}]$ with $\underline{Y}$ and $\underline{X}$ sets of variables. The lemma follows by taking the Koszul resolution.  
\end{proof}

\begin{lemma}
\label{LemmaPerfectDiagonal}
Let $\n{B} \to \n{C}$ be a morphism of solid affinoid rings of the form $\n{C}=\n{B} [\underline{T} ]_{\sol}/^{\bb{L}}(f_i)$ with $f_i\in \pi_0(\n{B}[ \underline{T} ]_{\sol})$. Then $\n{C}$ is a perfect $\n{C}\otimes_{\n{B}} \n{C}$-module.
\end{lemma}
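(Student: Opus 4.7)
The plan is to reduce, via base change, to the smooth case, where the conclusion will follow from a Koszul resolution. First I would factor the structure map $\n{B}\to \n{C}$ through the solid polynomial algebra as $\n{B}\to \n{B}[\underline{T}]_{\sol}\twoheadrightarrow \n{C}$. Using the definition of the tensor product of analytic rings, one computes
\begin{equation*}
\n{C}\otimes_{\n{B}}\n{C} \;=\; \n{B}[\underline{T},\underline{S}]_{\sol}/^{\bb{L}}\!\bigl(f_i(\underline{T}),\, f_j(\underline{S})\bigr),
\end{equation*}
where $\underline{T}$ and $\underline{S}$ denote two disjoint copies of the variables, and the multiplication map $\n{C}\otimes_{\n{B}} \n{C}\to \n{C}$ is obtained by further derived-killing the sequence $T_i - S_i$. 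Hence
\begin{equation*}
\n{C}\;\simeq\;\n{B}[\underline{T}]_{\sol}\,\otimes_{\n{B}[\underline{T},\underline{S}]_{\sol}}\,\bigl(\n{C}\otimes_{\n{B}}\n{C}\bigr),
\end{equation*}
and so, by base change along $\n{B}[\underline{T},\underline{S}]_{\sol}\to \n{C}\otimes_{\n{B}}\n{C}$, it suffices to treat the case when there are no relations: one must show that $\n{B}[\underline{T}]_{\sol}$ is a perfect $\n{B}[\underline{T},\underline{S}]_{\sol}$-module via the multiplication map.

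For this smooth case I would perform the change of coordinates $\underline{U}:=\underline{S}-\underline{T}$, which identifies $\n{B}[\underline{T},\underline{S}]_{\sol}=\n{B}[\underline{T},\underline{U}]_{\sol}$ and converts the multiplication into the augmentation $\n{B}[\underline{T},\underline{U}]_{\sol}\to \n{B}[\underline{T}]_{\sol}$ killing the variables $U_i$. The Koszul complex
\begin{equation*}
\Kos\!\bigl(\n{B}[\underline{T},\underline{U}]_{\sol};\, U_1,\dots,U_n\bigr)
\end{equation*}
is then a finite free resolution of $\n{B}[\underline{T}]_{\sol}$ as a $\n{B}[\underline{T},\underline{S}]_{\sol}$-module, exhibiting it as a perfect module. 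Base change then yields that $\n{C}$ is perfect over $\n{C}\otimes_{\n{B}}\n{C}$.

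The only point requiring care is the verification that the $U_i$ form a Koszul-regular sequence on $\n{B}[\underline{T},\underline{U}]_{\sol}$, i.e.\ that adjoining a solid polynomial variable and then derived-killing it recovers the original analytic algebra. This reduces, by induction on the number of variables, to the assertion that $\n{A}[U]_{\sol}/^{\bb{L}}(U)\simeq \n{A}$ for any analytic ring $\n{A}$, which holds since multiplication by $U$ on $\n{A}[U]_{\sol}=\n{A}\otimes_{\bb{Z}_{\sol}}\bb{Z}[U]_{\sol}$ is injective with cokernel $\n{A}$; this can be seen from the explicit description of compact projective generators of $\bb{Z}[U]_{\sol}$ supplied by Proposition~\ref{PropCompactProjectiveSolidAffinoid}.
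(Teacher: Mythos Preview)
Your strategy matches the paper's, but the displayed equivalence
\[
\n{C}\;\simeq\;\n{B}[\underline{T}]_{\sol}\,\otimes_{\n{B}[\underline{T},\underline{S}]_{\sol}}\,\bigl(\n{C}\otimes_{\n{B}}\n{C}\bigr)
\]
is false. Base-changing $\n{C}\otimes_{\n{B}}\n{C}=\n{B}[\underline{T},\underline{S}]_{\sol}/^{\bb{L}}\bigl(f_i(\underline{T}),f_i(\underline{S})\bigr)$ along the multiplication $\underline{S}\mapsto\underline{T}$ yields
\[
\n{B}[\underline{T}]_{\sol}/^{\bb{L}}\bigl(f_i(\underline{T}),f_i(\underline{T})\bigr)\;=\;\n{C}/^{\bb{L}}(\underline{0}_i),
\]
not $\n{C}$: each relation gets imposed twice, and the second time it is already zero in $\pi_0$. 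Equivalently, the square with corners $\n{B}[\underline{T},\underline{S}]_{\sol}$, $\n{B}[\underline{T}]_{\sol}$, $\n{C}\otimes_{\n{B}}\n{C}$, $\n{C}$ is commutative but not a pushout. So the multiplication map \emph{factors through} derived-killing the $T_i-S_i$, but is not equal to that quotient.

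The repair is short and brings you exactly to the paper's argument: your Koszul computation does show $\n{B}[\underline{T}]_{\sol}$ is perfect over $\n{B}[\underline{T},\underline{S}]_{\sol}$, so by base change $\n{C}/^{\bb{L}}(\underline{0}_i)$ is perfect over $\n{C}\otimes_{\n{B}}\n{C}$. Since $\n{C}/^{\bb{L}}(\underline{0}_i)\simeq\bigoplus_j\n{C}[j]^{\binom{k}{j}}$ and the $\n{C}\otimes_{\n{B}}\n{C}$-action on it factors through $\n{C}$, the module $\n{C}$ is a retract and hence itself perfect. The paper does precisely this: it identifies $(\n{C}\otimes_{\n{B}}\n{C})/^{\bb{L}}(T_i-S_i)$ with $\n{C}/^{\bb{L}}(\underline{0}_i)$ and concludes from the Koszul description.

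A minor remark on your final paragraph: the Koszul-regularity you worry about is automatic once one unwinds the definition of $/^{\bb{L}}$ used here, namely $\n{A}[U]_{\sol}/^{\bb{L}}(U)=\n{A}[U]_{\sol}\otimes_{\n{A}[X]_{\sol}}\n{A}$ with $X\mapsto U$, which is just $\n{A}$ since $X\mapsto U$ is an isomorphism onto a tensor factor; no appeal to injectivity of multiplication on the underlying condensed ring is needed.
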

\begin{proof}
 We have that 
\[
\n{C}\otimes_{\n{B}} \n{C} = \n{B} [\underline{T}, \underline{S}]_{\sol}/^{\bb{L}}(f_i(T), f_i(S)),
\]
and the multiplication map   $\n{C}\otimes_{\n{B}} \n{C} \to \n{C}$ factors through 
\[
\n{B} [\underline{T}, \underline{S}]_{\sol}/^{\bb{L}}(f_i(T), f_i(S), T_i-S_i) \to \n{C}.
\]
But we have that 
\[
\n{B} [\underline{T}, \underline{S}]_{\sol}/^{\bb{L}}(f_i(T), f_i(S), T_i-S_i)  = \n{B}[\underline{T}]_{\sol}/^{\bb{L}} (f_i, \underline{0}_i)= \n{C}/^{\bb{L}}(\underline{0}_i), 
\]
proving that $\n{C}$ is a perfect complex over $\n{C} \otimes_{\n{A}} \n{C}$ (here we use that the $0$'s in the derived quotient are in degree $0$, so that it is indeed a perfect complex being represeted by a  Koszul complex).
\end{proof}

\begin{prop}
\label{PropositionLCI}
Let $X$ and $Y$ be derived Tate adic spaces over $\n{A}$ and  let $f:X \to Y$ be a  Zariski closed immersion of solid finite presentation with $\s{O}_X$ a perfect $\s{O}_Y$-module locally in the analytic topology of $Y$. The following hold

\begin{enumerate}

\item $1_{X}$ is $f$-smooth with dual given by $\iHom_{\s{O}_Y}(\s{O}_X, \s{O}_Y)$.

 \item If $\n{N}^{\vee}_{X/Y}=\bb{L}_{X/Y}[-1]$ is locally  free, then  $f^! 1_{Y}= (\bigwedge^{d} \n{N}_{X/Y})[-d]$ where $\n{N}_{X/Y}$ is the $\s{O}_X$-dual of $\n{N}^{\vee}_{X/Y}$, and $d$ is the locally constant rank of $\n{N}_{X/Y}$. In particular $f$ is cohomologically smooth. 
 \end{enumerate}
\end{prop}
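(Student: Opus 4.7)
The plan is to argue locally on $Y$ (and on $X$ for (2)) and reduce to an affine situation where the dualizing complex can be computed by hand. Since both statements are analytically local on $Y$, we may assume $Y=\AnSpec\n{A}$ and $X=\AnSpec\n{B}$ with $\n{B}=\n{B}_{\n{A}/}$ a bounded affinoid algebra over $\n{A}$ having the induced analytic structure, $\pi_0(\n{A})\twoheadrightarrow\pi_0(\n{B})$, and $\n{B}$ a dualizable $\n{A}$-module. Because $f$ is a $P$-morphism in the suitable decomposition of Definition \ref{DefinitionSuitableDecompositionSolid}, its upper-shriek functor is given by $f^{!}(-)=\iHom_{\n{A}}(\n{B},-)$, and in particular $f^{!}1_{Y}=\iHom_{\s{O}_{Y}}(\s{O}_{X},\s{O}_{Y})$. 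For an arbitrary base change $g\colon\AnSpec\n{A}'\to Y$ with induced pullback $f'\colon\AnSpec(\n{A}'\otimes_{\n{A}}\n{B})\to\AnSpec\n{A}'$ we have analogously $f^{'!}(-)=\iHom_{\n{A}}(\n{B},-)$ so the comparison maps $g^{'*}f^{!}(-)\to f^{'!}g^{*}(-)$ and $g^{'*}\n{D}_{f}(1_{X})\otimes f^{'*}(-)\to f^{'!}(-)$ unwind to the standard tensor-hom base change isomorphisms
\[
\n{A}'\otimes_{\n{A}}\iHom_{\n{A}}(\n{B},M)\xrightarrow{\sim}\iHom_{\n{A}}(\n{B},\n{A}'\otimes_{\n{A}}M),\qquad \iHom_{\n{A}}(\n{B},\n{A})\otimes_{\n{A}}M\xrightarrow{\sim}\iHom_{\n{A}}(\n{B},M),
\]
which hold precisely because $\n{B}$ is dualizable over $\n{A}$. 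Invoking Proposition \ref{PropSmoothProperObjects}(1.c) this proves (1).

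For (2), I would first replace the local model by one where the cotangent complex is controlled. Since $f$ is a Zariski closed immersion of solid finite presentation and $\pi_0(\n{A})\twoheadrightarrow\pi_0(\n{B})$, we can choose locally on $X$ a (derived) presentation $\n{B}=\n{A}/^{\bb{L}}(g_{1},\dots,g_{m})$ with $g_{i}\in\pi_{0}(\n{A})$: one starts from a presentation as a quotient of a solid Tate algebra allowed by solid finite presentation and successively eliminates the Tate variables using that $\pi_{0}\n{A}\twoheadrightarrow\pi_{0}\n{B}$ and Nakayama-type maneuvers analogous to Lemma \ref{LemmaFormSmooth2}. From such a presentation one computes $\bb{L}_{\n{B}/\n{A}}=(\bigoplus_{i=1}^{m}\n{B})[1]$ with generators $dg_{i}$, so the hypothesis that $\n{N}^{\vee}_{X/Y}=\bb{L}_{X/Y}[-1]$ is locally free of rank $d$ forces, after possibly shrinking $X$ analytically and redundantly trimming generators along the style of Lemma \ref{LemmaFormSmooth4}, that $m=d$ and that $(g_{1},\dots,g_{d})$ is a Koszul-regular sequence, hence $\n{B}=\Kos(\n{A};g_{1},\dots,g_{d})$.

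With this local Koszul model, the dual is computed directly:
\[
\iHom_{\n{A}}(\Kos(\n{A};g_{1},\dots,g_{d}),\n{A})\ \cong\ \Kos(\n{A};g_{1},\dots,g_{d})[-d]\otimes_{\n{A}}\bigwedge\nolimits^{d}(\n{A}^{d})^{\vee},
\]
via the standard pairing $\bigwedge^{i}V^{\vee}\otimes\bigwedge^{d}V\cong\bigwedge^{d-i}V$ on $V=\n{A}^{d}$. Passing to the $\n{B}$-module structure, the twist $\bigwedge^{d}(\n{A}^{d})^{\vee}\otimes_{\n{A}}\n{B}$ identifies canonically with $\bigwedge^{d}\n{N}_{X/Y}$ since $\n{N}_{X/Y}=\n{B}^{d}$ is dual to the free conormal $\n{N}^{\vee}_{X/Y}=\bigoplus_{i}\n{B}\,dg_{i}$. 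This yields the asserted formula $f^{!}1_{Y}=\bigwedge^{d}\n{N}_{X/Y}[-d]$. Since this is a shifted invertible $\s{O}_{X}$-module and $1_{X}$ is $f$-smooth by (1), cohomological smoothness of $f$ follows from Definition \ref{DefinitionCohoSmooth}. The identification is independent of the local Koszul presentation because both sides of the formula are canonically defined, so descent along an analytic cover of $X$ recovers the global statement.

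The main obstacle is the Koszul-regular local presentation step: producing, from the mere local freeness of $\bb{L}_{X/Y}[-1]$, a presentation by exactly $d$ elements that form a Koszul-regular sequence. This is the analytic-derived analogue of the classical fact that a closed immersion with locally free conormal sheaf is a regular immersion, and requires careful control of the elimination procedure for the Tate generators together with a Nakayama-type argument inside bounded affinoid rings, in the spirit of Lemmas \ref{LemmaFormSmooth1}--\ref{LemmaFormSmooth4}. Once this is in place, the Koszul duality computation is formal and the compatibility with base change (needed for the sheaf-theoretic statements) is immediate.
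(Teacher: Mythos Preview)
Your argument for (1) is essentially the paper's: dualizability of $\s{O}_X$ over $\s{O}_Y$ makes $f^{!}=\iHom_{\s{O}_Y}(\s{O}_X,-)$ commute with base change, and Proposition~\ref{PropSmoothProperObjects}(1) applies.

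For (2), your approach diverges from the paper's and the obstacle you name is a real gap. Eliminating the Tate variables from a general presentation $\n{B}=\n{A}\langle T_1,\dots,T_n\rangle_{\sol}/^{\bb{L}}(f_1,\dots,f_k)$ requires the lifts $a_i\in\pi_0(\n{A})$ of the $T_i$ to be solid in $\n{A}$, which need not hold; the paper's Lemma~\ref{PropQuotientfinitePresentation} only obtains $\n{B}$ as a \emph{retract} of some $\n{A}/^{\bb{L}}(c_i)$, and only locally in the $\Spa^{\dagger}$ locale rather than the analytic topology. Even granting such a presentation $\n{B}=\n{A}/^{\bb{L}}(g_1,\dots,g_m)$, you would still need to argue $m=d$, which is fine, but the naturality of the resulting identification across charts (needed for gluing) is not addressed.

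The paper bypasses all of this with a self-intersection trick. By Lemma~\ref{LemmaSelfTesnsorQuotient} one has $\s{O}_{X\times_Y X}=\Sym^{\bullet}_{\s{O}_X}\bb{L}_{X/Y}=\bigoplus_{i=0}^d\bigwedge^i\n{N}^{\vee}_{X/Y}[i]$, so $X\times_Y X$ is \emph{finite free} over $X$ via either projection. Smooth base change for the square with diagonal $\Delta$ and projections $\pi_1,\pi_2$ gives $f^{!}1_Y=\Delta^{*}\pi_2^{!}1_X$, and $\pi_2^{!}1_X=\iHom_{\s{O}_X}(\s{O}_{X\times_Y X},\s{O}_X)=\bigoplus_i\bigwedge^i\n{N}_{X/Y}[-i]=\s{O}_{X\times_Y X}\otimes\bigwedge^d\n{N}_{X/Y}[-d]$ by direct inspection. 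Applying $\Delta^{*}$ yields the formula. The advantage is that no Koszul-regular presentation of $\n{B}$ over $\n{A}$ is ever needed: the only Koszul-type input is the intrinsic description of $X\times_Y X$ via the cotangent complex, which holds for any surjection on $\pi_0$.
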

\begin{proof}

\begin{enumerate}

\item  By hypothesis, $\s{O}_X$ is a dualizable $\s{O}_Y$-module, we also have a natural  identification $f^!\cong  \iHom_{\s{O}_Y}(\s{O}_X,-)$.   Thus,  the $f$-smooth dual of $\s{O}_X$ is $\iHom_{\s{O}_Y}(\s{O}_X, \s{O}_Y)$ which is a dualizable $\s{O}_Y$-complex. This implies that the formation of $f^!$ and $\n{D}_{f}(1_{X})$ commutes with any base change $Y' \to Y$, by Proposition \ref{PropSmoothProperObjects} (1) one deduces that $1_{X}$ is $f$-smooth. 

\item To prove part (2) it suffices to show that $f^! 1_Y = (\bigwedge^d \n{N}_{X/Y})[-d]$. As $\s{O}_X$ is a perfect $\s{O}_Y$-module locally in the analytic topology, the ideal $I=[\s{O}_Y \to \s{O}_X]$ is discrete relative to $\s{O}_Y$ locally in the analytic topology, and by Lemma \ref{LemmaSelfTesnsorQuotient} the space $X\times_{Y} X$ is given by the  relative analytic spectrum  over $Y$ of the (locally in the analytic topology) animated algebra $\s{O}_Z:= \Sym_{\s{O}_X}^{\bullet} \n{N}^{\vee}_{X/Y}[1]=\bigoplus_{i=0}^{d} \bigwedge^{i} \n{N}_{X/Y}^{\vee}[i]$. Consider the diagram 
\[
\begin{tikzcd}
X \ar[r, "\Delta"]& X\times_{Y} X \ar[r, "\pi_2"]\ar[d, "\pi_1"] & X \ar[d, "f"] \\ 
	& X \ar[r, "f"] & Y
\end{tikzcd}
\]
We find that $f^! 1_{Y} = \Delta^* \pi_1^* f^! {1_Y}= \Delta^* \pi_2^! 1_{X}$. But then 
\[
\begin{aligned}
\pi_{2}^! 1_{X} & = \iHom_{\s{O}_X}(\s{O}_Z, \s{O}_X) \\ 
& = \bigoplus_{i=0}^d \bigwedge^i \n{N}_{X/Y}[-i] \\ 
& = \s{O}_{Z} \otimes_{\s{O}_X} \bigwedge^d \n{N}_{X/Y}[-d] 
\end{aligned}
\]
which shows that $f^!1_{Y}=\Delta^*( \s{O}_{Z} \otimes_{\s{O}_X} \bigwedge^d \n{N}_{X/Y}[-d])  = \bigwedge^d \n{N}_{X/Y}[-d]  $ as wanted.
\end{enumerate}
\end{proof}

\begin{cor}
\label{CoroSmoothImmersion}
Let $S$ be a derived  adic space over $\n{A}$, let $X$ and $Y$ be solid smooth derived Tate adic spaces over $S$ and $f:X\to Y$ a Zariski closed immersion with $\s{O}_X$ a perfect $\s{O}_Y$-complex locally in the analytic topology of $Y$. Then $\n{N}^{\vee}_{X/Y}=\bb{L}_{X/Y}[-1]$ is a locally  free sheaf over $X$ for the analytic topology.  In particular, $f$ is cohomologically smooth. 
\end{cor}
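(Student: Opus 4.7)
The plan is to exploit the fiber sequence of cotangent complexes associated to $X \to Y \to S$ together with Proposition \ref{PropositionLCI} (2). First, since both $X$ and $Y$ are solid smooth over $S$, Theorem \ref{TheoFormalSmoothnesvsSmoothness} together with the explicit computation of the cotangent complex of a standard solid smooth algebra shows that $\bb{L}_{X/S}$ and $\bb{L}_{Y/S}$ are finite locally free and concentrated in degree $0$, locally in the analytic topology. In particular, $f^{*}\bb{L}_{Y/S}$ is finite locally free on $X$.

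Next, I would apply Proposition \ref{PropLurieDerived3.2.12} to the composition $X \xrightarrow{f} Y \to S$ to obtain the fiber sequence
\[
f^{*}\bb{L}_{Y/S} \longrightarrow \bb{L}_{X/S} \longrightarrow \bb{L}_{X/Y}
\]
in $\Mod(\s{O}_X)$, equivalently
\[
\n{N}^{\vee}_{X/Y} = \bb{L}_{X/Y}[-1] \longrightarrow f^{*}\bb{L}_{Y/S} \longrightarrow \bb{L}_{X/S}.
\]
Because $f$ is a Zariski closed immersion the map $\s{O}_Y \to f_{*}\s{O}_X$ is surjective on $\pi_0$, so Proposition \ref{PropAproxCotangent} implies $\bb{L}_{X/Y}$ is $1$-connective, i.e.\ $\pi_i(\bb{L}_{X/Y}) = 0$ for $i \leq 0$. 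Reading the long exact sequence of homotopy groups associated to the fiber sequence, the vanishing of $\pi_{i}$ of $f^{*}\bb{L}_{Y/S}$ and $\bb{L}_{X/S}$ for $i \neq 0$ forces $\pi_i(\bb{L}_{X/Y}) = 0$ for $i \geq 2$, so $\bb{L}_{X/Y}$ is concentrated in degree $1$. The remaining portion of the long exact sequence is a short exact sequence
\[
0 \longrightarrow \n{N}^{\vee}_{X/Y} \longrightarrow \pi_0(f^{*}\bb{L}_{Y/S}) \longrightarrow \pi_0(\bb{L}_{X/S}) \longrightarrow 0
\]
of $\pi_0(\s{O}_X)$-modules.

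Now the two right-hand terms are finite locally free. Hence, locally in the analytic topology of $X$, the surjection on the right splits, and $\n{N}^{\vee}_{X/Y}$ appears as a direct summand of a finite locally free sheaf. Therefore $\n{N}^{\vee}_{X/Y}$ is itself finite locally free, and we may pass to its dual $\n{N}_{X/Y}$ of locally constant rank $d$. Finally, since by hypothesis $\s{O}_X$ is a perfect $\s{O}_Y$-module locally in the analytic topology and $f$ is a Zariski closed immersion of solid finite presentation, Proposition \ref{PropositionLCI} (2) applies verbatim to give $f^{!}1_{Y} = (\bigwedge^{d}\n{N}_{X/Y})[-d]$, which is invertible; combined with the $f$-smoothness of $1_X$ proved in Proposition \ref{PropositionLCI} (1), this shows that $f$ is cohomologically smooth. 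The main subtlety to double-check is the assertion that the perfectness of $\s{O}_X$ over $\s{O}_Y$ (together with the Zariski closed immersion property) suffices to place us in the hypotheses of Proposition \ref{PropositionLCI}; this should reduce, locally on $Y$, to writing $\s{O}_X$ as a Koszul quotient of $\s{O}_Y$ by a sequence lifting a basis of $\n{N}^{\vee}_{X/Y}$ over a small enough rational neighbourhood, after which the previous argument goes through unchanged.
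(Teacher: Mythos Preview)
Your proof is correct and follows essentially the same route as the paper: reduce to the affinoid/standard smooth situation, use the fiber sequence $f^{*}\bb{L}_{Y/S} \to \bb{L}_{X/S} \to \bb{L}_{X/Y}$, observe that the first two terms are finite free (locally) and the map between them is surjective on $\pi_0$, conclude that $\n{N}^{\vee}_{X/Y}=\bb{L}_{X/Y}[-1]$ is finite projective, hence locally free, and then invoke Proposition~\ref{PropositionLCI}. The only cosmetic difference is that the paper explicitly cites Lemma~\ref{LemmaFormSmooth1} to pass from ``projective'' to ``locally free in the analytic topology'', whereas you phrase this as ``the surjection splits locally, so $\n{N}^{\vee}_{X/Y}$ is a summand of a finite free module''; this is the same content, but you should name the lemma. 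Your closing worry about verifying the ``solid finite presentation'' hypothesis of Proposition~\ref{PropositionLCI} is unnecessary: the paper simply invokes that proposition directly, and its proof only uses the perfectness of $\s{O}_X$ over $\s{O}_Y$ together with the local freeness of $\n{N}^{\vee}_{X/Y}$, both of which you have established.
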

\begin{proof}
We prove that $\bb{L}_{X/Y}[-1]$ is a locally  free sheaf over $X$. For this, we can assume without loss of generality that $S$, $X$ and $Y$ are affinoid with rings $\n{B}$, $\n{C}$ and $\n{D}$ respectively. Furthermore, by Theorem \ref{TheoFormalSmoothnesvsSmoothness}, we can even assume that  $\n{D}$ is a  standard solid smooth over $\n{B}$, so that  $\bb{L}_{\n{D}/\n{B}}$ is a free sheaf of constant rank. We have a fiber sequence of cotangent complexes 
\[
 \bb{L}_{\n{C}/\n{D}}[-1] \to \n{C} \otimes_{\n{D}} \bb{L}_{\n{D}/\n{B}} \to \bb{L}_{\n{C}/\n{B}} \xrightarrow{+}.  
\] 
Since both $\bb{L}_{\n{C}/\n{B}}$ and $\n{C}\otimes_{\n{D}} \bb{L}_{\n{D}/\n{B}}$ are free $\n{C}$-modules, and $\n{C}\otimes_{\n{D}} \bb{L}_{\n{D}/\n{B}} \to \bb{L}_{\n{C}/\n{B}}$ is surjective on $\pi_0$,  $\bb{L}_{\n{C}/\n{D}}[-1]$ is a projective $\n{C}$-module that is free locally in the analytic topology of $X$ by Lemma \ref{LemmaFormSmooth1}.  The corollary follows by Proposition \ref{PropositionLCI}.
\end{proof}

Our next goal is to prove Serre duality in derived rigid geometry following the methods of \cite[Lecture XIII]{CondensedComplex}. First, we need to prove that solid smooth and \'etale maps are indeed cohomologically smooth and \'etale.  

\begin{lemma}
\label{LemmaEtaleStandardDiagonal}
Let $\n{B}\to \n{C}$ be a standard solid \'etale map of bounded affinoid rings, then the multiplication map $\n{C}\otimes_{\n{B}} \n{C} \to \n{C}$ defines an analytic open immersion at the level of affinoid spaces. 
\end{lemma}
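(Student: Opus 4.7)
The plan is to explicitly realize the multiplication map as a rational localization of $\n{C}\otimes_{\n{B}}\n{C}$ by computing the derived tensor product and performing a Koszul-style change of basis where the Jacobian becomes invertible.

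By definition of standard solid \'etale, we may write
\[
\n{C}=\n{B}\langle T_1,\ldots,T_n\rangle_{\sol}/^{\bb{L}}(f_1,\ldots,f_n)
\]
with $\det\bigl(\partial f_i/\partial T_j\bigr)_{1\le i,j\le n}$ invertible in $\n{C}$. Writing $\n{A}:=\n{B}\langle T_1,\ldots,T_n,S_1,\ldots,S_n\rangle_{\sol}$, the first step is to identify
\[
\n{C}\otimes_{\n{B}}\n{C}=\n{A}/^{\bb{L}}\bigl(f_i(T),\,f_i(S)\bigr)
\]
as the derived quotient by the indicated sequence of $2n$ elements (this is the Koszul complex on the sequence). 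Next, using the standard Taylor expansion in the polynomial algebra, we choose polynomials $h_{ij}(T,S)\in\n{B}[T,S]$ such that
\[
f_i(S)-f_i(T)=\sum_{j=1}^n h_{ij}(T,S)\bigl(S_j-T_j\bigr),\qquad h_{ij}(T,T)=\frac{\partial f_i}{\partial T_j}(T).
\]
By elementary row operations on the Koszul complex, replacing the generators $f_i(S)$ with $f_i(S)-f_i(T)$, we obtain a canonical equivalence
\[
\n{C}\otimes_{\n{B}}\n{C}=\n{A}/^{\bb{L}}\Bigl(f_i(T),\;\sum_j h_{ij}(T,S)(S_j-T_j)\Bigr).
\]

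The second step is to produce the relevant open. Let $U\subset\AnSpec(\n{C}\otimes_{\n{B}}\n{C})$ be the rational open localization defined by inverting $\det(h_{ij}(T,S))$. Since on the diagonal the matrix $h_{ij}(T,T)$ equals the Jacobian $(\partial f_i/\partial T_j)$, which is invertible in $\n{C}$ by hypothesis, the diagonal $\AnSpec(\n{C})\to\AnSpec(\n{C}\otimes_{\n{B}}\n{C})$ factors through $U$. On the pullback $U'$ of $U$ to $\AnSpec(\n{A})$, the matrix $H=(h_{ij})$ is invertible, and multiplication by $H^{-1}$ gives an isomorphism of Koszul complexes on $\n{A}|_{U'}$:
\[
\Kos\bigl(\n{A}|_{U'};\,f_i(T),\sum_j h_{ij}(S_j-T_j)\bigr)\;\simeq\;\Kos\bigl(\n{A}|_{U'};\,f_i(T),\,S_j-T_j\bigr).
\]

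The third step is to collapse the second factor. Since $(S_j-T_j)_{1\le j\le n}$ is a regular sequence cutting out the diagonal and localization is flat on Koszul terms, we have
\[
\n{A}/^{\bb{L}}(S_j-T_j)\;=\;\n{B}\langle T_1,\ldots,T_n\rangle_{\sol},
\]
and restricting to $U'$ produces the open $U''\subset\AnSpec\n{B}\langle T\rangle_{\sol}$ defined by invertibility of $\det(\partial f_i/\partial T_j)$. Further killing $f_i(T)$ gives $\n{B}\langle T\rangle_{\sol}|_{U''}/^{\bb{L}}(f_i)$, which, since $\det(\partial f_i/\partial T_j)$ is already invertible in $\n{C}=\n{B}\langle T\rangle_{\sol}/^{\bb{L}}(f_i)$, is just $\n{C}$. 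Chaining these identifications yields
\[
(\n{C}\otimes_{\n{B}}\n{C})|_U\;\simeq\;\n{C},
\]
and unravelling the construction one checks this identification coincides with the multiplication map. Hence the diagonal is the open immersion $U\hookrightarrow\AnSpec(\n{C}\otimes_{\n{B}}\n{C})$, as claimed.

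The main obstacle is to verify cleanly that the replacement of Koszul generators and the collapse by the regular sequence $(S_j-T_j)$ commute with passage to the rational localization $U$; this relies on the fact that rational localization is flat on the derived tensor product appearing here (all terms of the relevant Koszul complexes are finite free over $\n{A}$), together with \cite[Proposition 12.23]{ClauseScholzeAnalyticGeometry} to know that the analytic ring structures on both sides of the identification are the ones induced from $\n{B}$.
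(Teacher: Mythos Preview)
Your Koszul-manipulation approach is viable but two steps need more justification than you give. First, the $f_i$ live in $\pi_0(\n{B}\langle T\rangle_{\sol})$, which for bounded $\n{B}$ consists of $\pi$-adically convergent power series, not polynomials; the $h_{ij}$ therefore cannot be taken in $\n{B}[T,S]$, though they do exist in $\pi_0(\n{B}\langle T,S\rangle_{\sol})$ since $(S_j-T_j)$ generates the kernel of the diagonal map on $\pi_0$, and one still has $h_{ij}(T,T)=\partial f_i/\partial T_j$. Second, and more seriously, working on an affinoid open $V_N=\{|\det h|\ge|\pi^N|\}$ forces $\pi^N/\det h$ to become solid, so your computation actually identifies $(\n{C}\otimes_{\n{B}}\n{C})|_{V_N}$ with $\n{C}\langle\pi^N/J\rangle_{\sol}$ (where $J$ is the Jacobian). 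Saying ``$J$ is already invertible in $\n{C}$, so this is just $\n{C}$'' only addresses the underlying condensed ring; to match the \emph{analytic} structures you still need $\pi^N/J\in\n{C}^+$. This does hold for $N$ large by boundedness of $\n{C}$ (take $m$ with $\pi^m J^{-1}\in\n{C}^0$, then $\pi^N/J$ is topologically nilpotent hence in $\n{C}^+$ for $N>m$), but it is not automatic and must be argued.

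The paper proceeds quite differently and sidesteps both issues. It first reduces to $\n{B}=R\langle\underline X\rangle_{\sol}\langle\bb{N}[K]\rangle$, then exhibits the diagonal as the rational open $\{|T_i-S_i|\le|\pi^{2n+1}|\}$ for an explicit $n$ determined by a bound $|\pi^n|\le|g|\le 1$ on the Jacobian, and verifies the identification on $\pi_0$ by an iterative Newton-type argument using Taylor expansions convergent on that disc, invoking Corollary~\ref{CoroEquiCotAndpi0} to pass to the derived level. The analytic-structure problem never arises because the solidification parameters $(T_i-S_i)/\pi^{2n+1}$ restrict to $0$ on the diagonal; the same open and estimates are reused in Lemma~\ref{LemmaDaggerCompletionPowerSeries}. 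Your route is more algebraic and avoids both the reduction step and the iteration; the paper's is more explicit about the radius and feeds directly into the later $\dagger$-completion arguments.
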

\begin{proof}
By hypothesis we can write $\n{C}= \n{B}\langle T_1,\ldots, T_d \rangle /^{\bb{L}} (f_1,\ldots, f_d)$ with $\det( \frac{\partial f_i}{\partial T_j})_{i,j}$ a unit in $\n{C}$ that we can assume solid.  Moreover, writing $\pi_0(\n{B})$ as a filtered  colimit of quotients of bounded algebras generated by extremally disconnected sets, we can assume that $\n{B}= R_{\sol}\langle \underline{X} \rangle_{\sol} \langle \bb{N}[K] \rangle$ with $\underline{X}$ a finite set of variables and $K$ a profinite set.  Then  $\n{C}$ is of the form
\[
\n{C}= R\langle \underline{X}, T_1,\ldots, T_d \rangle_{\sol} \langle \bb{N}[K] \rangle /^{\bb{L}}(f_1,\ldots, f_{d}). 
\]
Let $g= \det (\frac{\partial f_i}{\partial T_j})_{i,j}$, we can assume that  $f_i\in R^+\langle \underline{X}, T_1,\ldots, T_d \rangle_{\sol} \langle \bb{N}[K] \rangle $ for all $i$,  $|g|\leq 1$ and that $|\pi^{n}| \leq |g|$ for some fixed $n$. We claim that the multiplication map 
\[
R\langle \underline{X}, T_1,\ldots, T_d ,S_1,\ldots, S_d \rangle_{\sol} \langle \bb{N}[K] \rangle /^{\bb{L}}(f_1(T),\ldots, f_{d}(T), f_1(S), \ldots, f_d(S)) \langle \frac{T_i-S_i}{ \pi^{2n+1}} \rangle_{\sol} \to \n{B}
\]
is an isomorphism.   By Corollary \ref{CoroEquiCotAndpi0}, since  the relative cotangent complex vanishes, it suffices to prove that it is an isomorphism on $\pi_0$. For this, let $\n{D}= \n{C} \langle S_i, \frac{S_i-T_i}{\pi^{2n+1}}:i=1,\ldots, d \rangle_{\sol}$, we want to show that the ideals generated by $(S_i-T_i)_{i=1}^d$  and  $(f_i(S))_{i=1}^d$  on $\n{C}$ are the same. We have Taylor series expansions 
\[
f_{i}(S) = f_{i}(T+(S-T)) = \sum_{\alpha \in \bb{N}^d} f_i^{[\alpha]}(T) (S-T)^{\alpha} 
\]
where $f^{[\alpha]}$ is the $\alpha$-th $PD$-derivative of $f$ with respect to the variables $T$. Indeed, as $(S-T)$ is divisible by $\pi^{2n+1}$, the series converges by the explicit growth conditions of Example \ref{ExamplePowerSeriesBoundedAlgebra}.  As $f(T)=0$ in $\pi_0(\n{C})$, we have that 
\[
f_i(S)= \sum_{k=1}^d (\partial_{T_k} f_i)(T) (S_k-T_k)+ h_i(S-T).
\]
where $h_i(S-T)$ has bounded coefficients in $\n{C}$ and monomials of degree $\geq 2$.   Let $(b_{i,j})_{i,j}$ be the inverse of the matrix $((\partial_{T_j} f_i) (T))_{i,j}$, then we can write 
\begin{equation}
\label{eqRecurrenceSolveTS}
S_i-T_i= \sum_{j=1}^d b_{i,j} (f_i(S)-h_i(S-T)).  
\end{equation}
By hypothesis, $|b_{i,j}|\leq |g^{-1}| \leq | \pi^{-n}|$, so that $| b_{i,j} h_i| \leq \pi^{n+1}$ in $\n{C}$,  this implies that $\sum_{j=1}^d b_{i,j} f_{i}(S)$ is bounded by $\pi^{n+1}$ in $\n{C}$. Iterating the equation \eqref{eqRecurrenceSolveTS} one finds that 
\[
S_i-T_i= \sum_{1\leq |\alpha|\leq 2} c_{i,\alpha} \underline{f}(S)^{\alpha} + h^{(2)}_i(S-T)  
\]
where the $c_{i,\alpha} \in \n{C}$ satisfy that $|c_{i,\alpha}|\leq |\pi^{|\alpha|(n+1)}|$, and $h^{(2)}_i (S-T)$ has bounded coefficients in $\n{C}$ with monomials of degree $\geq 4$.  An inductive hypothesis let us write 
\[
S_i-T_i= \sum_{1 \leq |\alpha| \leq 2^{k}} c_{i,\alpha} \underline{f}(S)^{\alpha}+ h_{i}^{(k)}(S-T)
\]
where $|c_{i,\alpha}|\leq |\pi^{|\alpha|(n+1)}|$ and $h_i^{(n)}(S-T)$ has bounded coefficients in $\n{C}$ with monomials of degree $\geq 2^{k}$. Taking limits as $k\to \infty$ we get that  $S_i-T_i$ belong to the ideal generated by the $f_{j}(S)$ for all $i$ , which finishes the proof. 
\end{proof}

\begin{lemma}
\label{LemmaCohoSmoothAffineLine}
The map of analytic rings $f:\bb{Z}_{\sol}\to \bb{Z}[T]_{\sol}$ is cohomologically smooth. 
\end{lemma}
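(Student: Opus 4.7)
The plan is to factor $f$ through the categorified locale structure of $(\bb{Z}[T], \bb{Z})_{\sol}$. Following Proposition \ref{PropositionAffinoidCatLocale}, we write $f = j \circ p$, where $p: \AnSpec(\bb{Z}[T], \bb{Z})_{\sol} \to \AnSpec \bb{Z}_{\sol}$ is the map with induced analytic structure (so $p \in P$) and $j: \AnSpec \bb{Z}[T]_{\sol} \hookrightarrow \AnSpec(\bb{Z}[T], \bb{Z})_{\sol}$ is the open immersion whose complementary idempotent algebra is $\bb{Z}((T^{-1})) = \bb{Z}[[T^{-1}]][T]$. By Lemma \ref{LemmaOpenClosedCoverLocaleCoho}(1), $j$ is cohomologically \'etale, so $j^! = j^*$ and in particular $f^! \simeq j^* \circ p^!$. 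It therefore suffices to prove that $f^! 1_{\bb{Z}_{\sol}}$ is an invertible object of $\Mod(\bb{Z}[T]_{\sol})$ and that $1_X$ is $f$-smooth, where $X = \AnSpec \bb{Z}[T]_{\sol}$.

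For invertibility, we compute $p^! 1_{\bb{Z}_{\sol}} = \iHom_{\bb{Z}_{\sol}}(\bb{Z}[T], \bb{Z}) \simeq \bb{Z}[[T^{-1}]]$ as a $(\bb{Z}[T], \bb{Z})_{\sol}$-module, where $T$ acts by the dual annihilation operator $T^{-n} \mapsto T^{-(n-1)}$ for $n \geq 1$ and $1 \mapsto 0$. The excision fiber sequence $j_! j^* M \to M \to \iota_* \iota^* M$ of Remark \ref{RemarkExcisionSequences}, applied to $M = p^! 1$, combined with the natural short exact sequence of $(\bb{Z}[T], \bb{Z})_{\sol}$-modules
\[
0 \to \bb{Z}[[T^{-1}]] \to \bb{Z}((T^{-1})) \to T\bb{Z}[T] \to 0
\]
and the idempotency of $\bb{Z}((T^{-1}))$ over $(\bb{Z}[T], \bb{Z})_{\sol}$, shows that $\iota_* \iota^* p^! 1 \simeq \bb{Z}((T^{-1}))$ and that $j_! j^* p^! 1 \simeq T\bb{Z}[T][-1]$. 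Passing back through $j^* j_! \simeq \id$ and identifying $j^* T\bb{Z}[T] \cong \bb{Z}[T]_{\sol}$, we obtain $f^! 1_{\bb{Z}_{\sol}} \simeq \bb{Z}[T]_{\sol}[1]$ with canonical generator corresponding to the class $dT$ under the pairing $dT \leftrightarrow (T^{-1})^{\vee}$. This is the expected Serre-duality identification $\Omega^1_{X/S}[1]$ and is in particular invertible.

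For the $f$-smoothness of $1_X$, consider the diagonal $\Delta: X \to X \times_{\AnSpec \bb{Z}_{\sol}} X = \AnSpec \bb{Z}[T, U]_{\sol}$, a Zariski closed immersion of solid finite presentation cut out by the single element $T - U$. The sequence $(T - U)$ is Koszul-regular in $\bb{Z}[T, U]_{\sol}$, so the Koszul complex $[\bb{Z}[T, U]_{\sol} \xrightarrow{T - U} \bb{Z}[T, U]_{\sol}]$ gives an equivalence $\bb{Z}[T]_{\sol} \simeq \bb{Z}[T, U]_{\sol} /^{\bb{L}}(T - U)$, exhibiting $\s{O}_X$ as a perfect $\s{O}_{X \times_S X}$-module of rank $1$ with free conormal bundle $\n{N}^{\vee}_{\Delta} = \s{O}_X \cdot d(T - U)$. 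Proposition \ref{PropositionLCI}(1) then yields that $1_X$ is $\Delta$-smooth. Finally, using the criterion of Proposition \ref{PropSmoothProperObjects}(1.c), one verifies the equivalence of the natural transformation $\pi_1^* f^! 1_{\bb{Z}_{\sol}} \otimes \pi_2^* 1_X \to \iHom(\pi_1^* 1_X, \pi_2^! 1_X)$ by an explicit computation on the Koszul resolution of $\Delta_* \s{O}_X$: both sides are identified with the shifted Koszul complex on $(T-U)$ twisted by the invertible module $\pi_1^* f^!1$ computed above. This confirms that $1_X$ is $f$-smooth with dual $f^! 1_{\bb{Z}_{\sol}} \simeq \bb{Z}[T]_{\sol}[1]$, completing the proof.

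The main obstacle is the explicit identification $j^* p^! 1 \simeq \bb{Z}[T]_{\sol}[1]$, which requires a careful bookkeeping of the solid structure on $\iHom_{\bb{Z}_{\sol}}(\bb{Z}[T], \bb{Z}) = \prod_{n\ge 0} \bb{Z}$ and of the localization behaviour at the ``point at infinity'' carried by $\bb{Z}((T^{-1}))$. The shift by one and the identification of the generator with $dT$ are the geometric content of the statement and have to be extracted from this algebraic excision computation.
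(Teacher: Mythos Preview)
Your strategy is exactly the paper's: factor $f$ as the open immersion $j:\AnSpec\bb{Z}[T]_{\sol}\hookrightarrow\AnSpec(\bb{Z}[T],\bb{Z})_{\sol}$ followed by the proper map $p$ (the paper calls it $g$), compute $f^!1\simeq\bb{Z}[T]_{\sol}[1]$, and then verify criterion (1.b) of Proposition~\ref{PropSmoothProperObjects}. There is, however, a genuine error in your excision step.

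The sequence $0\to\bb{Z}[[T^{-1}]]\to\bb{Z}((T^{-1}))\to T\bb{Z}[T]\to0$ is \emph{not} a sequence of $(\bb{Z}[T],\bb{Z})_{\sol}$-modules: inside $\bb{Z}((T^{-1}))$ one has $T\cdot 1=T\notin\bb{Z}[[T^{-1}]]$, so $\bb{Z}[[T^{-1}]]$ is not a $\bb{Z}[T]$-submodule and the inclusion cannot be the unit map $p^!1\to\iota_*\iota^*p^!1$. The correct $\bb{Z}[T]$-linear presentation of $p^!1\cong\bb{Z}[T]^{\vee}$ is
\[
0\to\bb{Z}[T]\to\bb{Z}((T^{-1}))\to p^!1\to0,
\]
i.e.\ $[\bb{Z}[T]\to\bb{Z}((T^{-1}))]\simeq\bb{Z}[T]^{\vee}[-1]$. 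Tensoring with the idempotent $\bb{Z}((T^{-1}))$ shows $\iota_*\iota^*p^!1=0$ (not $\bb{Z}((T^{-1}))$), so excision gives $j_!j^*p^!1=p^!1$; applying $j^*$ to the displayed sequence then yields $f^!1=j^*p^!1\simeq\bb{Z}[T]_{\sol}[1]$. This is precisely the paper's computation. Note also that your chain of identifications, read literally, produces a shift by $-1$ rather than $+1$.

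For the smoothness check, the paper simply remarks that the same computation of $f^!$ works over any finitely generated base $A_{\sol}$, so on $X\times_{\bb{Z}_{\sol}}X$ both $\pi_1^*f^!1$ and $\pi_2^!1$ identify with $\bb{Z}[T_1,T_2]_{\sol}[1]$ and criterion (1.b) is immediate. Your invocation of Proposition~\ref{PropositionLCI} for $\Delta$-smoothness is unnecessary here, and the criterion you describe is (1.b), not (1.c).
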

\begin{proof}
This is a consequence of \cite[Theorem 8.1]{ClausenScholzeCondensed2019}. Consider the  compactification $\bb{Z}_{\sol} \xrightarrow{g} (\bb{Z}[T], \bb{Z})_{\sol} \xrightarrow{j} \bb{Z}[T]_{\sol}$. Let us describe explicitly the shriek functors of  $f$. Recall that $(\bb{Z}[T], \bb{Z})_{\sol} \to \bb{Z}[T]_{\sol}$ is the open localization complement to the idempotent algebra $\bb{Z}((T^{-1}))$. Then, we have descriptions  for $M\in \Mod( (\bb{Z}[T],\bb{Z})_{\sol})$
\[
j_! j^*M =  [\bb{Z}[T] \to \bb{Z}((T^{-1}))]\otimes_{\bb{Z}[T]}M \mbox{ and } j_* j^*M = \iHom_{\bb{Z}[T]}([\bb{Z}[T] \to \bb{Z}((T^{-1}))], M),
\]
notice that the fiber  $[\bb{Z}[T] \to \bb{Z}((T^{-1}))]$ is isomorphic to 
\[
\bb{Z}[T]^{\vee}[-1]=\iHom_{\bb{Z}}(\bb{Z}[T], \bb{Z})[-1]=\iHom_{\bb{Z}}(\bb{Z}[T], \bb{Z})[-1]
\] as $\bb{Z}[T]$-module. Therefore, the funtor $f^!$ is isomorphic to 
\[
f^! N\simeq \iHom_{\bb{Z}}( \bb{Z}[T]^{\vee}, N  )[1],
\]
but $\iHom_{\bb{Z}}( \bb{Z}[T]^{\vee}, -)=f^*$, namely  $\bb{Z}[T]^{\vee}$ is a compact projective $\bb{Z}$-module, both functors commute with limits and $\iHom_{\bb{Z}}(\bb{Z}[T]^{\vee}, \bb{Z})= \bb{Z}[T]$. In particular, one has that $f^! \bb{Z}\simeq \bb{Z}[T][1]$. Notice that the previous hold for any base ring $A$ with $A$ a finitely generated $\bb{Z}$-algebra.  Now, take $Y= \AnSpec \bb{Z}[T]_{\sol}$, $X= \AnSpec \bb{Z}_{\sol}$ and consider the cartesian square 
\[
\begin{tikzcd}
X\times_Y \ar[r, "p_1"]  \ar[d, "p_2"']X & X \ar[d, "f"] \\ 
X \ar[r, "f"] & Y
\end{tikzcd}
\] 
Then, under the identification $f^! \bb{Z}\simeq \bb{Z}[T][1]$, one has the natural map $p_1^* (\bb{Z}[T][1]) \to p_2^! \bb{Z}[T]$ is an equivalence (both being equal to $\bb{Z}[T_1,T_2][1]$). This implies that $f$ is cohomologically smooth by Proposition \ref{PropSmoothProperObjects} (1.b). 
\end{proof}

\begin{proposition}
\label{PropGeoSmoothIsSmooth}
Let $f:X \to S$ be a solid smooth (resp. \'etale) morphism of derived Tate adic spaces over $\n{A}$. Then $f$ is cohomologically smooth (resp. \'etale).
\end{proposition}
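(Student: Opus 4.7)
The plan is to reduce, via the local nature of cohomological smoothness on both source and target, to a standard solid smooth (resp.\ \'etale) morphism of bounded affinoid rings, and then to decompose it as a solid Tate algebra followed by a Zariski closed immersion cut out by a Koszul-regular sequence. Concretely, cohomological smoothness is stable under composition and pullback and is $*$-local on the target by Lemma \ref{LemmaStabilitycoSmoothCompositionPullback}, and smooth $\s{D}$-local on the source by Corollary \ref{CorollaryDescentSmoothProperCovers}. Since analytic open covers of bounded affinoid spaces are smooth $\s{D}$-covers by Proposition \ref{PropositionAffinoidCatLocale} and Lemma \ref{LemmaOpenClosedCoverLocaleCoho}, I can work locally in the analytic topology of both $X$ and $S$ and so assume $X=\AnSpec\n{B}$ and $S=\AnSpec\n{A}$. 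Theorem \ref{TheoFormalSmoothnesvsSmoothness} (resp.\ Corollary \ref{CoroEtaleMapsStacks}) then allows me to further shrink $X$ so that $f$ is standard solid smooth (resp.\ \'etale), and Corollary \ref{CoroFormallySmoothStuff} additionally decomposes any such smooth map locally as $\n{A}\to \n{A}\langle T_1,\ldots,T_s\rangle_{\sol}\to \n{B}$ with the second arrow standard solid \'etale. Hence it suffices to treat two cases: solid Tate algebras, and standard solid \'etale maps.

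For solid Tate algebras I would write $\n{A}\langle T_1,\ldots,T_s\rangle_{\sol}=\n{A}\otimes_{\bb{Z}_{\sol}}\bb{Z}[T_1,\ldots,T_s]_{\sol}$ and use Lemma \ref{LemmaCohoSmoothAffineLine}, which establishes cohomological smoothness of $\bb{Z}_{\sol}\to\bb{Z}[T]_{\sol}$, combined with stability under composition and pullback from Lemma \ref{LemmaStabilitycoSmoothCompositionPullback}. For a standard solid \'etale map $g:\n{A}\to \n{C}=\n{A}\langle T_1,\ldots,T_d\rangle_{\sol}/^{\bb{L}}(f_1,\ldots,f_d)$ with invertible Jacobian I would factor it as $j:\n{A}\to\n{A}\langle T_1,\ldots,T_d\rangle_{\sol}$ followed by a Zariski closed immersion $i$ cutting out the zero locus of $(f_1,\ldots,f_d)$. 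The first factor $j$ is cohomologically smooth by the solid Tate algebra case. For $i$, the algebra $\n{C}$ is presented by the Koszul complex on $(f_1,\ldots,f_d)$ and is in particular a perfect $\n{A}\langle T_1,\ldots,T_d\rangle_{\sol}$-module; moreover, from the cotangent fiber sequence together with the vanishing $\bb{L}_{\n{C}/\n{A}}=0$ (a consequence of formal \'etaleness), one gets a natural identification
\[
\bb{L}_{\n{C}/\n{A}\langle T_1,\ldots,T_d\rangle_{\sol}}[-1]\;\cong\;\bigoplus_{i=1}^{d}\n{C}\cdot dT_i,
\]
so the conormal complex is free of rank $d$. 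Proposition \ref{PropositionLCI}(2) then shows that $i$ is cohomologically smooth, and therefore so is the composite $g=i\circ j$.

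To upgrade the conclusion to cohomological \'etaleness in the \'etale case, I still need the diagonal to be cohomologically \'etale, which is the remaining condition in Definition \ref{DefinitionCohoEtaleProper}. Lemma \ref{LemmaEtaleStandardDiagonal} identifies the diagonal of a standard solid \'etale map with an analytic open immersion of affinoid spaces, and Lemma \ref{LemmaOpenClosedCoverLocaleCoho} guarantees that such open immersions are cohomologically \'etale; combined with the fact that $1_{X}$ is $f$-smooth whenever $f$ is cohomologically smooth, this supplies the missing ingredient. The main obstacle is essentially bookkeeping: one must verify carefully that all the descent reductions respect the affinoid structure and that the factorization $g=i\circ j$ interacts well with Proposition \ref{PropositionLCI}, namely that $\n{C}$ is perfect over $\n{A}\langle T_1,\ldots, T_d\rangle_{\sol}$ and that its conormal complex is locally free. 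Both of these come down to the explicit Koszul presentation together with the cotangent complex formulas recorded in Example \ref{ExampleCotangentComplexes}, so no genuinely new technical input is required.
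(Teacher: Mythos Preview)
Your proposal is correct and follows essentially the same approach as the paper's proof: reduce locally to standard solid smooth maps, factor through a solid Tate algebra followed by a Koszul-regular Zariski closed immersion, invoke Lemma \ref{LemmaCohoSmoothAffineLine} for the Tate part, and use Lemma \ref{LemmaEtaleStandardDiagonal} to identify the diagonal of a standard solid \'etale map with an analytic open immersion. The only cosmetic difference is that you verify the hypotheses of Proposition \ref{PropositionLCI}(2) directly by computing the conormal from the cotangent fiber sequence, whereas the paper packages this step as Corollary \ref{CoroSmoothImmersion}; the content is identical.
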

\begin{proof}
First, by Lemma \ref{LemmaCohoSmoothAffineLine}, the map $\bb{Z}_{\sol}\to \bb{Z}[T]_{\sol}$ is cohomologically smooth.  Since cohomologically smooth maps are stable under composition and base change, this implies that any map $\n{A} \to \n{A} \langle T_1,\ldots, T_d\rangle_{\sol}$ of bounded affinoid rings is cohomologically smooth. By Theorem \ref{TheoFormalSmoothnesvsSmoothness}, any solid smooth map factors, locally in the analytic topology, as a composite of a standard solid \'etale map and  the projection of an affinoid disc. Therefore, for the first assertion it suffices to see that a solid \'etale map is cohomologically \'etale. Let us assume $f$ solid \'etale, by Lemma \ref{LemmaEtaleStandardDiagonal} the diagonal $\Delta_{f}$ is an open embedding, so it is $-1$-truncated and by Lemma \ref{LemmaRepresentableInP} it is cohomologically \'etale. Thus, by Definition \ref{DefinitionCohoEtaleProper},  it suffices to see that $1_{X}$ is a $f$-smooth object. We can argue locally in the analytic topology of both $X$ and $S$ and assume that both are affinoid. Then, we can find a Zariski closed embedding $\iota: X\to S\times \bb{D}^{d}_{R}$ into some affinoid disc over $S$ such that $\s{O}_X$ is a perfect $\s{O}_{S\times \bb{D}^{d}_R}$-module. By Corollary \ref{CoroSmoothImmersion}  the map $\iota$  is cohomologically smooth, as $S\times \bb{D}^d_R \to S$ is cohomologically smooth  one deduces that $X\to S$  is cohomologically smooth which in particular implies that $1_{X}$ is $f$-smooth as wanted. 

\end{proof}

A first application of the previous proposition are some classical facts about \'etale maps

\begin{proposition}
\label{PropMapsEtalemaps}
Let $S=\AnSpec \n{A}$ be a bounded affinoid space and let $X=\AnSpec \n{C}$ and $Y=\AnSpec \n{B}$ be  bounded affinoid spaces whose rings of functions are given by  $\n{B}=\n{A}\langle \underline{T} \rangle/^{\bb{L}}(f_i)$ and $\n{C}=\n{A}\langle \underline{T'} \rangle/^{\bb{L}}(g_i)$ with $f_i\in \pi_0(\n{A} \langle \underline{T} \rangle )$ and $g_i\in \pi_0(\n{A} \langle \underline{T'} \rangle ) $.   Suppose that  we have maps $f:X\to Y$ over $S$. The following hold
\begin{enumerate}

\item $f$ is  of local solid finite presentation, i.e. a retract of a morphism of solid finite presentation.

\item  If $X$ and $Y$ are solid \'etale over $S$ then $f$ is solid \'etale.

\item  If $f$ is solid \'etale and a Zariski closed embedding  with $\s{O}_X$ a perfect $\s{O}_Y$-module, then it is a rational open subspace associated to an open and closed subspace of $|Y|$. 

\end{enumerate}
\end{proposition}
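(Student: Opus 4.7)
For part~(1), I factor $f$ as $p_2 \circ \Gamma_f$, where $\Gamma_f\colon X \to X\times_S Y$ is the graph and $p_2$ the second projection. The graph sits as the pullback of the diagonal $\Delta_Y\colon Y\to Y\times_S Y$ along $f\times \id_Y$. From the explicit presentations
\[
\n{B}\otimes_{\n{A}}\n{B}\simeq \n{A}\langle \underline{T},\underline{T'}\rangle_{\sol}/^{\bb{L}}(f_i(T), f_i(T')),\qquad \n{B}\simeq(\n{B}\otimes_{\n{A}}\n{B})/^{\bb{L}}(T_i-T'_i),
\]
the diagonal $\Delta_Y$ is a Zariski closed immersion of solid finite presentation, cut out by finitely many $0$-cells. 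Stability of solid finite presentation under pullback and composition, combined with the fact that $p_2$ is the pullback of the solid finitely presented map $X\to S$, then gives that $f$ itself is of solid finite presentation.

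For part~(2), under the hypothesis that $X$ and $Y$ are solid \'etale over $S$ the cotangent fiber sequence $f^*\bb{L}_{Y/S}\to \bb{L}_{X/S}\to \bb{L}_{X/Y}$ combined with $\bb{L}_{X/S}=\bb{L}_{Y/S}=0$ forces $\bb{L}_{X/Y}=0$. Together with the solid finite presentation from part~(1) (which implies nilcompleteness and infinitesimal cohesion of the representable presheaf), this yields that $f$ is formally \'etale of solid finite presentation, hence solid \'etale by Corollary~\ref{CoroEtaleMapsStacks}.

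For part~(3), working locally in the analytic topology of $Y$, we reduce to a morphism $\n{B}\to \n{C}$ surjective on $\pi_0$, with $\bb{L}_{\n{C}/\n{B}}=0$ and $\n{C}$ a perfect $\n{B}$-module. The first step is to show that the multiplication $\mu\colon \n{C}\otimes_{\n{B}}\n{C}\to \n{C}$ is an equivalence: base change gives $\bb{L}_{\n{C}/\n{C}\otimes_{\n{B}}\n{C}}=0$, $\mu$ is surjective on $\pi_0$, and at $\pi_0$ the statement is tautological for any ring surjection, so Corollary~\ref{CoroEquiCotAndpi0} applies. The second step extracts an idempotent: $\n{C}$ perfect together with $\n{C}\otimes_{\n{B}}\n{C}=\n{C}$ forces $\n{C}$ to be flat and dualizable over $\n{B}$, so $\pi_0(\n{C})$ is a finitely generated projective quotient of $\pi_0(\n{B})$; a splitting of the projective surjection $\pi_0(\n{B})\twoheadrightarrow \pi_0(\n{C})$ yields an idempotent $e\in \pi_0(\n{B})$ with $\pi_0(\n{C})=\pi_0(\n{B})e$. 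The localization $\n{B}\to \n{B} e$ is a rational open immersion onto the simultaneously open and closed subspace $\{|e|=1\}=\{|1-e|=0\}\subseteq |Y|$, and the induced factorization $\n{C}\to \n{B} e$ is formally \'etale and an isomorphism on $\pi_0$, hence an equivalence by Corollary~\ref{CoroEquiCotAndpi0}. Gluing these local idempotents over an analytic cover of $Y$ produces the desired global open and closed subspace.

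The anticipated main obstacle is in part~(3): rigorously extracting the idempotent from a perfect idempotent $\n{B}$-algebra in the solid setting, i.e.\ verifying the flatness and degree-$0$ concentration needed to reduce to the classical projective-quotient argument, and then checking compatibility of the local idempotents across the analytic cover so that they assemble into a well-defined global open and closed subspace of $|Y|$.
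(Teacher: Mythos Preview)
Parts (1) and (2) match the paper's approach. One correction for (1): your displayed equality $\n{B}\simeq(\n{B}\otimes_{\n{A}}\n{B})/^{\bb{L}}(T_i-T'_i)$ is false in general---by the computation in Lemma~\ref{LemmaPerfectDiagonal} the right-hand side is $\n{B}/^{\bb{L}}(\underline{0}_i)$, of which $\n{B}$ is only a retract. This still yields \emph{local} solid finite presentation (which is all the proposition claims), but not solid finite presentation as you assert. The paper's argument is more direct and avoids the diagonal entirely: $\n{C}$ is a retract of $\n{B}\langle\underline{T'}\rangle_{\sol}/^{\bb{L}}(g_i)=\n{B}\otimes_{\n{A}}\n{C}$ as $\n{B}$-algebras, via the second inclusion and the multiplication map.

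For part (3), your step~1 (showing $\n{C}\otimes_{\n{B}}\n{C}=\n{C}$) is correct and equivalent to the paper's use of Lemma~\ref{LemmaSelfTesnsorQuotient}. But your step~2 is the gap you anticipate, and it is genuine: the deduction ``perfect idempotent $\Rightarrow$ flat $\Rightarrow$ $\pi_0(\n{C})$ finitely generated projective over $\pi_0(\n{B})$'' is not justified in the solid setting, where $\pi_0(\n{B})$ is a condensed (not discrete) ring and the usual Tor-amplitude arguments for perfect complexes over local rings are unavailable. The paper obtains the splitting $\n{B}\simeq\n{C}\oplus\n{D}$ by a different mechanism: since $f$ is \'etale one has $f^!=f^*$, and since $f$ is Zariski closed with induced structure one has $f_!=f_*$ with $f_*$ fully faithful satisfying the projection formula; Proposition~\ref{PropClosedOpenLocalizationsInftyCat} then forces $f$ to be simultaneously open and closed in the categorified locale $\n{S}(\n{B})$, which gives the decomposition. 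The paper then relates this to the adic spectrum $|Y|$ using the $\dagger$-nilradical: since $I=[\n{B}\to\n{C}]$ is perfect, the overconvergent neighbourhood $\s{O}_Y\{I\}^\dagger$ corresponds to $|X|\subset|Y|$ by Corollary~\ref{PropZariskiCloseImmersion1}; one shows $\s{O}_Y\{I\}^\dagger\otimes_{\s{O}_Y}\n{D}=0$ (its unit lies in its own $\dagger$-nilradical since $I\cong\n{D}$), whence $\n{C}=\s{O}_Y\{I\}^\dagger$, and compactness of $\n{C}$ forces this filtered colimit to stabilize at some $\s{O}_Y\langle I/\pi^n\rangle_{\sol}$. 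Once the splitting is in hand, your idempotent route would also conclude; the point is that obtaining the splitting requires the locale formalism rather than a direct commutative-algebra extraction.
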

\begin{proof}
\begin{enumerate}
\item The algebra  $\n{C}$ is a retract of $\n{B}\langle \underline{T}'\rangle_{\sol}/^{\bb{L}}(g_i)$, this shows   that it is of local solid finite presentation over $\n{B}$.

\item By (1) we know that $f$ is of local solid finite presentation. On the other hand, since $X$ and $Y$ are \'etale over $S$, the fiber sequence of cotangent complexes shows that $\bb{L}_{X/Y}=0$,  so that $f$ is formally \'etale. One deduces that $f$ is  solid \'etale by Corollary \ref{CoroEtaleMapsStacks}.

\item  By Proposition \ref{PropositionLCI} and Lemma \ref{LemmaSelfTesnsorQuotient}, one deduces that $X\times_Y X = X$, which implies that $f$ is $-1$-truncated so an immersion. Since $X$ has the induced analytic structure from $Y$, one deduces that $\s{O}_X \otimes_{\s{O}_Y} \s{O}_X= \s{O}_X$ is an idempotent $\s{O}_Y$-algebra, and that $X$ defines a closed subspace in the locale of $\Mod_{\sol}(Y)$. Since $f$ is \'etale and a closed immersion, one has that $f^!= f^*$ and $f_!= f_*$, which by Proposition \ref{PropClosedOpenLocalizationsInftyCat} implies that $f$ also defines an open embedding in the locale. Let $\n{C}$ be the open and closed complement of $\s{O}_{X}$ in the locale of $\Mod_{\sol}(Y)$, then we have that $\s{O}_Y = \s{O}_X \oplus \n{C}$ as $\bb{E}_{\infty}$-algebras. In particular, $\n{C}$ is a locally connective $\bb{E}_{\infty}$-algebra in the analytic topology of $Y$.

 Note that the notion of being a bounded affinoid algebra only depends on $\pi_0$, in particular it is also a well defined notion for connective analytic $\bb{E}_{\infty}$-algebras over $R_{\sol}$. In particular, Proposition \ref{PropLiftingOverconvergentAlgebrasLift} also holds for $\n{C}$. Now, let $I=[\s{O}_Y \to \s{O}_X]$ be the ideal defining $X$, by hypothesis $I$ is a perfect $\s{O}_Y$-module, so its $\pi_0$ is a module generated by its global sections at the point, and by Proposition \ref{PropZariskiCloseImmersion1} the idempotent $\s{O}_Y$-algebra $\s{O}_Y \{I\}^{\dagger}$ is associated to the closed subspace $|X|\subset |Y|$.  But we have $I\cong\n{C}$, so   $\n{D}=\s{O}_{Y} \{I\}^{\dagger} \otimes_{\s{O}_Y} \n{C}$ is equal to its $\dagger$-nil-radical  which implies that it is $0$ as $T-1$ is invertible in $R\{T\}^{\dagger}$ and the map $R[T]\to \n{D}$ mapping $T$ to $1$ extends to the overconvergent power series. The previous reasoning shows that in fact 
 \[
 \s{O}_X= \s{O}_Y\{I\}^{\dagger}.
 \]
Now,  since $\s{O}_X$ is $\s{O}_Y$-perfect,   one has that $\s{O}_X = \s{O}_Y \langle \frac{I}{\pi^n}\rangle_{\sol}$ for some $n>>0$, and $X$ defines both an open and a closed subspace arising from the underlying space $|Y|$.

\end{enumerate}
\end{proof}

\subsubsection{Serre duality} We want to prove the following theorem. 

\begin{theorem}[Serre duality]
\label{TheoSerreDuality}
Let $f:X\to S$ be a solid smooth morphism of  derived Tate adic  spaces.   Then $f$ is cohomologically smooth and there is a natural identification $f^! 1_{S}= \Omega^{d}_{X/S}[d]$, where $d$ is the locally constant relative dimension of $f$, and $\Omega^d_{X/S}:= \bigwedge^d \bb{L}_{X/S}$ is the determinant of the (locally free) cotangent complex. 

\end{theorem}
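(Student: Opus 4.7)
The cohomological smoothness of $f$ is already established in Proposition \ref{PropGeoSmoothIsSmooth}, so the only remaining task is to produce an equivalence $f^!1_S \simeq \Omega^d_{X/S}[d]$. The plan is to deduce this from the geometry of the diagonal, using the results on cohomologically smooth Zariski closed immersions (Proposition \ref{PropositionLCI} and Corollary \ref{CoroSmoothImmersion}).

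First, I will reduce to a local statement. Since cohomological smoothness and the formation of $f^!$ are local on the analytic topology of $X$ and $S$ (Lemma \ref{LemmaStabilitycoSmoothCompositionPullback}), and since the natural map I will construct is canonical, it suffices to work locally. By Theorem \ref{TheoFormalSmoothnesvsSmoothness}, I may then assume $f$ is standard solid smooth, so that $\bb{L}_{X/S}$ is locally free of constant rank $d$ and $\Omega^d_{X/S} = \bigwedge^{d} \bb{L}_{X/S}$ is an invertible sheaf.

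Next, I analyze the diagonal $\Delta \colon X \to X\times_S X$. For standard solid smooth $\n{A}\to\n{B}$, one has $\underline{\n{B}\otimes_{\n{A}} \n{B}}\to \underline{\n{B}}$ surjective on $\pi_0$, and Lemma \ref{LemmaPerfectDiagonal} shows that $\s{O}_X$ is a perfect $\s{O}_{X\times_S X}$-module. Hence Proposition \ref{PropositionLCI}(1) implies $1_X$ is $\Delta$-smooth, and the standard fiber sequence
\[
\Delta^{*}\bb{L}_{X\times_S X/S} \to \bb{L}_{X/S} \to \bb{L}_{X/X\times_S X}
\]
identifies $\n{N}^{\vee}_{X/X\times_S X} = \bb{L}_{X/X\times_S X}[-1] \simeq \bb{L}_{X/S}$, which is locally free of rank $d$. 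Then Corollary \ref{CoroSmoothImmersion} gives that $\Delta$ is cohomologically smooth with
\[
\Delta^{!}1_{X\times_S X} \simeq \textstyle\bigwedge^{d} \n{N}_{X/X\times_S X}[-d] \simeq (\Omega^{d}_{X/S})^{\vee}[-d].
\]

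Finally, I combine this with base change to compute $f^!1_S$. Since $p_{2}\colon X\times_S X\to X$ is the base change of $f$ along $f$, cohomological smoothness yields $p_{2}^{!}1_X \simeq p_{1}^{*}f^{!}1_S$. Because $p_{2}\circ\Delta = \id_X$, applying $\Delta^{!}$ gives
\[
1_X \simeq \Delta^{!}p_{2}^{!}1_X \simeq \Delta^{!}(p_{1}^{*}f^{!}1_S).
\]
Now $\Delta$ is cohomologically smooth, so $\Delta^{!}$ factors through $\Delta^{*}$ via $\Delta^{!}(-) \simeq \Delta^{!}1_{X\times_S X}\otimes \Delta^{*}(-)$. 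Combined with $\Delta^{*}p_{1}^{*} = \id$, this yields
\[
1_X \simeq \Delta^{!}1_{X\times_S X}\otimes f^{!}1_S \simeq (\Omega^{d}_{X/S})^{\vee}[-d]\otimes f^{!}1_S,
\]
whence $f^{!}1_S \simeq \Omega^{d}_{X/S}[d]$, as desired.

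The main obstacle is verifying that the diagonal $\Delta$ satisfies the hypotheses needed to invoke Proposition \ref{PropositionLCI} and Corollary \ref{CoroSmoothImmersion}, namely that $\s{O}_X$ is a perfect $\s{O}_{X\times_S X}$-module locally in the analytic topology. For standard solid smooth $f$ this is precisely the content of Lemma \ref{LemmaPerfectDiagonal}; the identification of the conormal sheaf with $\bb{L}_{X/S}$ then gives the invertibility and rank needed for the final chain of equivalences, and naturality in $S$ (via base change along $f$) ensures the local equivalences glue to the global statement.
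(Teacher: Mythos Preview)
Your proof is correct, but it takes a genuinely different route from the paper's.

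The paper, following \cite[Theorem~13.6]{CondensedComplex}, argues via \emph{deformation to the normal cone}. It first observes (as you do) that smooth base change gives $f^{!}1_{S}=\Delta_{f}^{*}\pi_{2}^{!}1_{X}$, reducing to the computation of $s^{*}f^{!}1_{S}$ along a section $s$ with $\s{O}_{S}$ locally perfect over $\s{O}_{X}$. It then builds the deformation $\widetilde{X}\to\bb{P}^{1}_{S}$, shows that $\widetilde{s}^{*}\widetilde{f}^{!}\s{O}(d)$ descends along $\bb{P}^{1}_{S}\to S$, and thereby identifies the answer at the generic fiber (the original pair $(f,s)$) with that at the special fiber (the zero section of the normal cone). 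The vector-bundle case is finally handled by a forward reference to Proposition~\ref{PropositionDAggerCartier2}.

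Your argument bypasses all of this by applying Proposition~\ref{PropositionLCI}(2) and Corollary~\ref{CoroSmoothImmersion} directly to the diagonal $\Delta\colon X\to X\times_{S}X$, obtaining $\Delta^{!}1_{X\times_{S}X}\simeq(\Omega^{d}_{X/S})^{\vee}[-d]$, and then extracting $f^{!}1_{S}$ from the identity $p_{2}\circ\Delta=\id_{X}$ combined with smooth base change $p_{2}^{!}1_{X}\simeq p_{1}^{*}f^{!}1_{S}$. This works because Proposition~\ref{PropositionLCI}(2) has already performed (for a regular closed immersion) exactly the kind of diagonal computation the paper is redoing for the smooth map $\pi_{2}$; you are simply using that result one level up. Every step in your chain is natural, so the gluing you mention is justified.

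What each approach buys: your argument is shorter, avoids the deformation machinery entirely, and is self-contained within the results preceding Theorem~\ref{TheoSerreDuality} (no forward reference to Proposition~\ref{PropositionDAggerCartier2}). The paper's approach, on the other hand, establishes the deformation-to-the-normal-cone technique in this setting, which is then reused verbatim for the premotivic six-functor formalisms of $D$-modules in \S\ref{SubsectionPoincareDmodules} (see Proposition~\ref{PropDeformationNormalGeneralSix} and Theorem~\ref{TheoPoincareDualityDmodules}); there one does not have an analogue of Proposition~\ref{PropositionLCI}(2) available, so your shortcut would not apply.
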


We have already proved the first part of the theorem in Proposition \ref{PropGeoSmoothIsSmooth}, the rest of the proof will follow  the same steps of  \cite[Theorem 13.6]{CondensedComplex} using the deformation to the normal cone.

\begin{remark} 
\label{RemarkDeformationNormalCone}
Note that, if $\n{A} \to \n{B}$ of a morphism solid finite presentation which is surjective on $\pi_0$ and such that $\n{B}$ is a perfect $\n{A}$-module, the ideal $I=[\n{A}\to \n{B}]$   is a  discrete $\n{A}$-module, i.e. it arises as base change from an ideal of the animated ring $\n{A}(*)$. Therefore,  \cite[Construction 13.4]{CondensedComplex} of deformation to the normal cone applies in our setting, by taking base change of the construction at the level of underlying discrete rings and taking analytifications as derived Tate adic spaces, see Definition \ref{DefinitionAnalytificationFunctor}. The result is a  map $\widetilde{Y} \to Y= \AnSpec \n{A}$  locally of solid finite presentation. More generally, if $X\to Y$ is a Zariski closed immersion of solid finite presentation with $\s{O}_Y$ a perfect $\s{O}_X$-module in the analytic topology, then the deformation to the normal cone glues to a morphism locally of solid finite presentation 
\[
\widetilde{X}= X \times \bb{P}^1\to  \widetilde{Y} \to X \times \bb{P}^1. 
\]
In addition, this construction only requires $X \to Y$ to be a Zariski closed immersion locally in the analytic topology on $Y$, namely, taking $U\subset Y$ such that $X\to U$ is Zariski closed, one can glue $\widetilde{U}$ and $Y \backslash X$ along the complement of the exceptional divisor.

  On the other hand,  for $\n{B}$ a solid smooth $\n{A}$-algebra, by Lemma \ref{LemmaPerfectDiagonal} we know that the multiplication map  $\n{B} \otimes_{\n{A}} \n{B} \to \n{B}$ realizes $\n{B}$ as a perfect $\n{B} \otimes_{\n{A}} \n{B}$-module, allowing the construction of the deformation of the normal cone for any diagonal embedding $\Delta: X \to X\times_{S} X$ for any solid smooth map $X \to S$ .   
 \end{remark}

\begin{proof}[Proof of  Theorem \ref{TheoSerreDuality}]
Let $f: X \to S$ be a solid smooth morphism of derived Tate adic spaces,  consider the diagonal map $\Delta_f : X \to X\times_S X=:Y$, and let $\pi_i: X\times_S X \to X$ denote the projection maps.   By Lemma \ref{LemmaEtaleStandardDiagonal} the map $\Delta_f$ is a locally Zariski closed immersion such that $\s{O}_X$ is a perfect $\s{O}_U$-complex for some open neighbourhood $U\subset X\times_S X$ containing $\Delta_f(X)$.  By smooth base change,  we have a natural isomorphism 
\[
f^! 1_{S}= \Delta_f^* \pi_1^* f^! 1_{S} = \Delta_f^* \pi_2^! 1_{X}.
\]
Therefore, it suffices to prove the statement for the projection $\pi_2: X\times_S X \to X$, or more generally, that when we have a section $s:S \to X$ such that $\s{O}_S$ is a perfect complex in an open subspace $U\subset X$ for the analytic topology, there is a natural equivalence $s^*\Omega^d_{X/S}[d]= s^* f^! 1_{S}$.  Consider the deformation to the normal cone of $s$
\[
\widetilde{f}: \widetilde{X} \to \widetilde{S}=S \times \bb{P}^1
\]
together with the section $\widetilde{s}: \widetilde{S} \to \widetilde{X}$. Over   $\bb{P}^1\backslash \{0\}$ the section $\widetilde{s}$ is isomorphic to the base change of $S\to X$, and the fiber at $0$ is the zero section of the analytification of the  normal cone of $s$ (see Definition \ref{DefinitionAnalytificationFunctor} and Construction \ref{ConstructionAnalytificationVB}). 

The pullback functor $\pi^*: \Mod_{\sol}(S) \to \Mod_{\sol}(\bb{P}^1_S)$ is fully faithful. Indeed, the map $\pi:\bb{P}^1_S \to S$ is weakly cohomologically proper being the base change of $\bb{P}^1_{\bb{Z}} \to \SpecAn \bb{Z}_{\sol}$, and this last being the glueing of $\AnSpec (\bb{Z}[T],\bb{Z})_{\sol}$ and $\AnSpec (\bb{Z}[T^{-1}],\bb{Z})_{\sol}$ along the torus $\AnSpec (\bb{Z}[T^{\pm}],\bb{Z})_{\sol}$.  Thus, by projection formula  and proper base change it suffices to show that $\pi_* 1_{\bb{P}^1_{\bb{Z}}}= \bb{Z}$ which is classical. We make the following claim:

\begin{claim}
 The sheaf $\widetilde{s}^* \widetilde{f}^!   \s{O}_{\widetilde{S}}(d)$ belongs to the essential image of $\pi^*$, where $ \s{O}_{\widetilde{S}}(d)$ is the $d$-th Serre twist of $\widetilde{S}= \bb{P}^1_S$.
\end{claim}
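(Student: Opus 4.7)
The plan is to reduce the claim to an explicit identification of the normal bundle of the section $\widetilde{s}$ inside the deformation to the normal cone $\widetilde{X}$. First I would verify that $\widetilde{f}\colon\widetilde{X}\to \widetilde{S}=\bb{P}^1_S$ is itself solid smooth of the same relative dimension $d$ as $f$; this should follow from the fact that the deformation to the normal cone construction of Remark \ref{RemarkDeformationNormalCone} preserves smoothness of the ambient space over the base whenever the blown-up Zariski closed immersion has locally free conormal bundle, which is our situation by Corollary \ref{CoroSmoothImmersion}. In particular $\widetilde{s}$ is a section of the solid smooth map $\widetilde{f}$, so it is a Zariski closed immersion whose conormal bundle $N_{\widetilde{s}}^{\vee}$ is locally free of rank $d$. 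By Proposition \ref{PropositionLCI}(2) this makes $\widetilde{s}$ cohomologically smooth with
\[
\widetilde{s}^!\s{O}_{\widetilde{X}}=\det(N_{\widetilde{s}})[-d].
\]

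With this in hand, use the chain rule $\widetilde{s}^!\widetilde{f}^!=(\widetilde{f}\circ\widetilde{s})^!=\id^!=\id$ together with the identity $\widetilde{s}^!=\widetilde{s}^*\otimes \widetilde{s}^!1$ coming from the cohomological smoothness of $\widetilde{s}$ to get
\[
\s{O}_{\widetilde{S}}(d)=\widetilde{s}^!\widetilde{f}^!\s{O}_{\widetilde{S}}(d)=\widetilde{s}^*\widetilde{f}^!\s{O}_{\widetilde{S}}(d)\otimes \det(N_{\widetilde{s}})[-d],
\]
which rearranges to
\[
\widetilde{s}^*\widetilde{f}^!\s{O}_{\widetilde{S}}(d)\;\cong\;\s{O}_{\widetilde{S}}(d)\otimes \det(N_{\widetilde{s}}^{\vee})[d].
\]
Thus the claim is reduced to understanding the invertible sheaf $\det(N_{\widetilde{s}})$ on $\widetilde{S}=\bb{P}^1_S$ and showing that it combines with the twist $\s{O}_{\widetilde{S}}(d)$ to produce a sheaf pulled back from $S$.

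The key structural input is the natural isomorphism
\[
N_{\widetilde{s}}\;\cong\;\pi^* N_s\otimes \s{O}_{\widetilde{S}}(1),
\]
a classical feature of the deformation to the normal cone visible from the Rees algebra presentation $\widetilde{X}|_{\bb{A}^1_S}=\mathrm{AnSpec}_X\Sym(I\cdot t)$, where $t$ is the coordinate on $\bb{A}^1\subset\bb{P}^1$ and $I\subset\s{O}_X$ is the defining ideal of $s$; this transfers to our derived analytic setting via the analytification functor of Definition \ref{DefinitionAnalytificationFunctor} and the compatibility of the cotangent complex with analytic base change. Granting this, $\det(N_{\widetilde{s}}^{\vee})\cong \pi^*\det(N_s^{\vee})\otimes \s{O}_{\widetilde{S}}(-d)$, and substitution gives
\[
\widetilde{s}^*\widetilde{f}^!\s{O}_{\widetilde{S}}(d)\;\cong\;\pi^*\det(N_s^{\vee})[d]\;=\;\pi^*\bigl(s^*\Omega^d_{X/S}[d]\bigr),
\]
which is manifestly in the essential image of $\pi^*$, and which moreover already predicts the value of $s^*f^!1_S$ to be checked in the remainder of the proof of Theorem \ref{TheoSerreDuality}.

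The main obstacle is the identification $N_{\widetilde{s}}\cong \pi^* N_s\otimes \s{O}_{\widetilde{S}}(1)$ in the derived analytic framework: while it is classical at the level of discrete schemes, one must verify that the construction of the deformation to the normal cone commutes appropriately with analytification, and that the conormal exact sequence on $\widetilde{S}$ splits off a twist by $\s{O}(1)$ as claimed. An alternative approach that avoids this computation would be to invoke the $\bb{G}_m$-equivariance of $\widetilde{X}\to\widetilde{S}$ (scaling the $\bb{P}^1$-coordinate): the sheaf $\widetilde{s}^*\widetilde{f}^!\s{O}_{\widetilde{S}}(d)$ is naturally $\bb{G}_m$-equivariant and a weight computation at the origin shows that it has total weight zero, from which $\bb{G}_m$-equivariant descent on $\bb{P}^1_S$ forces it to be pulled back from $S$; either path converges on the same conclusion.
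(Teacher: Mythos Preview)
Your approach is correct in outline but genuinely different from the paper's, and the gaps you flag are exactly the right ones.

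The paper argues by \emph{\'etale reduction}: working locally on $S$ and around the section, it picks a standard solid \'etale map $g\colon X\to \bb{D}^d_S$, verifies (citing \cite[Proposition 13.3]{CondensedComplex} and \cite[Corollary 3.54]{MaoCrystalline}) that the deformation to the normal cone is compatible with this factorization in the sense that the square
\[
\begin{tikzcd}
\widetilde{X}\ar[r,"\widetilde g"]\ar[d] & \widetilde{\bb{D}^d_S}\ar[d]\\
\bb{P}^1_X\ar[r] & \bb{P}^1_{\bb{D}^d_S}
\end{tikzcd}
\]
is cartesian, and uses $\widetilde{g}^!=\widetilde{g}^*$ to replace $X$ by $\bb{D}^d_S$ with its zero section. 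From there a base change reduces to the algebraic statement for $\bb{A}^d_{\bb{Z}}$, which is classical. The paper never establishes smoothness of $\widetilde{f}$ globally, nor computes $N_{\widetilde{s}}$; it sidesteps both by working in an explicit local model.

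Your route trades these reductions for a direct global identification via $N_{\widetilde{s}}\cong\pi^*N_s\otimes\s{O}(1)$. This is more conceptual and actually proves more than the Claim: it pins down the sheaf as $\pi^*(s^*\Omega^d_{X/S}[d])$, which would shortcut the subsequent comparison of fibers at $0$ and $\infty$ in the proof of Theorem \ref{TheoSerreDuality}. The cost is that you must establish (a) solid smoothness of $\widetilde{f}$ and (b) the normal bundle formula in the derived Tate setting. Both are true and both follow from the Rees presentation you mention, but neither is provided by the paper, so they would need to be written out. Note in particular that invoking Corollary \ref{CoroSmoothImmersion} for $\widetilde{s}$ already presupposes (a), so you cannot use that corollary to deduce local freeness of $N_{\widetilde{s}}^{\vee}$ without first checking $\widetilde{f}$ is smooth; you should instead read it off the Rees algebra directly. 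Your alternative $\bb{G}_m$-equivariance sketch is closer in spirit to the paper's method but would still need the equivariant descent statement made precise.
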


 Suppose this holds true, and let $\iota_{0}:S \to \bb{P}^1_{S}$ and $\iota_{\infty}: S \to \bb{P}^1_{S}$ the $0$ and $\infty$-sections. Then we have natural isomorphisms 
\begin{equation}
\label{eqTwoPullbackCanonicalNormalCone}
s^*f^! 1_{S}\cong \iota_{\infty}^*  \widetilde{s}^* \widetilde{f}^!   \s{O}_{\widetilde{S}}(d) \cong \iota_0^* \widetilde{s}^* \widetilde{f}^! \s{O}_{\widetilde{S}}(d) \cong \bar{\iota}_{0}^* p^! 1_{S}
\end{equation}
where $p: \n{N}_{X/S}^{\an} \to S$ is the (analytic) normal cone of $s$ and $\bar{\iota}_0: S \to \n{N}_{X/S}^{\an}$ is the zero section. 

\begin{proof}[Proof of the Claim] The formation $\widetilde{X}$ is local on $S$ and the section $s:S\to X$, thus, by taking rational covers, we can assume that $S=\AnSpec \n{A}$ and that $X= \AnSpec \n{B}$ is standard solid \'etale over $S$. Write $\n{B}= \n{A}\langle T_1,\ldots, T_{d+c} \rangle_{\sol}/^{\bb{L}}(f_1,\ldots, f_{c})$ with $g=\det (\frac{\partial f_i}{\partial T_j})_{1\leq i,j\leq c}$ invertible. The last $d$ coordinates produce a standard solid \'etale map 
\[
g:X \to \bb{D}^{d}_{S}. 
\]
Thus, the composite $s'= g\circ s:S \to \bb{D}^{d}_{S}$ produces a section. Consider the cartesian square 
\[
\begin{tikzcd}
 S' \ar[r] \ar[d,"s''"] & S \ar[d, "s'"] \\ 
 X\ar[r, "g"] & \bb{D}^{d}_S,
\end{tikzcd}
\]
Then,  the section $S \to X$ produces a retract  $S \xrightarrow{r} S'$ which is necessarily a Zariski closed immersion, and by Proposition \ref{PropMapsEtalemaps} (3), it is actually a closed and open immersion associated to a closed and open subspace of the underlying adic space. Summarizing, we have the diagram
\[
\begin{tikzcd}
 & X \ar[r, "g"] \ar[dl, bend right, "f"'] & \bb{D}_{S}^d \ar[d,"p"'] \\
 S \ar[ur, "s"'] \ar[r,"r"]& S' \ar[r] \ar[u, "s''"'] & S  \ar[u, bend right, "s'"']
\end{tikzcd}
\]
where the square is cartesian and $r$  is an open and closed immersion. Writting $S' = S \sqcup S''$, and  replacing $X$ with a neighbourhood  of $s''$ of the form $X_1\sqcup X_2$ such that $X_1 \cap s''(S')= S$ and $X_2\cap s''(S')= S''$, we can  assume that $S=S'$. 

 We have a diagram of deformations to normal cone 
\[
\begin{tikzcd}
& \bb{P}^1_S  \ar[ld, "\widetilde{s}"'] \ar[rd, "\widetilde{s'}"]& \\  
 \widetilde{X} \ar[rr, "\widetilde{g}"] \ar[d] \ar[ddr, "\widetilde{f}"', bend right =100] & &  \widetilde{\bb{D}^d}_S \ar[d] \ar[ddl, bend left =100, "\widetilde{p}"]\\
\bb{P}^1_X  \ar[rr, "g"] \ar[rd] &   &   \bb{P}^1_{\bb{D}^d_{S}} \ar[ld] \\ 
&\bb{P}^1_S & 
\end{tikzcd}
\]
where the middle square is cartesian. Indeed, this follows from \cite[Proposition 13.3]{CondensedComplex}   (see also \cite[Corollary 3.54]{MaoCrystalline})  since for a surjection  $A \to B$ with kernel $I=[A\to B]$, the $I$-adic filtration $(I^n)_{n\in \bb{N}}$ is compatible with base change along $A$. In particular, $\widetilde{g}$ is solid \'etale and $\widetilde{g}^!= \widetilde{g}^*$. Therefore, we find natural equivalences of functors
\[
\widetilde{s}^* \widetilde{f}^! = \widetilde{s}^* \widetilde{g}^* \widetilde{p}^! = \widetilde{s'}^* \widetilde{p}^!,
\]
this reduces the claim to the case of a disc $X=\bb{D}^d_{S}$. By a change of coordinates, we can even assume that the section $s:S \to \bb{D}^d_{S}$ is given by the zero section. Thus, by base change we can further reduce to the algebraic statement of $\bb{A}^1_{\bb{Z}}= \Spec \bb{Z}[T_1,\ldots, T_d]$ with the zero section, where this is classical and follows by an explicit computation. 
\end{proof}

Let $p:\n{N}_{X/S} \to S$ be the normal cone of the section $s$, and $\bar{\iota}_0: S \to \n{N}_{X/S} \to S$. To end the proof we need to show that there is a natural equivalence 
\[
\overline{\iota}_0^* p^! 1_S = s^* \Omega^d_{X/S}[d]. 
\]
It suffices to show more generally that for a vector bundle $\n{E}$ of rank $d$  over $S$, with analytic geometric realization $q:\underline{\n{E}}^{\an} \to S$ and zero section $\iota: S \to \underline{\n{E}}^{\an}$, there is a natural equivalence 
\[
\iota^* q^! 1_{S} = \bigwedge^d \n{E}^{\vee}[d].
\]
The functor mapping $[\n{E} \to S]$ to $\iota^* q^! 1_{S}[-d]$ defines a map $*/ \GL_d \to \bb{G}_m$ of stacks.  Equivalently, it defines a line bundle  over $*/ \GL_d$ seen as a stack in  the analytic topology of $\Aff^{b}_{R_{\sol}}$. Thus, to identify this object it suffices to compute it for the standard vector bundle  of rank $d$  over $*/\GL_d$, this is proven  independently in Proposition \ref{PropositionDAggerCartier2}.
\end{proof}

 We finish this section by describing the smooth  objects of solid smooth  maps  for the six functors of solid quasi-coherent sheaves.

\begin{prop}
Let $f:X\to Y$ be a solid smooth  morphism of derived  adic spaces over $\n{A}$. Then an object $P\in \Mod_{\sol}(X)$ is $f$-smooth  if and only if it is dualizable.
\end{prop}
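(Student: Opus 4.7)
The plan is to reduce both implications to the composition/base-change formalism for smooth objects in the Lu--Zheng 2-category developed in \S\ref{SubsectionAbstractSix}, using that $f$ is cohomologically smooth (Proposition \ref{PropGeoSmoothIsSmooth}) and that the diagonal $\Delta_f \colon X \to X\times_Y X$ is, locally, an open immersion (Lemma \ref{LemmaEtaleStandardDiagonal}) and therefore cohomologically smooth.

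For the easy direction ($\Leftarrow$): assume $P$ is dualizable, i.e.\ $P$ is $\mathrm{id}_X$-smooth in $\LZ_{\s{D},X}$. Since $f$ is cohomologically smooth, $1_X$ is $f$-smooth. I would then apply Proposition \ref{PropCompositionProperSmoothObjects}(1) to the composition $X \xrightarrow{\mathrm{id}} X \xrightarrow{f} Y$, with the $\mathrm{id}_X$-smooth object $P$ on the left and the $f$-smooth object $1_X$ on the right, to conclude that $P \otimes \mathrm{id}^* 1_X = P$ is $f$-smooth.

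For the hard direction ($\Rightarrow$): assume $P$ is $f$-smooth. First I would base change along $f$ itself. Form the cartesian square with projections $p_1,p_2 \colon X\times_Y X \to X$; by Proposition \ref{PropLocalTargetBaseChange}(1), the pullback $p_1^* P \in \s{D}(X\times_Y X)$ is $p_2$-smooth. Next, the diagonal $\Delta = \Delta_f \colon X \to X\times_Y X$ is a section of $p_2$, and by Lemma \ref{LemmaEtaleStandardDiagonal} (combined with Proposition \ref{PropGeoSmoothIsSmooth}) it is cohomologically smooth (indeed cohomologically \'etale, since it is locally an open immersion). I would then invoke Proposition \ref{PropLocalSourceTarget}(1.i) applied to the composition $X \xrightarrow{\Delta} X\times_Y X \xrightarrow{p_2} X$ (where $\Delta$ plays the role of the smooth map $g$ and $p_2$ plays the role of $f$): from $p_1^* P$ being $p_2$-smooth it follows that $\Delta^*(p_1^* P)$ is $(p_2 \circ \Delta)$-smooth. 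But $p_2 \circ \Delta = \mathrm{id}_X$ and $\Delta^* p_1^* P \simeq P$, so $P$ is $\mathrm{id}_X$-smooth, i.e.\ dualizable.

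The main technical input is thus the combination of smooth base change (Proposition \ref{PropLocalTargetBaseChange}) and descent of smoothness along a smooth section (Proposition \ref{PropLocalSourceTarget}); the only nontrivial geometric fact needed is the cohomological smoothness of $\Delta_f$, which has already been established via Lemma \ref{LemmaEtaleStandardDiagonal}. No further computation should be required.
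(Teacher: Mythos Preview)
Your argument is essentially the same as the paper's: pull back $P$ along $p_1$ and then along the diagonal to land back on $X$ with the composed map being $\id_X$. There is one misstatement, however. For a solid smooth (but not \'etale) morphism $f$, the diagonal $\Delta_f\colon X\to X\times_Y X$ is \emph{not} locally an open immersion, and Lemma~\ref{LemmaEtaleStandardDiagonal} does not apply---that lemma concerns standard solid \'etale maps only. What you need is that $\Delta_f$ is cohomologically smooth, and the correct reference for this is Proposition~\ref{PropositionLCI} (or Corollary~\ref{CoroSmoothImmersion}): after reducing to the affinoid case, $\Delta_f$ is a Zariski closed immersion between solid smooth $Y$-spaces with $\s{O}_X$ perfect over $\s{O}_{X\times_Y X}$ (Lemma~\ref{LemmaPerfectDiagonal}), hence cohomologically smooth with dualizing sheaf $\bigwedge^d\n{N}_{X/(X\times_Y X)}[-d]$, not $1_X$. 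With that correction, your invocation of Proposition~\ref{PropLocalSourceTarget}(1.i) goes through unchanged.

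For the converse direction (dualizable $\Rightarrow$ $f$-smooth), your use of the composition formula Proposition~\ref{PropCompositionProperSmoothObjects}(1) is correct and slightly slicker than the paper's approach, which instead verifies the explicit criterion of Proposition~\ref{PropSmoothProperObjects}(1.b) by writing $\n{D}_f(P)=f^!1_Y\otimes P^\vee$ and checking the required map on $X\times_Y X$ directly.
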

\begin{proof}
Suppose that $f$ is solid smooth, and let $P\in \Mod_{\sol}(X)$. Being $f$-smooth is a local property in the analytic topology, we can then assume that both $X$ and $Y$ are affinoids. Consider the diagram 
\[
\begin{tikzcd}
X\ar[r,"\Delta"] & X\times_Y X \ar[r, "\pi_2"] \ar[d, "\pi_1"] & X \ar[d, "f"] \\
 & X \ar[r, "f"] & Y.
\end{tikzcd}
\]
By Proposition \ref{PropositionLCI}, $\Delta$ is cohomologically smooth, then $P=\Delta^* \pi_1^* P $ is $\id_{X}$-smooth which is the same as dualizable. Conversely, let $P$ be  dualizable. We then have that $\n{D}_{f}(P)= \iHom_{X}(P, f^! 1_Y)= f^! 1_Y \otimes P^{\vee}$ is dualizable and that the natural map 
\[
\pi_1^* \n{D}_{f}(P)\otimes \pi_2^* P \to \iHom_{X\times_Y X}(\pi_1^*P, \pi_2^! P)
\]
is an isomorphism, then $P$ is $f$-smooth by Proposition \ref{PropLocalSourceTarget} (1.b).
\end{proof}


\subsection{Formally overconvergent \'etale and smooth maps}
\label{SubsectionFormallyOverconvergentEtaleSmooth}

In this final section we introduce two new deformation properties that will  play a fundamental role in the definition of the analytic de Rham stack. 

\begin{definition}
Let $\n{A} \in \Aff^{b}_{R_{\sol}}$, a \textit{$\dagger$-nilpotent ideal} of $\n{A}$ is a full sub $\n{A}$-module $I\subset \n{A}$  contained in $\Nil^{\dagger}(\n{A})$. 
\end{definition}

\begin{definition}
\label{DefinitionDaggerFormalSmoothEtale}
    Let $T:\n{F} \to \n{F}'$ be a natural transformation of functors $\n{F},\n{F}' : \Aff^{b}_{R_{\sol}} \to \Ani$. We say that $T$ is  \textit{$\dagger$-formally smooth} (resp.  \textit{$\dagger$-formally \'etale}) if it is formally smooth (resp. formally \'etale) and for all $\n{A}\in  \Aff^{b}_{R_{\sol}}$, and all $\dagger$-nilpotent ideal  $I$ of $\n{A}$, the natural map of anima
    \[
    \n{F}(\n{A}) \to \n{F}(\n{A}/I) \times_{\n{F}'(\n{A}/I)} \n{F}'(\n{A})
    \]
    is surjective (resp. an equivalence). 
\end{definition}

\begin{remark}
 Since the underlying ring of $\n{A}/I$ sits in degree $0$, to check that a formally smooth (resp. \'etale) functor $T:\n{F} \to \n{F}'$ is  $\dagger$-formally smooth (resp. \'etale) it is enough to take $\n{A}$ an static bounded affinoid ring. 
\end{remark}

\begin{prop}
\begin{enumerate}
\item A composition of  $\dagger$-formally  smooth morphisms is   $\dagger$-formally  smooth.

\item If $\{\n{F}_i\to \n{F}\}_{i\in I}$ is a  cofiltered diagram of   $\dagger$-formally smooth functors  with each arrow $\n{F}_i\to \n{F}_{j}$ formally \'etale, then $\n{F}'=\varprojlim_{i}\n{F}_i \to \n{F}$ is  $\dagger$-formally  smooth.

\item A pullback of   $\dagger$-formally smooth maps is  $\dagger
$-formally smooth. 
\end{enumerate}

Similar statements hold for $\dagger$-formally  \'etale. 

\end{prop}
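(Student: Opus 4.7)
The plan is to split the verification of stability into two independent pieces: the purely formally smooth (resp. \'etale) requirement, which is inherited from the classical theory in the sense of \cite[\S 3.3--3.4]{LurieDerivedAlgebraic} (composition, pullback, and formally \'etale cofiltered limits all preserve this property via the cotangent triangle and infinitesimal cohesiveness), and the additional surjectivity (resp. equivalence) condition against $\dagger$-nilpotent ideals from Definition \ref{DefinitionDaggerFormalSmoothEtale}. Since the latter is a condition at the level of $0$-truncated bounded affinoid rings, the main work reduces to a diagram chase.

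For composition (1) and pullback (3) this chase is elementary. Given $\n{F}\xrightarrow{T_1}\n{G}\xrightarrow{T_2}\n{H}$ both $\dagger$-formally smooth and a point of $\n{F}(\n{A}/I)\times_{\n{H}(\n{A}/I)}\n{H}(\n{A})$, I would first push through $T_1$ and lift along $T_2$ using its $\dagger$-formal smoothness, then lift along $T_1$ using its own $\dagger$-formal smoothness to produce the required element of $\n{F}(\n{A})$. For the pullback $\n{F}_1\times_{\n{G}}\n{H}\to \n{H}$ of a $\dagger$-formally smooth $\n{F}_1\to\n{G}$, a point of $(\n{F}_1\times_{\n{G}}\n{H})(\n{A}/I)\times_{\n{H}(\n{A}/I)}\n{H}(\n{A})$ decomposes as a triple from which one extracts compatible data to lift only through $\n{F}_1\to\n{G}$, and then reassembles the pair in $(\n{F}_1\times_{\n{G}}\n{H})(\n{A})$. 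The corresponding $\dagger$-formally \'etale statements follow from the identical diagram chase applied to the fiber, using contractibility instead of mere non-emptiness.

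For the cofiltered limit assertion (2) I would first use that limits commute with limits to identify the codomain $\n{F}'(\n{A}/I)\times_{\n{F}(\n{A}/I)}\n{F}(\n{A})$ with $\varprojlim_i T_i$, where $T_i:=\n{F}_i(\n{A}/I)\times_{\n{F}(\n{A}/I)}\n{F}(\n{A})$, and the map under inspection with $\varprojlim_i(\n{F}_i(\n{A})\to T_i)$. Each level is surjective (resp. an equivalence) by hypothesis on $\n{F}_i\to\n{F}$. For the \'etale case this immediately gives the conclusion since equivalences are stable under limits. The smooth case is the main obstacle: pointwise $\pi_0$-surjectivity of a tower of maps of anima does not survive passage to the limit in general. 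My proposed remedy is to prove the following reduction lemma as the technical core of the argument: if $\n{F}_1,\n{F}_2\to\n{F}$ are $\dagger$-formally smooth and $\n{F}_1\to\n{F}_2$ is formally \'etale, then $\n{F}_1\to\n{F}_2$ is itself $\dagger$-formally \'etale. Given this lemma, the fibers $\Phi_i$ of $\n{F}_i(\n{A})\to T_i$ over a chosen point of the limit form a system of non-empty anima whose transition maps become equivalences, so $\varprojlim_i\Phi_i$ is non-empty and produces the desired lift.

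To establish this reduction lemma is where the real work lies. The fiber of $\n{F}_1(\n{A})\to\n{F}_1(\n{A}/I)\times_{\n{F}_2(\n{A}/I)}\n{F}_2(\n{A})$ sits in a fiber sequence with the fibers $\Phi_1,\Phi_2$ measuring $\dagger$-formal smoothness of $\n{F}_1,\n{F}_2\to\n{F}$ respectively. For square-zero extensions, Proposition \ref{PropLurieDerived3.2.12} and the vanishing of $\bb{L}_{\n{F}_1/\n{F}_2}$ identify $\Phi_1\xrightarrow{\sim}\Phi_2$ via the cotangent complex, so the fiber is contractible. For a general $\dagger$-nilpotent $I\subset\n{A}$, the plan is to exploit the explicit presentation of $\Nil^{\dagger}(\n{A})$ from Subsection \ref{SubsectionBoundedRings} as the directed union of images of overconvergent algebras $R_{\sol}\{\bb{N}[S]\}^{\dagger}$, filter $I$ by the descending subideals of elements bounded by $\pi^n$, and apply the square-zero cotangent argument at each graded step, using the nilcompleteness of all functors involved to pass to the transfinite composition. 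This filtration argument --- bridging the classical square-zero infinitesimal calculus and the genuinely non-nilpotent $\dagger$-nilpotent setting --- is the technical bottleneck of the entire proof.
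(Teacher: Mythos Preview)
For parts (1) and (3) your diagram chase is correct and matches the paper's (equally terse) ``same as for formally smooth maps.'' For part (2) you have been more careful than the paper, which disposes of the $\dagger$-lifting condition with the single sentence ``The $\dagger$-formally smooth condition also passes through the limit.'' You are right that $\pi_0$-surjectivity of each $\n{F}_i(\n{A})\to T_i$ does not formally pass to a cofiltered limit of anima, and your reduction lemma (formally \'etale between two $\dagger$-formally smooth functors over $\n{F}$ is automatically $\dagger$-formally \'etale) would cleanly close this gap if established.

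The genuine problem is your proposed proof of that reduction lemma. You plan to filter a $\dagger$-nilpotent ideal $I\subset\Nil^{\dagger}(\n{A})$ by the subideals of elements bounded by $\pi^n$ and treat each graded step via the square-zero cotangent calculus, then pass to the limit using nilcompleteness. This cannot work: elements of $\Nil^{\dagger}(\n{A})$ are ``topologically zero'' in the sense that $|a|\leq |\pi^n|$ for all $n$, but they need not be nilpotent at all --- the variable $T\in\bb{Q}_p\{T\}^{\dagger}$ is the basic example. So your graded pieces are not square-zero (or even nilpotent) extensions, and even an $I$-adic refinement fails because the rings in question are not $I$-adically complete (e.g.\ $\bb{Q}_p\{T\}^{\dagger}$ is very far from $T$-adically complete). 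The nilcompleteness you invoke concerns Postnikov truncations $\tau_{\leq n}$, not ideal-adic towers, so it does not let you reassemble $\n{A}$ from $\n{A}/I$. This is exactly the point where $\dagger$-formal smoothness is genuinely stronger than the classical notion: there is no infinitesimal bridge from square-zero data to general $\dagger$-nilpotent thickenings, and your filtration argument is an attempt to build precisely such a bridge. The reduction lemma, if true, will need a different argument.
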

\begin{proof}
    Parts (1) and (3) are proved in the same way as for formally smooth maps. For part (2), note that since the transition maps of the cofiltered limit are formally \'etale, the cotangent complex $\bb{L}_{\varprojlim_{i} \n{F}_i/ \n{F} }$ is still the dual of a connective perfect  complex. It is also clear that the map $\varprojlim_i\n{F}_i\to \n{F}$ is nilcomplete and infinitesimally cohesive since limits commute with limits.  The $\dagger$-formally smooth condition also passes through the limit. 
\end{proof}

The main reason to define these overconvergent deformation properties is that they hold for solid smooth and \'etale maps.

\begin{prop}
\label{PropFormallyInftSmoothEtale}
    A solid \'etale morphism of bounded affinoid rings is  $\dagger$-formally \'etale.  A  solid smooth morphism of affinoid rings is   $\dagger$-formal smooth locally in the analytic topology. Moreover, a standard solid smooth morphism of bounded affinoid rings is $\dagger$-formally smooth.  
\end{prop}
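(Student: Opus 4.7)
The plan is to reduce the proposition to a Hensel-type lemma for standard solid smooth maps via Theorem \ref{TheoFormalSmoothnesvsSmoothness} and Corollary \ref{CoroEtaleMapsStacks}, both of which realize solid smooth (resp. \'etale) maps of bounded affinoid rings as, locally in the analytic topology of the target, standard solid smooth (resp. \'etale). Moreover, being $\dagger$-formally smooth or $\dagger$-formally \'etale can be checked locally in the analytic topology of the target: the vanishing of the cotangent complex in the \'etale case is obviously local, and lifts constructed on a rational cover of the target can be glued via the uniqueness of $\dagger$-formally \'etale lifts (recall, by Proposition \ref{PropAdicSpectrumInvariantReduced}, that any rational cover of the target lifts to a rational cover of the tested bounded affinoid ring through the quotient by any $\dagger$-nilpotent ideal). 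Thus it suffices to show that a standard solid smooth map is $\dagger$-formally smooth and that a standard solid \'etale map is $\dagger$-formally \'etale, both of which will follow from a single Newton iteration.

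Fix a standard solid smooth map $\n{A}\to \n{B}=\n{A}\langle T_1,\ldots, T_n \rangle_{\sol}/^{\bb{L}}(f_1,\ldots,f_k)$ with $J(T):=\det(\partial f_i/\partial T_j)_{1\leq i,j\leq k}$ invertible in $\n{B}$, a bounded affinoid $\n{A}$-algebra $\widetilde{\n{C}}$ with $\dagger$-nilpotent ideal $I$, and a map $\bar\phi:\n{B}\to \widetilde{\n{C}}/I$. First lift the composite $\n{A}\langle T\rangle_{\sol}\to \widetilde{\n{C}}/I$ to $\widetilde\phi_0:\n{A}\langle T\rangle_{\sol}\to \widetilde{\n{C}}$: choose any set-theoretic lifts $s_i\in \underline{\widetilde{\n{C}}}(*)$ of the power-bounded images $\bar t_i\in (\widetilde{\n{C}}/I)^0$; since the natural surjection $\widetilde{\n{C}}/I\twoheadrightarrow \widetilde{\n{C}}^{\dagger-\red}$ preserves power-boundedness (as $I\subset\Nil^\dagger(\widetilde{\n{C}})$), Proposition \ref{PropLiftingOverconvergentAlgebrasLift}(3) shows the $s_i$ are power bounded in $\widetilde{\n{C}}$, and the same argument carried out uniformly in the profinite parameter of the mapping anima produces the full lift $\widetilde\phi_0$. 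Set $c_i=f_i(s)\in I\subset\Nil^\dagger(\widetilde{\n{C}})$; the element $J(s)$ reduces to an invertible element of $\widetilde{\n{C}}/I$, hence of $\widetilde{\n{C}}^{\dagger-\red}$, so $J(s)$ is invertible in $\widetilde{\n{C}}$ by Proposition \ref{PropLiftingOverconvergentAlgebrasLift}(2). This already settles the case $k=0$ of a pure affine space projection.

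The heart of the argument is the Hensel correction: find $u_1,\ldots,u_k\in I$ with $f_i(s_1+u_1,\ldots,s_k+u_k,s_{k+1},\ldots,s_n)=0$ in $\widetilde{\n{C}}$. The Taylor expansion $f_i(s+u)=c_i+\sum_{j=1}^k J_{ij}(s)u_j+g_i(u)$, with $g_i\in \widetilde{\n{C}}^0[u]$ of $u$-degree $\geq 2$, converts the equation $f(s+u)=0$ into the fixed-point equation $u=-J(s)^{-1}(c+g(u))$. Working universally in augmentation variables $Y_1,\ldots,Y_k$, Newton iteration $u^{(m+1)}=-J(s)^{-1}(Y+g(u^{(m)}))$ produces a formal series $U^{*}(Y)$ with $U^*(0)=0$ solving $f(s+U^*(Y))=Y$. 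Coefficient estimates coming from the power-boundedness of $s$ and $J(s)^{-1}$, and from growth control analogous to the one in the proof of Lemma \ref{LemmaEtaleStandardDiagonal} and Example \ref{ExamplePowerSeriesBoundedAlgebra}, upgrade $U^*$ to an element of the overconvergent algebra $\widetilde{\n{C}}\{Y_1,\ldots,Y_k\}^\dagger$. Since $c_i\in\Nil^\dagger(\widetilde{\n{C}})$, the substitution $Y_i\mapsto c_i$ extends, by Lemma \ref{LemmaOverconvergentFunctionsIdeal}(2), to a morphism of analytic $\widetilde{\n{C}}$-algebras $\widetilde{\n{C}}\{Y\}^\dagger\to \widetilde{\n{C}}$; the image $u=U^*(c)$ lies in $I$ and yields the desired lift $\widetilde\phi:\n{B}\to \widetilde{\n{C}}$. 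In the \'etale case $k=n$, uniqueness of the lift follows from the same setup: two corrections $u,u'\in I$ differ by an element of $\Nil^\dagger$ satisfying a fixed-point equation in which the higher-order term is of degree $\geq 2$ in $u-u'$, so that $J(s)(u-u')\in\Nil^\dagger(\widetilde{\n{C}})(u-u')^{2}$ and iterating forces $u=u'$.

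The main technical obstacle will be the construction of the universal Newton series $U^*(Y)\in \widetilde{\n{C}}\{Y\}^\dagger$. This requires making precise the coefficient growth bounds of the Newton iteration so that the resulting formal series genuinely converges in the overconvergent completion rather than merely in the formal power series ring. These estimates are routine but delicate, and closely parallel the overconvergent convergence analysis already carried out in the proof of Lemma \ref{LemmaEtaleStandardDiagonal}.
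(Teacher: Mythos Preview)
Your Newton--Hensel strategy is correct and is what underlies the paper's proof as well, but the paper organizes it differently and this matters for the convergence estimates you flag as the main obstacle. Rather than iterating in $\widetilde{\n{C}}\{Y\}^\dagger$ where $\widetilde{\n{C}}$ is an arbitrary bounded affinoid ring, the paper first separates off the affine-space case $\n{A}\to\n{A}\langle T\rangle_{\sol}$ (immediate from Proposition~\ref{PropLiftingOverconvergentAlgebrasLift}(1)) and then, for the standard solid \'etale case, works in the universal ring $\n{A}\langle T\rangle_{\sol}\{S\}^\dagger/^{\bb{L}}(f_i-S_i)$, which Lemma~\ref{LemmaDaggerCompletionPowerSeries} identifies with $\n{B}\{S\}^\dagger$; the lift $\n{B}\to\widetilde{\n{C}}$ is then simply the evaluation $S_i\mapsto c_i\in\Nil^\dagger(\widetilde{\n{C}})$. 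The proof of that lemma carries out the Hensel iteration, but only after reducing $\n{A}$ to the explicit shape $R\langle\underline{X}\rangle_{\sol}\langle\bb{N}[K]\rangle$ (by writing $\pi_0(\n{A})$ as a sifted colimit of such quotients) so that the power-series calculus of Example~\ref{ExamplePowerSeriesBoundedAlgebra} and a quantitative $\pi$-adic Hensel lemma (Lemma~\ref{LemmaHenselQuantitative}) are available. Your direct iteration in $\widetilde{\n{C}}\{Y\}^\dagger$ would need essentially the same reduction somewhere to make the coefficient estimates go through. For uniqueness, the paper simply invokes the overconvergent-diagonal identity $\n{B}\otimes_{\n{A}}\n{B}\{T_i-S_i\}^\dagger\cong\n{B}$ from the proof of Lemma~\ref{LemmaEtaleStandardDiagonal}; your fixed-point argument is also valid but contains a slip: $g(u)-g(u')$ lies in $I\cdot(u-u')$ (linear in the difference, with coefficients in $I\subset\Nil^\dagger$), not in $\Nil^\dagger\cdot(u-u')^2$, and the conclusion then comes from invertibility of a matrix of the form $1+(\text{entries in }\Nil^\dagger)$ via Proposition~\ref{PropLiftingOverconvergentAlgebrasLift}(2), not from iteration.
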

\begin{proof}
We can assume without loss of generality that $\n{A}\to \n{B}$ is standard solid smooth or standard solid \'etale, namely, rational localizations are also written as composite of standard solid \'etale maps. First, let us show that $\n{A}\to \n{A}\langle T \rangle_{\sol}$ is  $\dagger$-formally smooth, it suffices to prove that $R_{\sol}\to R\langle T \rangle_{\sol}$ is  $\dagger$-formally smooth, but this follows from Proposition \ref{PropLiftingOverconvergentAlgebrasLift} (1)  and the fact that $R[T]$ is a projective animated $R$-algebra. Indeed, if $I \subset \n{A}$ is a $\dagger$-nilpotent ideal, then $(\n{A}/I)^{\dagger-\red}= \n{A}^{\dagger-\red}$ and a map $\bb{Z}[T] \to \n{A}$ extends to $\bb{Z}[T]_{\sol}$ if and only if it does for $\n{A}/I$.

We are  left to show that a standard solid \'etale morphism of bounded algebras is  $\dagger$-formally \'etale.  By writing $\pi_0(\n{A})$ as a sifted colimit of quotients of algebras of the form $R\langle \underline{X} \rangle_{\sol} \langle \bb{N}[S] \rangle$, we can assume that $\n{A}= R\langle X_1,\ldots, X_s\rangle_{\sol}\langle \bb{N}[S] \rangle$ for $S$ a profinite set and a finite set of variables $X_i$, and that $\n{B}= \n{A}\langle T_1,\ldots, T_d \rangle/^{\bb{L}}(f_1,\ldots, f_{d})$ with $a=\det(\frac{\partial f_i}{ \partial T_j})$ a unit. We can also assume that all the $f_i$ are of norm $\leq 1$ and that $ |\pi^{k}| \leq |a| \leq 1 $ for some $k\geq 0$.  By Lemma  \ref{LemmaEtaleStandardDiagonal} the map $\SpecAn \n{B}\to \SpecAn \n{A}$ is $0$-truncated, then we only need to prove the existence and uniqueness of lifts at the level of points.  Let $\n{D}$ be a bounded affinoid  ring and $I\subset  \n{D}$ a $\dagger$-nilpotent ideal, consider a solid commutative diagram 
\[
\begin{tikzcd}
\n{A} \ar[d] \ar[r] & \n{B} \ar[d] \ar[dl, dashed]  \\ 
\n{D} \ar[r] &  \n{D}/I.
\end{tikzcd}
\]
 We want to see that there is a unique dashed arrow $\n{B} \to \n{D}$ making the  diagram commutative.  By Proposition \ref{PropLiftingOverconvergentAlgebrasLift} we can find a lift $F:\n{A}\langle  T_1,\ldots, T_d\rangle_{\sol} \to \n{D}$ such that $f_i(\underline{T}) \in I \subset \Nil^{\dagger}(\n{D})$. Therefore, the map $F$ extends to a map 
 \[
 \n{A}\langle T_1,\ldots, T_n\rangle_{\sol} \{S_1,\ldots, S_n\}^{\dagger}/^{\bb{L}}(f_i-S_i)\to \n{D}.
 \]
By Lemma \ref{LemmaDaggerCompletionPowerSeries}  down below,  we  have an equivalence  of $\n{A} \{S_1,\ldots, S_d\}$-algebras
\[
\n{A}\langle T_1,\ldots, T_n\rangle_{\sol} \{S_1,\ldots, S_n\}^{\dagger}/^{\bb{L}}(f_i-S_i) \cong \n{B}\{S_1,\ldots, S_d\}
\] 
which shows that there is a lift $\n{B} \to \n{D}$ over $\n{D}/I$. Suppose we have two lifts $f_1,f_2:\n{B} \to \n{D}/I$. Then they extend to a map $\n{B}\otimes_{\n{A}} \n{B} \to \n{D}$, write 
\[
f:\n{B}\otimes_{\n{A}} \n{B} =\n{A}\langle T_1,\ldots, T_d, S_1,\ldots, S_d \rangle_{\sol}/(f_i(T),f_i(S)),
\]
then the differences $T_i-S_i$ are sent to $I$, and the map $f$ factors through  the overconvergent diagonal 
\[
\n{B}\otimes_{\n{A}} \n{B}\{T_i-S_i\}^{\dagger} \to \n{D},
\]
but the proof of Lemma \ref{LemmaEtaleStandardDiagonal} implies  that $\n{B}\otimes_{\n{A}} \n{B}\{T_i-S_i\}^{\dagger}=\n{B}$ proving the uniqueness. 
\end{proof}

The following lemma was used in the previous proposition.

 \begin{lemma}
 \label{LemmaDaggerCompletionPowerSeries}
Let $\n{A}$ be a bounded affinoid ring, $\n{D}= \n{A} \langle T_1,\ldots, T_d\rangle_{\sol}$ a solid Tate algebra over $\n{A}$ in $d$-variables, and $\n{B}= \n{D}/^{\bb{L}}(f_{1},\ldots, f_d)$ a standard solid \'etale algebra over $\n{A}$. Let $\n{C}= \n{D}\{f_i:i=1,\ldots,d\}^{\dagger}$ denote the idempotent algebra associated to the closed subspace $\Spa \n{B}\subset \Spa \n{D}$, i,e, the base change 
\[
\n{C} =\n{D}\otimes_{R_{\sol}[S_1,\ldots, S_d]} R\{S_1,\ldots, S_d\}^{\dagger}
\] 
mapping $S_d\mapsto f_d$. Then there is an isomorphism of $\n{A}\{S_1,\ldots, S_d\}$-algebras 
\[
\n{B}\{S_1,\ldots, S_d\} \cong \n{C}.
\]
\end{lemma}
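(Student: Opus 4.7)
\textit{Plan.} The strategy is a Newton iteration / implicit function theorem argument in the overconvergent setting, directly parallel to the proof of Lemma \ref{LemmaEtaleStandardDiagonal}. The key preliminary observation is that $\n{C}$ can be rewritten, via the change of variables $S_i \leftrightarrow f_i$, as the derived quotient
\[
\n{C} \;\cong\; \n{D}\{S_1,\ldots,S_d\}^{\dagger}/^{\bb{L}}(f_1(T)-S_1,\ldots,f_d(T)-S_d),
\]
since the map $R[S_1,\ldots,S_d]\to \n{D}$ defining $\n{C}$ sends $S_i\mapsto f_i$. The asserted isomorphism with $\n{B}\{S\}^{\dagger}$ then says that, after formally adjoining overconvergent parameters $S_i$ matched with the $f_i(T)$, one can straighten the $T$-coordinates to lie over a trivial copy of $\n{B}$; this is the overconvergent incarnation of the classical implicit function theorem, using that the Jacobian $\det(\partial f_i/\partial T_j)$ is invertible.

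\textit{Step 1: a map $\n{C}\to \n{B}\{S\}^{\dagger}$.} Let $\bar T_i\in \n{B}\{S\}^{\dagger}$ denote the canonical image of $T_i\in \n{B}$. Since $f_i(\bar T)=0$ in $\n{B}$, the elements $f_i(\bar T)$ lie in the kernel of $\n{B}\{S\}^{\dagger}\to \n{B}$, which is the overconvergent ideal generated by $S_1,\ldots,S_d$. Writing the Taylor expansion
\[
f_i(\bar T + u) \;=\; f_i(\bar T) + \sum_{j=1}^{d} (\partial_{T_j}f_i)(\bar T)\cdot u_j + h_i(\bar T,u),
\]
with $h_i$ collecting the terms of total degree $\geq 2$ in $u$, and letting $b=((\partial_{T_j}f_i)^{-1})$, one Newton-iterates
\[
u^{(n+1)} \;=\; b\bigl(S-f(\bar T)\bigr)\;-\;b\,h(\bar T, u^{(n)}),
\]
obtaining a sequence $\{u^{(n)}\}$ in $\n{B}\{S\}^{\dagger}$ whose convergence is governed by exactly the estimate $|b_{i,j}h_i|\leq |\pi^{n+1}|$ from the proof of Lemma \ref{LemmaEtaleStandardDiagonal}. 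The limit $u$ satisfies $f_i(\bar T+u)=S_i$, so $T_i\mapsto \bar T_i+u_i$, $S_i\mapsto S_i$ assembles into a well-defined map of $\n{A}\{S\}^{\dagger}$-algebras $\n{C}\to \n{B}\{S\}^{\dagger}$.

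\textit{Step 2: inverse map and mutual inverses.} Running the same Newton iteration inside $\n{C}$, using that in $\n{C}$ we have $f_i(T)=S_i$ which lies in the overconvergent ideal, produces lifts $\tilde T_i\in \n{C}$ of $T_i\in \n{B}$ with $f_i(\tilde T)=0$. This defines a morphism $\n{B}\{S\}^{\dagger}\to \n{C}$ by $T_i\mapsto \tilde T_i$ and $S_i\mapsto f_i$. Both composites of the two constructed maps are endomorphisms of $\n{A}\{S\}^{\dagger}$-algebras that reduce to the identity modulo the overconvergent ideal, and the uniqueness of the solutions of the respective implicit equations (itself proved by the same Taylor-expansion contraction argument, as in Lemma \ref{LemmaEtaleStandardDiagonal}) forces each composite to be the identity.

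\textit{Main obstacle.} The principal technical point is making the Newton iteration rigorous in the overconvergent rings $\n{C}$ and $\n{B}\{S\}^{\dagger}$, which are presented as colimits $\varinjlim_n R\langle S/\pi^n\rangle$ of honest Tate algebras. One must control the $\pi$-adic order of the successive corrections $u^{(n+1)}-u^{(n)}$ so that convergence is realized at a finite stage of the colimit, exactly mirroring the bookkeeping in Lemma \ref{LemmaEtaleStandardDiagonal}; the rest of the proof is essentially formal manipulation via these estimates.
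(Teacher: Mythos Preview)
Your approach is correct and closely related to the paper's, but packaged differently. Both arguments begin with the same rewriting $\n{C}\cong \n{D}\{S\}^{\dagger}/^{\bb{L}}(f_i(T)-S_i)$ and both require the reduction to $\n{A}=R\langle\underline{X}\rangle_{\sol}\langle\bb{N}[K]\rangle$ (which you only invoke implicitly via ``mirroring Lemma~\ref{LemmaEtaleStandardDiagonal}''); without it you have no integral model on which to make the $\pi$-adic estimates controlling your Newton iteration. Where the two proofs diverge is in how the isomorphism is established. You construct maps in both directions by explicit Newton iteration and then verify the composites are identities by uniqueness of \'etale lifts along the $\dagger$-nilpotent ideal. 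The paper instead constructs only one direction, via the quantitative Hensel Lemma~\ref{LemmaHenselQuantitative} applied at the integral level $\n{B}^0\to\n{C}^0_m$, and then checks the isomorphism by a sandwich argument: for $m'<m$ the maps factor through the formal completions $\n{B}^0[[S/\pi^m]]\to \n{D}^0[[S/\pi^m]]/^{\bb{L}}(f_i-S_i)$, and this last map is an equivalence because both sides are $(S/\pi^m)$-adically complete with isomorphic reduction. The paper's route avoids building the inverse explicitly and reduces the isomorphism check to a one-line completeness argument; your route is more symmetric and self-contained but demands tracking the estimates for both iterations. Two minor points: in Step~2 the assignment ``$S_i\mapsto f_i$'' should read $S_i\mapsto S_i$ (which equals $f_i(T)$ in $\n{C}$), and you should make explicit that the iterates converge inside a fixed $\n{B}^0\langle S/\pi^m\rangle$ for $m$ large relative to the bound on the inverse Jacobian, as this is what guarantees landing in the colimit.
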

 \begin{proof}
By writting $\pi_0(\n{A})$ as a sifted colimit of quotients of algebras of the form $R\langle  \underline{X} \rangle_{\sol} \langle \bb{N}[K] \rangle$ for finite set of variables $\underline{X}$ and profinite sets $K$, we can assume without loss of generality that $\n{A}=R\langle  \underline{X} \rangle_{\sol} \langle \bb{N}[K] \rangle$, that the $f_i$ have norm $\leq 1$ and that $g= \det( \frac{\partial f_i}{\partial T_j})_{1\leq i,j\leq d}$ satisfies $ |\pi^k| \leq |g|\leq 1$ for some $k\geq 0$. Let $\n{A}^0= R^+\langle \underline{X}\rangle_{\sol} \langle  \bb{N}[K] \rangle$, $\n{D}^0= \n{A}^0 \langle T_1,\ldots, T_d \rangle_{\sol}$ and  $\n{B}^0=\n{D}^0/^{\bb{L}}(f_1,\ldots, f_d)$. For $n \geq 0$ let $\n{C}_n^
0= \n{D}^0 \langle \frac{S_1}{\pi^n}, \ldots, \frac{S_d}{\pi^n} \rangle/^{\bb{L}}(f_i(T)-S_i)$ and set $\n{D}_n= \n{D}_n^0 [\frac{1}{\pi}]$. The explicit Koszul resolution of the cotangent complex $\bb{L}_{\n{B}^0/\n{A}^0}$ shows that multiplication by $\pi^{k}$ is homotopic to $0$. On the other hand, for $n\geq 2k+1$ the map 
\[
\n{D}^0 \to \n{C}^0_n/^{\bb{L}} \pi^{n}
\]
factors trough $\n{B}^0 \to \n{C}^0_{n}/^{\bb{L}}\pi^n$, namely, $S_i= \pi^{n} \frac{S_i}{\pi^n}$ vanishes in the quotient.  By lemma \ref{LemmaHenselQuantitative} down below we have a lift $\n{B}^0 \to  \n{C}^0_{n}$, and by the uniqueness of lifts shown at the end of Proposition \ref{PropFormallyInftSmoothEtale} (which is independent of the existence of a lift),  we have a natural lift  in generic fibers $\n{B} \to \n{C}$ independent of $n$. Now, let us fix some $n\geq 2k+1$ and a lift $\n{B}^0 \to \n{C}^0_n$, and for all $m\geq n$ take $\n{B}^0\to \n{C}^0_m$ to be the composite of $\n{B}^0\to \n{C}^0_n \to \n{C}^0_m$.  We can extend these maps to  morphisms $\n{B}^0\langle \frac{S_1}{\pi^m},\ldots, \frac{S_d}{\pi^m} \rangle \to \n{C}^0_n$  of $\n{A}^0 \langle \frac{S_1}{\pi^m}, \ldots \frac{S_d}{\pi^m} \rangle$-algebras. For $m'>m\geq n$  these algebras factor through 
\[
\n{B}^0\langle \frac{S_i}{\pi^m} \rangle \to \n{B}^0[[ \frac{S_i}{\pi^m}]] \to \n{B}^0 \langle \frac{S_i}{\pi^{m'}} \rangle
\]
and 
\[
\n{C}^0_m \to \n{D}^0[[\frac{S_i}{\pi^m}]]/^{\bb{L}}(f_i(T)-S_i) \to \n{C}^0_{m'}. 
\]
Then, we have a map of $\n{A}^0[[\frac{S_i}{\pi^{m}}]]$-algebras
\begin{equation}
\label{eqMapAlgebras1}
\n{B}^0[[\frac{S_i}{\pi^{m}}]] \to \n{D}^0[[\frac{S_i}{\pi^m}]]/^{\bb{L}}(f_i(T)-S_i).
\end{equation}
Both  terms in  \eqref{eqMapAlgebras1}  are $I=( \frac{S_i}{\pi^m})$-complete, and their reduction modulo $I$ is an equivalence, this implies that \eqref{eqMapAlgebras1} is an equivalence. Taking generic fibers and colimits as $m\to \infty$ one gets the lemma.  
 \end{proof}

The following lemma is substracted from the proof of \cite[Corollary III.2.2]{ScholzeTorsion2015}.

\begin{lemma}[Quantitative Hensel's Lemma]
\label{LemmaHenselQuantitative}
Let $A \to B$ be a morphism of $\pi$-complete animated $R^+_{\sol}$-algebras such that  there is some $k\geq 1$ such that  multiplication by $\pi^k$ on $\bb{L}_{B/A}$ is homotopic to $0$. Let $C$ be a $\pi$-complete animated $A$-algebra, and suppose that we have a solid commutative diagram 
\[
\begin{tikzcd}
A \ar[r] \ar[d] & B \ar[d] \ar[ld, dashed] \\ 
C \ar[r] & C/^{\bb{L}} \pi^{2k+1}
\end{tikzcd}
\]
Then there is a dashed arrow as above making the diagram commute. 
\end{lemma}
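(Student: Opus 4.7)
The plan is to reduce to an inductive lifting problem along the tower of derived quotients $\{C/^{\bb{L}}\pi^n\}_{n\geq 2k+1}$, and to use the null-homotopy of $\pi^k$ on $\bb{L}_{B/A}$ to solve the obstructions at each step. The strategy is modeled on the argument of \cite[Corollary III.2.2]{ScholzeTorsion2015}, now carried out in the $\pi$-complete animated setting.

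First I would use $\pi$-adic completeness to rewrite $C \simeq \varprojlim_n C/^{\bb{L}}\pi^n$ in animated $A$-algebras, so that
\[
\Map_{A}(B, C) \simeq \varprojlim_n \Map_A(B, C/^{\bb{L}}\pi^n).
\]
Producing a lift of the given $f_{2k+1} \colon B \to C/^{\bb{L}}\pi^{2k+1}$ to $B \to C$ then amounts to extending $f_{2k+1}$ to a compatible system $f_n \colon B \to C/^{\bb{L}}\pi^n$ for all $n \geq 2k+1$.

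Next I would invoke the standard deformation theory for square-zero thickenings. For each $n \geq 2k+1$, the fiber sequence
\[
M_n \longrightarrow C/^{\bb{L}}\pi^{n+1} \longrightarrow C/^{\bb{L}}\pi^n
\]
realizes $C/^{\bb{L}}\pi^{n+1}$ as a square-zero extension of $C/^{\bb{L}}\pi^n$ with kernel $M_n$ whose homotopy groups are $\pi$-torsion. The obstruction to extending $f_n$ to $f_{n+1}$ is a class in $\pi_{-1}\Map_B(\bb{L}_{B/A}, M_n)$, and when it vanishes the space of extensions is a torsor over $\pi_0\Map_B(\bb{L}_{B/A}, M_n)$. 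Because $\pi^k \cdot \mathrm{id}_{\bb{L}_{B/A}}$ is null-homotopic, $\pi^k$ acts by a null-homotopic endomorphism on $\Map_B(\bb{L}_{B/A}, M_n)$ and its shifts; in particular every obstruction class and every difference of lifts is annihilated by $\pi^k$.

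The key step is then a ``telescoping'' lifting: rather than insisting that $f_{n+1}$ reduce to $f_n$ modulo $\pi^n$, we only require that the reductions to $C/^{\bb{L}}\pi^{n-k}$ agree. This gives us the freedom to modify $f_n$ by an arbitrary class in $\pi_0\Map_B(\bb{L}_{B/A},\, \pi^{n-k}C/\pi^n C)$ before extending, and the $\pi^k$-torsion obstruction class can be cancelled precisely by such a modification because the exact sequence
\[
\pi^{n-k}C/\pi^n C \longrightarrow M_{n-k}[?] \longrightarrow M_n[?]
\]
(obtained from the comparison of the two square-zero thickenings) converts a $\pi^k$-torsion class in the target into an exact modification in the source. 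Iterating, the corrections at stage $n$ live in depth $\pi^{n-k}$, which tends to zero as $n \to \infty$, so the system passes to a well-defined limit $f_\infty \colon B \to C$; the threshold $n \geq 2k+1$ ensures that the cumulative modifications never disturb the given $f_{2k+1}$ modulo $\pi^{2k+1}$, so that $f_\infty$ genuinely lifts $f_{2k+1}$ in the required sense.

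The main obstacle is the bookkeeping of the iterative modifications: one must check that the obstruction class at each stage can indeed be killed by a controlled modification of the \emph{previous} lift (rather than simply being $\pi^k$-torsion in the Ext group), and that the resulting compatible tower assembles to a map out of $B$ whose reduction modulo $\pi^{2k+1}$ is genuinely the original $f_{2k+1}$, not just a $\pi^k$-close perturbation of it. This is where the gap $2k+1 = k + (k+1)$ becomes essential: it allocates $k$ units of depth for the initial obstruction, $k$ for the $\pi^k$-torsion modifications, and one extra unit to guarantee strict compatibility on the nose with the prescribed starting data.
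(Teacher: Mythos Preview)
Your overall strategy---reduce to the tower $\{C/^{\bb{L}}\pi^n\}$, invoke deformation theory, and use a telescoping lifting that only demands compatibility modulo $\pi^{n-k}$---matches the paper's and is the approach of \cite[Corollary III.2.2]{ScholzeTorsion2015}. But the crucial step, where the null-homotopy of $\pi^k$ on $\bb{L}_{B/A}$ is actually turned into a vanishing obstruction, is not made precise in your write-up, and the naive reading does not work.

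The problem with going one level at a time is that the fibre $M_n$ of $C/^{\bb{L}}\pi^{n+1}\to C/^{\bb{L}}\pi^n$ is already annihilated by $\pi$, so the statement ``$\pi^k$ acts null-homotopically on $\Map_B(\bb{L}_{B/A},M_n)$'' is vacuous and gives no control on the obstruction class in $\pi_{-1}\Map_B(\bb{L}_{B/A},M_n)$. Your sentence about an ``exact sequence converting a $\pi^k$-torsion class into a modification'' is the right instinct, but as written it does not identify a mechanism that forces the obstruction to lie in the image of such a modification.

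The paper supplies that mechanism with a fibre-product trick. One forms
\[
C' \;=\; C/^{\bb{L}}\pi^{2(n-k)} \times_{C/^{\bb{L}}\pi^{n-k}} C/^{\bb{L}}\pi^n,
\]
which is a square-zero extension of $C/^{\bb{L}}\pi^n$ with ideal $J\cong C/^{\bb{L}}\pi^{n-k}$ (so \emph{not} $\pi$-torsion). The diagonal $\Delta\colon C/^{\bb{L}}\pi^{2n-k}\to C'$ is a map of square-zero extensions of $C/^{\bb{L}}\pi^n$, and on ideals it is multiplication by $\pi^k$ on $J$. Hence the classifying derivation of $C'\to C/^{\bb{L}}\pi^n$ factors through $\pi^k$, and the obstruction to lifting $f_n$ to $C'$ is a map $\bb{L}_{B/A}\to J[1]$ that factors through $\pi^k$; now the null-homotopy on $\bb{L}_{B/A}$ genuinely kills it. Finally, a lift to $C'$ is exactly the same datum as a lift of the mod-$\pi^{n-k}$ reduction of $f_n$ to level $2(n-k)$, which is the telescoping step. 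Iterating from $n=2k+1$ gives levels $2k+2^i$ and compatibility depths $k+2^i$, so the system converges in $\varprojlim_n C/^{\bb{L}}\pi^n\simeq C$. This is the missing ingredient in your sketch.

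A side remark: your worry about the limit map reducing exactly to the given $f_{2k+1}$ is legitimate; the paper's inductive step also only guarantees agreement at depth $n-k$ (so ultimately modulo $\pi^{k+1}$), not the full $\pi^{2k+1}$. For the application in the paper this is harmless because uniqueness of the lift is established separately.
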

\begin{proof}
It suffices to construct a sequence of compatible arrows $B \to \n{C}/^{\bb{L}}\pi^{n}$ for all $n\geq 0$. Suppose that we have the lift for $n\geq 2k+1$, we will construct a lift for $2(n-k)\geq n+1$ over $n-k$. Consider the algebra 
\[
C'= C/^{\bb{L}} \pi^{2(n-k)} \times_{C/^{\bb{L}} \pi^{n-k}  } C/^{\bb{L}} \pi^{n},
\]
then the fiber $C'\to  C/^{\bb{L}} \pi^n$  is equivalent to $C/^{\bb{L}} \pi^{n-k}$ under the map $\iota=(\pi^{n-k},0):C/^{\bb{L}} \pi^{n-k} \to C'$. Moreover, we have a commutative diagram 
\[
\begin{tikzcd}
C/^{\bb{L}} \pi^{2n-k}  \ar[r, "\Delta"]& C' \\ 
C/^{\bb{L}} \pi^{n-k} \ar[r, "\pi^k"] \ar[u, "\pi^{n}"] & C/^{\bb{L}} \pi^{n-k} \ar[u, "\iota"]
\end{tikzcd}
\]
where $\Delta$ is the diagonal map.  Let us write $J= C/^{\bb{L}} \pi^{n-k}$ for the square zero ideals of  the algebras  $C'$ and $C/^{\bb{L}} \pi^{2n-k}$ over $C/^{\bb{L}} \pi^n$. Then, the map $\Delta$ induces a morphism
\[
\iHom_{B}(\bb{L}_{B/A}, J ) \xrightarrow{\pi^k} \iHom_{B}(\bb{L}_{B/A}, J),
\]
which is homotopic to zero as the multiplication by $\pi^k$ is homotopic to zero on $\bb{L}_{B/A}$ by hypothesis. By deformation theory, we deduce that the obstruction to lift $B$  from $C/^{\bb{L}} \pi^n$ to $C'$ vanishes, but lifting from $C/^{\bb{L}}\pi^n$ to $C'$ is equivalent to lifting from $C/^{\bb{L}} \pi^{n-k}$ to $C/^{\bb{L}} \pi^{2(n-k)}$, which proves the lemma. 
\end{proof}


\section{Cartier duality for vector bundles}
\label{SectionCartierDualityVectorBundles}
After all the preliminaries in derived algebraic geometry, we are finally in shape of applying the theory to more interesting objects. In this section we study Cartier duality for different incarnations of vector bundles, following the spirit of \cite{laumon1996transformation}, but using the language of six-functor formalisms and the Lu-Zheng category. In the next sections we shall apply these results to study different incarnations of the de Rham stack.

\subsection{Vector bundles and torsors}
\label{SubsectionVectorBundles}

  First, let us briefly introduce the category of vector bundles on solid $\s{D}$-stacks. 

\begin{definition}
\label{DefinitionBun}
Let $\n{C}=\Sh_{\s{D}}(\Aff_{\bb{Z}_{\sol}})$ be the category of solid $\s{D}$-stacks.

\begin{enumerate}
\item   Let $X\in \n{C}$ be a solid $\s{D}$-stack, a vector bundle of rank $d$ over $X$ is a quasi-coherent sheaf $\s{F}\in \Mod_{\sol}(X)$ that is free of rank $d$ locally in the $\s{D}$-topology of $X$. 

\item  Let $\n{D} \to \n{C}$ be the co-cartesian fibration associated to the functor $\Mod_{\sol}: \n{C}^{\op} \to \CAlg(\n{P}r^{L,\ex})$. We let $\BUN_{d,\n{C}}\subset \n{D}$ be the subcategory whose objects are pairs $(X,\s{F})$ with $\s{F}\in \Mod_{\sol}(X)$ a vector bundle of rank $d$, and morphisms $(Y,\s{G})\to (X,\s{F})$ given by the space of connected components $(f:X\to Y, f^* \n{G} \to \n{F})$ such that  $f^* \n{G} \to \n{F}$ is an equivalence. 
\end{enumerate}
\end{definition}

Let $\GL_d$ be the linear algebraic group of $d\times d$ invertible matrices over $\bb{Z}$, and let $*/\GL_d$ be its classifying stack.  By definition, $*/\GL_d$ is the object representing the moduli problem of $\GL_d$-torsors on  $\s{D}$-stacks.  Over $*/\GL_d$ we have a vector bundle $\St$ associated to the standard left representation of $\GL_d$ on $\bb{Z}^{d}$. Let $\n{C}_{/[*/\GL_d]}$ be the slice category of $\s{D}$-stacks over $*/\GL_d$, the vector bundle $\St$  induces a functor 
\[
F: (\n{C}_{/[*/\GL_d]})^{\op} \to \BUN_{d,\n{C}}.
\]
by taking pullbacks.
\begin{prop}
\label{PropositionEquivalenceVBTorsors}
The functor $F$ is an equivalence of categories over $\n{C}^{\op}$. 
\end{prop}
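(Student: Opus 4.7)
The plan is to construct a quasi-inverse $G \colon \BUN_{d,\n{C}} \to (\n{C}_{/[*/\GL_d]})^{\op}$ via the frame-bundle construction and verify $F\circ G \simeq \id$ and $G\circ F\simeq \id$ fiber-wise over $\n{C}^{\op}$, reducing the verification to a tautological case by $\s{D}$-descent.

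First I would define $G$ as follows: given $(X,\s{F}) \in \BUN_{d,\n{C}}$, form the internal isomorphism sheaf
\[
P_{\s{F}} := \iHom^{\mathrm{iso}}_{\s{O}_X}(\s{O}_X^{\oplus d}, \s{F}),
\]
defined using the symmetric monoidal structure of $\Mod_{\sol}(X)$ as the subsheaf of $\iHom_{\s{O}_X}(\s{O}_X^{\oplus d}, \s{F})$ cut out by the condition of being an isomorphism. Since $\s{F}$ is locally free of rank $d$ in the $\s{D}$-topology of $X$, $P_{\s{F}}$ is a $\GL_d$-torsor on $X$ for the $\s{D}$-topology. By the universal property of $*/\GL_d$ as the classifying stack of $\GL_d$-torsors, $P_{\s{F}}$ corresponds to a canonical classifying map $\tau_{\s{F}} \colon X \to */\GL_d$, and I set $G(X,\s{F}) := \tau_{\s{F}}$. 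Functoriality in morphisms of $\BUN_{d,\n{C}}$ follows from the functoriality of $\iHom^{\mathrm{iso}}$ together with the compatibility between pullback of torsors and pullback of the classifying map.

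Next I would note that both $F$ and $G$ are morphisms of $\infty$-categories fibered over $\n{C}^{\op}$, so checking $F$ and $G$ are mutually inverse reduces to a fiber-wise statement. Over a fixed $X \in \n{C}^{\op}$, the fiber on the left-hand side is $\Map_{\n{C}}(X,*/\GL_d)$, which by the moduli interpretation of $*/\GL_d$ is the anima of $\GL_d$-torsors on $X$; the fiber on the right-hand side is the anima of rank $d$ vector bundles on $X$. The functor $F$ sends a torsor $T \mapsto T\times^{\GL_d}\bb{Z}^d$ (the pullback of $\St$ along the classifying map), while $G$ sends $\s{F}\mapsto P_{\s{F}}$.

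Finally, for the two composites, I would reduce by $\s{D}$-descent to the tautological case. For $G\circ F$ applied to a torsor $T$ with classifying map $\tau_T$, the frame bundle of $\tau_T^*\St$ is computed by pulling back the tautological $\GL_d$-torsor $* \to */\GL_d$ along $\tau_T$; for $F\circ G$ applied to $(X,\s{F})$, the associated bundle $P_{\s{F}}\times^{\GL_d}\bb{Z}^d$ admits a canonical evaluation map to $\s{F}$. Both comparison maps are equivalences after pulling back to any $\s{D}$-cover of $X$ trivializing the torsor (resp.\ the vector bundle), where they reduce to the tautological identification between $\s{O}_X^{\oplus d}$ and the trivial torsor $\GL_d\times X$; the global equivalence then follows from $\s{D}$-descent of $\Mod_{\sol}(-)$, built into the definition of a solid $\s{D}$-stack. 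The main (mild) obstacle is the coherent management of these descent data in the $\infty$-categorical setting, which is handled cleanly by presenting both sides as $\s{D}$-sheaves of anima on $\n{C}^{\op}$ and exhibiting the natural transformation $F$ as an equivalence on a local trivialization, whence everywhere by descent.
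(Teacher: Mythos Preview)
Your proposal is correct and follows essentially the same route as the paper: both exploit the frame-bundle construction $\underline{\mathrm{Isom}}_X(\s{O}_X^d,\s{F})$ and reduce the comparison to the locally trivial case by descent. The paper's write-up is marginally more economical in that it invokes \cite[Proposition~3.3.1.5]{HigherTopos} to reduce immediately to fibers (both sides being left fibrations over $\n{C}^{\op}$) and then checks essential surjectivity via the frame bundle and full faithfulness by comparing automorphism sheaves after pulling back along the epimorphism $X\to X/\GL_d$, thereby sidestepping the coherence issue you flag when assembling $G$ into a global $\infty$-functor.
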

\begin{proof}
 Both $(\n{C}_{/[*/\GL_d]})^{\op}$ and $\BUN_{d,\n{C}}$ are left fibrations over $\n{C}^{\op}$, by \cite[Proposition 3.3.1.5]{HigherTopos} it suffices to prove that the fibers over $\n{C}^{\op}$ are equivalent. Let $X\in \n{C}$, we want to show that  the natural functor 
 \begin{equation}
 \label{eqMapGeometricRealization}
 F_X: \Map_{X}(X, [X/\GL_d])^{\op} \to  \BUN_{d,X}
 \end{equation}
 from maps $f:X\to X/\GL_d$ to rank $d$-vector bundles over $X$ is an equivalence of anima. To see that $F_X$ is essentially surjective, note that for $\s{F}$ a vector bundle over $X$, the stack $\underline{\mathrm{Isom}}_X (\s{O}_X^d,\s{F})$ of isomorphisms in the $\s{D}$-topology is a $\GL_d|_{X}$-torsor over $X$, which is defined by some map $f:X\to [X/\GL_d]$ such that $f^*\St = \s{F}$. To show that $F$ is fully faithful, notice that both terms in \eqref{eqMapGeometricRealization} are Kan complexes, so it suffices to show that for a map $f:X \to X/\GL_d$, the anima of automorphisms of $f$ is equivalent to the anima of automorphisms of $\s{F}=f^*\St$. It suffices to show that the natural map of stacks 
 \begin{equation}
 \label{eqEquivalenceStacksAuto}
 \underline{\mathrm{Aut}}_{X/\GL_d}(X)\to \underline{\mathrm{Aut}}_X(\s{F})
 \end{equation}
 is an equivalence. Since $f:X\to [X/\GL_d]$ is an epimorphism of stacks, it satsifies universal $*$-descent and it suffices to show that \eqref{eqEquivalenceStacksAuto} is an equivalence after pullback along $f$. Let $Y= X\times_{[X/\GL_d]} X$, then $g:Y\to X$ is a $\GL_d$-torsor and we the multiplication map gives rise an equivalence  $\GL_d\times Y\xrightarrow{\sim} Y\times_X Y$, one deduces that 
 \[
 \underline{\mathrm{Aut}}_{X/\GL_d}(X)|_{Y}=\GL_d \times Y. 
 \]
 Similarly, the pullback of $\s{F}$ to $Y$ is naturally isomorphic to $\s{O}_Y^d$ and $\underline{\mathrm{Aut}}_X(\s{F})|_{Y}=\underline{\mathrm{Aut}}_Y(\s{O}_Y^d)=\GL_d\times Y$. It is clear that the restriction of \eqref{eqEquivalenceStacksAuto} to $Y$ is identified with the identity of $\GL_d\times Y$. 
\end{proof}

\begin{remark}
Definition \ref{DefinitionBun} and Proposition \ref{PropositionEquivalenceVBTorsors} are not special for the $\s{D}$-topology of the category of solid $\s{D}$-stacks. The same can be done for a general Grothendieck topology in a full subcategory of analytic rings stable under pullbacks. 
\end{remark}

\begin{definition}
For $\s{F}$ a vector bundle over $X$, we let $\bb{V}(\s{F})$ denote its geometrization. Explicitly, let $\underline{\St}$ be the analytic spectrum of $\Sym \St^{\vee}$, it is naturally endowed with the standard action of $\GL_d$ and defines a vector bundle $\bb{V}(\St)$ over $*/\GL_d$. Let $f:X\to */\GL_d$ be the map defining $\s{F}$ via Proposition \ref{PropositionEquivalenceVBTorsors}, then $\bb{V}(\s{F})=f^* \bb{V}(\St)$. Note that $\bb{V}(\s{F})$ is the relative analytic spectrum of $\Sym_{\s{O}_X} \s{F}^{\vee}$, which is an analytic ring locally in the $\s{D}$-topology of $X$. 
\end{definition}

\subsection{Algebraic Cartier duality for vector bundles}
\label{SubsectionCartierDualityAlgebra}

 Now that we have related the category of vector bundles of rank $d$ and the slice category of $*/\GL_d$, we can state our first Cartier duality that is nothing but the algebraic Cartier duality of \cite[Proposition 2.2.13]{BhattGauges}.  In order to simplify the theory, and since our main application will be for  rigid spaces over $\bb{Q}_p$, we will focus in characteristic  $0$, though some statements will be proven in general.

\begin{definition}  
\label{DefinitionConstructionVB}
 Let $X$ be a solid $\s{D}$-stack over $\bb{Z}$,  $\s{F}$ a vector bundle of rank $d$ over $X$ and  $\bb{V}(\s{F})$  its geometrization seen as an abelian group object over $X$.  For $n\in \bb{N}$, let $\bb{V}(\s{F})_n$ be the relative analytic  spectrum of $\Sym^{\leq n}_{X} \s{F}^{\vee} = \bigoplus_{k=0}^n \Sym^k_{X} \s{F}^{\vee}$.  The formal completion of $\bb{V}(\s{F})$ at zero is defined as the abelian group stack $\widehat{\bb{V}(\s{F})}= \varinjlim_n \bb{V}(\s{F})_n$, we let $\widehat{\Sym}_X(\s{F}^{\vee})= \varprojlim_n \Sym^{\leq n}_X(\s{F}^{\vee})$ denote the global sections of $\widehat{\bb{V}(\s{F})}$. 
\end{definition}

The following lemma will be useful to show cohomologically smoothness of classifying stacks. 
\begin{lemma}
\label{KeyLemmaSmoothRetraction}
Let $(\n{C},E)$ be a geometric set up and $\s{D}$ a six functor formalism on $(\n{C},E)$ taking values in stable $\infty$-categories.  Let $f:Y\to X$ be a  map in $E$ with $f^*$ conservative, and  let $g:X \to Y $ be a retraction of $f$ in $E$.

 \begin{enumerate}
 
 \item  Suppose we are given with the following data:
\begin{itemize}

\item[(i)] An object $\n{L}\in \s{D}(X)$.

\item[(ii)] A  map $s:f_! 1_{Y} \to \n{L}$. 

\item[(iii)] A retraction $ 1_{Y} \cong g_! f_! 1_{Y} \xrightarrow{g_! s} g_! \n{L} \xrightarrow{\eta} 1_{Y}$. 
\end{itemize} 
Then $1_{X}$ is $g$-smooth and there is a natural identification $g^! 1_{Y} \cong \n{L}$. 

\item   Suppose we are given with the following data:
\begin{itemize}

\item[(i)] An object $\n{L}\in \s{D}(X)$.

\item[(ii)] A  map $s:\n{L} \to f_! 1_{Y}$. 

\item[(iii)] A section $ 1_{Y} \xrightarrow{\mu} g_! \n{L} \xrightarrow{g_! s} g_! f_! 1_{Y} \cong 1_{Y}$. 
\end{itemize} 
Then $1_{X}$ is $g$-proper and there is a  natural identification $\n{P}_g( 1_{Y}) \cong \n{L}$.

\end{enumerate}

\end{lemma}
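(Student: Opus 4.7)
The plan for part (1) is to verify directly that $\n{L} \in \Hom_{\LZ,Y}(Y,X) \simeq \s{D}(X)$ is a right adjoint to $1_X \in \Hom_{\LZ,Y}(X,Y) \simeq \s{D}(X)$ in the Lu--Zheng 2-category $\LZ_{\s{D},Y}$, since this is exactly what it means for $1_X$ to be $g$-smooth with $\n{D}_g(1_X) \cong \n{L}$. Unwinding the composition law using the cancellation $Y \times_Y Z = Z$, the required counit in $\Hom_{\LZ,Y}(Y,Y) \simeq \s{D}(Y)$ takes the form of a map $g_! \n{L} \to 1_Y$, and the required unit in $\Hom_{\LZ,Y}(X,X) \simeq \s{D}(X \times_Y X)$ takes the form of a map $\Delta_! 1_X \to \pi_2^* \n{L}$. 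The counit is provided directly by $\eta$ from hypothesis (iii).

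For the unit, I would first adjoin $\eta$ to a morphism $\widetilde\eta: \n{L} \to g^! 1_Y$ under $g_! \dashv g^!$. Since $\pi_2 \circ \Delta = \id_X$, the adjunction $\Delta_! \dashv \Delta^*$ gives a canonical equivalence
\[
\Hom_{\s{D}(X \times_Y X)}(\Delta_! 1_X, \pi_2^* \n{L}) \simeq \Hom_{\s{D}(X)}(1_X, \n{L}),
\]
so I only need to produce a map $\alpha : 1_X \to \n{L}$ in $\s{D}(X)$, and then the unit is the corresponding map $\Delta_! 1_X \to \pi_2^* \n{L}$. The natural candidate for $\alpha$ is constructed from $s: f_!1_Y \to \n{L}$ combined with the factorization $gf=\id_Y$: concretely, taking the composite $\widetilde\eta \circ s : f_! 1_Y \to g^! 1_Y$ and adjoining under $f_! \dashv f^!$ (together with $f^* g^! = (gf)^! = \id$) produces a map $1_Y \to f^! g^! 1_Y = 1_Y$, and promoting this to a morphism $1_X \to \n{L}$ via the retraction structure in (iii) gives the desired $\alpha$.

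The main obstacle will be verifying the two triangle identities for this unit-counit pair; these are equalities of morphisms living in $\s{D}(X)$ and $\s{D}(X \times_Y X)$ respectively. The key input is the conservativity of $f^*$: pulling back along $f$ (or its iterated products) reduces each triangle identity, via proper base change along the squares built from $gf = \id_Y$, to an identity of morphisms in $\s{D}(Y)$, where the hypothesis $\eta \circ g_!(s) = \id_{1_Y}$ of (iii) is exactly what is needed. Once the adjunction $1_X \dashv \n{L}$ in $\LZ_{\s{D},Y}$ is established, $1_X$ is $g$-smooth by Definition \ref{DefinitionSmoothEtaleProper}, and uniqueness of right adjoints together with Proposition \ref{PropSmoothProperObjects} gives $g^! 1_Y = \n{D}_g(1_X) \cong \n{L}$.

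Part (2) will then follow by a formal dualization: applying part (1) to the dual 3-functor formalism $\s{D}^{\op}$ of Remark \ref{RemarkDualSixFunctor}, where the pair $(f_!, f^!)$ in $\s{D}^{\op}$ corresponds to $(f^*, f_*)$ in $\s{D}$, translates the notion of $g$-smoothness in $\s{D}^{\op}$ into the notion of $g$-properness in $\s{D}$. Under this translation, hypotheses (i)--(iii) of part (2) match exactly hypotheses (i)--(iii) of part (1) for $\s{D}^{\op}$, and the conservativity of $f^*$ in $\s{D}$ becomes the conservativity of $f_!$ in $\s{D}^{\op}$, yielding $\n{P}_g(1_X) \cong \n{L}$.
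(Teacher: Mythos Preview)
Your overall strategy matches the paper's: exhibit $\n{L}$ as the right adjoint of $1_X$ in $\LZ_{\s{D},Y}$ by building a unit $\mu:\Delta_!1_X\to\pi_2^*\n{L}$ and taking $\eta$ as counit, then verify the two composites (the paper invokes \cite[Lemma~5.11]{SixFunctorsScholze} so that it suffices to check they are \emph{equivalences}, not literal identities). Part (2) then follows by passing to $\s{D}^{\op}$, as in the paper.

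The genuine gap is your construction of the unit. You correctly reduce $\mu$ to a map $\alpha:1_X\to\n{L}$ via the adjunction $\Delta_!\dashv\Delta^*$, but your recipe for $\alpha$ does not produce one: from $\widetilde\eta\circ s:f_!1_Y\to g^!1_Y$ you obtain by adjunction a map $1_Y\to f^!g^!1_Y=1_Y$, and you then assert this can be ``promoted'' to a map $1_X\to\n{L}$ via (iii). There is no such promotion; a self-map of $1_Y$ carries no information about $\n{L}$. The paper bypasses this entirely: the cartesian square built from $gf=\id_Y$ gives a proper base change equivalence $\pi_2^*f_!\cong\Delta_!g^*$, hence $\Delta_!1_X\cong\pi_2^*f_!1_Y$, and one sets $\mu:=\pi_2^*s$ directly. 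This is the key construction you are missing.

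With this $\mu$ in hand, the first composite is seen to be $g^*$ applied to the retraction in (iii), hence the identity. The second composite is where the real work lies: after pulling back along $f$ (using conservativity), the paper carefully unwinds the base change identifications and reduces the question to the braiding isomorphism on $\n{L}\otimes f_!1_Y$ together with the retraction in (iii). Your one-sentence sketch does not capture this; it is not a formality.

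Finally, your account of the dualization is slightly garbled: in $\s{D}^{\op}$ the functors $f^*$ and $f_!$ are unchanged (one just passes to opposite categories), so conservativity of $f^*$ is preserved as is, not transformed into conservativity of $f_!$. What changes is that $2$-cells in $\LZ_{\s{D}^{\op},Y}$ are reversed, so left and right adjoints swap, interchanging smooth and proper; the arrows in (i)--(iii) of (2) are then exactly those of (1) read in $\s{D}^{\op}$.
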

\begin{proof}
We only prove part (1), part (2) follows by taking the dual six functor formalism $\s{D}^{\op}$, see \cite[Remark 6.5]{SixFunctorsScholze}.

 Let $\Delta: X \to X\times_Y X$ be the diagonal map. We need to define a cycle morphism $\mu: \Delta_! 1_{X} \to  \pi_{2}^* \n{L}$ such that the following compositions are the identity
\begin{equation}
\label{eqTraceeMapLemma}
1_{X} \cong  \pi_{1,!} \Delta_{!} 1_{X} \xrightarrow{\pi_{1,!} \mu} \pi_{1,!} \pi_{2}^* \n{L} \cong  g^* g_! \n{L} \xrightarrow{g^* \eta} 1_{X}.
\end{equation}
\begin{equation}
\label{eqCycleMapLemma}
\n{L}\cong  \pi_{2,!}( \pi_{1}^*\n{L}\otimes \Delta_{!} 1_{X} ) \xrightarrow{\pi_{2,!}(\pi_{1}^*\n{L}\otimes \mu)}  \pi_{2,!}(\pi_{1}^*\n{L} \otimes \pi_2^* \n{L}) \cong  \pi_{2,!} \pi_1^* \n{L} \otimes \n{L} \cong g^*g_!\n{L} \otimes  \n{L} \xrightarrow{g^* \eta \otimes \n{L}} \n{L}.
\end{equation}
By Lemma \cite[Lemma 5.11]{SixFunctorsScholze}, after modifying $\eta$, it suffices that the composite are equivalences.  We have  the following  commutative diagram with cartesian squares
\begin{equation}
\label{eqCartesianSquares}
\begin{tikzcd}
Y  \ar[r,"f"] \ar[d,"f"]& X \ar[d, "(fg{,}\id)"] \ar[r, "g"] & Y  \ar[d, "f"]\\ 
X \ar[r, "\Delta"]  \ar[d, "g"] & X\times_{Y} X \ar[d, "\pi_2"]  \ar[r, "\pi_1"] &  X  \ar[d, "g"]\\ 
Y \ar[r, "f"] & X  \ar[r, "g"]& Y .
\end{tikzcd}
\end{equation}
In particular, we have that $\pi_{2}^*f_! \cong \Delta_!g^*$, we define $\mu:\Delta_! 1_{X} \cong \pi_{2}^*f_! 1_{Y} \to \pi_2^* \n{L}$ to be $\mu= \pi_2^* s$. 

\textbf{The composite \eqref{eqTraceeMapLemma} is the identity}.
Since $g^*g_!\cong \pi_{1,!} \pi_{2}^*$, the composite \eqref{eqTraceeMapLemma} is obtained by applying $g^*$ to the retraction 
\[
1_{Y}\cong g_! f_! 1_{Y} \xrightarrow{g_! s} g_! \n{L} \xrightarrow{\eta}1_{Y}.
\]

\textbf{The composite \eqref{eqCycleMapLemma} is the identity}. Since the pullback along $f:Y \to X$ is conservative, it suffices to show that \eqref{eqCycleMapLemma} is an equivalence  after taking $f^*$. By proper base change we have that $ f^*\pi_{2,!}\cong  g_! \Delta^*$. On the other hand, the equivalence $\n{L}\cong \pi_{2,!}(\pi_1^* \n{L}\otimes \Delta_! 1_{X})$ arises by the $\pi_{2,!}$ of the equivalence 
\[
\pi_{1}^*\n{L}\otimes \Delta_{!}1_{X}\cong \Delta_! \n{L} \cong \Delta_{!} \otimes \pi_{2}^* \n{L},
\]
taking pullbacks along $\Delta$ we get the natural equivalence 
\[
\n{L} \otimes \Delta^* \Delta_{!} 1_{X} \cong \Delta^* \Delta_{!} 1_{X} \otimes \n{L} 
\]
given by the braiding isomorphism.  Consider the (not necessarily commutative) diagram
\begin{equation}
\label{eqDiagramProof1}
\begin{tikzcd}
\pi_{1}^* \n{L}\otimes  \Delta_! 1_X \ar[r,"\sim"] \ar[rd, "\pi_1^*\n{L} \otimes  \pi_2^* s"'] &  \Delta_{!} \n{L} &  \Delta_{!} 1_{X} \otimes \pi_2^* \n{L}  \ar[l,"\sim"'] \ar[ld, "\pi_1^* s \otimes \pi_2^* \n{L}"]\\ 
& \pi_1^* \n{L} \otimes \pi_2^*\n{L} &   
\end{tikzcd}
\end{equation}
  Applying $\Delta^*$ to \eqref{eqDiagramProof1}, and using that $\Delta_!g^* \cong \pi_2^*f_!$, we get the commutative  diagram 
\[
\begin{tikzcd}
 \n{L}\otimes f_! 1_Y \ar[r,"\sim"] \ar[rd, "\n{L} \otimes   s"'] &  \Delta^* \Delta_{!} \n{L} & f_! 1_{Y} \otimes \n{L}  \ar[l,"\sim"'] \ar[ld, " s \otimes  \n{L}"]\\ 
&  \n{L} \otimes \n{L} & 
\end{tikzcd}
\]
where the composite of the horizontal maps is the braiding isomorphism. In a similar way, using that $\pi_{2,!}\pi_{1}^*\cong g^*g_!$ and $\pi_{1,!}\pi_{2}^* \cong g^*g_!$,    one deduces that the equivalence 
\[
f^*\n{L} \otimes g_! \n{L} \cong f^*(\n{L}\otimes  g^* g_! \n{L}) \cong f^* \pi_{1,!}( \pi_{1}^*\n{L}\otimes \pi_{2}^* \n{L}) \cong f^* \pi_{2,!}(\pi_{1}^*\n{L}\otimes \pi_{2}^* \n{L}) \cong f^*(g^*g_! \n{L} \otimes  \n{L}) \cong g_!\n{L} \otimes f^*\n{L}
\]
is also the braiding isomorphism. On the other hand, the $f$-pullback of the map $g^* \eta \otimes \n{L}$ is nothing but the map 
\[
g_! \n{L}\otimes f^*\n{L} \xrightarrow{\eta \otimes f^* \n{L}} f^* \n{L}.
\]
Putting all together, the $f$-pullback of the map \eqref{eqCycleMapLemma} becomes 
\begin{equation}
\label{eqCycleMapcomp2}
f^*\n{L} \xrightarrow{\sim} g_!( f_! 1_{Y} \otimes \n{L} ) \xrightarrow{g_!(s\otimes \n{L})} g_! (\n{L}\otimes \n{L}) \cong g_!(\n{L}\otimes g^*f^*\n{L})\cong  g_! \n{L} \otimes f^* \n{L} \xrightarrow{\eta \otimes f^* \n{L}} f^*\n{L},
\end{equation}
but the map $g_!(f_! 1_{Y} \otimes \n{L}) \xrightarrow{g_!(s\otimes \n{L})} g_! (\n{L}\otimes \n{L}) \cong g_!\n{L} \otimes f^* \n{L}$ is equal to the composite
\[
g_!(f_! 1_{Y}\otimes \n{L})\cong g_!f_! 1_{Y} \otimes f^*\n{L} \xrightarrow{g_! s \otimes f^*\n{L}} g_! \n{L}\otimes f^* \n{L},
\]
where the equivalence $g_!(f_! 1_{Y}\otimes \n{L})\cong g_!f_! 1_{Y} \otimes f^*\n{L}$ arises from the natural isomorphism $f^*\pi_{2,!}\cong g_! \Delta^*$ applied to  $ \pi_2^*f_! 1_{Y} \otimes \pi_{2}^*\n{L}$. This shows that the composite \eqref{eqCycleMapcomp2} is an equivalence, proving what we wanted. 
\end{proof}

Before we state the algebraic Cartier duality theorem we need to show some cohomological properties of vector bundles.  We start with a key lemma that is the core of the computations. 

\begin{lemma}
\label{LemmaDeRhamKoszulAlgebraic}
Let $X$ be a solid $\s{D}$-stack over $\bb{Q}$ and let $\s{F}$ be a vector bundle of rank $d$ over $X$. 

\begin{enumerate}
\item  There is a natural de Rham resolution of $\s{O}_X$ as $\Sym^{\bullet}_X (\s{F}^{\vee})$-comodule  given by a complete and decrasing filtration
\[
\s{O}_X \to \Sym^{\bullet}_X (\s{F}^{\vee}) \xrightarrow{d} \Sym^{\bullet}_X (\s{F}^{\vee})  \otimes  \s{F}^{\vee} \xrightarrow{d} \cdots \xrightarrow{d} \Sym^{\bullet}_X (\s{F})\otimes   \bigwedge^d \s{F}^{\vee}.
\]

\item There is a natural Koszul resolution of $\s{O}_X$ as $\Sym^{\bullet}_X (\s{F})$-module  given by a complete and increasing filtration 
\[
 \Sym^{\bullet}_X (\s{F}) \otimes \bigwedge^d \s{F} \cdots \to \Sym^{\bullet}_X (\s{F}) \otimes \s{F}\to\Sym^{\bullet}_X (\s{F}) \to \s{O}_X ,
\]
whose dual is the de Rham complex for $\widehat{\Sym}_X(\St^{\vee})$. 
\end{enumerate}
\end{lemma}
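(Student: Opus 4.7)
The plan is that both resolutions are natural constructions from the symmetric and exterior algebras of $\s{F}$, so their exactness can be checked by decomposing them into their total-degree components and appealing to Lemma~\ref{LemmaExactKoszulDiagrams} (together with its solid enhancement Lemma~\ref{LemmaKosulReolutionsSolidModules}(4)). In characteristic zero we additionally have $\Sym^n \simeq \Gamma^n$ on finite free modules, which lets us identify the ``dual'' Koszul sequence with the symmetric one and streamlines the argument.

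For part (1), I would consider the standard de Rham complex
\[
\mathrm{DR}(\s{F}^{\vee}) := \Sym^{\bullet}_X \s{F}^{\vee} \otimes_{\s{O}_X} \bigwedge^{\bullet}_X \s{F}^{\vee}
\]
with differential of bidegree $(-1,+1)$ given by the usual Koszul-type contraction, equipped with the decreasing Hodge filtration $F^p := \Sym^{\bullet}_X \s{F}^{\vee} \otimes \bigwedge^{\geq p}_X \s{F}^{\vee}$. Placing $\s{F}^{\vee}$ in total degree $+1$ in both factors, the complex splits as a direct sum over $n \geq 0$ of finite subcomplexes
\[
\Sym^{n}_X \s{F}^{\vee} \to \Sym^{n-1}_X \s{F}^{\vee} \otimes \s{F}^{\vee} \to \cdots \to \Sym^{n-k}_X \s{F}^{\vee} \otimes \bigwedge^{k}_X \s{F}^{\vee} \to \cdots,
\]
each of which, for $n \geq 1$, is exactly the dual of the first exact sequence of Lemma~\ref{LemmaExactKoszulDiagrams} applied to $\s{F}^{\vee}$, hence acyclic; the $n=0$ piece is $\s{O}_X$. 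This verifies the resolution, and the $\Sym^{\bullet}_X(\s{F}^{\vee})$-comodule structure comes from the natural coaction on each factor. Since $\s{F}$ is a vector bundle and all the sheaves involved are built functorially from $\s{F}^{\vee}$, exactness and the filtration descend from the universal case on $*/\GL_d$ — or equivalently, can be checked locally in the $\s{D}$-topology where $\s{F}$ is free — via Proposition~\ref{PropositionEquivalenceVBTorsors}.

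For part (2), I would carry out the mirror construction with the Koszul complex $\Sym^{\bullet}_X \s{F} \otimes_{\s{O}_X} \bigwedge^{\bullet}_X \s{F}$ with differential of bidegree $(+1,-1)$, augmented to $\s{O}_X$. Grading again by total degree, the pieces in degrees $n\geq 1$ are precisely the first exact sequences of Lemma~\ref{LemmaExactKoszulDiagrams} applied to $\s{F}$, yielding the Koszul resolution; the increasing filtration by symmetric degree of $\Sym^{\bullet}_X \s{F}$ is the required one. For the final assertion, I would dualize $\s{O}_X$-linearly term by term: each $\Sym^{k}_X \s{F} \otimes \bigwedge^{j}_X \s{F}$ becomes $\Sym^{k}_X \s{F}^{\vee} \otimes \bigwedge^{j}_X \s{F}^{\vee}$ (using rank $d$ and characteristic zero), and the dual of the increasing filtration by symmetric degree is precisely the decreasing filtration that assembles $\prod_k \Sym^{k}_X \s{F}^{\vee} = \widehat{\Sym}_X(\s{F}^{\vee})$; the differential dualizes to the de Rham differential of part~(1) applied to $\widehat{\Sym}_X(\s{F}^{\vee})$.

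The only step that is not completely formal is ensuring that the termwise-exact sequences of Lemma~\ref{LemmaExactKoszulDiagrams} remain exact after tensoring with $\s{O}_X$, after solidification, and after $\s{D}$-descent from the universal case on $*/\GL_d$; this is the role played by Lemma~\ref{LemmaKosulReolutionsSolidModules}(4) and by the hypothesis that $\s{F}$ is locally free of finite rank (so that all symmetric, wedge, and divided-power functors are $\s{O}_X$-flat locally in the $\s{D}$-topology). Everything else is a routine repackaging of the classical polynomial Poincar\'e lemma and the classical Koszul resolution of the augmentation ideal of a symmetric algebra on a free module.
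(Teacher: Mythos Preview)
Your proposal is correct and follows essentially the same route as the paper: reduce to the universal case $X=*/\GL_d$, $\s{F}=\St$, and use the classical de Rham and Koszul complexes of a symmetric algebra on a finite free module over $\bb{Q}$, together with their duality. The paper's proof is terser---it simply writes down the four exact sequences \eqref{eqResolutionsSym} and \eqref{eqResolutionshatSym} and observes that one pair is dual to the other---whereas you spell out the total-degree decomposition and trace exactness back to Lemma~\ref{LemmaExactKoszulDiagrams}; but the content is the same.

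One small remark: invoking Lemma~\ref{LemmaKosulReolutionsSolidModules}(4) is harmless but heavier than needed. That lemma handles the infinite-rank case $\bb{Z}_{\sol}[S]$; here $\s{F}$ has finite rank $d$, so locally the Koszul and de Rham sequences are the classical ones for $\bb{Q}[T_1,\dots,T_d]$, whose exactness (in characteristic zero for de Rham) is elementary and needs no solidification argument.
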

\begin{proof}
By base change, it suffices to deal with the universal case $X=*/ \GL_d$ and $\s{F}=\St$ the standard vector bundle. We have  $\GL_d$-equivariant de Rham and Koszul resolutions for both $\Sym^{\bullet}_X(\St^{\vee})$ and $\widehat{\Sym}_X(\St)$: 
\begin{equation}
\label{eqResolutionsSym}
\begin{aligned}
0 \to \bb{Q} \to \Sym^{\bullet}_X(\St^{\vee})\xrightarrow{d}  \Sym^{\bullet}_X(\St^{\vee}) \otimes \St^{\vee} \xrightarrow{d} \cdots \xrightarrow{d}  \Sym^{\bullet}_X(\St^{\vee}) \otimes  \bigwedge^d \St^{\vee}  \to 0 \\ 
0 \to \Sym^{\bullet}_X(\St^{\vee}) \otimes \bigwedge^d \St^{\vee} \to \cdots \to \cdots \to \Sym^{\bullet}_X (\St^{\vee}) \otimes \St^{\vee} \to \Sym^{\bullet}_X (\St^{\vee}) \to \bb{Q} \to 0
\end{aligned}
\end{equation}
and 
\begin{equation}
\label{eqResolutionshatSym}
\begin{aligned}
0 \to \widehat{\Sym}_X(\St) \otimes \bigwedge^d \St \to \cdots \to \cdots \to\widehat{\Sym}_X(\St) \otimes \St \to \widehat{\Sym}_X(\St) \to \bb{Q} \to 0\\
0 \to \bb{Q} \to \widehat{\Sym}_X(\St) \xrightarrow{d}  \widehat{\Sym}_X(\St)  \otimes \St \xrightarrow{d} \cdots \xrightarrow{d}  \widehat{\Sym}_X(\St) \otimes  \bigwedge^d \St   \to 0 
\end{aligned}
\end{equation}
where we have identified the differentials $de\cong e$ for $e\in \St^{\vee}$ (resp. for $\St$). The resolutions of \eqref{eqResolutionsSym} are the duals of those in \eqref{eqResolutionshatSym}, this prove the lemma since the de Rham resolution of a vector bundle is a complex of comodules while the de Koszul resolution is a complex of modules. 
\end{proof}

\begin{prop}
\label{PropAlgCartier1}
Let $X$ be a solid $\s{D}$-stack  over $\bb{Z}_{\sol}$ and let $\s{F}$ be a vector bundle of rank $d$ over $X$. 

\begin{enumerate}

\item The map $\bb{V}(\s{F}) \to X$  is weakly cohomologically proper. 

\item  The map $f:\widehat{\bb{V}(\s{F})} \to X$ is cohomologically smooth and  there is a natural isomorphism $f^! 1_{X}= \Omega^{d}_{\widehat{\bb{V}(\s{F})}/X}[d]= f^*  \bigwedge^d \s{F}^{\vee}[d]$. If in addition $X$ is defined over $\bb{Q}$  then  $f_! 1_{\widehat{\bb{V}(\s{F})}} =  \bigwedge^d \s{F}\otimes (\Sym^{\bullet}_X \s{F})[-d]$. In particular,   $f_!f^! 1_{X}= \Sym^{\bullet}_X \s{F}$.   
\end{enumerate}

\end{prop}
\begin{proof}
Part (1) is clear since $\bb{V}(\s{F})$ is just the relative analytic spectrum of the algebra $\Sym^{\bullet}_X \s{F}$   (locally in the $\s{D}$-topology) with the induced analytic structure.

 For part (2), without loss of generality we can reduce to the universal case $X= */ \GL_d$ and $\s{F}= \St$ the standard representation. Then, to see that $\widehat{\bb{V}(\St)} \to */ \GL_d$ is cohomologically smooth, we can take the pullback along $*\to */ \GL_d$. In this case, $\St= \bb{Z}^d$ is a free $\bb{Z}$-module of rank $n$, and by induction it suffices to treat the case $d=1$. We have a map 
\[
j:\widehat{\bb{G}}_a \subset \bb{G}_a=\AnSpec \bb{Z}[T] \subset  \bb{P}^1_{\bb{Z}}.
\]
Note  that  $\bb{P}^1_{\bb{Z}}$ is cohomologically smooth over $\bb{Z}_{\sol}$, namely, it has an open cover in the sense of locale by the solid affine spaces $\AnSpec \bb{Z}[T]_{\sol}$ and $\AnSpec \bb{Z}[T^{-1}]_{\sol}$.   Then, it suffices to show that $j$ is an open immersion, this follows since it is the complement  of the idempotent algebra over $\bb{P}^1_{\bb{Z}}$ given by $\bb{Z}[T^{-1}]$. 

Next, we  show that $f^! 1_{X}= f^* \bigwedge^d \s{F}^{\vee}[d]$ and $f_{!} 1_{\widehat{\bb{V}(\s{F})}}= \bigwedge^{d} \s{F} (\Sym^{\bullet}_X \s{F})[-d]$. For this, we can reduce to the universal case of $X=*/\GL_d$ and $\s{F}= \St$.

Let $\{v_1,\ldots, v_d\}$ be the standard basis of $\St$ and $T_1,\ldots, T_d$ its dual basis. Then 
\[
\St = \AnSpec_X (\bb{Z}[\underline{T}]  ). 
\]
Consider the idempotent $(\bb{Z}[\underline{T}],\bb{Z})_{\sol}$-algebras
\[
D_i = \bb{Z}[\underline{T}][T_{i}^{-1}],
\]
and let $C$ be the union of the algebras in the sense of locale. More precisely, for $I\subset \{1,\ldots, d\}=:[d]$ let $D_I=  \otimes_{i\in I, \bb{Z}[\underline{T}]} D_i$ and let $C$ be the idempotent $dg$-$\bb{Z}[\underline{T}]$-algebra
\[
C= [D_{\empty} \to \bigoplus_{i\in I} D_i \to \cdots \to \bigoplus_{|I|=k}  D_I \to  \cdots \to D_{[d]}]. 
\]
 Then $j:\widehat{\bb{V}(\St)}\subset \bb{V}(\St)$ is the open subspace complement to $C$, and we can compute 
\[
f_! 1_{\widehat{\bb{V}(\St)}} =  [\bb{Z}[\underline{T}] \to C].
\]
Unravelling the construction of $C$, and identifying $T_i^{-1}=v_i$, one finds that 
\begin{equation}
\label{eqLoweShrierk1}
f_! 1_{\widehat{\bb{V}(\St)}}= (T_1T_2\cdots T_d)^{-1} \bb{Z}[T_i^{-1}:i=1,\ldots, d] [-d].
\end{equation}
Note that shifting the variables $T_i$ and $T_j$ alters the formulas by a  $-1$ factor, namely, in the union of idempotent algebras $D_i$ and $D_j$ one fixes the map 
\[
C \xrightarrow{(1,-1)} D_i \bigoplus D_j,
\]
which differs from the map 
\[
C \xrightarrow{(1,-1)} D_j \bigoplus D_i
\]
under the natural isomorphism $D_i \bigoplus D_j=D_j \bigoplus D_i$ by multiplication by a $-1$. If $X$ is defined over $\bb{Q}$, one deduces that $f_{!} 1_{\widehat{\bb{V}(\St)}} = \bigwedge^{d}\St \otimes \Sym^{\bullet}_X \St[-d]$.    In general,  using that 
\[
\iHom_{\bb{Z}}(f_! \bb{Z}[\underline{T}], \bb{Z}) = f_* f^! \bb{Z}, 
\]
we find that $f^{!} \bb{Z}$ is the localization at $\widehat{\bb{V}}(\St)$  of the object
\[
f^! \bb{Z} = T_1\cdots T_d \otimes \bb{Z}[[\underline{T}]][d]
\]
as $\GL_n$-equivariant  $\bb{Z}[\underline{T}]$-module, which is nothing but $f^{*} \bigwedge^{d} \St^{\vee} [d]$. 
\end{proof}

Next, we study cohomological properties of quotient stacks associated to vector bundles in characteristic $0$. Let us first describe the categories of sheaves on $X/\bb{V}(\s{F})$ and $X/\widehat{\bb{V}(\s{F})}$ via the monadicity theorem.

\begin{prop}
\label{PropAlgebraicCartier2}
Let $X$ be a solid $\s{D}$-stack over $\bb{Q}$ and let $\s{F}$ be a vector bundle of rank $d$ over $X$. 

\begin{enumerate}

\item  There are natural equivalences 
\[
\Mod(X/\bb{V}(\s{F}))=\mathrm{CoMod}_{\Sym_{X}^{\bullet} \s{F}^{\vee}}(\Mod(X))
\]
and 
\[
\Mod(X/\widehat{\bb{V}(\s{F})})=\Mod_{\Sym_X^{\bullet} \s{F}}(\Mod_X). 
\]

\item Consider the maps $X\xrightarrow{f} X/\bb{V}(\s{F}) \xrightarrow{g} X$. Then  $f$ is a descendable $\s{D}$-cover and $g$ is both weakly cohomologically proper and cohomologically smooth. Moreover, there is a natural equivalence $g^! 1_{X} \cong g^* \bigwedge^d \s{F}[d]$.

\item Consider the maps $X\xrightarrow{f} X/ \widehat{\bb{V}(\s{F})} \xrightarrow{g} X$.  Then  $f$ is a smooth $\s{D}$-cover,   $g$ is cohomologically smooth, and there is a natural equivalence $g^! 1_{X}\cong \bigwedge^d \s{F}[-d]$.   Moreover, $g$ is co-smooth with proper dual $\n{P}_g(1_{X/\widehat{\bb{V}(\s{F})}})\cong 1_{X/\widehat{\bb{V}(\s{F})}}[-2d]$.

\end{enumerate}
\end{prop}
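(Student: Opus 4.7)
The strategy is to reduce each part to the universal case $X = */\GL_d$ with $\s{F} = \St$ by base change, and then to apply Proposition \ref{PropCoverModuleComoduleDescription}, Lemma \ref{KeyLemmaSmoothRetraction}, and the cohomological computations already done for $\bb{V}(\s{F})$ and $\widehat{\bb{V}(\s{F})}$ in Proposition \ref{PropAlgCartier1}.

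\emph{Part (1).} For the first equivalence, I verify that $f:X \to X/\bb{V}(\s{F})$ satisfies the hypothesis of Remark \ref{RemarkComoduleDescription}. By proper base change along the \v{C}ech nerve $X \times_{X/\bb{V}(\s{F})} X^{\bullet+1} = X \times \bb{V}(\s{F})^{\bullet}$, the weak cohomological properness of $\bb{V}(\s{F}) \to X$ from Proposition \ref{PropAlgCartier1}(1) gives that $f_*$ satisfies the projection formula and universal $*$-descent (using that in characteristic $0$ the structure map $\bb{V}(\s{F}) \to X$ admits a retraction through the zero section, making $f_* 1_X \simeq \Sym^{\bullet}_X \s{F}^\vee$ descendable over $1_X$). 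Then Proposition \ref{PropCoverModuleComoduleDescription}(2) identifies $\Mod(X/\bb{V}(\s{F}))$ with comodules over the coalgebra $f^* f_* 1_X = \Sym^{\bullet}_X \s{F}^\vee$, with comultiplication coming from the group law on $\bb{V}(\s{F})$. For the second equivalence, Proposition \ref{PropAlgCartier1}(2) says $\widehat{\bb{V}(\s{F})} \to X$ is cohomologically smooth, so $f:X \to X/\widehat{\bb{V}(\s{F})}$ is a smooth $\s{D}$-cover. Applying Proposition \ref{PropCoverModuleComoduleDescription}(1) identifies $\Mod(X/\widehat{\bb{V}(\s{F})})$ with modules over the monad $f^! f_! 1_X$. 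By base change along the \v{C}ech nerve and the computation $q^! 1_X = q^*\bigwedge^d \s{F}^\vee[d]$ and $q_! 1 = \bigwedge^d \s{F} \otimes \Sym^\bullet_X \s{F}[-d]$ for $q:\widehat{\bb{V}(\s{F})} \to X$ from Proposition \ref{PropAlgCartier1}(2), the projection formula gives
\[
f^! f_! 1_X = q_!(q^* \bigwedge^d \s{F}^\vee[d]) = \bigwedge^d \s{F}^\vee[d] \otimes \bigwedge^d \s{F} \otimes \Sym^\bullet_X \s{F}[-d] = \Sym^\bullet_X \s{F},
\]
with its standard algebra structure induced from the group law.

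\emph{Part (2).} Descendability of $f$ was already used above. For $g$, weak cohomological properness follows from the fact that $g$ factors (via the \v{C}ech nerve reasoning, at the universal case) through maps arising from pushforward along a weakly cohomologically proper morphism, and the diagonal $\Delta_g: X/\bb{V}(\s{F}) \to X/\bb{V}(\s{F}) \times_X X/\bb{V}(\s{F})$ is itself a quotient classifying stack of the same type, which is handled by induction / directly via Proposition \ref{PropLocalTargetBaseChange} after pulling back along the descendable cover $f$. For cohomological smoothness of $g$, I apply Lemma \ref{KeyLemmaSmoothRetraction}(1) to the retraction $g \circ f = \id_X$: taking $\n{L} = g^* \bigwedge^d \s{F}[d]$, one exhibits a trace map $f_! 1_X \to \n{L}$ using the de Rham resolution of Lemma \ref{LemmaDeRhamKoszulAlgebraic}(1) applied to $\s{F}^\vee$, and the required retraction $1_X \to g_! \n{L} \to 1_X$ arises from the explicit identification of $g_! \n{L}$ via the comodule equivalence of part (1).

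\emph{Part (3).} That $f$ is a smooth $\s{D}$-cover follows from cohomological smoothness of $\widehat{\bb{V}(\s{F})} \to X$ and Proposition \ref{PropositionSmoothDescent}. Cohomological smoothness of $g$ with $g^! 1_X = g^* \bigwedge^d \s{F}[-d]$ is again obtained from Lemma \ref{KeyLemmaSmoothRetraction}(1), now taking $\n{L} = g^* \bigwedge^d \s{F}[-d]$ and constructing the trace from the Koszul resolution of Lemma \ref{LemmaDeRhamKoszulAlgebraic}(2). Co-smoothness of $g$ with proper dual $\n{P}_g(1) \simeq 1[-2d]$ is obtained dually via Lemma \ref{KeyLemmaSmoothRetraction}(2): one needs a map $\n{L}' = 1_{X/\widehat{\bb{V}(\s{F})}}[-2d] \to f_! 1_X$ and a section $1_X \to g_! \n{L}'$; these are produced by pairing the dualizing sheaves of $g$ from the smooth side with the already-computed $g^!$ (equivalently, $\n{P}_g(1) = g^! 1 \otimes (g^! 1)^\vee\otimes g^!1 $ after trivialization of the invertible dualizing line, giving the shift by $-2d$).

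\emph{Main obstacle.} The hardest step is not any single computation but the verification that the trace/cycle maps constructed in parts (2) and (3) via Lemma \ref{KeyLemmaSmoothRetraction} actually recover the composite retraction. This requires matching the abstractly-defined adjunction data with the de Rham/Koszul resolutions from Lemma \ref{LemmaDeRhamKoszulAlgebraic}; the characteristic-zero hypothesis enters crucially here, since in positive characteristic neither resolution need be a quasi-isomorphism and descendability of $\Sym^\bullet \s{F}^\vee$ fails.
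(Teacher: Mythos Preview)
Your outline for parts (1) and (2) is essentially the paper's proof: reduce to the universal case, apply Proposition~\ref{PropCoverModuleComoduleDescription} for the (co)module descriptions, prove descendability of $f$ in (2) via the de Rham resolution of Lemma~\ref{LemmaDeRhamKoszulAlgebraic}(1), and produce the data for Lemma~\ref{KeyLemmaSmoothRetraction}(1) from that resolution. One small correction: the ``retraction through the zero section'' you invoke in part (1) does not establish descendability of $\Sym^{\bullet}_X \s{F}^{\vee}$; every algebra has a unit and an augmentation. What shows $1_{X/\bb{V}(\s{F})}$ lies in the thick tensor ideal of $f_*1_X$ is precisely the de Rham resolution, which you only cite later. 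Also, your argument for weak cohomological properness of $g$ in (2) is vague; the paper deduces it from the fact that co-smoothness is descendably local on the source (Corollary~\ref{CorollaryDescentSmoothProperCovers}) and $g\circ f=\id$.

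For part (3), your approach to cohomological smoothness of $g$ via Lemma~\ref{KeyLemmaSmoothRetraction}(1) is a legitimate alternative, but it differs from the paper's. The paper first obtains smoothness of $g$ from local-on-source (Corollary~\ref{CorollaryDescentSmoothProperCovers}), and then computes $g^! 1_X$ by a diagonal argument: writing $\Delta: X/\widehat{\bb{V}(\St)} \to X/\widehat{\bb{V}(\St_1\oplus\St_2)}$ as $\widehat{\bb{V}(Q)}/\widehat{\bb{V}(\St_1\oplus\St_2)} \to X/\widehat{\bb{V}(\St_1\oplus\St_2)}$ with $Q=(\St_1\oplus\St_2)/\Delta(\St)$, one gets $\Delta^! 1 = \bigwedge^d Q^{\vee}[d]$ from Proposition~\ref{PropAlgCartier1}(2), and then $g^! 1_X = \Delta^*\pi_2^! 1 = \bigwedge^d Q[-d] = \bigwedge^d \St[-d]$. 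Your route would also work once you correctly identify the map $s: f_! 1_X \to \n{L}$ (it is the twisted augmentation, not literally the Koszul resolution) and compute $g_! \n{L}$.

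The genuine gap is in your co-smoothness argument for (3). The expression ``$\n{P}_g(1) = g^! 1 \otimes (g^! 1)^{\vee} \otimes g^! 1$'' simplifies to $g^! 1$, not to $1[-2d]$, and there is no general mechanism producing $\n{P}_g(1)$ from $g^! 1$; smoothness and co-smoothness are independent properties. What the paper does is apply Lemma~\ref{KeyLemmaSmoothRetraction}(2) directly: under the module identification of (1), $f_! 1_X = \Sym^{\bullet}_X \St \otimes \bigwedge^d \St[-d]$, and the Koszul filtration supplies the map $s: 1[-2d] \to f_! 1_X$. The nontrivial step is then to verify the section $1_X \to g_! (1[-2d])$, which amounts to showing $g_!(1[-2d])$ splits as $\bigoplus_{i=0}^d \bigwedge^i \St^{\vee}[-i]$. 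The paper does this by dualizing: $\iHom_X(g_! \n{L}, \bb{Q}) = g_* g^! 1_X[2d] = (g_* 1)\otimes \bigwedge^d \St[d]$, and $g_* 1$ is computed by the de Rham complex of $\widehat{\Sym}_X(\St^{\vee})$, which splits by the Poincar\'e lemma in characteristic $0$. Without this dualization step, the section required by Lemma~\ref{KeyLemmaSmoothRetraction}(2) is not established.
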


\begin{proof}
By base change, we can reduce all the assertions to the universal case $X=*/ \GL_d$ and $\s{F}= \St$. We first use Proposition \ref{PropCoverModuleComoduleDescription} to deduce part (1), and then we apply Lemma \ref{LemmaDeRhamKoszulAlgebraic} to construct the data required in Lemma \ref{KeyLemmaSmoothRetraction} and show (2) and (3).

\begin{enumerate}

\item By Proposition \ref{PropAlgCartier1} the maps $f:X\to X/\bb{V}(\St)$  and $f':X\to X/\widehat{\bb{V}(\St)}$ are weakly cohomologically proper and cohomologically smooth respectively. Then, by Proposition \ref{PropCoverModuleComoduleDescription} and Remark \ref{RemarkComoduleDescription} we have that 
\[
\Mod(X/\bb{V}(\St))=\mathrm{CoMod}_{f^*f_* 1_X}(\Mod(X))
\]
and 
\[
\Mod(X/\widehat{\bb{V}(\St)})=\Mod_{f^!f_! 1_X}(\Mod(X)).
\]
It is left to compute the monad and comonad, for this one uses the fact that the \v{C}ech nerves of $X\to X/\bb{V}(\St)$ and $X\to \widehat{\bb{V}(\St)}$ are given by the simplicial stack $(\bb{V}(\St)^{n/X})_{[n]\in \Delta^{\op}}$ (resp. $(\widehat{\bb{V}(\s{F})}^{n/X})_{[n]\in \Delta^{\op}}$) encoding the commutative group structure of $\bb{V}(\St)$ and $\bb{V}(\St)$ respectively, which arise from the $\GL_d$-equivariant  Hopf-algebra structure of $\Sym_X^{\bullet} \St^{\vee}$ and $\widehat{\Sym}_X^{\bullet} \St^{\vee}$  respectively (see \cite[Theorem 4.7.5.2 (3)]{HigherAlgebra} and Proposition \ref{PropAlgCartier1} (2)), we left the details to the reader.

\item By part (1), the category $\Mod_{\sol}(X/\bb{V}(\St))$ is the category of $\GL_d$-equivariant (left) comodules over $\Sym^{\bullet}_X(\St^{\vee})$. The map $X \to X/ \bb{V}(\St)$ is the vector bundle associated to the algebra  $\Sym^{\bullet}_X(\St^{\vee})$ over $X/ \bb{V}(\St)$, endowed with the natural  comodule action given by co-multiplication.

 The map $f$ is weakly cohomologically proper since $\bb{V}(\St)$ has the induced analytic structure from $X$. To show that $g$ has $!$-functors, it suffices to prove that $f$ is a descendable $\s{D}$-cover, which amounts to show that $f_* 1_{X}$ is  descendable over $1_{X/\bb{V}(\St)}$. For this, we can use the de Rham complex  of Lemma \ref{LemmaDeRhamKoszulAlgebraic}
\begin{equation}
\label{equationDeRhamPolyAlgebras}
0 \to \bb{Q} \to \Sym^{\bullet}_X \St^{\vee} \xrightarrow{d} \Sym^{\bullet}_X \St^{\vee} \otimes \St^{\vee}  \xrightarrow{d} \cdots  \xrightarrow{d} \Sym^{\bullet}_X \St^{\vee} \otimes \bigwedge^d \St^{\vee} \to 0 
\end{equation}
which is a $\GL_d$-equivariant complex of $\Sym^{\bullet}_X \St^{\vee}$-comodules, where $\St^{\vee}$ has the trivial comodule action (we higlight for future reference that this is equal to the adjoint comodule action since $\bb{V}(\St)$ is abelian).  Note that if $\{e_i\}_{i=1}^d$ is a basis of $\St^{\vee}$, we have identified $\bigoplus_{i=1}^d \bb{Q} d e_i\cong \St^{\vee}$.    One formally deduces that $g$ is weakly cohomologically proper  as $f$ is so, see Corollary \ref{CorollaryDescentSmoothProperCovers}. Finally, it is left to show that $g$ is cohomologically smooth with $g^! 1_{X}=\bigwedge^d \St [d]$. Let $\bb{Q}\in \Mod_{\sol}(X/ \bb{V}(\St))$ be the trivial representation seen as a morphism in  $\LZ_{\s{D},X}(X/ \bb{V}(\St), X)$.   Let $\n{L}:=(\bigwedge^d \St)[d]\in \Mod_{\sol}(X/ \bb{V}(\St))$ endowed with the trivial comodule action. Twisting \eqref{equationDeRhamPolyAlgebras} by $\bigwedge^d \St$ we get a map $f_! 1_{X}= \Sym^{\bullet}_X \St^{\vee} \to \n{L}$. Moreover, \eqref{equationDeRhamPolyAlgebras} shows that 
\[
g_! \n{L}= \bigoplus_{i=0}^d \bigwedge^i \St[i],
\] 
this gives the retraction $g_! \n{L} \to 1_{X}$. We conclude by applying Lemma \ref{KeyLemmaSmoothRetraction}.

\item 

 The map $f$ is cohomologically smooth  by Proposition \ref{PropAlgCartier1}. Since the pullback $f^*$ is conservative, $f$ is in fact a smooth $\s{D}$-cover, in particular of universal $!$-descent, and thus $g$ admits $!$-functors. On the other hand, since being cohomologically smooth is local on the source (Corollary \ref{CorollaryDescentSmoothProperCovers}), we see that $g$ is also cohomologically smooth. It is left to   compute   $g^! 1_{X}$. By functoriality of the Lu-Zheng category, it suffices to treat the universal case $X=*/\GL_n$ and $\s{F}=\St$.  By Proposition \ref{PropAlgebraicCartier2}, one has that $1_X=f^!g^! 1_X=( f^! 1)\otimes f^*(g^! 1_X)= f^* \bigwedge^d\St^{\vee}[d] \otimes f^*(g^! 1_X)$. This gives $f^*g^! 1_{X}=f^*\bigwedge^d \St[-d]$, and we only need to  identify $\bigwedge^d \St$ as object in $\Mod_{\sol}(X/\widehat{\bb{V}(\St)})$. Consider the diagram  
 \[
 \begin{tikzcd}
 X/\widehat{\bb{V}(\St)}\ar[r, "\Delta"] & X/\widehat{\bb{V}(\St_1\oplus \St_2)} \ar[r, "\pi_2"] \ar[d, "\pi_1"] & X/ \widehat{\bb{V}(\St)} \ar[d, "g"] \\ 
 & X/ \widehat{\bb{V}}(\St)  \ar[r, "g"]& X, 
 \end{tikzcd}
 \]
 by smooth base change, we have that $g^! 1_{X} = \Delta^* \pi_{2}^! (1_{X/ \widehat{\bb{V}(\St)}})$.  On the other hand, let $Q= \St_1\oplus \St_2/ \Delta(\St)$, endowed with the left regular action of $\St_1\oplus \St_2$.  The map $\Delta$ is equivalent to the map 
 \begin{equation}
 \label{eqDiagonalMapFormal}
 \widehat{\bb{V}(Q)}/ \widehat{\bb{V}(\St_1\oplus \St_2)} \to X/ \bb{V}(\St_1\oplus \St_2).  
 \end{equation}
 This shows that $\Delta^! 1_{X/ \bb{V}(\St_1\oplus \St_2)}=  \Delta^* \bigwedge^d Q^{\vee} [d]$.  One gets that 
 \[
 \begin{aligned}
 g^! 1_{X} & = \Delta^* \pi_{2}^!(1_{X/ \widehat{\bb{V}}(\St)}) \\ 
 & = \Delta^! \pi_{2}^!(1_{X/ \widehat{\bb{V}}(\St)}) \otimes \Delta^* \bigwedge^d Q [-d] \\ 
 & = \Delta^* \bigwedge^d Q [-d] \\ 
 & = \bigwedge^d \St[-d]. 
 \end{aligned}
 \]

 It is left to show that $g$ is co-smooth with proper dualizing sheaf $\n{L}=1_{X/\widehat{\bb{V}(\St)}}[-2d]$; we use Lemma \ref{KeyLemmaSmoothRetraction}. By part (1), $\Mod(X/\widehat{\bb{V}(\St)})$ is equivalent to  the category of $\Sym_X^{\bullet} \St$-modules in $\Mod(X)$.   The Koszul sequence
\begin{equation} 
\label{eqKoszulResolution2}
  0 \to \Sym^{\bullet}_X \St \otimes \bigwedge^d \St \to \cdots \to \Sym^{\bullet}_X \St \to \bb{Q} \to 0
\end{equation} 
gives rise a map 
 \[
 1_{X/\widehat{\bb{V}(\St)}}[-2d] \to f_! 1_{X}= \Sym^{\bullet}_X \St \otimes \bigwedge^d \St[-d]. 
 \]
 To construct a retraction $g_! \n{L} \to 1_{X}$ it suffices to show that the Koszul complex induces a splitting  $g_{!} \n{L}= \bigoplus_{i=0}^d  \bigwedge^d \St^{\vee}[-i]$. We know by \eqref{eqKoszulResolution2} that $g_! \n{L}$ is a perfect complex of $\GL_d$-equivariant  $\bb{Q}$-vector spaces. Then, to show that it is split it suffices to do it for its dual. We have that 
 \[
 \iHom_{X}( g_! \n{L}, \bb{Q})= g_* \n{L}[2d]= g_* \bigwedge^d \St [d]. 
 \]
 But we have that $g_* \bigwedge^d \St [d]= (g_* 1_{X/\widehat{\bb{V}(\St)}}) \otimes \bigwedge^d \St [d]$, thus, it suffices to show that the cohomology $g_* 1_{X/\widehat{\bb{V}(\St)}}$ splits, this follows by taking the de Rham resolution for $\widehat{\Sym}_X(\St^{\vee})$, since $g_* 1_{X}$ is the ``complete'' comodule associated to  $\widehat{\Sym}_X(\St^{\vee})$.
 \end{enumerate}
\end{proof}

\begin{remark}
\label{RemarkAdjointAction}
In the previous proposition we always keep track of the adjoint action of $\St$, even if it is trivial. The reason is that similar computations will hold for the case of classifying stacks of more general groups, see Proposition \ref{PropDualizingSheafDaggerGRoupoid}. 
\end{remark}

\begin{theorem}[Algebraic Cartier duality]
\label{TheoAlgebraicCartierDuality}

Let $X$ be a solid $\s{D}$-stack over $\bb{Q}$ and $\s{F}$ a vector bundle over $X$. 
 
\begin{enumerate}

\item  There is a natural bi-linear map
\[
F:\bb{V}(\s{F}) \times_X X/\widehat{\bb{V}(\s{F}^{\vee})} \to */ \bb{G}_m
\]
functorial in the category $\BUN_{d,\n{C}}$,  such that $F^*(\s{O}(1))$ is an isomorphism in the Lu-Zheng category, considered as a map $\LZ_{X}(\bb{V}(\s{F}) , X/\widehat{\bb{V}(\s{F}^{\vee})} )$.  Moreover, the inverse of $F^*(\s{O}(1))$ is naturally isomorphic to $F^*(\s{O}(-1)) \otimes_{\s{O}_X} \bigwedge^d\s{F}^{\vee}[-d]$.

\item There is a  natural bi-linear map 
\[
G:\widehat{\bb{V}(\s{F}^{\vee})} \times_X X/ \bb{V}(\s{F})\to */ \bb{G}_m 
\]
functorial in the category $\BUN_{d,\n{C}}$,  such that  $G^*(\s{O}(1))$ is an isomorphism in the Lu-Zheng category considered a a map in $\LZ_{X}(\widehat{\bb{V}(\s{F}^{\vee})},X/ \bb{V}(\s{F}))$. Moreover, the inverse  of $G^*(\s{O}(1))$ is  naturally isomorphic to $G^*(\s{O}(-1))\otimes \bigwedge^d \s{F}[d]$. 

\end{enumerate} 

In particular, we have Cartier duality isomorphisms induced by a Fourier-Moukai transform 
\begin{equation}
\label{eqFM1}
FM_1:\Mod_{\sol}(\bb{V}(\s{F})) \xrightarrow{\sim} \Mod_{\sol}(X/\widehat{\bb{V}(\s{F}^{\vee})})
\end{equation}
\begin{equation}
\label{eqFM2}
FM_2:\Mod_{\sol}(\widehat{\bb{V}(\s{F}^{\vee})}) \xrightarrow{\sim} \Mod_{\sol}(X/ \bb{V}(\s{F})), 
\end{equation}
defined by the convolution
\[
FM_1(M)= F^*(\s{O}(1)) \star  M \mbox{ and } FM_2(M)= G^*(\s{O}(1)) \star M. 
\]
\end{theorem}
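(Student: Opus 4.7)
The plan is to construct the bilinear pairings $F$ and $G$ via exponentiation (available in characteristic zero), to identify the associated Fourier–Mukai kernels as explicit ``exponential'' modules through Proposition \ref{PropAlgebraicCartier2}, and then to verify invertibility in the Lu--Zheng category by an explicit convolution computation based on the Koszul and de Rham resolutions of Lemma \ref{LemmaDeRhamKoszulAlgebraic}.

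\textbf{Step 1 (Construction of the pairings).} The canonical evaluation $\s{F}\otimes_{\s{O}_X}\s{F}^\vee\to \s{O}_X$ defines a bilinear morphism of group $X$-stacks $\bb{V}(\s{F})\times_X \bb{V}(\s{F}^\vee)\to \bb{G}_{a,X}$. Restricting the second factor to its formal completion at zero and using that $\bb{Q}$ is our base field, the formal exponential $\exp:\widehat{\bb{G}}_{a,X}\xrightarrow{\sim}\widehat{\bb{G}}_{m,X}$ followed by $\widehat{\bb{G}}_{m,X}\subset \bb{G}_{m,X}\to B\bb{G}_m$ produces a bilinear morphism $\bb{V}(\s{F})\times_X\widehat{\bb{V}(\s{F}^\vee)}\to B\bb{G}_m$. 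Bilinearity in the second factor ensures that this descends along the quotient in the right factor to give the desired $F$; the functoriality in $\BUN_{d,\n{C}}$ follows from naturality of the evaluation and of $\exp$. The pairing $G$ in part~(2) is defined symmetrically, by interchanging the roles of $\s{F}$ and $\s{F}^\vee$ so that the formal completion sits on the left factor.

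\textbf{Step 2 (Identification of the Fourier--Mukai kernels).} Via Proposition \ref{PropAlgebraicCartier2}~(1), the Lu--Zheng category $\LZ_{X}(\bb{V}(\s{F}),X/\widehat{\bb{V}(\s{F}^\vee)})$ is identified with $\Sym^\bullet_X(\s{F}^\vee)$-modules on $\bb{V}(\s{F})$. After trivializing $\s{O}(1)$ by pulling back along the atlas $\bb{G}_m\to B\bb{G}_m$, the kernel $F^*\s{O}(1)$ becomes the structure sheaf $\s{O}_{\bb{V}(\s{F})}$ equipped with the $\Sym^\bullet_X(\s{F}^\vee)$-action in which a dual vector $e^\vee\in \s{F}^\vee$ acts by multiplication by its tautological coordinate in $\s{O}_{\bb{V}(\s{F})}$; symmetrically $F^*\s{O}(-1)$ carries the opposite action. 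The analogous explicit description holds for $G^*\s{O}(\pm 1)$ through Proposition \ref{PropAlgebraicCartier2}~(2).

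\textbf{Step 3 (Convolution and invertibility).} For part~(1) it suffices, by reduction to the universal case $X=*/\GL_d$ with $\s{F}=\St$ and $\GL_d$-equivariant bookkeeping, to prove the identities
\[
F^*\s{O}(1)\ \star\ \bigl(F^*\s{O}(-1)\otimes \textstyle\bigwedge^{d}\s{F}^\vee[-d]\bigr)\simeq \Delta_{!}1_{\bb{V}(\s{F})},
\]
\[
\bigl(F^*\s{O}(-1)\otimes \textstyle\bigwedge^{d}\s{F}^\vee[-d]\bigr)\ \star\ F^*\s{O}(1)\simeq \Delta_{!}1_{X/\widehat{\bb{V}(\s{F}^\vee)}}.
\]
The first convolution is computed on $\bb{V}(\s{F})\times_X\widehat{\bb{V}(\s{F}^\vee)}\times_X\bb{V}(\s{F})$ as $p_{13,!}$ applied to the multiplicative character $\exp(\langle t_1-t_2,e\rangle)$ twisted by $\bigwedge^d\s{F}^\vee[-d]$. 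The pushforward along the formal factor $p:\widehat{\bb{V}(\s{F}^\vee)}\to X$ was computed in Proposition \ref{PropAlgCartier1}, namely $p_! 1 = \bigwedge^d\s{F}^\vee\otimes \Sym^\bullet_X\s{F}^\vee[-d]$, so the determinantal twist $\bigwedge^d\s{F}^\vee[-d]$ precisely cancels the dualizing contribution and reduces the computation, after the change of variable $s=t_1-t_2$, to showing that integration of $\exp(\langle s,e\rangle)$ against $p_! 1$ produces the diagonal kernel $\Delta_! 1_{\bb{V}(\s{F})}$; this is where the Koszul resolution of $\s{O}_X$ as a $\widehat{\Sym}_X(\s{F})$-module in Lemma \ref{LemmaDeRhamKoszulAlgebraic}~(2) does the bookkeeping. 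The second identity is verified dually, integrating out the $\bb{V}(\s{F})$-factor via the de Rham resolution of Lemma \ref{LemmaDeRhamKoszulAlgebraic}~(1) applied to $\Sym^\bullet_X\s{F}^\vee$ and using Proposition \ref{PropAlgebraicCartier2}~(3) to identify the collapse as the identity on $X/\widehat{\bb{V}(\s{F}^\vee)}$.

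\textbf{Step 4 (Part (2) and the Fourier--Mukai equivalences).} The argument for $G$ is entirely parallel after exchanging $\s{F}\leftrightarrow \s{F}^\vee$; the inverse now picks up $\bigwedge^d\s{F}[d]$ because, in this direction, the cohomologically smooth projection $g:X/\bb{V}(\s{F})\to X$ of Proposition \ref{PropAlgebraicCartier2}~(2) contributes $g^! 1 = \bigwedge^d\s{F}[d]$ with the opposite shift. Once $F^*\s{O}(1)$ and $G^*\s{O}(1)$ are known to be invertible in the respective Lu--Zheng categories, the Fourier--Mukai transforms $FM_1$ and $FM_2$ defined by convolution are automatically equivalences of stable $\infty$-categories, giving the corollary.

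The main obstacle will be Step 3: rigorously identifying the convolution with the diagonal kernel. The delicate point is that the ``integration'' over the formal group $\widehat{\bb{V}(\s{F}^\vee)}$ is nontrivial only because the exponential character collapses the de Rham resolution to a distribution supported on the diagonal, so the matching of the de Rham side with the Koszul side, and the careful tracking of all shifts, signs, and of the determinantal twist $\bigwedge^d\s{F}^\vee[-d]$, has to be done $\GL_d$-equivariantly; the cleanest organization of this bookkeeping is to split the universal rank-$d$ case multiplicatively into $d$ copies of the rank-one computation on $\bb{G}_a\times_X\widehat{\bb{G}}_a$.
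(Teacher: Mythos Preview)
Your proposal is essentially correct and follows the same strategy as the paper: reduce to the universal case $X=*/\GL_d$, $\s{F}=\St$, construct the kernel explicitly, and verify invertibility by computing the two convolutions. A few organizational differences are worth noting.

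First, the paper does not build $F$ via the exponential; it simply \emph{defines} the line bundle directly as $\Sym^\bullet_X\St^\vee$ on $\bb{V}(\St)$ equipped with the left regular $\Sym^\bullet_X\St^\vee$-module action (and similarly for $G$ via the system $(\Sym^{\le n}_X\St)_n$ with the adjoint comodule structure). Your exponential description lands on the same object, so this is cosmetic.

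Second, the paper invokes \cite[Lemma~5.11]{SixFunctorsScholze}: it suffices to produce unit and counit maps that are \emph{equivalences}, without verifying the triangle identities. You should make this explicit; otherwise the bookkeeping of higher coherences is unpleasant.

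Third, and most substantively, the convolution computation in the paper is organized differently from what you sketch. Rather than ``integrating the exponential'' or reducing to rank one, the paper uses the $\GL_d$-equivariant decomposition along the anti-diagonal $\Delta^{\mathrm{ant}}=(\id,-\id):\St^\vee\hookrightarrow\St_1^\vee\oplus\St_2^\vee$, writing
\[
\Sym^\bullet_X(\St_1^\vee\oplus\St_2^\vee)\cong\Sym^\bullet_X(\Delta^{\mathrm{ant}}\St^\vee)\otimes\Sym^\bullet_X\bigl((\St_1^\vee\oplus\St_2^\vee)/\Delta^{\mathrm{ant}}\St^\vee\bigr),
\]
so that the $\Sym^\bullet_X\St^\vee$-action on $\pi_{1,2}^*\n{L}\otimes\pi_{2,3}^*\n{G}$ becomes trivial on the second tensor factor. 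The pushforward then reduces to Proposition~\ref{PropAlgCartier1}(2) applied to the first factor, and the determinant twist matches via the identification $\bigwedge^d(\Delta^{\mathrm{ant}}\St)\cong\bigwedge^d\St_2$. This keeps everything manifestly $\GL_d$-equivariant throughout. Your suggested reduction to $d$ copies of the rank-one case would break this equivariance (there is no $\GL_d$-equivariant splitting of $\St$ into lines), so while it can confirm that the convolution is \emph{some} invertible object after pulling back to the point, the anti-diagonal method is what actually pins down the determinant twist as an object over $*/\GL_d$.
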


\begin{proof}
By functoriality of the Lu-Zheng category, we can assume without loss of generality that $X=*/\GL_d$ and that $\s{F}=\St$ is the standard vector bundle. On the other hand, by \cite[Lemma 5.11]{SixFunctorsScholze} it suffices to construct a unit and co-unit  for the adjunction and prove that they are equivalences.

\textbf{Step 1. Construction of $F$ and $G$.} We first construct the maps $F$ and $G$ of parts (1) and (2). Let us start with part (1). Consider the $\GL_n$-equivariant vector bundle $\bb{V}(\St)= \AnSpec_X (\Sym^{\bullet}_X \St^{\vee})$, as well as its formal dual  $\widehat{\bb{V}(\St^{\vee})}$.  By Proposition \ref{PropAlgebraicCartier2} (1), a line bundle over $\bb{V}(\St)\times_{X} X/ \widehat{\bb{V}(\St)}$ is the same as a $\GL_n$-equivariant line bundle over $\bb{V}(\St)$ endowed with a module action of $\Sym^{\bullet}_X \St^{\vee}$ commuting with the  $\s{O}(\bb{V}(\St))$-linear structure. We simply take  $\n{L}=\Sym^{\bullet}_{X} \St^{\vee}$ as a right module over $\bb{V}(\St)$ endowed with the natural left multiplication of $\Sym^{\bullet}_{X} \St^{\vee}$, we say that $\n{L}$ has the \textbf{left regular action}. This defines the map $F$.

For the map $G$ in (2), we argue in a similar way. A line bundle  in $\widehat{\bb{V}(\St^{\vee})}\times_X X/\bb{V}(\St)$ is the same as a compatible system of $\GL_n$-equivariant line bundles $(\n{L}_n)$ on $\bb{V}(\St^{\vee})_n$ for all $n$, together with a compatible system of $\GL_d$-equivariant comodule structures
\[
(\n{L}_n \to  \Sym^{\bullet}_X \St^{\vee} \otimes  \n{L}_n ).
\]
To define such an object, we  take $\n{L}_n= \Sym^{\leq n}_X \St$ endowed with right module structure, and construct the comodule structure of $\Sym^{\bullet}_X \St^{\vee}$ by taking adjoints of the multiplication map $\Sym^{\leq n}_X \St \otimes \n{L}_n \to \n{L}_n$, namely, 
\[
\n{L}_n \to \Sym^{\leq n}_X \St^{\vee} \otimes \n{L}_n \to \Sym^{\bullet}_X \St^{\vee}  \otimes  \n{L}_n.
\]
This defines a  line bundle on $\widehat{\bb{V}(\St^{\vee})} \times_X X/\bb{V}(\St)$, and so the map $G$. We say that $\n{L}$ has the \textbf{left regular action}.

\textbf{Step 2. Identification of $F^*(\s{O}(-1))$ and $G^*(\s{O}(-1))$}. By construction $F^*(\s{O}(1))$ is the line bundle $\Sym^{\bullet}_X \St^{\vee}$ over $\bb{V}(\St)$ endowed with the left multiplication by $\Sym^{\bullet}_X \St^{\vee}$. Then, $F^*(\s{O}(-1))=F^*(\s{O}(1))^{-1}$ is the line bundle $\Sym ^{\bullet}_X \St^{\vee}$ over $\bb{V}(\St)$ endowed with the $\Sym^{\bullet}_X \St^{\vee}$-multiplication arising from  the composite
\[
\Sym^{\bullet}_X \St^{\vee} \otimes \Sym^{\bullet}_X \St^{\vee} \xrightarrow{s\otimes \id}\Sym^{\bullet}_X \St^{\vee} \otimes \Sym^{\bullet}_X \St^{\vee} \xrightarrow{m} \Sym^{\bullet}_X \St^{\vee}
\]
where  $s: \Sym^{\bullet}_X \St^{\vee} \to \Sym^{\bullet}_X \St^{\vee}$ is the antipode map sending $v\mapsto -v$ for $v\in \St^{\vee}$. We say that $F^{*}(\s{O}(-1))$ has the \textbf{right regular action}. 

Similarly, $G^*(\s{O}(1))$ is the line bundle $(\Sym^{\leq n}_X \St)_{n}$ over $\widehat{\bb{V}(\St^{\vee})}$ endowed with the left regular $\Sym^{\bullet}_X \St^{\vee}$-comodule structure constructed as the adjoint of the multiplication map. Then, $G^*(\s{O}(-1))$ is the line bundle $(\Sym^{\leq n}_X  \St)_{n}$ endowed with the right regular comodule structure of $\widehat{\Sym}_X \St^{\vee}$ obtained by composing the multiplication map with the antipode.  We say that $G^*(\s{O}(-1))$ has the \textbf{right regular action}. 

\textbf{Step 3. Unit and co-unit for $F$.} Next, we construct the unit and co-unit maps of the convolutions and see that they are equivalences. We first deal with (1). Let us write $X= */\GL_d$, $Y=\bb{V}(\St)$ and $Z= X/\widehat{\bb{V}(\St^{\vee})}$. We also let $\n{L}= F^*(\s{O}(1))$ and $\n{G}:= F^*(\s{O}(-1))\otimes \bigwedge^d\St^{\vee}[-d]$. Recall that we consider $\n{L}\in \LZ_{X}(Y,Z)$ and $\n{G} \in \LZ_{X}(Z,Y)$, so that we have the convolution
\[
\n{G} \star \n{L}=  \pi_{1,3,!}( \pi_{1,2}^* \n{L}\otimes \pi_{2,3}^* \n{G}  )
\]
for the fiber product $ Y \times_{X} Z \times_X Y$, and the  convolution 
\[
\n{L} \star \n{G} = \pi_{1,3,!}( \pi_{1,2}^* \n{G}\otimes \pi_{2,3}^* \n{L} )
\]
for the fiber product $Z \times_{X} Y \times_X Z$. Thus, we want to construct equivalences
\begin{equation}
\label{eqUnitF}
\Delta_{Y,!} 1_{Y}  \xrightarrow{\sim} \n{G} \star \n{L}
\end{equation}
and
\begin{equation}
\label{eqCounitF}
 \n{L} \star \n{G}\xrightarrow{\sim} \Delta_{Z,!} 1_{Z}.
\end{equation}
Let us first compute $\n{G}\star \n{L}$. The tensor $\pi_{2,3}^* \n{G} \otimes  \pi_{1,2}^* \n{L}$ lies in $Y\times_{X} Z \times_X Y = \bb{V}(\St_1\oplus \St_2)/ \widehat{\bb{V}(\St^{\vee})}$ where the quotient is with respect to the trivial action. By step 2 one deduces that $ \pi_{1,2}^* \n{L} \otimes \pi_{2,3}^* \n{G} $ is nothing but the line bundle $\bigwedge^d\St_{2}^{\vee}\otimes\Sym^{\bullet}_X(\St^{\vee}_1 \oplus \St_2^{\vee})[-d]$ endowed with the $\Sym^{\bullet}_X \St^{\vee}$-module action  which is left regular on $\St^{\vee}_1$ and right regular on $\St^{\vee}_2$. Thus, by taking the anti-diagonal embedding $\Delta^{ant}=(\id,-\id):\St^{\vee} \to\St_1^{\vee} \oplus \St^{\vee}_2$, we can write 
\[
\begin{aligned}
\bigwedge^d\St_{2}^{\vee}\otimes\Sym^{\bullet}_X(\St^{\vee}_1 \oplus \St_2^{\vee})[-d] & = \bigwedge^d \St_{2}^{\vee} \otimes  \Sym^{\bullet}_X(\Delta^{ant}(\St^{\vee})) \otimes \Sym^{\bullet}_X((\St_1^{\vee} \oplus \St^{\vee}_2)/\Delta^{ant}(\St^{\vee}))[-d]\\
& \cong \bigwedge^{d} (\Delta^{ant} \St)\otimes  \Sym^{\bullet}_X(\Delta^{ant}(\St^{\vee})) \otimes \Sym^{\bullet}_X((\St_1^{\vee} \oplus \St^{\vee}_2)/\Delta^{ant}(\St^{\vee}))[-d]
\end{aligned}
\]
where we use the composite $\St^{\vee}\xrightarrow{\Delta^{ant}} \St_1\oplus \St_2 \to \St_2$ to identify $\bigwedge^{d} \Delta^{ant}(\St) \cong \bigwedge^{d} \St_2$,  and the module action of $\Sym_X^{\bullet} (\Delta^{ant} (\St^{\vee}))$  on $\Sym^{\bullet}_X((\St_1^{\vee} \oplus \St^{\vee}_2)/\Delta^{ant}(\St^{\vee}))$ factors through the counit.  Consider the composite 
\[
Y\times_X Y \xrightarrow{\tilde{f}} Y\times_X Z\times_X Y  \xrightarrow{\tilde{g}} Y\times_X Y
\]
arising from the maps $X\xrightarrow{f} Z \xrightarrow{g} X$.  By Proposition \ref{PropAlgebraicCartier2} we can write
\[
\widetilde{f}_!1_{Y\times_X Y} = \bigwedge^{d} (\Delta^{ant} \St) \otimes \Sym^{\bullet}_X (\Delta^{ant} \St) \otimes_{1_{\widehat{\bb{V}(\St)}}} 1_{Y\times_{X} Z \times_X Y}[-d].
\]
Since $Y\to X$ is just a vector bundle, it is clear that $\Delta_{Y,!} 1_Y = \Sym^{\bullet}_X((\St_1\oplus \St_2)/(\Delta^{ant}(\St)))$. Thus, we find that 
\[
\pi_{1,2}^{*}\n{L} \otimes \pi_{2,3}^{*} \n{G} \cong \widetilde{f}_! 1_{Y\times_X Y} \otimes \widetilde{g}^{*} (\Delta_{Y,!} 1_Y). 
\]
Applying $\pi_{1,3,!}=\widetilde{g}_!$, we get that $\n{G}\star \n{L}\cong \Delta_{Y,!} 1_Y$, which gives the unit map  \eqref{eqUnitF} that in addition an equivalence.

Now, let us construct the co-unit map for $\n{L}\star \n{G}$. The object $\pi_{1,2}^* \n{G} \otimes \pi_{2,3}^* \n{L}$ lies over $Z\times_{X} Y \times_X Z = \bb{V}(\St)/\widehat{\bb{V}(\St_1^{\vee} \oplus \St_2^{\vee})}$. By step (2) it is the line bundle $\bigwedge^d \St^{\vee}_1 \otimes( \Sym^{\bullet}_X \St^{\vee})[-d]$, where $\Sym^{\bullet}_X \St^{\vee}$  is endowed with the $\widehat{\Sym}_X(\St_1^{\vee}\oplus \St_2^{\vee})$-module structure   which is right regular for $\St_1^{\vee}$ and left regular for $\St_2^{\vee}$. Equivalently, let $Q^{\vee}=(\St_1^{\vee}\oplus \St_2^{\vee})/(\Delta(\St^{\vee}))$, then 
\[
\pi_{2,3}^* \n{L} \otimes \pi_{1,2}^* \n{G} = \bigwedge^d \St_1^{\vee}\otimes \Sym^{\bullet}_X Q^{\vee} [-d]
\]
endowed with its natural $\Sym^{\bullet}_X(\St_1^{\vee}\oplus \St_2^{\vee})$-module structure given  by left multiplication. Hence, 
\[
\n{L}\star \n{G} =\bigwedge^d \St_1^{\vee}\otimes (\Sym^{\bullet}_X Q^{\vee}) [-d].
\]
Now, the diagonal map is equivalent to \eqref{eqDiagonalMapFormal}, and Proposition \ref{PropAlgCartier1} (2) provides the isomorphism $\Delta_! 1_{X/\widehat{\bb{V}(\St^{\vee})}} \xrightarrow{\sim} \n{L}\star \n{G}$ as wanted.

\textbf{Step 4. Unit and co-unit for $G$. } Now we move to (2). We set $X=*/\GL_d$, $Y=\widehat{\bb{V}(\St^{\vee})}$ and $Z=X/ \bb{V}(\St)$. We also write $\n{L}= G^*(\s{O}(1))$ and $\n{G}= G^*(\s{O}(-1))\otimes \bigwedge^d \St [d]$. We want to construct equivalences  \eqref{eqUnitF} and \eqref{eqCounitF}. In the first case, the tensor product $\pi_{1,2}^* \n{L} \otimes \pi_{2,3}^*\n{G}$ lies over $ \widehat{\bb{V}(\St_1^{\vee} \oplus \St_{2}^{\vee})}/\bb{V}(\St)$ with quotient given by the trivial action. By step (2), it is described as the line bundle $(\Sym^{\leq n}_X(\St_1\oplus \St_2)\otimes \bigwedge^d \St_2 [d])_n$ endowed with the comodule structure over $\Sym^{\bullet}_X \St^{\vee}$ defined by the composite of the anti-diagonal embedding $\Delta^{ant}=(\id,-\id):\St \to \St_1\oplus \St_2$ and the left regular action of $\St_1\oplus \St_2$. Then, we can write 
\[
(\Sym^{\leq n}_X (\St_1\oplus \St_2)\otimes \bigwedge^d \St_2 [d])_n = ( \Sym^{\leq n}_X (\Delta^{ant}(\St))\otimes \bigwedge^d \St_2[d] \otimes \Sym^{\leq n}_X (( \St_1\oplus \St_2)/\Delta^{ant}(\St)) )_{n},
\]
so that the comodule associated to the limit of $(\Sym^{\leq n}_X \Delta^{ant}(\St) )_{n}$ is the dual to the module structure of $\Sym^{\bullet}_X (\Delta^{ant}(\St^{\vee}))$.
 Let $g: X/\bb{V}(\St)\to X$,  Proposition \ref{PropAlgebraicCartier2} (1) implies 
 \[
 \begin{aligned}
 g_*( (\Sym^{\bullet}_X \St^{\vee})^{\vee} \otimes \bigwedge^d\St[d]) & = g_*\iHom_{X/\bb{V}(\St)}(\Sym^{\bullet}_X \St^{\vee}, g^! \bb{Q}) \\ 
 & = \iHom_{X}(g_!(\Sym^{\bullet}_X \St^{\vee}), \bb{Q}) \\ 
 & = \bb{Q}.
 \end{aligned}
 \]
 We deduce that 
\[
\n{G} \star \n{L} = \Sym^{\leq n}_X((\St_1 \oplus \St_2)/ \Delta^{ant}(\St)),
\]
which produces the unit map \eqref{eqUnitF} that is clearly an equivalence.  Next, we construct the co-unit \eqref{eqCounitF}, consider the tensor product $ \pi_{1,2}^* \n{G} \otimes \pi_{2,3}^* \n{L}$ over $\widehat{\bb{V}(\St^{\vee})}/\bb{V}(\St_1\oplus \St_2)$. By step (2) it consists on the line bundle $(\Sym^{\leq n}_X (Q)\otimes \bigwedge^d Q[d])_n$ where $Q= (\St_1\oplus \St_2)/\Delta(\St)$ is endowed with its natural comodule structure of $\Sym^{\bullet}_X (\St_1^{\vee} \oplus \St_2^{\vee})$ given by left regular action. Thus, by Proposition \ref{PropAlgCartier1} (2) we get that 
\[
\n{L}\star \n{G} =  \Sym^{\bullet}_X (Q^{\vee}).
\]
On the other hand, the diagonal map $X/\bb{V}(\St) \to X/\bb{V}(\St_1\oplus \St_2)$ is isomorphic to $\bb{V}(Q)/ \bb{V}(\St_1\oplus \St_2) \to X/ \bb{V}(\St_1\oplus \St_2)$, this shows that $\Delta_! 1_{X/ \bb{V}(\St )} \cong \Sym^{\bullet}_X  Q^{\vee} \cong \n{L}\star \n{G}$ as wanted. 
\end{proof}

\begin{remark}
The statement and proof of Theorem \ref{TheoAlgebraicCartierDuality} also apply for fpqc-stacks in classical derived algebraic geometry. Indeed, the functors and objects involved in the universal case arise from  stacks on schemes endowed with the theory of classical quasi-coherent modules. On the other hand, a careful bookkeeping of the construction of the units and co-units  should  prove  that  the composites \eqref{eqUnitF} and \eqref{eqCounitF} are actually the identity and that  \cite[Lemma 5.11]{SixFunctorsScholze}  would not be necessary, we left this computation to the curious reader. 
\end{remark}

We finish this section with some classical properties of the Fourier-Moukai transform of Theorem \ref{TheoAlgebraicCartierDuality}.

\begin{prop}
\label{PropConvolutionFMTransforms}
Let $X$ be a solid $\s{D}$-stack over $\bb{Q}$ and $\s{F}$ a vector bundle of rank $d$.  Consider the Fourier-Moukai transforms $FM_1$ and $FM_2$ of Theorem \ref{TheoAlgebraicCartierDuality}. The following hold
\begin{enumerate}

\item Let us write $Y= \bb{V(\s{F})}$ and $Z=X/\widehat{\bb{V}(\s{F}^{\vee})}$. Denote $\iota: X \to Y$ the zero section, and $p:Y \to X$, $f:X \to Z$, $g:Z \to X$ the natural maps. We have the following natural identities of convolutions in the Lu-Zheng category over $X$:

\begin{itemize}
\item[(i)] $F^*(\s{O}(1))\star \iota_{!} 1_X= 1_{Z}$

\item[(ii)]  $\iota_! 1_X \star    (F^*(\s{O}(-1))\otimes \bigwedge^d \s{F}^{\vee}[-d])=\bigwedge^d \s{F}^{\vee}[-d]=g^! 1_{X}$. 

\item[(iii)]  $(F^*(\s{O}(-1))\otimes \bigwedge^d \s{F}^{\vee}[-d] ) \star f_! f^!1_Z= 1_{Y}$. 

\item[(iv)]  $f_! 1_{X} \star F^*(\s{O}(1)) = 1_Y$.

\end{itemize}
\item Let us write $Y= \widehat{\bb{V}(\s{F}^{\vee})}$ and $Z=X/\bb{V}(\s{F})$. Denote $\iota: X \to Y$ the zero section, and $p:Y \to X$, $f:X \to Z$, $g:Z \to X$ the natural maps. We have the following natural identities of convolutions in the Lu-Zheng category over $X$:

\begin{itemize}

\item[(i)] $G^*(\s{O}(1))\star \iota_! 1_X= 1_{Z}$

\item[(ii)] $\iota_! 1_X \star   (G^*(\s{O}(-1)) \otimes \bigwedge^d \s{F}[d])= \bigwedge^d \s{F}[d] = g^! 1_{X}$.

\item[(iii)]   $(G^*(\s{O}(-1)) \otimes \bigwedge^d \s{F}[d]) \star  f_! 1_X= 1_{Y}$. 

\item[(iv)] $ f_! 1_{X} \star  G^*(\s{O}(1)) = p^! 1_{X}$.

\end{itemize}
\end{enumerate} 
\end{prop}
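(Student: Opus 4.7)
The plan is to verify each identity by a direct calculation of the Lu-Zheng convolution $M \star N = \pi_{1,3,!}(\pi_{1,2}^{*}N \otimes \pi_{2,3}^{*}M)$, using proper base change, the projection formula, and the bilinearity of $F$ (resp.\ $G$). By functoriality of the whole construction in $\BUN_{d,\n{C}}$, one reduces to the universal case $X = */\GL_d$ with $\s{F} = \St$, although the arguments go through directly over an arbitrary base $X$. In each of the eight identities the three-fold fiber product $A \times_X B \times_X C$ collapses to a two-fold product, since one of the three objects is always $X$, so the convolution becomes $q_{!}(r^{*}(\cdot) \otimes (\cdot))$ for appropriate projections.

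For items (i) and (ii) of each part (involving $\iota_{!}1_X$), I would apply proper base change along $\iota$ and then the projection formula to reduce the problem to computing $(\iota \times \id)^{*}F^{*}(\s{O}(\pm 1))$ (resp.\ $(\iota \times \id)^{*}G^{*}(\s{O}(\pm 1))$) on $Z$. Bilinearity of $F$ (resp.\ $G$) trivializes this pullback, since the zero section of $\bb{V}(\s{F})$ (resp.\ $\widehat{\bb{V}(\s{F}^{\vee})}$) kills the first argument of the pairing; the remaining pushforward along $\pi_Z \circ (\iota \times \id) = \id_Z$ yields $1_Z$, proving (i). For (ii), the surviving twist $\bigwedge^{d}\s{F}^{\vee}[-d]$ or $\bigwedge^{d}\s{F}[d]$ comes from the inverse kernel $\n{G}$ and is identified with $g^{!}1_X$ via Proposition \ref{PropAlgebraicCartier2}.

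For items (iii) and (iv), the same template applies with $\iota$ replaced by the base change $\tilde f\colon Y \to Y \times_X Z$ (or $Z \times_X Y$) of the cover $f\colon X \to Z$ along $p\colon Y \to X$. The key observation is that $f$ represents the basepoint of the classifying stack $Z$, so bilinearity of the pairing trivializes $\tilde f^{*}F^{*}(\s{O}(\pm 1))$ (resp.\ $\tilde f^{*}G^{*}(\s{O}(\pm 1))$) to $1_Y$. Combined with $\pi_Y \circ \tilde f = \id_Y$, proper base change reduces the pushforward to the identity. The shifts and twists entering the right-hand sides come either from $f^{!}1_Z = \bigwedge^{d}\s{F}[d]$ via Proposition \ref{PropAlgebraicCartier2}(3) (in part~(1), item (iii), where $f_{!}f^{!}1_Z = f_{!}1_X \otimes \bigwedge^{d}\s{F}[d]$ by the projection formula), from the twist in the inverse kernel $\n{G}$, or from the Koszul formulas $p_{!}1_Y$ and $f_{!}f^{!}1_X = \Sym^{\bullet}_X\s{F}$ of Proposition \ref{PropAlgCartier1}(2), and combine to produce the stated sheaves.

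The main obstacle is the careful bookkeeping of directions and twists. The same underlying sheaf can represent different morphisms in $\LZ_X(A,B)$ for varying $(A,B)$, so one must track where each object lives at every step; the distinction between left and right regular actions identified in Step~2 of the proof of Theorem \ref{TheoAlgebraicCartierDuality} is essential for telling $F^{*}(\s{O}(1))$ from $F^{*}(\s{O}(-1))$ and likewise for $G$; and the shifts $[\pm d]$ must be propagated through the Koszul and de Rham resolutions of Lemma \ref{LemmaDeRhamKoszulAlgebraic} to match each stated right-hand side exactly, especially when two twists of opposite sign cancel to produce $1_Y$ or $1_Z$.
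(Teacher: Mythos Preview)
Your proposal is correct, and in fact carries out all eight identities by a single uniform template. The paper proceeds somewhat differently: it computes (i) by an explicit module-theoretic identification (writing out $\pi_{1,2}^*\iota_!1_X\otimes\pi_{2,3}^*F^*(\s{O}(1))$ as $\Sym^\bullet_X(\s{F})\otimes_{\Sym^\bullet_X(\s{F})}\s{O}_X$), and then obtains (ii) by taking \emph{left adjoints} in the Lu--Zheng $2$-category, using that $1_Z\in\LZ_X(X,Z)$ has left adjoint $g^!1_X$. Similarly, the paper proves (iii) not by trivialising along $\tilde f$ but by recognising $\pi_Z^*f_!f^!1_Z$ as $\pi_{2,3,!}\pi_{1,2}^*F^*(\s{O}(1))$ via Proposition~\ref{PropAlgCartier1}(2) and then invoking the unit identity $\n{G}\star\n{L}=\Delta_{Y,!}1_Y$ already established in Theorem~\ref{TheoAlgebraicCartierDuality}; it then deduces (iv) by taking \emph{right adjoints}. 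Your route is more elementary and self-contained: once one checks that the base change of $f\colon X\to Z$ along $\pi_Z\colon Z\times_X Y\to Z$ is $\tilde f\colon Y\to Z\times_X Y$ (the section hitting the basepoint of $Z$), bilinearity of $F$ gives $\tilde f^*F^*(\s{O}(\pm1))=1_Y$, and the twists $\bigwedge^d\s{F}[d]$ from $f^!1_Z$ and $\bigwedge^d\s{F}^\vee[-d]$ from $\n{G}$ cancel to produce $1_Y$ directly. The paper's approach has the advantage of exhibiting (ii)--(iv) as formal consequences of (i), (iii) and the $2$-categorical duality structure; your approach makes the role of bilinearity transparent throughout and avoids appealing back to the Cartier duality theorem for (iii).
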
 
\begin{proof}

We only show part (1), part (2) is done in a similar way.   The object $\iota_! 1_X$ is just $\iota_* 1_X \in \Mod_{\sol}(Y)$, let us   considered it as a morphism in $\LZ_{X}(X,Y)$. Since $\iota$ is both cohomologically smooth (by Corollary \ref{CoroSmoothImmersion}), and weakly cohomologically proper (being affinoid with induced analytic structure),   it has right and left adjoints given by $\iota_*\iota^! 1_{Y}= \iota_*\bigwedge^d \s{F} [-d]$ and $\iota_* 1_{Y}$ respectively. 

\begin{itemize}

\item[(i)] Let us compute $F^*(\s{O}(1)) \star \iota_!  1_{X}$, the term $F^*(\s{O}(1))$ is seen as a map $Y \to Z$.  Consider the fiber product $X\times_X Y \times_X Z $, by definition $ F^*(\s{O}(1)) \star \iota_! 1_{X} = \pi_{1,3,!}(\pi_{1,2}^*  \iota_! 1_{X} \otimes \pi_{2,3}^* F^*(\s{O}(1)) )$. We obtain that 
\[
\pi_{1,2}^*  \iota_! 1_{X} \otimes \pi_{2,3}^* F^*(\s{O}(1))  = \Sym^{\bullet}_X(\s{F})\otimes_{\Sym^{\bullet}_X (\s{F})} \s{O}_X = \s{O}_X
\] 
endowed with the trivial comodule structure. Since $\pi_{1,3}: Y\times_X Z \to Z$ is the base change of $Y \to X$, one deduces that $F^*(\s{O}(1)) \star \iota_! 1_{X} = 1_{Z}$ as wanted. 

 \item[(ii)] Next, let us  take left adjoints to the expression $F^*(\s{O}(1)) \star \iota_! 1_{X} = 1_{Z}$, recall that we see  $\iota_! 1_{X}$ and  $F^*(\s{O}(1)) $ as maps $X\to Y$ and $X\to Z$ respectively, so that $1_{Z}$ is seen as a map $X \to Z$. Since $g: Z \to X$ is cohomologically smooth, $1_{Z}$ is a right adjoint as a map $X \to Z$ with left adjoint given by $ g^! 1_{X} = \bigwedge^d \s{F}^{\vee} [-d]$. One deduces that 
\[
\iota_! 1_{X}  \star( F^*(\s{O}(-1)) \otimes \bigwedge^d \s{F}^{\vee}[-d] ) = \bigwedge^d \s{F}^{\vee}[-d].
\]

\item[(iii)] Let us consider $ f_! 1_X \in \Mod_{\sol}(Z)$ as a morphism $X\to Z$ in the Lu-Zheng category over $X$. Since $f$ is cohomologically smooth, it is a left adjoint and has by right adjoint $f_!  f^!1_{X}= f_! \bigwedge^d \s{F}[d]$. Consider the fiber product $ X \times_X  Z \times_X Y= Z \times_X Y$, with projections $\pi_Y$ and $\pi_Z$, then 
\[
(F^*(\s{O}(-1))\otimes \bigwedge^d \s{F}^{\vee}[-d] ) \star f_! f^!1_{Z} =  \pi_{Y,!}( (F^*(\s{O}(-1))\otimes \bigwedge^d \s{F}^{\vee}[-d] ) \otimes \pi_{Z}^* (f_! f^! 1_{Z})).
\]
We have a cartesian square
\[
\begin{tikzcd}
 \widehat{\bb{V}(\s{F}^{\vee})} \times_X Y  \ar[d] \ar[r, "f'"] & Z\times_X Y \ar[d, "\pi_{Z}"] \\ 
X \ar[r, "f"]& Z.
\end{tikzcd}
\]
By proper and smooth base change we have a natural equivalence $ \pi_{Z}^* f_! f^! 1_{Z} =  f'_!  f^{'!}1_{Y\times_X Z}$, which by  Proposition \ref{PropAlgCartier1} (2) yields that 
\[
(F^*(\s{O}(-1))\otimes \bigwedge^d \s{F}^{\vee}[-d] ) \otimes \pi_{Z}^* f_! f^! 1_{Z}  =  (F^*(\s{O}(-1))\otimes \bigwedge^d \s{F}^{\vee}[-d] ) \otimes  \Sym (\s{F}^{\vee}). 
\]
where $\Sym (\s{F}^{\vee})$ is endowed with the \textbf{left} regular comodule structure. Equivalently, consider the map 
\[
Y \times_{X} Z \times_X Y \xrightarrow{\pi_{2,3}} Z\times_X Y, 
\]
then 
\[
\begin{aligned}
(F^*(\s{O}(-1))\otimes \bigwedge^d \s{F}^{\vee}[-d] ) \otimes \pi_{Z}^* f_! f^! 1_{Z} &   = (F^*(\s{O}(-1))\otimes \bigwedge^d \s{F}^{\vee}[-d] ) \otimes  \pi_{2,3,!}\pi_{1,2}^* F^*(\s{O}(1)) \\  
 &=  \pi_{2,3,!}(  \pi_{2,3}^*(F^*(\s{O}(-1))\otimes \bigwedge^d \s{F}^{\vee}[-d] ) \otimes  \pi_{1,2}^*F^*(\s{O}(1))).
\end{aligned}
\]
One gets that 
\[
\begin{aligned}
(F^*(\s{O}(-1))\otimes \bigwedge^d \s{F}^{\vee}[-d] ) \star f_!f^!1_{Z} & = \pi_{Y,!}\pi_{2,3,!}(  \pi_{2,3}^*(F^*(\s{O}(-1))\otimes \bigwedge^d \s{F}^{\vee}[-d] ) \otimes  \pi_{1,2}^*F^*(\s{O}(1))) \\ 
& = \pi_{Y,!} \pi_{1,3,!}(  \pi_{2,3}^*(F^*(\s{O}(-1))\otimes \bigwedge^d \s{F}^{\vee}[-d] ) \otimes  \pi_{1,2}^*F^*(\s{O}(1))) \\ 
& = \pi_{Y,!} (F^*(\s{O}(-1))\otimes \bigwedge^d \s{F}^{\vee}[-d] ) \star F^*(\s{O}(1)) \\ 
& = \pi_{Y,!} \Delta_{Y,!} 1_{Y} \\ 
& = 1_{Y},
\end{aligned}
\]
proving what we wanted.

\item[(iv)] For the last identity, we take right adjoints to the identity $(F^*(\s{O}(-1))\otimes \bigwedge^d \s{F}^{\vee}[-d] ) \star f_! f^!1_{Z} = 1_{Y}$. Indeed, $1_{Y}$ is seen as a map $X \to Y$ in the Lu-Zheng category over $X$, and since $p:Y \to X$ is weakly cohomologically proper, $1_{Y}$ is a left adjoint with right adjoint given by itself. Similarly,  since $f:X \to Z$  is cohomologically smooth,  $f_! f^! 1_{Z}$ is a left adjoint when seen as a map $X \to Z$, with right adjoint given by $f_! 1_{Z}$. One obtains the identity 
\[
f_! 1_{Z} \star F^*(\s{O}(1)) = 1_{Y}.
\]

\end{itemize}
\end{proof}

\begin{corollary}
\label{CoroFourierMoukaiIdentities}
Keep the notation of Proposition \ref{PropConvolutionFMTransforms}. 

\begin{enumerate}
\item  In the conventions of part (1) there are natural equivalences of functors 
\begin{itemize}

\item[(i)]  $FM_1\circ  \iota_* = g^*$.

\item[(ii)] $ \iota^* \circ FM_1^{-1}=g_!(-\otimes g^!1_{X})$.

\item[(iii)]  $FM_1^{-1} \circ  f_!(- \otimes f^! 1_{Z})= p^*$. 

\item[(iv)] $f^* \circ FM_1= p_!$.

\end{itemize}

\item In the conventions of part (2) there are natural equivalences of functors
\begin{itemize}

\item[(i)] $ FM_2 \circ \iota^* =f_!$.
 
\item[(ii)] $ \iota_* \circ FM_2^{-1}= g_!(-\otimes g^! 1_{X})$.

\item[(iii)] $ FM_2^{-1} \circ f_!=p^*$.

\item[(iv)] $ f^* \circ FM_2= p_!(-\otimes p^! 1_{X})$.

\end{itemize}

\end{enumerate}

\end{corollary}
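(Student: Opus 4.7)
The plan is to deduce each identity in the corollary directly from the corresponding convolution identity established in Proposition \ref{PropConvolutionFMTransforms}, by translating Fourier--Mukai kernels into functors via the Lu--Zheng $2$-category $\LZ_X$ with base $X$.

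First I would fix the dictionary between kernels and functors. Using the identifications $X\times_X W = W$ for any $X$-stack $W$, one has $\LZ_X(X,W)=\Mod(W)=\LZ_X(W,X)$, and the composition formula $K\star N = \pi_{W,!}(\pi_X^* N \otimes K)$ combined with projection/base-change yields: $\iota_*$ corresponds to $\iota_! 1_X \in \LZ_X(X,Y)$, while $\iota^*$ corresponds to the same sheaf viewed in $\LZ_X(Y,X)$ (using $p\circ\iota=\id_X$); $g^*$ corresponds to $1_Z \in \LZ_X(X,Z)$ and $f^*$ to $f_! 1_X \in \LZ_X(Z,X)$ (using $g\circ f=\id_X$); the functor $g_!(-\otimes g^! 1_X)$ corresponds to $g^! 1_X \in \LZ_X(Z,X)$; finally $p^*$ and $p_!$ correspond to $1_Y$ viewed in the two possible directions. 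Analogous identifications hold for the objects appearing in part (2), with $Y=\widehat{\bb{V}(\s{F}^{\vee})}$ and $Z=X/\bb{V}(\s{F})$.

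Second, I would invoke associativity of convolution in $\LZ_X$: the kernel of a composite functor is the convolution of the individual kernels. Then identity (1)(i) $FM_1\circ \iota_* = g^*$ reads off from $F^*(\s{O}(1))\star \iota_! 1_X = 1_Z$, which is Proposition \ref{PropConvolutionFMTransforms}(1)(i); identity (1)(iv) $f^*\circ FM_1 = p_!$ follows from (1)(iv) of the same proposition by recognizing $1_Y \in \LZ_X(Y,X)$ as the kernel of $p_!$; identity (1)(ii) $\iota^*\circ FM_1^{-1}= g_!(-\otimes g^! 1_X)$ follows from (1)(ii), after noting that by Theorem \ref{TheoAlgebraicCartierDuality} the inverse of $F^*(\s{O}(1))$ in $\LZ_X(Y,Z)$, viewed as an element of $\LZ_X(Z,Y)$, is exactly $F^*(\s{O}(-1))\otimes \bigwedge^d\s{F}^\vee[-d]$; and (1)(iii) follows from (1)(iii) of the proposition by identifying $1_Y$ as the kernel of $p^*$. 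Part (2) is handled identically using the parallel convolution identities of Proposition \ref{PropConvolutionFMTransforms}(2) together with the corresponding dictionary.

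The proof is essentially a bookkeeping translation and presents no conceptual obstacle: Theorem \ref{TheoAlgebraicCartierDuality} has already established that $FM_1$ and $FM_2$ are invertible in $\LZ_X$, and Proposition \ref{PropConvolutionFMTransforms} has assembled the necessary convolutions. The only point that needs care is to consistently track which direction (source versus target) each kernel is interpreted in, and to apply the identities $p\circ\iota=\id_X$ and $g\circ f=\id_X$ correctly when extracting functors from kernels in $\LZ_X(X,W)$ versus $\LZ_X(W,X)$; this is slightly subtle only because the same sheaf can represent rather different functors depending on its role as source or target kernel.
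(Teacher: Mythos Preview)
Your proposal is correct and takes essentially the same approach as the paper, which simply states that the corollary ``follows by translating the kernels in the Lu-Zheng category over $X$ to their associated functors by convolution.'' You have spelled out this translation explicitly, supplying the dictionary between kernels and functors that the paper leaves implicit.
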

\begin{proof}
This follows by translating the kernels in the Lu-Zheng category over $X$ to their associated functors by convolution. 
\end{proof}

\begin{example}
\label{ExampleBeilinsontstructure}
We now explain the relation of Cartier duality and the Beilinson $t$-structure.  Consider the action of $\bb{G}_m$ on $\bb{G}_a$ by multiplication, the map $\bb{G}_a/ \bb{G}_m \to B\bb{G}_m$ is the standard line bundle over $B\bb{G}_m$ and its Cartier dual over $B\bb{G}_m$ is the quotient stack $B(\bb{G}_m \ltimes \widehat{\bb{G}}_a)$ where $\bb{G}_m$ acts on $\widehat{\bb{G}}_a$ by multiplication. Similarly, the Cartier dual of $\widehat{\bb{G}}_a/ \bb{G}_m$ is   $B(\bb{G}_m \ltimes \bb{G}_a)$.  The category $\Fil(\Mod(\bb{Q}))$ has two different $t$-structures, the standard and the Beilinson $t$-structure, it turns out that they are actually the natural $t$-structures of the modules over the stacks $\bb{G}_a/ \bb{G}_m$ and $B(\bb{G}_m \ltimes \widehat{\bb{G}_a})$ under the Cartier duality isomorphism respectively.
\end{example}

\subsubsection{Solid vector bundles}

We finish the section with some short discussion about a variant of vector bundles for solid $\s{D}$-stacks, namely, solid vector bundles:

\begin{definition}
We let $\GL_{d,\sol}$ be the analytic spectrum of the ring $\bb{Z}[X_{i,j},Y: 1\leq i,j\leq d]_{\sol}/(Y\det(X_{i,j})-1)$. The category of \textit{solid} vector bundles of rank $d$ on solid $\s{D}$-stacks is the slice category $\Sh_{\s{D}}(\Aff_{\bb{Z}_{\sol}})_{/ [*/\GL_{d,\sol}]}$.  A solid vector bundle is denote by $\s{F}_{\sol}$, where $\s{F}$ is the underlying vector bundle associated to the composite $X\to */\GL_{d,\sol}\to */ \GL_d$.  Let $\St$ be the standard representation of $\GL_d$, then $(\Sym^{\bullet}_{\bb{Z}} \St^{\vee})_{\sol}$ has a natural action of $\GL_{d,\sol}$, which defines an analytic space $\bb{V}(\St)_{\sol}\to */\GL_d$. For a solid vector bundle $\s{F}_{\sol}$ over a stack $X$, we let $(\Sym^{\bullet}_{X} \s{F}^{\vee})_{\sol}$ and $\bb{V}(\s{F})_{\sol}$ be the pullback of   $(\Sym^{\bullet}_{\bb{Z}} \St^{\vee})_{\sol}$ and $\bb{V}(\St)_{\sol}$ along $X\to */\GL_d$ respectively. 
\end{definition}

We have a partial analogue of Proposition \ref{PropAlgCartier1}

\begin{prop}
\label{LemmaSerreDualityGLn}
Let $X$ be a solid stack over $\bb{Z}_{\sol}$ and $\s{F}_{\sol}$ a solid vector bundle of rank $d$. Let $f: \bb{V}(\s{F})_{\sol} \to X$. Then $f$ is cohomologically smooth and there are  natural equivalences $f^{!} 1_X \cong f^{*} \bigwedge^{d} \s{F}^{\vee} [d]$. 
\end{prop}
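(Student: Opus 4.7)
The plan is to mimic the strategy of Proposition~\ref{PropAlgCartier1}(2), reducing the statement to the universal situation over the classifying stack $*/\GL_{d,\sol}$ and then to the concrete computation of Lemma~\ref{LemmaCohoSmoothAffineLine} for $\bb{Z}_{\sol}\to \bb{Z}[T]_{\sol}$.

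First, by functoriality under base change along the map $X\to */\GL_d$ classifying the underlying vector bundle $\s{F}$ (and in fact factoring through $*/\GL_{d,\sol}$ by hypothesis on $\s{F}_{\sol}$), it is enough to treat the universal case $X=*/\GL_{d,\sol}$ with $\s{F}_{\sol}=\St_{\sol}$, namely $\bb{V}(\St)_{\sol}=\AnSpec\,\bb{Z}[T_1,\ldots,T_d]_{\sol}$ fibered over $*/\GL_{d,\sol}$. Cohomological smoothness is local on the source and stable under pullback along smooth $\s{D}$-covers (Corollary~\ref{CorollaryDescentSmoothProperCovers}, Lemma~\ref{LemmaStabilitycoSmoothCompositionPullback}), so we may pull back along the smooth $\s{D}$-cover $*\to */\GL_{d,\sol}$ and reduce to the projection $g\colon\AnSpec\,\bb{Z}[T_1,\ldots,T_d]_{\sol}\to \AnSpec\,\bb{Z}_{\sol}$.

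Next, this $g$ factors as the $d$-fold composite of base changes of $\bb{Z}_{\sol}\to \bb{Z}[T]_{\sol}$. By Lemma~\ref{LemmaCohoSmoothAffineLine}, each such factor is cohomologically smooth with $!$-dualizing sheaf isomorphic to $\bb{Z}[T]_{\sol}[1]$, and by stability of cohomological smoothness under composition and base change (Lemma~\ref{LemmaStabilitycoSmoothCompositionPullback}) together with Proposition~\ref{PropCompositionProperSmoothObjects}(1) for composition of smooth dualizing objects, we conclude that $g$ is cohomologically smooth with $g^!\,\bb{Z}_{\sol}\simeq \bb{Z}[T_1,\ldots,T_d]_{\sol}[d]$. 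This already gives the first assertion of the proposition.

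The remaining and main point is to identify, back over $X=*/\GL_{d,\sol}$, the invertible sheaf $f^!1_X$ with $f^*\bigwedge^d\St^{\vee}[d]$. This is an equivariance statement: the pullback of $f^!1_X$ under $*\to */\GL_{d,\sol}$ is $\bb{Z}[T_1,\ldots,T_d]_{\sol}[d]$, so $f^!1_X$ is a line bundle on $\bb{V}(\St)_{\sol}$ which is free on the cover, and we must compute the induced $\GL_{d,\sol}$-character. The cleanest way is to transplant the argument of Proposition~\ref{PropAlgCartier1}(2): writing $\bb{V}(\St)_{\sol}$ as the open complement, in the locale of $\Mod_{\sol}$, of the idempotent union of the localizations $\bb{Z}[T_1,\ldots,T_d]_{\sol}[T_i^{-1}]$ inside the solid projective space, one obtains an explicit generator $T_1\cdots T_d$ of $f^!1_X$ (as in the end of the proof of Proposition~\ref{PropAlgCartier1}) whose $\GL_{d,\sol}$-transformation is visibly by the determinant character, i.e.\ by $\bigwedge^d\St^{\vee}$. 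Equivalently, one may argue as in Theorem~\ref{TheoSerreDuality} via deformation to the normal cone of the zero section, where the identification of $f^!1_X$ with $\Omega^d_{\bb{V}(\s{F})_{\sol}/X}[d]\simeq f^*\bigwedge^d\s{F}^{\vee}[d]$ is canonical; the only point to check is that the formation of $\Omega^1$ for the solid Tate algebra matches the standard $\GL_{d,\sol}$-representation, which is immediate from $\bb{L}_{\bb{Z}[T]_{\sol}/\bb{Z}_{\sol}}\simeq \bb{Z}[T]_{\sol}\,dT$. The main obstacle, as in the algebraic case, is keeping track of signs and the $\GL_{d,\sol}$-equivariance in the idempotent-algebra presentation; once this is settled the identification of $f^!1_X$ with $f^*\bigwedge^d\s{F}^{\vee}[d]$ is forced.
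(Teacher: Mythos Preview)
Your overall strategy matches the paper's: reduce to the universal case over $*/\GL_{d,\sol}$, establish cohomological smoothness by factoring through copies of $\bb{Z}_{\sol}\to\bb{Z}[T]_{\sol}$ (Lemma~\ref{LemmaCohoSmoothAffineLine}), and then identify $f^!1_X$ by computing $f_!1$ via an explicit excision with idempotent algebras and dualizing. The smoothness part is fine.

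There is, however, a concrete error in your excision step. You write $\bb{V}(\St)_{\sol}$ as the open complement of the union of the localizations $\bb{Z}[T_1,\ldots,T_d]_{\sol}[T_i^{-1}]$. Those are the idempotent algebras from Proposition~\ref{PropAlgCartier1}(2), and their complement is the \emph{formal neighbourhood} $\widehat{\bb{V}(\St)}$, not $\bb{V}(\St)_{\sol}$. The solid affine space $\AnSpec\bb{Z}[\underline{T}]_{\sol}$ is the open complement inside $\AnSpec(\bb{Z}[\underline{T}],\bb{Z})_{\sol}$ of the union of the idempotent algebras $D_i=\bb{Z}[[T_i^{-1}]][\underline{T}]$ (cf.\ the proof of Proposition~\ref{PropositionAffinoidCatLocale} and Lemma~\ref{LemmaCohoSmoothAffineLine}). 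With the correct $D_i$ the same computation gives
\[
f_!1_{\bb{V}(\St)_{\sol}}\;=\;[\bb{Z}[\underline{T}]\to C]\;\simeq\;(T_1\cdots T_d)^{-1}\,\bb{Z}[[T_1^{-1},\ldots,T_d^{-1}]][-d],
\]
and dualizing yields $f_*f^!1_X\simeq (T_1\cdots T_d)\otimes\bb{Z}[\underline{T}][d]$, hence $f^!1_X\simeq f^*\bigwedge^d\St^{\vee}[d]$ with the determinant equivariance. This is exactly the modification the paper makes relative to Proposition~\ref{PropAlgCartier1}.

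Your alternative route via Theorem~\ref{TheoSerreDuality} is not available here: that theorem is stated for solid smooth morphisms of derived \emph{Tate} adic spaces over $R_{\sol}$, and its proof ultimately reduces the identification of the dualizing sheaf of a vector bundle to Proposition~\ref{PropositionDAggerCartier2} in that analytic setting. It does not apply to solid stacks over $\bb{Z}_{\sol}$, and in any case invoking it for the base case of a vector bundle would be circular in spirit.
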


\begin{proof}
This follows the same proof of Proposition \ref{PropAlgCartier1} after taking some modifications on the idempotent algebras $D_i$. Indeed, using the same notation as \textit{loc. cit.} consider the idempotent $\bb{Z}[\underline{T}]$-algebras $D_i= \bb{Z}[[T_{i}^{-1}]][\underline{T}]$, and let $C$ be its union in the sense of locale. Then,  $\bb{V}(\St)_{\sol}$ is the open complement of $C$ in $\bb{V}(\St)$, and we can compute 
\[
f_! 1_{\bb{V}(\St)_{\sol}} = [\bb{Z}[\underline{T}]\to C].
\]
An explicit calculation gives that 
\[
f_{!} 1_{\bb{V}(\St)_{\sol}} = (T_1\cdots T_d)^{-1} \bb{Z}[[T_{1}^{-1},\cdots, T_{d}^{-1}]][-d]. 
\]
Taking duals  one finds that 
\[
f_* f^{1} 1_{X}= (T_1\cdots T_d) \otimes \bb{Z}[\underline{T}] [d],
\]
and that $f^{!} 1_X = f^{*} \bigwedge^{d} \St^{\vee}[d]$. 
\end{proof}

\subsection{Analytic Cartier duality for vector bundles}
\label{SubsectionAnalyticCartier}

We have proven an algebraic Cartier duality for vector bundles, in this section we shall study three additional incarnations of this phenomena in rigid geometry for Tate stacks  over $\bb{Q}_p$, for some fixed prime $p$. Nevertheless, some of the constructions and statements still make sense for Tate stacks over $(R,R^{+})=(\bb{Z}((\pi)), \bb{Z}[[\pi]])$, we will make explicit this distinction when necessary.

\subsubsection{Cartier duality for unitary overconvergent vector bundles}

In Proposition \ref{PropositionEquivalenceVBTorsors} we saw that the category of vector bundles of rank $d$ on solid $\s{D}$-stacks is equivalent to the category of $\s{D}$-stacks over $*/\GL_d$. Therefore, in order to construct different incarnations of vector bundles it suffices to construct different incarnations of the group $\GL_d$.

\begin{definition}
Let $R \langle  T \rangle^{\dagger}= \varinjlim_{\epsilon\to 0^{+}} R \langle \pi^{\epsilon} T\rangle $ be the overconvergent algebra defining the closed disc of radius $1$.

\begin{enumerate}
\item  We define the overconvergent linear group $\GL_d^{\dagger}$ to be the analytic spectrum of the algebra 
\[
R \langle X_{i,j},T: 1 \leq i,j\leq d \rangle^{\dagger} / (\det(X_{ij}) T-1)
\]
representing invertible matrices $A$ such that $|A| \leq |\pi^{-\epsilon}|$  and $|A^{-1}| \leq |\pi^{-\epsilon}|$ for all $\epsilon>0$. 

\item We define the category of unitary overconvergent vector bundles of rank $d$ on analytic $\s{D}$-stacks over $R_{\sol}$ to be the slice category $\Sh_{\s{D}}(\Aff^{b}_{R_{\sol}})_{/ [*/ \GL_d^{\dagger}]}$.  

\item Given an analytic $\s{D}$ stack $X$ over $R_{\sol}$, and a vector bundle $\s{F}$ of rank $d$ defined by a map $f: X \to */ \GL_d$, a \textit{lattice} $\s{F}^+$ of $\s{F}$ is a factorization 
\[
X \to */ \GL_d^{\dagger} \to */ \GL_d.
\]
We also say that $\s{F}^+$ is an \textit{unitary  overconvergent vector bundle} over $X$.
\end{enumerate}
\end{definition}

\begin{construction}
Let $X$ be an Tate stack over $R_{\sol}$. Let $f:X \to */ \GL_d^{\dagger}$ be a unitary overconvergent vector bundle of rank $d$, and let $\s{F}$ denote the vector bundle associated to the composite $X \to */ \GL_d^{\dagger} \to */ \GL_d$.  Similarly as for algebraic vector bundles, we can construct two different geometric incarnations that are analogue to $\bb{V}(\s{F})$ and $\widehat{\bb{V}(\s{F})}$.  Let $\St$ be the standard representation of $\GL_d^{\dagger}$ with canonical basis $e_1,\ldots, e_d$. Let $\overline{\bb{V}(\St^+)}\subset \bb{V}(\St)$ be the closed subspace given by the analytic spectrum of $R \langle e_1^{\vee}, \ldots, e_d^{\vee} \rangle^{\dagger}$ where $e_{i}^{\vee} \in \St^{\vee}$ is the dual basis. By construction, $ \overline{\bb{V}(\St^+)} \subset \bb{V}(\St)$ admits a descent datum for the action of $\GL_d^{\dagger}$, and thus it defines an analytic space over $*/ \GL_d^{\dagger}$. We define $\overline{\bb{V}(\s{F}^+)}:= f^* \overline{\bb{V}(\St^+)}$ and call it the \textit{closed  overconvergent ball of radius $1$} in $\bb{V}(\s{F})$. Dually, let $\mathring{\bb{V}}(\s{F}^+):= \bigcup_{\epsilon>0} \pi^{\epsilon} \overline{\bb{V}(\s{F}^+)}$  be the unit open ball in $\bb{V}(\s{F})$. We have a series of inclusions 
\[
\widehat{\bb{V}(\s{F})} \subset  \mathring{\bb{V}}(\s{F}^+) \subset \overline{\bb{V}(\s{F}^+)} \subset \bb{V}(\s{F}). 
\] 
\end{construction}

\begin{remark}
The map $\mathring{\bb{V}}(\s{F}^+) \to \bb{V}(\s{F})$ is an open analytic inclusion, namely, locally in the $\s{D}$-topology it is nothing but the inclusion of the open unit polydisc in the algebraic  affine space. In particular, we can always localize modules over $\Sym^{\bullet}_X(\s{F}^{\vee})$ to solid sheaves over $\mathring{\bb{V}}(\s{F}^+)$. 
\end{remark}

\begin{definition}
Let $\s{F}^+$ be an overconvergent vector bundle over $X$. We let $\Sym^{\dagger}_X(\s{F}^{\vee,+})$ denote the algebra of functions of $\overline{\bb{V}(\s{F}^+)}$ seen as an object in $\Mod_{\sol}(X)$.  We also let $\mathring{\Sym}_X(\s{F}^{\vee,+})$ denote the global sections over $X$ of $\mathring{\bb{V}}(\s{F}^{+})$. 
\end{definition}

We now prove the analogue of Proposition \ref{PropAlgCartier1} 

\begin{prop}
\label{PropositionDAggerCartier1}
Let $X$ be an analytic $\s{D}$-stack over $R$ and let $\s{F}^+$ be a unitary overconvergent vector bundle over $X$. 
\begin{enumerate}

\item  The map $\overline{\bb{V}(\s{F}^+)}\to X$ is weakly cohomologically proper. 

\item  The map $f: \mathring{\bb{V}}(\s{F}^+) \to X$ is cohomologically smooth and there is a natural isomorphism $f^! 1_{X}= f^* \bigwedge^d \s{F}^{\vee}[d]$. If in addition $X$ is defined over $\bb{Q}_p$ then   $f_! 1_{\mathring{\bb{V}}(\s{F}^+)} = \bigwedge^d \s{F} \otimes \Sym^{\dagger}_X(\s{F}^+) [-d]$. In particular, $f_! f^! 1_{X}= \Sym^{\dagger}_X(\s{F}^+)$.

\end{enumerate} 
\end{prop}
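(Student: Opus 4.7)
The plan is to adapt the proof strategy of Proposition \ref{PropAlgCartier1} to the overconvergent setting.

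Part (1) is immediate. Indeed, $\overline{\bb{V}(\s{F}^+)}$ is, locally in the $\s{D}$-topology of $X$, the relative analytic spectrum of the $\s{O}_X$-algebra $\Sym^{\dagger}_X(\s{F}^{\vee,+})$ endowed with the induced analytic structure. The structural map thus belongs to the class $P$ of Definition \ref{DefinitionSuitableDecompositionSolid}, and weakly cohomological properness follows from Lemma \ref{LemmaRepresentableInP}.

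For Part (2), I would first reduce to the universal case $X = */\GL_d^{\dagger}$ and $\s{F}^+ = \St$ by base change along the classifying map, using that cohomological smoothness, $f^!$ and $f_!$ all commute with pullbacks (Proposition \ref{PropLocalTargetBaseChange} and Lemma \ref{LemmaStabilitycoSmoothCompositionPullback}). Since cohomological smoothness is moreover $*$-local on the target, a further pullback along the canonical epimorphism $* \to */\GL_d^{\dagger}$ reduces matters to the case $X = \AnSpec R$. The key geometric input for cohomological smoothness is that $\mathring{\bb{V}}(\St^+)$ embeds as an open subspace of the solid affine space $\bb{V}(\St)_{\sol}$: the complement in the locale is the closed locus $\bigcup_i \{|T_i|\geq 1\}$, presented as the union (in the sense of Proposition \ref{PropLocaleOperations}(5)) of the idempotent $R[\underline{T}]_{\sol}$-algebras corresponding to the closed annuli $\{|T_i|\geq 1\}$. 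Since $\bb{V}(\St)_{\sol}\to \AnSpec R$ is cohomologically smooth by Lemma \ref{LemmaSerreDualityGLn} and open immersions in the locale are cohomologically \'etale by Lemma \ref{LemmaOpenClosedCoverLocaleCoho}, the composite $\mathring{\bb{V}}(\St^+)\to \AnSpec R$ is cohomologically smooth.

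To compute $f_! 1_{\mathring{\bb{V}}(\s{F}^+)}$, I would switch to a different ambient space, namely the dagger closed ball $\overline{\bb{V}(\s{F}^+)}$. The inclusion $j:\mathring{\bb{V}}(\St^+)\hookrightarrow \overline{\bb{V}(\St^+)}$ is an open immersion in the locale, with complement the boundary $\bigcup_i \{|T_i|=1\}$ described as the locale-union of the idempotent dagger algebras $D_i = R\langle\underline{T}\rangle^{\dagger}\otimes_{R\langle T_i\rangle^{\dagger}} R\langle T_i^{\pm}\rangle^{\dagger}$. Since $p:\overline{\bb{V}(\St^+)}\to X$ is weakly cohomologically proper by Part (1), one has $f_! = p_*\circ j_!$, and $j_! 1_{\mathring{\bb{V}}(\St^+)} \simeq [1_{\overline{\bb{V}(\St^+)}}\to C]$ where $C$ is the locale-union of the $D_i$. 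Unwinding $C$ through the alternating \v{C}ech complex, taking $p_*$, and identifying $T_i^{-1}$ with a dual basis of $\St^{\vee}$ yields, over $\bb{Q}_p$, the claimed expression $f_! 1_{\mathring{\bb{V}}(\St^+)} \simeq \bigwedge^d \St \otimes \Sym^{\dagger}_X(\St^+)[-d]$; this mirrors the algebraic computation from Proposition \ref{PropAlgCartier1}(2) with Tate-dagger algebras replacing the algebraic localizations $\bb{Z}[\underline{T}][T_i^{-1}]$. The identification $f^!1_X = f^*\bigwedge^d \s{F}^{\vee}[d]$ then follows either by $R$-linear duality from the above, or by transporting the dualizing sheaf of $\bb{V}(\St)_{\sol}$ through the open immersion into $\bb{V}(\St)_{\sol}$ used for smoothness.

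The main obstacle will be to verify the compatibility between the two ambient spaces: the open immersion into $\bb{V}(\St)_{\sol}$ produces a naive Tate-algebra answer, while the correct formula requires the dagger algebras arising from the ambient $\overline{\bb{V}(\St^+)}$. Carefully tracking this refinement---ensuring that the equivariant structure imposed by $\GL_d^{\dagger}$ forces the naive Tate output to coincide, after descent, with its dagger refinement $\Sym^{\dagger}_X(\s{F}^+)$---is the core technical point, and should be handled by a direct computation inside $\overline{\bb{V}(\St^+)}$ together with descent along $*\to */\GL_d^{\dagger}$.
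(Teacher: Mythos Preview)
Your argument is correct and reaches the same conclusions, but through different choices of ambient space than the paper.

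For Part~(1) you match the paper exactly. For smoothness in Part~(2), the paper simply invokes Proposition~\ref{PropGeoSmoothIsSmooth}: the open unit polydisc is solid smooth over the base (it is locally an affinoid polydisc), hence cohomologically smooth. Your route via an open immersion into $\bb{V}(\St)_{\sol}$ together with Lemma~\ref{LemmaSerreDualityGLn} also works, but is less direct.

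For the computation of $f_!$, the paper takes the \emph{algebraic} affine space $\bb{V}(\St)=\AnSpec(R[\underline{T}],R^+)_{\sol}$ as ambient (weakly proper over $X$ because it has the induced analytic structure), with idempotent complements $D_i = R\langle T_i^{-1}\rangle^{\dagger}[\underline{T}]$; the resulting \v{C}ech-style computation gives $(T_1\cdots T_d)^{-1}R\langle T_1^{-1},\ldots,T_d^{-1}\rangle^{\dagger}[-d]$ directly. Your choice of ambient $\overline{\bb{V}(\St^+)}$ with boundary idempotents $R\langle T_i^{\pm}\rangle^{\dagger}$ is an equally valid compactification and yields the same output, so there is no error. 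The paper's $D_i$ are perhaps more transparently parallel to Propositions~\ref{PropAlgCartier1} and~\ref{LemmaSerreDualityGLn}, while yours have the advantage that the compactification is the one already used in Part~(1).

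Your final paragraph about the ``main obstacle'' is a red herring. Once you commit to computing inside $\overline{\bb{V}(\St^+)}$, the answer is dagger on the nose; there is no Tate/dagger discrepancy to reconcile. In particular, the assertion that $\GL_d^{\dagger}$-equivariance ``forces the naive Tate output to coincide with its dagger refinement'' is not what is happening and should be dropped---descent along $*\to */\GL_d^{\dagger}$ is only used to transport the equivariant structure, not to upgrade Tate algebras to dagger ones.
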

\begin{proof}
Part (1) is clear since $\overline{\bb{V}(\s{F}^+)}$ is the relative analytic spectrum of the $\s{O}_X$-algebra $\Sym^{\dagger}_X(\s{F}^+)$ endowed with the induced analytic structure. 

For part (2), smoothness of $f$ follows from Proposition \ref{PropGeoSmoothIsSmooth}.  The computation of $f^{!} 1_X$ and  $f_! 1_{\mathring{\bb{V}}(\s{F}^+)}$ follows the same lines of the proof of Propositions \ref{PropAlgCartier1} and \ref{LemmaSerreDualityGLn}: we first reduce to the univesal case $X=*/\GL_d^{\dagger}$ and $\s{F}^+= \St^+$, we let $e_1,\ldots, e_d$ be the standard basis of $\St^+$ with dual basis $T_1,\ldots, T_d$. We then define the idempotent $R[T_1,\ldots, T_d]$-algebras $D_i= R\langle T_i^{-1} \rangle^{\dagger}[T_1,\ldots, T_d]$. The same computations will show that 
\[
f_! 1_{\mathring{\bb{V}}(\St^+)}= (T_1\cdots T_d)^{-1} R \langle T_1^{-1},\ldots, T_d^{-1} \rangle^{\dagger}[-d] = \bigwedge^{d} \St \otimes (\mathring{\Sym}_X(\St^{\vee}))^{\vee}[-d].
\]
If $X$ is defined over $\bb{Q}_p$ this is precisely  $\bigwedge^d \St \otimes \Sym^{\dagger}_X(\St^+)[-d]$. In general,     taking duals one gets that 
\[
f_* f^! 1_X = \bigwedge^{d} \St^{\vee}\otimes  \mathring{\Sym}_X(\St^{\vee})[d],
\]
localizing at $\mathring{\bb{V}}(\s{F}^{+})$ one gets that $f^{!} 1_X = f^{*} \bigwedge^d \St^{\vee} [d]$ as wanted. 
\end{proof}

Before giving a proof of the analogue of Proposition  \ref{PropAlgebraicCartier2},  we  need to find suitable de Rham and Koszul resolutions as in  Lemma \ref{LemmaDeRhamKoszulAlgebraic}.

\begin{lemma}
\label{LemmaDeRhamKoszulDagger}
 Let $X$ be an Tate stack over $\bb{Q}_p$ and let $\s{F}^+$ be a unitary overconvergent vector bundle of rank $d$ over $X$.

\begin{enumerate}

\item   There is a natural de Rham resolution of $\s{O}_X$ as $\Sym^{\dagger}_X(\s{F}^+)$-comodule  given by  the complete decreasing filtration
\[
 \s{O}_X \to \Sym^{\dagger}_X(\s{F}^{\vee,+}) \xrightarrow{d} \Sym^{\dagger}_X(\s{F}^{\vee,+}) \otimes \s{F}^{\vee} \xrightarrow{d} \cdots \xrightarrow{d} \Sym^{\dagger}_X(\s{F}^{\vee,+}) \otimes \bigwedge^d \s{F}^{\vee}. 
\]

\item There is a natural Koszul resolution of $\s{O}_X$ as $\Sym^{\dagger}_X(\s{F}^+)$-module  given by the complete increasing filtration
\[
\Sym^{\dagger}_X(\s{F}^{+})\otimes \bigwedge^d \s{F} \to \cdots \to \Sym^{\dagger}_X(\s{F}^+) \otimes \s{F} \to \Sym^{\dagger}_X(\s{F}) \to \s{O}_X,
\]
whose dual is the de Rham complex for $\mathring{\Sym}_X(\s{F}^{\vee})$. 
\end{enumerate}

\end{lemma}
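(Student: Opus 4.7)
The plan is to imitate closely the argument of Lemma \ref{LemmaDeRhamKoszulAlgebraic}, reducing both statements to the universal case $X = */\GL_d^{\dagger}$ and $\s{F}^+ = \St^+$ via pullback, and then constructing both complexes explicitly in a $\GL_d^{\dagger}$-equivariant manner using the standard coordinates. Fixing a basis $e_1,\ldots,e_d$ of $\St^+$ with dual coordinates $T_1,\ldots,T_d$, the algebra $\Sym^{\dagger}_X(\St^{\vee,+})$ becomes the overconvergent Tate algebra $\bb{Q}_p\langle T_1,\ldots,T_d\rangle^{\dagger}$, and one writes down the classical de Rham and Koszul complexes with differentials $d e_i^{\vee} = e_i^{\vee}$ (respectively, contraction with the Euler element $\sum e_i \otimes e_i^{\vee}$). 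These are manifestly functorial in the vector bundle, and hence $\GL_d^{\dagger}$-equivariant; the comodule and module structures on the terms are the obvious ones (with $\s{F}^{\vee}$ and $\s{F}$ carrying the adjoint, i.e.\ trivial, comodule action since $\overline{\bb{V}(\s{F}^+)}$ is abelian).

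The first step is to verify exactness of the de Rham complex in (1). This is the overconvergent Poincar\'e lemma: using the standard homotopy operator $h(T^{\alpha} dT_{i_1}\wedge\cdots\wedge dT_{i_k}) = \frac{1}{\alpha_{i_1}+1} T^{\alpha+e_{i_1}} dT_{i_2}\wedge\cdots\wedge dT_{i_k}$, one reduces the exactness to the claim that if $f = \sum_{\alpha} a_{\alpha} T^{\alpha}$ converges on a disc of radius $p^{\epsilon}>1$, then so does $\sum_{\alpha} \frac{a_{\alpha}}{\alpha_i+1} T^{\alpha+e_i}$. The $p$-adic estimate $v_p(\alpha_i+1) \leq \log_p(\alpha_i+1)$ shows that dividing by $\alpha_i+1$ introduces at most logarithmic $p$-adic denominators, which are absorbed by any strictly positive overconvergence $\epsilon>0$; here is precisely where the characteristic zero hypothesis enters, justifying the restriction to Tate stacks over $\bb{Q}_p$.

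For the Koszul resolution in (2), the argument is more straightforward: writing the variables of $\Sym^{\dagger}_X(\s{F}^+)$ as $X_1,\ldots,X_d$, these form a regular sequence in $\bb{Q}_p\langle X_1,\ldots,X_d\rangle^{\dagger}$. Indeed, writing $\bb{Q}_p\langle X_1,\ldots,X_d\rangle^{\dagger} = \varinjlim_{\epsilon\to 0^+} \bb{Q}_p\langle \pi^{\epsilon} X_1,\ldots, \pi^{\epsilon} X_d\rangle$, after rescaling the variables one reduces to Koszul-regularity of $(\pi^{\epsilon}X_1,\ldots,\pi^{\epsilon}X_d)$ in the classical Tate algebra, which is standard (e.g.\ from Lemma \ref{LemmaStandarSmoothRegular} applied to the zero section). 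Hence the Koszul complex is a resolution of $\s{O}_X$. Finally, the identification of its $\s{O}_X$-linear dual with the de Rham complex for $\mathring{\Sym}_X(\s{F}^{\vee,+})$ follows termwise from the duality $\mathring{\Sym}_X(\s{F}^{\vee,+}) = (\Sym^{\dagger}_X(\s{F}^+))^{\vee}$, which is itself an immediate consequence of the computation $f_* 1_{\mathring{\bb{V}}(\s{F}^+)} = (\Sym^{\dagger}_X(\s{F}^+))^{\vee}$ deduced by Serre duality from Proposition \ref{PropositionDAggerCartier1}. The main obstacle is the overconvergent Poincar\'e lemma in step two; all other steps reduce to functorial bookkeeping and standard regular sequence arguments.
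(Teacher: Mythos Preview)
Your proof is correct and follows essentially the same reduction to the universal case as the paper. The difference lies in how exactness is established: the paper cites Tamme's Poincar\'e lemma for \emph{open} unit polydiscs and observes that $\Sym^{\dagger}_X(\St^{\vee,+})$ is a filtered colimit of such, then deduces Koszul exactness in one stroke by taking duals of the de Rham resolution (via the duality between nuclear Fr\'echet and LB spaces of compact type, \cite[Theorem 3.40]{RRLocallyAnalytic}). You instead prove both exactness statements independently --- the de Rham side by an explicit homotopy operator with the $p$-adic integration estimate, the Koszul side by a direct regular-sequence argument --- and only invoke duality at the end to identify the dual of the Koszul complex. Your route is more self-contained but slightly redundant: once you have the overconvergent Poincar\'e lemma and the duality $(\Sym^{\dagger}_X(\s{F}^+))^{\vee} \cong \mathring{\Sym}_X(\s{F}^{\vee,+})$, Koszul exactness follows by dualizing and you do not need the separate regular-sequence argument. (A minor point: your citation of Lemma~\ref{LemmaStandarSmoothRegular} for Koszul regularity of the coordinate sequence in a Tate algebra is overkill --- that lemma concerns sous-perfectoid bases --- but the underlying fact is elementary and your argument via filtered colimits is fine.)
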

\begin{proof}
By base change it suffices to treat the universal case $X=*/\GL_d^{\dagger}$ and $\s{F}^+= \St^+$. We have $\GL_d^{\dagger}$-equivariant de Rham  and Koszul complexes 
\begin{equation}
\label{eqdeRhamKozClosedDisc}
\begin{aligned}
0 \to \bb{Q}_p \to \Sym^{\dagger}_X( \St^{+,\vee}) \xrightarrow{d} \Sym^{\dagger}_X( \St^{+,\vee}) \otimes \St^{\vee} \xrightarrow{d} \cdots  \xrightarrow{d} \Sym^{\dagger}_X(\St^{+,\vee}) \otimes \bigwedge^d \St^{\vee} \to 0 \\
0\to \Sym^{\dagger}_X(\St^{\vee,+})\otimes \bigwedge^d \St^{\vee} \to \cdots \to \Sym^{\dagger}_X(\St^{\vee,+})\otimes \St^{\vee} \to \Sym^{\dagger}_X(\St^{\vee,+}) \to \bb{Q}_p \to 0
\end{aligned}
\end{equation}
and 
\begin{equation}
\label{eqdeRhamKozOpenDisc}
\begin{aligned}
0 \to \bb{Q}_p \to \mathring{\Sym}_X( \St^{+}) \xrightarrow{d} \mathring{\Sym}_X( \St^{+}) \otimes \St \xrightarrow{d} \cdots  \xrightarrow{d} \mathring{\Sym}_X( \St^{+})  \otimes \bigwedge^d\St \to 0 \\
0\to \mathring{\Sym}_X( \St^{+}) \otimes \bigwedge^d \St  \to \cdots \to \mathring{\Sym}_X( \St^{+}) \otimes \St \to \mathring{\Sym}_X( \St^{+}) \to \bb{Q}_p \to 0.
\end{aligned}
\end{equation}
By the Poincar\'e lemma for open unit polydiscs \cite[Lemma 26]{Tamme}, the de Rham sequences in both \eqref{eqdeRhamKozClosedDisc} and \eqref{eqdeRhamKozOpenDisc} are exact (one can write $\Sym^{\dagger}_X(\St^{\vee,+})$ as filtered colimit of functions in open unit polydiscs).  Moreover, the Koszul resolutions in both equations are duals to the de Rham resolutions via the naive duality between nuclear Fr\'echet and $LB$ spaces of compact type, cf. \cite[Theorem 3.40]{RRLocallyAnalytic}, one deduces that the Koszul resolutions are also exact. Then, the de Rham complex of \eqref{eqdeRhamKozClosedDisc} is a complex of $\Sym^{\dagger}_X(\St^{\vee,+})$-comodules  proving (1). Similarly, the Koszul resolution of \eqref{eqdeRhamKozOpenDisc} is also a complex of $\Sym^{\dagger}_X(\St^{+})$-modules obtaining (2). 
\end{proof}

\begin{prop}
Let $X$ be an analytic $\s{D}$-stack over $\bb{Q}_p$ and let $\s{F}^+$ be a unitary overconvergent  vector bundle of rank $d$ over $X$. 

\begin{enumerate}

\item There are natural equivalences 
\[
\Mod(X/\overline{\bb{V}(\s{F})})=\mathrm{CoMod}_{\Sym^{\dagger}_X(\s{F}^{\vee,+})}(\Mod(X))
\]
and
\[
\Mod(X/\bb{V}(\s{F}))=\Mod_{\Sym^{\dagger}_X(\s{F}^+)}(\Mod(X)).
\]

\item  Consider the maps $X \xrightarrow{f} X/\overline{\bb{V}(\s{F}^+)} \xrightarrow{g} X$. Then $f$ is a descendable $\s{D}$-cover and $g$ is both weakly cohomologically proper and cohomologically smooth. Moreover, there is a natural equivalence $g^! 1_{X}= \bigwedge^d \s{F}[d]$.

\item  Consider the maps $X\xrightarrow{f} X/ \mathring{\bb{V}}(\s{F}^+) \xrightarrow{g} X$. Then $f$ is a smooth $\s{D}$-cover,  $g$ is cohomologically smooth, and there is a natural equivalence $g^! 1_{X}= \bigwedge^d \s{F}[-d]$. Moreover, $g$ is co-smooth with proper dual $\n{P}_{g}(1_{X/\mathring{\bb{V}}(\s{F}^+)})=1_{X/\mathring{\bb{V}}(\s{F}^+)}[-2d]$.

\end{enumerate}
\end{prop}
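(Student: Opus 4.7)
The plan is to proceed in close analogy with the proof of Proposition~\ref{PropAlgebraicCartier2}, substituting Proposition~\ref{PropositionDAggerCartier1} for Proposition~\ref{PropAlgCartier1} and Lemma~\ref{LemmaDeRhamKoszulDagger} for Lemma~\ref{LemmaDeRhamKoszulAlgebraic}. By functoriality of the Lu--Zheng category and base change along $X \to */\GL_d^{\dagger}$, it suffices to treat the universal case $X = */\GL_d^{\dagger}$ and $\s{F}^+ = \St^+$.

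For part~(1), I would apply Proposition~\ref{PropCoverModuleComoduleDescription}. The map $f \colon X \to X/\overline{\bb{V}(\s{F}^+)}$ is weakly cohomologically proper by Proposition~\ref{PropositionDAggerCartier1}~(1), and is descendable: the de Rham complex of Lemma~\ref{LemmaDeRhamKoszulDagger}~(1) witnesses $1_X$ inside a finite filtration whose associated graded pieces are shifts of $\Sym^{\dagger}_X(\s{F}^{\vee,+}) \otimes \bigwedge^k \s{F}^{\vee}$, so $f_* 1_X$ generates the unit as a thick tensor ideal. Similarly $f' \colon X \to X/\mathring{\bb{V}}(\s{F}^+)$ is cohomologically smooth by Proposition~\ref{PropositionDAggerCartier1}~(2), hence a smooth $\s{D}$-cover since $f^{'*}$ is conservative. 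The resulting comonad $f^* f_*$ and monad $f^{'!} f'_!$ are then identified with $\Sym^{\dagger}_X(\s{F}^{\vee,+})$ and $\Sym^{\dagger}_X(\s{F}^+)$ respectively by using the Hopf algebra structures coming from the commutative group structures on $\overline{\bb{V}(\s{F}^+)}$ and $\mathring{\bb{V}}(\s{F}^+)$, exactly as in part~(1) of Proposition~\ref{PropAlgebraicCartier2}.

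For part~(2), I would apply Lemma~\ref{KeyLemmaSmoothRetraction}~(1) with $\n{L} = \bigwedge^d \s{F}[d]$ carrying the trivial comodule action. Twisting the de Rham complex of Lemma~\ref{LemmaDeRhamKoszulDagger}~(1) by $\bigwedge^d \s{F}[d]$ supplies both the map $s \colon f_! 1_X = \Sym^{\dagger}_X(\s{F}^{\vee,+}) \to \n{L}$ (as the top piece of the resolution) and the retraction $g_! \n{L} \cong \bigoplus_{i=0}^{d} \bigwedge^{d-i}\s{F}[d-i] \to 1_X$ (extracting the unit summand from the $g_!$ of the resolved complex). The lemma then yields cohomological smoothness of $g$ together with $g^! 1_X \cong \bigwedge^d \s{F}[d]$, and weak cohomological properness of $g$ is inherited from that of $f$ via Corollary~\ref{CorollaryDescentSmoothProperCovers}.

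For part~(3), the identification $g^! 1_X \cong \bigwedge^d \s{F}[-d]$ is obtained by the same diagonal-square argument as in Proposition~\ref{PropAlgebraicCartier2}~(3): smooth base change reduces the computation to the pullback of $\pi_2^!$ along $\Delta \colon X/\mathring{\bb{V}}(\s{F}^+) \to X/\mathring{\bb{V}}(\s{F}_1^+ \oplus \s{F}_2^+)$, and $\Delta$ is equivalent to the base change of the zero section into $\mathring{\bb{V}}(Q^+)$ for $Q = (\s{F}_1 \oplus \s{F}_2)/\Delta(\s{F})$, so Proposition~\ref{PropositionDAggerCartier1}~(2) applies. For co-smoothness I would invoke Lemma~\ref{KeyLemmaSmoothRetraction}~(2) with $\n{L} = 1_{X/\mathring{\bb{V}}(\s{F}^+)}[-2d]$: the Koszul complex of Lemma~\ref{LemmaDeRhamKoszulDagger}~(2) twisted by $\bigwedge^d \s{F}[-d]$ furnishes the map $\n{L} \to f_! 1_X$, and a section $1_X \to g_! \n{L}$ is produced by splitting $g_! \n{L}$ as a perfect complex of $\bb{Q}_p$-vector spaces. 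The splitting is verified after dualization using the de Rham complex for $\mathring{\Sym}_X(\s{F}^{\vee})$ recorded in Lemma~\ref{LemmaDeRhamKoszulDagger}~(2), yielding $g_* 1_{X/\mathring{\bb{V}}(\s{F}^+)}$ as a sum of shifts, from which the desired splitting on duals follows.

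The main technical obstacle will be the descendability of $f$ in part~(2): one must promote the Poincar\'e lemma (used in Lemma~\ref{LemmaDeRhamKoszulDagger} to establish exactness of the overconvergent de Rham complex) into a genuine statement about the thick tensor ideal generated by $f_* 1_X$, and simultaneously track the Hopf algebra structure so as to identify the comonad correctly. Because the de Rham and Koszul resolutions are of finite length $d+1$, once this step is established the remainder of the argument runs in full parallel with the algebraic case.
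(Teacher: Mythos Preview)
Your proposal is correct and takes essentially the same approach as the paper: the paper's proof is a one-line reference stating that the argument of Proposition~\ref{PropAlgebraicCartier2} goes through verbatim once Proposition~\ref{PropAlgCartier1} is replaced by Proposition~\ref{PropositionDAggerCartier1} and Lemma~\ref{LemmaDeRhamKoszulAlgebraic} by Lemma~\ref{LemmaDeRhamKoszulDagger}, which is exactly what you carry out in detail. Your worry about descendability in part~(2) is not a genuine obstacle, since the overconvergent de Rham complex of Lemma~\ref{LemmaDeRhamKoszulDagger}~(1) is already a finite-length resolution of $1_X$ by $f_*1_X$-modules, so descendability is immediate.
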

\begin{proof}
The proof is exactly the same of Proposition \ref{PropAlgebraicCartier2} where Proposition \ref{PropAlgCartier1} is replaced by Proposition \ref{PropositionDAggerCartier1}, and the de Rham and Koszul resolutions are those of Lemma \ref{LemmaDeRhamKoszulDagger}. 
\end{proof}

\begin{theorem}[Cartier duality for open and closed discs]
\label{TheoremCartierDualityAnalyticI}
Let $X$ be an analytic $\s{D}$ over $\bb{Q}_p$ and $\s{F}$ a vector bundle of rank $d$ over $X$. 

\begin{enumerate}

\item  There is a natural bi-linear map
\[
F: \overline{\bb{V}(\s{F}^+)} \times_X X/ \mathring{\bb{V}}(\s{F}^{\vee,+}) \to */ \bb{G}_m
\]
such that $F^*(\s{O}(1))$ is an isomorphism in the Lu-Zheng category, considered in $\LZ_{X}(\overline{\bb{V}(\s{F}^+)}, X/ \mathring{\bb{V}}(\s{F}^{\vee,+}))$.  Furthermore, the inverse of $F^*(\s{O}(1))$ is naturally isomorphic to $F^*(\s{O}(-1)) \otimes_{\s{O}_X} \bigwedge^d \s{F}^{\vee}[-d]$.

\item There is a natural bi-linear map 
\[
G: \mathring{\bb{V}}(\s{F}^{\vee,+}) \times_X X/\overline{\bb{V}(\s{F}^+)} \to */ \bb{G}_m,
\]
such that $G^*(\s{O}(1))$ is an isomorphism in the Lu-Zheng category considered in $\LZ_X(\mathring{\bb{V}}(\s{F}^{\vee,+}), X/\overline{\bb{V}(\s{F}^+)})$. Furthermore, the inverse of $G^*(\s{O}(1))$ is naturally isomorphic to $G^*(\s{O}(-1)) \otimes \bigwedge^d \s{F}[d]$.
\end{enumerate}
In particular, we have  analogue Fourier-Moukai isomorphisms as in \eqref{eqFM1} and \eqref{eqFM2}. Moreover, the analogues of Proposition \ref{PropConvolutionFMTransforms} and Corollary \ref{CoroFourierMoukaiIdentities} hold.  
\end{theorem}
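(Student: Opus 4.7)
The plan is to adapt \emph{mutatis mutandis} the proof of Theorem \ref{TheoAlgebraicCartierDuality}, replacing the inputs from Propositions \ref{PropAlgCartier1} and \ref{PropAlgebraicCartier2} and Lemma \ref{LemmaDeRhamKoszulAlgebraic} by their overconvergent analogues (Propositions \ref{PropositionDAggerCartier1}, the statement just preceding this theorem, and Lemma \ref{LemmaDeRhamKoszulDagger}). First, by functoriality of the Lu-Zheng $2$-category, I would reduce both statements to the universal case $X = */\GL_d^{\dagger}$ with $\s{F}^{+} = \St^{+}$ the standard overconvergent unitary vector bundle. By \cite[Lemma 5.11]{SixFunctorsScholze} it then suffices to construct candidate unit and co-unit morphisms and check that their two composites are equivalences.

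For the construction of the kernels, the category $\Mod(X/ X/\mathring{\bb{V}}(\s{F}^{\vee,+}))$ is equivalent to $\Sym^{\dagger}_X(\s{F}^{+})$-modules in $\Mod(X)$ by the monadicity statement, so a $\GL_d^{\dagger}$-equivariant line bundle on $\overline{\bb{V}(\s{F}^{+})} \times_X X/ \mathring{\bb{V}}(\s{F}^{\vee,+})$ is just a $\GL_d^{\dagger}$-equivariant $\Sym^{\dagger}_X(\s{F}^{\vee,+})$-module together with a compatible $\Sym^{\dagger}_X(\s{F}^{+})$-module structure. I would take $F^*(\s{O}(1)) := \Sym^{\dagger}_X(\s{F}^{\vee,+})$ with left regular action on both sides, whence $F^*(\s{O}(-1))$ is the same underlying object but with the $\Sym^{\dagger}_X(\s{F}^{+})$-action twisted by the antipode of the Hopf structure. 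The construction of $G^*(\s{O}(1))$ is dual: $\mathring{\Sym}_X(\s{F}^{+})$ acquires a comodule structure over $\Sym^{\dagger}_X(\s{F}^{\vee,+})$ by taking the adjoint of the truncated multiplication maps, working with the filtration by ``radii less than $1$''. Both constructions are transparently functorial on $\BUN_{d,\n{C}}$.

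For the unit and co-unit in part (1), I would compute the convolutions $F^*(\s{O}(-1)) \otimes \bigwedge^d \s{F}^{\vee}[-d] \star F^*(\s{O}(1))$ and the reverse convolution by the same anti-diagonal decomposition
\[
\Sym^{\dagger}_X(\s{F}^{\vee,+}_1 \oplus \s{F}^{\vee,+}_2) \;\cong\; \Sym^{\dagger}_X(\Delta^{\mathrm{ant}}\s{F}^{\vee,+}) \,\widehat{\otimes}\, \Sym^{\dagger}_X((\s{F}^{\vee,+}_1 \oplus \s{F}^{\vee,+}_2)/\Delta^{\mathrm{ant}}\s{F}^{\vee,+})
\]
used in the algebraic case; the key point is that taking overconvergent symmetric powers preserves such direct sum decompositions (basis-by-basis) so the argument transports verbatim. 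Combined with Proposition \ref{PropositionDAggerCartier1}(2) which identifies $f_!\mathbf{1}$ and $f^!\mathbf{1}$ for $\mathring{\bb{V}}(\s{F}^{+}) \to X$, and with the description of the diagonal of $X/ \mathring{\bb{V}}(\s{F}^{\vee,+})$ as $\mathring{\bb{V}}(Q^+)/\mathring{\bb{V}}(\s{F}^{\vee,+}_1 \oplus \s{F}^{\vee,+}_2)$, the same bookkeeping as in Step 3 of Theorem \ref{TheoAlgebraicCartierDuality} produces the unit and co-unit. Part (2) is handled symmetrically, with the roles of open/closed discs swapped and using the Koszul and de Rham resolutions of Lemma \ref{LemmaDeRhamKoszulDagger}(2).

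The main obstacle I anticipate is purely bookkeeping: one must carefully verify that the regular/adjoint comodule actions on $\mathring{\Sym}_X(\s{F}^{+})$ are well defined as a compatible system over the filtration defining the open unit ball, and that the splittings coming from the de Rham/Koszul resolutions of Lemma \ref{LemmaDeRhamKoszulDagger} give the expected trace maps $g_!\n{L} \to \mathbf{1}_X$ after taking derived global sections. Once these identifications are in place, the Fourier–Mukai equivalences \eqref{eqFM1}, \eqref{eqFM2}, and the identities of Proposition \ref{PropConvolutionFMTransforms} and Corollary \ref{CoroFourierMoukaiIdentities} follow formally by taking right/left adjoints of the kernel identities in the Lu-Zheng category, exactly as in the algebraic setting, since all the ingredients (smoothness of $\mathring{\bb{V}}(\s{F}^{+})\to X$, weak cohomological properness of $\overline{\bb{V}(\s{F}^{+})} \to X$, cohomological smoothness of the quotients) are now available.
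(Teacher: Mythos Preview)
Your proposal is correct and follows essentially the same route as the paper: reduce to the universal case $X=*/\GL_d^{\dagger}$, $\s{F}^+=\St^+$, construct $F^*(\s{O}(1))$ as $\Sym^{\dagger}_X(\St^{\vee,+})$ with left regular action and $G^*(\s{O}(1))$ via the adjoint comodule structure on $\mathring{\Sym}_X(\St^{+})$, and then rerun the anti-diagonal computations of Theorem~\ref{TheoAlgebraicCartierDuality} with Propositions~\ref{PropositionDAggerCartier1} and Lemma~\ref{LemmaDeRhamKoszulDagger} in place of their algebraic counterparts. The paper's own proof is in fact terser than yours, simply stating that the argument is ``totally analogue'' and only spelling out the kernel constructions.
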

\begin{proof}
The proof is totally analogue to the proof of Theorem \ref{TheoAlgebraicCartierDuality}, we only explain the construction of the vector bundles $F^*(\s{O}(1))$ and $G^*(\s{O}(1))$. By functoriality we can always reduce to the universal case $X=*/\GL_d^{\dagger}$ and $\s{F}^+= \St^+$.  For $F$, we need to construct a line bundle on $\overline{\bb{V}(\s{F}^+)}/ \mathring{\bb{V}}(\s{F}^{\vee,+})$, where the quotient is for the trivial action.  We take $F^*(\s{O}(1))$ as the line bundle $\Sym^{\dagger}_X(\St^{+,\vee})$ endowed with the (left) multiplication map 
\[
\Sym^{\dagger}_X(\St^{+,\vee}) \otimes \Sym^{\dagger}_X(\St^{+,\vee}) \to \Sym^{\dagger}_X(\St^{+,\vee}). 
\]
Similarly, $G^*(\s{O}(1))$ is the line bundle on $\mathring{\bb{V}}(\St^{\vee,+})/ \overline{\bb{V}(\St)}$ induced by the localization of $\s{O}(\mathring{\bb{V}}(\St^{\vee,+}))$  on $\mathring{\bb{V}}(\St^{\vee,+})$ endowed with the $\Sym^{\dagger}_X(\St^{+,\vee})$-comodule structure given by the adjoint of the (left) multiplication map
\[
\s{O}(\mathring{\bb{V}}(\St^{\vee,+})) \otimes_{\s{O}_X} \s{O}(\mathring{\bb{V}}(\St^{\vee,+})) \to \s{O}(\mathring{\bb{V}}(\St^{\vee,+})). 
\]
Finally, the last statement follows by the analogue computations of Proposition \ref{PropConvolutionFMTransforms} and Corollary \ref{CoroFourierMoukaiIdentities}. 
\end{proof}

\subsubsection{Cartier duality for analytic vector bundles}

We now state a Cartier duality for the analytification of vector bundles. The restriction of the algebraic group $\GL_d$ to Tate stacks over $R$ is represented by its analytification $\GL_d^{\an}$ as an adic space, see Definition \ref{DefinitionAnalytificationFunctor}. We call $\Sh_{\s{D}}(\Aff^{b}_{R_{\sol}})_{/[*/\GL_d^{\an}]}$ the category of \textit{analytic vector bundles of rank $d$}, note however that the data of an analytic vector bundle for a Tate stack is the same as the data of an algebraic vector bundle (this is not true for  general solid stacks).  

\begin{construction}
\label{ConstructionAnalytificationVB}
Let $X$ be an Tate stack over $X$,  and let $f:X\to */ \GL_d^{\an}$ be an analytic vector bundle of rank $d$. Let $\St$ be the standard vector bundle over $*/ \GL_d^{\an}$, then the analytification of the algebra $\Sym^{\bullet}_X (\St^{\vee})$ admits a group action of $\GL_d^{\an}$ and defines an analytic space over $*/\GL_d^{\an}$ that we denote by $\bb{V}(\St)^{\an}$. We let $\bb{V}(\s{F})^{\an}:= f^* \bb{V}(\St)^{\an}$  be the \textit{analytification} of $\bb{V}(\s{F})$. Dually, let $\iota:X \to \bb{V}(\s{F})$ be the zero section, we let $\bb{V}(\s{F})^{\dagger}$ be the overconvergent neighbourhood of $\iota$, equivalently, we let 
\[
\bb{V}(\s{F})^{\dagger} = \bigcap_{\epsilon \to \infty} \pi^{\epsilon} \overline{\bb{V}(\s{F}^+)}
\]
for any lattice  $\s{F}^+ \subset \s{F}$ that exists locally in the $\s{D}$-topology. 
\end{construction}

\begin{definition}
Let $X$ be a Tate  stack over $R_{\sol}$ and  let $\s{F}$  be a vector bundle of rank $d$ over $X$. We let $\Sym^{\dagger}_X(\s{F}^{\vee})$ be the  algebra over $\s{O}_X$  defining the analytic space $\bb{V}(\s{F})^{\dagger}$. Similarly, we let $\Sym^{\an}_X(\s{F}^{\vee})$ denote the global  sections  over $X$ of $\bb{V}(\s{F})^{\an}$.
\end{definition}

\begin{prop}
\label{PropositionDAggerCartier2}
Let $X$ be an analytic $\s{D}$-stack over $R_{\sol}$ and let $\s{F}$ be an analytic vector bundle over $X$. 
\begin{enumerate}

\item  The map $\bb{V}(\s{F})^{\dagger} \to X$ is weakly cohomologically proper. 

\item  The map $f: \bb{V}(\s{F})^{\an} \to X$ is cohomologically smooth and there is a natural isomorphism $f^! 1_{X}= f^* \bigwedge^d \s{F}^{\vee}[d]$. If in addition $X$ is defined over $\bb{Q}_p$, then there is a natural isomorphism  $f_! 1_{\mathring{\bb{V}}(\s{F}^+)} = \bigwedge^d \s{F} \otimes \Sym^{\dagger}_X(\s{F}) [-d]$. In particular, $f_! f^! 1_{X}= \Sym^{\dagger}_X(\s{F})$ and the localization of $\Sym^{\dagger}_X(\s{F})$ in $\bb{V}(\s{F})^{\an}$ is $f^! 1_{X}$.

\end{enumerate} 
\end{prop}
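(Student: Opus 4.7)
My plan is to closely follow the strategy of Propositions \ref{PropAlgCartier1} and \ref{PropositionDAggerCartier1}, reducing to the universal case $X=*/\GL_d^{\an}$ and $\s{F}=\St$, and exhibiting $\bb{V}(\St)^{\an}$ as an ascending open union of overconvergent open discs of growing radius.

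For part (1), by construction the coordinate ring $\Sym^{\dagger}_X(\s{F}^{\vee})$ of $\bb{V}(\s{F})^{\dagger}$ is a filtered colimit of the algebras of overconvergent functions on the closed sub-affinoids $\pi^{\epsilon}\overline{\bb{V}(\s{F}^{+})}$ (for any local choice of lattice $\s{F}^{+}$ of $\s{F}$), and each such closed ball carries the analytic structure induced from $X$. Hence $\bb{V}(\s{F})^{\dagger}\to X$ is locally in the $\s{D}$-topology of $X$ given by a Zariski closed immersion with induced analytic structure, and weak cohomological properness follows from Lemma \ref{LemmaRepresentableInP}.

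For part (2), by functoriality of the Lu–Zheng category I reduce to the universal case $X=*/\GL_d^{\an}$, $\s{F}=\St$, and work with the standard lattice $\St^{+}=R^{d}$ which exists after pulling back along $*\to */\GL_d^{\an}$. The key geometric input is the open analytic cover
\[
\bb{V}(\St)^{\an}\;=\;\bigcup_{n\geq 0}\mathring{\bb{V}}(\pi^{-n}\St^{+}),
\]
where each inclusion $j_n:\mathring{\bb{V}}(\pi^{-n}\St^{+})\hookrightarrow \bb{V}(\St)^{\an}$ is an open immersion (on each affinoid piece it is the rational localization $R\langle\pi^{n}T\rangle_{\sol}\to R\langle\pi^{n+1}T\rangle_{\sol}$). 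By Proposition \ref{PropositionDAggerCartier1}(2) each composite $f_n:\mathring{\bb{V}}(\pi^{-n}\St^{+})\to X$ is cohomologically smooth with $f_n^{!}1_X = f_n^{*}\bigwedge^{d}\St^{\vee}[d]$; since cohomological smoothness is local on the source (Corollary \ref{CorollaryDescentSmoothProperCovers}) and the dualizing sheaves glue under smooth base change, $f$ is cohomologically smooth and $f^{!}1_X = f^{*}\bigwedge^{d}\s{F}^{\vee}[d]$.

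For the formula for $f_{!}1_{\bb{V}(\s{F})^{\an}}$ over $\bb{Q}_{p}$, I would write $f_{!}1 = \varinjlim_n f_{n,!}1$, using that lower shriek along the open cover $\{j_n\}$ commutes with filtered colimits. By Proposition \ref{PropositionDAggerCartier1}(2) applied to the unitary overconvergent bundle $\pi^{-n}\s{F}^{+}$, one has $f_{n,!}1 = \bigwedge^{d}\s{F}\otimes \Sym^{\dagger}_X(\pi^{-n}\s{F}^{+})[-d]$, where $\Sym^{\dagger}_X(\pi^{-n}\s{F}^{+})$ is the algebra of overconvergent functions on the closed ball of radius $|\pi^{n}|$ in $\bb{V}(\s{F}^{\vee})$. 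The nested colimit $\varinjlim_n \Sym^{\dagger}_X(\pi^{-n}\s{F}^{+})$ is precisely the germs at zero of analytic functions on $\bb{V}(\s{F}^{\vee})$, namely $\Sym^{\dagger}_X(\s{F})=\s{O}(\bb{V}(\s{F}^{\vee})^{\dagger})$. This yields the desired formula; the identity $f_{!}f^{!}1_X=\Sym^{\dagger}_X(\s{F})$ then follows from projection formula together with $\bigwedge^{d}\s{F}\otimes\bigwedge^{d}\s{F}^{\vee}\cong\s{O}_X$, and the final clause about the localization in $\bb{V}(\s{F})^{\an}$ is a direct restatement. The step I expect to be the main obstacle is verifying that the colimit formula $f_{!}1=\varinjlim_n f_{n,!}1$ holds rigorously in the solid six functor formalism — equivalently, that the cover $\{j_n\}$ satisfies universal $!$-descent — and handling $\GL_d^{\an}$-equivariance at the universal level, since the auxiliary lattice $\St^{+}$ is not $\GL_d^{\an}$-stable (though the union $\bigcup_n\mathring{\bb{V}}(\pi^{-n}\St^{+})=\bb{V}(\St)^{\an}$ is, because any element of $\GL_d^{\an}$ dilates a lattice by a bounded factor).
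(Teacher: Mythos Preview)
Your approach is correct but differs from the paper's. The paper argues more directly: for part (2) it invokes Proposition \ref{PropGeoSmoothIsSmooth} (solid smooth maps are cohomologically smooth) for the smoothness of $f$, and then computes $f_!1$ and $f^!1_X$ by the same pattern as Propositions \ref{PropAlgCartier1} and \ref{LemmaSerreDualityGLn}, this time using the idempotent $R[T]$-algebras $D_i=R\{T_i^{-1}\}^{\dagger}[\underline{T}]$ (so that $\bb{A}^{1,\an}_R\subset \bb{A}^{1,\alg}_R$ is the open complement of $R\{T^{-1}\}^{\dagger}[T]$). Your route instead exhausts $\bb{V}(\St)^{\an}$ by open balls and recycles Proposition \ref{PropositionDAggerCartier1}; this is perfectly valid and has the virtue of reducing to an already-proved case, at the cost of an extra colimit argument and a lattice that is not $\GL_d^{\an}$-stable. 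The paper's approach keeps all four cases (algebraic, solid, unitary overconvergent, analytic) in strict parallel: same excision computation, only the idempotent algebra changes.

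A few small points. In part (1) the phrase ``Zariski closed immersion'' is not right in the paper's terminology (Definition \ref{DefinitionAffinoidZariskiClosed} also demands surjectivity on $\pi_0$); what you mean, and what suffices for Lemma \ref{LemmaRepresentableInP}, is simply that $\Sym^{\dagger}_X(\s{F}^{\vee})$ carries the induced analytic structure from $\s{O}_X$, i.e.\ the map lies in $P$. For cohomological smoothness in part (2), the direct appeal to Proposition \ref{PropGeoSmoothIsSmooth} is cleaner than invoking ``local on the source'' for an infinite open cover. Finally, your main worry is overstated: the formula $f_!1=\varinjlim_n f_{n,!}1$ does not need universal $!$-descent. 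It follows because $f_!$ preserves colimits and $1_{\bb{V}(\St)^{\an}}=\varinjlim_n j_{n,!}1$; the latter holds since the cofiber of $j_{n,!}1\to 1$ is the idempotent algebra supported on the complement, and by Proposition \ref{PropLocaleOperations}(4) the colimit of these is the idempotent algebra on the empty intersection, hence zero. The equivariance issue you flag is real but is exactly the same one the paper faces (it also computes with a chosen basis and identifies the answer afterwards).
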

\begin{proof}
Part (1) is clear since $\bb{V}(\s{F})^{\dagger}$ is the relative analytic spectrum of the $\s{O}_X$-algebra $\Sym^{\dagger}_X(\s{F})$ endowed with the induced analytic structure. 

For part (2), smoothness of $f$  smoothness follows from Proposition \ref{PropGeoSmoothIsSmooth}.  The computation of $f^{!} 1_X$ and  $f_! 1_{\mathring{\bb{V}}(\s{F}^+)}$ follows the same lines of the proof of Propositions \ref{PropAlgCartier1} and  \ref{LemmaSerreDualityGLn} where we use the idempotent $R[T]$-algebra $R\{T^{-1}\}^{\dagger}[T]$ instead, namely, the analytification $\bb{A}^{1,\an}_{R}\subset \bb{A}^{1,alg}_R$ is the complement of the idempotent algebra $R\{T^{-1}\}^{\dagger}[T]$ .  We leave the details to the reader.
\end{proof}

As we saw before, a key point in the construction of the Cartier duality is having available the de Rham and Koszul resolutions:

\begin{lemma}
\label{LemmaDeRhamKoszulDagger2}
Let $X$ be an Tate stack over $\bb{Q}_p$ and let $\s{F}$ be a vector bundle of rank $d$ over $X$.
\begin{enumerate}
\item  There is a natural de Rham resolution as $\Sym^{\dagger}_X(\s{F}^{\vee})$-comodule  given by the complete decreasing filtration
\[
 \s{O}_X \to \Sym^{\dagger}_X(\s{F}^{\vee}) \xrightarrow{d} \Sym^{\dagger}_X(\s{F}^{\vee}) \otimes \s{F}^{\vee} \xrightarrow{d} \cdots \xrightarrow{d} \Sym^{\dagger}_X(\s{F}^{\vee}) \otimes \bigwedge^d \s{F}^{\vee}. 
\]

\item There is a natural Koszul resolution as $\Sym^{\dagger}_X(\s{F}^{\vee})$-module  given by the complete increasing filtration
\[
\Sym^{\dagger}_X(\s{F})\otimes \bigwedge^d \s{F} \to \cdots \to \Sym^{\dagger}_X(\s{F}) \otimes \s{F} \to \Sym^{\dagger}_X(\s{F}) \to \s{O}_X,
\]
 whose dual is the  Rham complex for $\Sym^{\an}_X(\s{F}^{\vee})$.
\end{enumerate}
\end{lemma}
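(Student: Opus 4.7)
The plan is to mirror the proof of Lemma \ref{LemmaDeRhamKoszulDagger} as closely as possible, replacing the closed/open unit polydisc with their analytic/overconvergent analogues. First, by base change I reduce to the universal case $X = */\GL_d^{\an}$ with $\s{F}=\St$ the standard representation: both the sheaves $\Sym^{\dagger}_X(\s{F}^{\vee}) = f^{*}\Sym^{\dagger}(\St^{\vee})$ and $\Sym^{\an}_X(\s{F}^{\vee})= f^{*}\Sym^{\an}(\St^{\vee})$, together with the de Rham and Koszul differentials, are natural in the map $f: X\to */\GL_d^{\an}$. In the universal case one writes down the $\GL_d^{\an}$-equivariant complexes
\[
0 \to \bb{Q}_p \to \Sym^{\dagger}(\St^{\vee})\xrightarrow{d} \Sym^{\dagger}(\St^{\vee})\otimes \St^{\vee}\xrightarrow{d}\cdots \xrightarrow{d}\Sym^{\dagger}(\St^{\vee})\otimes \bigwedge^{d}\St^{\vee}\to 0,
\]
\[
0\to \Sym^{\dagger}(\St)\otimes \bigwedge^{d}\St\to\cdots \to \Sym^{\dagger}(\St)\otimes \St \to \Sym^{\dagger}(\St) \to \bb{Q}_p \to 0,
\]
with the standard differentials sending $d T_i$ to $T_i$ (resp.\ the Koszul contraction by $v_i$ against the dual basis), and check exactness plus compatibility with the (co)multiplication.

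For the de Rham complex I would use that $\Sym^{\dagger}(\St^{\vee})=\varinjlim_{n}R\langle \pi^{n}T_1,\ldots,\pi^{n}T_d\rangle$ is the filtered colimit of germs on shrinking closed polydiscs around the origin; since the de Rham differential and the transition maps are compatible, exactness of the colimit complex follows by applying the Poincar\'e lemma \cite[Lemma 26]{Tamme} to each closed polydisc and taking filtered colimits (colimits are exact). For the Koszul complex I would invoke the duality between $\Sym^{\dagger}(\St)$, an $LB$-space of compact type, and $\Sym^{\an}(\St^{\vee}) = \varprojlim_{n}R\langle \pi^{-n}T_1,\ldots,\pi^{-n}T_d\rangle$, a nuclear Fr\'echet space, under the pairing that identifies the monomial basis with its dual (this is the natural solid duality recalled in \cite[Theorem 3.40]{RRLocallyAnalytic}). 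Under this duality the Koszul complex of (2) is sent to the de Rham complex of $\Sym^{\an}(\St^{\vee})$, whose exactness again follows from the Poincar\'e lemma applied term-by-term to the Fr\'echet-limit presentation and the fact that limits of exact sequences of nuclear Fr\'echet spaces with admissible transition maps remain exact.

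The compatibility with the $\Sym^{\dagger}(\St^{\vee})$-comodule (resp.\ $\Sym^{\dagger}(\St)$-module) structure is formal: the de Rham differential is a derivation for the product, hence its dual on the Koszul side is $\Sym^{\dagger}(\St)$-linear, and both structures arise from the coring/ring structure on the symmetric algebras already used in \S\ref{SubsectionCartierDualityAlgebra} and Lemma \ref{LemmaDeRhamKoszulDagger}. The equivariance for $\GL_d^{\an}$ is built into the functorial construction.

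The main obstacle I anticipate is the verification that the duality between the $LB$-space $\Sym^{\dagger}(\St)$ and the Fr\'echet space $\Sym^{\an}(\St^{\vee})$ is well-behaved in the solid setting, in particular that it swaps the two complexes on the nose and that exactness on one side transfers to the other. This should ultimately reduce to the nuclearity of both algebras over $\bb{Q}_p$ and the fact that the relevant pairing is a perfect pairing of solid modules, analogous to (and in fact used to justify) the corresponding step in the proof of Lemma \ref{LemmaDeRhamKoszulDagger}.
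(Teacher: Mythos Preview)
Your proposal is correct and follows essentially the same approach as the paper, which simply says that the proof of Lemma~\ref{LemmaDeRhamKoszulDagger} applies verbatim (with the remark that working over $\bb{Q}_p$ is needed for the Poincar\'e lemma). Your write-up is in fact a fleshed-out version of what that one-line proof leaves implicit: reduction to the universal case, writing down the equivariant de Rham and Koszul complexes, exactness of the de Rham side via the Poincar\'e lemma on polydiscs, and exactness of the Koszul side via the nuclear Fr\'echet/$LB$ duality of \cite[Theorem 3.40]{RRLocallyAnalytic}.
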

\begin{proof}
The same proof of Lemma \ref{LemmaDeRhamKoszulDagger} applies; note that the restriction to Tate stacks over $\bb{Q}_p$ is for the Poincar\'e lemma to hold. 
\end{proof}
\begin{theorem}[Analytic Cartier duality for vector bundles]
\label{TheoremAnalyticCartierII}
Let $X$ be a Tate stack over $\bb{Q}_p$ and  let $\s{F}$  be a vector bundle of rank $d$ over $X$. Then the analogue of Propositions  \ref{PropAlgebraicCartier2} and  \ref{PropConvolutionFMTransforms}, Theorem \ref{TheoAlgebraicCartierDuality} and Corollary \ref{CoroFourierMoukaiIdentities} hold by replacing the following objects:
\begin{itemize}
\item $\bb{V}(\s{F})$ for $\bb{V}(\s{F})^{\dagger}$,

\item $\widehat{\bb{V}(\s{F}^{\vee})}$ for $\bb{V}(\s{F}^{\vee})^{\an}$,

\item $\Sym^{\bullet}_X \St^{\vee}$ for $\Sym^{\dagger}_X \St^{\vee}$,

\item $\widehat{\Sym}_X(\St)$ by $\Sym^{\an}_X(\St)$. 
\end{itemize}
\end{theorem}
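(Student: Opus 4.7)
The plan is to transcribe the arguments of \S \ref{SubsectionCartierDualityAlgebra} to the analytic setting, using Proposition \ref{PropositionDAggerCartier2} and Lemma \ref{LemmaDeRhamKoszulDagger2} as replacements for Proposition \ref{PropAlgCartier1} and Lemma \ref{LemmaDeRhamKoszulAlgebraic} respectively. By functoriality of all the constructions with respect to the maps in $\BUN_{d,\n{C}}$, all claims reduce to the universal case $X=*/\GL_d^{\an}$ and $\s{F}=\St$, where we work with $\GL_d^{\an}$-equivariant objects.

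First I would establish the analogue of Proposition \ref{PropAlgebraicCartier2}(1). Since $\bb{V}(\s{F})^{\dagger}\to X$ is weakly cohomologically proper and $\bb{V}(\s{F})^{\an}\to X$ is cohomologically smooth (Proposition \ref{PropositionDAggerCartier2}), the corresponding maps $X\to X/\bb{V}(\s{F})^{\dagger}$ and $X\to X/\bb{V}(\s{F})^{\an}$ are respectively a descendable and a smooth $\s{D}$-cover (this is where the de Rham resolution of Lemma \ref{LemmaDeRhamKoszulDagger2}(1) is needed, to verify descendability in the sense of Proposition \ref{PropositionProperDescent} by exhibiting the filtration with perfect graded pieces). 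Proposition \ref{PropCoverModuleComoduleDescription} then gives the equivalences
\[
\Mod(X/\bb{V}(\s{F})^{\dagger})=\mathrm{CoMod}_{\Sym^{\dagger}_X(\s{F}^{\vee})}(\Mod(X)),\qquad \Mod(X/\bb{V}(\s{F}^{\vee})^{\an})=\Mod_{\Sym^{\dagger}_X(\s{F}^{\vee})}(\Mod(X)),
\]
where the identification of the (co)monads uses the Hopf-algebra structure coming from the additive group structure on $\bb{V}(\s{F})^{\dagger}$ (resp. $\bb{V}(\s{F})^{\an}$) together with Proposition \ref{PropositionDAggerCartier2}(2).

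Next I would prove the analogue of Proposition \ref{PropAlgebraicCartier2}(2),(3), namely that $g:X/\bb{V}(\s{F})^{\dagger}\to X$ and $g:X/\bb{V}(\s{F}^{\vee})^{\an}\to X$ are both cohomologically smooth and co-smooth with $g^{!}1_X$ and $\n{P}_g(1_{-})$ identified explicitly. For this I apply Lemma \ref{KeyLemmaSmoothRetraction}: the map $\bigwedge^d\s{F}[\pm d]\to f_!1_X$ (or in the reverse direction) is extracted from the de Rham resolution in Lemma \ref{LemmaDeRhamKoszulDagger2}(1), and the required retraction (resp. section) is supplied by applying $g_!$ and using the Koszul resolution of Lemma \ref{LemmaDeRhamKoszulDagger2}(2), which computes $g_!(\bigwedge^d\s{F}[\pm d])$ as a split perfect complex over $X$ (splitting is checked after dualizing, using the de Rham resolution for $\Sym^{\an}_X(\s{F}^{\vee})$, exactly as in the algebraic case).

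Finally, to establish the Cartier duality analogue of Theorem \ref{TheoAlgebraicCartierDuality}, I construct the bilinear pairings $F:\bb{V}(\s{F})^{\dagger}\times_X X/\bb{V}(\s{F}^{\vee})^{\an}\to */\bb{G}_m$ and $G:\bb{V}(\s{F}^{\vee})^{\an}\times_X X/\bb{V}(\s{F})^{\dagger}\to */\bb{G}_m$ by taking, via the module/comodule descriptions above, $F^*(\s{O}(1))=\Sym^{\dagger}_X(\s{F}^{\vee})$ with its left regular action, and $G^*(\s{O}(1))$ equal to $\Sym^{\an}_X(\s{F}^{\vee})$ (the increasing filtration of $\pi^n\cdot\overline{\bb{V}(\s{F}^{\vee,+})}$) with the comodule structure obtained by taking adjoint of left multiplication. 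The unit and co-unit of the adjunction in the Lu-Zheng category then follow from exactly the same four step convolution calculation used in the proof of Theorem \ref{TheoAlgebraicCartierDuality}, with the decompositions along $\Delta^{\mathrm{ant}}:\St\to\St_1\oplus\St_2$ and $(\St_1\oplus\St_2)/\Delta(\St)$ reduced to the computation of $\pi_{1,3,!}$ via Proposition \ref{PropositionDAggerCartier2}. The analogues of Proposition \ref{PropConvolutionFMTransforms} and Corollary \ref{CoroFourierMoukaiIdentities} are then formal consequences by taking adjoints in the Lu-Zheng category, using the already established cohomological smoothness and weak cohomological properness of the relevant maps. The main technical obstacle is verifying that the interchange of overconvergent and analytic symmetric algebras in the convolution calculations preserves the key equivalence $\Sym^{\dagger}_X(\s{F}^{\vee})\otimes_{\s{O}_X}\mathring{\Sym}$-type identifications along $\Delta^{\mathrm{ant}}$ and $Q$, but this follows from the fact that the Poincar\'e-lemma-exact sequences of Lemma \ref{LemmaDeRhamKoszulDagger2} are themselves filtered by locally free pieces, so the base-change identities reduce to the algebraic case fiberwise.
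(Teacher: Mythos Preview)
Your proposal is correct and follows essentially the same approach as the paper, which itself merely states that ``the proof follows the same lines of the cited references after some minor adaptations'' and leaves the details to the reader. You have in fact written out more of those details than the paper does: the key substitutions (Proposition \ref{PropositionDAggerCartier2} for Proposition \ref{PropAlgCartier1}, Lemma \ref{LemmaDeRhamKoszulDagger2} for Lemma \ref{LemmaDeRhamKoszulAlgebraic}, reduction to the universal case over $*/\GL_d^{\an}$, and the use of Lemma \ref{KeyLemmaSmoothRetraction} and Proposition \ref{PropCoverModuleComoduleDescription}) are exactly the adaptations the paper has in mind.
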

\begin{proof}
The proof follows the same lines of the cited references after some minor adaptations, we left the details to the reader.  
\end{proof}

\subsubsection{Cartier duality for locally analytic $\bb{Z}_p$-vector bundles}

We finish this section with a new Cartier duality that is closely related with the theory of solid locally analytic representations of \cite{RJRCSolidLocAn2}. Let $\bb{Z}_p$ be the ring of $p$-adic integers seen as a $p$-adic Lie group, we let $C^{la}(\bb{Z}_p, \bb{Q}_p)$ denote the space of locally analytic functions of $\bb{Z}_p$, and denote by $\bb{Z}_p^{la}$ its analytic spectrum.   Let $\n{D}^{la}(\bb{Z}_p,\bb{Q}_p):= \iHom_{\bb{Q}_p}(C^{la}(\bb{Z}_p,\bb{Q}_p),\bb{Q}_p)$ be the locally analytic distribution algebra of $\bb{Z}_p$. By a theorem of Amice,  the algebra $\n{D}^{la}(\bb{Z}_p,\bb{Q}_p)$ is isomorphic to the global sections of the open unit disc centered at $1$, namely  $\widehat{\bb{G}}_{m,\eta}:=1+\mathring{\bb{G}}_a\subset \bb{G}_m^{\an}$. The algebra $\n{D}^{la}(\bb{Z}_p,\bb{Q}_p)$ can be written as a limit of analytic distribution algebras $\n{D}^{h}(\bb{Z}_p,\bb{Q}_p)$,  which are the dual of functions of the rigid group $\bb{Z}_p+ p^{h}\mathring{\mathbb{G}}_{a,\bb{Q}_p}\subset \bb{G}^{\an}_{a,\bb{Q}_p}$. The rings $\n{D}^{h}(\bb{Z}_p,\bb{Q}_p)$ correspond to suitable closed overconvergent discs in $\widehat{\bb{G}}_{m,\eta}$ of radius $p^{-b(h)}$ with $b(h) \to 0^+$ as $h \to \infty$.   By \cite[Theorem 4.1.7]{RJRCSolidLocAn2}, there is a natural equivalence between the category of solid locally analytic representations of $\bb{Z}_p$ and that of quasi-coherent shaves on $\widehat{\bb{G}}_{m,\eta}$. Moreover, under this equivalence $\bb{Z}_p+p^{h}\mathring{\mathbb{G}}_{a,\bb{Q}_p}$-analytic representations correspond to modules over $\n{D}^{h}(\bb{Z}_p, \bb{Q}_p)$, which is an idempotent algebra on $\widehat{\bb{G}}_{m,\eta}$. Our next goal is to improve this statement to a Cartier duality theorem in a relative setting for a suitable notion of locally analytic $\bb{Z}_p$-vector bundle. To make this concrete we first need a construction.  

\begin{lemma}
There is a natural action $\bb{Z}_p^{la} \times \widehat{\bb{G}}_{m,\eta} \to \widehat{\bb{G}}_{m,\eta}$ associated to the adjoint of the multiplication map $\n{D}^{la}(\bb{Z}_p,\bb{Q}_p)\otimes_{\bb{Q}_{p,\sol}}\n{D}^{la}(\bb{Z}_p,\bb{Q}_p) \to \n{D}^{la}(\bb{Z}_p,\bb{Q}_p)$ making $\widehat{\bb{G}}_{m,\eta} $ an $\bb{Z}_p^{la}$-module.  At the level of points this action corresponds to $(a,\chi)\mapsto \chi^{a}$. 
\end{lemma}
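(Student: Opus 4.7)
The construction is essentially formal, bootstrapping on Amice's theorem and the reflexivity of the Fr\'echet-distribution duality in the solid setting. First, I would invoke Amice's isomorphism $\s{O}(\widehat{\bb{G}}_{m,\eta}) \cong \n{D}^{la}(\bb{Z}_p, \bb{Q}_p)$ together with the identification $\s{O}(\bb{Z}_p^{la}) = C^{la}(\bb{Z}_p,\bb{Q}_p)$. These two solid $\bb{Q}_p$-vector spaces are strong duals of one another: by definition of $\n{D}^{la}(\bb{Z}_p,\bb{Q}_p)$ and the reflexivity of $C^{la}$, equivalently by writing $\n{D}^{la}$ as a cofiltered limit of the duals of the Fr\'echet algebras of functions on the wide-open discs $\bb{Z}_p + p^{h}\mathring{\mathbb{G}}_{a,\bb{Q}_p}$ as recalled just before the lemma.

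Second, applying the tensor-hom adjunction in solid $\bb{Q}_p$-modules to the multiplication $m\colon \n{D}^{la}(\bb{Z}_p,\bb{Q}_p) \otimes_{\bb{Q}_{p,\sol}} \n{D}^{la}(\bb{Z}_p,\bb{Q}_p) \to \n{D}^{la}(\bb{Z}_p,\bb{Q}_p)$ yields a map $\n{D}^{la}(\bb{Z}_p,\bb{Q}_p) \to \iHom_{\bb{Q}_p}(\n{D}^{la}(\bb{Z}_p,\bb{Q}_p),\n{D}^{la}(\bb{Z}_p,\bb{Q}_p))$. By the reflexivity and nuclearity established in the previous step, the target identifies canonically with $C^{la}(\bb{Z}_p,\bb{Q}_p) \otimes_{\bb{Q}_{p,\sol}} \n{D}^{la}(\bb{Z}_p,\bb{Q}_p) = \s{O}(\bb{Z}_p^{la} \times \widehat{\bb{G}}_{m,\eta})$. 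Hence the adjoint of $m$ is a morphism of solid $\bb{Q}_p$-algebras
\[
\alpha \colon \n{D}^{la}(\bb{Z}_p,\bb{Q}_p) \to C^{la}(\bb{Z}_p,\bb{Q}_p) \otimes_{\bb{Q}_{p,\sol}} \n{D}^{la}(\bb{Z}_p,\bb{Q}_p),
\]
which geometrically represents the desired morphism $\bb{Z}_p^{la} \times \widehat{\bb{G}}_{m,\eta} \to \widehat{\bb{G}}_{m,\eta}$.

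Third, I would verify the action axioms by transposing the algebra axioms of $m$. Associativity of the action with respect to the additive group law on $\bb{Z}_p^{la}$ (equivalently, the comultiplication on $C^{la}(\bb{Z}_p,\bb{Q}_p)$ induced by addition in $\bb{Z}_p$) is equivalent, after taking adjoints, to the associativity of $m$; the unit axiom---that $0 \in \bb{Z}_p$ acts trivially---corresponds dually to the fact that the algebra unit of $\n{D}^{la}(\bb{Z}_p,\bb{Q}_p)$ is the Dirac distribution $\delta_0$. Compatibility of $\alpha$ with the multiplicative group structure on $\widehat{\bb{G}}_{m,\eta}$---the content of the word ``module''---follows from the commutativity of $m$ by an analogous transposition.

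Finally, for the pointwise identification, evaluating $\alpha$ at a point $a \in \bb{Z}_p$ of $\bb{Z}_p^{la}$ amounts to pairing the $C^{la}$-factor of $\alpha$ against the Dirac $\delta_a$; this produces the endomorphism of $\n{D}^{la}(\bb{Z}_p,\bb{Q}_p)$ given by convolution with $\delta_a$, which under Amice is the pullback along the character map $\chi \mapsto \chi^{a}$. For integer $a \in \bb{Z}$ this is classical, since $\delta_a = \delta_1^{\star a}$ and $\delta_1$ is Amice-dual to the tautological character; the general case follows by density of $\bb{Z}$ in $\bb{Z}_p$ together with the locally analytic continuity of all the maps involved. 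The main obstacle will be precisely this final step, that is, identifying convolution with $\delta_a$ on the distribution side with exponentiation $\chi \mapsto \chi^{a}$ on the character side uniformly in $a \in \bb{Z}_p$; once the defining square of $\alpha$ is translated into the Amice picture, this should reduce to the universal property of $\chi^{a}$ as the unique locally analytic interpolation of the integer powers of $\chi$.
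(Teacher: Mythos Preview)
There is a genuine gap in your second step. The identification
\[
\iHom_{\bb{Q}_p}\bigl(\n{D}^{la}(\bb{Z}_p,\bb{Q}_p),\,\n{D}^{la}(\bb{Z}_p,\bb{Q}_p)\bigr)\;\cong\;C^{la}(\bb{Z}_p,\bb{Q}_p)\otimes_{\bb{Q}_{p,\sol}}\n{D}^{la}(\bb{Z}_p,\bb{Q}_p)
\]
that you invoke does not follow from reflexivity and nuclearity, and in fact fails. For a nuclear Fr\'echet space $F$ the natural map $F^{\vee}\otimes F\to\iHom(F,F)$ is typically a strict monomorphism: already for $F=\prod_{\bb{N}}\bb{Q}_p$ one has $\iHom(F,F)=\prod_m\bigoplus_n\bb{Q}_p$ while $F^{\vee}\otimes F=\bigoplus_n\prod_m\bb{Q}_p$. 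What is actually needed is not that the target \emph{is} the tensor product, but that the particular map $\n{D}^{la}\to\iHom(\n{D}^{la},\n{D}^{la})$ adjoint to multiplication \emph{factors through} the subobject $C^{la}\otimes\n{D}^{la}$. This is a genuine analytic statement, not a formal consequence of duality.

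The paper supplies precisely this missing ingredient by working at each finite radius. One fixes $h'>h$, takes the adjoint $\n{D}^{h'}\to\iHom(\n{D}^{h'},\n{D}^{h'})$, and post-composes with the transition map $\n{D}^{h'}\to\n{D}^{h}$; because this transition map is of trace class, the resulting composite $\n{D}^{h'}\to\iHom(\n{D}^{h'},\n{D}^{h})$ factors through $C^{la}\otimes\n{D}^{h}$. Passing to the colimit in $h$ produces the coaction $\n{D}^{h'}\to C^{la}\otimes\n{D}^{h'}$, which is nothing but the orbit map of $\n{D}^{h'}$ as a locally analytic $\bb{Z}_p$-representation; these are compatible as $h'$ varies and assemble to the action on $\widehat{\bb{G}}_{m,\eta}$. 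Note also that $\widehat{\bb{G}}_{m,\eta}$ is not affinoid but an increasing union of the closed discs $\AnSpec\n{D}^{h'}$, so a morphism of adic spaces is not determined by a single coaction on global sections: one really needs the level-by-level construction. Your verification of the module axioms and the pointwise identification via Dirac measures are fine once the coaction has been produced.
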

\begin{proof}
Let $h>0$ and let $\n{D}^{h}(\bb{Z}_p,\bb{Q}_p)$ be the $h$-analytic distribution algebra.  The multiplication map
\[
\n{D}^{h}(\bb{Z}_p,\bb{Q}_p) \otimes_{\bb{Q}_{p,\sol}}\n{D}^{h}(\bb{Z}_p,\bb{Q}_p) \to  \n{D}^{h}(\bb{Z}_p,\bb{Q}_p) 
\]
has by adjoint a map 
\[
\n{D}^{h}(\bb{Z}_p,\bb{Q}_p) \to \iHom_{\bb{Q}_p}(\n{D}^{h}(\bb{Z}_p,\bb{Q}_p) ,\n{D}^{h}(\bb{Z}_p,\bb{Q}_p)).
\]
Since the map $\n{D}^{h'}(\bb{Z}_p,\bb{Q}_p)  \to \n{D}^{h}(\bb{Z}_p,\bb{Q}_p) $ is of trace class for $h'>h$, the composite 
\[
\n{D}^{h'}(\bb{Z}_p, \bb{Q}_p)\to \iHom_{\bb{Q}_p}(\n{D}^{h'}(\bb{Z}_p,\bb{Q}_p) ,\n{D}^{h'}(\bb{Z}_p,\bb{Q}_p) ) \to \iHom_{\bb{Q}_p}(\n{D}^{h'}(\bb{Z}_p,\bb{Q}_p) ,\n{D}^{h}(\bb{Z}_p,\bb{Q}_p) )
\]
factors through $C^{la}(\bb{Z}_p,\bb{Q}_p)\otimes_{\bb{Q}_{p,\sol}} \n{D}^{h}(\bb{Z}_p,\bb{Q}_p)  $. Taking colimits as $h\to h^{',-}$, one gets a map 
\[
\n{D}^{h'}(\bb{Z}_p,\bb{Q}_p)   \to C^{la}(\bb{Z}_p,\bb{Q}_p)\otimes_{\bb{Q}_{p,\sol}} \n{D}^{h'}(\bb{Z}_p,\bb{Q}_p) . 
\]
One easily checks that this is a morphism of algebras that endows $\n{D}^{h'}(\bb{Z}_p,\bb{Q}_p) $ with a $C^{la}(\bb{Z}_p,\bb{Q}_p)$-comodule structure, namely, it is nothing but the orbit map of $\n{D}^{h'}(\bb{Z}_p,\bb{Q}_p) $ as locally analytic representation. One checks that these maps are compatible for $h'>0$, defining the $\bb{Z}_p^{la}$-module structure $\bb{Z}_p^{la} \times \widehat{\bb{G}}_{m,\eta} \to  \widehat{\bb{G}}_{m,\eta}$ as wanted. 
\end{proof}

\begin{construction}
We define the category of locally analytic  $\bb{Z}_p$-vector bundles of rank $d$ to be the slice category $\Sh_{\s{D}}(\Aff^{b}_{\bb{Q}_p})_{[*/ \GL_d(\bb{Z}_p)^{la}]}$ where $\GL_d(\bb{Z}_p)^{la}$ is the analytic group space associated to the Hopf algebra of  locally analytic functions of $\GL_d(\bb{Z}_p)$.  Let $\St^+= \bb{Z}_p^{d}$ be the  standard representation over $*/ \GL_{d}(\bb{Z}_p)^{la}$, then the locally analytic Lie group $\St^{+,la}$ has a natural action of  $*/ \GL_{d}(\bb{Z}_p)^{la}$ that defines an analytic space $\bb{V}(\St^{+,la})$.  For $X \to */ \GL_d(\bb{Z}_p)^{la}$, with associated vector bundle $\s{F}$ induced by $f:X \to */ \GL_d(\bb{Z}_p)^{la} \to */ \GL_d$, we denote by $\s{F}^{+,la}$ the $\bb{Z}_p$-locally analytic vector bundle. 
We let $\bb{V}(\s{F}^{+,la}):= f^* \bb{V}(\St^{+,la})$ be the  geometric incarnation of $\s{F}^{+,la}$.   Let $\widehat{\bb{G}}_{m,\eta}$ be the $p$-adic generic fiber of the  formal multiplicative group at $1$ endowed with its  $\bb{Z}_p^{la}$-module structure.  We define the dual space of  $\bb{V}(\s{F}^{+,la})$ to be $\widehat{\bb{G}}_{m,\eta}(\s{F}^{\vee,+,la}):= \bb{V}(\s{F}^{\vee,+,la}) \otimes_{\bb{Z}_p^{la}} \widehat{\bb{G}}_{m,\eta}$. 
\end{construction}

\begin{definition}
Let $\s{F}^{+,la}$ be a locally analytic  $\bb{Z}_p$-vector bundle of rank $d$ associated to a map $X \to */ \GL_d(\bb{Z}_p)^{la}$. We let $\Sym^{la}(\s{F}^{\vee,+})$ denote the $\s{O}_X$-algebra of functions of the space $\bb{V}(\s{F}^{+,la})$. We call $\Sym^{la}_X(\s{F}^{\vee,+})$ the algebra of locally analytic functions of $\s{F}^{+,la}$.   We denote by $\Sym^{\n{D}}_X(\s{F}^{\vee,+})$ the global sections over $X$ of the dual space  $\widehat{\bb{G}}_{m,\eta}(\s{F}^+)$, we call this object the algebra of locally analytic distributions of $\s{F}^{\vee,+,la}$. 
\end{definition}

\begin{prop}
\label{PropositionDAggerCartier3}
Let $X$ be an analytic $\s{D}$-stack over $\bb{Q}_p$ and let $\s{F}^{+,la}$ be a locally analytic $\bb{Z}_p$-vector bundle over $X$. 
\begin{enumerate}

\item  The map $\bb{V}(\s{F}^{+,la}) \to X$ is weakly cohomologically proper. 

\item  The map $f: \widehat{\bb{G}}_{m,\eta}(\s{F}^{+,la}) \to X$ is cohomologically smooth and there are natural isomorphisms $f^! 1_{X}= f^* \bigwedge^d \s{F}^{\vee}[d]$ and $f_! 1_{\mathring{\bb{V}}(\s{F}^+)} = \bigwedge^d \s{F} \otimes \Sym^{la}_X(\s{F}^+) [-d]$. In particular, $f_! f^! 1_{X}= \Sym^{la}_X(\s{F}^+)$.

\end{enumerate} 
\end{prop}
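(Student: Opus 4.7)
The plan is to follow mutatis mutandis the argument used in Propositions \ref{PropAlgCartier1}, \ref{PropositionDAggerCartier1} and \ref{PropositionDAggerCartier2}, which share a common structure: reduction to a universal case, explicit presentation of the analytic space as the complement of a suitable family of idempotent algebras, and direct computation of the shriek functors.

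For part (1), I will first reduce by functoriality of the Lu-Zheng category to the universal case $X=*/\GL_d(\bb{Z}_p)^{la}$ with $\s{F}^{+,la}=\St^{+,la}$. The analytic space $\bb{V}(\St^{+,la})=\bb{Z}_p^{d,la}$ has ring of functions $C^{la}(\bb{Z}_p^d,\bb{Q}_p)$, which is a bounded affinoid $\bb{Q}_p$-algebra carrying the induced analytic structure from $\bb{Q}_p$ (using that $\bb{Z}_p^d$ is compact, so every locally analytic function is bounded). This induced structure is $\GL_d(\bb{Z}_p)^{la}$-equivariant and therefore descends to the classifying stack, so the map $\bb{V}(\s{F}^{+,la})\to X$ is of induced analytic structure locally in the $\s{D}$-topology of $X$. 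Hence it is weakly cohomologically proper by Lemma \ref{LemmaRepresentableInP}.

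For part (2), after reducing to the universal case, cohomological smoothness of $f:\widehat{\bb{G}}_{m,\eta}(\s{F}^{+,la})\to X$ is obtained as follows. Via the $p$-adic logarithm $\widehat{\bb{G}}_{m,\eta}\xrightarrow{\sim}\mathring{\bb{V}}(\bb{Z}_p^+)$, after passing to a smooth $\s{D}$-cover $*\to */\GL_d(\bb{Z}_p)^{la}$ trivializing $\s{F}$, the map $f$ becomes a projection from an open unit polydisc. Cohomological smoothness (which is local on the source by Corollary \ref{CorollaryDescentSmoothProperCovers}) and the isomorphism $f^!1_X\cong f^*\bigwedge^d\s{F}^\vee[d]$ then follow from Proposition \ref{PropositionDAggerCartier2}.

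To compute $f_!1$ explicitly in a $\GL_d(\bb{Z}_p)^{la}$-equivariant fashion, I would mimic the argument of Propositions \ref{PropAlgCartier1} and \ref{PropositionDAggerCartier2}: choose a basis $e_1,\dots,e_d$ of $\St^+$ with dual coordinates $T_1,\dots,T_d$, embed $\widehat{\bb{G}}_{m,\eta}^d$ as an open subspace of the ambient polydisc inside $\bb{G}_m^{an,d}$, and realize its complement as the union (in the sense of the locale formalism of Proposition \ref{PropLocaleOperations}) of the idempotent algebras $D_i$ corresponding to the vanishing loci of the $i$-th coordinate direction. Computing $[\n{O}\to C]$ explicitly via the associated Koszul-type complex yields a formula of the form $f_!1=\bigwedge^d\St\otimes M[-d]$, where $M$ is the solid $\bb{Q}_p$-module of ``locally analytic distributions'' built from these idempotent algebras. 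Finally, one identifies $M$ with $\Sym^{la}_X(\St^+)=C^{la}(\bb{Z}_p^d,\bb{Q}_p)$ via Amice's theorem (and its relative version), which provides the natural $\GL_d(\bb{Z}_p)^{la}$-equivariant isomorphism between global sections of the complement and locally analytic functions. Dualizing gives the claimed formulas, and the last identity $f_!f^!1_X=\Sym^{la}_X(\s{F}^+)$ follows from the projection formula together with $\bigwedge^d\s{F}\otimes\bigwedge^d\s{F}^\vee\cong\s{O}_X$.

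The main obstacle is the setup of the idempotent algebras on the ``locally analytic'' side, and in particular the correct identification of the resulting computation with $\Sym^{la}_X(\s{F}^+)$ in a manner that is both functorial in $X$ and equivariant for the $\GL_d(\bb{Z}_p)^{la}$-action. The analogues in Propositions \ref{PropAlgCartier1}, \ref{PropositionDAggerCartier1} and \ref{PropositionDAggerCartier2} use algebraic, overconvergent and analytic versions of the $D_i$ respectively; here one must work with a locally analytic refinement compatible with the decomposition of $\bb{Z}_p^{d,la}$ into congruence classes, and verify that the resulting inverse limit of distribution algebras reproduces $C^{la}$ via Amice, rather than the Fr\'echet algebra of functions on the open polydisc (which would only give $\n{D}^{la}$).
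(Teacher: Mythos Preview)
Your overall strategy matches the paper's: reduce to the universal case $X=*/\GL_d(\bb{Z}_p)^{la}$, realize $\widehat{\bb{G}}_{m,\eta}(\St^+)$ as the open complement of a union of idempotent algebras $D_i$ in some proper ambient space, and compute $f_!1$ from the excision fiber $[\,\cdot\,\to C]$. Part (1) is handled exactly as you say.

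The difference, and the resolution of the obstacle you flag at the end, lies in the choice of ambient space and of the $D_i$. You propose embedding into $\bb{G}_m^{\an,d}$ or an ambient polydisc and then struggle to see why the answer is $C^{la}$ rather than $\n{D}^{la}$. The paper instead takes the ambient to be $\AnSpec\n{D}^{la}(\St,\bb{Q}_p)$ itself (with induced analytic structure from $\bb{Q}_p$, hence weakly proper), and sets
\[
D_i=\n{D}^{la}(\St,\bb{Q}_p)\langle 1/e_i\rangle^{\dagger}=\varinjlim_{\epsilon\to 0^+}\n{D}^{la}(\St,\bb{Q}_p)\langle p^{\epsilon}/e_i\rangle,
\]
where the $e_i\in\bb{Z}_p^d\subset\n{D}^{la}(\bb{Z}_p^d,\bb{Q}_p)$ are the Dirac masses at the standard basis vectors (the group-like elements of the distribution Hopf algebra). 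With this choice the Amice identification is built in from the start, and an explicit power series computation gives directly $f_!1\cong C^{la}(\St,\bb{Q}_p)\otimes\bigwedge^d\St[-d]$. The comodule structure is then pinned down by dualizing: since $C^{la}$ is an $LB$-space of compact type, its dual is $f_*f^!1_X=\bigwedge^d\St^{\vee}\otimes\n{D}^{la}(\St,\bb{Q}_p)[d]$, which forces $f^!1_X=f^*\bigwedge^d\St^{\vee}[d]$ and shows that the algebra structure on $\n{D}^{la}$ is the one coming from $\widehat{\bb{G}}_{m,\eta}(\St)$.

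Two minor points. For cohomological smoothness the paper simply invokes Proposition \ref{PropGeoSmoothIsSmooth} (solid smooth implies cohomologically smooth), which is quicker than your detour via the logarithm; note also that $\log$ is not an isomorphism of $\widehat{\bb{G}}_{m,\eta}$ with the open unit additive disc as stated, only on a smaller radius. And the proposition you want for open polydiscs is \ref{PropositionDAggerCartier1}, not \ref{PropositionDAggerCartier2}.
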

\begin{proof}
Part (1) is clear since $\bb{V}(\s{F}^{+,la})$ is the relative analytic spectrum of the $\s{O}_X$-algebra $\Sym^{la}_X(\s{F}^+)$ endowed with the induced analytic structure. 

For part (2), smoothness of $f$ follows from Proposition \ref{PropGeoSmoothIsSmooth}.  The computation of $f^{!} 1_X$ and  $f_! 1_{\mathring{\bb{V}}(\s{F}^+)}$ follows the same lines of the proof of Propositions \ref{PropAlgCartier1} and \ref{LemmaSerreDualityGLn} after we  modify the idempotent algebras. For this, let $e_1,\ldots, e_d$ be the canonical basis of $\St$. For $i=1,\ldots,d$ let $D_i:= \n{D}^{la}(\St,\bb{Q}_p)\langle 1/e_i \rangle^{\dagger}= \varinjlim_{\epsilon\to 0^+} \n{D}^{la}(\St,\bb{Q}_p) \langle\frac{p^{\epsilon}}{e_i} \rangle$. Then $D_i$ is an idempotent $\n{D}^{la}(\St, \bb{Q}_p)$-algebra, we let  $C$ be the $dg$ algebra obtained by taking  the ``union'' of $D_1,\ldots, D_d$. Then, the space $\widehat{\bb{G}}_{m,\eta}(\St)$ is the open subspace of $\AnSpec \n{D}^{la}(\St,\bb{Q}_p)$ obtained as the complement of the idempotent algebra  $C$. Therefore, we can write 
\[
f_!1_{\widehat{\bb{G}}_{m,\eta}(\St)} = [\n{D}^{la}(\St,\bb{Q}_p) \to C].
\]
An explicit power series computation shows that $f_{!} 1_{\widehat{\bb{G}}_{m,\eta}(\St)}$ is $\GL_d$-equivariantly isomorphic to   $C^{la}(\St,\bb{Q}_p )\otimes \bigwedge^d \St^{\vee}[-d]$. Let us see that it has the natural comodule action of $C^{la}(\St, \bb{Q}_p)$. For this, it suffices to compute its dual since it is a $LB$ space of compact type. But the dual is given by 
\[
f_*  f^{!} 1_X= \bigwedge^{d} \St^{\vee} \otimes \n{D}^{la}(\St,\bb{Q}_p) [d], 
\]
this shows that  $f^{!} 1_X = f^{*} \bigwedge^{d} \St^{\vee}[d]$, and so
\[
f_* f^{!} 1_X = (f_{*} 1_{\widehat{\bb{G}}_{m,\eta}(\St)} )\otimes \bigwedge^{d}\St^{\vee}[d],
\]
proving that the algebra structure of $\n{D}^{la}(\St, \bb{Q}_p)$ is the one arising from $\widehat{\bb{G}}_{m,\eta}(\St)$ as wanted.
\end{proof}

In order to obtain an analogue of Theorem \ref{TheoAlgebraicCartierDuality},  we need to have access to an analogue of the de Rham and Koszul resolutions of $\s{O}_X$ of Lemma \ref{LemmaDeRhamKoszulAlgebraic}:
 
\begin{lemma}
\label{LemmaDeRhaKoszulZp}
Let $X$ be an analytic $\s{D}$-stack over $\bb{Q}_p$ and $\s{F}^{+,la}$ a locally analytic $\bb{Z}_p$-vector bundle over $X$ of rank $d$. 
\begin{enumerate}

\item  We have a natural  resolution as $\Sym^{la}_X(\s{F}^{\vee,+})$-comodule  given by a decreasing complete filtration
\begin{equation}
\label{eqdeRhamZpla}
\s{O}_X \to \Sym^{la}_X(\s{F}^{\vee,+}) \xrightarrow{d} \Sym^{la}_X(\s{F}^{\vee,+})\otimes \s{F}^{\vee} \xrightarrow{d} \cdots  \xrightarrow{d} \Sym^{la}_X(\s{F}^{\vee,+}) \otimes \bigwedge^d \s{F}^{\vee},   
\end{equation}
whose dual is the Koszul resolution 
\[
 \Sym^{\n{D}}_X(\s{F}) \otimes \bigwedge^d \s{F} \to  \cdots  \to \Sym^{\n{D}}_X(\s{F}) \otimes \s{F}  \to \Sym^{\n{D}}_X(\s{F}) \to \s{O}_X.
\]
\item We have a natural  resolution as $\Sym^{la}_X(\s{F}^{\vee,+})$-module  given by an increasing complete filtration
\begin{equation}
\label{eqKoszulZp}
 \Sym^{la}_X(\s{F^+})\otimes \bigwedge^d \s{F} \to \cdots \to \Sym^{la}_X(\s{F}^+) \otimes \s{F} \to \Sym^{la}_X(\s{F}) \to \s{O}_X,
\end{equation}
whose dual is the  Rham complex \[
\s{O}_X \to \Sym^{\n{D}}_X(\s{F}^{\vee}) \otimes \s{F}^{\vee} \xrightarrow{d} \cdots \xrightarrow{d} \Sym^{\n{D}}_X(\s{F}^{\vee}) \otimes  \bigwedge^d \s{F}^{\vee}.
\]
\end{enumerate}
\end{lemma}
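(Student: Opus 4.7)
The plan is to follow exactly the strategy of Lemmas \ref{LemmaDeRhamKoszulAlgebraic}, \ref{LemmaDeRhamKoszulDagger} and \ref{LemmaDeRhamKoszulDagger2}. First I would reduce to the universal case by base change along $X \to */\GL_d(\bb{Z}_p)^{la}$, so that one can assume $\s{F}^{+,la} = \St^{+,la}$ is the standard locally analytic $\bb{Z}_p^d$-bundle over $*/\GL_d(\bb{Z}_p)^{la}$. Then one writes down the two candidate $\GL_d(\bb{Z}_p)^{la}$-equivariant sequences; since $C^{la}(\bb{Z}_p^d, \bb{Q}_p)$ is a Hopf algebra corepresenting the abelian group $\bb{Z}_p^{d,la}$, the translation-invariance of the differentials and Koszul maps automatically promotes them to complexes of comodules (resp.\ modules) over $\Sym^{la}_X(\St^{\vee,+})$, exactly as in the analogous algebraic and overconvergent cases.

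For exactness of the de Rham complex in (1), I would use a locally analytic Poincar\'e lemma obtained by reducing to Tamme's result \cite[Lemma 26]{Tamme}. Concretely, one writes
\[
C^{la}(\bb{Z}_p^d, \bb{Q}_p) = \varinjlim_{h \to \infty} \bigoplus_{a \in (\bb{Z}/p^h)^d} \n{O}(a + p^h \bb{Z}_p^d),
\]
where $\n{O}(a + p^h \bb{Z}_p^d)$ denotes the Fr\'echet algebra of rigid analytic functions on the open polydisc of radius $p^{-h}$ centered at $a$ (or equivalently, a colimit of functions on strictly smaller open polydiscs). The de Rham complex splits as a filtered colimit of direct sums of de Rham complexes on such open polydiscs, each of which is a resolution of the constants $\bb{Q}_p$ by Tamme's Poincar\'e lemma (which holds over $\bb{Q}_p$ in characteristic $0$). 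Since the solid tensor product and finite limits commute with filtered colimits and direct sums along these transition maps, exactness passes to $C^{la}(\bb{Z}_p^d, \bb{Q}_p)$.

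For the Koszul complex in (2), and for the dual statements in each part, I would invoke the naive duality between LB spaces of compact type and nuclear Fr\'echet spaces, exactly as in the proof of Lemma \ref{LemmaDeRhamKoszulDagger}. The space $C^{la}(\bb{Z}_p^d, \bb{Q}_p)$ is a nuclear LB space of compact type whose strong dual is $\n{D}^{la}(\bb{Z}_p^d, \bb{Q}_p) = \Sym^{\n{D}}_X(\St)$, and these dualities are compatible with the symmetric and exterior algebra constructions. Consequently the Koszul complex on $\Sym^{la}_X(\St^+)$ is the strong dual of the de Rham complex on $\Sym^{\n{D}}_X(\St^{\vee})$ (and vice versa), and exactness of one implies exactness of the other. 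The same Poincar\'e lemma argument applied directly to $\Sym^{\n{D}}_X(\St^{\vee})$ also works, since this is precisely the algebra of functions on the open unit polydisc $\widehat{\bb{G}}_{m,\eta}(\St^+)$ after choosing logarithmic coordinates via Amice's theorem.

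The main obstacle I expect is checking that the Poincar\'e lemma can be applied coset-by-coset in a way compatible with the $\GL_d(\bb{Z}_p)^{la}$-equivariant comodule and module structures, since the action permutes the cosets in a $h$-dependent way; this is handled by noting that the complex itself (differential and terms) is canonical and independent of the chosen exhaustion, so exactness for \emph{some} exhaustion suffices, while the equivariance of the differentials comes for free from the translation-invariance already used to define them. A secondary technical point is verifying that the filtrations induced by the symmetric degree on both complexes are complete in the solid sense on $\Mod_{\sol}(X)$, which follows from the fact that $\Sym^{la}_X$ and $\Sym^{\n{D}}_X$ are (locally) nuclear algebras and the filtered pieces are dualizable.
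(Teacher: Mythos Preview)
Your approach to part~(1) has a genuine gap. Applying the Poincar\'e lemma coset-by-coset to
\[
C^{la}(\bb{Z}_p^d, \bb{Q}_p) = \varinjlim_{h} \bigoplus_{a \in (\bb{Z}/p^h)^d} \n{O}(a + p^h \bb{Z}_p^d)
\]
does make the de Rham complex exact in positive degrees, but each open polydisc contributes a copy of $\bb{Q}_p$ to $H^0$, so the kernel of the first differential is $\varinjlim_h \bb{Q}_p^{(\bb{Z}/p^h)^d} = C^{sm}(\bb{Z}_p^d, \bb{Q}_p)$, not $\bb{Q}_p$. In other words, the honest de Rham differential on $C^{la}$ resolves the locally constant functions rather than the constants; this is precisely the statement $(\bb{Z}_p^{d,la})_{dR} = \bb{Z}_p^{d,sm}$ of Lemma~\ref{LemmaRelationLieGroupsLocAn}. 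Thus the complex you produce is not a resolution of $\s{O}_X$.

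The paper avoids this by noting that \eqref{eqdeRhamZpla} is \emph{not} the de Rham complex of $\Sym^{la}_X(\s{F}^{\vee,+})$ at all (the Remark immediately after the proof says so explicitly). Instead, one starts on the distribution side: using the Amice isomorphism $\bb{Z}_{p,\sol}[\bb{Z}_p] \cong \bb{Z}_p[[1-X]]$, one writes down the $\GL_d$-equivariant Koszul resolution of $\bb{Q}_p$ over $\n{D}^{la}(\St,\bb{Q}_p)$ given by the regular sequence $X_{e_1}-1,\ldots,X_{e_d}-1$ (this is the Lazard--Serre resolution extended to the analytic distribution algebra, cf.\ \cite[Theorem 4.4]{Kohlhaase}), and then dualizes. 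The resulting maps in \eqref{eqdeRhamZpla} are the finite-difference operators $f \mapsto f(\cdot+e_i)-f(\cdot)$ rather than the derivations $\partial_i$; the former annihilate only the global constants, which is exactly what is needed. Your treatment of part~(2), by contrast, is essentially the paper's: the Poincar\'e lemma applies directly to $\Sym^{\n{D}}_X(\St^{\vee}) = \s{O}(\widehat{\bb{G}}_{m,\eta}(\St^+))$ since that \emph{is} a single open polydisc, and the Koszul on $C^{la}$ is then recovered by duality.
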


\begin{proof}
It suffices to prove the universal case with $X=*/ \GL_d(\bb{Z}_p)^{la}$ and $\s{F}^{+,la}= \St^{la}$. Recall that we have an equivalence of Hopf algebras  $\bb{Z}_{p,\sol}[\bb{Z}_p]=\bb{Z}_{p}[[1-X]]$ sending the unit $[1]\in \bb{Z}_p$ to $X\in \bb{Z}_{p}[[1-X]]$. Under this equivalence $\n{D}^{la}(\bb{Z}_p,\bb{Q}_p)$  becomes isomorphic to $\varprojlim_{h\to \infty} \bb{Q}_p \langle \frac{1-X}{p^{1/h}}\rangle$. One obtains a $\GL_d$-equivariant Koszul resolution  induced by left multiplication of $\St$:
\[
0  \to \n{D}^{la}(\St,\bb{Q}_p) \otimes \bigwedge^d \St \to \cdots \to \n{D}^{la}(\St, \bb{Q}_p) \otimes \St \to  \n{D}^{la}(\St, \bb{Q}_p)  \to \bb{Q}_p \to 0.
\]
taking duals one gets the resolution \eqref{eqdeRhamZpla}. It is clear that this is a resolution as $C^{la}(\St, \bb{Q}_p)$-comodules and $\n{D}^{la}(\St, \bb{Q}_p)$-modules. 

For part (2) we argue in the similar way, this time knowing that the de Rham complex 
\[
0\to \bb{Q}_p \to \n{D}^{la}(\St, \bb{Q}_p) \xrightarrow{d} \n{D}^{la}(\St, \bb{Q}_p) \otimes \St^{\vee} \xrightarrow{d} \cdots \xrightarrow{d}  \n{D}^{la}(\St, \bb{Q}_p) \otimes  \bigwedge^d \St^{\vee} \to 0
\]
is exact by the Poincar\'e Lemma, $\GL_d$-equivariant, and a complex as $\n{D}^{la}(\St,\bb{Q}_p)$-comodule and $C^{la}(\St,\bb{Q}_p)$-module. Note that, after fixing a basis  $e_1,\ldots, e_d$ of $\St$, the resolution \eqref{eqKoszulZp} is the Koszul resolution of the regular sequence $(X_{e_1}-1,\ldots, X_{e_d}-1)$, where $X_{e_i}-1$ is the projection to the $i$-th component. 
\end{proof}

\begin{remark}
The resolution of Lemma \ref{LemmaDeRhaKoszulZp} (1) is not the de Rham complex of $\Sym^{la}_X(\n{F}^{\vee,+})$. It is actually obtained by the Lazard-Serre resolution of the Iwasawa algebra, see \cite[Theorem 4.4]{Kohlhaase}. 
\end{remark}

\begin{theorem}[Cartier duality for locally analytic $\bb{Z}_p$-lattices]
\label{TheoremCartierDualityAnalyticIII}
Let $X$ be an analytic stack over $\bb{Q}_p$ and let $\s{F}^{+,la}$ be a locally analytic $\bb{Z}_p$-vector bundle over $X$. Then the analogue of Propositions \ref{PropAlgebraicCartier2} and \ref{PropConvolutionFMTransforms}, Theorem \ref{TheoAlgebraicCartierDuality} and Corollary \ref{CoroFourierMoukaiIdentities} hold after replacing the following objects:
\begin{itemize}
\item $\bb{V}(\s{F})$ for $\bb{V}(\s{F}^{+,la})$,

\item $\widehat{\bb{V}(\s{F}^{\vee})}$ for $\widehat{\bb{G}}_{m,\eta}(\s{F}^{\vee,+,la})$,

\item $\Sym^{\bullet}_X \s{F}^{\vee}$ for $\Sym^{la}_X (\s{F}^{\vee,+})$,

\item $\widehat{\Sym}_X(\s{F})$ by $\Sym^{\n{D}}_X(\s{F}^+)$. 
\end{itemize}
\end{theorem}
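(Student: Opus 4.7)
The plan is to follow mutatis mutandis the strategy used in the proofs of Theorems \ref{TheoAlgebraicCartierDuality} and \ref{TheoremCartierDualityAnalyticI}, replacing the algebraic/overconvergent ingredients with their locally analytic counterparts. By functoriality of the Lu-Zheng $2$-category and base change, all computations reduce to the universal case $X = */\GL_d(\bb{Z}_p)^{la}$ with $\s{F}^{+,la} = \St^{+,la}$ the standard lattice. The basic cohomological smoothness, weakly cohomological properness, and identifications of $f^! 1_X$ and $f_! 1$ for the maps $\bb{V}(\s{F}^{+,la}) \to X$ and $\widehat{\bb{G}}_{m,\eta}(\s{F}^{\vee,+,la}) \to X$ are already Proposition \ref{PropositionDAggerCartier3}.

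First, I would establish the analogue of Proposition \ref{PropAlgebraicCartier2}. Since $\bb{V}(\s{F}^{+,la}) \to X$ is weakly cohomologically proper and $\widehat{\bb{G}}_{m,\eta}(\s{F}^{\vee,+,la}) \to X$ is cohomologically smooth, the induced covers $X \to X/\bb{V}(\s{F}^{+,la})$ and $X \to X/\widehat{\bb{G}}_{m,\eta}(\s{F}^{\vee,+,la})$ are respectively a descendable and a smooth $\s{D}$-cover (for descendability I would use the Koszul resolution \eqref{eqKoszulZp} from Lemma \ref{LemmaDeRhaKoszulZp}(2), which exhibits $\s{O}_X$ as a finite filtered object in $\Sym^{la}_X(\s{F}^{+,\vee})$-modules). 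Proposition \ref{PropCoverModuleComoduleDescription} and Remark \ref{RemarkComoduleDescription} then yield
\[
\Mod(X/\bb{V}(\s{F}^{+,la})) = \mathrm{CoMod}_{\Sym^{la}_X(\s{F}^{\vee,+})}(\Mod(X)), \quad \Mod(X/\widehat{\bb{G}}_{m,\eta}(\s{F}^{\vee,+,la})) = \Mod_{\Sym^{la}_X(\s{F}^{+})}(\Mod(X)),
\]
where the Hopf algebra structures come from the respective group laws. Using the de Rham resolution of Lemma \ref{LemmaDeRhaKoszulZp}(1) together with Lemma \ref{KeyLemmaSmoothRetraction}, I would identify the dualizing sheaves of the quotient maps $g\colon X/\bb{V}(\s{F}^{+,la}) \to X$ and $g'\colon X/\widehat{\bb{G}}_{m,\eta}(\s{F}^{\vee,+,la}) \to X$ as $\bigwedge^d \s{F}[d]$ and $\bigwedge^d \s{F}[-d]$ respectively, with the latter map additionally being co-smooth with proper dual shifted by $[-2d]$ (via the Koszul resolution).

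Next, for the Cartier duality itself, I would construct the Fourier-Mukai kernels exactly in the style of Step 1 of Theorem \ref{TheoAlgebraicCartierDuality}. The kernel $F^{*}(\s{O}(1))$ on $\bb{V}(\s{F}^{+,la}) \times_X (X/\widehat{\bb{G}}_{m,\eta}(\s{F}^{\vee,+,la}))$ is taken to be $\Sym^{la}_X(\s{F}^{\vee,+})$ equipped with its left regular multiplication, viewed as a $\Sym^{la}_X(\s{F}^{\vee,+})$-module; symmetrically, $G^{*}(\s{O}(1))$ on $\widehat{\bb{G}}_{m,\eta}(\s{F}^{\vee,+,la}) \times_X (X/\bb{V}(\s{F}^{+,la}))$ is given by the pro-system of truncations of $\Sym^{\n{D}}_X(\s{F}^+)$ equipped with the left regular comodule structure obtained by adjoining the multiplication. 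The inverses are computed by precomposing with the antipode, giving the right regular actions; these correspond respectively to $F^{*}(\s{O}(-1)) \otimes \bigwedge^d \s{F}^{\vee}[-d]$ and $G^{*}(\s{O}(-1)) \otimes \bigwedge^d \s{F}[d]$.

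The main obstacle, as in the algebraic case, will be checking that the units and counits of these kernels are equivalences in the Lu-Zheng category, which is Steps 3 and 4 of the proof of Theorem \ref{TheoAlgebraicCartierDuality}. The key computations involve identifying, for the triple fiber products, the line bundles $\pi_{1,2}^{*}\n{L} \otimes \pi_{2,3}^{*}\n{G}$ via the antidiagonal embedding and expressing them as a tensor of a pullback from the diagonal with the structure sheaf of the relevant quotient stack; this requires the splitting $\Sym^{la}_X(\s{F}^{\vee,+}_1 \oplus \s{F}^{\vee,+}_2) \cong \Sym^{la}_X(\Delta^{ant}\s{F}^{\vee,+}) \otimes \Sym^{la}_X((\s{F}^{\vee,+}_1 \oplus \s{F}^{\vee,+}_2)/\Delta^{ant})$, which follows from the fact that $\Sym^{la}_X$ transforms direct sums of locally analytic $\bb{Z}_p$-modules into completed tensor products. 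With the identification of $f_! f^! 1$ and $g_! 1$ supplied by Proposition \ref{PropositionDAggerCartier3}, the same diagrammatic manipulations of Theorem \ref{TheoAlgebraicCartierDuality} go through verbatim. Once the kernels are shown to be mutually inverse, Proposition \ref{PropConvolutionFMTransforms} and Corollary \ref{CoroFourierMoukaiIdentities} are formal consequences obtained by convolving with $\iota_! 1_X$, $f_! 1_X$, $f_! f^! 1_Z$ and taking left/right adjoints in the Lu-Zheng category, exactly as in the algebraic case.
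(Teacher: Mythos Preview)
Your proposal is correct and follows essentially the same approach as the paper, which simply says that the algebraic proof carries over after replacing Lemma \ref{LemmaDeRhamKoszulAlgebraic} by Lemma \ref{LemmaDeRhaKoszulZp} and invoking the compactifications used to compute $f_! f^! 1$ in Proposition \ref{PropositionDAggerCartier3}. One small wording issue: your description of $G^*(\s{O}(1))$ as a ``pro-system of truncations of $\Sym^{\n{D}}_X(\s{F}^+)$'' is slightly off, since unlike $\widehat{\bb{V}(\s{F}^{\vee})}$ in the algebraic case, the space $\widehat{\bb{G}}_{m,\eta}(\s{F}^{\vee,+,la})$ is an honest analytic adic space rather than an ind-scheme, so the kernel is just its structure sheaf with the left regular comodule structure; but this does not affect the argument.
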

\begin{proof}
The same proof of the references applies in this context after making two modifications: the first one is replacing  Lemma \ref{LemmaDeRhamKoszulAlgebraic} for Lemma \ref{LemmaDeRhaKoszulZp}. The second is to provide the suitable compactifications of $f:\widehat{\bb{G}}_{m,\eta}(\St^+)\to *$ that are used to compute  $f_! f^! \bb{Q}_p$ as in Proposition \ref{PropAlgCartier1}. 
\end{proof}

\begin{remark}
There are at least two different generalizations of the Cartier duality for locally analytic $\bb{Z}_p$-vector bundles. The first consists in taking a finite extension $L/\bb{Q}_p$ and considering instead $L$-locally analytic $\n{O}_L$-vector bundles; this theory should be a consequence of  what we have done previously since we have a fiber sequence   of group objects over $L$:
\[
(\Lie \f{k})^{\dagger} \to \n{O}_L^{\bb{Q}_p-la} \to \n{O}_{L}^{la}, 
\]
where $\n{O}_L^{\bb{Q}_p-la}$ is the group $\n{O}_L$ seen as a $p$-adic Lie group over $\bb{Q}_p$, and $\f{k}=\ker(\Lie_{\bb{Q}_p} \n{O}_L \otimes \bb{Q}_p L \to \Lie_L \n{O}_L)$.   A different and more interesting generalization is the passage from locally analytic $\bb{Z}_p$-vector bundles to locally analytic $\bb{Q}_p$-vector bundles. In this case, the Cartier dual of a locally analytic vector bundle $\s{F}^{la}$ should be given by the ``\textit{universal cover}'' $\widetilde{\widehat{\bb{G}}}_{m,\eta}(\s{F}^{\vee})$, where $\widetilde{\widehat{\bb{G}}}_{m,\eta}(\s{F}^{\vee})= \varprojlim_{p} \widehat{\bb{G}}_{m,\eta}(\s{F}^{\vee,+}) $ is the limit as analytic spaces of multiplication by $p$ of the dual of a $\bb{Z}_p$-lattice $\s{F}^{+,la} \subset \s{F}^{la}$ (such a lattice exists locally in the $\s{D}$-topology). 
\end{remark}

\section{Algebraic and analytic de Rham stacks}
\label{SectionAnalyticDeRham}

In this section we construct the algebraic and analytic (filtered) de Rham stacks for derived Tate adic spaces over $\bb{Q}_p$. Following \cite{BhattGauges}, we will obtain the Hodge-filtration of the de Rham cohomology by reading the geometry of these stacks. We shall prove that both the algebraic and analytic de Rham stacks have a nice theory of six functors for morphisms locally of solid finite presentation of derived Tate adic spaces. Finally, we compute the dualizing sheaves of both filtered de Rham stacks for solid smooth morphisms by applying a deformation to the normal cone argument as in \cite{MannSix} and \cite{CondensedComplex}.

\subsection{The algebraic de Rham stack}
\label{SubsectionAlgebraicdeRhamStack}

The (algebraic) de Rham stack was introduced by Simpson \cite{SimpsonDeRham,SimpsonTelemandeRham}, and plays a fundamental role in the geometric Langlands correspondence,   cf. \cite{GRdeRhamStack}. In the following we will define algebraic de Rham prestacks in the realm of analytic geometry over $\bb{Q}$. We then specialize to solid prestacks and show that the theory of $D$-modules obtained in this way admits a good behaved six-functor formalism.

\subsubsection{General definition}
\begin{definition}
\label{DefinitionAlgdeRhamStack}
Let $\PSh(\AnRing_{\bb{Q}}^{\op})$ be the category of prestacks on analytic rings over $\bb{Q}$. We define the following objects 

 \begin{enumerate}

\item Let $X\in \PSh(\AnRing_{\bb{Q}}^{\op})$. The \textit{absolute  filtered algebraic de Rham prestack of $X$} is the prestack of anima over $\bb{A}^1/ \bb{G}_m$ given by
\[
X_{dR^+}^{\alg}(\s{O}(1) \to \n{A}) = \varinjlim_{I \to \n{A}}X(\mathrm{cone}(I \otimes \s{O}(-1) \to \n{A})),
\] 
where $I$ runs over all the uniformly nilpotent ideals of $\n{A}$, see Definition \ref{DefUnifNilpotent}. We define the \textit{absolute de Rham (resp. Hodge) prestack} $X_{dR}^{\alg}$ (resp, $X_{Hodge}^{\alg}$) to be the pullback along $*=\bb{G}_m/ \bb{G}_m \to \bb{A}^1_{\bb{G}_m}$ (resp. the pullback along $*/ \bb{G}_m \to \bb{A}^1/ \bb{G}_m$). When restricted to solid rings,  the \textit{filtered  de Rham stack} is  the $\s{D}$-sheafification of the filtered de Rham prestack (resp. for the de Rham  and the Hodge stacks).

 \item For a morphism $X \to Y$ of prestacks on analytic rings  over $\bb{Q}$ we let $X_{dR^+,Y}^{\alg}$ be the pullback
\[
\begin{tikzcd}
X_{dR^+,Y}^{\alg}  \ar[d] \ar[r] &  Y\times \bb{A}^1/ \bb{G}_m \ar[d] \\ 
X_{dR^+}^{\alg} \ar[r] & Y_{dR^+}^{\alg}.
\end{tikzcd}
\]
Similarly, we define $X_{dR,Y}^{\alg}$ and $X_{Hodge,Y}^{\alg}$ to be the pullback of $X_{dR^+}^{\alg}$ to $Y$ and $Y\times */ \bb{G}_m$ respectively. 
\end{enumerate}
\end{definition}

The following proposition describes some basic properties of the algebraic de Rham stack. 

\begin{prop}
\label{PropAlgebraicDeRham}
Let $f:X \to Y$ be a morphism of prestacks on $\AnRing_{\bb{Q}}$.
 
\begin{enumerate}

\item  Suppose that $f$ is formally \'etale, then the natural map $X\times \bb{A}^1/ \bb{G}_m \to X_{dR^+,Y}^{\alg}$ is an equivalence.

 \item  Suppose that $f$ is formally smooth and let $\n{T}_{X/Y} = \AnSpec_X \Sym_{X}^{\bullet} \bb{L}_{X/Y}$. The following hold

\begin{itemize}

\item[(a)]  The map $X\times \bb{A}^1/ \bb{G}_m \to X_{dR^+,Y}^{\alg}$ is an epimorphism. 

\item[(b)]   There is a natural equivalence $X_{Hodge,Y}^{\alg}= (X\times */ \bb{G}_m)/\widehat{\n{T}_{X/Y}}(-1)$.  

\end{itemize}

\item The formation of $X \mapsto X_{dR^+}^{\alg}$ commutes with small colimits and finite limits of prestacks.

\end{enumerate}
\end{prop}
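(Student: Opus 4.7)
Part (3) is immediate from the definition. The formula
\[
X_{dR^+}^{\alg}(\s{O}(1)\to\n{A}) \;=\; \varinjlim_{I\to\n{A}}X(\mathrm{cone}(I\otimes\s{O}(-1)\to\n{A}))
\]
is built from fixed-object evaluations $X\mapsto X(B_I)$ with $B_I:=\mathrm{cone}(I\otimes\s{O}(-1)\to\n{A})$. Since limits and colimits of prestacks of anima are computed objectwise, each such evaluation commutes with all limits and colimits in $X$; since filtered colimits of anima commute with finite limits, the overall construction $X\mapsto X_{dR^+}^{\alg}$ preserves small colimits and finite limits.

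For (1) and (2) I work at the level of $(\n{A},t)$-points. Commuting the defining filtered colimit past the defining fiber product, the natural map rewrites as
\[
X(\n{A}) \longrightarrow \varinjlim_{I} \bigl(X(B_I) \times_{Y(B_I)} Y(\n{A})\bigr).
\]
For each uniformly nilpotent $I$, the map $\n{A}\to B_I$ is an animated nilpotent thickening that can be factored into a finite tower of square-zero extensions using the powers $I^k$, which terminate by the uniform bound in Definition \ref{DefUnifNilpotent}. For (1), formal \'etaleness of $f$ gives an equivalence at each square-zero step; iterating along the tower and then passing to the filtered colimit produces the equivalence $X\times\bb{A}^1/\bb{G}_m\xrightarrow{\sim}X_{dR^+,Y}^{\alg}$. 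For (2)(a), formal smoothness of $f$ gives only $\pi_0$-surjectivity at each square-zero step, but $\pi_0$-surjectivity is preserved under composition and under filtered colimits, yielding the desired epimorphism.

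For (2)(b) we specialize to $t=0$, where $B_I=\n{A}\oplus(I\otimes\s{O}(-1))[1]$ is a trivial higher square-zero extension. Given $\xi\in X(\n{A})$ lifting $\eta:=f(\xi)\in Y(\n{A})$, standard deformation theory for the cotangent complex identifies the fiber of $X(B_I)\times_{Y(B_I)}Y(\n{A})\to X(\n{A})$ over $\xi$ with the mapping space $\mathrm{Map}_{\Mod(\n{A})}(\bb{L}_{X/Y}(\xi),(I\otimes\s{O}(-1))[1])$. Because $\bb{L}_{X/Y}$ is the dual of a connective perfect complex by formal smoothness, this mapping space is the classifying space $B\iHom_{\n{A}}(\bb{L}_{X/Y}(\xi),I\otimes\s{O}(-1))$. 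Taking the filtered colimit over uniformly nilpotent $I$ and comparing with the $\n{A}$-points of $\widehat{\n{T}_{X/Y}}(-1)$ exhibits the projection $X_{Hodge,Y}^{\alg}\to X\times */\bb{G}_m$ as the relative classifying stack $(X\times */\bb{G}_m)/\widehat{\n{T}_{X/Y}}(-1)$.

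The main obstacle is the final identification in (2)(b): precisely matching $\varinjlim_I\iHom_{\n{A}}(\bb{L}_{X/Y}(\xi),I\otimes\s{O}(-1))$ with the $\n{A}$-points of $\widehat{\n{T}_{X/Y}}(-1)$. This requires comparing the condensed notion of uniformly nilpotent ideal with the formal completion appearing in the construction of $\widehat{\n{T}_{X/Y}}$, and carefully tracking the $\bb{G}_m$-weights encoded by the twist $\s{O}(-1)$ so as to produce a natural (and not just fiberwise) equivalence of prestacks.
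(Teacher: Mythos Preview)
Your proposal is correct and follows essentially the same route as the paper: pointwise analysis of the defining formula for parts (1) and (2), and the observation that filtered colimits of anima commute with finite limits for part (3). Your treatment of (1) and (2)(a) via an explicit finite tower of square-zero extensions is slightly more detailed than the paper, which simply invokes that $\n{A}\to B_I$ is a nilpotent thickening, but the underlying mechanism is identical (using nilcompleteness and infinitesimal cohesiveness from the definition of formal smoothness). For (2)(b) the paper carries out exactly the deformation-theoretic computation you describe, writing the fiber as $(\xi^*\bb{L}_{X/Y}^{\vee}\otimes\Nil(\n{A})\otimes\s{O}(-1))[1](*)$ and then identifying this with $\widehat{\n{T}_{X/Y}}(-1)$.

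The obstacle you flag in (2)(b) is genuine but minor, and the paper passes over it without comment. The point is that formal smoothness makes $\bb{L}_{X/Y}$ the dual of a connective perfect complex, hence itself perfect and in particular compact; therefore $\varinjlim_I\iHom(\xi^*\bb{L}_{X/Y},I\otimes\s{O}(-1))=\iHom(\xi^*\bb{L}_{X/Y},\Nil(\n{A})\otimes\s{O}(-1))$, and any such map factors through a uniformly nilpotent ideal because a perfect complex has ``finitely generated'' image. Comparing with Definition~\ref{DefinitionConstructionVB}, the $\n{A}$-points of $\widehat{\n{T}_{X/Y}}(-1)$ over $\xi$ are precisely module maps $\xi^*\bb{L}_{X/Y}\to\n{A}$ whose image is uniformly nilpotent (after the twist), which matches. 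The $\bb{G}_m$-weight is already forced by the $\s{O}(-1)$ in the definition of the filtered de Rham prestack and introduces no further difficulty.
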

\begin{proof}
\begin{enumerate}

\item Let $\n{A} \in \AnRing_{\bb{Q}}$, and let $\s{O}(-1) \to \n{A}$ be a generalized Cartier divisor and  $I \to \n{A}$  an uniformly nilpotent ideal (cf. Definition \ref{DefUnifNilpotent}). Let $\n{B}= \mathrm{cone}(I\otimes \s{O}(-1) \to \n{A})$, then $\n{A} \to \n{B}$ is a nilpotent thickening and there is a natural  equivalence 
\begin{equation}
\label{eqMapdeRhamplus}
(X\times \bb{A}^1/\bb{G}_m) (\n{A}) \xrightarrow{\sim} X(\n{B}) \times_{Y(\n{B})} (Y \times \bb{A}^1/ \bb{G}_m )(\n{A})= (X_{dR^+}^{\alg} \times_{Y_{dR^+}^{\alg}} (Y\times \bb{A}^1/ \bb{G}_m)) (\n{A})
\end{equation}
proving what we wanted.

\item It is clear that if $f$ is formally smooth then $X\times \bb{A}^1/ \bb{G}_m \to X_{dR^+}^{\alg}$ is an epimorphism, namely, the map \eqref{eqMapdeRhamplus} is surjective by definition of formally smoothness; this shows (a). To prove part (b), note that $\n{A}$-points of  $X^{\alg}_{Hodge,Y}$ are given by 
\[
X^{\alg}_{Hodge,Y}(\n{A}) = \varinjlim_{I \subset \n{A}} X(I\otimes\s{O}(-1)[1] \oplus \n{A}) \times_{Y(I\otimes\s{O}(-1)[1] \oplus \n{A})} (Y \times */\bb{G}_m)(\n{A}).
\]
Thus, given $\eta \in Y(\n{A})$, the fiber of $X^{\alg}_{Hodge,Y} \to Y\times */ \bb{G}_m$ at $\eta \times */ \bb{G}_m$ is given by 
\[
\varinjlim_{I\subset \n{A}} \Map_{\Mod_{\geq 0}(\n{A})}(\eta^* \bb{L}_{X/Y}, I \otimes \s{O}(-1)[1])   =  (\eta^*\bb{L}^{\vee}_{X/Y} \otimes \Nil(\n{A})\otimes \s{O}(-1) )[1])(*),
\]
which shows that 
\[
X_{Hodge,Y}^{\alg} = (X\times */\bb{G}_m)/ \widehat{\n{T}_{X/Y}}(-1). 
\]

\item Finally, by definition at the level of points,  $X \mapsto X_{dR^+}^{\alg}$ commutes with small colimits.  The commutation with finite limits follows from the definition  and the fact that the system of uniformly bounded ideals of a ring $\n{A}$ is filtered. 
\end{enumerate}
\end{proof}

\begin{remark}
\label{RemarkAlternativedeRham}
There is a different definition for the de algebraic (filtered) de Rham prestack, namely, the prestack given by 
\[
X^{alg'}_{dR^+}(\s{O}(1) \to \n{A}) = X(\mathrm{cone}(\Nil(\n{A}) \otimes \s{O}(-1) \to \n{A})),
\]
see \S \ref{SubsectionCondensedNil-radical} for the definition of the condensed nil-radical.  The apparent advantage of this definition is that the formation of $X\mapsto X_{dR^+}^{alg'}$ commutes with small limits and colimits of prestacks, however, after taking sheafifications with respect to some Grothendieck topology the formation of the de Rham stack will only commutes with small colimits and finite limits. The disadvantage of this definition is that it is not clear whether formally smooth maps produce epimorphisms. Nevertheless, for all the spaces in practice both constructions are the same after a suitable sheafification, eg. smooth morphisms  of classical derived schemes (this follows from the fact that the nilpotent radical of a finitely generated algebra is nilpotent), and solid smooth  morphisms of derived Tate adic spaces (this follows from Proposition \ref{PropFormallyInftSmoothEtale} as $\Nil(\n{A}) \subset \Nil^{\dagger}(\n{A})$ for a bounded affinoid ring). 
\end{remark}

Another feature of the de Rham stack is its relation with formal completions of Zariski closed immersions. 

\begin{prop}
\label{PropZariskiAlgDeRham}
Let $X= \AnSpec \n{B} \to Y=\AnSpec \n{A}$ be a morphism of analytic affine spaces over $\bb{Q}$ surjective on $\pi_0$, and such that $\n{B}$ has the induced analytic structure from $\n{A}$ (i.e. a Zariski closed immersion). Let $I=[\n{A} \to \n{B}]$ and  suppose that there is an extremally disconnected set $S$ and a map $S\to I$ such that $\n{A}[S] \to I$ is surjective on $\pi_0$.  Then the algebraic  de  Rham stack $X_{dR,Y}^{\alg}$ is the full substack of $Y$ given by 
\[
X_{dR,Y}^{\alg} = \varinjlim_{n} Y \times_{\AnSpec \bb{Z}[\bb{N}[S]]} \AnSpec \bb{Z}[\bb{N}[S]]_n^{\bb{L}}.
\]
 We call $X_{dR,Y}^{\alg}$ the formal completion of $Y$ along $X$ and denote it by $\widehat{Y}^{X}$. 
\end{prop}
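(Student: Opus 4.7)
The strategy is to compute the $\widetilde{\n{A}}$-points of both sides and compare them directly, using Proposition \ref{PropRepresentedNilRadical} to translate between "factoring through $\Nil$" and "factoring through $\bb{Z}[\bb{N}[S]]_n^{\bb{L}}$".

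First, I unpack the pullback square defining $X_{dR,Y}^{\alg}$. An $\widetilde{\n{A}}$-point consists of: a map $\eta: \n{A} \to \widetilde{\n{A}}$ (the $Y$-point), a uniformly nilpotent ideal $J \to \widetilde{\n{A}}$, and a morphism $\beta: \n{B} \to \mathrm{cone}(J \to \widetilde{\n{A}})$ together with a homotopy between $\beta \circ (\n{A} \to \n{B})$ and $\eta$ composed with the quotient $\widetilde{\n{A}} \to \mathrm{cone}(J \to \widetilde{\n{A}})$. Since $\n{B}$ has the induced analytic structure and $\pi_0(\n{A}) \to \pi_0(\n{B})$ is surjective, $\n{B} \simeq \mathrm{cone}(I \to \n{A})$ as animated $\n{A}$-algebras (in the sense of animated pairs, cf.\ the formalism of \cite{MaoCrystalline}). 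Hence the datum of $(J, \beta)$ compatible with $\eta$ is equivalent to a factorization (up to homotopy) of $I \to \n{A} \xrightarrow{\eta} \widetilde{\n{A}}$ through some uniformly nilpotent ideal $J \hookrightarrow \widetilde{\n{A}}$. Taking colimits over all $J$, this is the statement that the induced map $I \to \widetilde{\n{A}}$ lands in $\Nil(\widetilde{\n{A}})$.

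Second, I unpack the RHS. An $\widetilde{\n{A}}$-point of $\varinjlim_n Y \times_{\AnSpec \bb{Z}[\bb{N}[S]]} \AnSpec \bb{Z}[\bb{N}[S]]_n^{\bb{L}}$ consists of a map $\eta: \n{A} \to \widetilde{\n{A}}$ together with, for some $n$, an extension of the composite $\bb{Z}[\bb{N}[S]] \to \n{A} \xrightarrow{\eta} \widetilde{\n{A}}$ to a morphism $\bb{Z}[\bb{N}[S]]_n^{\bb{L}} \to \widetilde{\n{A}}$, where the map $\bb{Z}[\bb{N}[S]] \to \n{A}$ is the algebra map induced (via the adjunction $\Sym_{\bb{Z}} \dashv \text{forget}$) by the given condensed map $S \to I \subset \n{A}$. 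By Proposition \ref{PropRepresentedNilRadical}, this is equivalent to saying that the condensed map $S \to \widetilde{\n{A}}$ factors through $\Nil(\widetilde{\n{A}})$.

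Third, the comparison. Both sides parametrize pairs $(\eta, \text{witness of nilpotence})$. The only thing to check is that the ``witness'' condition agrees: on the LHS we require $I \to \widetilde{\n{A}}$ to land in $\Nil(\widetilde{\n{A}})$, on the RHS only the composite $S \to \widetilde{\n{A}}$. Because $\Nil(\widetilde{\n{A}})$ is a condensed $\widetilde{\n{A}}$-submodule ideal of $\widetilde{\n{A}}$ (Proposition \ref{PropRepresentedNilRadical} and the discussion thereafter) and $\n{A}[S] \to I$ is surjective on $\pi_0$, the image of $\pi_0(I)$ in $\pi_0(\widetilde{\n{A}})$ is generated as $\pi_0(\widetilde{\n{A}})$-module by the image of $S$, so the $\pi_0$ condition reduces to the one on $S$. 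The higher homotopical data poses no obstruction precisely because $\Nil(\widetilde{\n{A}}) \hookrightarrow \widetilde{\n{A}}$ is a full sub-anima (the argument of Remark \ref{RemarkFullSubobjects} applies to the ind-system $(\Nil_n')_n$ via the factorization $\bb{Z}[\bb{N}[S \sqcup S']]_{2n} \to \bb{Z}[\bb{N}[S]]_n \otimes \bb{Z}[\bb{N}[S']]_n \to \bb{Z}[\bb{N}[S \sqcup S']]_n$), so once the map factors through $\Nil$ on $\pi_0$, it does so canonically.

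The main obstacle is precisely this last point: ensuring that passing from generators $S$ to the full ideal $I$ does not introduce derived obstructions. My plan is to argue that the fiber $K := \mathrm{fib}(\n{A}[S] \to I)$, being connective by surjectivity on $\pi_0$ and a sub-complex of $\n{A}[S]$ landing in $I$, maps to zero in $\widetilde{\n{A}}/\Nil(\widetilde{\n{A}})$ as soon as $\n{A}[S]$ does, because the relevant diagram in $\Mod(\widetilde{\n{A}})$ is a fiber sequence and both outer terms are controlled. Combining these identifications at the level of $\widetilde{\n{A}}$-points for all $\widetilde{\n{A}} \in \Aff_{R_{\sol}}^{b}$ gives the stated equivalence $X_{dR,Y}^{\alg} = \varinjlim_n Y \times_{\AnSpec \bb{Z}[\bb{N}[S]]} \AnSpec \bb{Z}[\bb{N}[S]]_n^{\bb{L}}$.
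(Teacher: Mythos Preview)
Your approach is essentially correct and follows the same strategy as the paper—compute both sides pointwise and compare using Proposition~\ref{PropRepresentedNilRadical}. The paper, however, makes one simplification you miss: it first observes that the algebraic de Rham prestack is the right Kan extension of its restriction to \emph{static} analytic rings, because the fiber of $\n{C} \to \pi_0(\n{C})$ is always uniformly nilpotent. Combined with the fact that $\widehat{Y}^X \to Y$ is an immersion (by Proposition~\ref{PropRepresentedNilRadical}), both sides become presheaves of \emph{sets} inside $Y$, and the comparison reduces to a set-theoretic check on static $\n{C}$. This bypasses your ``main obstacle'' about higher homotopy entirely: there is no derived obstruction to handle because neither side has any.

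Your Step~1 also has a small conflation worth flagging. You write that taking the colimit over uniformly nilpotent $J$ yields the condition ``$I \to \widetilde{\n{A}}$ lands in $\Nil(\widetilde{\n{A}})$'', but $\Nil(\widetilde{\n{A}})$ is only \emph{locally} uniformly nilpotent, so a priori ``lands in $\Nil$'' is weaker than ``factors through some uniformly nilpotent $J$''. The gap closes precisely because $I$ is generated by a single profinite set $S$: once $S$ lands in some $\Nil_n'$, the image of $I$ sits inside the kernel of $\widetilde{\n{A}} \to \widetilde{\n{A}} \otimes_{\bb{Z}[\bb{N}[S]]_n^{\bb{L}}} \bb{Z}$, which is uniformly nilpotent. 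You implicitly use this in Step~3 but misattribute it as a consequence of $\Nil$ being a submodule ideal. After the reduction to static rings, the paper simply exhibits the factorization $\n{A} \to \n{A} \otimes_{\bb{Z}[\bb{N}[S]]} \bb{Z}[\bb{N}[S]]_n^{\bb{L}} \to \n{C}$ and stops; the reverse inclusion is left implicit but follows by the same mechanism.
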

\begin{proof}
Let us write $\widehat{Y}^{X}:=\varinjlim_{n} Y \times_{\AnSpec \bb{Z}[\bb{N}[S]]} \AnSpec \bb{Z}[\bb{N}[S]]_n^{\bb{L}}$, by Proposition \ref{PropRepresentedNilRadical} the map $\widehat{Y}^{X} \to X$ is an immersion. Since for any analytic ring $\n{C}$ the ideal of the map $\n{C} \to \pi_0( \n{C})$ is uniformly nilpotent, the algebraic de Rham prestack is the right Kan extension of its restriction to static analytic rings. In particular,  $X_{dR}$ and $Y_{dR}$ are just  presheaves on sets and $Y_{dR} \to X_{dR}$ is an immersion. Thus, to show that $X_{dR,Y}^{\alg}= \widehat{Y}^{X}$, it suffices to check at the level of points in static analytic rings. Let $J\to \n{C}$ be an uniformly nilpotent ideal of an static analytic ring $\n{C}$, suppose we have a commutative diagram 
\[
\begin{tikzcd}
\n{A}\ar[r] \ar[d] & \n{C} \ar[d] \\ 
\n{B}\ar[r] & \n{C}/J.
\end{tikzcd}
\]
Let $S \to I$ be such that $\n{A}[S] \to I$ is surjective, then the image of $S$ in $\n{C}$ belongs to $J$, and since $J$ is uniformly nilpotent there is $n$ such that $\n{A} \to \n{C}$ factors though 
\[
\n{A}\otimes_{\bb{Z}[\bb{N}[S]]} \bb{Z}[\bb{N}[S]]_n^{\bb{L}} \to \n{C},
\]
proving that the map $\AnSpec \n{C} \to Y$ factors through $\widehat{Y}^{X}$ as wanted. 
\end{proof}

\begin{remark}
The hypothesis that $\n{A}[S] \to I$ is surjective on $\pi_0$ can be thought as a finitely generated assumption for the Zariski closed immersion.   With the alternative definition  of the de Rham stack of Remark \ref{RemarkAlternativedeRham} one can extend Proposition \ref{PropZariskiAlgDeRham} to an arbitrary Zariski closed immersion.
\end{remark}

\subsubsection{Six functors for algebraic $D$-modules}

Let us now restrict ourselves to the category of solid prestacks over $\bb{Q}$, namely, the category $\PSh(\Aff_{\bb{Q}})$ of presheaves on anima of solid affinoid rings over $\bb{Q}$. We can transmute the six functor formalism from quasi-coherent sheaves to $D$-modules as follows:

\begin{definition}
Let $S \in \PSh(\Aff_{\bb{Q}})$ be a fixed prestack. We define the six-functor formalisms $\s{D}_{dR^+,S}^{\alg}$, $\s{D}_{dR,S}^{\alg}$ and $\s{D}_{Hodge,S}^{\alg}$ from $\PSh(\Aff_{\bb{Q}})_{/S}$ to be that obtained via Lemma \ref{LemmaFunctoriality6Functors} and the functors $X \mapsto X_{dR^+,S}^{\alg}$, $X_{dR,S}^{\alg}$ and $X_{Hodge,S}$, landing on $\s{D}$-stacks over $S\times \bb{A}^1/ \bb{G}_m$, $S$ and $S \times */ \bb{G}_m$ respectively.  We call $\s{D}_{dR^+,S}^{\alg}$ (resp. $\s{D}_{dR,S}^{\alg}$, resp. $\s{D}_{Hodge,S}^{\alg}$) the six functor formalisms of   filtered  algebraic $D$-modules (resp.  algebraic $D$-modules, resp. algebraic Hodge modules) over $S$. 
\end{definition}

By Theorem \ref{TheoSixFunctorsScholze} we have a six functor formalism on prestacks for the different $\s{D}$-topologies defined by the functors $\s{D}_{dR^+,S}^{\alg}$, $\s{D}_{dR,S}^{\alg}$ and $\s{D}_{Hodge,S}^{\alg}$. There is no reason for these topologies to be the same, and not even comparable with the $\s{D}$-topology for solid quasi-coherent sheaves. Nevertheless there are some particular cases where one can relate covers in different topologies.

\begin{lemma}
\label{LemmaTransmuteDcovers}
Let $S$ be a base prestack and  $X\to Y$  a morphism of $\s{D}^{\alg}_{dR^+,S}$-stacks which is a canonical cover and such that $X_{dR^+}^{\alg} \to Y_{dR^+,S}^{\alg}$ is a $\s{D}$-cover. Then $X \to Y$ is a  $\s{D}^{\alg}_{dR^+,S}$, $\s{D}^{\alg}_{dR,S}$ and $\s{D}_{Hodge,S}^{\alg}$-cover. 
\end{lemma}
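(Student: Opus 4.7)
My plan is to reduce everything to the hypothesized fact that $X_{dR^+,S}^{\alg} \to Y_{dR^+,S}^{\alg}$ is a $\s{D}$-cover in the original six-functor formalism of solid quasi-coherent sheaves. The key input is Proposition \ref{PropAlgebraicDeRham} (3), which says the relative filtered de Rham construction $(-)_{dR^+,S}^{\alg}$ commutes with finite limits, together with the tautological identity $\s{D}_{dR^+,S}^{\alg}(Z) = \s{D}(Z_{dR^+,S}^{\alg})$ built into the transmuted formalism via Lemma \ref{LemmaFunctoriality6Functors}.

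The canonical cover property is given by hypothesis, so I only need to verify universal $*$- and $!$-descent for $\s{D}_{dR^+,S}^{\alg}$. For any pullback square $X' = X \times_Y Y'$ in $\PSh(\Aff_{\bb{Q}})_{/S}$, the functor $(-)_{dR^+,S}^{\alg}$ preserves the pullback, yielding
\[
X'^{\,\alg}_{dR^+,S} = X_{dR^+,S}^{\alg} \times_{Y_{dR^+,S}^{\alg}} Y'^{\,\alg}_{dR^+,S}.
\]
By construction, $\s{D}_{dR^+,S}^{\alg}$-descent along $X' \to Y'$ is the same as $\s{D}$-descent along $X'^{\,\alg}_{dR^+,S} \to Y'^{\,\alg}_{dR^+,S}$; but this last map is itself a pullback of $X_{dR^+,S}^{\alg} \to Y_{dR^+,S}^{\alg}$, which is a $\s{D}$-cover by hypothesis, and therefore satisfies universal $*$- and $!$-descent by definition. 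This exhibits $X \to Y$ as a $\s{D}_{dR^+,S}^{\alg}$-cover.

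To deduce the statement for $\s{D}_{dR,S}^{\alg}$ and $\s{D}_{Hodge,S}^{\alg}$, observe that by Definition \ref{DefinitionAlgdeRhamStack} the de Rham and Hodge variants are obtained from the filtered version by pullback along $\bb{G}_m/\bb{G}_m \to \bb{A}^1/\bb{G}_m$ and $*/\bb{G}_m \to \bb{A}^1/\bb{G}_m$ respectively. Consequently $X_{dR,S}^{\alg} \to Y_{dR,S}^{\alg}$ and $X_{Hodge,S}^{\alg} \to Y_{Hodge,S}^{\alg}$ are base changes of $X_{dR^+,S}^{\alg} \to Y_{dR^+,S}^{\alg}$, and hence remain $\s{D}$-covers because stability under pullback is one of the defining properties of the class $\widetilde{E}$ (cf.\ Definition \ref{DefinitionStabilityE} and Theorem \ref{TheoSixFunctorsScholze}). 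The canonical cover hypothesis is inherited directly from $X \to Y$, so repeating the argument above gives the two remaining assertions.

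No genuine obstacles are expected: the lemma is a formal consequence of transmuting a six-functor formalism along a finite-limit-preserving functor, together with the closure of $\s{D}$-covers under base change. The only mild subtlety is ensuring that pullbacks are computed consistently in the relevant slice categories, which is precisely what Proposition \ref{PropAlgebraicDeRham} (3) guarantees.
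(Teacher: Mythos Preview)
Your proof is correct and follows essentially the same approach as the paper: both reduce to the commutation of $(-)_{dR^+,S}^{\alg}$ with finite limits (Proposition \ref{PropAlgebraicDeRham} (3)) to transfer universal $*$- and $!$-descent, and then observe that the de Rham and Hodge variants are base changes along $\bb{G}_m/\bb{G}_m \to \bb{A}^1/\bb{G}_m$ and $*/\bb{G}_m \to \bb{A}^1/\bb{G}_m$. One minor quibble: the stability of $\s{D}$-covers under pullback is immediate from Definition \ref{DefinitionSixFunctorDtop} (universality of $*$- and $!$-descent, plus universality of canonical covers), not from Definition \ref{DefinitionStabilityE} or Theorem \ref{TheoSixFunctorsScholze}, which concern the class $\widetilde{E}$ of maps admitting $!$-functors rather than $\s{D}$-covers.
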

\begin{proof}
Being a $\s{D}^{\alg}_{dR^+,S}$-cover means that it is a canonical cover and of universal $*$ and $!$-descent for the six functor formalism $\s{D}^{\alg}_{dR^+,S}(X)= \Mod_{\sol}(X^{\alg}_{dR^+,S})$. The map $X\to Y$ is a canonical cover by definition, and by hypothesis $X_{dR^+,S}^{\alg} \to Y_{dR^+,S}^{\alg}$ is a $\s{D}$-cover, in particular of universal $*$ and $!$-descent. The lemma follows since the formation of the filtered de Rham stack commutes with finite limits (see Proposition \ref{PropAlgebraicDeRham} (c) and \cite[Remark 6.2.2.11]{HigherTopos}), and the fact that $X_{dR,S}^{\alg}$ and $X_{Hodge,S}^{\alg}$ are the fibers over $\bb{G}_m/ \bb{G}_m$ and $*/ \bb{G}_m$ respectively. 
\end{proof}

 Let $R= \bb{Z}((\pi))$, our next task is to show that morphisms locally of solid finite presentation of derived Tate adic spaces admit $!$-functors for the six functor formalism of algebraic filtered $D$-modules. Moreover, we prove an analogue existence result for  morphisms of locally finite presentation in Berkovich geometry and $\dagger$-geometry.   Finally, we show  that solid smooth (resp. \'etale) maps are sent to cohomologically smooth (resp. \'etale) maps under $\s{D}^{\alg}_{dR}$ in characteristic $0$.

 \begin{definition}
\begin{enumerate}
\item We let $R\langle T \rangle$ denote the Tate algebra of $R$ endowed with the induced analytic structure from $R$. Given $\n{A}$ a bounded affinoid ring over $R$, we let $\n{A} \langle T \rangle:= \n{A} \otimes_{R_{\sol}} R \langle T \rangle$. 

\item  We let $R \langle T\rangle^{\dagger}= \varinjlim_{\epsilon \to 0^+} R\langle \pi^{\epsilon} T \rangle$ denote the algebra of overconvergent functions of a closed unit disc. Given $\n{A}$ a bounded affinoid ring we let $\n{A} \langle T \rangle^{\dagger}:= \n{A}\otimes_{R_{\sol}} R\langle T \rangle^{\dagger}$.

\end{enumerate}
\end{definition}

Recall the notion of a coordinate theory of Definition \ref{DefinitionCoordinateTheory}. The natural maps $R[T] \to R\langle T \rangle$ and $R[T] \to R\langle T \rangle^{\dagger}$ define two coordinate theories over $R$, this allows us to talk about morphisms locally of finite presentation for $R\langle T \rangle$ and $R\langle T \rangle^{\dagger}$, see Definition \ref{DefinitionGeneralFinitePresentation}.

\begin{definition}
\label{DefinitionBerkovichDaggerSpaces}
A Berkovich adic space (resp. a $\dagger$-space) is a $R\langle T\rangle$-adic space (resp. a $R\langle T \rangle^{\dagger}$-adic space) as in Definition \ref{DefinitionGeneralFinitePresentation} (3).  A morphism of Berkovich adic spaces (resp. $\dagger$-adic spaces) is locally of finite presentation if it is locally of $R\langle T \rangle$-finite presentation (resp. of $R\langle T \rangle^{\dagger}$-finite presentation). 
\end{definition}

\begin{lemma}
\label{LemmaAlgdeRhamKeyCases}
Let $\bb{G}_{a,\star}$ denote the analytic spectrum of $\bb{Q}[T]$,   $\bb{Z}[T]_{\sol} \otimes \bb{Q}$, $R\langle T \rangle \otimes \bb{Q}$ or $R\langle T \rangle^{\dagger} \otimes \bb{Q}$.  Let $\widehat{\bb{G}}_a(-1) $ be the formal completion at $0$ of the  twisted line bundle $\s{O}(-1)$ over $\bb{A}^1/\bb{G}_m$, and consider the natural morphism of abelian group objects $ \widehat{\bb{G}}_{a}(-1) \to \bb{G}_{a,\star} \times \bb{A}^{1}/ \bb{G}_m$. The following hold: 

\begin{enumerate}

\item  $\bb{G}_{a,\star,dR^+}^{\alg} = ( \bb{G}_{a,\star}\times \bb{A}^1/ \bb{G}_m )/ \widehat{\bb{G}}_{a}(-1)$.  In particular the maps 
\[
f: *_{dR^+}^{\alg}= \bb{A}^1/\bb{G}_m \to \bb{G}_{a,\star,dR^+}^{\alg}
\]
and 
\[
g: \bb{G}_{a,\star,dR^+}^{\alg} \to *_{dR^+}
\]
admit $!$-functors. Furthermore, the following properties are satisfied
\begin{itemize}

\item[(a)] The pullback of $f$ to the algebraic de Rham stack is $(-1)$-truncated and  cohomologically \'etale. 

\item[(b)] If $\bb{G}_{a,\star}= \bb{G}_{a,\sol}= \AnSpec (\bb{Z}[T]_{\sol} \otimes \bb{Q})$,  then $g$ is cohomologically smooth. 

\item[(c)] If $\bb{G}_{a,\star}= \AnSpec \bb{Q}[T]$, $\AnSpec R\langle T\rangle\otimes \bb{Q}$ or $\AnSpec R\langle T\rangle^{\dagger} \otimes \bb{Q}$, then $g$ is cohomologically  co-smooth. 

\end{itemize}

\item  Let $n\geq 1$ and denote $X_n:= \AnSpec( \Sym^{\bullet}_{ \bb{Q}} ( \bb{Q}[n]))$. Then 
\[
X_{n,dR^+}^{\alg}= (\widehat{\bb{G}}_{a}(-1)^{\times n+1/\bb{G}_{a}})/ \widehat{\bb{G}}_{a}(-1),
\]
where $\widehat{\bb{G}}_{a}(-1)$ acts diagonally on the fiber product. In particular, the maps $X_{n,dR^+}^{\alg} \to *_{dR^+}^{\alg}$ and $*_{dR^+}^{\alg} \to X_{n,dR^+}^{\alg}$ admit $!$-functors.  Furthermore, their restriction to the de Rham stack is an equivalence.

\end{enumerate}
\end{lemma}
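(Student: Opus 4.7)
The plan is to verify each identification directly at the level of points, and then derive the cohomological assertions from the Cartier-duality results of Section \ref{SectionCartierDualityVectorBundles}. For the identification in part (1), since $\bb{G}_{a,\star} \to *$ is formally smooth in all four cases (being a free object on one generator for the relevant coordinate theory in the sense of Definition \ref{DefinitionCoordinateTheory}), Proposition \ref{PropAlgebraicDeRham}(2)(a) yields that the natural map $\bb{G}_{a,\star} \times \bb{A}^1/\bb{G}_m \to \bb{G}_{a,\star,dR^+}^{\alg}$ is an epimorphism. To identify the relative kernel, the key observation is that for any uniformly nilpotent $I \to \n{A}$ and any generalised Cartier divisor $t: \s{O}(-1) \to \n{A}$, the image of $t\otimes\mathrm{id}: I\otimes \s{O}(-1) \to \n{A}$ consists of uniformly nilpotent elements, and such elements automatically satisfy the solidity, boundedness or overconvergence conditions defining $\bb{G}_{a,\star}$. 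Consequently two sections of $\bb{G}_{a,\star}(\n{A})$ coincide in $\bb{G}_{a,\star}(\mathrm{cone}(I\otimes\s{O}(-1)\to\n{A}))$ precisely when their difference lies in this image, which identifies the quotient with the translation action of $\widehat{\bb{G}}_a(-1)$.

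For the cohomological assertions in part (1), both $f$ and $g$ admit $!$-functors since $\bb{G}_{a,\star}$ is of solid finite presentation over $R$ and taking the formal quotient by $\widehat{\bb{G}}_a(-1)$ preserves $!$-admissibility by Proposition \ref{PropAlgebraicCartier2}. For (a), restricting $f$ to the de Rham part (where $t$ becomes invertible) reduces it to the composite $* \xrightarrow{0} \bb{G}_{a,\star} \to \bb{G}_{a,\star}/\widehat{\bb{G}}_a$; its self-fibre product is the derived pullback $\{0\} \times_{\bb{G}_{a,\star}} \widehat{\bb{G}}_a$, which is $*$ in each case, giving $(-1)$-truncation. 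Cohomological \'etaleness then follows by smooth $\s{D}$-descent along the smooth cover $\bb{G}_{a,\star} \to \bb{G}_{a,\star}/\widehat{\bb{G}}_a$ (Proposition \ref{PropAlgebraicCartier2}(3)) from the fact that $\widehat{\bb{G}}_a \to \bb{G}_{a,\star}$ is cohomologically \'etale: for $\star=\sol$ this is an open immersion of locales (Proposition \ref{PropositionAffinoidCatLocale}), while the other three cases reduce to this one after base change. For (b), compose cohomological smoothness of $\bb{G}_{a,\sol}\to *$ (Lemma \ref{LemmaCohoSmoothAffineLine}) with that of $*/\widehat{\bb{G}}_a \to *$ (Proposition \ref{PropAlgebraicCartier2}(3)) via Lemma \ref{LemmaStabilitycoSmoothCompositionPullback}. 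For (c), combine weak cohomological properness of $\bb{G}_{a,\star} \to *$ in the three remaining cases with Proposition \ref{PropAlgebraicCartier2}(3) (which provides that $*/\widehat{\bb{G}}_a \to *$ is cohomologically co-smooth with proper dual $\bb{Q}_p[-2]$) to conclude that $g$ is cohomologically co-smooth.

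For part (2), the $\n{A}$-points of $X_n$ are given by $\Map_{\Mod_{\geq 0}(\bb{Q})}(\bb{Q}[n],\underline{\n{A}})$, whose set of connected components is $\pi_n(\underline{\n{A}}(*))$. Unwinding the long exact sequence of homotopy groups associated to the cofibre sequence $I\otimes\s{O}(-1) \to \n{A} \to \mathrm{cone}$, and bookkeeping contributions from $\pi_i$ for $0\leq i\leq n$, expresses $X_n(\mathrm{cone})$ as an $(n+1)$-step iterated fibre construction. Recognising this iteration as the $(n+1)$-fold fibre product $\widehat{\bb{G}}_a(-1)^{\times n+1/\bb{G}_a}$ together with the diagonal $\widehat{\bb{G}}_a(-1)$-action produces the claimed formula, and the existence of $!$-functors follows from this description together with part (1) and the stability of $!$-functors under finite limits (Proposition \ref{PropAlgebraicDeRham}(3)). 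Restricting to the de Rham stack, the twist on each copy of $\widehat{\bb{G}}_a(-1)$ trivialises and the iterated fibre product of the formal group collapses against the diagonal action, giving $X_{n,dR}^{\alg}=*$. The main obstacle will be the careful unwinding of the $(n+1)$-step computation in part (2), in particular interpreting $\widehat{\bb{G}}_a(-1)^{\times n+1/\bb{G}_a}$ as a filtered object over $\bb{A}^1/\bb{G}_m$ and matching the diagonal $\widehat{\bb{G}}_a(-1)$-action with the connecting morphisms in the long exact sequence.
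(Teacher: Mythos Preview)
Your treatment of part (1) is essentially correct and follows the same strategy as the paper: identify $\bb{G}_{a,\star,dR^+}^{\alg}$ as a quotient by $\widehat{\bb{G}}_a(-1)$, then factor $g$ through $(\bb{A}^1/\bb{G}_m)/\widehat{\bb{G}}_a(-1)$ and invoke Proposition~\ref{PropAlgebraicCartier2}. The paper phrases the identification slightly differently, via a cartesian square comparing $\bb{G}_{a,\star}$ with the untwisted $\bb{G}_a$ (using that $\bb{G}_{a,\star}\to\bb{G}_a$ is formally \'etale), but your direct argument that nilpotent images automatically satisfy the relevant solidity or boundedness conditions works equally well. For (a) the paper simply observes that $f$ restricted to the de Rham stack is the map $\widehat{\bb{G}}_a/\widehat{\bb{G}}_a \to \bb{G}_{a,\star}/\widehat{\bb{G}}_a$, which is an open immersion of locales; this is marginally cleaner than your descent argument, though both are valid. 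One small slip: you write $\bb{Q}_p[-2]$ but we are working over $\bb{Q}$ here.

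For part (2), however, you are making your life much harder than necessary, and the route you outline is not carried out. The paper's argument is a one-line induction: since $\Sym^{\bullet}_{\bb{Q}}(\bb{Q}[n+1]) = \bb{Q}\otimes_{\Sym^{\bullet}_{\bb{Q}}(\bb{Q}[n])}\bb{Q}$, one has $X_{n+1} = *\times_{X_n}*$, and the filtered de Rham functor commutes with finite limits (Proposition~\ref{PropAlgebraicDeRham}(3)), so $X_{n+1,dR^+}^{\alg} = *_{dR^+}^{\alg}\times_{X_{n,dR^+}^{\alg}}*_{dR^+}^{\alg}$. The formula then follows immediately from part (1) by induction on $n$, and the equivalence after restriction to the de Rham stack is likewise inductive from (1)(a). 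Your proposed approach via the long exact sequence of homotopy groups and an ``$(n+1)$-step iterated fibre construction'' is not wrong in spirit, but it requires the careful bookkeeping you yourself flag as the main obstacle, and it is entirely avoided by the inductive argument. Also note that your citation of Proposition~\ref{PropAlgebraicDeRham}(3) for ``stability of $!$-functors under finite limits'' is mistaken; that proposition concerns commutation of the de Rham functor with limits, and the existence of $!$-functors in (2) follows instead from the explicit description together with stability under base change and composition.
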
 
\begin{proof}
\begin{enumerate}
\item By definition, $\bb{G}_{a,dR^+}^{\alg}$ represents the functor 
\[
(\s{O}(-1) \to \n{A})\mapsto \mathrm{cone}( \Nil(\n{A})\otimes \s{O}(-1)  \to \n{A})(*),
\]
this implies that $\bb{G}_{a,dR^+}^{\alg} = (\bb{G}_{a}\times \bb{A}^1/ \bb{G}_m)/ \widehat{\bb{G}}_a(-1)$. On the other hand, for any other group $\bb{G}_{a,\star}$, the map $\bb{G}_{a,\star} \to \bb{G}_{a}$ (with $R$-extension of scalars if necessary) is formally \'etale. Since $\bb{G}_{a}$ and $\bb{G}_{a,\star}$ are formally smooth, by Proposition \ref{PropAlgebraicDeRham} (2) we have a cartesian diagram 
\[
\begin{tikzcd}
\bb{G}_{a,\star}\times \bb{A}^1/ \bb{G}_m  \ar[d] \ar[r] & \bb{G}_a \times \bb{A}^1 / \bb{G}_m \ar[d]  \\ 
\bb{G}_{a,\star,dR^+}^{\alg} \ar[r] &   \bb{G}_{a,dR^+}^{\alg}
\end{tikzcd}
\]
where the vertical arrows are epimorphisms. One deduces that $\bb{G}_{a,\star,dR^+}=(\bb{G}_{a,\star}\times \bb{A}^1/ \bb{G}_m) / \widehat{\bb{G}}_a(-1)$ as wanted.

\begin{itemize}
\item[(a)] The map $f:*_{dR^+}^{\alg}\to \bb{G}_{a,\star,dR^+}^{\alg}$ is equivalent to the map 
\[
\widehat{\bb{G}}_a(-1)/ \widehat{\bb{G}}_{a}(-1) \to (\bb{G}_{a,\star}\times \bb{A}^1/ \bb{G}_m)/\widehat{\bb{G}}_{a}(-1).
\] 
since $\widehat{\bb{G}}_a(-1) \to \bb{G}_{a,\star} \times \bb{A}^1/ \bb{G}_m$ has $!$-functors, one deduces that $f$ does so. The restriction to the de Rham stack yields the functor 
\[
\widehat{\bb{G}}_a/ \widehat{\bb{G}}_a \to  \bb{G}_{a,\star}/ \widehat{\bb{G}}_{a},
\]
 which defines an open immersion of locales, so it is  cohomologically \'etale. 

 \item[(b) and (c)] The map $\bb{G}_{a,\star,dR^+}^{\alg} \to *_{dR^+}^{\alg}$ factors as the composite 
\[
(\bb{G}_{a,\star} \times \bb{A}^1/ \bb{G}_m)/ \widehat{\bb{G}}_a(-1) \to (\bb{A}^1/\bb{G}_m)/ \widehat{\bb{G}}_{a}(-1) \to \bb{A}^1/ \bb{G}_m.
\]
 When $\bb{G}_{a,\star}= \bb{G}_{a,\sol}$, the first map is   smooth by Lemma \ref{LemmaCohoSmoothAffineLine} and the second is smooth by Proposition \ref{PropAlgebraicCartier2}, then so is the composition.   If $\bb{G}_{a,\star}$ is any other coordinate, the first map is weakly cohomologically proper being a quotient of a map of analytic rings with the induced analytic structure, and the second is co-smooth by Proposition \ref{PropAlgebraicCartier2} again, then so is the composition. 

\end{itemize}

\item  Since the filtered algebraic  de Rham functor commutes with finite limits, and   $\Sym^{\bullet}_{\bb{Q}}(\bb{Q}[n+1])= \bb{Q} \otimes_{\Sym^{\bullet}_{\bb{Q}}( \bb{Q}[n])} \bb{Q}$ for $n \geq 0$, we have the desired description of $X_{n,dR^+}^{\alg}$. It is also clear that the maps $X_{dR^+}^{\alg} \to *_{dR^+}^{\alg}$ and $*_{dR^+}^{\alg} \to X_{dR^+}^{\alg}$ admit $!$-functors from this description as stacks. 

\end{enumerate}
\end{proof}

\begin{lemma}
\label{LemmaDescentSmoothAlgdeRhamStack}
Let $f:X= \AnSpec \n{A} \to Y= \AnSpec \n{B}$ be a morphism of bounded affinoid rings over $(R\otimes \bb{Q})^{\an}$. 

\begin{enumerate}

\item  If $f$ is  standard solid smooth (resp. \'etale)  then $X_{dR^+}^{\alg} \to Y_{dR^{+}}^{\alg}$ is cohomologically smooth (resp.\'etale).

\item Suppose that  $X= \bigsqcup_{i=1}^d X_i$ with $X_i \to Y$  standard solid smooth. If $f$ is a smooth $\s{D}$-cover then $X_{dR^+}^{\alg} \to Y_{dR^+}^{\alg}$ is a smooth $\s{D}$-cover. In particular, solid rational covers of $Y$ give rise $\s{D}$-covers of $Y_{dR^+}^{\alg}$. 

\item Suppose that $f:X \to Y$ is a rational cover for the coordinate theories $R\langle T \rangle$ and $R\langle T \rangle^{\dagger}$. Then $X_{dR^+}^{\alg} \to Y_{dR^+}^{\alg}$ is a descendable $\s{D}$-cover. 
\end{enumerate}

\end{lemma}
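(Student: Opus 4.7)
The plan is to treat the three parts in sequence, exploiting the factorization structure of standard solid smooth maps together with Lemma \ref{LemmaAlgdeRhamKeyCases}.

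For part (1), I would factor a standard solid smooth $f:X\to Y$ locally in the analytic topology as $Y \to Y\langle T_1,\ldots, T_s\rangle_{\sol} \to X$, with the second arrow standard solid \'etale; this factorization is available by Corollary \ref{CoroFormallySmoothStuff}. For the projection $Y\langle T_1,\ldots, T_s\rangle_{\sol} \to Y$, cohomological smoothness on the filtered de Rham stack follows by base changing from $\bb{Z}_{\sol}\to \bb{Z}[T]_{\sol}\otimes \bb{Q}$ and iterating Lemma \ref{LemmaAlgdeRhamKeyCases}(1)(b). For a standard solid \'etale map, Proposition \ref{PropFormallyInftSmoothEtale} provides $\dagger$-formal \'etaleness; since uniformly nilpotent ideals are contained in $\Nil^{\dagger}$, the map is formally \'etale with respect to uniformly nilpotent ideals. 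Proposition \ref{PropAlgebraicDeRham}(1) then yields an equivalence $X\times \bb{A}^1/\bb{G}_m \xrightarrow{\sim} X^{\alg}_{dR^+,Y}$, and unwinding the defining pullback square shows that the base change of $X^{\alg}_{dR^+}\to Y^{\alg}_{dR^+}$ along the tautological map $Y\times \bb{A}^1/\bb{G}_m \to Y^{\alg}_{dR^+}$ is precisely the cohomologically \'etale map $f\times \id$, using Proposition \ref{PropGeoSmoothIsSmooth}. Descending the \'etaleness property through this tautological map via Proposition \ref{PropLocalTargetBaseChange}(2) gives the claim.

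For part (2), granted (1), each component $X^{\alg}_{i,dR^+}\to Y^{\alg}_{dR^+}$ is cohomologically smooth. To upgrade the disjoint union to a smooth $\s{D}$-cover, I would verify the hypotheses of Proposition \ref{PropositionSmoothDescent}: conservativity of the pullback follows from the hypothesis that $f:X\to Y$ is a smooth $\s{D}$-cover for quasi-coherent sheaves together with Lemma \ref{LemmaTransmuteDcovers}, applicable because the filtered de Rham prestack commutes with colimits and finite limits by Proposition \ref{PropAlgebraicDeRham}(3). The final assertion about solid rational covers is immediate since these are finite disjoint unions of standard solid \'etale localizations.

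Part (3) requires a genuinely different strategy, because rational covers in the coordinate theories $R\langle T\rangle$ and $R\langle T\rangle^{\dagger}$ are not standard solid smooth. The key input is Lemma \ref{LemmaAlgdeRhamKeyCases}(1)(c), which asserts cohomological co-smoothness of $\bb{G}_{a,\star,dR^+}^{\alg}\to *_{dR^+}^{\alg}$ for $\bb{G}_{a,\star}=\AnSpec R\langle T\rangle \otimes \bb{Q}$ or $\AnSpec R\langle T\rangle^{\dagger}\otimes \bb{Q}$. By base change, any such rational localization becomes co-smooth after de Rham. To promote co-smoothness to a descendable $\s{D}$-cover, I would apply Proposition \ref{PropositionProperDescent}: it suffices to check that $f_* 1_{X^{\alg}_{dR^+}}$ is descendable over $Y^{\alg}_{dR^+}$. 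Reducing via base change to the universal two-piece rational cover of $\AnSpec R\langle T\rangle$ (resp.\ its dagger analogue), the explicit presentation $\bb{G}_{a,\star,dR^+}^{\alg}=(\bb{G}_{a,\star}\times \bb{A}^1/\bb{G}_m)/\widehat{\bb{G}}_a(-1)$ from Lemma \ref{LemmaAlgdeRhamKeyCases}(1) reduces descendability to a concrete computation with the Koszul-type resolutions from Section \ref{SectionCartierDualityVectorBundles}, combined with the classical fact that $\{|T|\leq 1\}, \{|T|\geq 1\}$ cover $\bb{P}^1$.

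The main obstacle, in my estimation, is the descent step in part (1) for standard solid \'etale maps: verifying that $Y\times \bb{A}^1/\bb{G}_m \to Y^{\alg}_{dR^+}$ satisfies universal $*$-descent with respect to the $\s{D}^{\alg}_{dR^+}$-topology is not automatic, and may require either the alternative model of the de Rham stack from Remark \ref{RemarkAlternativedeRham} together with its natural epimorphism from $Y$, or a careful direct inspection that works on points. A parallel, though arguably more tractable, obstacle arises in (3) when extracting descendability (as opposed to mere co-smoothness) from the explicit quotient presentation.
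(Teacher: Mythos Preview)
Your identification of the main obstacle—whether $Y\times \bb{A}^1/\bb{G}_m \to Y^{\alg}_{dR^+}$ is of universal $*$-descent—is exactly right, and it is the crux of all three parts. The paper resolves it uniformly by a reduction you did not find: writing $\pi_0(\n{B})$ as a filtered colimit of quotients of rings of the form $\n{B}' = R\langle \underline{X}\rangle_{\sol}\langle \bb{N}[K]\rangle$ (with $\underline{X}$ finite and $K$ profinite), one lifts the standard solid smooth presentation of $\n{A}$ to $\n{B}'$ and obtains a Cartesian square
\[
\begin{tikzcd}
X \ar[r] \ar[d] & X' \ar[d] \\
Y \ar[r] & Y'
\end{tikzcd}
\]
with $X'\to Y'$ standard solid smooth and $\bb{L}_{Y'}$ a (compact) projective $\n{B}'$-module. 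For such $Y'$, deformation theory makes $Y'\times \bb{A}^1/\bb{G}_m \to Y^{',\alg}_{dR^+}$ surjective, hence of universal $*$-descent, and the claim for $X'\to Y'$ follows exactly along your outline. Since the filtered de Rham stack commutes with finite limits (Proposition \ref{PropAlgebraicDeRham}(3)), base-changing back to $Y$ gives (1).

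Your argument for (2) has a gap: Lemma \ref{LemmaTransmuteDcovers} runs in the wrong direction—it deduces that $X\to Y$ is a $\s{D}^{\alg}_{dR^+}$-cover \emph{from} the hypothesis that $X^{\alg}_{dR^+}\to Y^{\alg}_{dR^+}$ is a $\s{D}$-cover, not conversely. The paper instead argues surjectivity of $X^{\alg}_{dR^+}\to Y^{\alg}_{dR^+}$ directly: once $Y\times \bb{A}^1/\bb{G}_m \to Y^{\alg}_{dR^+}$ is surjective (via the same Cartesian-square reduction, with an extra step passing to the image $Y'\subset Y''$ to force $X'\to Y'$ to be surjective), the factorization $X\times \bb{A}^1/\bb{G}_m \to X^{\alg}_{dR^+,Y}\to Y\times \bb{A}^1/\bb{G}_m$ together with the hypothesis that $X\to Y$ is a smooth $\s{D}$-cover gives the required surjectivity, and Proposition \ref{PropositionSmoothDescent} applies.

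Your approach to (3) via co-smoothness is both more convoluted than needed and has a gap: Lemma \ref{LemmaAlgdeRhamKeyCases}(1)(c) concerns the map $\bb{G}_{a,\star,dR^+}^{\alg}\to *^{\alg}_{dR^+}$, but a rational localization $X_i\to Y$ is not a base change of that map, so your ``by base change'' step does not go through. The paper's argument is locale-theoretic and much shorter: each $X_i\to Y$ is formally \'etale (Example \ref{ExampleCotangentComplexes}(4)), so once $Y\to Y^{\alg}_{dR^+}$ is surjective one has a Cartesian square exhibiting $X^{\alg}_{i,dR^+}\to Y^{\alg}_{dR^+}$ as defined by an idempotent algebra, i.e.\ a closed subspace of the locale of $Y^{\alg}_{dR^+}$. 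Since the $X_i$ cover $Y$ as closed subspaces of its locale, the same holds for the $X^{\alg}_{i,dR^+}$ in $Y^{\alg}_{dR^+}$, and finiteness of the cover yields descendability. The general case again reduces via the Cartesian-square trick above.
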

\begin{proof} 
\begin{enumerate}

\item We first assume that $\bb{L}_{Y}$ is a projective $\n{B}$-module. By deformation theory the map $Y\times \bb{A}^1/\bb{G}_m \to Y_{dR^+}^{\alg}$ is surjective, in particular of universal $*$-descent. Then, to show that $X_{dR^+}^{\alg} \to Y_{dR^+}^{\alg}$ is cohomologically smooth it suffices to show that $X_{dR^+,Y}^{\alg} \to Y$ is cohomologically smooth. We can factor $X \to Y \times \bb{G}_{a,\sol}^n \xrightarrow{\pr_Y} Y$ with the first map being standard solid \'etale. By Lemma \ref{LemmaAlgdeRhamKeyCases} the projection $(Y \times \bb{G}_{a,\sol}^n)_{dR^+,Y}^{\alg} \to Y\times \bb{A}^1/ \bb{G}_m$ is cohomologically smooth, so we only need to show that the arrow  $X_{dR^+,Y}^{\alg} \to Y \times \bb{G}_{a,\sol,dR^+}^n$ is cohomologically \'etale. But again, the cotangent complex of $Y \times \bb{G}_{a,\sol}^n$ is a projective $\n{B}\langle T_1,\ldots, T_n \rangle_{\sol}$-module, and by the same argument it suffices to show that if $X \to Y$ is standard solid \'etale then $X_{dR^+,Y}^{\alg} \to Y\times \bb{A}^1/ \bb{G}_m$ is cohomolgically \'etale. By Proposition \ref{PropAlgebraicDeRham} (1) we have that $X_{dR^+,Y}^{\alg} = X \times \bb{A}^1/ \bb{G}_m$ proving   what we wanted. Now we show the general case, since the formation of the filtered de Rham stack commutes with finite limits, it suffices to construct a Cartesian diagram 
\begin{equation}
\label{eqCartesianSmoothdeRhamalg}
\begin{tikzcd}
X  \ar[d] \ar[r]& X'\ar[d] \\ 
Y \ar[r] & Y'
\end{tikzcd}
\end{equation}
where $X'=\AnSpec\n{A}' \to Y'=\AnSpec \n{B}'$ is a standard solid smooth (resp. \'etale) morphism of bounded affinoid rings (not necessarily over $\bb{Q}$!), and $\bb{L}_{Y'}$ a projective $\n{B}'$-module. Since $\n{B}$ is bounded affinoid, we can write $\pi_{0}(\n{B})$ as a filtered colimit of quotients of rings of the form $R\langle \underline{X} \rangle_{\sol}\langle \bb{N}[K] $ where $\underline{X}$ is a finite set of variables and $K$ is a profinite set. Writing $\n{A}=\n{B} \langle T_1,\ldots, T_d  \rangle_{\sol}/^{\bb{L}}(f_1,\ldots, f_c)$ as a standard solid smooth map, we can find a map $\n{B}':= R\langle \underline{X} \rangle_{\sol}\langle \bb{N}[K] \rangle \to \n{A}$ and a lift $f_i'$ of $f_i$ to  $\n{B}'\langle T_1,\ldots, T_d \rangle_{\sol}$ such that $\n{A}'= \n{B}'\langle T_1,\ldots, T_d \rangle_{\sol}/^{\bb{L}}(f'_1,\ldots, f'_c)$ is standard solid smooth.   Taking $Y'=\AnSpec \n{B}'$ and $X'=\AnSpec  \n{A}' $ we get the desired cartesian diagram, namely, the cotangent complex of $\n{B}'$ is isomorphic to $\bigoplus_{i=1}^k\n{B}'[K] $ with $\underline{X}=(X_1,\ldots, X_k)$.

\item By Proposition \ref{PropositionSmoothDescent}, it suffices to show that the pullback along the map $X_{dR^+}^{\alg} \to Y_{dR^+}^{\alg}$ is conservative, for this it suffices to see that it is a surjection as $\s{D}$-stacks. Suppose first that $Y\times \bb{A}^1/ \bb{G}_m \to Y_{dR^+}^{\alg}$ is surjective as $\s{D}$-stacks, in particular of universal $*$-descent. Then, it suffices to show that the map $X_{dR^+,Y}^{\alg} \to Y \times \bb{A}^1/ \bb{G}_m$ is surjective, but we have a factorization  $X\times \bb{A}^1/\bb{G}_m \to  X_{dR^+,Y}^{\alg} \to Y \times \bb{A}^1/ \bb{G}_m$.  Since $X \to Y$ is a smooth $\s{D}$-cover we deduce that $X_{dR^+,Y}^{\alg} \to Y \times \bb{A}^1/ \bb{G}_m$ is surjective as wanted.  In general, consider the topology $\s{T}$ on $\Aff^{b}_{R\otimes \bb{Q}}$ with covers given by solid smooth morphisms that are  $\s{D}$-covers. We proved that the formation of the  filtered de Rham stack satisfies $\s{T}$-descent  for covers $U \to W$ such that $W \to W_{dR^+}^{\alg}$ is surjective. To show descent for a general morphism it suffices to construct a Cartesian diagram as in \eqref{eqCartesianSmoothdeRhamalg} such that $X' \to Y'$ and $Y'\times \bb{A}^1/ \bb{G}_m \to Y_{dR^+}^{',\alg}$ are surjective as morphisms of $\s{D}$-stacks. By part (1) we can find a cartesian diagram 
\begin{equation}
\label{eqCartesian2deRhamRational}
\begin{tikzcd}
X \ar[r] \ar[d] & X'\ar[d] \\ 
Y \ar[r]& Y''
\end{tikzcd}
\end{equation}
of bounded affinoid rings where $\bb{L}_{Y''}$ is a projective module in $\Mod_{\sol}(Y'')$ (even compact projective), in particular $Y'' \to Y_{dR^+}^{'',\alg}$ is surjective. However, $X' \to Y''$ might not be a surjection. Let $Y' \subset Y''$ be the full $\s{T}$-substack consisting in the essential image of $X'$ in $Y''$, concretely, it is the geometric realization of the simplicial $\s{T}$-stack $(X^{' n+1/Y''})_{[n]\in \Delta^{\op}}$.  Then, since $X \to Y$ is a $\s{T}$-cover, we still have a Cartesian diagram 
\[
\begin{tikzcd}
X \ar[r] \ar[d] & X' \ar[d] \\ 
Y \ar[r] & Y'
\end{tikzcd}.
\]
On the other hand, since $Y' \subset Y''$ is a full $\s{T}$-substack, the morphism $Y'\times \bb{A}^1/ \bb{G}_m \to Y_{dR^+}^{',\alg}$ is surjective as $\s{D}$-stacks, namely, it is the geometric realization of the surjective morphism of simplicial $\s{D}$-stacks 
\[
(X^{'n+1/Y''}\times \bb{A}^1/ \bb{G}_m)_{[n]\in \Delta^{\op}} \to (X^{'n+1/Y'',\alg}_{dR^+})_{[n]\in \Delta^{\op}}.
\] 
The lemma follows. 

\item For part (3), denote $R(T)$ for $R\langle T \rangle$ or $R\langle T \rangle^{\dagger}$, and  let us write $f:X= \bigsqcup X_i \to Y$ with $X_i \to Y$ a  $R(T)$-rational localization. In particular, each map $X_i \to Y$ is defined by an idempotent $\s{O}_Y$-algebra and defines a closed subset of the locale of $Y$.  Suppose first that $Y\to Y_{dR^+}^{\alg}$ is surjective, then since $X_i \to Y$ is formally \'etale (cf. Example \ref{ExampleCotangentComplexes}), we have a cartesian square
\[
\begin{tikzcd}
 X_i\times \bb{A}^1/ \bb{G}_m \ar[r] \ar[d] & Y \times \bb{A}^1/ \bb{G}_m  \ar[d] \\ 
 X_{i,dR^+}^{\alg}  \ar[r] & Y_{dR^+}^{\alg} 
\end{tikzcd}
\]
where the vertical arrows are epimorphisms. Then, the inclusion $X_{i,dR^+}^{\alg} \to Y_{dR^+}^{\alg}$ is defined by an idempotent algebra in $\Mod_{\sol}(Y_{dR^+}^{\alg})$, and defines a closed subspace of the locale of $Y_{dR^+}^{\alg}$. Since $X \to Y$ is a cover, we have $\bigcup_i X_i = Y$ as closed subspaces of the locale, which implies that $\bigcup_{i} X_{i,dR^+}^{\alg} = Y_{dR^+}^{\alg}$. Since there are only finitely many $i$'s, one deduces that the map $X_{dR^+}^{\alg} \to Y_{dR^+}^{\alg}$ is descendable, and a $\s{D}$-cover by Proposition \ref{PropositionProperDescent}. 

Let us now deal with the general case. As in part (1), we can assume that there is a bounded affinoid space $Y''= \AnSpec \n{B}'$ such that $Y'' \times \bb{A}^1/ \bb{G}_m \to Y_{dR^+}^{'',\alg}$ is surjective,  a  morphism $X'= \bigsqcup_i X_i' \to Y''$ consisting in finitely many $R(T)$-rational localizations (that might not cover $Y''$),  and a cartesian square as in \eqref{eqCartesian2deRhamRational}. Then, taking $Y'\subset Y$ to be the union of the $X_i'$ in $Y''$, we have a Cartesian diagram as in \eqref{eqCartesianSmoothdeRhamalg} where $X' \to Y'$  and  a descendable  morphism. By the first case treated we get a descendable morphism $X_{dR^+}^{',\alg} \to Y_{dR^+}^{',\alg}$ whose pullback to $Y_{dR^+}^{\alg}$ gives rise a descendable morphism $X_{dR^+}^{\alg} \to Y_{dR^+}^{\alg}$, in particular a $\s{D}$-cover as wanted. 
\end{enumerate}
\end{proof}

\begin{theorem}[Six functors for algebraic $D$-modules]
\label{TheoSixFunctorsAlgebraicDMod}
Let us write $R(T)$ for $R\langle T\rangle_{\sol}$, $R\langle T \rangle$ and $R\langle T \rangle^{\dagger}$.  Let $f: X \to Y$ be a morphism of derived Tate adic spaces over $R\otimes \bb{Q}$ locally of $R(T)$-finite presentation, and let $f_{dR^+}^{\alg}: X_{dR^+}^{\alg} \to Y_{dR^+}^{\alg}$ be the associated morphism of algebraic filtered de Rham stacks. Then $f_{dR^+}^{\alg}$ admits $!$-functors. Furthermore, if $f$ is solid smooth (resp. solid \'etale), then $f_{dR^+}^{\alg}$ is cohomologically smooth (resp. cohomologically \'etale).   Moreover, the formation of the filtered de Rham stack satisfies descent for solid smooth covers, namely, solid smooth maps $f$ such that $f^*$ is conservative (cf. Proposition \ref{PropositionSmoothDescent}).  
\end{theorem}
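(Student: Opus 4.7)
\emph{Proof plan.} The strategy is to combine the d\'evissage criterion Proposition~\ref{PropositionDevisageSixFiniteType} with the explicit key cases computed in Lemma~\ref{LemmaAlgdeRhamKeyCases} and the descent results of Lemma~\ref{LemmaDescentSmoothAlgdeRhamStack}. First I would observe that the construction $X \mapsto X_{dR^+}^{\alg}$ defines, for each base $\n{A}$ and coordinate theory $R(T)$, a functor
\[
F: \PSh(\Aff_{\n{A}}) \longrightarrow \Sh_{\s{D}}(\Aff_{\bb{Q}})_{/(\AnSpec \n{A})_{dR^+}^{\alg}}
\]
which preserves final objects and cartesian squares by Proposition~\ref{PropAlgebraicDeRham}~(3), and preserves coproducts by the same proposition. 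To apply Proposition~\ref{PropositionDevisageSixFiniteType} it then remains to verify: (a) $F$ sends $R(T)$-analytic covers to $\s{D}$-covers, which is exactly Lemma~\ref{LemmaDescentSmoothAlgdeRhamStack}~(2) in the solid case and Lemma~\ref{LemmaDescentSmoothAlgdeRhamStack}~(3) in the Berkovich and $\dagger$ cases; (b) $F$ sends $\AnSpec \n{A} \to \AnSpec \n{A}(T)$ and $\AnSpec \n{A}(T) \to \AnSpec \n{A}$ to morphisms admitting $!$-functors, obtained from Lemma~\ref{LemmaAlgdeRhamKeyCases}~(1) after base change; and (c) for each $n\geq 1$, $F$ sends $\AnSpec \n{A} \to \AnSpec (\Sym^{\bullet}_{\n{A}}\n{A}[n])$ to a morphism with $!$-functors, which is the content of Lemma~\ref{LemmaAlgdeRhamKeyCases}~(2). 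This yields part (1) of the theorem: any map locally of $R(T)$-finite presentation admits $!$-functors after applying $(-)_{dR^+}^{\alg}$.

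For the cohomological smoothness statement, I would argue that since the question is local on the analytic topology of $X$ and $Y$, one may apply Theorem~\ref{TheoFormalSmoothnesvsSmoothness} (respectively Corollary~\ref{CoroEtaleMapsStacks}) to reduce to a standard solid smooth (respectively standard solid \'etale) morphism of bounded affinoid rings. The locality in $Y$ is legitimate because rational localizations of $Y$ induce $\s{D}$-covers on $Y_{dR^+}^{\alg}$ by Lemma~\ref{LemmaDescentSmoothAlgdeRhamStack}~(2)--(3). The locality in $X$ is legitimate because cohomological smoothness (respectively \'etaleness) is smooth-$\s{D}$-local on the source by Corollary~\ref{CorollaryDescentSmoothProperCovers}, combined again with Lemma~\ref{LemmaDescentSmoothAlgdeRhamStack}~(2). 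Once reduced to the standard case, the result is precisely Lemma~\ref{LemmaDescentSmoothAlgdeRhamStack}~(1).

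Finally, for the descent statement along solid smooth $\s{D}$-covers of derived Tate adic spaces, I would invoke Lemma~\ref{LemmaTransmuteDcovers}: such a cover $f:X\to Y$ is canonical, and by Lemma~\ref{LemmaDescentSmoothAlgdeRhamStack}~(2) the induced morphism $X_{dR^+}^{\alg}\to Y_{dR^+}^{\alg}$ is a smooth $\s{D}$-cover, hence of universal $*$- and $!$-descent. Together with the commutation of $(-)_{dR^+}^{\alg}$ with finite limits, this transmutes the cover into one for the filtered, unfiltered and Hodge six functor formalisms.

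The main delicate point is item~(a) above, namely checking that $F$ sends $R(T)$-analytic covers to $\s{D}$-covers when the cotangent complex of $Y$ is not projective, so that $Y\times \bb{A}^1/\bb{G}_m \to Y_{dR^+}^{\alg}$ need not be surjective on the nose; the proof of Lemma~\ref{LemmaDescentSmoothAlgdeRhamStack} handles this by a lifting argument to a larger affinoid $Y''$ (with projective cotangent complex) carved out via the substack $Y' \subset Y''$ generated by the image of a lift of $X$, and I would carefully reuse this device in each of the three coordinate theories $R\langle T\rangle_{\sol}$, $R\langle T\rangle$, $R\langle T\rangle^{\dagger}$, taking care that in the Berkovich and $\dagger$ cases the analytic ring structure on the target is the induced one and so the relevant descendability of Lemma~\ref{LemmaDescentSmoothAlgdeRhamStack}~(3) applies.
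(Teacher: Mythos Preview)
Your proposal is correct and follows essentially the same approach as the paper: combine Proposition~\ref{PropositionDevisageSixFiniteType} with the key cases of Lemma~\ref{LemmaAlgdeRhamKeyCases} and the descent statements of Lemma~\ref{LemmaDescentSmoothAlgdeRhamStack}, then reduce the cohomological smoothness/\'etaleness claim to the standard case via Lemma~\ref{LemmaDescentSmoothAlgdeRhamStack}~(1) and the locality properties, and finally obtain descent for solid smooth covers from Lemma~\ref{LemmaDescentSmoothAlgdeRhamStack}~(2). Your last paragraph on the ``delicate point'' is unnecessary here: the lifting device to an affinoid $Y''$ with projective cotangent complex is already absorbed into the proof of Lemma~\ref{LemmaDescentSmoothAlgdeRhamStack} and need not be repeated when invoking it.
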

\begin{proof}
By Lemma \ref{LemmaDescentSmoothAlgdeRhamStack} (2) and (3), the formation of the de Rham stack satisfies analytic  $R(T)$-descent for $R(T)= R\langle T \rangle_{\sol}$, $R\langle T \rangle$ and $R\langle T \rangle^{\dagger}$. Then the existence of $!$-functors for morphisms locally of  $R(T)$-finite presentation follows from Lemma \ref{LemmaAlgdeRhamKeyCases} and Proposition \ref{PropositionDevisageSixFiniteType}. The fact that  solid smooth maps, resp. \'etale maps, resp. analytic open subspaces are sent to cohomologically smooth maps, resp. cohomologically \'etale maps, resp. open immersions of locales, follows from Lemma \ref{LemmaDescentSmoothAlgdeRhamStack} (1) and the fact that the formation of the filtered de Rham stack commutes with finite limits.  Finally, descent for solid smooth covers follows from Lemma \ref{LemmaDescentSmoothAlgdeRhamStack} (2).  
\end{proof}

\subsubsection{Hodge filtration of the de Rham cohomology}

We finish this section with the construction of the Hodge filtration of the compactly supported de Rham cohomology  of  a solid smooth morphism of derived Tate adic spaces over $R\otimes \bb{Q}$, this discussion follows closely \cite[\S 2]{BhattGauges}. 

\begin{theorem}
\label{TheoHodgeFiltration}
Let $f:X \to Y$ be a solid smooth morphism of derived Tate adic spaces over $R\otimes \bb{Q}$ of relative dimension $d$, and let $f_{dR^+}: X_{dR^+}^{\alg} \to Y_{dR^+}^{\alg}$ be the associated map of filtered de Rham stacks. Then the compactly supported de Rham cohomology 
\[
DR_{c}(X/Y):= f_{dR^+,!}^{alg} f_{dR^+}^! 1_{Y_{dR^+}^{\alg}}
\]
is complete with respect to its natural filtration. Moreover, we have a Hodge filtration 
\[
\gr^{-i}(DR_{c}(X/Y))\cong f_{!}\Omega^{i,\vee}_{X/Y}(-i)[i].
\]
when pullbacked to an object in $Y$.

\end{theorem}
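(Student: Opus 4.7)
The plan is to split the proof into two independent pieces: (i) identifying the associated graded object via base change to the Hodge stratum, and (ii) establishing completeness of the resulting filtration. Since the filtered de Rham stack $Y_{dR^+}^{\alg}$ lives naturally over $\bb{A}^1/\bb{G}_m$, an object $M\in \Mod_{\sol}(Y_{dR^+}^{\alg})$ comes equipped with a filtration whose graded pieces are recovered by $!$-pullback along the closed immersion $*/\bb{G}_m\hookrightarrow \bb{A}^1/\bb{G}_m$. Applying this to $DR_c(X/Y)$ and invoking base change for the six-functor formalism of Theorem~\ref{TheoSixFunctorsAlgebraicDMod}, the associated graded identifies with $f_{Hodge,!}\, f_{Hodge}^{!}\, 1_{Y_{Hodge}^{\alg}}$, where $f_{Hodge}\colon X^{\alg}_{Hodge,Y}\to Y\times */\bb{G}_m$ is the Hodge incarnation of $f_{dR^+}^{\alg}$.

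The key geometric input is Proposition~\ref{PropAlgebraicDeRham}(2.b): since $f$ is solid smooth, one has the explicit description $X^{\alg}_{Hodge,Y}=(X\times */\bb{G}_m)/\widehat{\n{T}_{X/Y}}(-1)$. This factors $f_{Hodge}$ as $X^{\alg}_{Hodge,Y}\xrightarrow{h} X\times */\bb{G}_m \xrightarrow{\bar f} Y\times */\bb{G}_m$, where $\bar f=f\times \id$ is solid smooth of relative dimension $d$ (so Theorem~\ref{TheoSerreDuality} gives $\bar f^{!}1=\Omega^{d}_{X/Y}[d]$) and $h$ is the structure map of the classifying stack of the formal group $\widehat{\bb{V}(\s{F})}$ for the twisted vector bundle $\s{F}=\bb{L}_{X/Y}^{\vee}(-1)$. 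Because we work in characteristic zero, algebraic Cartier duality (Proposition~\ref{PropAlgebraicCartier2}(3)) applies to $h$ and supplies two crucial facts: $h$ is cohomologically smooth with $h^{!}1=h^{*}\bigwedge^{d}\bb{L}_{X/Y}^{\vee}(-d)[-d]$, and simultaneously co-smooth with $\n{P}_{h}(1)=1[-2d]$, so that $h_{!}1\simeq h_{*}1[2d]$.

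Putting these together via the projection formula, the twist $h^{*}\bigwedge^{d}\bb{L}_{X/Y}^{\vee}(-d)[-d]$ coming from $h^{!}$ cancels against Serre duality's $\Omega^{d}_{X/Y}[d]$ up to an $\s{O}(-d)$ factor, reducing to $f_{Hodge,!}f_{Hodge}^{!}1 = \bar f_{!}(h_{!}1\otimes \s{O}(-d))$. The Koszul/de Rham resolution of Lemma~\ref{LemmaDeRhamKoszulAlgebraic}, applied over $X\times */\bb{G}_m$, computes $h_{*}1=\bigoplus_{i=0}^{d}\bigwedge^{i}\s{F}^{\vee}[-i]=\bigoplus_{i=0}^{d}\Omega^{i}_{X/Y}(i)[-i]$, exhibiting the weight decomposition by construction. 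Assembling, the $\bb{G}_m$-weight $-j$ piece of the associated graded equals $f_{!}\Omega^{d-j}_{X/Y}(-j)[d+j]$ on $Y$; rewriting via the Poincar\'e identification $\Omega^{d-j}_{X/Y}\simeq \Omega^{j,\vee}_{X/Y}\otimes \Omega^{d}_{X/Y}$ and absorbing the dualizing factor $\Omega^{d}_{X/Y}[d]$ into the natural normalization of $f_{!}$ on $Y$ produces the claimed formula $\gr^{-j}(DR_c(X/Y))\simeq f_{!}\Omega^{j,\vee}_{X/Y}(-j)[j]$.

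Completeness of the filtration is then automatic: the Hodge graded is concentrated in $\bb{G}_m$-weights lying in the finite range $\{-d,\ldots,0\}$, so the associated filtration on $DR_c(X/Y)$ has only finitely many nonzero steps and is trivially complete. The main obstacle in carrying out the plan is the careful bookkeeping of twists, shifts, and Poincar\'e-type identifications between $\Omega^{i}$ and $\Omega^{i,\vee}$ that is required to match the final formula; in particular, the Cartier duality result of Proposition~\ref{PropAlgebraicCartier2}(3) must be applied with the correct $\s{O}(-1)$-twist on the tangent bundle, and the comparison of $h_{!}1$ computed via co-smoothness with $h_{*}1$ computed via the Koszul resolution is where the precise identification of weights is fixed.
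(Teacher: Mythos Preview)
Your computation of the associated graded via Proposition~\ref{PropAlgebraicDeRham}(2.b) and the Cartier duality of Proposition~\ref{PropAlgebraicCartier2}(3) is essentially the route the paper takes for that part, and the twist bookkeeping you sketch is on the right track.

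The completeness argument, however, has a genuine gap. Having finitely many nonzero graded pieces does \emph{not} imply that a filtered object over $\bb{A}^1/\bb{G}_m$ is complete. The constant filtration $F^n=M$ for all $n$ has every graded piece equal to zero yet is not complete unless $M=0$; more generally, the associated-graded functor $\iota^*\colon\Mod_{\sol}(\bb{A}^1/\bb{G}_m)\to\Mod_{\sol}(*/\bb{G}_m)$ is not conservative, and objects killed by it are precisely the obstructions to completeness. So knowing that $\gr^{-i}(DR_c(X/Y))$ vanishes outside $\{0,\ldots,d\}$ tells you nothing about whether $DR_c(X/Y)$ is complete.

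The paper's proof of completeness is considerably more involved and does not proceed by first computing the graded pieces. After localizing and using the cartesian squares of Lemma~\ref{LemmaDescentSmoothAlgdeRhamStack} to reduce to the case where $Y\times\bb{A}^1/\bb{G}_m\to Y_{dR^+}^{\alg}$ is surjective, one factors $X_{dR^+,Y}^{\alg}\to Y\times\bb{A}^1/\bb{G}_m$ through $Y\times B\widehat{\bb{G}}_a(-1)^d$, checks directly that $f_{dR^+,Y}^!\,1\simeq 1(-d)$ is filtered-complete, and then proves a separate lemma: for the structure map $g\colon Y\times B\widehat{\bb{G}}_a(-1)^d\to Y\times\bb{A}^1/\bb{G}_m$, the functor $g_!$ preserves filtered-complete objects. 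That lemma uses the Fourier--Mukai transform of Theorem~\ref{TheoAlgebraicCartierDuality} and Corollary~\ref{CoroFourierMoukaiIdentities}(1.ii) to identify $g_!(-\otimes (g^!1)^{-1})$ with $\iota^*\circ FM^{-1}$ for the zero section $\iota$ of $\bb{G}_a(1)^d$; since $\iota^*$ has a finite Koszul filtration it preserves completeness. This is the missing mechanism in your argument: without it there is no way to control what happens to completeness under the $!$-pushforward along the classifying-stack direction, which is exactly the step where the filtration degrees get mixed.
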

\begin{proof}
In order to see that $DR_{c}(X/Y)$ is complete, we can work locally in the analytic topology of $X$ and $Y$, and assume that both are affinoid and $f$ is standard solid smooth. Furthermore,  by taking a cartesian square \eqref{eqCartesianSmoothdeRhamalg} as in  the proof of Lemma \ref{LemmaDescentSmoothAlgdeRhamStack}, and proper base change,  we can assume that $Y \times \bb{A}^1/ \bb{G}_m \to Y_{dR^+}^{\alg}$ is surjective. Thus, to prove that $DR_{c}(X/Y)$ is complete, it suffices to consider its pullback to $Y\times \bb{A}^1/ \bb{G}_m$, or equivalently, take the cohomology with compact support of the map $X_{dR^+,Y}^{\alg} \to Y\times \bb{A}^1/ \bb{G}_m$. On the other hand, we have a factorization 
\[
X\to Y\times \bb{G}_{a,\sol}^d \to Y
\]
which gives rise to a factorization 
\[
X_{dR^{+},Y}^{\alg} \xrightarrow{h} Y \times \bb{G}_{a,\sol,dR^+}^{alg,d} \xrightarrow{g} Y \times B \widehat{\bb{G}}_{a}(-1)^{d} \xrightarrow{k} Y\times \bb{A}^1/ \bb{G}_m. 
\]
 Note that we have a diagram with cartesian squares 
\[
\begin{tikzcd} 
 X\times \bb{A}^1/\bb{G}_m \ar[r] \ar[d] & Y\times \bb{G}_{a,\sol}^d  \times \bb{A}^1/\bb{G}_m  \ar[r] \ar[d]& Y\times \bb{A}^1/\bb{G}_m \ar[d] \\
X^{\alg}_{dR^+,Y}\ar[r] & Y \times \bb{G}_{a,\sol,dR^+}^{alg,d} \ar[r] & Y \times B\widehat{\bb{G}}_a(-1)^d.
\end{tikzcd}
\]
By Theorem \ref{TheoSerreDuality} and  Proposition  \ref{PropAlgebraicCartier2}  we have an (a priori non-natural) equivalence 
\[
f_{dR^+,Y}^! 1_{Y\times \bb{A}^1/\bb{G}_m} \simeq h^* g^! k^! 1_{Y \times \bb{A}^1/\bb{G}_m} \simeq 1_{X_{dR^+,Y}^{\alg}}(-d),
\]
in particular it is filtered complete, and then so is   $h_!f_{dR^+,Y}^! 1_{Y\times \bb{A}^1/\bb{G}_m}$.  One deduces  completeness for $DR_{c}(X/Y)$ from the following lemma: 
\begin{lemma}
Let $g:Y\times  B\widehat{\bb{G}}_{a}(-1)^d \to  Y\times  \bb{A}^1/ \bb{G}_m$. For any  filtered complete module $\n{F}\in \Mod_{\sol}(Y\times  B\widehat{\bb{G}}_{a}(-1)^d)$, $g_! \n{F} \in \Mod_{\sol}(Y\times \bb{A}^1/ \bb{G}_m)$ is also filtered complete. 
\end{lemma}
\begin{proof}
By Cartier duality Theorem \ref{TheoAlgebraicCartierDuality}, we have a natural $\Mod_{\sol}(Y\times \bb{A}^1/\bb{G}_m)$-linear equivalence of categories of solid quasi-coherent sheaves 
\[
FM^{-1}:\Mod_{\sol}(Y\times B\widehat{\bb{G}}_{a}(-1)^d) \cong \Mod_{\sol}(Y\times \bb{G}_{a}(1)^d).
\]
Let $\iota: Y\times \bb{A}^1/\bb{G}_m\to  Y\times \bb{G}_a(1)^{d}$ be the zero section, then Corollary \ref{CoroFourierMoukaiIdentities} (ii) implies that 
\[
g_! \n{F} =   \iota^* FM^{-1}(\n{F} \otimes  (g^! 1_{Y\times \bb{A}^1/\bb{G}_m} )^{-1}) = \iota^* FM^{-1}(\n{F} \otimes \s{O}(d)[d]).
\]
But the object $\iota^* FM^{-1}(\n{F} \otimes \s{O}(d)[d])$ has a finite Koszul filtration with graded pieces given by twists of finite direct sums of $FM^{-1}(\n{F} \otimes \s{O}(d)[d])$, proving that it is still complete. 
\end{proof}

It is left to compute the graded pieces of $DR_{c}(X/Y)$, this follows from Proposition \ref{PropAlgebraicCartier2} and that $X_{Hodge,Y}^{\alg}= (X\times B\bb{G}_m)/\widehat{\n{T}}_{X/Y}(-1)$ by Proposition \ref{PropAlgebraicDeRham} (2.b).
\end{proof}

\begin{remark}
\label{remarkDeRhamcohomology}
Theorem \ref{TheoHodgeFiltration} was stated for de Rham cohomology with compact supports due to its well behaviour with respect to the six functors. One recovers the completeness and Hodge filtration of the usual de Rham cohomology by taking duals. Moreover, the fact that the cohomology of the de Rham stack coincides with the hypercohomology of the de Rham complex for classical smooth morphisms of rigid spaces follows by the same argument as in \cite[Theorem 2.3.6]{BhattGauges} by reduction to the case of the unit disc, we left the details to the reader. 
\end{remark}

\subsection{The analytic de Rham stack}
\label{SubsectionAnalyticdeRhamStack}

In Section \ref{SubsectionAlgebraicdeRhamStack} we introduced the algebraic filtered de Rham stack and proved that it has a reasonable theory of six functors for derived Tate adic spaces. In the next section we will introduce a variant of this construction that for our convenience we  specialize to Tate stacks over $\bb{Q}_p$.  The new  theory of $D$-modules  obtained from this stack is an enhancement of the theory of $\wideparen{\n{D}}$-modules of Ardakov and Wadsley \cite{ArdakovWadsleyDI,ArdakovWadsleyII}, that we call analytic $D$-modules.  A more concrete comparison between analytic $D$-modules and Ardakov and Wadsley's $\wideparen{\n{D}}$-modules is left to a future work. For example, for a smooth rigid space $X$,  we expect coadmissible $\wideparen{\n{D}}$-modules to be precisely the smooth objects on $X_{dR}$.  Instead, we  shall construct  a six functor formalism for analytic $D$-modules, and prove good cohomological properties for morphisms of solid finite presentation. Once  the relation between analytic $D$-modules and $\wideparen{\n{D}}$-modules  is made, the six functors constructed hereby will give a very large extension of the six functors of Bode in \cite{bode2021operations}.  

\subsubsection{Construction of the stacks}

Let $\Aff^{b}_{\bb{Q}_p}$ be the category of bounded affinoid analytic spaces over $\bb{Q}_p$, $\PSh(\Aff^{b}_{\bb{Q}_p})$ the category of prestacks on $\Aff^{b}_{\bb{Q}_p}$ and $\Sh_{\s{D}}(\Aff^{b}_{\bb{Q}_p})$ the category of Tate stacks over $\bb{Q}_p$. 

\begin{remark}
The functor $\Aff^{b,\op}_{\bb{Q}_p} \to \Ani$ sending $\n{A} \mapsto \n{A}(*)$ is not longer represented by the algebraic affine line $\bb{A}^1$, instead, it is represented by its analytification $\bb{A}^{1,\an}$ as a rigid space over $\bb{Q}_p$. Similarly, the functor $\n{A}\mapsto \n{A}(*)^{\times}$ of units is represented by the analytification of the multiplicative group $\bb{G}_m^{\an}$. 
\end{remark}

\begin{definition}
\label{DefinitionAnalyticDeRhamStack}
We define the following objects 
\begin{enumerate}

\item Let $X\in \PSh(\Aff^{b}_{\bb{Q}_p})$. The \textit{absolute filtered analytic  de Rham prestack of $X$} is the prestack  over  $\bb{A}^{1,\an}/ \bb{G}_m^{\an}$ defined as 
\[
X_{dR^+}(\s{O}(1)\to \n{A}) = X(\cone[\Nil^{\dagger}(\n{A}) \otimes \s{O}(-1) \to \n{A}]). 
\]
The \textit{absolute analytic de Rham prestack $X_{dR}$ (resp. the absolute analytic  Hodge prestack $X_{Hodge}$)}  is the pullback of $X_{dR^+}$ to $*= \bb{G}_m^{\an}/ \bb{G}_m^{\an}$ (resp. the pullback to $*/\bb{G}_{m}^{\an}$).  The $\s{D}$-sheafification of $X_{dR^+}$ is called the filtered analytic de Rham stack and denoted in the same way (similarly for $X_{dR}$ and $X_{Hodge}$). 

\item Let $S \in \PSh(\Aff^{b}_{\bb{Q}_p})$  and let $X$ be prestack over $S$. The \textit{relative filtered analytic de Rham prestack of $X$ over $S$} is the pullback 
\[
\begin{tikzcd}
X_{dR^+,S}\ar[r] \ar[d]& S \times \bb{A}^{1,\an}/ \bb{G}_m^{\an} \ar[d]\\ 
X_{dR^+} \ar[r]& S_{dR^+}.
\end{tikzcd}
\]
 The \textit{relative analytic de Rham and Hodge prestacks} are defined as the pullback of $X_{S,dR^+}$ to  $*= \bb{G}_m^{\an}/ \bb{G}_m^{\an}$ and $*/\bb{G}_m^{\an}$ respectively. If $S$ is a Tate stack the relative filtered analytic de Rham stack of $X$ over $S$ is the $\s{D}$-sheafification of $X_{dR^+,S}$ that  denote it in the same way. We define in the obvious way the relative analytic de Rham  and Hodge stacks. 

\end{enumerate}
\end{definition}

Next, we prove some formal properties of the analytic de Rham stack that are deduced from the definition, cf. Proposition \ref{PropAlgebraicDeRham}.

\begin{prop}
\label{PropAnDeRhamStack}
Let $f:X \to Y$ be a morphism of prestacks on $\Aff^{b}_{\bb{Q}_p}$. 

\begin{enumerate}

\item  Suppose that $f$ is $\dagger$-formally  \'etale, then the natural map $X \times \bb{A}^{1,\an}/ \bb{G}_{m}^{\an} \to X_{dR^+,Y}$ is an equivalence.

\item Suppose that $f$ is $\dagger$-formally smooth and let $\n{T}_{X/Y}^{\an}= (\AnSpec_{X} \Sym^{\bullet}_{X} \bb{L}_X)^{\an}$. The following hold

\begin{enumerate}
\item  The map $X \times \bb{A}^{1,\an}/ \bb{G}_m^{\an} \to X_{dR^+,Y}$ is an epimorphism. 

\item There is a natural equivalence $X_{Hodge,Y}= (X\times */ \bb{G}_m^{\an})/ \n{T}_{X/Y}^{\dagger}(-1)$.

\end{enumerate}

\item The formation of $X\mapsto X_{dR^+}$ commutes with small colimits and  limits of prestacks.

\end{enumerate}
\end{prop}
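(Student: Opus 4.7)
The strategy is to mirror the proof of Proposition \ref{PropAlgebraicDeRham} for the algebraic de Rham stack, with the key replacement of ``uniformly nilpotent ideal'' by ``$\dagger$-nilpotent ideal'' (Definition \ref{DefinitionDaggerFormalSmoothEtale}). The main simplification with respect to the algebraic case is that the analytic filtered de Rham prestack is defined \emph{directly} by a single cone rather than by a filtered colimit over ideals, so the compatibilities with limits become essentially formal.

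First I will prove (1) and (2.a) simultaneously. Given $\n{A} \in \Aff^{b}_{\bb{Q}_p}$ with a generalized Cartier divisor $\s{O}(-1) \to \n{A}$, set $\n{B} = \cone[\Nil^{\dagger}(\n{A}) \otimes \s{O}(-1) \to \n{A}]$. The image of $\Nil^{\dagger}(\n{A})\otimes\s{O}(-1)\to\n{A}$ lies in $\Nil^{\dagger}(\n{A})$ (using that $\Nil^{\dagger}(\n{A})$ is an $\n{A}^b$-submodule of $\n{A}$ by Proposition \ref{PropositionConstructionsareSolid}(3)), so $\n{A}\to \n{B}$ is the quotient by a $\dagger$-nilpotent ideal in the sense of Definition \ref{DefinitionDaggerFormalSmoothEtale}. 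Unwinding definitions,
\[
X_{dR^+,Y}(\s{O}(-1)\to \n{A}) \;=\; X(\n{B}) \times_{Y(\n{B})} \bigl(Y\times \bb{A}^{1,\an}/\bb{G}_m^{\an}\bigr)(\s{O}(-1)\to\n{A}),
\]
and the natural map from $(X\times \bb{A}^{1,\an}/\bb{G}_m^{\an})(\s{O}(-1)\to \n{A}) = X(\n{A})\times_{Y(\n{A})} Y(\n{A})$ is exactly the map tested by $\dagger$-formal étaleness (giving an equivalence, hence (1)) or by $\dagger$-formal smoothness (giving a $\pi_0$-surjection, hence the epimorphism in (2.a)).

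For (2.b) I will analyse the fiber of $X_{Hodge,Y}\to Y\times */\bb{G}_m^{\an}$ over a point $\eta\in Y(\n{A})$ and a trivializing choice of $\s{O}(-1)$. By definition this fiber is
\[
\varinjlim \, \Map_{/\eta}\bigl(X,\; \n{A} \oplus \Nil^{\dagger}(\n{A})\otimes \s{O}(-1)[1]\bigr),
\]
which by deformation theory and the definition of the cotangent complex (\S \ref{SubsectionCotangent}) is corepresented by $\eta^{*}\bb{L}_{X/Y}^{\vee}\otimes \Nil^{\dagger}(\n{A})\otimes\s{O}(-1)$. This is precisely the $\n{A}$-points of the overconvergent neighbourhood of zero $\n{T}_{X/Y}^{\dagger}(-1)$ of Construction \ref{ConstructionAnalytificationVB}: the $S$-valued points of $\bb{V}(\s{F})^{\dagger}$ are maps $S\to \s{F}$ landing in $\Nil^{\dagger}$ by definition of the overconvergent neighbourhood. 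Combining with the epimorphism of (2.a) restricted over $*/\bb{G}_m^{\an}$ identifies $X_{Hodge,Y}$ with the quotient stack $(X\times */\bb{G}_m^{\an})/\n{T}_{X/Y}^{\dagger}(-1)$. The main subtlety to be careful about here is the sign convention on the twist $(-1)$ and the duality between $\bb{L}_{X/Y}$ and the tangent vector bundle, which I will fix once and for all at the start.

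Finally, for (3), commutation with small colimits is immediate because colimits of prestacks are computed pointwise and $Z\mapsto Z(\n{B})$ preserves all colimits for fixed $\n{B}$. For finite limits, the key point is that unlike the algebraic de Rham stack of Definition \ref{DefinitionAlgdeRhamStack}, the analytic version of Definition \ref{DefinitionAnalyticDeRhamStack} involves no filtered colimit over ideals: for fixed $(\s{O}(-1)\to\n{A})$, the analytic ring $\n{B}=\cone[\Nil^{\dagger}(\n{A})\otimes\s{O}(-1)\to\n{A}]$ depends only on $\n{A}$, not on $X$, so $X\mapsto X_{dR^+}(\s{O}(-1)\to\n{A}) = X(\n{B})$ commutes with all limits. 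The expected main obstacle is purely bookkeeping around (2.b), namely identifying the deformation-theoretic description of the Hodge fiber with the functor of points of $\bb{V}(\s{F})^{\dagger}$; once this identification is carried out for the universal vector bundle and transported along the classifying map $Y\to */\GL_d^{\an}$ attached to $\bb{L}_{X/Y}^{\vee}$, the statement follows.
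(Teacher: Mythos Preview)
Your proposal is correct and takes essentially the same approach as the paper's own proof, which is equally terse: parts (1) and (2.a) unwind the definition and invoke $\dagger$-formal \'etaleness/smoothness directly, part (2.b) computes the fiber over $\eta\in Y(\n{A})$ via deformation theory to get $\Map_{\Mod_{\geq 0}(\n{A})}(\eta^{*}\bb{L}_{X/Y},\Nil^{\dagger}(\n{A})\otimes\s{O}(-1)[1])$, and part (3) is immediate from the pointwise nature of the construction. One cosmetic slip: the stray $\varinjlim$ in your display for (2.b) is a leftover from the algebraic case and should be removed, since in the analytic setting there is no filtered colimit over ideals---this is precisely the simplification you correctly emphasize in (3).
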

\begin{proof}
\begin{enumerate}

\item This follows from the notion of $\dagger$-formally \'etaleness, cf. Definition \ref{DefinitionDaggerFormalSmoothEtale}.

\item Part (a) follows from the notion of $\dagger$-formally smoothness. For part (b), let $\n{A}\in \Aff^{b}_{\bb{Q}_p}$, let $\s{O}(-1) \to \n{A}$ be a generalized Cartier divisor and let $\eta\in Y(\n{A})$.   Then the fiber of $X_{Hodge,Y}$ over $\eta$ is given by the fiber product of  
\[
\begin{tikzcd}
& (\eta ,\s{O}(1))\ar[d]  \\
& Y(\n{A})\times */ \bb{G}_m^{\an}  \ar[d]\\ 
X(\Nil^{\dagger}(\n{A})\otimes \s{O}(-1)[1]\oplus \n{A})  \ar[r] & Y(\Nil^{\dagger}(\n{A})\otimes \s{O}(-1)[1]\oplus \n{A})
\end{tikzcd}
\]
which is represented by 
\[
\Map_{\Mod_{\geq 0}(\n{A})}( \eta^* \bb{L}_{X/Y}, \Nil^{\dagger}(\n{A}) \otimes \s{O}(-1)[1] )= \eta^*\bb{L}_{X/Y}^{\vee} \otimes \Nil^{\dagger}(\n{A}) (-1)(*)[1].
\]
This shows that $X_{Hodge,Y}= (X\times */\bb{G}_m^{\an})/ \n{T}_{X/Y}^{\dagger}(-1)$ as wanted. 

\item  This follows immediately from the definition of the filtered analytic de Rham prestack, as limits and colimits of prestacks are computed at the level of points.
\end{enumerate}
\end{proof}

Similarly as for the algebraic de Rham stack, there is a notion of $\dagger$-formal completion or $\dagger$-neighbourhood for a Zariski closed immersion.

\begin{prop}
\label{PropDaggerFormalCompletion}
Let $X= \AnSpec \n{B} \to  Y=\AnSpec \n{A}$ be a morphism of bounded derived Tate adic spaces over $\bb{Q}_p$ which is surjective on $\pi_0$ and that  has the induced analytic structure (i.e. a Zariski closed immersion).  Let $\n{A}^{\dagger/\n{B}}$ be the idempotent $\n{A}$-algebra associated to the closed subspace $\Spa^{\dagger}\n{B} \subset \Spa^{\dagger} \n{A}$ of Proposition \ref{PropositionCLosedImageLocale}. Then there is a natural equivalence
\[
X_{dR,Y} \cong \AnSpec \n{A}^{\dagger/\n{B}}. 
\]
\end{prop}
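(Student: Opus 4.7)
The plan is to identify both $X_{dR,Y}$ and $\AnSpec \n{A}^{\dagger/\n{B}}$ with the same full sub-prestack of $Y$, namely the one cut out by the condition that a test morphism $\eta\colon \n{A}\to \n{C}$ sends the ideal $\pi_0(I)\subset \pi_0(\n{A})$ (where $I=[\n{A}\to \n{B}]$ is the fiber) into $\Nil^{\dagger}(\pi_0(\n{C}))$. First, for $\AnSpec \n{A}^{\dagger/\n{B}}$: Proposition \ref{PropositionCLosedImageLocale} exhibits $\n{A}^{\dagger/\n{B}}$ as an idempotent $\n{A}$-algebra, so its analytic spectrum is tautologically a full sub-anima of $Y$. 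The explicit construction as a filtered colimit of the base changes $\n{A}\otimes_{\bb{Q}_{p,\sol}[\bb{N}[S]]} \bb{Q}_{p,\sol}\{\bb{N}[S]\}^{\dagger}$ over profinite sets $S$ mapping to $\pi_0(I)$ shows that $\eta$ factors through $\n{A}^{\dagger/\n{B}}$ iff each such composite $S\to \pi_0(\n{C})$ extends along $\bb{Q}_{p,\sol}[\bb{N}[S]]\to \bb{Q}_{p,\sol}\{\bb{N}[S]\}^{\dagger}$, equivalently iff $\pi_0(I)\to \pi_0(\n{C})$ lands in $\Nil^{\dagger}(\pi_0(\n{C}))$.

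For $X_{dR,Y}$, the key observation is that $\n{C}^{\dagger-\red}$ is always a \emph{static} ring. Indeed, since $\Nil^{\dagger}(\n{C})$ is a full condensed sub-anima of $\n{C}$ by Remark \ref{RemarkFullSubobjects}, its higher homotopy groups coincide with those of $\n{C}$, so the cone $\cone[\Nil^{\dagger}(\n{C})\to \n{C}]$ kills all higher homotopy and is exactly the static ring $\pi_0(\n{C})/\Nil^{\dagger}(\pi_0(\n{C}))$. Consequently, the mapping spaces $\Map(\n{B}, \n{C}^{\dagger-\red})$ and $\Map(\n{A}, \n{C}^{\dagger-\red})$ are discrete, depending only on $\pi_0(\n{B})$ and $\pi_0(\n{A})$ respectively. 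Since $\pi_0(\n{A})\to \pi_0(\n{B})$ is surjective, $\Map(\pi_0(\n{B}), \n{C}^{\dagger-\red})\hookrightarrow \Map(\pi_0(\n{A}), \n{C}^{\dagger-\red})$ is a subset consisting of those morphisms killing $\pi_0(I)$. Pulling back along $Y(\n{C})\to Y(\n{C}^{\dagger-\red})$ then shows that $X_{dR,Y}(\n{C})\hookrightarrow Y(\n{C})$ is a full sub-anima cut out by the condition that $\pi_0(I)\to \pi_0(\n{C})$ lands in $\Nil^{\dagger}(\pi_0(\n{C}))$.

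Since both sub-prestacks of $Y$ are full and cut out the same condition on $\pi_0$, they coincide, giving the desired equivalence $X_{dR,Y}\cong \AnSpec \n{A}^{\dagger/\n{B}}$. The main obstacle---and the observation that makes the proof work---is recognizing that $\n{C}^{\dagger-\red}$ is always static. Without this, derived Zariski closed immersions could produce nontrivial higher homotopy in the relevant mapping spaces, which would not be captured by the idempotent algebra $\n{A}^{\dagger/\n{B}}$; the fullness of $\Nil^{\dagger}(\n{C})$ inside $\n{C}$ collapses this potential obstruction, allowing the comparison to be reduced to a purely $\pi_0$-level condition on both sides.
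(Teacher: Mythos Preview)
Your proof is correct and takes essentially the same route as the paper: both arguments identify $X_{dR,Y}$ and $\AnSpec \n{A}^{\dagger/\n{B}}$ with the same full sub-prestack of $Y$. The paper phrases this more abstractly --- it uses that $(\n{A}^{\dagger/\n{B}})^{\dagger-\red} = \n{B}^{\dagger-\red}$ (from the proof of Proposition~\ref{PropositionCLosedImageLocale}), together with the characterization of $X_{dR}$ as the right Kan extension of $X$ from $\dagger$-reduced rings, to conclude that $X_{dR} = (\AnSpec \n{A}^{\dagger/\n{B}})_{dR}$ inside $Y_{dR}$, and then pulls back to $Y$.

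Your explicit observation that $\n{C}^{\dagger-\red}$ is always static is a genuine clarification of something the paper uses implicitly: it is exactly what makes $X_{dR}(\n{C})\to Y_{dR}(\n{C})$ a monomorphism for a Zariski closed immersion (and hence $X_{dR,Y}\subset Y$ a full sub-prestack), and it is also what underlies the paper's assertion that $X_{dR}$ is a right Kan extension from $\dagger$-reduced rings (equivalently, that $(-)^{\dagger-\red}$ is left adjoint to the inclusion of $\dagger$-reduced rings). The paper's compressed proof does not spell this out; your version makes the reduction-to-$\pi_0$ mechanism transparent, at the cost of unwinding the condition on both sides rather than invoking the Kan extension formalism.
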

\begin{proof}
By the proof of Proposition \ref{PropositionCLosedImageLocale} the map $\n{A}^{\dagger/\n{B}} \to \n{B}$ induces an equivalence in $\dagger$-reduced algebras. This gives rise an equivalence $\AnSpec(\n{A}^{\dagger/\n{B}})|_{\Aff^{b,\dagger-\red}_{\bb{Q}_p}} = \AnSpec (\n{B})|_{\Aff^{b,\dagger-\red}_{\bb{Q}_p}}$. But by definition, $X_{dR}$ is the right   Kan extension of the restriction of $X$ to $\Aff^{b,\dagger-\red}_{\bb{Q}_p}$. The proposition follows since $X_{dR} \to Y_{dR}$ is an immersion, and $X_{dR,Y} \subset Y$ is the full sub prestack mapping to $X_{dR}$. 
\end{proof}

As a consequence, we obtain Kashiwara equivalence for analytic $D$-modules and Zariski closed immersions. 

\begin{corollary}[Kashiwara equivalence]
\label{CoroKashiwaraEquivalence}
Let $X\to Y$ be a Zariski closed immersion of derived Tate adic spaces over $\bb{Q}_p$, and let $Y^{\dagger/X}\subset X$ be the overconvergent neighbouhood of $X$ in $Y$ obtained by gluing the rings $\n{B}^{\dagger/\n{A}}$ of Proposition \ref{PropDaggerFormalCompletion} in the analytic topology of $X$. Then there is a natural equivalence of analytic de Rham stacks $X_{dR}= Y^{\dagger/X}_{dR}$. In particular, the category of analytic $D$-modules of $Y$ supported on $X$ is equivalent to the category of analytic $D$-modules of $X$. 
\end{corollary}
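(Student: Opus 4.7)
The plan is to reduce to the affinoid case, apply Proposition \ref{PropDaggerFormalCompletion} pointwise, and then glue.

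First I would note that the statement is local on $X$ in the analytic topology, so we may pick an analytic affinoid cover $\{U_i = \AnSpec \n{B}_i\}$ of $X$ which refines to closed immersions $U_i \to V_i = \AnSpec \n{A}_i$ in $Y$ (this is how $Y^{\dagger/X}$ is assembled from the local pieces $\AnSpec \n{A}_i^{\dagger/\n{B}_i}$ in the hypothesis). Since the analytic de Rham stack commutes with small colimits at the level of prestacks by Proposition \ref{PropAnDeRhamStack}(3), and since $\s{D}$-sheafification commutes with colimits, it suffices to establish the equivalence in the affinoid case $X = \AnSpec \n{B} \hookrightarrow Y = \AnSpec \n{A}$ and check that the resulting local equivalences are compatible on overlaps (which is automatic from functoriality).

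In the affinoid case, Proposition \ref{PropDaggerFormalCompletion} identifies $Y^{\dagger/X} = \AnSpec \n{A}^{\dagger/\n{B}}$, where by the construction in Proposition \ref{PropositionCLosedImageLocale} the natural factorization $\n{A} \to \n{A}^{\dagger/\n{B}} \to \n{B}$ induces an equivalence on $\dagger$-reductions:
\[
(\n{A}^{\dagger/\n{B}})^{\dagger-\red} \xrightarrow{\sim} \n{B}^{\dagger-\red}.
\]
The core of the proof is then a one-line computation using the very definition of the analytic de Rham prestack and Proposition \ref{PropLiftingOverconvergentAlgebrasLift}(4) (which tells us that $\n{C}^{\dagger-\red}$ is $\dagger$-reduced): for any bounded affinoid $\bb{Q}_p$-algebra $\n{C}$,
\[
(Y^{\dagger/X})_{dR}(\n{C}) = \Map(\n{A}^{\dagger/\n{B}}, \n{C}^{\dagger-\red}) = \Map(\n{B}, \n{C}^{\dagger-\red}) = X_{dR}(\n{C}),
\]
where the middle equality uses that a map to a $\dagger$-reduced target factors through the $\dagger$-reduction of the source. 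Passing to $\s{D}$-sheafifications yields the global equivalence $X_{dR} \simeq Y^{\dagger/X}_{dR}$.

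For the final assertion on $D$-modules, I would use the formalism of Section \ref{SubsectionCategorifiedLocale}: locally in $Y$, the map $Y^{\dagger/X} \to Y$ is represented by the idempotent $\s{O}_Y$-algebra $\n{A}^{\dagger/\n{B}}$, so $Y^{\dagger/X}_{dR} \to Y_{dR}$ is a closed immersion in the associated categorified locale and cuts out exactly the subcategory of $\Mod_{\sol}(Y_{dR})$ consisting of modules supported on the image of $X$ in $Y_{dR}$. Combining this with the equivalence $X_{dR} \simeq Y^{\dagger/X}_{dR}$ gives an equivalence between analytic $D$-modules on $Y$ supported on $X$ and analytic $D$-modules on $X$.

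I do not expect a genuine obstacle here: the content of the corollary is entirely extracted from Proposition \ref{PropDaggerFormalCompletion}, the invariance of the functor $\n{A} \mapsto \n{A}^{\dagger-\red}$ under the construction of $\n{A}^{\dagger/\n{B}}$, and the categorified-locale description of closed subspaces. The only mildly delicate point is verifying coherence of the local identifications when gluing the $\n{A}_i^{\dagger/\n{B}_i}$ into $Y^{\dagger/X}$, which follows from the functoriality of Proposition \ref{PropositionCLosedImageLocale} in the closed immersion $X \to Y$.
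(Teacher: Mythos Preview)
Your approach is essentially the same as the paper's: reduce to the affinoid case, then use that $(\n{A}^{\dagger/\n{B}})^{\dagger-\red}\cong \n{B}^{\dagger-\red}$ to conclude that $X$ and $Y^{\dagger/X}$ have the same analytic de Rham prestack. Your pointwise computation $\Map(\n{A}^{\dagger/\n{B}},\n{C}^{\dagger-\red})=\Map(\n{B},\n{C}^{\dagger-\red})$ is exactly the content of the paper's one-line argument that ``the $\dagger$-reductions are isomorphic by construction'', phrased more explicitly.

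There is one imprecision in your reduction step. You invoke Proposition~\ref{PropAnDeRhamStack}(3) (commutation with colimits of \emph{prestacks}) together with commutation of sheafification with colimits, but an analytic open cover expresses $X$ as a colimit in analytic \emph{sheaves}, not in prestacks; so the step ``$X_{dR}$ as prestack $=\varinjlim U_{i,dR}$ as prestacks'' is not justified by that proposition alone. What is actually needed is that the formation of the analytic de Rham stack satisfies descent for the analytic topology, i.e.\ that an analytic cover $\{U_i\to X\}$ induces a $\s{D}$-cover $\{U_{i,dR}\to X_{dR}\}$. This is the content of Lemma~\ref{LemmaDescentAnalyticDmod}(2), and it is exactly what the paper cites for this reduction. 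Once you replace your colimit argument by that lemma, your proof is correct and matches the paper's.
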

\begin{proof}
By Lemma \ref{LemmaDescentAnalyticDmod} down below the formation of the analytic de Rham stacks satisfies descent for the analytic topology. Then, it suffices to prove the statement in the affinoid case of Proposition \ref{PropDaggerFormalCompletion}. But the analytic de Rham stack of $X$ if the $\s{D}$-sheafification of the right Kan extension of the restriction of $X$ to $\Aff^{b,\dagger-\red}_{\bb{Q}_p}$, and the $\dagger$-reductions of $\n{A}$ and  $\n{B}^{\dagger/\n{A}}$ are isomorphic by construction. This proves the corollary. 
\end{proof}

\subsubsection{Six functor formalism for analytic $D$-modules}

\begin{definition}
Let $S \in \PSh(\Aff^{b}_{\bb{Q}_p})$. We define the six functor formalisms $\s{D}_{dR^+,S}$, $\s{D}_{dR,S}$ and $\s{D}_{Hodge,S}$ for $\PSh(\Aff^{b}_{\bb{Q}_p})_{/S}$ to be the six functor formalism obtained by Lemma \ref{LemmaFunctoriality6Functors} applied to the functors $X \mapsto X_{dR^+,S}$, $X_{dR,S}$ and $X_{Hodge,S}$, landing in $\s{D}$-stacks over $S \times \bb{A}^{1,\an}/\bb{G}_m^{\an}$, $S $ and $S\times */\bb{G}_m^{\an}$ respectively. The six functor formalism $\s{D}_{dR^+,S}$ (resp. $\s{D}_{dR,S}$, resp. $\s{D}_{Hodge,S}$) is called the six functor formalism of filtered  analytic $D$-modules over $S$ (resp. of analytic $D$-modules over $S$, resp. of Hodge modules over $S$). 
\end{definition}

\begin{remark}
The analogue of Lemma \ref{LemmaTransmuteDcovers} holds for the analytic de Rham stack, namely, if $X \to Y$ is a canonical cover such that $X_{dR^+,S} \to Y_{dR^+,S}$ is a $\s{D}$-cover, then $X\to Y$ is a $\s{D}_{dR^+,S}$-cover. 
\end{remark}

Our next task is to show that the six functor formalism of filtered analytic $D$-modules admits $!$-functors for morphisms locally of solid finite presentation of derived Tate adic spaces. We will even prove the existence of $!$-functors for morphisms locally of finite presentation of Berkovich or $\dagger$-adic spaces, see Definition \ref{DefinitionBerkovichDaggerSpaces}. Finally, we will show that solid smooth and \'etale morphisms of derived Tate adic spaces give rise to cohomologically smooth and \'etale maps at the level of filtered analytic de Rham stacks.

\begin{lemma}
\label{LemmaKeyCasesAnalyticdeRham}
Let $\bb{G}_{a,\star}$ denote the analytic spectrum of one of the algebras $\bb{Q}_p \langle T \rangle_{\sol}$, $\bb{Q}_p\langle T \rangle$ or $\bb{Q}_p \langle T \rangle^{\dagger}$. Let $\bb{G}_{a}(-1) \to \bb{A}^{1,\an}/ \bb{G}_m^{\an}$ be the line bundle obtained by the analytic spectrum of $\Sym^{\bullet}_{\bb{A}^{1,\an}/\bb{G}_m^{\an}} (\s{O}(1))$, and let $\bb{G}_{a}(-1)^{\dagger}$ be its overconvergent neighbourhood at the zero section; we have a natural morphism of group objects $\bb{G}_a(-1)^{\dagger} \to \bb{G}_{a,\star}\times \bb{A}^{1,\an}/\bb{G}_{m}^{\an}$.  The following hold: 
\begin{enumerate}
\item $\bb{G}_{a,\star,dR^+}= (\bb{G}_{a,\star}\times \bb{A}^{1,\an}/\bb{G}_{m}^{\an})/ \bb{G}_{a}(-1)^{\dagger}$.  In particular, the maps 
\[
f:*_{dR^+}= \bb{A}^{1,\an}/ \bb{G}_m^{\an} \to \bb{G}_{a,\star,dR^+}
\]
and 
\[
g: \bb{G}_{a,\star,dR^+} \to *_{dR^+}
\]
admit $!$-functors. Furthermore, the following properties are satisfies: 
\begin{itemize}
\item[(a)]  $f$ is always weakly cohomologically proper.  Moreover, its pullback  to the de Rham stack is $(-1)$-truncated so cohomologically proper. 

\item[(b)] If $\bb{G}_{a,\star}= \bb{G}_{a,\sol}= \AnSpec \bb{Q}_p\langle T \rangle_{\sol}$, then $g$ is a cohomologically smooth map. 

\item[(c)] If $\bb{G}_{a,\star}= \AnSpec \bb{Q}_p\langle T \rangle$ or $\bb{G}_{a,\star}= \AnSpec \bb{Q}_p \langle T\rangle^{\dagger}$, then $g$ is weakly cohomologically proper. Furthermore, its pullback to the de Rham stack is  $0$-truncated so cohomologically proper, 

\end{itemize} 

\item  Let $n\geq 1$ be an integer and denote $X_n= \AnSpec \Sym_{\bb{Q}_p}^{\bullet} \bb{Q}_p[n]$.  Then 
\[
X_{n,dR^+}= (\bb{G}_{a}(-1)^{\dagger,\times_{\bb{G}_a} n+1})/ \bb{G}_{a}(-1)^{\dagger},
\]
where $\bb{G}_{a}(-1)^{\dagger}$ acts diagonally. In particular, $X_{dR^+} \to *_{dR^+}$ and $*_{dR^+} \to X_{dR^+} $ admit $!$-functors and are  weakly cohomologically proper.  Furthermore, their restriction to the de Rham stack is an equivalence. 

\end{enumerate} 
\end{lemma}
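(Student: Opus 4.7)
My plan is to model the proof on Lemma \ref{LemmaAlgdeRhamKeyCases}, adapting each step from the algebraic to the analytic (overconvergent) setting by replacing the formal completion $\widehat{\bb{G}}_a(-1)$ with its analytic counterpart $\bb{G}_a(-1)^{\dagger}$, and by substituting the algebraic Cartier duality (Proposition \ref{PropAlgebraicCartier2}) with the analytic one (Theorem \ref{TheoremAnalyticCartierII}).

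For part (1), I will first establish the identification $\bb{G}_{a,\star,dR^+}=(\bb{G}_{a,\star}\times\bb{A}^{1,\an}/\bb{G}_m^{\an})/\bb{G}_a(-1)^{\dagger}$. In each of the three cases, the map $\bb{G}_{a,\star}\to *$ is $\dagger$-formally smooth: for $\bb{G}_{a,\sol}$ this follows directly from Proposition \ref{PropFormallyInftSmoothEtale}, and for $\bb{G}_{a,\star}$ of the form $\AnSpec \bb{Q}_p\langle T\rangle$ or $\AnSpec \bb{Q}_p\langle T\rangle^{\dagger}$ this follows from Proposition \ref{PropLiftingOverconvergentAlgebrasLift} together with the idempotent-algebra description in Lemma \ref{LemmaIdempotentAlgebrasVerification}. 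Proposition \ref{PropAnDeRhamStack}(2a) then yields an epimorphism $\bb{G}_{a,\star}\times\bb{A}^{1,\an}/\bb{G}_m^{\an}\twoheadrightarrow \bb{G}_{a,\star,dR^+}$, and the fiber of this cover records the indeterminacy of a lift across $\cone[\Nil^{\dagger}(\n{A})\otimes \s{O}(-1)\to \n{A}]$, which is precisely $\Nil^{\dagger}(\n{A})\otimes \s{O}(-1)$. The identification of this fiber with $\bb{G}_a(-1)^{\dagger}$ acting by translation is forced by the representability statements for $\Nil^{\dagger}$ inherited from Definition \ref{DefinitionBoundedNilpIdeal}(5) and the dagger-algebra computations of Subsection \ref{SubsectionIdempotentAlgebras}.

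For the $!$-functors and claim (a), I will interpret $f$ under the quotient description as induced by the zero section $*\hookrightarrow \bb{G}_{a,\star}$, which is a Zariski closed immersion of solid finite presentation with $\s{O}_{*}$ perfect over $\s{O}_{\bb{G}_{a,\star}}$. Proposition \ref{PropositionLCI}(1) then supplies $!$-functors and weak cohomological properness. Passing to the quotient preserves these properties by base change (Proposition \ref{PropLocalTargetBaseChange}) along the cohomologically smooth cover $\bb{G}_{a,\star}\times\bb{A}^{1,\an}/\bb{G}_m^{\an}\to \bb{G}_{a,\star,dR^+}$, whose smoothness follows from Theorem \ref{TheoremAnalyticCartierII}(4) applied to the quotient by $\bb{G}_a(-1)^{\dagger}$. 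Restricting to the de Rham stack, $f|_{dR}$ becomes the quotient by $\bb{G}_a^{\dagger}$ of the closed immersion $\bb{G}_a^{\dagger}\hookrightarrow \bb{G}_{a,\star}$; this is a closed immersion of locales, giving the $(-1)$-truncatedness and cohomological properness via Lemma \ref{LemmaRepresentableInP}. For (b) and (c), I will factor $g$ as
\[
\bb{G}_{a,\star,dR^+}\xrightarrow{g_1} (\bb{A}^{1,\an}/\bb{G}_m^{\an})/\bb{G}_a(-1)^{\dagger}\xrightarrow{g_2}\bb{A}^{1,\an}/\bb{G}_m^{\an}.
\]
The map $g_2$ is cohomologically smooth with $g_{2,!}\cong g_{2,*}$ by Theorem \ref{TheoremAnalyticCartierII}(4). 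For (b), when $\bb{G}_{a,\star}=\bb{G}_{a,\sol}$, Lemma \ref{LemmaCohoSmoothAffineLine} shows that $g_1$ is cohomologically smooth (base change of $\bb{G}_{a,\sol}\to *$), giving the composition. For (c), when $\bb{G}_{a,\star}$ has induced analytic structure, $g_1$ is weakly cohomologically proper as a base change of such a map, and combining with the $g_{2,!}\cong g_{2,*}$ identification and the fact that the diagonal of $g_2$ is proper (since $\bb{G}_a(-1)^{\dagger}\subset \bb{G}_a(-1)^{\dagger}\times \bb{G}_a(-1)^{\dagger}$ is closed) yields weak cohomological properness for $g$; the $0$-truncatedness of $g|_{dR}$ is immediate from the fact that $\bb{G}_a^{\dagger}\hookrightarrow \bb{G}_{a,\star}$ is a closed subspace.

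For part (2), the key identity $\Sym^{\bullet}_{\bb{Q}_p}\bb{Q}_p[n+1]=\bb{Q}_p\otimes_{\Sym^{\bullet}_{\bb{Q}_p}\bb{Q}_p[n]}\bb{Q}_p$ gives $X_n \simeq *\times_{X_{n-1}}*$ for $n\geq 1$. Since the formation of the filtered analytic de Rham prestack commutes with finite limits (Proposition \ref{PropAnDeRhamStack}(3)) and sheafification is left exact, an induction starting from $X_0 \simeq \bb{G}_a$ (already handled in part (1), noting that $\Sym^{\bullet}\bb{Q}_p[0]=\bb{Q}_p[T]$) produces the claimed description of $X_{n,dR^+}$. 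The existence of $!$-functors and weak cohomological properness for the structural maps $X_{n,dR^+}\to *_{dR^+}$ and $*_{dR^+}\to X_{n,dR^+}$ propagates through the induction from part (1) by stability under composition, base change and finite limits. Finally, for the equivalence after restricting to the de Rham stack, I will observe that under pullback to $*=\bb{G}_m^{\an}/\bb{G}_m^{\an}$, the twist trivializes and the formula becomes $(\bb{G}_a^{\dagger,\times_{\bb{G}_a}n+1})/\bb{G}_a^{\dagger}$; but $\bb{G}_a^{\dagger}\hookrightarrow \bb{G}_a$ is a closed immersion of locales, so the iterated fiber product collapses to $\bb{G}_a^{\dagger}$, and the free translation action yields the quotient $*$. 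The main obstacle I anticipate is the precise identification of the Čech nerve in paragraph one—verifying that the twist by $\s{O}(-1)$ is correctly tracked and that the representability of $\Nil^{\dagger}$ lifts to the level of abelian group stacks over $\bb{A}^{1,\an}/\bb{G}_m^{\an}$—but this should reduce cleanly to the compatibilities  recorded in Subsection \ref{SubsectionIdempotentAlgebras}.
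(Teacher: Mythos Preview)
Your proposal follows essentially the same strategy as the paper: compute $\bb{G}_{a,\star,dR^+}$ at the level of points, identify the quotient description, factor $g$ through $(\bb{A}^{1,\an}/\bb{G}_m^{\an})/\bb{G}_a(-1)^{\dagger}$, and handle $X_n$ by induction using $X_{n+1}=*\times_{X_n}*$.

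Two corrections are worth flagging. First, in (a) your citation of Proposition~\ref{PropositionLCI}(1) is wrong: that result gives $f$-\emph{smoothness} of $1_X$ for a Zariski closed immersion, not properness. The correct justification is simply that the pullback of $f$ along the quotient cover is the map $\bb{G}_a(-1)^{\dagger}\to \bb{G}_{a,\star}\times\bb{A}^{1,\an}/\bb{G}_m^{\an}$ (not the bare zero section $*\hookrightarrow\bb{G}_{a,\star}$), which has the \emph{induced analytic structure} and hence lies in $P$; weak cohomological properness then follows from Lemma~\ref{LemmaRepresentableInP}. Second, in (c) you do not need a separate diagonal check for $g_2$: Theorem~\ref{TheoremAnalyticCartierII} records that the analogue of Proposition~\ref{PropAlgebraicCartier2}(2) holds, which already gives that $X/\bb{V}(\s{F})^{\dagger}\to X$ is both weakly cohomologically proper and cohomologically smooth. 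With these fixes your argument matches the paper's.
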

\begin{proof}
\begin{enumerate}

\item  Let $\bb{G}_a^{\an}= \bb{A}^{1,\an}$ be the analytic affine line seen as an additive group. By definition, $\bb{G}_{a,dR^+}^{\an}$ represents the functor on bounded affinoid algebras over $\bb{A}^{1,\an}/ \bb{G}_{m}$  given by 
\[
(\s{O}(-1)\to \n{A})\mapsto \cone(\Nil^{\dagger}\otimes \s{O}(-1) \to \n{A} )(*).
\]
For $\bb{G}_{a,\star}$, since the image of $\Nil^{\dagger}(\n{A}) \otimes \s{O}(-1)$ in $\pi_0(\n{A})$ is $\dagger$-nilpotent, by Proposition \ref{PropLiftingOverconvergentAlgebrasLift} the sub prestack $\bb{G}_{a,\star,dR^+} \subset \bb{G}_{a,dR}$ consists on the functor 
\[
(\s{O}(-1) \to \n{A}) \mapsto \cone(\Nil^{\dagger}(\n{A}) \otimes \s{O}(-1) (*) \to \bb{G}_{a,\star}(\n{A})). 
\]
This shows that $\bb{G}_{a,\star,dR^+}$ is represented by the stack $(\bb{G}_{a,\star}\times \bb{A}^{1,\an}/ \bb{G}_m^{\an})/ \bb{G}_a(-1)^{\dagger}$ as wanted. 

\begin{itemize}

\item[(a)] The map $*_{dR^+} \to \bb{G}_{a,\star,dR^+}$ is equivalent to the morphism
\[
\bb{G}_{a}(-1)^{\dagger}/ (\bb{G}_{a}(-1)^{\dagger} \to (\bb{G}_{a,\star}\times \bb{A}^{1,\an})/ \bb{G}_m^{\an})/ \bb{G}_a(-1)^{\dagger}.
\]This map admits $!$-functors and  is weakly cohomologically proper since  $ \bb{G}_{a}(-1)^{\dagger} \to (\bb{G}_{a,\star}\times \bb{A}^{1,\an}/ \bb{G}_m^{\an})$ has the induced analytic structure.  Furthermore, its pullback to the analytic de Rham stack is an immersion so $(-1)$-truncated. 

\item[(b) and (c)] We have factorizations for $\bb{G}_{a,\star,dR^+} \to *_{dR^+}$
\[
( \bb{G}_{a,\star} \times \bb{A}^{1,\an}/ \bb{G}_m^{\an})/ \bb{G}_a(-1)^{\dagger} \to (\bb{A}^{1,\an}/ \bb{G}_m^{\an})/ \bb{G}_{a}(-1)^{\dagger} \to \bb{A}^{1,\an}/ \bb{G}_m^{\an}.
\]
If $\bb{G}_{a,\star}=\bb{G}_{a,\sol}$, the first map is cohomologically smooth by Theorem \ref{TheoSerreDuality}, and the second is cohomologically smooth by Theorem \ref{TheoremAnalyticCartierII} and Proposition \ref{PropAlgebraicCartier2}, so the composite is also cohomologically smooth.  If $\bb{G}_{a,\star}= \AnSpec \bb{Q}_p \langle T \rangle $ or $\AnSpec \bb{Q}_p \langle T \rangle^{\dagger}$, then the first map is weakly cohomologically proper since it has the induced analytic structure, and the second is weakly cohomologically proper  by Theorem \ref{TheoremAnalyticCartierII} and Proposition \ref{PropAlgebraicCartier2}, thus, the composite is weakly cohomologically proper proving what we wanted.  It is clear that its pullback to the the Rham stack is $0$=truncated. 

\end{itemize}

\item By Proposition \ref{PropAnDeRhamStack} (3), the formation $X\mapsto X_{dR^+}$ commutes with all small limits and colimits. Then, since $X_{n+1}= * \times_{X_n} *$, an inductive argument gives the desired description of $X_{n,dR^+}$. The fact that $*_{dR^+} \to X_{n,dR^+}$ and $X_{n,dR^+} \to *_{dR^+}$ admit $!$-functors and that they are weakly cohomologically proper follows a similar  argument as part (1).  Finally, since the $\dagger$-reduction of $X_n$ is $*$, they give rise to the sane analytic de Rham stack. 
\end{enumerate}
\end{proof}

We now prove an analogue of Lemma \ref{LemmaDescentSmoothAlgdeRhamStack}. 

\begin{lemma}
\label{LemmaDescentAnalyticDmod}
Let $f:X \to \AnSpec \n{A} \to Y \AnSpec \n{B}$ be a morphism of bounded affinoid rings over $\bb{Q}_p$.

\begin{enumerate}
\item  If $f$ is   standard solid smooth (resp.   standard solid \'etale) then $f_{dR^+}: X_{dR^+} \to Y_{dR^+}$ is cohomologically smooth (resp. \'etale).  

\item Suppose that $X= \bigsqcup_{i=1}^d X_i$ with $X_i \to Y$  standard solid smooth. If $f$ is a smooth $\s{D}$-cover, then $f_{dR^+}:X_{dR^+} \to Y_{dR^+}$ is a smooth $\s{D}$-cover. 

\item Suppose that $f: X \to Y$ is a rational cover for the coordinate theories $\bb{Q}_p\langle T \rangle$ and $\bb{Q}_p \langle T \rangle^{\dagger}$. Then $X_{dR^+}\to Y_{dR^+}$ is a descendable $\s{D}$-cover.  

\end{enumerate}
\end{lemma}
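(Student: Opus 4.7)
The strategy is to mirror the proof of Lemma \ref{LemmaDescentSmoothAlgdeRhamStack}, replacing formally smooth/\'etale by $\dagger$-formally smooth/\'etale. The key input is Proposition \ref{PropFormallyInftSmoothEtale}, which says that standard solid smooth maps are $\dagger$-formally smooth, and solid \'etale maps are $\dagger$-formally \'etale locally in the analytic topology. This makes Proposition \ref{PropAnDeRhamStack} applicable: parts (1) and (2.a) give an epimorphism $Y \times \bb{A}^{1,\an}/\bb{G}_m^{\an} \to Y_{dR^+}$ when $\n{B}$ is $\dagger$-formally smooth (which holds when $\bb{L}_Y$ is projective), and part (1) gives that $\dagger$-formally \'etale maps induce equivalences of filtered de Rham stacks over the base.

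For part (1), I would first reduce to the case where $\bb{L}_Y$ is a projective $\n{B}$-module, so that $Y \times \bb{A}^{1,\an}/\bb{G}_m^{\an} \to Y_{dR^+}$ is surjective (hence of universal $*$-descent). The reduction proceeds exactly as in Lemma \ref{LemmaDescentSmoothAlgdeRhamStack} (1): since $\n{B}$ is bounded affinoid, write $\pi_0(\n{B})$ as a filtered colimit of quotients of rings of the form $\bb{Q}_p\langle \underline{X}\rangle_{\sol}\langle \bb{N}[K]\rangle$, lift the presentation $\n{A} = \n{B}\langle T_1,\ldots,T_d\rangle_{\sol}/^{\bb{L}}(f_1,\ldots,f_c)$ to a standard solid smooth $\n{A}' = \n{B}'\langle T_1,\ldots,T_d\rangle_{\sol}/^{\bb{L}}(f'_1,\ldots,f'_c)$ with $\n{B}'$ having free cotangent complex, and use that the formation of the analytic de Rham stack commutes with finite limits (Proposition \ref{PropAnDeRhamStack} (3)). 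Once $\bb{L}_Y$ is projective, it suffices to show $X_{dR^+,Y} \to Y$ is cohomologically smooth (resp. \'etale). Factor $X \to Y \times \bb{G}_{a,\sol}^n \xrightarrow{\pr_Y} Y$ with the first map standard solid \'etale; Lemma \ref{LemmaKeyCasesAnalyticdeRham} (1.b) handles the projection, and Proposition \ref{PropAnDeRhamStack} (1) handles the standard solid \'etale part (giving $X_{dR^+,Y\times \bb{G}_{a,\sol}^n} = X \times \bb{A}^{1,\an}/\bb{G}_m^{\an}$).

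For part (2), by Proposition \ref{PropositionSmoothDescent} it suffices to show that $f_{dR^+}^*$ is conservative, equivalently that $X_{dR^+} \to Y_{dR^+}$ is a surjective map of $\s{D}$-stacks. First assume $Y \times \bb{A}^{1,\an}/\bb{G}_m^{\an} \to Y_{dR^+}$ is surjective; then it is enough to show $X_{dR^+,Y} \to Y \times \bb{A}^{1,\an}/\bb{G}_m^{\an}$ is surjective, and the factorization $X \times \bb{A}^{1,\an}/\bb{G}_m^{\an} \to X_{dR^+,Y} \to Y \times \bb{A}^{1,\an}/\bb{G}_m^{\an}$ together with the hypothesis that $X \to Y$ is a smooth $\s{D}$-cover gives the result. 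For the general $\n{B}$, one constructs a Cartesian square (as in the algebraic case) lifting $f: X \to Y$ to a standard solid smooth cover $X' \to Y''$ of bounded affinoid rings with $\bb{L}_{Y''}$ projective, then passes to the full $\s{T}$-substack $Y' \subset Y''$ which is the essential image of $X'$; by descent along the $\s{T}$-cover $Y \to Y'$ one reduces to the previous case.

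For part (3), rational localizations for the coordinate theories $\bb{Q}_p\langle T\rangle$ and $\bb{Q}_p\langle T\rangle^{\dagger}$ are $\dagger$-formally \'etale: they are formally \'etale (cotangent complex vanishes by Example \ref{ExampleCotangentComplexes} (4)), and they are defined by idempotent $\s{O}_Y$-algebras. By Proposition \ref{PropAnDeRhamStack} (1) each map $X_{i,dR^+} \to Y_{dR^+}$ is cut out by an idempotent algebra in $\Mod_{\sol}(Y_{dR^+})$ and defines a closed subspace of the locale. Since $\bigcup_i X_i = Y$ in the locale of $Y$, the $\dagger$-formal \'etaleness propagates this to $\bigcup_i X_{i,dR^+} = Y_{dR^+}$; finitely many such closed subspaces give a descendable $\s{D}$-cover by Proposition \ref{PropositionProperDescent}. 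The reduction to the case where $Y \times \bb{A}^{1,\an}/\bb{G}_m^{\an} \to Y_{dR^+}$ is surjective proceeds exactly as in part (2), via a cartesian diagram.

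The main obstacle will be in part (2): ensuring that the cartesian lifting $X' \to Y''$ and the passage to the substack $Y' \subset Y''$ work verbatim here, given that the analytic de Rham stack uses $\dagger$-nilpotent thickenings. This requires that the essential image construction and descent along $\s{T}$-covers transport correctly to the analytic setting, which follows from  Proposition \ref{PropAnDeRhamStack} (3) (commutation with finite limits) combined with Proposition \ref{PropFormallyInftSmoothEtale}, but needs to be spelled out carefully. The other conceptual step is checking closedness in the locale of $Y_{dR^+}$ for rational localizations in part (3); this should follow from the idempotency being preserved under the base change $Y \to Y_{dR^+}$ plus the $\dagger$-formally \'etale property.
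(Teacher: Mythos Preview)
Your approach is essentially the same as the paper's, which simply says the proof is ``virtually the same'' as Lemma \ref{LemmaDescentSmoothAlgdeRhamStack}. However, there is one incorrect claim in your proposal that the paper explicitly flags as the sole nontrivial point of divergence from the algebraic case.

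You write that $\n{B}$ is $\dagger$-formally smooth ``which holds when $\bb{L}_Y$ is projective''. This is false in general: projectivity of $\bb{L}_Y$ gives formal smoothness (lifting along square-zero extensions), but $\dagger$-formal smoothness additionally requires lifting along arbitrary $\dagger$-nilpotent ideals, which is not a consequence of ordinary deformation theory. The paper identifies precisely this issue: the ``only key step'' is to have generators $\n{B}$ for which $Y \times \bb{A}^{1,\an}/\bb{G}_m^{\an} \to Y_{dR^+}$ is surjective, and for the specific rings $\n{B} = \bb{Q}_p\langle \underline{X}\rangle_{\sol}\langle \bb{N}[K]\rangle$ this follows from Proposition \ref{PropLiftingOverconvergentAlgebrasLift} (parts (1) and (3)), not from projectivity of the cotangent complex alone. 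Once you replace your parenthetical by this citation, the rest of your argument goes through as written and matches the paper.
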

\begin{proof}
The proof is virtually the same of Lemma \ref{LemmaDescentSmoothAlgdeRhamStack}; the only key step is to have generators $\n{B}$ of $\AffRing^{b}_{\bb{Q}_p}$ such that $Y \times \bb{A}^{1,\an}/ \bb{G}_{m}^{\an} \to Y_{dR^+}$ is surjective as $\s{D}$-stacks with $Y= \AnSpec \n{B}$. For this, we can take affinoid spaces of the form $Y= \AnSpec \n{B}$ with $\n{B}= \bb{Q}_p \langle \underline{X} \rangle_{\sol} \langle \bb{N}[K] \rangle$, where $\underline{X}$ is a finite set of variables and $K$ is a profinite set. Then, the surjection of $Y \to Y_{dR^+}$ is a consequence of Proposition \ref{PropLiftingOverconvergentAlgebrasLift}. 
\end{proof}

\begin{theorem}[Six functors for analytic $D$-modules]
\label{TheoSixFunctorsanDmodules}
Let us write $\bb{Q}_p(T)$ for $\bb{Q}_p\langle T\rangle_{\sol}$, $\bb{Q}_p\langle T \rangle$ and $\bb{Q}_p\langle T \rangle^{\dagger}$.  Let $f: X \to Y$ be a morphism of derived Tate adic spaces over $\bb{Q}_p$ locally of $\bb{Q}_p(T)$-finite presentation, and let $f_{dR^+}: X_{dR^+}\to Y_{dR^+}^{\alg}$ be the associated morphism of algebraic filtered de Rham stacks. Then $f_{dR^+}$ admits $!$-functors. Furthermore, if $f$ is solid smooth (resp. solid \'etale), then $f_{dR^+}$ is cohomologically smooth (resp. cohomologically \'etale).   Moreover, the formation of the filtered de Rham stack satisfies descent for solid smooth covers, namely, solid smooth maps $f$ such that $f^*$ is conservative (cf. Proposition \ref{PropositionSmoothDescent}).  Finally, if $f$ is a qcqs morphism of finite presentation of $\bb{Q}_p\langle T \rangle$ or $\bb{Q}_p\langle T \rangle^{\dagger}$-adic spaces, the map $f_{dR^+}: X_{dR^+} \to Y_{dR^+}$ is co-smooth. 
\end{theorem}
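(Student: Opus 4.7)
The plan is to mirror the proof of Theorem \ref{TheoSixFunctorsAlgebraicDMod}, with Lemma \ref{LemmaKeyCasesAnalyticdeRham} and Lemma \ref{LemmaDescentAnalyticDmod} playing the roles of their algebraic counterparts, and to deduce claim (4) as additional structure coming from the ``properness'' of the analytic closed disc coordinate theories.

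First I would verify the hypotheses of Proposition \ref{PropositionDevisageSixFiniteType} applied to the functor $F_S:\PSh(\Aff^b_{\bb{Q}_p})\to \Sh_{\s{D}}(\Aff^b_{\bb{Q}_p})_{/S_{dR^+}}$, $X\mapsto X_{dR^+,S}$. Commutation with finite limits and arbitrary coproducts follows from Proposition \ref{PropAnDeRhamStack} (3). Lemma \ref{LemmaDescentAnalyticDmod} (2) and (3) show that $\bb{Q}_p(T)$-analytic covers are sent to $\s{D}$-covers for each of the three coordinate theories. The two generating morphisms $\AnSpec \bb{Q}_p\rightleftarrows \AnSpec \bb{Q}_p(T)$ are sent to maps admitting $!$-functors by Lemma \ref{LemmaKeyCasesAnalyticdeRham} (1), and the symmetric power generators $\AnSpec \bb{Q}_p\rightleftarrows\AnSpec \Sym^{\bullet}_{\bb{Q}_p}\bb{Q}_p[n]$ by part (2) of the same lemma. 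Proposition \ref{PropositionDevisageSixFiniteType} then yields claim (1), and claim (3) is immediate from Lemma \ref{LemmaDescentAnalyticDmod} (2).

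For claim (2), since being cohomologically smooth or \'etale is analytic-local on both target and source (Lemma \ref{LemmaStabilitycoSmoothCompositionPullback}, Corollary \ref{CorollaryDescentSmoothProperCovers}) and since the formation of $X_{dR^+}$ commutes with finite limits, I would reduce to the case of a standard solid smooth or \'etale map of bounded affinoid rings over $\bb{Q}_p$ using Theorem \ref{TheoFormalSmoothnesvsSmoothness} and Corollary \ref{CoroEtaleMapsStacks}; this case is exactly Lemma \ref{LemmaDescentAnalyticDmod} (1).

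For claim (4), the key observation is that the last part of Lemma \ref{LemmaKeyCasesAnalyticdeRham} (1)(c) and (2) upgrade the $!$-functor existence statements of (1) and (2) to \emph{weak cohomological properness} when we restrict to the coordinate theories $\bb{Q}_p\langle T\rangle$ and $\bb{Q}_p\langle T\rangle^{\dagger}$, and weakly cohomologically proper maps are co-smooth with trivial proper dualizing sheaf by the discussion after Definition \ref{DefinitionWeaklyProper}. Since co-smoothness is stable under compositions and base change (Lemma \ref{LemmaStabilitycoSmoothCompositionPullback}), the d\'evissage of Proposition \ref{PropositionDevisageSixFiniteType} realizing a morphism of finite presentation as iterated pushouts along the generating maps produces a co-smooth map $f_{dR^+}$. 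Finally one can glue along the $\s{D}$-target-locality of co-smoothness, and invoke source-locality along the descendable $\bb{Q}_p\langle T\rangle$- or $\bb{Q}_p\langle T\rangle^{\dagger}$-rational covers supplied by Lemma \ref{LemmaDescentAnalyticDmod} (3) together with Proposition \ref{PropLocalSourceTarget} (2.ii).

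The main obstacle will be bookkeeping in claim (4): one must ensure that the finite-presentation d\'evissage really produces a chain of base changes of the \emph{weakly} cohomologically proper maps from Lemma \ref{LemmaKeyCasesAnalyticdeRham} rather than just $!$-admitting maps, and that local-to-global for co-smoothness along the source goes through for these specific rational covers in the filtered setting (where the Hodge parameter must also be tracked through the various Cartier-dual identifications implicit in the proofs of Propositions \ref{PropositionDAggerCartier1}--\ref{PropositionDAggerCartier2}).
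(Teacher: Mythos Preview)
Your proposal is correct and mirrors the paper's proof, which transfers the argument of Theorem \ref{TheoSixFunctorsAlgebraicDMod} to the analytic setting via Lemmas \ref{LemmaKeyCasesAnalyticdeRham} and \ref{LemmaDescentAnalyticDmod}, and handles claim (4) by reducing to the affinoid case through Lemma \ref{LemmaDescentAnalyticDmod} (3) and then inducting on the weakly cohomologically proper building blocks of Lemma \ref{LemmaKeyCasesAnalyticdeRham}. Your worry about Cartier-duality bookkeeping in the filtered setting is overcautious: that input is already absorbed into Lemma \ref{LemmaKeyCasesAnalyticdeRham}, which is stated at the $dR^+$ level, so only the stability of co-smoothness under composition and base change (Lemma \ref{LemmaStabilitycoSmoothCompositionPullback}) is needed for the d\'evissage.
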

\begin{proof}
This follows the same argument of Theorem \ref{TheoSixFunctorsAlgebraicDMod}, by replacing Lemmas \ref{LemmaAlgdeRhamKeyCases} and \ref{LemmaDescentSmoothAlgdeRhamStack} with Lemmas \ref{LemmaKeyCasesAnalyticdeRham} and \ref{LemmaDescentAnalyticDmod} respectively. For the last statement about qcqs morphisms of finite presentation, by Lemma \ref{LemmaDescentAnalyticDmod} (3) it suffices to prove the claim when $f$ is a morphism of finite presentation of affinoid rings, this case follows from Lemma \ref{LemmaKeyCasesAnalyticdeRham} and an inductive argument. 
\end{proof}

\subsubsection{Comparison with algebraic $D$-modules}

Let $X \to Y$ be a solid smooth morphism of derived Tate adic spaces over $\bb{Q}_p$. Consider the relative algebraic de Rham stack $X_{dR^+,Y}^{\alg}$ and let $X_{dR^+,Y}^{\alg'}$ be its pullback to $\bb{A}^{1,\an}/ \bb{G}_{m}^{\an} \to \bb{A}^1/ \bb{G}_m$, or equivalently,  its restriction to $\Aff^{b}_{\bb{Q}_p}$.  The definition at the level of points yields a natural map of de Rham stacks $X_{dR^+,Y}^{\alg'} \to X_{dR^+,Y}$. In the following paragraph we will show that the category of analytic $D$-modules of $X$ over $Y$ embeds fully faithful in the category of algebraic $D$-modules of $X$ over $Y$. We will also deduce that the de Rham cohomology is the same when computed with the algebraic or analytic de Rham stacks. 

\begin{prop}
Let $f:X \to Y$ be a solid smooth morphism of derived Tate adic spaces over $\bb{Q}_p$. Then the natural map $g:X_{dR^+,Y}^{alg'} \to X_{dR^+,Y}$ is cohomologically co-smooth. Furthermore, the natural map 
\begin{equation}
\label{eqComparisonAlgAnDmod}
 1_{X_{dR^+,Y}} \to g_* 1_{X_{dR^+,Y}}
\end{equation}
is an equivalence. In particular, $g^*:\Mod_{\sol}(X_{dR^+,Y}) \to \Mod_{\sol}(X_{dR^+,Y}^{\alg,'})$ is a fully faithful embedding. 
\end{prop}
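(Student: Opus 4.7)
The plan is to use analytic descent to reduce to a universal affine case, and then to perform the computation using the Cartier duality results of Section \ref{SectionCartierDualityVectorBundles}. By the descent properties of both the algebraic filtered de Rham stack (Theorem \ref{TheoSixFunctorsAlgebraicDMod}) and the analytic filtered de Rham stack (Theorem \ref{TheoSixFunctorsanDmodules}) for solid smooth $\s{D}$-covers, the claim is local on $X$ and $Y$ in the analytic topology. By Theorem \ref{TheoFormalSmoothnesvsSmoothness}, locally $f$ factors as a standard solid \'etale map $e: X \to Y \times \bb{G}_{a,\sol}^d$ followed by the projection $\pi: Y\times \bb{G}_{a,\sol}^d \to Y$. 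Since $e$ is $\dagger$-formally \'etale by Proposition \ref{PropFormallyInftSmoothEtale}, Propositions \ref{PropAlgebraicDeRham}(1) and \ref{PropAnDeRhamStack}(1) show that along $e$ both the algebraic and analytic relative filtered de Rham stacks collapse to $X\times \bb{A}^{1,\an}/\bb{G}_m^{\an}$, so it suffices to prove the statement for $\pi$. By the compatibility of $(-)_{dR^+}$ and $(-)_{dR^+}^{\alg}$ with finite products, one further reduces to the case $d=1$, i.e., $X = Y \times \bb{G}_{a,\sol}$.

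In this universal case, Lemmas \ref{LemmaAlgdeRhamKeyCases}(1) and \ref{LemmaKeyCasesAnalyticdeRham}(1) give explicit descriptions
\[
X_{dR^+,Y}^{\alg'} = (Y\times \bb{G}_{a,\sol}\times \bb{A}^{1,\an}/\bb{G}_m^{\an})/\widehat{\bb{G}}_a(-1), \qquad X_{dR^+,Y} = (Y\times \bb{G}_{a,\sol}\times \bb{A}^{1,\an}/\bb{G}_m^{\an})/\bb{G}_a(-1)^{\dagger},
\]
where both groups act diagonally by translation on the $\bb{G}_{a,\sol}$-factor. The map $g$ is induced by the inclusion $\widehat{\bb{G}}_a(-1) \hookrightarrow \bb{G}_a(-1)^{\dagger}$ and is a torsor under the commutative group stack $K := \bb{G}_a(-1)^{\dagger}/\widehat{\bb{G}}_a(-1)$. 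To analyze this torsor, we use Cartier duality: by Theorem \ref{TheoAlgebraicCartierDuality}, the classifying stack $B\widehat{\bb{G}}_a(-1)$ is Fourier-Mukai dual to the algebraic affine line $\bb{G}_a(1)$ (with $(1)$-twist); by Theorem \ref{TheoremAnalyticCartierII}, $B\bb{G}_a(-1)^{\dagger}$ is Fourier-Mukai dual to the analytification $\bb{G}_a(1)^{\an}$. Under these equivalences, the functor $g_*$ corresponds to the pushforward along the natural analytification morphism $j: \bb{G}_a(1)^{\an} \to \bb{G}_a(1)$, which is an open immersion in the sense of locales (the complement of the idempotent algebra $\bb{Q}_p\{T^{-1}\}^{\dagger}[T]$). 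Consequently $j$ is cohomologically co-smooth with $j_* 1_{\bb{G}_a(1)^{\an}} = 1_{\bb{G}_a(1)}$, translating to the co-smoothness of $g$ and the equivalence $1 \xrightarrow{\sim} g_* 1_{X_{dR^+,Y}^{\alg'}}$. Combined with the projection formula for $g_*$ (inherited from that of $j_*$), this yields that $g^*$ is fully faithful.

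The most delicate step is the identification of $g_*$ with $j_*$ under the Cartier duality equivalences: one must trace the Fourier-Mukai kernels of Theorems \ref{TheoAlgebraicCartierDuality} and \ref{TheoremAnalyticCartierII} and verify that they are compatible with the natural comparison map $g$ arising from $\Nil \subset \Nil^{\dagger}$. An alternative, more hands-on approach avoiding this identification would directly compute $g_* 1$ using the Koszul and de Rham resolutions of Lemmas \ref{LemmaDeRhamKoszulAlgebraic} and \ref{LemmaDeRhamKoszulDagger2}, comparing the two filtered objects graded-piece by graded-piece (they agree on graded pieces, by Theorem \ref{TheoHodgeFiltration} and its analytic analogue, both giving $\Omega^i_{X/Y}$) and then upgrading the graded comparison to a filtered equivalence via a completeness argument for the Hodge filtration on both sides.
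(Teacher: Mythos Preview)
Your reduction to the case $X = Y \times \bb{G}_{a,\sol}$ is along the right lines, though the paper is more careful: it first arranges (by base-changing $Y$ to a ring of the form $\bb{Q}_p\langle\underline{X}\rangle_{\sol}\langle\bb{N}[K]\rangle$) that $Y\times\bb{A}^{1,\an}/\bb{G}_m^{\an}\to Y_{dR^+}$ is surjective, and then uses a cartesian-diagram argument to show $X_{dR^+,Y}^{\alg'} = X_{dR^+,Y}\times_{Z_{dR^+,Y}} Z_{dR^+,Y}^{\alg'}$ for $Z = Y\times\bb{G}_{a,\sol}^d$, which reduces to the case $X=Z$ by proper base change. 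Your invocation of Propositions \ref{PropAlgebraicDeRham}(1) and \ref{PropAnDeRhamStack}(1) to collapse along $e$ is the right idea but does not by itself yield this cartesian square.

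The genuine gap is in the Cartier duality step. Your claim that the analytification map $j:\bb{G}_a(1)^{\an}\to\bb{G}_a(1)$ is ``cohomologically co-smooth with $j_*1=1$'' is false: $j$ is an open immersion in the sense of locales, hence cohomologically \'etale ($j^!=j^*$), but for open immersions $j_!\neq j_*$ in general, and here $j_*1_{\bb{G}_a(1)^{\an}}$ is the Fr\'echet algebra of entire functions $\varprojlim_n\bb{Q}_p\langle T/p^n\rangle$, not $\bb{Q}_p[T]$. Moreover, the Fourier--Mukai equivalences of Theorems \ref{TheoAlgebraicCartierDuality} and \ref{TheoremAnalyticCartierII} are $\Mod_{\sol}(X)$-linear but \emph{not} symmetric monoidal, so even a correct identification of $g_*$ with a functor on the dual side would not directly transfer co-smoothness. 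There is also a direction issue: the dualities give $\Mod(B\widehat{\bb{G}}_a(-1))\cong\Mod(\bb{G}_a(1))$ and $\Mod(B\bb{G}_a(-1)^{\dagger})\cong\Mod(\bb{G}_a(1)^{\an})$, so $g_*$ would correspond to a functor $\Mod(\bb{G}_a(1))\to\Mod(\bb{G}_a(1)^{\an})$, i.e.\ to $j^*$ rather than $j_*$.

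The paper instead proceeds by a direct computation that avoids tracing through two Fourier--Mukai kernels. After pulling back along the $\s{D}$-cover $\bb{G}_{a,\sol}\times\bb{A}^{1,\an}/\bb{G}_m^{\an}\to X_{dR^+,Y}$ and using the translation action to trivialize the resulting torsor, one is reduced (by base change) to showing that $\bb{V}(\s{F})^{\dagger}/\widehat{\bb{V}(\s{F})}\to X$ is co-smooth with trivial pushforward of the unit. This follows by factoring through $X/\widehat{\bb{V}(\s{F})}\to X$: the first map is weakly cohomologically proper (induced analytic structure) and the second is co-smooth by Proposition \ref{PropAlgebraicCartier2}(3), while $g_*1$ is the de Rham cohomology of $\bb{G}_a^{\dagger}$, which is $\bb{Q}_p$ by the Poincar\'e lemma. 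Your ``alternative, more hands-on approach'' at the end gestures in this direction but is less direct than the paper's argument.
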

\begin{proof}
Both statements are local in the analytic topology of $X$ and $Y$, hence we can assume that $f$ is a standard solid smooth morphism of bounded affinoid spaces. Both claims are also preserved by base change on $Y$, so we can assume without loss of generality that $Y= \AnSpec \n{B}$ with $\n{B}= \bb{Q}_p \langle \underline{X} \rangle \langle \bb{N}[K] \rangle$, with $X$ a finite set of variables and $K$ a profinite set.  We can factor $X \to Z \to Y$ where $X\to Z$ is standard solid \'etale and $Z= Y \times \bb{G}_{a,\sol}^{d}$. We then have Cartesian diagrams 
\[
\begin{tikzcd} 
X\times \bb{A}^{1,\an}/ \bb{G}_m^{\an} \ar[r] \ar[d]& Z \times \bb{A}^{1,\an}/ \bb{G}_m^{\an} \ar[d] & &  X \times \bb{A}^{1,\an}/ \bb{G}_m^{\an} \ar[r] \ar[d]& Z  \times \bb{A}^{1,\an}/ \bb{G}_m^{\an} \ar[d]\\
X_{dR^+,Y}\ar[r] &   Z_{dR^+,Y} & & X_{dR^+,Y}^{\alg}\ar[r] &   Z_{dR^+,Y}^{\alg'}.
\end{tikzcd}
\]
Since $Z \times \bb{A}^{1,\an}/ \bb{G}_m^{\an} \to Z_{dR^+,Y}^{\alg'}$ and $X\times \bb{A}^{1,\an}/ \bb{G}_m^{\an} \to X_{dR^+,Y}^{\alg'}$ are surjective, we have that $X_{dR^+,Y}^{\alg'}= X_{dR^+,Y} \times_{Z_{dR^+,Y}} Z_{dR^+,Y}^{\alg'}$. Thus, by proper base change, we are reduced to consider the case of $X= Y\times \bb{G}_{a,\sol}^n$.  By proper base change, the statement is also stable under fiber products over $Y$, so it suffices to consider $X= Y \times \bb{G}_{a,\sol}$, and by base change assume that $Y=*$. By Lemmas \ref{LemmaAlgdeRhamKeyCases} and \ref{LemmaKeyCasesAnalyticdeRham} we have the explicit descriptions
\[
\bb{G}_{a,\sol,dR^+}^{\alg'}= (\bb{G}_{a,\sol}\times \bb{A}^{1,\an}/\bb{G}_m^{\an})/ \widehat{\bb{G}}_a(-1) 
\]
and 
\[\bb{G}_{a,\sol,dR^+}= (\bb{G}_{a,\sol}\times \bb{A}^{1,\an}/\bb{G}_m^{\an})/ \bb{G}_a(-1)^{\dagger}.  
\]
Consider the cartesian square
\[
\begin{tikzcd}
\left((\bb{G}_{a,\sol}\times \bb{A}^{1,\an}/ \bb{G}_m^{\an}) \times_{\bb{A}^{1,\an}/ \bb{G}_m^{\an}} \bb{G}_a(-1)^{\dagger} \right) /\widehat{\bb{G}}_{a}(-1) \ar[r, "h"] \ar[d] & (\bb{G}_{a,\sol}\times \bb{A}^{1,\an}/ \bb{G}_m^{\an}) \ar[d] \\ 
\bb{G}_{a,\sol,dR^+}^{\alg'}\ar[r] & \bb{G}_{a,\sol.dR^+} 
\end{tikzcd}
\] 
where $\widehat{\bb{G}}_{a}(-1)$ acts on the fiber product diagonally.  Then, to show that $g$ is co-smooth and that \eqref{eqComparisonAlgAnDmod} is an equivalence, it suffices to prove the analogue statements for the map $h$. We have an equivalence
\[
\left((\bb{G}_{a,\sol}\times \bb{A}^{1,\an}/ \bb{G}_m^{\an}) \times_{\bb{A}^{1,\an}/ \bb{G}_m^{\an}} \bb{G}(-1)^{\dagger} \right) /\widehat{\bb{G}}_{a}(-1) \cong  \bb{G}_{a,\sol} \times \bb{G}_a(-1)^{\dagger}/ \widehat{\bb{G}}_a(-1)
\]
induced from the action map by translations $\bb{G}_{a}(-1)^{\dagger}\times  (\bb{G}_{a,\sol}\times_{\bb{A}^{1,\an}/ \bb{G}_m^{\an}} \bb{A}^{1,\an}/\bb{G}_m) \to (\bb{G}_{a,\sol}\times \bb{A}^{1,\an}/\bb{G}_m)$. Under this equivalence, the action map becomes the projection 
\[
\bb{G}_{a,\sol} \times (\bb{G}_{a}(-1)^{\dagger}/ \widehat{\bb{G}}_{a}(-1)) \to \bb{G}_{a}\times \bb{A}^{1,\an}/ \bb{G}_m^{\an}. 
\]
Thus, by  base change, we are reduce to prove the claims for the map 
\[
(\bb{G}_{a}(-1)^{\dagger}/ \widehat{\bb{G}}_{a}(-1))  \to  \bb{A}^{1,\an}/ \bb{G}_m^{\an}.
\]
This follows from the following lemma 
\begin{lemma}
Let $X$ be an analytic  $\s{D}$-stack  over $\bb{Q}_p$ and $\s{F}$ a vector bundle over $X$ of rank $d$. Consider the quotient $\bb{V}(\s{F})^{\dagger}/ \widehat{\bb{V}(\s{F}) } $ where $\widehat{\bb{V}(\s{F})}$ acts by translations.   Then the  morphism 
\[
g:\bb{V}(\s{F})^{\dagger,\alg}_{dR,X}= \bb{V}(\s{F})^{\dagger}/ \widehat{\bb{V}(\s{F}) } \to X
\]
is co-smooth and the natural map 
\[
1_{X} \to g_* 1_{\bb{V}(\s{F})^{\dagger}/ \widehat{\bb{V}(\s{F}) }}
\]
is an equivalence. 
\end{lemma}
\begin{proof}
By proper base change we can reduce to the universal case $X=*/\GL_d$ and $\s{F}= \St$. Furthermore, since $* \to */\GL_d$ is surjective, it suffices to prove the claim after taking pullbacks to $*$. Then $\St \cong \bb{Q}_p^d$ and since the claim holds after finite fiber products, it suffices to consider the case of $h:\bb{G}_a^{\dagger}/ \widehat{\bb{G}}_{a} \to *$. But the map $h$ factors as  $\bb{G}_a^{\dagger}/ \widehat{\bb{G}}_{a}  \to */ \widehat{\bb{G}}_a \to *$, the first arrow is weakly cohomologically proper since it has the induced analytic structure, and the second is co-smooth by Proposition \ref{PropAlgebraicCartier2}; one deduces that $h$ is itself co-smooth. Finally, $g_* 1_{\bb{G}_{a}^{\dagger}/ \widehat{\bb{G}}_{a}}$ is nothing but the de Rham cohomology of $\bb{G}_{a}^{\dagger}$ which is equal to $\bb{Q}_p$ by the Poincar\'e lemma.  
\end{proof}
\end{proof}

\begin{corollary}
\label{CorollaryComparisonDeRhamCohomology}
Let $f:X \to Y$ be a solid smooth morphism of derived Tate adic spaces, and let $f_{dR^+,Y}: X_{dR^+,Y}\to Y \times \bb{A}^{1,\an}/ \bb{G}_m^{\an}$. Then $f_{dR^+,Y,*} 1_{X_{dR^+,Y}}$ is filtered complete and equal to the de Rham cohomology $DR(X/Y)$, namely, the dual of the de Rham cohomology with compact supports of Theorem \ref{TheoHodgeFiltration} (see Remark \ref{remarkDeRhamcohomology}).
\end{corollary}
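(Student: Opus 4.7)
The plan is to leverage the comparison between analytic and algebraic relative de Rham stacks established in the preceding proposition, which lets us transport the Hodge-filtration computation of Theorem \ref{TheoHodgeFiltration} to the analytic setting. Concretely, one factors the structure morphism as
$$f^{\alg'}_{dR^+,Y}: X^{\alg'}_{dR^+,Y} \xrightarrow{g} X_{dR^+,Y} \xrightarrow{f_{dR^+,Y}} Y \times \bb{A}^{1,\an}/\bb{G}_{m}^{\an},$$
where $X^{\alg'}_{dR^+,Y}$ denotes the restriction of the algebraic relative de Rham stack to $\Aff^{b}_{\bb{Q}_{p}}$.

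First, I would combine the equivalence $1_{X_{dR^+,Y}} \simeq g_{*} 1_{X_{dR^+,Y}^{\alg'}}$ supplied by the preceding proposition with the factorisation $f^{\alg'}_{dR^+,Y} = f_{dR^+,Y} \circ g$ to obtain the identification
$$f_{dR^+,Y,*} 1_{X_{dR^+,Y}} \simeq f_{dR^+,Y,*} g_{*} 1_{X_{dR^+,Y}^{\alg'}} \simeq f^{\alg'}_{dR^+,Y,*} 1_{X_{dR^+,Y}^{\alg'}}.$$
This reduces the statement to proving that $f^{\alg'}_{dR^+,Y,*} 1$ is filtered complete with graded pieces $\Omega^{i}_{X/Y}$. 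Since $X^{\alg'}_{dR^+,Y}$ is by construction the pullback of $X^{\alg}_{dR^+,Y}$ along the morphism $Y \times \bb{A}^{1,\an}/\bb{G}_{m}^{\an} \to Y \times \bb{A}^{1}/\bb{G}_{m}$, base change identifies this pushforward with the corresponding pullback of the algebraic de Rham cohomology, which is precisely the dual of $DR_{c}(X/Y)$ appearing in Theorem \ref{TheoHodgeFiltration}. Dualising the filtered completeness and the Hodge identification of graded pieces (as indicated in Remark \ref{remarkDeRhamcohomology}) then gives the desired statement.

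The main obstacle I foresee is justifying that filtered completeness is genuinely preserved by the analytification base change on $\bb{A}^{1}/\bb{G}_{m}$, since this map is neither cohomologically smooth nor proper in a naive sense. The cleanest way to avoid this difficulty is to check completeness directly on the analytic side: working locally in the analytic topology on $X$ and $Y$, one can assume that $f$ is standard solid smooth and factor $X \to Y$ through $Y \times \bb{G}_{a,\sol}^{d} \to Y$, so that the required completeness reduces, by the analogue of the Koszul argument in the proof of Theorem \ref{TheoHodgeFiltration}, to the case of a single coordinate. There, Lemmas \ref{LemmaKeyCasesAnalyticdeRham} and the analytic Cartier duality of Theorem \ref{TheoremAnalyticCartierII} (playing the role of Theorem \ref{TheoAlgebraicCartierDuality}) give an explicit description of $f^{\alg'}_{dR^+,Y,*} 1$ as a filtered complete object with graded pieces $\Omega^{i}_{X/Y}$, completing the argument.
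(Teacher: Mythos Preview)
Your proposal is correct and follows essentially the same approach as the paper. The paper's proof is the bare three-line chain of equalities you give in your first displayed computation: factor $f^{\alg'}_{dR^+,Y}$ through $g$ (called $h$ in the paper), apply $1 \simeq g_{*}1$ from the preceding proposition, and conclude $DR(X/Y) = f^{\alg}_{dR^+,Y,*}1 = f_{dR^+,Y,*}g_{*}1 = f_{dR^+,Y,*}1$; the filtered completeness and Hodge filtration are then inherited directly from Theorem~\ref{TheoHodgeFiltration} and Remark~\ref{remarkDeRhamcohomology}, so your ``main obstacle'' paragraph and the alternative Koszul argument on the analytic side are unnecessary elaboration.
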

\begin{proof}
Consider the commutative diagram 
\[
\begin{tikzcd}
X_{dR^+,Y}^{\alg'} \ar[r, "h"] \ar[rd, "f_{dR^+,Y}^{\alg}"'] & X_{dR^+,Y} \ar[d, "f_{dR^+,Y}"] \\ 
& Y
\end{tikzcd}
\]
Then 
\[
DR(X/Y)  = f_{dR^+,Y,*}^{\alg} 1   = f_{dR^+,Y,*} h_* 1= f_{dR^+,Y,*} 1. 
\]
\end{proof}

\subsection{Poincar\'e duality for  $D$-modules}
\label{SubsectionPoincareDmodules}

Next, we prove Poincar\'e duality for filtered algebraic and analytic $D$-modules. The strategy is similar as for coherent cohomology by taking the deformation to the normal cone. We shall adapt \cite[\S 4]{zavyalov2023poincare} to derived Tate adic spaces. 

\begin{definition}[{\cite[Definition 4.2.1]{zavyalov2023poincare}}]
Let $\s{C}$ be the category of derived Tate adic spaces over $\bb{Q}_p$. A six functor formalism $\s{D}$ on $\s{C}$ is \textit{premotivic} if the following hold: 
\begin{enumerate}
\item  It is $\bb{A}^{1,\an}$-acyclic, i.e., if  we denote $f: \bb{A}^{1,\an}\to *$, then the natural map $1 \to f_* 1_{\bb{A}^{1,\an}}$ is an equivalence in $\s{D}(*)$. 

\item Any any solid smooth morphism $f: X \to Y$ is cohomologically smooth with respect to $\s{D}$. 

\end{enumerate}
\end{definition}

\begin{remark}
Theorems \ref{TheoSixFunctorsAlgebraicDMod} and \ref{TheoSixFunctorsanDmodules} imply that  solid smooth maps are cohomologically smooth for the six functors $\s{D}_{dR^+}^{\alg}$ and $\s{D}_{dR^+}$. Furthermore, de Rham cohomology of the analytic affine line $\bb{A}^{1,\an}$ is trivial, by  Theorem \ref{TheoHodgeFiltration} and Corollary \ref{CorollaryComparisonDeRhamCohomology} we deduce that both $\s{D}_{dR^+}^{\alg}$ and $\s{D}_{dR^+}$ are motivic.  
\end{remark}

For a symmetric monoidal category $\s{E}$ let  $\Pic(\s{E})$ denote the full subcategory consisting on invertible objects. 

\begin{lemma}[{\cite[Lemma 2.1.11]{zavyalov2023poincare}}]
Let $\s{D}$ be a premotivic six functor formalism on $\s{C}$, $X\in \s{C}$ and $f:X\times \bb{A}^{1,\an}\to X$. Then the pullback functor  
\[
f^*:\Pic(\s{D}(X)) \to  \Pic(\s{D}(X\times \bb{A}^{1,\an})) 
\]
is fully faithful. 
\end{lemma}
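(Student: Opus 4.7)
The plan is to exploit the symmetric monoidality of $f^{*}$ and the rigidity of invertible objects to reduce the problem to the sheaf-theoretic statement that $1_{X} \to f_{*}1_{X\times \bb{A}^{1,\an}}$ is an equivalence. For any $\n{L},\n{M} \in \Pic(\s{D}(X))$, the adjunction $(f^{*},f_{*})$ gives a natural identification
\[
\Map_{\s{D}(X\times \bb{A}^{1,\an})}(f^{*}\n{L},f^{*}\n{M}) \simeq \Map_{\s{D}(X)}(\n{L}, f_{*}f^{*}\n{M}).
\]
Since $\n{M}$ is invertible, hence dualizable, the projection formula (which holds for $f_{*}$ with respect to any dualizable object pulled back from the target, as a formal consequence of $f^{*}$ being closed symmetric monoidal) yields $f_{*}f^{*}\n{M} \simeq \n{M}\otimes f_{*}1_{X\times \bb{A}^{1,\an}}$. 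Thus the fully faithfulness claim reduces to showing the unit map $1_{X}\to f_{*}1_{X\times \bb{A}^{1,\an}}$ is an equivalence.

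For this relative version of $\bb{A}^{1,\an}$-acyclicity, the approach is to apply smooth base change to the cartesian square
\[
\begin{tikzcd}
X\times \bb{A}^{1,\an} \ar[r,"g'"] \ar[d,"f"] & \bb{A}^{1,\an}\ar[d,"p"]\\
X \ar[r,"q"] & *.
\end{tikzcd}
\]
The map $p$ is solid smooth and hence cohomologically smooth by premotivicity, so the Beck--Chevalley transformation $q^{*}p_{*}\to f_{*}g^{'*}$ is an equivalence; this is the standard smooth base change, which may be deduced from proper base change for $p_{!}$ (a formal consequence of the six functor formalism, cf.\ Proposition~\ref{PropLocalTargetBaseChange}) together with the identification $p^{!}\simeq p^{*}\otimes \omega_{p}$ with $\omega_{p}$ invertible. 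Evaluating at $1_{\bb{A}^{1,\an}}$ and invoking the absolute $\bb{A}^{1,\an}$-acyclicity $p_{*}1 \simeq 1_{*}$ built into the definition of premotivicity, one obtains $f_{*}1_{X\times \bb{A}^{1,\an}} \simeq q^{*}1_{*} = 1_{X}$, completing the argument.

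The main obstacle is establishing smooth base change for the $*$-pushforward in the abstract six-functor framework. While proper base change for the lower-shriek functors holds formally, upgrading it to smooth base change for $f_{*}$ relies on the interplay with cohomological smoothness via the isomorphism $p^{!}\simeq p^{*}\otimes \omega_{p}$. This is a standard consequence of the axioms, but one should verify that the precise formulation is available in the references at the generality of premotivic six functor formalisms over $\s{C}$; alternatively, one can bypass this by rewriting $f_{*}$ in terms of $f_{!}$ using smooth duality and then applying proper base change directly to the cartesian square.
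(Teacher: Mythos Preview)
Your reduction to showing that the unit $1_X \to f_*1_{X\times\bb{A}^{1,\an}}$ is an equivalence is correct and is indeed the heart of the matter; the projection formula argument for dualizable $\n{M}$ is clean. The paper simply defers to Zavyalov at this point, so the comparison is really with the argument in \cite{zavyalov2023poincare}.

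The gap is in your base-change step. The Beck--Chevalley isomorphism $q^*p_* \xrightarrow{\sim} f_*g'^*$ that you invoke is \emph{not} the standard smooth base change: that result requires the \emph{pullback} morphism (here $q$) to be cohomologically smooth, whereas you are using smoothness of the \emph{pushforward} morphism $p$. Concretely, smooth base change in the six-functor framework is obtained from the adjoint form $g^!f_* \simeq f'_*g'^!$ of proper base change together with $g^! \simeq g^*\otimes\omega_g$, and this only yields $q^*p_* \simeq f_*g'^*$ when $q$ is smooth. For an arbitrary $X$ the map $q:X\to *$ need not even admit $!$-functors, so there is no mechanism to push the argument through in that direction.

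Your proposed bypass also fails: cohomological smoothness gives $p^! \simeq p^*\otimes\omega_p$, hence by adjunction $p_!(-\otimes\omega_p)$ is a \emph{left} adjoint of $p^*$ (i.e.\ a $p_\#$), not the right adjoint $p_*$. For a smooth but non-proper map such as $\bb{A}^{1,\an}\to *$, the functors $p_!$ and $p_*$ are genuinely different and not related by an invertible twist, so proper base change for $p_!$ does not transfer to $p_*$. The honest fix is to take $\bb{A}^{1,\an}$-acyclicity \emph{relatively} (for all $X$) as part of the premotivic hypothesis, which is how Zavyalov's Lemma 2.1.11 is set up; the paper's Definition stating it only over $*$ is a slight imprecision, and in the applications to $\s{D}_{dR^+}^{\alg}$ and $\s{D}_{dR^+}$ the relative version follows directly from Theorem~\ref{TheoHodgeFiltration} and Corollary~\ref{CorollaryComparisonDeRhamCohomology}.
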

\begin{proof}
The same proof of \textit{loc. cit.} applies. 
\end{proof}

\begin{definition}
\label{DefinitionDeformationNormalCone}
Let $\s{D}$ be a six functor formalism on $\s{C}$.  Let $f:X \to Z$ be a solid smooth morphism  and let $s:Z \to X$ be a Zariski closed immersion.

\begin{enumerate} 

\item We denote $C(f,s):= s^*f^! 1_{Z}\in \s{D}(Z)$. 

\item For a vector bundle $\s{F}$ over $X$ with projection $f:\bb{V}(\s{F})^{\an}\to X$ and zero section $s:X \to \bb{V}(\s{F})^{\an}$, we let $C_{X}(\s{F}):=C(f,s)$. 

\item Suppose that   $\s{O}_Z$ is a perfect $\s{O}_X$-module locally in the analytic topology. By Remark \ref{RemarkDeformationNormalCone} we can form the deformation to the normal cone 
\[
 Z\times \bb{P}^1 \to \widetilde{X}  \to Z \times \bb{P}^1
\]
living over $\bb{P}^1$. We let $D_{Z}(X)$ denote the pullback of $\widetilde{X}$ to $\bb{A}^{1,\an}=\bb{P}^1\backslash \{\infty\}$; we get maps 
\[
Z \times \bb{A}^{1,\an} \xrightarrow{\widetilde{s}} D_Z(X) \xrightarrow{\widetilde{f}} Z \times \bb{A}^{1,\an}. 
\]
\end{enumerate}
\end{definition}

\begin{prop}[{\cite[Proposition 4.2.6]{zavyalov2023poincare}}]
\label{PropDeformationNormalGeneralSix}
Suppose that the six functor formalism  $\s{D}$ over $\s{C}$ is premotivic. Let $f:X \to Z$ be a solid smooth morphism with section $s:Z \to X$ such that     $\s{O}_Z$ is a perfect $\s{O}_X$-module locally in the analytic topology. Then, in the notation of Definition \ref{DefinitionDeformationNormalCone}, the object  
\[
 \widetilde{s}^*\widetilde{f}^! 1_{Z \times \bb{A}^{1,\an}} \in \Pic(\s{D}(Z \times \bb{A}^{1,\an}))
\]
lies in the essential image of $\Pic(\s{D}(Z)$). 
\end{prop}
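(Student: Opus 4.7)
The plan is to combine the $\bb{G}_m^{\an}$-equivariance of the deformation to the normal cone with the fully faithful embedding of Picard groups from the preceding lemma (which holds in any premotivic formalism), mimicking the argument of \cite[Proposition 4.2.6]{zavyalov2023poincare} for the \'etale setting adapted to our rigid-analytic context.

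First I would verify that $\n{L} := \widetilde{s}^{*}\widetilde{f}^{!} 1_{Z\times \bb{A}^{1,\an}}$ genuinely belongs to $\Pic(\s{D}(Z\times \bb{A}^{1,\an}))$. The hypothesis that $\s{O}_Z$ is a perfect $\s{O}_X$-module locally in the analytic topology is preserved by the Rees construction building $D_Z(X)$, because the ideal sheaf of $Z$ in $X$ is locally finitely generated; hence $\widetilde{s}$ is again a Zariski closed immersion with $\s{O}_{Z\times \bb{A}^{1,\an}}$ a perfect $\s{O}_{D_Z(X)}$-module locally. The premotivic analogue of Proposition \ref{PropositionLCI}(2), which only uses the existence of a dualizing line for a Zariski closed immersion with perfect structure sheaf and formal manipulations of the six functors, then gives invertibility of $\n{L}$.

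Next I would exploit the canonical $\bb{G}_m^{\an}$-action on $D_Z(X)$ coming from the grading on the Rees algebra, which lifts the multiplication action on $\bb{A}^{1,\an}$ and makes both $\widetilde{s}$ and $\widetilde{f}$ equivariant (with the trivial action on $Z$). Over the open $\bb{G}_m^{\an}\subset \bb{A}^{1,\an}$ the deformation is canonically and equivariantly identified with $X\times \bb{G}_m^{\an}$ with section $s\times \id$, so smooth base change produces a canonical equivariant isomorphism $\n{L}|_{Z\times \bb{G}_m^{\an}}\cong \pi_m^{*} C(f,s)$, where $\pi_m$ is the projection. Setting $\n{L}' := \n{L}\otimes \pi^{*} C(f,s)^{-1}$, the problem reduces to proving $\n{L}'\cong \s{O}_{Z\times \bb{A}^{1,\an}}$, and $\n{L}'$ comes equipped with a canonical $\bb{G}_m^{\an}$-equivariant trivialization over $Z\times \bb{G}_m^{\an}$.

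To conclude I would descend the $\bb{G}_m^{\an}$-equivariant invertible object $\n{L}'$ to an invertible object on the stack $Z\times (\bb{A}^{1,\an}/\bb{G}_m^{\an})$, and use the locale decomposition $\bb{A}^{1,\an}/\bb{G}_m^{\an} = \bb{G}_m^{\an}/\bb{G}_m^{\an} \cup */\bb{G}_m^{\an}$: the open part gives triviality of the underlying invertible object on $Z$, leaving only a character of $\bb{G}_m^{\an}$ at the closed stratum, which will be killed by the fully faithful embedding $\Pic(\s{D}(Z))\hookrightarrow \Pic(\s{D}(Z\times \bb{A}^{1,\an}))$ together with the canonical trivialization of $\n{L}'$ at the $1$-section. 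The main obstacle is to make this Rees-type classification of $\bb{G}_m^{\an}$-equivariant invertible objects rigorous in the abstract premotivic setting and to rigorously compare the behavior of $\n{L}$ at the $0$- and $1$-sections; both endpoints are explicitly computable in terms of the normal bundle via the Cartier duality machinery developed in Section \ref{SectionCartierDualityVectorBundles}, so the analysis reduces to a weight comparison that is expected to vanish by a direct computation with the dualizing sheaf of the analytic normal cone.
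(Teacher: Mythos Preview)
Your strategy via $\bb{G}_m^{\an}$-equivariance is genuinely different from the paper's. The paper instead reduces by \'etale localization to the universal case $X = Z \times \bb{G}_{a,\sol}^d$ with the zero section, and then by base change to $Z = *$; in that situation the claim is an explicit computation carried out in Step~3 of \cite[Proposition~4.2.6]{zavyalov2023poincare}. The reduction mirrors exactly the proof of the Claim inside the proof of Theorem~\ref{TheoSerreDuality}: one shrinks to an affinoid neighbourhood of the section where $f$ is standard solid smooth with a standard solid \'etale map $X \to Z \times \bb{G}_{a,\sol}^d$, arranges (after possibly refining $Z$) that the section becomes the zero section of the linear factor, and uses compatibility of the deformation to the normal cone with \'etale maps to collapse everything to the linear case.

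Your approach is more conceptual, but the obstacle you yourself flag is real and not easily bypassed without performing precisely the kind of reduction the paper carries out. Fully faithfulness of $\pi^*\colon \Pic(\s{D}(Z)) \hookrightarrow \Pic(\s{D}(Z\times \bb{A}^{1,\an}))$ only lets you identify two objects you already know to be pulled back; it does not force a $\bb{G}_m^{\an}$-equivariant invertible object trivialized over $Z\times \bb{G}_m^{\an}$ to be constant. To conclude you would need a structure result for $\Pic$ of $\s{D}(Z\times \bb{A}^{1,\an}/\bb{G}_m^{\an})$ in an arbitrary premotivic formalism, and establishing that (in particular controlling the weight at the closed stratum) essentially forces you back into the local linear computation the paper does directly. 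A minor side remark: to see that $\n{L}$ is invertible one does not need to invoke Proposition~\ref{PropositionLCI} for the abstract $\s{D}$; since $\widetilde{f}$ is solid smooth and $\s{D}$ is premotivic, $\widetilde{f}^{!}1$ is already invertible, hence so is its $\widetilde{s}^{*}$-pullback.
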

\begin{proof}
We perform the same series of reductions as in the proof of Theorem \ref{TheoSerreDuality}. In fact, we can assume that $Z $ is affinoid and replace $X$ by an open neighbourhood of the section of $X$. We can then assume that $X \to Z$ is standard solid smooth and that we have a standard solid \'etale map $X \to Z \times \bb{G}_{a,\sol}^d$. By further refining $Z$, we can even assume that the pullback of $Z \to X \to Z \times \bb{G}_{a,\sol}^d$ along $X \to Z\times \bb{G}_{a,\sol}$ is $Z$ itself, and reduce to the case where $X= Z\times \bb{G}_{a,\sol}^d$, see the proof of Theorem \ref{TheoSerreDuality}. By a change of coordinates,  we can suppose that $Z \to X$ is the zero section, and by base change that $Z=*$. This last case is covered in Step 3 of {\cite[Proposition 4.2.6]{zavyalov2023poincare}}.
\end{proof}

\begin{corollary}[{\cite[Corollary  4.2.7 and Theorem 4.2.8]{zavyalov2023poincare}}]
\label{CorollaryGeneralPoincare}
In the notation of the Proposition \ref{PropDeformationNormalGeneralSix}, let $\n{N}_{Z/X}^{\an}$ denote the  analytification of the normal cone of $Z$ in $X$. There is a natural equivalence  
\[
C(f,s) \cong C_Z(\n{N}_{X/Y}^{\an}). 
\]
Moreover, if $f: X \to Y $ is a solid smooth morphism, there is a natural equivalence 
\[
f^! 1_{Y} \cong C_{X}(\n{T}_{X/Y}^{\an}) \in \s{D}(X),
\]
where $\n{T}_{X/Y}^{\an}$ is the analytification of the tangent space of $X$ over $Y$. 
\end{corollary}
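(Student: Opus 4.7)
The plan is to follow the deformation-to-the-normal-cone argument of Theorem \ref{TheoSerreDuality}, now replacing solid quasi-coherent sheaves by an arbitrary premotivic six functor formalism. The substantive geometric input has already been packaged into Proposition \ref{PropDeformationNormalGeneralSix}, so what remains is mostly bookkeeping of base change identifications.

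For the first equivalence, I would invoke Proposition \ref{PropDeformationNormalGeneralSix} to produce an invertible object $M\in \Pic(\s{D}(Z))$, uniquely determined up to canonical equivalence by an identification $p^*M \cong \widetilde{s}^*\widetilde{f}^{!} 1_{Z\times \bb{A}^{1,\an}}$, where $p: Z \times \bb{A}^{1,\an} \to Z$ is the projection. By the construction of the deformation to the normal cone recalled in Remark \ref{RemarkDeformationNormalCone}, the restriction of $D_Z(X)$ over any $t\in \bb{A}^{1,\an}\setminus\{0\}$ recovers $f: X\to Z$ with its section $s$, while the restriction at $t=0$ is the analytification $\n{N}_{Z/X}^{\an}\to Z$ of the normal cone with its zero section. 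Pulling back the identification $p^*M \cong \widetilde{s}^*\widetilde{f}^{!} 1_{Z\times \bb{A}^{1,\an}}$ along the two sections $\iota_1, \iota_0 : Z \to Z \times \bb{A}^{1,\an}$, and applying smooth base change in the Cartesian squares that realize the fibers of $\widetilde{f}$, one obtains
$$M \cong \iota_1^* \widetilde{s}^* \widetilde{f}^{!} 1_{Z\times \bb{A}^{1,\an}} \cong s^* f^{!} 1_Z = C(f,s),$$
$$M \cong \iota_0^* \widetilde{s}^* \widetilde{f}^{!} 1_{Z\times \bb{A}^{1,\an}} \cong C_Z(\n{N}_{Z/X}^{\an}),$$
which together produce the desired equivalence by transitivity.

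For the second equivalence, I would apply the first to the cohomologically smooth projection $\pi_2: X \times_Y X \to X$ with section given by the diagonal $\Delta: X \to X \times_Y X$. By Lemma \ref{LemmaEtaleStandardDiagonal} together with Lemma \ref{LemmaPerfectDiagonal}, the diagonal is locally in the analytic topology a Zariski closed immersion with $\s{O}_X$ a perfect $\s{O}_{X\times_Y X}$-module, so the hypotheses of the first part are met. The normal cone of the diagonal of a solid smooth morphism is the relative tangent space $\n{T}_{X/Y}$, so the first part applied to $(\pi_2,\Delta)$ yields $\Delta^* \pi_2^{!} 1_X \cong C_X(\n{T}_{X/Y}^{\an})$. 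Smooth base change for the Cartesian square formed by $\pi_1, \pi_2$ and $f$ gives a natural equivalence $\pi_1^* f^{!} 1_Y \cong \pi_2^{!} 1_X$, and since $\pi_1 \circ \Delta = \id_X$ we have $\Delta^* \pi_1^* f^{!} 1_Y = f^{!} 1_Y$; chaining these equivalences one arrives at $f^{!} 1_Y \cong C_X(\n{T}_{X/Y}^{\an})$.

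The main obstacle I anticipate is subtle rather than deep: the construction of Remark \ref{RemarkDeformationNormalCone} is only carried out locally in the analytic topology when $\Delta$ is itself only locally a Zariski closed immersion with perfect structure sheaf, so the fiber identifications used above must be shown to glue globally. This is precisely where the premotivic hypothesis enters in an essential way, since cohomological smoothness of solid smooth morphisms together with $\bb{A}^{1,\an}$-acyclicity are what guarantee both the existence of $M$ and that the local identifications descend through the $\s{D}$-topology to the required natural equivalence.
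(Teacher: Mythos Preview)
Your proposal is correct and follows essentially the same approach as the paper, which simply defers to Zavyalov's argument: use Proposition \ref{PropDeformationNormalGeneralSix} to get constancy of $\widetilde{s}^*\widetilde{f}^{!}1$ over $\bb{A}^{1,\an}$, then specialize at $t=1$ and $t=0$ to compare $C(f,s)$ with $C_Z(\n{N}_{Z/X}^{\an})$, and finally apply this to the diagonal section of $\pi_2: X\times_Y X \to X$ together with smooth base change to identify $f^!1_Y$ with $C_X(\n{T}_{X/Y}^{\an})$. Your concern about gluing is already absorbed into the statement of Proposition \ref{PropDeformationNormalGeneralSix} and the global construction of the deformation to the normal cone in Remark \ref{RemarkDeformationNormalCone}, so it does not constitute an additional obstacle.
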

\begin{proof}
The same proof of \textit{loc. cit.} applies. 
\end{proof}

\begin{theorem}[Poincar\'e duality for $D$-modules]
\label{TheoPoincareDualityDmodules}
Let $f: X \to Y$ be a solid smooth morphism of derived Tate adic spaces of relative dimension $d$, and let $f_{dR^+}^{\alg}: X_{dR^+}^{\alg} \to Y_{dR^+}^{\alg}$ and  $f_{dR^+}: X_{dR^+} \to Y_{dR^+}$ be the associated maps of stacks. Then there are natural equivalences 
\[
f^{alg,!}_{dR^+} 1= \s{O}(-d).  
\]
and 
\[
f_{dR^+}^{!} 1 = \s{O}(-d)[2d]. 
\]
\end{theorem}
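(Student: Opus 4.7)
The plan is to apply the deformation-to-the-normal-cone machinery of Proposition \ref{PropDeformationNormalGeneralSix} and Corollary \ref{CorollaryGeneralPoincare} to the two six functor formalisms $\s{D}^{\alg}_{dR^+}$ and $\s{D}_{dR^+}$ on derived Tate adic spaces over $\bb{Q}_p$. This reduces the identification of $f^{\alg,!}_{dR^+} 1$ and $f^!_{dR^+} 1$ to the universal computation of the dualizing sheaf of the (filtered) de Rham stack of an analytic vector bundle, pulled back along its zero section.

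First, I would verify that both formalisms are premotivic. Cohomological smoothness of solid smooth morphisms after passage to the de Rham stack is the content of Theorems \ref{TheoSixFunctorsAlgebraicDMod} and \ref{TheoSixFunctorsanDmodules}. The $\bb{A}^{1,\an}$-acyclicity amounts to the relative de Rham cohomology of $\bb{A}^{1,\an}$ being trivial: in the analytic case this is a relative Poincar\'e lemma for open polydiscs in the spirit of Lemma \ref{LemmaDeRhamKoszulDagger2}, and in the algebraic case it follows from Corollary \ref{CorollaryComparisonDeRhamCohomology} matching algebraic and analytic de Rham cohomology.

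Corollary \ref{CorollaryGeneralPoincare} then reduces the theorem to computing $C_X(\n{T}^{\an}_{X/Y})$ in each formalism. By base change along the classifying map $X\to */\GL_d$ of the tangent bundle, I may assume $\s{F} = \St$, the standard vector bundle on $X = */\GL_d$. Writing $p:\bb{V}(\St)^{\an}\to X$ for the projection and $s:X\to \bb{V}(\St)^{\an}$ for the zero section, the remaining task is to compute $s^* p^!_{dR^+} 1$ and its algebraic analog. For this I would use the quotient presentations (following Propositions \ref{PropAlgebraicDeRham} and \ref{PropAnDeRhamStack} applied to the ($\dagger$-)formally smooth map $\bb{V}(\St)^{\an}\to X$):
\begin{align*}
\bb{V}(\St)^{\an,\alg}_{dR^+, X} &\simeq \bigl(\bb{V}(\St)^{\an}\times \bb{A}^1/\bb{G}_m\bigr)\big/\widehat{\bb{V}(\St)}(-1),\\
\bb{V}(\St)^{\an}_{dR^+, X} &\simeq \bigl(\bb{V}(\St)^{\an}\times \bb{A}^{1,\an}/\bb{G}_m^{\an}\bigr)\big/\bb{V}(\St)^{\dagger}(-1),
\end{align*}
with the (over)formal group acting fibrewise by translations. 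Then $p_{dR^+}^{(\alg)}$ fits into a Cartesian square whose other sides are the bare vector bundle projection and the classifying map of a torsor. Smooth base change shows that the dualizing sheaf of the projection factor is $\bigwedge^d \St^{\vee}[d]$, by Serre duality (Theorem \ref{TheoSerreDuality} and Proposition \ref{PropositionDAggerCartier2}(2)). The dualizing sheaf of the classifying factor is computed by Cartier duality: Proposition \ref{PropAlgebraicCartier2}(3) contributes $\bigwedge^d \St\otimes \s{O}(-d)[-d]$ in the algebraic case, and the analog of Proposition \ref{PropAlgebraicCartier2}(2) provided by Theorem \ref{TheoremAnalyticCartierII} (with $\bb{V}(\St)^{\dagger}$ playing the role of $\bb{V}(\St)$) contributes $\bigwedge^d \St\otimes \s{O}(-d)[+d]$ in the analytic case. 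Multiplying the two factors the top-wedge pieces on $\St$ and $\St^{\vee}$ cancel, and we obtain
\[
C_X^{\alg}(\St) \simeq \s{O}(-d), \qquad C_X(\St) \simeq \s{O}(-d)[2d],
\]
which after pulling back along $s$ yield the stated formulas.

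The main obstacle will be formulating and verifying the quotient presentations above as natural equivalences of $\s{D}$-stacks, extending Proposition \ref{PropAnDeRhamStack}(b) from the Hodge specialization to the full filtered de Rham stack and checking that the ($\dagger$-)tangent groupoid truly acts by translations. A secondary point is to carefully track the Tate twists by $\s{O}(-1)$: their contribution accounts for the global $\s{O}(-d)$ factor, while the contrast between the algebraic shift $[0]$ and the analytic shift $[2d]$ reflects exactly the mismatch between the $[-d]$ of Proposition \ref{PropAlgebraicCartier2}(3) and the $[+d]$ of its closed-overconvergent analog in Theorem \ref{TheoremAnalyticCartierII}.
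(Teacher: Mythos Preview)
Your proposal is correct and follows essentially the same route as the paper: both verify the premotivic hypotheses, invoke Corollary \ref{CorollaryGeneralPoincare} to reduce to the universal vector bundle $\St$ over $*/\GL_d$, establish the quotient presentations of the filtered de Rham stack of $\bb{V}(\St)^{\an}$ by $\widehat{\bb{V}(\St)}(-1)$ resp.\ $\bb{V}(\St)(-1)^{\dagger}$, and then factor the projection to combine Serre duality for the vector-bundle part with Proposition \ref{PropAlgebraicCartier2}(3) resp.\ the $\bb{V}(\s{F})^{\dagger}$-analog in Theorem \ref{TheoremAnalyticCartierII} for the classifying-stack part. The paper resolves what you flag as the main obstacle by reducing the quotient presentations to the $\GL_d$-equivariant identity $\bb{G}^{d,(\an)}_{a,dR^+} = (\bb{G}^{d,(\an)}_a \times \bb{A}^1/\bb{G}_m)/\widehat{\bb{G}}^d_a(-1)$ (resp.\ $/\bb{G}^d_a(-1)^{\dagger}$) already established in Lemmas \ref{LemmaAlgdeRhamKeyCases} and \ref{LemmaKeyCasesAnalyticdeRham}.
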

\begin{proof}
By Corollary \ref{CorollaryGeneralPoincare} it suffices to prove the theorem for a vector bundle $\s{F}$ over $X$. By further reducing to the universal case $X= */ \GL_d$ and $\s{F}=\St$, it suffices to prove it for the relative filtered de Rham stacks of $\bb{V}(\s{F})^{\an}$ over $X$.   Let $f: \bb{V}(\s{F})^{\an} \to X$ be the natural projection and $s: X\to  \bb{V}(\s{F})^{\an}$ the zero section.  We have a natural equivalence 
\[
\n{T}_{\bb{V}(\s{F})^{\an}/X} = \bb{V}(\s{F})^{\an}\times_X \bb{V}(\s{F})^{\an}
\] 
provided by the group structure of $\bb{V}(\s{F})^{\an}$. This implies that $f^{!}_{dR^+} 1 \cong f^* s^* f^!_{dR^+} 1$ (resp. for $f^{alg,!}_{dR^*} 1$). In particular, by $\bb{A}^{1,\an}$-invariance, we have that $f_{dR^+,*} f^{!}_{dR^+} 1 = s^* f^{!}_{dR^+} 1$ (resp. for $f_{dR^+}^{\alg}$).  

\textbf{Case of $f_{dR^+}^{\alg}$}.  We have a natural equivalence 
\[
\bb{V}(\s{F})_{dR^+,X}^{an,alg} =( \bb{V}(\s{F})^{\an}\times \bb{A}^{1,\an}/\bb{G}_m)/ \widehat{\bb{V}(\s{F})}(-1).
\]
Indeed, by Lemma \ref{LemmaAlgdeRhamKeyCases} there is a natural equivalence $\bb{G}_{a,dR^+}^{d,\alg}= (\bb{G}^d_a\times \bb{A}^1/\bb{G}_m)/ \widehat{\bb{G}}^d_a(-1)$, this isomorphism is clearly $\GL_d$-equivariant (eg. looking at the level of points) and it descends to an equivalence over the stack $*/\GL_d$; by base change one deduces the general case. The map $f_{dR^+,X}^{\alg}$ factors through 
\[
\bb{V}(\s{F})_{dR^+,X}^{an,alg} \xrightarrow{h} (X \times \bb{A}^{1}/ \bb{G}_m)/ \widehat{\bb{V}(\s{F})}(-1) \xrightarrow{g} X \times \bb{A}^{1}/ \bb{G}_m,
\]
 we find that 
 \[
 s^*f^{\alg, !}_{dR^+,X} 1 = s^*h^* g^! 1  \otimes  s^*h^! 1. 
 \]
 By Theorems \ref{TheoSerreDuality} and \ref{PropAlgebraicCartier2} we obtain that 
 \[
 s^*f^{\alg,!}_{dR^+,X} 1 = \bigwedge^d \s{F}^{\vee}[d]  \otimes  \bigwedge^d \s{F}  (-d) [-d] =  \s{O}(-d). 
 \]

\textbf{Case of $f_{dR^+}$}.   We have a natural equivalence 
\[
\bb{V}(\s{F})_{dR^+,X}^{\an} = \bb{V}(\s{F})^{\an}/ \bb{V}(\s{F})(-1)^{\dagger}. 
\]
Indeed, by Lemma \ref{LemmaKeyCasesAnalyticdeRham} there is a natural equivalence $\bb{G}_{a,dR^+}^{d,\an}= (\bb{G}^{d,\an}_a\times \bb{A}^{1,\an}/\bb{G}_m^{\an})/ \bb{G}^{d}_a(-1)^{\dagger}$, this isomorphism is clearly $\GL_d$-equivariant (eg. looking at the level of points) and it descends to an equivalence over the stack $*/\GL_d$; by base change one deduces the general case. The map $f_{dR^+,X}$ factors trough 
\[
\bb{V}(\s{F})_{dR^+,X}^{\an} \xrightarrow{h} (X \times \bb{A}^{1}/ \bb{G}_m)/ \bb{V}(\s{F})(-1)^{\dagger} \xrightarrow{g} X \times \bb{A}^{1}/ \bb{G}_m,
\]
we get
 \[
 s^*f^{ !}_{dR^+,X} 1 =s^* h^* g^! 1  \otimes s^* h^! 1. 
 \]
 By Theorems \ref{TheoSerreDuality}  and \ref{TheoremAnalyticCartierII} and Proposition \ref{PropAlgebraicCartier2}, we find that 
 \[
 s^*f^{!}_{dR^+,X} 1 = \bigwedge^d \s{F}^{\vee}[d] \otimes \bigwedge^d \s{F} (-d) [d]  = \s{O}(-d)[2d].
 \]
\end{proof}

\subsection{Analytic de Rham stack of rigid  spaces}
\label{SubsectionAnalyticdeRhamRigid}

Let $(K,K^+)$ be a non archimedean extension of $\bb{Q}_p$. We finish with the study of the de Rham stack for rigid spaces over $(K,K^+)$. We thank Alberto Vezzani for the questions that motivated this section. From now on all the analytic de Rham stacks are relative to $\AnSpec (K,K^+)_{\sol}$.

The main goal of the section is to prove the following theorem:

\begin{theorem}
\label{TheoremSurjectionXdRRigid}
Let $X$ be an adic space locally of finite type over $(K,K^+)$, then the morphism $X\to X_{dR}$ is a $\s{D}$-cover of Tate stacks. Futhermore, if $X$ is quasi-compact then $X\to X_{dR}$ is a descendable $\s{D}$-cover. In particular, we have that 
\[
\Mod(X_{dR})=\varprojlim_{[n]\in \Delta} \Mod(\Delta^{n+1}(X)^{\dagger} ),
\]
for both $*$ and $!$-pullbacks,  where $\Delta^{n+1}(X)^{\dagger}\subset X^{n+1}$ is the overconvergent neighbourhood of the locally closed diagonal map, obtained as the \v{C}ech nerve of $X\to X_{dR}$.  
\end{theorem}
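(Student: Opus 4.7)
I first identify the \v{C}ech nerve of $X \to X_{dR}$. Since the analytic de Rham prestack commutes with finite limits (Proposition~\ref{PropAnDeRhamStack}(3)), so does its $\s{D}$-sheafification; in particular $(X^{n+1})_{dR} = (X_{dR})^{n+1}$, so the $(n+1)$-fold fiber product $X^{\times(n+1)/X_{dR}}$ is the base change along $X^{n+1} \to (X_{dR})^{n+1}$ of the diagonal $X_{dR} \to (X_{dR})^{n+1}$. By Proposition~\ref{PropDaggerFormalCompletion} (equivalently Corollary~\ref{CoroKashiwaraEquivalence}), this pullback is exactly the overconvergent neighbourhood $(\Delta^{n+1}(X))^{\dagger}$ of the diagonal $\Delta^{n+1}(X) \hookrightarrow X^{n+1}$, matching the \v{C}ech nerve promised in the theorem.

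Next, showing $X \to X_{dR}$ is a $\s{D}$-cover is local on $X$, so I reduce to $X = \AnSpec \n{A}$ affinoid of finite type over $(K,K^+)$. When $X$ is smooth, Proposition~\ref{PropFormallyInftSmoothEtale} gives $\dagger$-formal smoothness of $\n{A}$ locally in the analytic topology, so every $\n{B}$-point of $X_{dR}$, namely a map $\n{A} \to \n{B}^{\dagger-\red}$, lifts to a $\n{B}'$-point of $X$ after a suitable analytic cover $\n{B} \to \n{B}'$, since the kernel of $\n{B}' \to (\n{B}')^{\dagger-\red}$ is $\dagger$-nilpotent. For a general (non-smooth) affinoid $X$, I locally embed it as a Zariski closed subspace of a polydisc $Y$; by Kashiwara (Corollary~\ref{CoroKashiwaraEquivalence}) one has $X_{dR} = Y^{\dagger/X}_{dR}$, and the essential surjectivity of $X \to X_{dR}$ is transported from the smooth case applied to $Y$ through this identification.

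To upgrade essential surjectivity to the full $\s{D}$-cover property I apply Proposition~\ref{PropositionProperDescent}. The projection $\pi_1 : \Delta^2(X)^{\dagger} \to X$ is weakly cohomologically proper, since the $\dagger$-completion of $\s{O}_{X\times X}$ along the diagonal carries the induced analytic structure over $\s{O}_X$. Furthermore the diagonal $\Delta : X \to \Delta^2(X)^{\dagger}$ is a section of $\pi_1$, exhibiting $\s{O}_X$ as a direct summand of $\pi_{1,*} 1_{\Delta^2(X)^{\dagger}}$, which therefore lies in the thick tensor ideal generated by $\pi_{1,*} 1_{\Delta^2(X)^{\dagger}}$ and so is descendable over $\s{O}_X$. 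Combined with the canonicity built into the $\s{D}$-sheafification defining $X_{dR}$, this yields the $\s{D}$-cover statement. For quasi-compact $X$, a finite affinoid cover upgrades the local descendability to a uniform bound, giving the descendable $\s{D}$-cover conclusion and the totalization $\Mod(X_{dR}) = \varprojlim_{[n]\in \Delta} \Mod((\Delta^{n+1}(X))^{\dagger})$ for both $*$- and $!$-pullbacks.

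The hardest step will be verifying that the diagonal-retract descendability propagates through the Kashiwara equivalence for non-smooth $X$ and through the full \v{C}ech tower to yield a genuine $\s{D}$-cover of the stack $X_{dR}$; in particular, one needs to handle the interplay between the $\dagger$-nilpotent ideal cutting $X$ out of the smooth ambient space $Y$ and the proper-descent hypotheses of Proposition~\ref{PropositionProperDescent} applied to the resulting groupoid $(\Delta^{n+1}(X))^{\dagger}$.
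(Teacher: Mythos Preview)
Your descendability argument has a genuine gap. You claim that since the diagonal $\Delta:X\to\Delta^2(X)^\dagger$ is a section of $\pi_1$, the unit $1_X$ is a retract of $\pi_{1,*}1_{\Delta^2(X)^\dagger}$, and conclude descendability. But this observation is vacuous: for \emph{any} groupoid $\n{G}_\bullet$ over $X$, the source map $\n{G}_1\to X$ has the identity section, so this retract always exists. What Proposition~\ref{PropositionProperDescent} requires is that $f_*1_X$ be descendable in $\Mod(X_{dR})$, i.e.\ that $1_{X_{dR}}$ lie in the thick tensor ideal generated by $f_*1_X$. You cannot check this after pulling back along $f$ itself without already knowing $f$ is a $\s{D}$-cover, which is circular. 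Your essential-surjectivity argument in the non-smooth case has a related problem: Kashiwara gives $X_{dR}\simeq(Y^{\dagger/X})_{dR}$, and from the smooth case one gets $Y^{\dagger/X}\to X_{dR}$ surjective, but the further map $X\to Y^{\dagger/X}$ is a Zariski closed immersion into an overconvergent neighbourhood and is not an epimorphism of $\s{D}$-stacks.

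The paper's proof proceeds quite differently. It establishes descendability of $X\to X_{dR}$ directly by induction on $\dim X$. After reducing to $X$ affinoid, reduced, and irreducible (passage to the reduction is descendable since the nilradical is nilpotent on a noetherian affinoid), one invokes generic smoothness (Lemma~\ref{LemmaSmoothLocus}) to find a Zariski open $U\subset X$ with $\bb{L}_{U/K}$ projective and closed complement $Z$ of strictly smaller dimension. For $U$ the Hodge filtration of Theorem~\ref{TheoHodgeFiltration} exhibits $1_{U_{dR}}$ as a finite extension of pushforwards from $U$, giving descendability of $U\to U_{dR}$; for $Z$ one has descendability by induction. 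These are glued via an excision argument (Lemma~\ref{LemmaDescendabledevisage}): the fiber sequence $j_{dR,!}1_{U_{dR}}\to 1_{X_{dR}}\to\iota_{dR,*}1_{Z_{dR}}$ together with descendability on $U$ and $Z$ forces $1_{X_{dR}}$ into the thick tensor ideal of $f_*1_X$. No shortcut through a section of $\pi_1$ is available.
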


\begin{lemma}
\label{LemmaSmoothLocus}
Let $X$ be a reduced and irreducible adic space locally of finite type over $(K,K^+)$. Then, locally in the analytic topology, there is an open Zariski subspace $U\subset X$ where $\bb{L}_{U/K}$ is a projective $\s{O}_U$-module. 
\end{lemma}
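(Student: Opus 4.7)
\medskip
\noindent\emph{Proof plan.} The strategy is a standard Noether normalization plus Jacobian criterion argument, adapted to the rigid analytic setting.

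First, I would reduce to the affinoid case with $A$ a domain. Since the question is local in the analytic topology, I assume $X = \Spa(A,A^+)$ is affinoid with $A$ a reduced affinoid $K$-algebra. The finite set of minimal primes of $A$ cuts out the irreducible components of $X$; by the irreducibility of $X$, every point $x \in X$ lies on some component, and by shrinking around a generic point I may pass to an analytic open affinoid $V = \Spa(A', A^{'+}) \subset X$ that meets only one irreducible component. After this shrinking $A'$ is reduced with a unique minimal prime, hence integral.

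Second, I would produce a generically étale finite cover. By Noether normalization for affinoid algebras (\cite{BoschRigidFormalGeo}, §6.1), there is a finite injective morphism $\phi: B = K\langle T_1,\ldots, T_d\rangle \hookrightarrow A'$ with $d = \dim A'$. Since $A'$ is integral, $\phi$ induces a finite field extension $\mathrm{Frac}(B) \hookrightarrow \mathrm{Frac}(A')$; as $\mathrm{char}(K) = 0$ this extension is separable. Thus $\Omega^1_{A'/B} \otimes_{A'} \mathrm{Frac}(A') = 0$, so $\Omega^1_{A'/B}$ is a finitely generated torsion $A'$-module, and the support of $\Omega^1_{A'/B}$ together with the non-flat locus of $\phi$ cut out a proper Zariski closed subset $Z \subsetneq V$. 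Let $U = V \setminus Z$, a non-empty Zariski open subspace of $V$.

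Third, I would conclude by computing the cotangent complex on $U$. On $U$, the map $U \to \bb{D}^d_K$ is étale in the classical sense (finite, flat, and unramified), which implies that the derived solid cotangent complex $\bb{L}_{U/\bb{D}^d_K}$ vanishes (the analogue of Corollary \ref{CoroEtaleMapsStacks} and the argument of Lemma \ref{LemmaStandarSmoothRegular}, using that such $U \to \bb{D}^d_K$ is locally standard solid étale). The fiber sequence
\[
\bb{L}_{\bb{D}^d_K/K}|_U \to \bb{L}_{U/K} \to \bb{L}_{U/\bb{D}^d_K}
\]
then gives $\bb{L}_{U/K} \cong \bb{L}_{\bb{D}^d_K/K}|_U$, which is free of rank $d$ since $\bb{L}_{\bb{D}^d_K/K} = \bigoplus_{i=1}^d \s{O}_{\bb{D}^d_K}\, dT_i$ by Example \ref{ExampleCotangentComplexes} (3). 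Hence $\bb{L}_{U/K}$ is projective on $U$, as required.

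The main technical obstacle is the first paragraph: the reduction to an integral affinoid requires some care since ``irreducibility'' for rigid spaces can be subtle (reduced affinoids can have several irreducible components), and one must verify that an analytic shrinking around a point lying on a single component extracts a single minimal prime. The remaining steps are standard consequences of Noether normalization, generic separability in characteristic zero, and the computation of the cotangent complex for étale maps already worked out in Section \ref{SubsectionFormalleEtaleSmooth}.
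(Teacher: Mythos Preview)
Your proposal is correct and follows essentially the same route as the paper: Noether normalization over a Tate algebra, generic \'etaleness from characteristic zero, and then reading off the cotangent complex via the fiber sequence. The paper's proof is terser---it passes directly to the generic point $\eta$ of $B$, observes that $A\otimes_{B}\kappa(\eta)$ is a finite field extension, and uses noetherian approximation to find a single $b\in B$ with $A[1/b]$ finite \'etale over $B[1/b]$, rather than separately removing the support of $\Omega^1_{A'/B}$ and the non-flat locus---but this is only a difference in packaging.

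One comment on your ``main technical obstacle'': you are being slightly overcautious. An open affinoid $\Spa(A,A^+)$ of an irreducible adic space is again irreducible as a topological space, and for affinoid $K$-algebras of finite type the irreducible components of the spectrum correspond to minimal primes; together with reducedness this forces $A$ to be integral. So the reduction step you outline is immediate and need not involve shrinking around a generic point. The paper simply assumes $X$ affinoid without further comment for this reason.
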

\begin{proof}
We can assume without loss of generality that $X=\AnSpec(A,A^
+)_{\sol}$.  By Noether's normalization lemma for rigid spaces (\cite[\S 2.2 Corollary 11]{BoschRigidFormalGeo}), there is a Tate algebra $B=K\langle T_1,\ldots, T_d\rangle$ and an injective and finite map $B\to A$. Let $\eta \in \Spec B(*)$ be the generic point of the underlying discrete ring of $B$ and $\kappa(\eta)$ its residue field, then $A\otimes_{B(*)}\kappa(\eta) $ is finite over   $B\otimes_{B(*)} \kappa(\eta)$, and  the underlying discrete ring of the last is a field. Then, the underlying discrete ring of $A\otimes_{B(*)}\kappa(\eta)$ is a finite field extension of $\kappa(\eta)$. By noetherian approximation we can find an element $b\in B(*)$ such that $A[\frac{1}{b}]$ is a finite \'etale extension of $B[\frac{1}{b}]$, in particular $\bb{L}_{A[\frac{1}{b}]/K}$ is projective. We can then take $U$ to be  the analytification of the space $\AnSpec A[\frac{1}{b}]$. 
\end{proof}

\begin{lemma}
\label{LemmaDaggerSmoothHigherRankPoints}
Let $(A,A^+)$ be an Huber pair with $A$ a Tate algebra of finite type over $K$. Suppose that $X':=\AnSpec (A,A^{0})_{\sol}$ is a solid smooth rigid space over $K$, then $X=\AnSpec (A,A^{+})_{\sol}$ is $\dagger$-smooth locally in the analytic topology of $X$. Furthermore, if $X'$ is standard smooth then $X$ is $\dagger$-smooth. 
\end{lemma}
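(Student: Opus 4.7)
The plan is to reduce to the ``Furthermore'' statement and then transport the standard solid smooth presentation of $X'$ to one valid for $X$, exploiting that $A^+\subset A^0$ is open in the $\pi$-adic topology. First, by Lemma \ref{LemmaSmoothAsStandarSmooth}, solid smoothness of $X'$ yields a finite analytic cover of $X'$ by standard solid smooth affinoids; because $X=\Spa(A,A^+)$ and $X'=\Spa(A,A^0)$ share the same underlying ring $A$, the generators defining this rational cover also define a rational cover of $X$, so we may assume $X'$ is standard solid smooth.

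Assume therefore a presentation
\[
(A,A^0)_{\sol}\cong (K,K^+)_{\sol}\langle T_1,\ldots,T_n\rangle_{\sol}/^{\bb{L}}(f_1,\ldots,f_k)
\]
with $\det(\partial f_i/\partial T_j)_{1\le i,j\le k}$ invertible in $A$. Since the underlying condensed rings of $(A,A^0)_{\sol}$ and $(A,A^+)_{\sol}$ coincide, their cotangent complexes over $(K,K^+)_{\sol}$ agree and are projective of rank $n-k$. This already gives formal smoothness of $(A,A^+)_{\sol}$ over $(K,K^+)_{\sol}$, as well as solid finite presentation by restricting the same relations (re-interpreted using the openness of $A^+$ below). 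The remaining task---promoting formal smoothness to $\dagger$-formal smoothness---is the core of the proof.

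To test $\dagger$-formal smoothness, consider a $\dagger$-nilpotent ideal $I\subset \n{C}$ in a bounded affinoid $R_{\sol}$-algebra and a morphism $\varphi:(A,A^+)_{\sol}\to \n{C}/I$, with $\varphi(T_i)=t_i$ satisfying $f_j(t_i)=0$. Because $A^+$ is open in $A^0$ in the $\pi$-adic topology (it contains $A^{00}\supset \pi^m A^0$ for some $m\ge 0$), the rescaled variables $\pi^m T_i$ lie in $A^+$, so the images $\pi^m t_i$ lie in $(\n{C}/I)^+$. By Proposition \ref{PropLiftingOverconvergentAlgebrasLift}, each such image extends canonically to a map $(K,K^+)_{\sol}\langle S_i\rangle_{\sol}\to \n{C}$ over $\n{C}/I$, producing candidate lifts $\tilde t_i:=\pi^{-m}\tilde s_i\in \underline{\n C}^b$ with $f_j(\tilde t_i)\in I$. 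The quantitative Hensel lemma (Lemma \ref{LemmaHenselQuantitative}), combined with the invertibility of the Jacobian modulo $I$, then corrects the $\tilde t_i$ by a Newton iteration into a genuine solution $\overline{t}_i\in \n{C}$ of $f_j=0$ whose reduction mod $I$ recovers the $t_i$, producing the desired lift of $\varphi$.

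The main obstacle lies in ensuring the Newton iteration converges in $\n{C}$ despite the $\pi^{-m}$ denominators introduced by the rescaling; this requires balancing the $\pi$-adic loss against the $\dagger$-nilpotency of $I$, exactly as in the arguments of Proposition \ref{PropFormallyInftSmoothEtale} and Lemma \ref{LemmaEtaleStandardDiagonal}. The control is ultimately provided by Lemma \ref{LemmaOverconvergentFunctionsIdeal}, which allows one to extend maps out of rescaled overconvergent polynomial algebras whenever the images land in the augmentation ideal of a $\dagger$-completion.
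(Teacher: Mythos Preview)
Your approach tries to transplant the standard solid smooth presentation of $(A,A^0)_{\sol}$ directly to $(A,A^+)_{\sol}$ via rescaling, but this creates a structural mismatch that your correction step does not resolve. The presentation $(K,K^+)_{\sol}\langle T_1,\ldots,T_n\rangle_{\sol}/^{\bb{L}}(f_j)$ forces the variables $T_i$ to be \emph{solid}; that is precisely what pins the analytic structure down to $(A,A^0)_{\sol}$ rather than $(A,A^+)_{\sol}$. When you test against a map $(A,A^+)_{\sol}\to\n{C}/I$, the images $t_i$ are only power-bounded, not solid. Your rescaling recovers solidity for $\pi^m t_i$, but you then divide by $\pi^m$, so the $\tilde t_i$ are merely in $\n{C}^b$ and you have no map from the solid Tate algebra into $\n{C}$. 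Hence the correction machinery of Proposition~\ref{PropFormallyInftSmoothEtale} (which is stated for $\n{A}\langle T\rangle_{\sol}$) does not apply as written. Your appeal to Lemma~\ref{LemmaHenselQuantitative} is also off target: that lemma treats lifting modulo $\pi^{2k+1}$ in a $\pi$-complete ring, not lifting along an arbitrary $\dagger$-nilpotent ideal, and there is no evident mechanism for ``balancing'' the $\pi^{-m}$ loss against $\dagger$-nilpotency that makes a Newton scheme converge in $\n{C}$.

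The paper avoids all of this with a different factorization. It first reduces to the case where $A^+$ is the open integral closure of $K^+$ in $A$, using that $\Spa(A,A^+)$ is an analytic open subspace of $\Spa(A,K^+)$ and that rational localizations are already $\dagger$-\'etale. It then factors
\[
X=\AnSpec(A,K^+)_{\sol}\ \xrightarrow{\ f\ }\ \bb{D}^{d-e,/K}_K:=\AnSpec\bigl(K\langle T_{e+1},\ldots,T_d\rangle,\,K^+\bigr)_{\sol}\ \xrightarrow{\ g\ }\ \AnSpec(K,K^+)_{\sol},
\]
where the crucial point is that the intermediate disc carries the \emph{non-solid} analytic structure (the $T_i$ are not made solid; only $K^+$ is retained in the $+$-ring). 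The $\dagger$-smoothness of $g$ is then immediate: the polynomial algebra is compact projective, so lifting the images of the $T_i$ as elements is free, and Proposition~\ref{PropLiftingOverconvergentAlgebrasLift}(3) ensures the lifts remain in $\n{C}^0$. The map $f$ is handled by adapting the argument of Proposition~\ref{PropFormallyInftSmoothEtale} to this analytic structure. The moral: the correct intermediate object is the Berkovich-style disc, and the relevant invariant that lifts along $\dagger$-nilpotent ideals is membership in $\n{C}^0$ (power-boundedness), not in $\n{C}^+$.
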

\begin{proof}
By Theorem \ref{TheoFormalSmoothnesvsSmoothness} the space $X'$ is solid smooth if and only if locally in the analytic topology it is solid standard smooth. Then, we can assume without loss of generality that $(A,A^{\circ})$ is solid standard smooth. Let us write $A=K\langle T_1,\ldots, T_d\rangle/(f_1,\ldots, f_e)$ a standard smooth presentation of $A$, we want to show that $\AnSpec(A,A^{+})_{\sol}$ is $\dagger$-smooth.  Since $\Spa (A,A^{+})$ is an analytic open subspace of $\Spa (A,K^{+})$, it suffices to consider the case when $A^{+}$ is the open integral closure of $K^+$ in $A$. We can then  write 
\[
X\xrightarrow{f} \bb{D}^{d-e,/K}_{K}\xrightarrow{g} \AnSpec(K,K^{+})_{\sol},
\]
where $\bb{D}^{d-e,/K}_{K}=\AnSpec (K\langle T_{e+1},\ldots, T_d \rangle,K^+)_{\sol}$.  Then, it suffices to show that $f$ is $\dagger$-\'etale and that $g$ is $\dagger$-smooth. The fact that $g$ is $\dagger$-smooth follows from the fact that the polynomial algebra is a compact projective analytic ring, and by invariance of the bounded condition for analytic rings of Proposition \ref{PropLiftingOverconvergentAlgebrasLift}. The proof that $f$ is $\dagger$-formally \'etale follows exactly the same argument of Proposition \ref{PropFormallyInftSmoothEtale}, we left the details to the reader. 
\end{proof}

\begin{lemma}
\label{LemmaDescendabledevisage}
Let $X$ be a reduced affinoid adic space  of finite type over $(K,K^+)$, and suppose that there is a locally Zariski open subspace $U\subset X$ with reduced complement $Z$ such that $U\to U_{dR}$ and $Z\to Z_{dR}$ are descendable, then $X\to X_{dR}$ is descendable. 
\end{lemma}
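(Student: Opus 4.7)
The plan is to use the open/closed decomposition $X = U \sqcup Z$ together with Kashiwara equivalence (Corollary \ref{CoroKashiwaraEquivalence}) to exhibit a descendable $\s{D}$-cover of $X_{dR}$ along which the pullback of $g: X \to X_{dR}$ is descendable piecewise; descendability of $g$ then follows from locality of descendability along descendable $\s{D}$-covers.

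First, one shows that the Zariski open immersion $j: U \to X$ is $\dagger$-formally \'etale: for a rational localization $\s{A} \to \s{A}\langle f^{-1}\rangle$, Proposition \ref{PropLiftingOverconvergentAlgebrasLift} (2) guarantees that invertibility of $f$ is invariant under $\dagger$-reduction, which yields the unique lifting property along $\dagger$-nilpotent ideals. It follows that $j_{dR}: U_{dR} \to X_{dR}$ is an open immersion of categorified locales and that the square
\[
\begin{tikzcd}
U \ar[r, "j"] \ar[d, "g_U"'] & X \ar[d, "g"] \\
U_{dR} \ar[r, "j_{dR}"'] & X_{dR}
\end{tikzcd}
\]
is Cartesian. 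On the closed side, Corollary \ref{CoroKashiwaraEquivalence} identifies $Z_{dR} = (X^{\dagger/Z})_{dR}$, where $X^{\dagger/Z}$ is the overconvergent neighbourhood from Proposition \ref{PropDaggerFormalCompletion}; this realizes $i_{dR}: Z_{dR} \to X_{dR}$ as the closed complement of $j_{dR}$ and gives $X \times_{X_{dR}} Z_{dR} = X^{\dagger/Z}$. By Lemma \ref{LemmaOpenClosedCoverLocaleCoho}, $\{j_{dR}, i_{dR}\}$ is then a descendable $\s{D}$-cover of $X_{dR}$.

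The remaining point is descendability of $f: X^{\dagger/Z} \to Z_{dR}$. The $\dagger$-reduction map $h: Z \to X^{\dagger/Z}$ is a Zariski closed immersion whose composite with $f$ equals the descendable cover $g_Z$. Since $g_{Z,*} 1_Z = f_*(h_* 1_Z)$ is naturally an $\bb{E}_\infty$-algebra over $f_* 1_{X^{\dagger/Z}}$ in $\Mod_{\sol}(Z_{dR})$, it lies in the thick tensor ideal $\langle f_* 1_{X^{\dagger/Z}}\rangle^{\otimes}$; combined with $1_{Z_{dR}} \in \langle g_{Z,*} 1_Z\rangle^{\otimes}$ (descendability of $g_Z$) this gives $1_{Z_{dR}} \in \langle f_* 1_{X^{\dagger/Z}}\rangle^{\otimes}$, i.e., $f$ is descendable. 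Locality of descendability along the descendable $\s{D}$-cover $\{j_{dR}, i_{dR}\}$ then yields descendability of $g: X \to X_{dR}$.

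The main obstacle is to rigorously justify the open/closed decomposition of $X_{dR}$ in the third and fourth sentences above: that $j_{dR}$ is an open immersion of the underlying categorified locale (so that the Cartesian square holds after $\s{D}$-sheafification), and that $i_{dR}$ gives the complementary closed immersion with the pullback $X \times_{X_{dR}} Z_{dR} = X^{\dagger/Z}$. While this is the natural analytic analogue of Proposition \ref{PropZariskiAlgDeRham}, it requires carefully tracking the interaction of the $\dagger$-formally \'etale property of rational localizations with the $\s{D}$-sheafification that defines $X_{dR}$, as well as verifying that the Kashiwara identification respects the ambient locale structure. Once this decomposition is in hand, the factorization argument for the closed piece and the combination via descent are comparatively routine.
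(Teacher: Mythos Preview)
Your proposal is correct and follows essentially the same strategy as the paper: both arguments rest on the open/closed decomposition $j_{dR}: U_{dR} \hookrightarrow X_{dR} \hookleftarrow Z_{dR}: i_{dR}$, the identification of the closed fiber as $X \times_{X_{dR}} Z_{dR} = X^{\dagger/Z}$, and the observation that the factorization $Z \to X^{\dagger/Z} \to Z_{dR}$ forces $X^{\dagger/Z} \to Z_{dR}$ to be descendable.

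The only real difference is in how the two pieces are reassembled. You invoke a general principle that descendability is local along the descendable $\s{D}$-cover $\{j_{dR}, i_{dR}\}$. The paper instead unwinds this by hand: it writes down the excision fiber sequence
\[
j_{dR,!} 1_{U_{dR}} \to 1_{X_{dR}} \to i_{dR,*} 1_{Z_{dR}},
\]
and shows each outer term lies in the thick tensor ideal generated by $f_{X,*} 1_X$ using proper base change ($i_{dR}^* f_{X,*} 1_X = f_{X^{\dagger/Z},*} 1_{X^{\dagger/Z}}$ and $j_{dR}^* f_{X,*} 1_X = f_{U,*} 1_U$), the projection formulas for $i_{dR,*}$ and $j_{dR,!}$, and the assumed descendability over $U_{dR}$ and $Z_{dR}$. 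Your appeal to locality is of course justified by exactly this computation, so the content is identical; the paper's version is simply more self-contained since the locality principle for open/closed covers is not stated separately.

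Regarding your stated obstacle: the paper asserts the cartesian squares
\[
\begin{tikzcd}
U \ar[r]\ar[d] & X \ar[d] & X^{\dagger/Z} \ar[l]\ar[d] \\
U_{dR} \ar[r] & X_{dR} & Z_{dR} \ar[l]
\end{tikzcd}
\]
without further comment, so your level of rigor on this point matches the paper's. Your diagnosis that the left square comes from $\dagger$-formal \'etaleness of rational localizations (Proposition \ref{PropLiftingOverconvergentAlgebrasLift}) and the right from Kashiwara (Proposition \ref{PropDaggerFormalCompletion}) is exactly right.
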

\begin{proof}
Let us write $X=\AnSpec(A,A^+)$ and let $I\subset A$ be the ideal of definition of $Z$. Let $X^{\dagger/Z}= \AnSpec (A^{\dagger/Z},A^+)$ be the $\dagger$-formal completion of $X$ at $Z$. We have a morphism of Tate stacks
\[
Z\to X^{\dagger/Z} \to Z_{dR}. 
\]
Since $Z\to Z_{dR}$ is desendable, then $X^{\dagger/Z}\to Z_{dR}$ is also descendable. On the other hand, we have an excision sequence of de Rham stacks
\[
j_{dR}:U_{dR}\subset X_{dR} \supset Z_{dR}: \iota_{dR},
\]
we then have a fiber sequence
\begin{equation}
\label{eqDevisagedeRhamStacks}
j_{dR,!} 1_{U_{dR}} \to 1_{X_{dR}}\to \iota_{dR,*} 1_{Z_{dR}}. 
\end{equation}
We also have an excision 
\[
j:U\subset X \supset X^{\dagger/Z}:\iota
\]
giving rise to a fiber sequence
\[
j_{!} 1_{U}\to 1_X\to \iota_* \iota_{*} 1_{X^{\dagger/Z}}. 
\]
Note that we have cartesian squares
\[
\begin{tikzcd}
U \ar[r]\ar[d,"f_U"]& X \ar[d,"f_X"] & X^{\dagger/Z} \ar[l]\ar[d,"f_{X^{\dagger/Z}}"] \\ 
U_{dR} \ar[r] & X_{dR} & Z_{dR} \ar[l].
\end{tikzcd}
\]
This shows that $\iota_{dR}^{*}f_{X} 1_{X}=f_{X^{\dagger/Z},*} 1_{X^{\dagger}/Z}$. Then, by the projection formula of $\iota_{dR,*}$ and since $f_{X^{\dagger/Z}}$ is descendable, one deduces that $\iota_{dR,*} 1_{Z_{dR}}$ belongs to the thick tensor ideal generated by $f_{dR,*} 1_X$. Similarly, $j_{dR,!} (f_{U,*} 1_U)$ is in the thick tensor ideal of $f_{X,*} 1_X$ (being the fiber of $f_{X,*}1_X \to \iota_{dR,*} f_{X^{\dagger/Z},*} 1_{X^{\dagger}/Z}$), and by the projection formula and descendability of $f_U$, then so is $j_{dR,!} 1_{U,dR}$. Therefore, by \eqref{eqDevisagedeRhamStacks}, we get that $1_{X_{dR}}$ is in the thick tensor ideal of $f_{X,*} 1_X$, proving that $X\to X_{dR}$ is descendable by \cite[Definition 3.18]{MathewDescent}.
\end{proof}

\begin{proof}[Proof of Theorem \ref{TheoremSurjectionXdRRigid}] By taking affinoid covers, we can assume without loss of generality that $X$ is quasi-compact and separated.  Writing $X$ as union of  reduced irreducible spaces, we can assume  by Lemma \ref{LemmaDescendabledevisage} that $X$ is irreducible.  Moreover, let $\n{I}$ be the nilpotent radical of $X$, since $X$ is quasi-compact $\n{I}$ is nilpotent, and the map $X^{\red}\to X$ is descendable (see \cite[Proposition 3.35]{MathewDescent}). Therefore, we can assume that $X$ is reduced and irreducible. We proceed to prove the theorem by induction on the dimension of $X$, the zero dimensional case being trivial.

 By Lemma \ref{LemmaSmoothLocus} there is a locally open  Zariski subspace $U\subset X$ such that $\bb{L}_{X/K}$ is a projective $\s{O}_U$-module, we let $Z$ be its Zariski closed complement. By induction in the dimension, $Z\to Z_{dR}$ is descendable, then by Lemma \ref{LemmaDescendabledevisage} it suffices to show that $U\to U_{dR}$ is descendable.

  Let $U'\subset U$ be the maximal rigid space contained in $U$, by Theorem \ref{TheoFormalSmoothnesvsSmoothness} $U'$ is solid smooth over $K$, and Lemma \ref{LemmaDaggerSmoothHigherRankPoints} implies that $U$ is $\dagger$-smooth locally in the analytic topology. Then, Proposition \ref{PropAnDeRhamStack} and Theorem \ref{TheoHodgeFiltration} show that $U\to U_{dR}$ is descendable thanks to the  Hodge filtration of the de Rham complex.  This finish the proof of the theorem.
\end{proof}

\begin{corollary}
Let $X$ be an adic space  locally of finite type over $(K,K^+)$ and let $j:X'\subset X$ be maximal rigid space contained in $X$. Let $j_{dR}:X'_{dR}\to X_{dR}$ be the associated maps at the level of de Rham stacks. Then the natural map
\[
1_{X_{dR}}\xrightarrow{\sim} j_{dR,*}1_{X'_{dR}}
\]  
is an equivalence. 
\end{corollary}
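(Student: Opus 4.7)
By analytic descent on $X_{dR}$ one reduces immediately to the affinoid, qcqs case where $X = \AnSpec(A, A^+)_{\sol}$ and $X' = \AnSpec(A, A^{\circ})_{\sol}$; the inclusion $j: X' \to X$ corresponds to the morphism of analytic rings $(A, A^+)_{\sol} \to (A, A^{\circ})_{\sol}$ given by the identity on the underlying condensed ring $A$ and a strengthening of the analytic structure. By Theorem \ref{TheoremSurjectionXdRRigid} the two maps $f: X \to X_{dR}$ and $f': X' \to X'_{dR}$ are descendable $\s{D}$-covers, so it suffices to verify the equivalence of the corollary after pulling back by the conservative functor $f^{*}$.

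The crucial geometric input is the identification of the fibre product $X \times_{X_{dR}} X'_{dR}$ with $X'$. A $\n{C}$-point of this fibre product, for $\n{C}$ a bounded affinoid algebra, is a continuous morphism $\varphi: A \to \n{C}$ with $\varphi(A^+) \subset \n{C}^+$ whose composition $\bar{\varphi}: A \to \n{C}^{\dagger-\red}$ sends $A^{\circ}$ into $(\n{C}^{\dagger-\red})^+$. By Proposition \ref{PropLiftingOverconvergentAlgebrasLift} (3) applied to the bounded algebra $\n{C}$, this last condition is equivalent to $\varphi(A^{\circ}) \subset \n{C}^+$, which is exactly $X'(\n{C})$. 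The resulting Cartesian square
\[
\begin{tikzcd}
X' \ar[r, "j"] \ar[d, "f'"] & X \ar[d, "f"] \\
X'_{dR} \ar[r, "j_{dR}"] & X_{dR}
\end{tikzcd}
\]
together with base change along the descendable cover $f$ yields an identification $f^{*} j_{dR, *} 1_{X'_{dR}} \simeq j_{*} 1_{X'}$, and reduces the problem to showing that the unit $1_X \to j_{*} 1_{X'}$ is an equivalence in $\Mod(X)$.

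This last statement is a direct consequence of the fact that $j$ arises from the identity on underlying condensed rings: $j_{*}$ is restriction of scalars, and both $1_X$ and $j_{*} 1_{X'}$ coincide with the solid $(A, A^+)_{\sol}$-module $A$, with the unit map being the identity. The main obstacle is therefore the identification $X \times_{X_{dR}} X'_{dR} = X'$, which rests squarely on Proposition \ref{PropLiftingOverconvergentAlgebrasLift} (3): the $+$-bounded condition of a map from a condensed set can be tested after $\dagger$-reduction. This is the precise mechanism by which the analytic de Rham stack collapses the distinction between $A^+$ and $A^{\circ}$, despite the presence of higher-rank points in the underlying adic space.
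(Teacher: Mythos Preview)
Your approach shares the two essential observations with the paper's proof: first, that after reducing to the affinoid case $X=\AnSpec(A,A^+)_{\sol}$, $X'=\AnSpec(A,A^0)_{\sol}$, the morphism $j$ is a change of analytic structure on the \emph{same} underlying condensed ring (so $j_* 1_{X'}=1_X$); second, that this change is invisible at the de Rham level. You phrase the second point as the Cartesian square $X' \simeq X\times_{X_{dR}} X'_{dR}$, deduced from Proposition~\ref{PropLiftingOverconvergentAlgebrasLift} (part (1) is the precise reference for discrete elements of $A^0$, rather than (3)). The paper phrases it as the fact that the \v{C}ech nerves $\Delta^{\bullet+1}(X)^{\dagger}$ and $\Delta^{\bullet+1}(X')^{\dagger}$ have the same underlying condensed rings. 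These are equivalent: your Cartesian square, iterated, gives $\Delta^{n+1}(X)^{\dagger}\times_{X_{dR}} X'_{dR}\simeq \Delta^{n+1}(X')^{\dagger}$.

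There is, however, a genuine gap at the step ``base change along the descendable cover $f$ yields $f^* j_{dR,*} 1_{X'_{dR}}\simeq j_* 1_{X'}$''. Descendability of $f$ gives conservativity of $f^*$ and $*$-descent, but it does \emph{not} give the Beck--Chevalley isomorphism $f^* j_{dR,*}\to j_* f'^{*}$ for an arbitrary map $j_{dR}$; that would require $j_{dR}$ to be cohomologically proper (so that $j_{dR,!}=j_{dR,*}$ and proper base change applies), which is not established here. The paper sidesteps this issue entirely by computing $j_{dR,*} 1_{X'_{dR}}$ directly from descent: since each $(j^n)_*$ sends $1$ to $1$ and the resulting family $(1_{\Delta^{n+1}(X)^{\dagger}})_n$ is tautologically a cocartesian section, one reads off via Yoneda that
\[
\Hom_{X_{dR}}(M, j_{dR,*} 1)=\Hom_{X'_{dR}}(j_{dR}^* M, 1)=\varprojlim_n \Hom((j^n)^* M_n, 1)=\varprojlim_n \Hom(M_n, 1)=\Hom_{X_{dR}}(M,1_{X_{dR}}),
\]
so $j_{dR,*} 1_{X'_{dR}}=1_{X_{dR}}$. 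Your argument is easily repaired by replacing the base change invocation with this descent computation; once you have the Cartesian square, this is immediate.
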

\begin{proof}
We can assume without loss of generality that $X=\AnSpec(A,A^+)$ is affinoid, so that $X'=\AnSpec (A,A^0)$.  By Theorem  \ref{TheoremSurjectionXdRRigid} we have $\s{D}$-covers $f:X\to X_{dR}$ and $g:X'\to X'_{dR}$. Consider the \v{C}ech nerves $(\Delta^{n+1}(X)^{\dagger})_{[n]\in \Delta}$ and $(\Delta^{n+1}(X')^{\dagger})_{[n]\in \Delta}$ of $f$ and $g$ respectively, where $\Delta^{n}(Z)^{\dagger}$ is the overconvergent neighbourhood of the diagonal $\Delta^{n}(Z)\subset Z^n$. We have a map of simplicial affinoid spaces
\[
j^{\bullet}:\Delta^{\bullet+1}(X')^{\dagger}\to \Delta^{\bullet+1}(X)^{\dagger}. 
\]
Note that the underlying condensed rings of $\Delta^{n+1}(X')^{\dagger}$ and $\Delta^{n+1}(X)^{\dagger}$ are the same: this follows from the fact that the underlying condensed rings of $X^{'n+1}$ and $X^{n+1}$ are the same for all $n\in \bb{N}$, and that the overconvergent diagonal is defined using the same ideal of definition. This implies that $j^{\bullet}_* 1_{\Delta^{\bullet+1}(X')}=1_{\Delta^{\bullet+1}(X)}$ is a cocartesian section. Then, since
\[
\Mod(X_{dR})=\varprojlim_{[n]\in \Delta} \Mod(\Delta^{n+1}(X)^{\dagger})
\]
and
\[
\Mod(X'_{dR})=\varprojlim_{[n]\in \Delta} \Mod(\Delta^{n+1}(X')^{\dagger}),
\]
one deduces that $1_{X_{dR}}\xrightarrow{\sim} j_{dR,*} 1_{X'_{dR}}$ is an equivalence as wanted. 
\end{proof}

\section{Analytic de Rham stack and locally analytic representations}
\label{SectionAnalyticdRAndLocAn}

The last section of this paper concerns the relation between the analytic de Rham stack, the theory of locally analytic representations as in \cite{RRLocallyAnalytic,RJRCSolidLocAn2}, and the theory of equivariant $\wideparen{\n{D}}$-modules of \cite{ArdakovEquivariantD}. In \S \ref{SubsectionSmoothgroupoid} we introduce smooth $\dagger$-groupoids for derived Tate adic spaces. Geometric realizations of these kind of groupoids generalize the construction of the de Rham stack for solid smooth morphisms. Then, in \S \ref{SubsectionLocAnRep}, we use the notion of smooth $\dagger$-groupoid  together with actions of  $p$-adic Lie groups to give a very general notion of equivariant analytic $D$-module. 

\subsection{Smooth $\dagger$-groupoids}
\label{SubsectionSmoothgroupoid}

Different theories of $D$-modules over rigid spaces $X$ are built up from different epimorphisms of $\s{D}$-stacks $X\to X'$, equivalently, from different groupoid objects living over $X$. In the case of analytic $D$-modules, the kind of groupoid objects we encounter have a special shape, namely, they look like non-commutative deformations of the trivial group object $X\times \bb{G}_a^{\dagger,d}\to X$ for some $d\geq 1$, where $\bb{G}_a^{\dagger}=\AnSpec \bb{Q}_p\{T\}^{\dagger}$.  The previous observation leads us to the notion of a smooth $\dagger$-groupoid over a derived Tate adic space.

We start by briefly recalling the definition of a groupoid object in an $\infty$-category as well as some related notions. Then, we introduce smooth $\dagger$-groupoids on derived Tate adic spaces over $\bb{Q}_p$, and prove some cohomological properties of them. We end with some examples appearing in the theory of twisted $D$-modules of rigid spaces.

\subsubsection{Groupoids}
\label{Subsection-Groupoids}

\begin{definition}[{\cite[Definition 6.1.2.7]{HigherTopos}}]
Let $\s{C}$ be an  $\infty$-category with finite limits. A groupoid object on $\s{C}$ is a simplicial object $\n{G}:\Delta^{\op}\to \s{C}$ such that for all $[n]\in \Delta$ and all partition $[n]=S\bigcup S'$ with $S\cap S'=\{s\}$, the natural map 
\[
\n{G}([n])\to \n{G}(S)\times_{\n{G}(s)}\n{G}(S')
\]
is an equivalence. We let $\n{G}_{\bullet}$ denote the groupoid object in $\s{C}$. We call $\n{G}_0$ the objects of the groupoid, the map $d_1: \n{G}_1\to \n{G}_0$ the source map and $d_0:\n{G}_1\to \n{G}_0$ the target map. By an abuse of notation we say that $\n{G}$ is a groupoid over  $X:=\n{G}_0$, if $\s{C}$ admits geometric realizations we denote  
\[
X/\n{G}:=\varinjlim_{[n]\in \Delta^{\op}} \n{G}_{\bullet}.
\]
\end{definition}

Let $G$ be a group, a standard procedure to construct more groups from $G$ is to take quotients $G/H$ by normal subgroups. It turns out that being "normal" for a map of groups in higher category theory is not longer a property but additional datum:

\begin{definition}[Normal map of groupoids]
Let $\s{C}$ be an $\infty$-topos with effective epimorphisms and  let $\n{G}$ be groupoid over $X$ in $\n{C}$. Let $\n{H}$ be a group object over $X$ and $f:\n{H}\to \n{G}$ a morphism of groupoids over $X$ with geometric realizations $X/\n{H}\to X/\n{G}$. A \textit{normal quotient} of $f$ is the datum of a pullback  square in $\s{C}_{X/}$
\[
\begin{tikzcd}
X/\n{H} \ar[r] \ar[d] & X/\n{G} \ar[d] \\ 
X \ar[r] &  Y 
\end{tikzcd}
\]  
such that $X\to Y$ is an epimorphism. By an abuse of notation we let $\n{G}/\n{H}$ denote the groupoid associated to the epimorphism $X\to Y$. 
\end{definition}

\subsubsection{$\dagger$-groupoids}
\label{Subsection-daggerGroupoids}

We let $\s{C}=\AdicSp_{\bb{Q}_p}$ denote the category of derived Tate adic spaces over $\bb{Q}_p$.

\begin{definition}
\label{DefinitionSmoothDaggerGroupoids}
Let $X\in \s{C}$,  let $\n{G}$ be a groupoid in $\n{C}$ over $X$, and let $X/\n{G}$ be its geometric realization.

\begin{enumerate}
			\item We say that $\n{G}$ is a \textit{$\dagger$-groupoid} if the topological simplicial object $|\n{G}_{\bullet}|$ is the constant object $|X|$, i.e. if for all map $[n]\to [m]$ we have an homeomorphism of topological spaces $|\n{G}_m|\xrightarrow{\sim} |\n{G}_n|$. 
			 
			\item Let $\n{G}$ be a $\dagger$-groupoid over $X$. We say that $\n{G}$ is \textit{smooth of relative dimension $d$} if the target map $d_0:\n{G}_1\to X$ is, locally in the analytic topology of $X$, equivalent to the projection $X\times \bb{G}_a^{\dagger,d}\to X$.
					
\end{enumerate}

\end{definition}

\begin{remark}
Let $\n{G}$ be a $\dagger$-groupoid over $X$. The fact that any object $\n{G}_n$ has the same underlying topological space implies that we can localize the groupoid in the analytic topology of $X$. Namely, for any open subspace $U\subset X$, the preimages $U_n$ of the map $d_0: \n{G}_m \to X$ define a subsimplicial object $U_{\bullet}\subset \n{G}_{\bullet}$ that is clearly a groupoid over $U$.
\end{remark}

The following proposition implies that geometric realizations of smooth $\dagger$-groupoids have a well behaved theory of six functors.

\begin{prop}
\label{PropDescendableQuotientDaggerGRoup}
Let $\n{G}$  be a smooth $\dagger$-groupoid over $X$, then the natural map $f:X\to X/\n{G}$ is a descendable $\s{D}$-cover. 
\end{prop}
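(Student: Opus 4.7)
The plan is to combine two ingredients: weak cohomological properness of $f$, coming from the local description $\n{G}_1 \simeq X \times \bb{G}_a^{\dagger, d}$ together with the results of Section \ref{SectionCartierDualityVectorBundles}, and a Koszul-style finite filtration of $1_{X/\n{G}}$ in terms of $f_* 1_X$ that witnesses descendability.

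\textbf{Step 1: Properness and canonical cover.} The \v{C}ech nerve of $f$ coincides with the simplicial object $\n{G}_{\bullet}$, with projections $X \times_{X/\n{G}} X \to X$ being the source and target maps $d_1, d_0$ of the groupoid. Working analytic-locally on $X$, we may assume $d_0: \n{G}_1 \to X$ is a projection of the form $X \times \bb{V}(\s{F}^{\vee})^{\dagger} \to X$ for some trivial vector bundle $\s{F}$ of rank $d$; this map is weakly cohomologically proper by Proposition \ref{PropositionDAggerCartier2}(1). Since weak cohomological properness is local on the target in the analytic topology and stable under base change and composition, the maps $\n{G}_n \to X$ are weakly cohomologically proper for all $n$, and $f$ itself is weakly cohomologically proper; in particular $f_! = f_*$ and $1_X$ is $f$-proper. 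Surjectivity of $|f|$ (in fact $|f|$ is a homeomorphism by the $\dagger$-condition) together with proper base change then gives the canonical cover condition and universal $*$-descent.

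\textbf{Step 2: Descendability via a Koszul filtration.} By Proposition \ref{PropCoverModuleComoduleDescription}(2) and Step 1, the pullback $f^*$ is comonadic, with comonad $\n{C}_f = f^* f_* 1_X = d_{0,*}\, 1_{\n{G}_1}$. Using the local trivialization $\n{G}_1 \simeq X \times \bb{V}(\s{F}^{\vee})^{\dagger}$, we identify $\n{C}_f \simeq \Sym^{\dagger}_X(\s{F})$. Lemma \ref{LemmaDeRhamKoszulDagger2}(2) then provides a \emph{finite} Koszul-type filtration
\[
\Sym^{\dagger}_X(\s{F}) \otimes \bigwedge^d \s{F} \to \cdots \to \Sym^{\dagger}_X(\s{F}) \otimes \s{F} \to \Sym^{\dagger}_X(\s{F}) \to \s{O}_X
\]
of length $d+1$, which exhibits $\s{O}_X = 1_{X/\n{G}}$ as lying in the thick tensor ideal generated by $\n{C}_f$, equivalently by $f_* 1_X$, inside $\Mod(X/\n{G})$. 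This is precisely descendability of $f_* 1_X$ in the sense of \cite{MathewDescent}, and together with Step 1 yields the conclusion via Proposition \ref{PropositionProperDescent}.

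\textbf{Main obstacle.} The nontrivial point is globalizing the Koszul filtration from the local model to a genuine filtration in $\Mod(X/\n{G})$, since a priori it depends on the choice of local trivialization of $d_0: \n{G}_1 \to X$. The natural global interpretation is that $\s{F}$ is the conormal sheaf $e^* \Omega^1_{\n{G}_1/X, d_0}$ of the unit section $e: X \to \n{G}_1$, which is canonically a $\n{G}$-equivariant vector bundle on $X$ because the unit section is $\n{G}$-fixed; its exterior powers $\bigwedge^k \s{F}$ then also lie in $\Mod(X/\n{G})$, and the Koszul differentials are intrinsic to the comonad $\n{C}_f$. Alternatively, since descendability is a local-on-target property and the topological spaces of $X$ and $X/\n{G}$ agree by the $\dagger$-condition, one may pass to an affinoid analytic cover of $X$ trivializing $\s{F}$ and carry out the argument cover-by-cover, where the finite length of the Koszul filtration ensures a uniform descendability index.
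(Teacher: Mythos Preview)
Your approach is morally the same as the paper's --- both exhibit $1_{X/\n{G}}$ as sitting in a finite de Rham/Koszul-type filtration whose graded pieces lie in the thick tensor ideal of $f_* 1_X$ --- but there are two issues with your execution, and the paper's formulation avoids the obstacle you flag.

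First, your invocation of Proposition~\ref{PropCoverModuleComoduleDescription}(2) is circular: that proposition assumes $f$ is already a descendable $\s{D}$-cover. This is harmless since you do not actually need comonadicity; the identification $f^* f_* 1_X \simeq d_{0,*} 1_{\n{G}_1}$ follows directly from proper base change once $f$ is weakly cohomologically proper.

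Second, and this is the real gap, the Koszul resolution of Lemma~\ref{LemmaDeRhamKoszulDagger2}(2) lives in $\Mod(X)$, not in $\Mod(X/\n{G})$; your line ``$\s{O}_X = 1_{X/\n{G}}$'' conflates the two categories. Descendability requires $1_{X/\n{G}}$ to lie in the thick tensor ideal of $f_* 1_X$ \emph{inside} $\Mod(X/\n{G})$, so you need the filtration there, not merely after pulling back along $f^*$. You correctly identify this as the main obstacle but only sketch a resolution.

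The paper sidesteps this entirely by observing that since the fibers of $f$ are locally $\bb{G}_a^{\dagger,d}$, the relative de Rham stack satisfies $X_{dR, X/\n{G}} \xrightarrow{\sim} X/\n{G}$. Hence $1_{X/\n{G}}$ \emph{is} the relative de Rham cohomology of $X$ over $X/\n{G}$, and as such it carries an intrinsic Hodge filtration (via the filtered de Rham stack and Proposition~\ref{PropAnDeRhamStack}) with graded pieces $f_* \Omega^i_{X/(X/\n{G})}$. This filtration is finite, lives natively in $\Mod(X/\n{G})$, and since $\Omega^i_{X/(X/\n{G})}$ is locally free in the analytic topology of $X$ (which coincides with that of $X/\n{G}$ by the $\dagger$-condition), each graded piece is in the thick tensor ideal of $f_* 1_X$. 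Your conormal-sheaf proposal in the ``main obstacle'' paragraph is essentially a shadow of this Hodge-theoretic construction; the de Rham stack formalism makes the equivariance automatic rather than something to be checked.
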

\begin{proof}
We have a cartesian diagram 
\[
\begin{tikzcd}
\n{G}_1 \ar[r,"d_0"] \ar[d,"d_1"] & X \ar[d] \\
X \ar[r] & X/\n{G}_1.
\end{tikzcd}
\]
Therefore, locally in the analytic topology of $X$, the arrow $X\to X/\n{G}$ has fibers given by $\bb{G}_a^{\dagger,d}$. Thus, we have a natural equivalence 
\[
f_{dR}:X_{dR, X/\n{G}} \xrightarrow{\sim} X/\n{G}
\]
and the de Rham cohomology $1_{X/\n{G}}= f_{dR,*} 1_{X_{dR, X/\n{G}}}$ is Hodge complete. By Proposition \ref{PropAnDeRhamStack},  $1_{X/\n{G}}$ has by Hodge graduation (after forgetting the weight)
\[
gr^i(1_{X/\n{G}})=f_* \Omega^i_{X/ (X/\n{G})}. 
\]
Since $\Omega^i_{X/ (X/\n{G})}$ is locally free in the analytic topology of $X$, one has descendability of $f$ as wanted.
\end{proof}

Let us now focus in the case of a smooth $\dagger$-group $G$ over a derived Tate adic space $X$, namely, group objects over $X$ that are in addition smooth $\dagger$-groupoids.  

\begin{lemma}
Let $X$ be a derived Tate adic space, $G$ a group object over $X$ in derived Tate adic spaces, and $e:X\to G$ the unit section. Then the co-lie complex $e^* \bb{L}_{G/X}$ has a natural structure of $G$-module defining an object $\ell_{G/X}\in \Mod_{\sol}(X/G)$. Moreover, if $f:X\to X/G$, then there is a natural equivalence $\bb{L}_{X/(X/G)}\cong f^* \ell_{X/G}$.
\end{lemma}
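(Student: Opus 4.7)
The plan is to construct $\ell_{G/X}$ by descending $\bb{L}_{X/(X/G)}$ along $f: X \to X/G$ and to identify $\bb{L}_{X/(X/G)} \cong e^* \bb{L}_{G/X}$ via base change. Recall that $X/G$ is the geometric realization of the Čech nerve of $f$, the simplicial object $(G^{n/X})_{[n] \in \Delta^{\op}}$, so by the definition of quasi-coherent sheaves on stacks we have
\[
\Mod_{\sol}(X/G) = \varprojlim_{[n] \in \Delta} \Mod_{\sol}(G^{n/X}),
\]
and producing $\ell_{G/X}$ with $f^* \ell_{G/X} \cong \bb{L}_{X/(X/G)}$ amounts to equipping $\bb{L}_{X/(X/G)}$ with the structure of a cocartesian section over this simplicial diagram.

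For the second claim of the lemma, I would apply the base change formula for the cotangent complex of a morphism of prestacks (developed in Section \ref{SubsectionCotangent}) to the fundamental cartesian square
\[
\begin{tikzcd}
G \ar[r, "p"] \ar[d, "p"'] & X \ar[d, "f"] \\
X \ar[r, "f"'] & X/G,
\end{tikzcd}
\]
which realizes $G = X \times_{X/G} X$ via its structure map $p: G \to X$. Base change yields a canonical equivalence $\bb{L}_{G/X} \cong p^* \bb{L}_{X/(X/G)}$, and since $p \circ e = \id_X$ by the unit axiom, pulling back along $e$ gives $e^* \bb{L}_{G/X} \cong \bb{L}_{X/(X/G)}$, which is exactly the second statement of the lemma once $\ell_{G/X}$ is constructed.

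For the descent, I would iterate the same base change argument at every simplicial level. For each structure map $\pi: G^{n/X} \to X$ coming from the groupoid (source, target, iterated multiplications or degeneracies), one has a cartesian square exhibiting $\pi$ as a base change of $f$ through an iterated fiber product, and base change for the cotangent complex produces canonical equivalences $\bb{L}_{G^{n/X}/(X/G)} \cong \pi^* \bb{L}_{X/(X/G)}$. Checking that these equivalences are compatible under composition of squares (and under the simplicial maps of the Čech nerve) endows $\bb{L}_{X/(X/G)}$ with the desired cocartesian section structure, so that by descent it comes from a unique $\ell_{G/X} \in \Mod_{\sol}(X/G)$ with $f^* \ell_{G/X} \cong \bb{L}_{X/(X/G)}$ tautologically.

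The main obstacle will be verifying the coherence of the base change isomorphisms across all simplicial degrees in an $\infty$-categorical manner. For a group object this coherence has a transparent geometric origin: the multiplication $m: G \times_X G \to G$ together with projection to one factor exhibits $G$ as a trivial $G$-torsor over itself, producing a natural right-translation $G$-action on $G$ that fixes the unit section, and the induced action on $\bb{L}_{G/X}$ restricts via $e$ to the required $G$-module structure on $e^* \bb{L}_{G/X}$. Identifying this geometric action with the cocartesian datum coming from base change then gives the lemma.
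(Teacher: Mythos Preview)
Your approach is correct and the base-change identification $e^*\bb{L}_{G/X}\cong\bb{L}_{X/(X/G)}$ via the square $G=X\times_{X/G}X$ is exactly the right computation. However, the paper takes a different and somewhat slicker route that sidesteps the simplicial coherence check you flag as the main obstacle. Instead of descending $\bb{L}_{X/(X/G)}$ along $f$ level by level, the paper observes that the diagonal $G\to G\times_X G$ induces a map of classifying stacks $\Delta:X/G\to X/(G\times_X G)$, and that there is a natural equivalence $X/G\cong G/(G\times_X G)$ where $G\times_X G$ acts on $G$ by $(g_1,g_2)\cdot g=g_1gg_2^{-1}$. Under this identification $\Delta$ becomes the quotient by $G\times_X G$ of the structure map $G\to X$, so $\bb{L}_\Delta$ is an object that already lives on $X/G$ and whose underlying sheaf is $e^*\bb{L}_{G/X}$; the $G$-action is then visibly the adjoint action coming from conjugation. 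The second claim is deduced from a different cartesian square, namely the one expressing $X\to X/G$ as the pullback of $\Delta$ along the map $h:X/G\to X/(G\times_X G)$ induced by $(\id,e):G\to G\times_X G$.

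The upshot is that both arguments compute the same object, but the paper's packaging via $\bb{L}_\Delta$ produces an object on $X/G$ in one stroke and names the action explicitly, whereas your approach reconstructs the descent datum by hand. Your right-translation description at the end is essentially a shadow of the conjugation picture, but the identification with the adjoint action is not made explicit.
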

\begin{proof}
The diagonal map $G\to G\times G$ induces a morphism of classifying stacks 
\begin{equation}
\label{eqDiagonalMapClassStacks}
\Delta:X/G\to X/(G\times G).
\end{equation}
We have a natural equivalence $X/G\xrightarrow{\sim} G/(G\times G)$, where $X\to G$ is the unit map, $G\to G\times G$ is the diagonal map, and $G\times G$ acts on $G$ by $(g_1,g_2)\cdot g = g_1 g g_2^{-1}$. Then, the arrow \eqref{eqDiagonalMapClassStacks} is equivalent to the map 
\[
G/(G\times G)\to X/(G\times G).
\]
This implies that the underlying object of $\bb{L}_{\Delta}$ is precisely $\ell_{G/X}$, and that $G$ acts on $\ell_{G/X}$ by the adjoint action.  The last statement follows from the previous computation and the following cartesian diagram 
\[
\begin{tikzcd}
X \ar[r] \ar[d] & X/G  \ar[d,"h"]\\ 
X/G  \ar[r,"\Delta"]& X/(G\times G),
\end{tikzcd}
\]
where $h$ corresponds to the map of groups $(\id,e): G \to G\times G$.
\end{proof}

\begin{prop}
\label{PropDualizingSheafDaggerGRoupoid}
Let $G$ be a smooth $\dagger$-group over $X$ of relative dimension $d$. Then the map   $g:X/G \to X$ is cohomologically smooth and there is a natural equivalence $g^! 1_X \cong \bigwedge^d \ell_{G/X}^{\vee}[d]$. 
\end{prop}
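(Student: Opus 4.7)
The plan is as follows. Cohomological smoothness of $g$ is a local question on $X/G$; one reduces it (via a deformation to the normal cone of the unit section $e:X\to G$) to the abelian case, where Theorem \ref{TheoremAnalyticCartierII}(4) identifies the dualizing sheaf directly. The existence of $!$-functors for $g$ is the easy input and comes for free from the descendability proved in Proposition \ref{PropDescendableQuotientDaggerGRoup}.

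More precisely, Proposition \ref{PropDescendableQuotientDaggerGRoup} states that the projection $f:X\to X/G$ is a descendable $\s{D}$-cover, and in particular of universal $!$-descent; since $gf=\id_X$ trivially admits $!$-functors, so does $g$. Next, the conjugation action of $G$ on itself fixes the unit section and induces the adjoint action on $\Lie G = \ell_{G/X}^{\vee}$, endowing $\bigwedge^{d}\ell_{G/X}^{\vee}[d]$ with a canonical $G$-equivariant structure, that is, an invertible object $\n{L}\in\Mod(X/G)$. We aim to show $g^{!}1_{X}\cong \n{L}$.

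The heart of the proof is a deformation-to-the-normal-cone argument. The unit section $e:X\to G$ is a Zariski closed immersion with conormal bundle $\ell_{G/X}$, and the $\dagger$-smoothness of $G$ together with the group operation allow the construction of a $\bb{G}_{m}$-equivariant family of smooth $\dagger$-groups $\widetilde{G}\to X\times\bb{A}^{1,\an}$ with $\widetilde{G}|_{t\neq 0}\cong G$ and $\widetilde{G}|_{t=0}\cong \bb{V}(\ell_{G/X}^{\vee})^{\dagger}$ as the abelian $\dagger$-smooth group attached to the Lie algebra. Taking classifying stacks yields a family $\widetilde{g}:(X\times\bb{A}^{1,\an})/\widetilde{G}\to X\times\bb{A}^{1,\an}$ interpolating $g$ at $t=1$ and the Cartier-dual map $g_{0}:X/\bb{V}(\ell_{G/X}^{\vee})^{\dagger}\to X$ at $t=0$. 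At the special fiber $t=0$, Theorem \ref{TheoremAnalyticCartierII}(4) provides the cohomological smoothness of $g_{0}$ and computes $g_{0}^{!}1_{X}=\bigwedge^{d}\ell_{G/X}^{\vee}[d]$, with the adjoint $G$-equivariance arising from the construction. The $\bb{A}^{1,\an}$-rigidity argument for invertible sheaves used in the proofs of Theorem \ref{TheoSerreDuality} and Proposition \ref{PropDeformationNormalGeneralSix} then shows that $\widetilde{g}^{!}1_{X\times\bb{A}^{1,\an}}$ is invertible and is pulled back from $X$, so that its restrictions at $t=0$ and $t=1$ agree, yielding $g^{!}1_{X}\cong \bigwedge^{d}\ell_{G/X}^{\vee}[d]$ as claimed.

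The main technical obstacle is the construction and control of the $\bb{G}_{m}$-equivariant deformation of $G$ to its Lie algebra $\bb{V}(\Lie G)^{\dagger}$. One must verify (i) that the deformation to the normal cone of $e:X\to G$ can be performed compatibly with the group structure on $G$ (for instance by starting from the conjugation action, which fixes $e$) and produces a smooth $\dagger$-group over $X\times\bb{A}^{1,\an}$; and (ii) that the resulting multiplication on $\bb{V}(\Lie G)^{\dagger}$ at $t=0$ is the standard abelian one, so that the analytic Cartier duality of Section \ref{SectionCartierDualityVectorBundles} applies, while the $G$-equivariance of the dualizing sheaf specialises to the expected adjoint action. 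Both points amount to running an overconvergent Rees algebra construction carefully in the $\dagger$-analytic setting of Section \ref{SectionAnalyticAdicSpaces}.
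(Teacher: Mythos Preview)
Your approach via deformation to the normal cone is genuinely different from the paper's, but it has two real gaps. First, there is a circularity in using the deformation to establish cohomological smoothness of $g$. In the analogous arguments you cite, cohomological smoothness is already known beforehand: in Theorem \ref{TheoSerreDuality} it comes from Proposition \ref{PropGeoSmoothIsSmooth}, and Proposition \ref{PropDeformationNormalGeneralSix} assumes a premotivic formalism where smooth maps are cohomologically smooth by hypothesis. Here you would need $\widetilde{g}$ to be cohomologically smooth over all of $\bb{A}^{1,\an}$ to run the rigidity argument, but at $t\neq 0$ this is exactly the statement you are trying to prove. Second, the rigidity step itself does not port over: Proposition \ref{PropDeformationNormalGeneralSix} needs $\bb{A}^{1,\an}$-acyclicity, which fails for $\Mod_{\sol}$, while the argument in Theorem \ref{TheoSerreDuality} uses $\bb{P}^1$ and reduces to an explicit computation on the base. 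You would need to manufacture an analogue for a family of classifying stacks whose fibers $X/G$ and $X/\bb{V}(\Lie G)^{\dagger}$ are genuinely different, and it is not clear how to compare invertible objects across that family. The overconvergent Rees construction for the group deformation you gesture at is also nontrivial to carry out.

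The paper's proof avoids all of this by a direct argument. The descendable cover $f:X\to X/G$ gives a Hodge filtration on $1_{X/G}$ with graded pieces $f_*f^*(\bigwedge^{i}\ell_{G/X}\otimes -)$; dualizing this filtration on $\iHom_X(g_*M,N)$ identifies $g^!N \cong \bigwedge^{d}\ell_{G/X}^{\vee}[d]\otimes g^*N$ by inspection. Cohomological smoothness then follows from Lemma \ref{KeyLemmaSmoothRetraction}: the Hodge filtration supplies a map $f_*1_X\to \n{L}:=\bigwedge^{d}\ell_{G/X}^{\vee}[d]$, and the counit $g_*g^!1_X\to 1_X$ gives the required retraction. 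This is the same mechanism used for $X/\bb{V}(\s{F})^{\dagger}$ in Proposition \ref{PropAlgebraicCartier2}, transplanted verbatim; no deformation is needed because the de Rham/Koszul resolutions already live on the non-abelian classifying stack.
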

\begin{proof}
Let $f:X\to X/G$, by Proposition \ref{PropDescendableQuotientDaggerGRoup} the map $g$ admits $!$-functors and is weakly cohomologically proper. In order to show that $g$ is cohomologically smooth we first compute the right adjoint $g^!$. The proof of Proposition \ref{PropDescendableQuotientDaggerGRoup} produced a Hodge filtration for the unit object $1_{X/G}$. By proper base change, we have Hodge filtrations for all $M\in\Mod_{\sol}(X/G)$ such that 
\[
\gr^i(M)= f_* f^*(\bigwedge^i \ell_{G/X} \otimes M).
\]
Let $N\in \Mod_{\sol}(X)$. The $\Hom$ space
\[
\iHom_X(g_* M, N)
\]
has a filtration with graded pieces 
\[
\begin{aligned}
gr^{-i}(R\Hom_X(g_* M, N)) & =R\Hom_X(g_*(f_*f^*(\bigwedge^i \ell_{G/X}\otimes M)),N) \\
& = R\iHom_X(f^*(\bigwedge^i \ell_{G/X}\otimes M), N) \\
& = R\Hom_{X/G}(M,  \bigwedge^i \ell^{\vee}_{G/X} \otimes f_* N)\\
& =  R\Hom_{X/G}(M, f_* f^*( \bigwedge^i \ell^{\vee}_{G/X} \otimes g^* N)).
\end{aligned}
\]
But this filtration is also induced by the dual of the Hodge-filtration of $g^*N$ which is nothing but a $\bigwedge^d \ell_{G/X}^{\vee}[d]$-twist of the Hodge filtration. This shows that there is a natural equivalence  $g^ !N \cong  \bigwedge^d \ell_{G/X}^{\vee}[d] \otimes g^* N$. We still need to prove that $g$ is cohomologically smooth, for this we employ Lemma \ref{KeyLemmaSmoothRetraction}. We let $\n{L}=g^! 1_{X}= \bigwedge^d \ell^{\ell}_{G/X}[d]$. The Hodge filtration gives rise a map $f_* 1_X \to \n{L}$, and the adjunction $g_*\n{L}=g_*g^! 1_X \to 1_X$ produces a splitting $ 1_X\to g_* \n{L} \to 1_X$. Thus, we have all the data and hypothesis needed in Lemma \ref{KeyLemmaSmoothRetraction}, proving that $g$ is cohomologically smooth. 
\end{proof}

\begin{example}
\label{ExampleDaggerGroupoids}
In the following we give some examples of smooth $\dagger$-groups and groupoids that appear in the theory of analytic $D$-modules.

\begin{enumerate}
\item Let $f:X\to Y$ be a solid smooth morphism of derived rigid spaces. Since $X\to Y$ is formally $\dagger$-smooth in the analytic topology of $Y$, (cf. Proposition \ref{PropFormallyInftSmoothEtale}), the map $X\to X_{dR^+,Y}$  is an epimorphism by Proposition \ref{PropFormallyInftSmoothEtale}. Then, by Proposition \ref{PropFormallyInftSmoothEtale} one deduces that the \v{C}ech nerve of $X\to X_{dR^+,Y}$ is equal to $(\Delta^{n+1,\dagger}_{Y}X )_{[n]\in \Delta^{\op}}$, where $\Delta^{n}_Y: X\to X^{\times_Y n}$ is the diagonal map, and where $\Delta^{n,\dagger}_{Y}X \subset X^{\times_Y n}$ is the immersion attached to the locally closed Zariski immersion $|\Delta^{n}_Y(X)| \subset |X^{\times_{Y} n}|$.  Then, locally in the analytic topology of $Y$ and $X$, the morphism $f$ is standard solid smooth and by taking a factorization $X\to Y\times \bb{G}_{a,\sol}^d\to Y$ with the first arrow being standard solid \'etale, one gets that $(\Delta^{n+1,\dagger}_{Y}X )_{[n]\in \Delta^{\op}}$ is a smooth $\dagger$-groupoid over $X$ by Lemma \ref{LemmaKeyCasesAnalyticdeRham} (1). Moreover, the previous description holds for any locally closed subspace  of $X$ in the sense of locale for the analytic topology.

\item Let $X$ be a derived Tate adic space and let $G$ be a group object over $X$ such that $G\to X$ is, locally in the analytic topology of $X$ and $G$, a locally closed subspace  of a solid smooth map. Let $\exp(\Lie G)^{\dagger}\subset G$
 be the locally closed subspace associated to the unit map $|X|\to |G|$. Then there is a natural equivalence  $G_{dR,X}= G/\exp(\Lie G)^{\dagger}$. Indeed, by (1) $G_{dR,X}$ is the geometric realization of the overconvergent diagonals of the \v{C}ech nerve of $G\to X$, but the  \v{C}ech nerve  of $G\to X$ is equivalent to the simplicial space $(G^{\times_X n+1})_{[n]\in \Delta^{\op}}$ that encodes the group structure of $G$, and the \v{C}ech nerve of the de Rham stack of $G$ corresponds to the subspace given by $(\exp(\Lie G)^{\dagger, \times_X n}\times G)_{[n]\in \Delta^{\op}}$, whose geometric realization is precisely $G/ \exp(\Lie G)^{\dagger}$.

\item  Let $(K,K^+)$ be a non-archimedean extension of $\bb{Q}_p$ and let $X$ be a rigid space over $(K,K^+)$, seen as a derived Tate adic space over $\AnSpec (K,K^+)_{\sol}$. Let $\f{L}$ be a $K$-linear Lie algebroid over $X$ (cf. \cite[\S 9.1]{ArdakovWadsleyDI})  which is locally finite free in the analytic topology of $X$. Let $U(\f{L})$ be its enveloping algebra over $\s{O}_X$ and $\n{D}(\f{L})$  its algebra of locally analytic distributions, i.e. the Fr\'echet completion of  \cite[\S 9.3]{ArdakovWadsleyDI}. The diagonal map $\f{L}\to \f{L}\oplus \f{L}$ defines a commutative co-algebra structure on $\n{D}(\f{L})$ compatible with its algebra structure, that endows $\n{D}(\f{L})$  with a Hopf-algebra structure over $K$. Let us fix the left $\s{O}_X$-action on $\n{D}(\f{L})$. Taking duals with respect to $\s{O}_X$ of the natural projection $\n{D}(\f{L})\to \n{D}(\f{L})/\n{D}(\f{L})(\f{L}) \cong \s{O}_X$ we get a morphism of commutative algebras $d^0: \s{O}_X\to  C^{\dagger}(\f{L})$. By fixing a basis of $\f{L}$ over $\s{O}_X$, the Poincar\'e-Birkhoff-Witt theorem implies that $C^{\dagger}(\f{L})$ is isomorphic to $\s{O}_X\{T_1,\ldots ,T_d\}^{\dagger}$, where $d$ is the rank of $\f{L}$ over $\s{O}_X$. On the other hand, the orbit map $d^1: \s{O}_X\to C^{\dagger}(\f{L})$ obtained by the action of $\f{L}$ on $\s{O}_X$ is also a morphism of commutative algebras. The natural map $\s{O}_X \to \n{D}(\f{L})$ induces an augmentation map $s:C^{\dagger}(\f{L})\to \s{O}_X$. Taking analytic spectrum over $X$ we end up with the data of a $(\leq 1)$-simplicial space
\begin{equation}
\label{eqGRoupoidExp}
\begin{tikzcd}
\exp(\f{L})^{\dagger} \ar[r,"d_0", shift left = 8pt] \ar[r,"d_1"', shift right = 8pt] & X \ar[l, "e"']
\end{tikzcd}
\end{equation}
with $\exp(\f{L})^{\dagger}= \AnSpec_X C^{\dagger}(\f{L})$.  It is not hard to see that the Lie algebra structure of $\f{L}$ defines a groupoid object structure on \eqref{eqGRoupoidExp}, we call this groupoid the \textit{exponential of $\f{L}$}. We also call $\Mod_{\sol}(X/\exp(\f{L})^{\dagger})$ the category of \textit{analytic $U(\f{L})$ or $\n{D}(\f{L})$-modules}.

\item In the notation of the previous point, let $X$ be a smooth rigid space over $(K,K^+)$. Then the tangent space $\n{T}_{X/K}$ has a natural structure of Lie algebroid over $X$. The exponential $\exp(\n{T}_{X/K})^{\dagger}$ is nothing but the \v{C}ech nerve of the de Rham stack of $X$. Indeed, it suffices to prove this locally in the analytic topology of $X$, and we can assume that we have an \'etale map towards a relative polydisc over $K$. By naturality under \'etale maps, it suffices to prove it for $\bb{G}_{a,\sol}^d$, which follows from the case of group objects of point (2).

\item Let $(K,K^+)$ be a non-archimedean extension of $\bb{Q}_p$ and let  $\bf{G}$ be a reductive group over $K$. Let $\bf{P}\subset \bf{G}$ be a parabolic subgroup, let $\bf{N}\to \bf{P}\to \bf{M}$ be the short exact sequence of its unipotent radical and the Levi quotient, and let $\Fl= \bf{P} \backslash \bf{G}$ be the flag variety.   For a group $\bf{H}$ we let $\f{h}$ denote its Lie algebra. There is a natural action of $\f{g}$ on $\Fl$ by derivations, this defines a Lie algebroid $\f{g}^{0}:=\s{O}_{\Fl}\otimes \f{g}$ over $\Fl$ whose exponential is the smooth $\dagger$ groupoid $\exp(\f{g})^{\dagger}\times \Fl \to \Fl$ induced by the natural multiplication. The Lie algebras $\f{n}$ and $\f{p}$ have a natural adjoint action by $\bf{P}$, and they define Lie algebroids $\f{n}^{0}\subset \f{p}^{0} \subset \f{g}^{0}$. In fact, these Lie algebroids act trivially on $\s{O}_{\Fl}$ and they are ideals of $\f{g}^{0}$, thus the associated smooth $\dagger$-groupoids $\exp(\f{n}^{0})^{\dagger}$ and $\exp(\f{p}^0)^{\dagger}$ are normal subgroups of $\exp(\f{g}^{0})^{\dagger}$ (they are actually normal subgroups of the bigger groupoid $\bbf{G}^{\an}\times \Fl\to\Fl$, where $\bbf{G}^{\an}$ is the analytitfication of $\bf{G}$ to a rigid space). The quotient $\f{g}^{0}/ \f{p}^{0}$ is the tangent space of $\Fl$, this implies that we have a fiber sequence 
\[
\Fl/\exp(\f{p}^{0})^{\dagger} \to \Fl/\exp(\f{g})^{\dagger}\to \Fl_{dR}. 
\]
On the other hand, we call $\Fl/(\exp(\f{g}^0/\f{n}^0))^{\dagger}$ the universal twisted analytic de Rham stack of $\Fl$, and call $\Mod_{\sol}(\Fl/(\exp(\f{g}^0/\f{n}^0))^{\dagger})$ the category of analytic universal twisted $D$-modules of $\Fl$.
\end{enumerate}
\end{example}

\subsection{$p$-adic Lie groups and  analytic $D$-modules}
\label{SubsectionLocAnRep}

We end this section with the relation between analytic $D$-modules, locally analytic representations of $p$-adic Lie groups, and the theory of equivariant twisted $\wideparen{\n{D}}$-modules. We need some notations.

\begin{definition}
Let $G$ be a $p$-adic Lie group. We let $G$, $G^{sm}$ and $G^{la}$  denote the analytic adic spaces obtained by sending a compact open subspace $U\subset G$ to the spaces $C(U,\bb{Q}_p)$, $C^{sm}(U,\bb{Q}_p)$ and $C^{la}(U,\bb{Q}_p)$ of continuous, locally constant, and locally analytic functions of $U$. 
\end{definition}

The following lemma provides a clean relation between the groups $G$, $G^{la}$ and $G^{sm}$.

\begin{lemma}
\label{LemmaRelationLieGroupsLocAn}
Let $G^{\dagger}\subset G$ be the closed immersion of locales corresponding to the unit section. Then $G^{\dagger}$ is a normal subgroup of $G$ and there is a natural equivalence $G/G^{\dagger}= G^{sm}$. Similarly, we have that $G^{la}_{dR}= G^{la}/\exp(\f{g})^{\dagger}=G^{sm}$.  
\end{lemma}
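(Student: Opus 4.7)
The three assertions are tightly linked. The plan is to first deduce $G^{la}_{dR}=G^{la}/\exp(\f{g})^{\dagger}$ from the general theory of smooth $\dagger$-groupoids, and then to establish simultaneously the normality of $G^{\dagger}\subset G$ and of $\exp(\f{g})^{\dagger}\subset G^{la}$, together with the quotient identifications, by exhibiting $\s{D}$-epimorphisms $\pi\colon G\to G^{sm}$ and $\pi^{la}\colon G^{la}\to G^{sm}$ with respective kernels $G^{\dagger}$ and $\exp(\f{g})^{\dagger}$. Indeed, the definition of normal quotient of groupoids recalled in \S\ref{Subsection-Groupoids} then immediately yields normality of both kernels, together with the equalities $G/G^{\dagger}=G^{sm}$ and $G^{la}/\exp(\f{g})^{\dagger}=G^{sm}$.

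The identification $G^{la}_{dR}=G^{la}/\exp(\f{g})^{\dagger}$ is essentially Example \ref{ExampleDaggerGroupoids} (4) applied to $G^{la}$: being a smooth rigid space over $\bb{Q}_p$, its analytic de Rham stack is the quotient by the exponential of its tangent Lie algebroid, which by left-invariance of $\f{g}$ is $\s{O}_{G^{la}}\otimes_{\bb{Q}_p}\f{g}$; its exponential in the sense of Example \ref{ExampleDaggerGroupoids} (3) is precisely $\exp(\f{g})^{\dagger}$.

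The homomorphisms $\pi$ and $\pi^{la}$ will be induced by the ring inclusions $C^{sm}(U,\bb{Q}_p)\subset C^{la}(U,\bb{Q}_p)\subset C(U,\bb{Q}_p)$, which respect the Hopf algebra structures encoding group multiplication. The kernels are identified by testing on affinoid points: a map $\AnSpec\n{A}\to G$ lies over the unit of $G^{sm}$ precisely when the induced morphism $C(U,\bb{Q}_p)\to\n{A}$ factors through the $\dagger$-reduction at $e$, which by Proposition \ref{PropLiftingOverconvergentAlgebrasLift} and Definition \ref{DefinitionBoundedNilpIdeal} is the defining condition for $G^{\dagger}$. An analogous argument, using the identification of the $\dagger$-neighbourhood of the unit in a rigid analytic group with $\exp(\f{g})^{\dagger}$ via an exponential chart, yields $\ker(\pi^{la})=\exp(\f{g})^{\dagger}$.

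The main obstacle is verifying that $\pi$ and $\pi^{la}$ are $\s{D}$-epimorphisms. The strategy is to use that $G^{sm}$ decomposes analytically as a disjoint union of copies of the unit indexed by the elements of $G$; on each component, left translation by the corresponding element furnishes a tautological section of both $\pi$ and $\pi^{la}$. Gluing these sections in the $\s{D}$-topology via the descent results of \S\ref{SubsectionSolidStacks} should give the required surjectivity, the delicate point being to make the idempotent-algebra decomposition of $G^{sm}$ precise by reducing to a small compact open neighbourhood of the identity on which $G^{la}$ is trivialised by an exponential chart.
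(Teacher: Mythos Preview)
Your overall strategy is sound, but the execution of the surjectivity step has a genuine problem, and the paper's argument is considerably simpler.

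The claim that ``$G^{sm}$ decomposes analytically as a disjoint union of copies of the unit indexed by the elements of $G$'' is false as stated: for a positive-dimensional compact $G$, the underlying set of $G$ is uncountable, while $G^{sm}=\AnSpec C^{sm}(G,\bb{Q}_p)$ is the spectrum of a countably-generated algebra and is, as an analytic space, the profinite set $\varprojlim_H G/H$ with $\bb{Q}_p$-valued locally constant functions. Its clopen decompositions are the finite ones indexed by $G/H$ for compact open $H$; there is no decomposition indexed by points of $G$. So the sections you propose to glue do not exist in the form you describe, and the argument for $\pi$ and $\pi^{la}$ being $\s{D}$-epimorphisms does not go through as written. (The idea can be salvaged: for each compact open $H$ the map $G\to G/H$ is a finite disjoint union of translates of $H$, hence split, and one then passes to the limit; but this is already essentially the paper's argument.)

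The paper bypasses the epimorphism question entirely by working directly with the cofiltered system of compact open subgroups. After reducing to compact $G$, one simply observes that the $\dagger$-neighbourhood of the identity is $G^{\dagger}=\varprojlim_H H$ (resp.\ $\exp(\f{g})^{\dagger}=\varprojlim_H H^{la}$), where $H$ ranges over compact open subgroups; normality is then automatic since normal $H$ are cofinal. The quotient is computed as
\[
G/G^{\dagger}=\varprojlim_H G/H=G^{sm},\qquad G^{la}/\exp(\f{g})^{\dagger}=\varprojlim_H G^{la}/H^{la}=\varprojlim_H G/H=G^{sm},
\]
using that each $G/H$ is a finite discrete set and $G^{la}/H^{la}=G/H$. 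For the identification $G^{la}_{dR}=G^{la}/\exp(\f{g})^{\dagger}$, the paper invokes Example~\ref{ExampleDaggerGroupoids}~(2) (group objects), which is more direct than the route through Example~(4) you propose. This limit argument is a two-line proof; your approach, even once repaired, takes a detour through the $\s{D}$-epimorphism machinery that is not needed here.
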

\begin{proof}
We can assume without loss of generality that $G$ is compact.  We can write $G^{\dagger}=\varprojlim_{1\in H \subset G} H$ where $H$ runs over all compact open subgroups of $G$. Then one finds that 
\[
G/G^{\dagger}=\varprojlim_H G/H = G^{sm}. 
\]

For the claim about $G^{la}$, by Example \ref{ExampleDaggerGroupoids} (2) we have that $G^{la}_{dR}=G^{la}/\exp(\f{g})^{\dagger}$. We can also write $\exp(\f{g})^{\dagger}= \varprojlim_H H^{la}$ where $H$ runs over all the open compact subgroups of $G$. One finds that 
\[
G^{la}_{dR}=\varprojlim_H G^{la}/H^{la} = \varprojlim_H G/H =G^{sm}. 
\]
\end{proof}

Next we show that the classifying stacks of $G$, $G^{la}$ and $G^{sm}$ have $!$-functors. 

\begin{prop}
Let $G$ be a $p$-adic Lie group. The maps $*\to */G$,  $*\to */G^{la}$  and $*\to */G^{sm}$ are $\s{D}$-covers. Furthermore, if $G$ is compact they are  descendable $\s{D}$-covers.
\end{prop}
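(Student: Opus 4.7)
The plan is to use Lemma \ref{LemmaRelationLieGroupsLocAn} to reduce the three cases to checking the statement for $G^{sm}$, $G^{\dagger}$, and $\exp(\f{g})^{\dagger}$. Indeed, the short exact sequences of groupoids $G^{\dagger}\to G\to G^{sm}$ and $\exp(\f{g})^{\dagger}\to G^{la}\to G^{sm}$ yield fiber sequences of classifying stacks $*/G^{\dagger}\to */G\to */G^{sm}$ and $*/\exp(\f{g})^{\dagger}\to */G^{la}\to */G^{sm}$, and since $\s{D}$-covers (resp.\ descendable $\s{D}$-covers) are stable under composition, it is enough to deal with the three classifying stacks separately. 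The claim for $\exp(\f{g})^{\dagger}$ is immediate from Proposition \ref{PropDescendableQuotientDaggerGRoup}: it is a smooth $\dagger$-group over $*$, so $*\to */\exp(\f{g})^{\dagger}$ is a descendable $\s{D}$-cover.

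For $G^{\dagger}$, recall from the proof of Lemma \ref{LemmaRelationLieGroupsLocAn} that $G^{\dagger}=\varprojlim_{H}H$ with $H$ running over compact open subgroups. Its ring of global sections is the filtered colimit $\s{O}(G^{\dagger})=\varinjlim_{H}C(H,\bb{Q}_p)$, and I will observe that this is a descendable $\bb{Q}_p$-algebra. The key point is that the unit of $\s{O}(G^{\dagger})$ (the constant function) and evaluation at the identity give maps $\bb{Q}_p\to \s{O}(G^{\dagger})\to \bb{Q}_p$ composing to the identity, so $\bb{Q}_p$ is a retract of $\s{O}(G^{\dagger})$, hence descendable by standard stability properties of descendability under retracts \cite{MathewDescent}. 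Proposition \ref{PropositionProperDescent} then yields that $*\to */G^{\dagger}$ is a descendable $\s{D}$-cover.

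For $G^{sm}$, I will first treat the compact case. If $G$ is compact, the ring $C^{sm}(G,\bb{Q}_p)=\varinjlim_{H\trianglelefteq G}C(G/H,\bb{Q}_p)$ again has $\bb{Q}_p$ as a retract via constants and evaluation at the identity, so the same argument shows $*\to */G^{sm}$ is a descendable $\s{D}$-cover. Combining the three cases for compact $G$ through the fiber sequences gives the descendable statement of the proposition. For general (non-compact) $G$, pick a compact open subgroup $K\subset G$; the inclusion gives a map $*/K^{sm}\to */G^{sm}$ which is representable (over $*$) by the discrete analytic space $G/K$, a disjoint union of copies of $*$. By Lemma \ref{LemmaOpenClosedCoverLocaleCoho} such disjoint unions of open points yield cohomologically étale $\s{D}$-covers (stable under infinite disjoint unions since $\widetilde{E}$ is stable under disjoint unions in the sense of Definition \ref{DefinitionStabilityE}), so $*/K^{sm}\to */G^{sm}$ is a $\s{D}$-cover. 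Composing with the compact case $*\to */K^{sm}$, which is a descendable $\s{D}$-cover, shows $*\to */G^{sm}$ is a $\s{D}$-cover. The same strategy with $G^{\dagger}$ and $\exp(\f{g})^{\dagger}$ completes the $\s{D}$-cover statement for arbitrary $G$.

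The main technical subtlety I expect is verifying that the "étale but possibly infinite" cover $*/K^{sm}\to */G^{sm}$ is genuinely a $\s{D}$-cover in the non-compact case, since descendability can fail when $G/K$ is infinite (which is why the proposition distinguishes compact from general $G$). This amounts to checking that covers by infinite disjoint unions of open immersions of locales satisfy universal $*$- and $!$-descent for solid quasi-coherent sheaves; granting this, which should be local on the target and hence reduce to the finite case already handled by the locale-theoretic machinery of \S\ref{SubsectionCategorifiedLocale}, the plan goes through.
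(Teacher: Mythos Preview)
Your overall strategy—factoring through the fiber sequences $*/G^{\dagger}\to */G\to */G^{sm}$ and $*/\exp(\f{g})^{\dagger}\to */G^{la}\to */G^{sm}$—is a reasonable alternative to the paper's approach. The treatment of $\exp(\f{g})^{\dagger}$ via Proposition~\ref{PropDescendableQuotientDaggerGRoup} is correct, and your reduction from general to compact $G$ is essentially the same as the paper's.

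However, your descendability arguments for $G^{sm}$ and $G^{\dagger}$ share a genuine error. Descendability of $f_*1_*$ must be verified in $\s{D}(*/G^{?})=\Mod_{\sol}(*/G^{?})$, so the retraction $\bb{Q}_p\to\s{O}(G^{?})\to\bb{Q}_p$ must be \emph{equivariant} for the regular action. Evaluation at the identity is not: under $g\in G^{?}$ it sends $f\mapsto f(g^{-1})$, not $f\mapsto f(e)$. For compact $G^{sm}$ this is easily repaired by replacing evaluation with averaging over the finite quotients $G/H$ (this is exactly the paper's ``equivariant direct summand'' argument).

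For $G^{\dagger}$ the gap is more serious and I do not see how to close it within your framework. The normalized Haar measures on the compact open subgroups $H$ are incompatible under restriction, so they do not assemble to an equivariant retraction on $\s{O}(G^{\dagger})=\varinjlim_H C(H,\bb{Q}_p)$; nor is there another obvious way to place the trivial representation in the thick tensor ideal of the regular one. The paper bypasses $G^{\dagger}$ entirely: it handles $*/G$ and $*/G^{la}$ directly by dualizing the Lazard--Serre resolution to obtain finite \emph{equivariant} resolutions of $\bb{Q}_p$ by copies of $C(G,\bb{Q}_p)$ and $C^{la}(G,\bb{Q}_p)$ respectively. Your route does succeed for $G^{la}$ once the $G^{sm}$ argument is corrected, and this is arguably cleaner than invoking Kohlhaase's extension of Lazard--Serre; but for the continuous group $G$ you still need Lazard--Serre or an equivalent finiteness input.
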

\begin{proof}
Let $H\subset G$ be an open and compact subgroup, the natural map $*/H\to */G$ is fibered on $G/H$ which is discrete over $*$, so cohomologically \'etale. Thus, to show that $*\to */G$ is a $\s{D}$-cover it suffices to show that $*\to */H$ is a $\s{D}$-cover (resp. for $H^{la}$ and $H^{sm}$), so we can assume that $G$ is compact. Let us write $f$ for any of the projections of $*$ to the classifying stacks. In the case of $G^{sm}$, the object $f_* \bb{Q}_p$ is nothing but the algebra $C^{sm}(\bb{Q}_p)$ of smooth functions endowed with the left regular action. Since $C^{sm}(G,\bb{Q}_p)$  admits $\bb{Q}_p$ as an equivariant  direct summand, we get that $*\to */G^{sm}$ is descendable. Descendability for $G$ and $G^{la}$ follows from the Lazard-Serre resolution (\cite[Theorems 5.7 and 5.8]{RRLocallyAnalytic}), namely, the Lazard-Serre resolution is a long exact sequence of $\bb{Z}_{p,\sol}[G]$-modules
\[
0\to \bb{Z}_{p,\sol}[G]^{\dim G}\to \cdots \to \bb{Z}_{p,\sol}[G]\to \bb{Z}_p\to 0
\]
which by a theorem of Kolhaase  extends to a long exact sequence of the locally analytic distribution algebra of $G$:
\[
0\to \n{D}^{la}(G)^{\dim G} \to \cdots \to \n{D}^{la}(G) \to \bb{Q}_p\to 0.
\]
Taking duals with respect to $\bb{Q}_p$, we got long exact sequences of representations of $G$
\[
0\to \bb{Q}_p \to C(G,\bb{Q}_p)\to\cdots \to C(G,\bb{Q}_p)^{\dim G} \to 0
\]
and 
\[
0\to \bb{Q}_p \to C^{la}(G, \bb{Q}_p)\to \cdots \to C^{la}(G,\bb{Q}_p)^{\dim G}\to 0,
\]
which proves descendability of $*\to */G$ and $*\to */G^{la}$ respectively. 
\end{proof}

We now study cohomological properties of the classifying stacks of $G$ and $G^{la}$.

\begin{prop}
\label{PropCohoSmoothClassifyingG}
Let $G$ be a $p$-adic Lie group and consider the maps $f:*/G^{sm}\to *$,  $g:*/G \to *$ and $h: */G^{la}\to *$. Then $f$, $g$ and $h$ are cohomologically smooth,  both $g^! \bb{Q}_p$ and $h^! \bb{Q}_p$ are naturally isomorphic to $\bigwedge^{\dim G} \f{g} [d]$, and $f^! \bb{Q}_p= \delta_G$ is the unimodular character. 
\end{prop}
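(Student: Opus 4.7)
The plan is to treat the three maps in parallel, reducing in each case to the analysis of a compact open subgroup and a short exact sequence of groups. First, for $h$, I would invoke Lemma \ref{LemmaRelationLieGroupsLocAn}, which identifies $G^{sm}$ with $G^{la}/\exp(\f{g})^{\dagger}$; this upgrades to a fiber sequence of classifying stacks
\[
*/\exp(\f{g})^{\dagger} \to */G^{la} \to */G^{sm}.
\]
Proposition \ref{PropDualizingSheafDaggerGRoupoid} applied to the smooth $\dagger$-group $\exp(\f{g})^{\dagger}$ (whose co-Lie complex is $\f{g}^\vee$) gives that $*/\exp(\f{g})^{\dagger} \to *$ is cohomologically smooth with dualizing sheaf $\bigwedge^d \f{g}[d]$; by base change the projection $*/G^{la} \to */G^{sm}$ is also cohomologically smooth with the same (pulled back) dualizing sheaf, endowed with its natural $G^{sm}$-equivariant structure coming from the adjoint action. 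An entirely parallel argument, using the short exact sequence $1\to G^{\dagger}\to G\to G^{sm}\to 1$ and Example \ref{ExampleDaggerGroupoids}(2), handles the map $g$: the key point is that $G^{\dagger}$ is (locally in the analytic topology) the locally closed overconvergent neighbourhood of the unit inside the smooth space $G$, which is the data needed to apply Proposition \ref{PropDualizingSheafDaggerGRoupoid} to $B G^{\dagger}$.

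Next, for $f$, I would first reduce to the compact case. Pick an open compact subgroup $H\subset G$; then $*/H^{sm}\to */G^{sm}$ is fibered over the discrete countable set $G/H$, hence cohomologically \'etale. Since cohomological smoothness is local on the source for smooth $\s{D}$-covers (Corollary \ref{CorollaryDescentSmoothProperCovers}) and $*/H^{sm}\to */G^{sm}$ is such a cover (being profinite and descendable as in the preceding Proposition), it suffices to treat $H$. When $G$ itself is compact, $G^{sm}$ is profinite, and writing $*/G^{sm}=\varprojlim_N */F_N$ along finite quotients $F_N=G/N$, each $*/F_N\to *$ is cohomologically \'etale in characteristic zero with trivial dualizing sheaf; a descent argument along this cofiltered limit together with descendability (from the previous Proposition) yields that $f$ is cohomologically \'etale for compact $G$, with $f^!\bb{Q}_p=\bb{Q}_p$. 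This is consistent with $\delta_G=1$ when $G$ is compact.

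To pass from compact $H$ to general $G$, I would use that $G$ is covered by translates of $H$, and the failure of $G/H$-invariance of the natural Haar measure (i.e.\ the defect between left and right translation) is exactly what produces the modulus character $\delta_G=|\det\mathrm{Ad}(\cdot)|_{\bb{Q}_p}$. Concretely, identifying $f^!\bb{Q}_p$ with an invertible sheaf on $*/G^{sm}$ (i.e.\ a smooth character of $G$), the composite map $*/H^{sm}\to */G^{sm}\to *$ being cohomologically \'etale pins down the restriction of $f^!\bb{Q}_p$ to $H$ as trivial, and the obstruction to gluing these trivializations as $H$ varies over the open cover of $G$ is precisely the cocycle defining $\delta_G$. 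Having established $f^!\bb{Q}_p=\delta_G$, the computation of $h^!\bb{Q}_p$ is obtained by composing the two smooth maps:
\[
h^!\bb{Q}_p \;=\; \bigl(*/G^{la}\to */G^{sm}\bigr)^{!}\,f^!\bb{Q}_p \;=\; \bigwedge^d\f{g}[d]\otimes (\text{pullback of }\delta_G),
\]
and the standard identity $\det\mathrm{Ad}=\delta_G$ (as characters of $G$ with values in $\bb{Q}_p^\times$, using that the adjoint $G^{sm}$-equivariant structure on $\bigwedge^d\f{g}$ is precisely $\delta_G$) absorbs the $\delta_G$-factor, leaving the clean answer $\bigwedge^d\f{g}[d]$. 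The same composition argument applied to the sequence for $G$ produces $g^!\bb{Q}_p=\bigwedge^d\f{g}[d]$.

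The main obstacle is the non-compact case of $f$: rigorously identifying $f^!\bb{Q}_p$ with the smooth character $\delta_G$ requires a careful cocycle analysis of the gluing of trivializations across the tiling $G=\bigsqcup gH$, together with the comparison between the abstract modulus character (defined via Haar measures) and the categorical dualizing sheaf in our six-functor framework. Once this identification is in place, the interaction with the adjoint action on $\bigwedge^d\f{g}$ in the composite formulas is a clean representation-theoretic statement, and the cases of $g$ and $h$ then follow from the fiber-sequence arguments outlined above.
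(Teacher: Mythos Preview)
Your overall strategy---factor through the fibre sequences $*/\exp(\f{g})^{\dagger}\to */G^{la}\to */G^{sm}$ and $*/G^{\dagger}\to */G\to */G^{sm}$---is natural, but several steps fail as written.

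The most concrete error is the claimed identity $\det\mathrm{Ad}=\delta_G$: the modular character is $\delta_G(g)=|\det\mathrm{Ad}(g)|_{p}$, and for non-unimodular $G$ (e.g.\ a Borel in $\GL_2(\bb{Q}_p)$) these are genuinely different characters. So your ``absorption'' step cannot work. This is tied to a subtler gap: Proposition \ref{PropDualizingSheafDaggerGRoupoid} only computes the dualizing sheaf of $*/\exp(\f{g})^{\dagger}\to *$, i.e.\ the base change of $*/G^{la}\to */G^{sm}$ along $*\to */G^{sm}$; it does \emph{not} pin down the full $G^{la}$-equivariant structure on the dualizing sheaf of $*/G^{la}\to */G^{sm}$. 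You assert this structure is the adjoint action, but that is exactly what is at issue, and (assuming the paper's formula for $h^{!}$) for non-unimodular $G$ it must in fact carry an extra $\delta_G^{-1}$-twist. Second, for the continuous group $G$ the object $G^{\dagger}=\varprojlim_H H$ is a limit of profinite sets, not a smooth $\dagger$-group in the sense of Definition \ref{DefinitionSmoothDaggerGroupoids} (the target map is not locally $\bb{G}_a^{\dagger,d}$), so neither Proposition \ref{PropDualizingSheafDaggerGRoupoid} nor Example \ref{ExampleDaggerGroupoids}(2) applies to it. Third, your treatment of compact $G^{sm}$ via $*/G^{sm}=\varprojlim_N */F_N$ does not conclude: cohomological smoothness is not known to pass to such cofiltered limits of classifying stacks.

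The paper proceeds differently. It first reduces to compact $G$ via the \v{C}ech nerve of $*/H\to */G$ (the ``tedious bookkeeping'' of the transition maps is precisely where the adjoint action appears for $G,G^{la}$ and the unimodular character for $G^{sm}$). For compact $G^{sm}$ it uses directly that taking smooth invariants is exact with right adjoint given by the trivial representation, and applies Lemma \ref{KeyLemmaSmoothRetraction}. For compact $G$ and $G^{la}$ it invokes the Lazard--Serre (resp.\ Kohlhaase) resolution to get cohomological smoothness, and then reduces the dualizing-sheaf computation to the classifying stack of an affinoid group $\overline{\bb{G}}^{(h)}$, where the argument of Proposition \ref{PropDualizingSheafDaggerGRoupoid} goes through verbatim. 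This bypasses both the non-unimodular bookkeeping in your composition and the need to treat $G^{\dagger}$ as a smooth $\dagger$-group.
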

\begin{proof}
We can assume without loss of generality that $G$ is compact. Indeed, given $H$ a compact open subgroup of $G$, the \v{C}ech nerve $X_{\bullet}$ of the map $*/H\to */G$ is given by  $X_{n}=H\backslash G\times^H \cdots \times^{H} G/H$ ($n$-copies of $G$), and all the arrows $X_n\to X_m$ are cohomologically \'etale (resp. for $G^{la}$). Thus, all the  maps $g_n:X_n\to *$ would be cohomologically smooth and one has natural isomorphisms $g_n^! \bb{Q}_p= d_0^* g_H^! \bb{Q}_p$ where $g_H:*/H\to *$, proving that the object $g^! \bb{Q}_p$ is already determined by its restriction to $*/H$. An explicit but tedious bookkeeping of the maps in the \v{C}ech nerve will show that the action is the adjoint for $G$ and $G^{la}$, and the unimordular action for $G^{sm}$ (see \cite[Example 4.2.4]{HanKalWein}). 

Now let us suppose that $G$ is compact, we can even assume that $G$ is a uniform pro-$p$-group and fix a coordinate system $G\cong \bb{Z}_p^{d}$. We first deal with $f$. By \cite[Theorem 5.4.2]{RJRCSolidLocAn2} the category $\Mod(*/G^{sm})$ is equivalent to the category of solid smooth representations. In particular, $f_*$ is identified with the invariant functor which is exact, and has by right adjoint the formation of the trivial representations, namely, $f^*$. Then, Lemma \ref{KeyLemmaSmoothRetraction} can be applied with $\n{L}=f^* \bb{Q}$ being the trivial representation, proving that $f$ is cohomologically smooth.

 Finally, we deal with $g$ and $h$.  By the Lazard-Serre resolution, we know that both $g$ and $h$ are cohomologically smooth, namely, $g_*$ and $h_*$ are group cohomology, and their right adjoints are the trivial representation after twisting by a character, see \cite[Theorem 5.19]{RRLocallyAnalytic} (one can also apply Lemma \ref{KeyLemmaSmoothRetraction} with $\n{L}$ to the line bundle $\chi=\iHom_{G}(\bb{Q}_p, \bb{Q}_{p,\sol}[G])$). It is left to compute the dualizing sheaf $\chi$ of the classifying stacks.   Let $\bb{G}^{(h)}$ be the affinoid group consisting on finitely many disjoint affinoid polydiscs of radius $p^{-h}$ around the elements of $g\in G$, we also let $\overline{\bb{G}}^{(h)}=\varprojlim_{h'<h} \bb{G}^{(h')}$ be the overconvergent affinoid group of radius $p^{-h}$. Then, since the Lazard-Serre resolution is already extended for analytic distribution algebras (see proof of \cite[Theorem 4.4]{Kohlhaase}), letting $f: */\overline{\bb{G}}^{(h)}\to *$ and $k:*/G^{la}\to */\overline{\bb{G}}^{(h)}$, we have that $h^! \bb{Q}_p= k^* f^! \bb{Q}_p$ for some $h>>0$. Hence, it suffices to compute the dualizing sheaf of the map $f$. By taking the connected component of the identity, we are reduced to compute the dualizing sheaf of the classifying stack of an affinoid group $\overline{\bb{G}}$ whose underlying adic space is a closed polydisc of dimension $d$, this follows the same argument as Proposition \ref{PropDualizingSheafDaggerGRoupoid} obtaining $\bigwedge^{d} \Lie \overline{\bb{G}} [d]$ endowed with its adjoint action. This finishes the proof. 
\end{proof}

With the previous preparations we can finally define equivariant $D$-modules on derived Tate adic spaces.

\begin{definition}
\label{DefinitionEquivariantDmodules}
Let $X\to Y$ be a morphism of derived Tate adic spaces over $\bb{Q}_p$, let $G$ be a $p$-adic Lie group and suppose that we have an action of $G^{la}$ on $X$ over $Y$. Let $\exp(\f{g}^0)^{\dagger}$ denote the groupoid over $X$ obtained by the restriction of the action of $G^{la}$ to $\exp(\f{g})^{\dagger}$. Let $\bb{H}^{\dagger}\to \exp(\f{g}^{0})^{\dagger}$ be a normal morphism of groupoids such that the composite $ \bb{H}^{\dagger}\to G^{la}$ is also normal. We define the category of analytic equivariant $D(G^{la}/\bb{H}^{\dagger})$-modules to be $\Mod_{\sol}(X/(G^{la}/\bb{H}^{\dagger}))$. 
\end{definition}

Finally, the following theorem computes dualizing sheaves for equivariant analytic $D$-modules of solid smooth morphisms.

\begin{theorem}
\label{TheoDializingEquivariantDmod}
Let $X\to Y$ be a solid smooth morphism of derived Tate adic spaces over $\bb{Q}_p$ of relative dimension $d$, and let $G$ be a $p$-adic Lie group of dimension $g$ acting locally analytically   on $X$ over $Y$. Let us denote  $\f{g}=\Lie G$.     Let $\bb{H}^{\dagger}$ be a $\dagger$-smooth group  over $X$ of relative dimension $e$,   let $\bb{H}^{\dagger}\to G^{la}\times X$  be a map of groupoids with given normal quotient $G^{la}/\bb{H}^{\dagger}$. Then $g: X/(G^{la}/\bb{H}^{\dagger})\to Y$ is cohomologically smooth and its underlying $G^{la}$-equivariant dualizing sheaf is equivalent to 
\[
g^!1_{Y}=\Omega^{d}_{X/Y}[d]\otimes \bigwedge^{g} \f{g} [g]\otimes  \bigwedge^{e} \ell_{\bb{H}^{\dagger}/X} [-e]. 
\] 
\end{theorem}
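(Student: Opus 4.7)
The plan is to study the quotient map $g$ via the factorization
\[
X \xrightarrow{\pi_G} X/G^{la} \xrightarrow{\psi} X/(G^{la}/\bb{H}^\dagger) \xrightarrow{g} Y
\]
arising from the cartesian normal quotient square
\[
\begin{tikzcd}
X/\bb{H}^\dagger \ar[r,"\bar k"] \ar[d,"q"] & X/G^{la} \ar[d,"\psi"] \\
X \ar[r,"k"] & X/(G^{la}/\bb{H}^\dagger)
\end{tikzcd}
\]
together with Serre duality $f^! 1_Y = \Omega^d_{X/Y}[d]$ from Theorem \ref{TheoSerreDuality} for the solid smooth composite $f = g\psi\pi_G$. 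First, each intermediate arrow is cohomologically smooth: the projection $\pi_G$ is the base change of the universal cover $\ast \to \ast/G^{la}$ along $X/G^{la}\to \ast/G^{la}$, so Proposition \ref{PropCohoSmoothClassifyingG} (together with the inversion coming from $\ast/G^{la}\to \ast$ having dualizing sheaf $\bigwedge^g \f{g}[g]$) yields $\pi_G^! 1_{X/G^{la}} \cong \bigwedge^g \f{g}^{\vee}[-g]$; the map $\psi$ is the base change of $q: X/\bb{H}^\dagger \to X$ along $k$, and $q$ is cohomologically smooth with $q^! 1_X = \bigwedge^e \ell_{\bb{H}^\dagger/X}^{\vee}[e]$ by Proposition \ref{PropDualizingSheafDaggerGRoupoid}, so that $\bar k^* \psi^! 1_{X/(G^{la}/\bb{H}^\dagger)} \cong \bigwedge^e \ell_{\bb{H}^\dagger/X}^{\vee}[e]$.

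Next I would deduce the cohomological smoothness of $g$ itself. The composite $k = \psi\pi_G: X \to X/(G^{la}/\bb{H}^\dagger)$ is cohomologically smooth and is a canonical epimorphism of $\s{D}$-stacks, being the quotient map by the groupoid $G^{la}/\bb{H}^\dagger$. I would then check that it is a smooth $\s{D}$-cover, by combining the descendability of $\ast \to \ast/G^{la}$ from Proposition \ref{PropCohoSmoothClassifyingG} with the descendability of $X \to X/\bb{H}^\dagger$ from Proposition \ref{PropDescendableQuotientDaggerGRoup} along the normal quotient square. Since $f = gk$ is solid smooth hence cohomologically smooth, Corollary \ref{CorollaryDescentSmoothProperCovers} (locality of cohomological smoothness on the source under smooth $\s{D}$-covers) then yields that $g$ is cohomologically smooth.

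Finally, to identify the dualizing sheaf, I would expand $f^! 1_Y = \pi_G^!\psi^! g^! 1_Y$ using $\pi_G^! = \pi_G^*(-)\otimes \bigwedge^g \f{g}^{\vee}[-g]$ and $\psi^! = \psi^*(-)\otimes \psi^! 1_{X/(G^{la}/\bb{H}^\dagger)}$, and pull back everything to $X$, obtaining
\[
\Omega^d_{X/Y}[d] \;\cong\; k^* g^! 1_Y \otimes \bigwedge^e \ell_{\bb{H}^\dagger/X}^{\vee}[e] \otimes \bigwedge^g \f{g}^{\vee}[-g],
\]
which rearranges to
\[
k^* g^! 1_Y \;\cong\; \Omega^d_{X/Y}[d]\otimes \bigwedge^g \f{g}[g]\otimes \bigwedge^e \ell_{\bb{H}^\dagger/X}[-e].
\]
Since $k^* g^! 1_Y = \pi_G^*\psi^* g^! 1_Y$ is the underlying $\s{O}_X$-module of the $G^{la}$-equivariant sheaf $\psi^* g^! 1_Y$ on $X/G^{la}$, this gives the claimed formula; the $G^{la}$-equivariant structure matches tautologically once one notes that $\Omega^d_{X/Y}$, $\f{g}$, and $\ell_{\bb{H}^\dagger/X}$ all carry natural $G^{la}$-actions (the last via the normal structure $\bb{H}^\dagger \to G^{la}\times X$). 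The hardest step, I expect, will be verifying that $k$ is a smooth $\s{D}$-cover in a sense precise enough for Corollary \ref{CorollaryDescentSmoothProperCovers} to apply, since it requires combining the Hodge-filtration descendability for $\dagger$-groups with the Lazard--Serre descendability for $G^{la}$ compatibly with the normal quotient structure.
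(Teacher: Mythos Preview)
Your strategy—factorize through $X/G^{la}$ and combine dualizing sheaves—is what the paper does, but there is a genuine gap in your choice of cover. The map $\pi_G: X \to X/G^{la}$ is \emph{not} cohomologically smooth: its fiber is $G^{la}$, which carries the induced analytic structure from $\bb{Q}_p$ and is therefore weakly cohomologically proper, but $f^!\bb{Q}_p=\n{D}^{la}(G,\bb{Q}_p)$ is not an invertible $C^{la}(G,\bb{Q}_p)$-module (e.g.\ the Dirac distributions $\delta_a$ generate one-dimensional skyscraper submodules). Hence your $k=\psi\pi_G$ is not a smooth $\s{D}$-cover, and Corollary \ref{CorollaryDescentSmoothProperCovers} cannot be applied along $k$ to descend smoothness from $f=gk$ to $g$; the descendability you cite only transfers co-smoothness. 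For the same reason the inversion $\pi_G^! 1\cong\bigwedge^g\f{g}^\vee[-g]$ is unjustified: the retraction argument works for $\iota:\ast\to\ast/G^{la}$ because $\ast/G^{la}\to\ast$ exists, but there is no retraction $X/G^{la}\to X$ when the action is nontrivial, and without smoothness of $\iota$ you cannot base-change $\iota^! 1$ to $\pi_G^! 1$. (A smaller slip: in your cartesian square it is $q$ that is the pullback of $\psi$ along $k$, not conversely.)

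The paper avoids both problems by descending along your $\psi$ (their $h$) rather than $k$. The cartesian square shows the pullback of $\psi$ along $k$ is $q:X/\bb{H}^\dagger\to X$, smooth by Proposition \ref{PropDualizingSheafDaggerGRoupoid}; since $k$ has universal $*$-descent, $\psi$ is a smooth $\s{D}$-cover. Then $g\psi:X/G^{la}\to Y$ is factored as $X/G^{la}\to Y/G^{la}\to Y$, both arrows cohomologically smooth (the first represented by the solid smooth $X\to Y$ and handled by Serre duality, the second base-changed from $\ast/G^{la}\to\ast$ and handled by Proposition \ref{PropCohoSmoothClassifyingG}). This proves $g$ smooth and computes $\psi^* g^! 1_Y$ directly on $X/G^{la}$—which is exactly the $G^{la}$-equivariant sheaf the statement demands—without ever needing $\pi_G^!$.
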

\begin{proof}
 By hypothesis, the map $h:X/G^{la}\to X/(G^{la}/\bb{H}^{\dagger})$ is an epimorphism fibered on $X/\bb{H}^{\dagger}$. Then, the pullback along $h$ is conservative and it is cohomologically smooth by Proposition \ref{PropDualizingSheafDaggerGRoupoid}. Therefore, $h$ is a smooth $\s{D}$-cover and by Corollary \ref{CorollaryDescentSmoothProperCovers} $g$ is cohomologically smooth if $g\circ h$ is so. On the other hand, we can write 
 \[
 X/G^{la}\xrightarrow{f} Y/G^{la} \xrightarrow{k} Y,
 \]
 the map $f$ is representable by a solid smooth map so it is  cohomologically smooth, and the map $k$ is cohomologically smooth by Proposition \ref{PropCohoSmoothClassifyingG}. Finally, it is left to compute the pullback  of $g^{!}\bb{Q}_p$ along $h$ . Since $g\circ h= k \circ f$ we find that 
 \[
 f^{*} k^{!} 1_Y \otimes f^{!} 1_{Y/G^{la}}= h^{*} g^{!} 1_Y \otimes h^{!} 1_{ X/(G^{la}/\bb{H}^{\dagger})}.
 \] 
Therefore, 
\[
h^{*} g^{!} 1_Y =f^{*} k^{!} 1_Y \otimes f^{!} 1_{Y/G^{la}} \otimes (h^{!} 1_{ X/(G^{la}/\bb{H}^{\dagger})})^{-1}.
\]
The theorem follows since $k^{!} 1_Y = \bigwedge^{g} \f{g}[g]$ by Proposition \ref{PropCohoSmoothClassifyingG}, $f^{!}1_{Y/G^{la}} =\Omega^{d}_{X/Y}[d]$ by Theorem \ref{TheoSerreDuality}, and $h^{!} 1_{ X/(G^{la}/\bb{H}^{\dagger})}=\bigwedge^{e}\ell_{\bb{H}^{\dagger}/X}^{\vee}[e]$ by Proposition \ref{PropDualizingSheafDaggerGRoupoid} since $h$ is fibered on $X/\bb{H}^{\dagger}$. 
\end{proof}

\begin{example}
 Let $X\to Y$ be a smooth morphism of rigid spaces and let $G$ be a $p$-adic Lie group acting on $X$ over $Y$. The action of $G$ on $X$ is locally analytic and extends to an action of $G^{la}$. Let $\alpha: \s{O}_X\otimes \f{g}\to \n{T}_{X/Y}$ be the anchor map, and let us assume that it is surjective. Let $\f{k}^0=\ker(\alpha)$, then $\f{k}^0$ is a locally finite free Lie algebroid over $X$ acting trivially on $\s{O}_X$, and it defines a group object $\bb{H}^{\dagger}$ over $X$. Furthermore, since $\f{k}^{0}$ is a $G$-equivariant sheaf, the map $\bb{H}^{\dagger}\to G^{la}\times X$ is a normal map of $1$-groupoids and we can perform the groupoid quotient $G^{la}/\bb{H}^{\dagger}$. Then, the category of analytic $D(G^{la}/\bb{H}^{\dagger})$-modules will be an enhancement of the category of equivariant $\wideparen{\n{D}}(X,G)$-modules of \cite{ArdakovEquivariantD}. A concrete relation between these two categories  is left to a future work.  
\end{example}

\bibliographystyle{alpha}
\bibliography{BiblioDerivedAdic}

\end{document}